\pgfplotsset{compat=newest}
\definecolor{teal}{rgb}{0.0, 0.5, 0.5}
\newcounter{mnotecount}[section]
\DeclareFontFamily{U}{mathb}{\hyphenchar\font45}
\DeclareFontShape{U}{mathb}{m}{n}{
      <5> <6> <7> <8> <9> <10> gen * mathb
      <10.95> mathb10 <12> <14.4> <17.28> <20.74> <24.88> mathb12
      }{}
\DeclareSymbolFont{mathb}{U}{mathb}{m}{n}
\theoremstyle{plain}
\newtheorem*{theorem*}{Theorem}
\newtheorem{theorem}{Theorem}[section]
\newtheorem*{lemma*}{Lemma}
\newtheorem{lemma}[theorem]{Lemma}
\newtheorem*{assumption*}{Assumption}
\newtheorem*{proposition*}{Proposition}
\newtheorem{proposition}[theorem]{Proposition}
\newtheorem*{corollary*}{Corollary}
\newtheorem{corollary}[theorem]{Corollary}
\newtheorem*{claim*}{Claim}
\newtheorem*{conjecture*}{Conjecture}
\newtheorem*{question*}{Question}
\newtheorem{open}[theorem]{Open Problem}
\newtheorem*{result*}{Result}
\theoremstyle{definition}
\newtheorem*{definition*}{Definition}
\newtheorem{definition}[theorem]{Definition}
\newtheorem*{example*}{Example}
\newtheorem{example}[theorem]{Example}
\newtheorem*{algorithm*}{Algorithm}
\newtheorem*{remark*}{Remark}
\newtheorem*{remarks*}{Remarks}
\newtheorem{remark}[theorem]{Remark}
\newtheorem*{convention*}{Convention}
\def\al{\alpha}
\def\be{\beta}
\def\ga{\gamma}
\def\de{\delta}
\def\ep{\epsilon}
\def\et{\eta}
\def\th{\theta}
\def\la{\lambda}
\def\si{\sigma}
\def\ta{\tau}
\def\ph{\phi}
\def\vh{\varphi}
\def\ch{\chi}
\def\ps{\psi}
\def\om{\omega}
\def\Ga{\Gamma}
\def\De{\Delta}
\def\Th{\Theta}
\def\Si{\Sigma}
\def\Om{\Omega}
\def\C{\mathbb{C}}
\def\I{\mathbb{I}}
\def\K{\mathbb{K}}
\def\N{\mathbb{N}}
\def\R{\mathbb{R}}
\def\Z{\mathbb{Z}}
\def\cA{\mathcal{A}}
\def\cF{\mathcal{F}}
\def\cH{\mathcal{H}}
\def\cK{\mathcal{K}}
\def\cL{\mathcal{L}}
\def\cN{\mathcal{N}}
\def\sC{\mathscr{C}}
\def\sD{\mathscr{D}}
\def\sI{\mathscr{I}}
\def\sP{\mathscr{P}}
\def\sS{\mathscr{S}}
\def\p{\partial}
\def\ansqrt{\theta}
\renewcommand{\Im}{\mathrm{Im}}
\def\<{\langle}
\def\>{\rangle}
\renewcommand{\o}{\circ}
\def\cq{{/\!\!/}}
\newcommand{\at}{^o}
\def\ol{\overline}
\def\ul{\underline}
\def\Lip{\on{Lip}}
\def\Hyp{\on{Hyp}}
\def\cq{{/\!\!/}}
\def\acts{\circlearrowleft}
\def\rep{(G\acts V,d,\si)}
\def\Lip{\on{Lip}}
\def\Hoeld{\on{H\ddot{o}ld}}
\def\kk{{m}}
\DeclareMathOperator{\discr}{\Delta}
\DeclareMathOperator{\charp}{\chi}
\let\on=\operatorname
\newcommand{\sr}[1]%
{\ifmmode{}^\dagger\else${}^\dagger$\fi\ifvmode
\vbox to 0pt{\vss
 \hbox to 0pt{\hskip\hsize\hskip1em
 \vbox{\hsize3cm\raggedright\pretolerance10000
 \noindent #1\hfill}\hss}\vss}\else
 \vadjust{\vbox to0pt{\vss%
 \hbox to 0pt{\hskip\hsize\hskip1em%
 \vbox{\hsize3cm\raggedright\pretolerance10000%
 \noindent #1\hfill}\hss}\vss}}\fi%
}
\providecommand{\mapsfrom}{\kern.2em%
\setbox0=\hbox{$\leftarrow$\kern-.10em\rule[0.26mm]{0.1mm}{1.3mm}}\box0%
\kern.3em}
\title[Perturbation theory of polynomials and linear operators]
{Perturbation theory of polynomials\\ and linear operators}
\author[Adam Parusi\'nski and  Armin Rainer]
{Adam Parusi\'nski and Armin Rainer}
\address {Adam Parusi\'nski: Universit\'e C\^ote d'Azur,  CNRS,  LJAD, UMR 7351, 06108 Nice, France}
\email{adam.parusinski@univ-cotedazur.fr}
\address{Armin Rainer: Fakult\"at f\"ur Mathematik, Universit\"at Wien,
Oskar-Morgenstern-Platz~1, A-1090 Wien, Austria}
\email{armin.rainer@univie.ac.at}
\begin{document}

\begin{abstract} 
This survey revolves around the question how the roots of a monic polynomial (resp.\ the spectral decomposition of a linear operator), 
    whose coefficients depend in a smooth way 
    on parameters, depend on those parameters. 
    The parameter dependence of the polynomials (resp.\ operators) ranges from real analytic over $C^\infty$ to differentiable of finite order
    with often drastically different regularity results for the roots (resp.\ eigenvalues and eigenvectors).
    Another interesting point is the difference between the perturbation theory of hyperbolic polynomials (where, by definition, all roots are real) 
    and that of general complex polynomials.
    The subject, which started with Rellich's work in the 1930s, enjoyed sustained interest through time
    that intensified in the last two decades, bringing some definitive optimal results. 
    Throughout we try to explain the main proof ideas; Rellich's theorem and Bronshtein's theorem on 
    hyperbolic polynomials are presented with full proofs.
    The survey is written for readers interested in singularity theory but also for 
    those who intend to apply the results in other fields.
\end{abstract}

\thanks{This research was funded in whole or in part by the Austrian Science Fund (FWF) DOI 10.55776/P32905.
For open access purposes, the author has applied a CC BY public copyright license to any author-accepted manuscript version arising from this submission.}
\keywords{Perturbation theory, regularity of roots of polynomials with smooth coefficients, eigenvalues, eigenvectors, singular values, 
hyperbolic polynomials, group representations, lifting over invariants, optimal transport}
\subjclass[2020]{
    26C10,  
	26B05, 	
    26B30,  
    26E05,  
    26E10,  
    26A46,  
    30C15,  
    32B20,	
    46E35,  
    47A55}  
\date{\today}

\maketitle

\tableofcontents
\section{Introduction}

The question how the spectral decomposition of a linear operator in Hilbert space depends on parameters of the operator 
is a natural and important one. Its investigation -- stimulated without doubt by the development of quantum mechanics --  
began with Rellich's paper \cite{Rellich37} published in 1937 
(the first in a series which culminated in Rellich's book \cite{Rellich69}).
In finite dimensions, the problem reduces to the questions about the regularity of the eigenvalues and eigenvectors of 
families of matrices and, in consequence, one is led to ask how regular the roots of a monic polynomial depending on parameters 
can be chosen as functions of these parameters.

In this survey, we will concentrate on the finite dimensional setting of matrices and polynomials; occasionally we will provide 
references for generalizations to operators in infinite dimensional Hilbert space.

\subsubsection*
{Real analytic perturbations of normal matrices and hyperbolic polynomials}  
In \cite{Rellich37}, Rellich was concerned with analytic families of linear operators, in particular, he proved that 
real analytic curves of Hermitian matrices admit real analytic eigenvalues and a real analytic frame of orthonormal eigenvectors.
To this end, he showed that a real analytic curve of monic hyperbolic polynomials admits a real analytic choice of its roots.
A monic polynomial $P_a(Z) = Z^d + \sum_{j=1}^d a_j Z^{d-j}$ with real coefficients is called \emph{hyperbolic}\index{hyperbolic polynomial}\index{polynomial!hyperbolic} 
if all $d$ roots 
of $P_a$ (with multiplicities) are real.

Rellich also realized that the analytic dependence of the spectral decomposition breaks down as soon as the Hermitian matrix 
depends on two or more real parameters. The analytic multiparameter case was taken up and continued 
by Kurdyka and P\u aunescu \cite{KurdykaPaunescu2008}: there exists an analytic modification of the parameter space such that, after this 
modification, the eigenvalues and the eigenvectors (or the roots in the case of hyperbolic polynomials) 
admit real analytic parameterizations, locally.  
Actually, these results hold in the more general setting of normal matrices, as observed by Rainer \cite{RainerN},
where the eigenvalues are generally complex valued. We discuss the analytic perturbation theory in Section \ref{sec:analytic}.

\subsubsection*
{Singularity theory methods} 
Many results of perturbation theory can be proven using singularity theory.  
Puiseux's theorem is an important ingredient of Rellich's proof and the multiparameter case is proven using its multivariable counterpart, 
the Abhyankar--Jung theorem.  In Section \ref{sec:analytic}  we present a different proof of Rellich's theorem on perturbation of hyperbolic polynomials, 
based on a nowadays standard singularity theory tool, the splitting, which we also recall briefly.  
Similar methods are used in the multiparameter case and in  the study of the perturbation of normal matrices, 
see Parusi\'nski and Rond \cite{ParusinskiRondMatr}, 
where the splitting is replaced by a version of Hensel's lemma for normal matrices, or in the problem of lifting over invariants of group 
representations, 
where it is replaced by the slice theorem (see Theorem \ref{thm:slice}).  We describe how 
this method is used in several places of the survey.  

In the multiparameter case, the resolution of singularities is used to make ideals principal generated by normal crossings.  
In most cases, this is the ideal generated by the discriminant of $P_a$, or of the square-free reduction $(P_a)_{\on{red}}$, so that one can use then the Abhyankar--Jung theorem.  
Interestingly, the resolution of singularities is used as well to show that the continuous roots of $P_a$, with $a=a(t)$ 
depending smoothly on $t\in \R$, 
are locally absolutely continuous,
see Section \ref{sec:formulas} and Parusi\'nski and Rainer \cite{ParusinskiRainerAC}.

In the case of $C^\infty$ coefficients (or coefficients of finite differentiability), 
besides splitting, another important ingredient is Glaeser's inequality and its generalizations.  
We recall some of its manifestations in 
Section \ref{sec:Bronshtein}.  The higher order Glaeser inequalities appear in 
Section \ref{sec:regularity}.    

\subsubsection*
{$C^\infty$ perturbations of normal matrices and hyperbolic polynomials}  
Another field of application which greatly stimulated the research on the regularity problem for roots of polynomials is the theory of PDEs -- 
most notably the Cauchy problem for hyperbolic PDOs. 
With this goal in mind, Bronshtein \cite{Bronshtein79} proved that any continuous root of a monic hyperbolic polynomial $P_a$ of degree 
$d$ with coefficients $C^{d-1,1}$ functions $a_j : U \to \R$, where $U$ is an open subset of $\R^n$, 
is locally Lipschitz. (Glaeser's inequality gives Bronshtein's result in the simplest nontrivial case.) 
Note that ordering the roots $P_a$ increasingly yields a continuous system of the roots.
This result is sharp. If $n=1$ and the coefficients are $C^d$ (resp.\ $C^{2d}$), then there is a $C^1$ (resp.\ twice differentiable) 
choice of the roots, but there are examples of nonnegative $C^\infty$ functions $f$ such that $Z^2=f$ has no $C^{1,\al}$ solution 
for any $\al>0$. We give a full proof of Bronshtein's theorem with uniform bounds for the Lipschitz constant in Section \ref{sec:Bronshteinproof} 
(it is based on Parusi\'nski and Rainer \cite{ParusinskiRainerHyp}).   

The step from analytic to $C^\infty$ coefficients entails a big drop of regularity for the roots, because it allows for oscillation.
The same is true for the eigenvalues of Hermitian and, more generally, normal matrices; for the eigenvectors it is even worse: 
there are $C^\infty$ curves of symmetric $2 \times 2$ matrices that do not admit a continuous system of eigenvectors.
On the other hand, the conditions that guarantee locally Lipschitz or $C^1$ eigenvalues of Hermitian (even normal) matrices 
are much weaker than for hyperbolic polynomials: for instance, a $C^{0,1}$ (resp.\ $C^1$) curve of Hermitian (or, more generally, normal) matrices admits a 
system of $C^{0,1}$ (resp.\ $C^1$) eigenvalues. We explore several explanations for this phenomenon: one of them is that 
the determinant is a G{\aa}rding hyperbolic polynomial with respect to the identity matrix on the real vector space of 
$d \times d$ Hermitian matrices (see Section \ref{sec:Garding} and Section \ref{sec:Hermitian}).
This also gives that $C^{1,1}$ curves of Hermitian matrices admit eigenvalues that are locally of Sobolev class $W^{2,1}$ 
(which is optimal in view of the Sobolev inequality).

There are different approaches (discussed in Section \ref{sec:sufficienthyp}) to ascertain better $C^p$ regularity of the roots and the eigenvalues  
by imposing stronger assumptions.
For instance, given any integer $p\ge 2$, forcing $f \ge 0$ defined on $\R$ and sufficiently many of its derivatives to vanish on all local minima of $f$ 
guarantees that the equation $Z^2 = f$ has a $C^p$ solution. 
Or, one can define a regularity class $\cF^\be$, $\be >0$, of nonnegative $C^\be$ functions on $\R$, incorporating flatness of $f$ near its zeros 
in a certain sense, 
such that $f^\al \in \cF^{\al\be}$ for all $f \in \cF^\be$ and $\al \in (0,1]$.
In the case of a curve of monic hyperbolic polynomials or normal matrices,
sufficient conditions for the existence of $C^p$ roots or eigenvalues can be given in terms of the differentiability of the coefficients and the 
finite order of contact of the roots or eigenvalues. If the coefficients are $C^\infty$ and definable in any (not necessarily polynomially bounded) 
o-minimal expansion of the real field, 
then the roots and eigenvalues can be chosen $C^\infty$ and definable; this reaffirms that oscillation is to blame for the loss of regularity.

\subsubsection*
{Perturbations of polynomials.  General (nonhyperbolic) case}  
Other problems from PDEs (cf.\ \cite{Spagnolo00}) and geometric analysis demand to abandon the hyperbolicity assumption:
how regular can the roots of a general (nonhyperbolic) monic polynomial $P_a$ with smooth coefficients $a_j : U \to \C$ be?
This problem was solved only recently by Ghisi and Gobbino \cite{GhisiGobbino13} in the radical case and 
by Parusi\'nski and Rainer \cite{ParusinskiRainerAC,ParusinskiRainerOpt,ParusinskiRainerBV} in the general case. 
We present in Section \ref{sec:regularity} two approaches to this problem: one is based on formulas 
for the roots of the universal monic polynomial of degree $d$, obtained by resolution of singularities, the other one is elementary and yields the optimal Sobolev regularity result.   
It states that the roots of a curve of monic complex polynomials $P_a$ of degree $d$ 
with coefficients of class $C^{d-1,1}(\ol I)$, where $I\subseteq \R$ is a bounded open interval,  admit an absolutely continuous parameterization whose derivative is in $L^p(I)$ for each $1 \le p <d/(d-1)$.
Actually, each continuous root has this property, in particular, it belongs to the Sobolev space $W^{1,p}(I)$ for each $1 \le p <d/(d-1)$.
Moreover, there are uniform bounds for the Sobolev norm of the roots in terms of the $C^{d-1,1}$ norm of the coefficients.  This result is optimal.  

A multiparameter version for continuous roots follows by standard arguments, provided there exists a continuous root (perhaps on some subset).
But in general there are obstructions for continuous roots, due to monodromy.
It turns out that the possibly discontinuous roots still can be represented by functions of bounded variation, cf. Section \ref{sec:bvariation}.
The proof uses the formulas for the roots of the universal monic polynomial alluded to above.

\subsubsection*
{Lifting from the orbit space} 
The problem of finding roots with optimal regularity of monic (hyperbolic and nonhyperbolic) polynomials
has a representation-theoretic interpretation: it is equivalent to a lifting problem over the orbit map $\si$ of the 
standard representation of the symmetric group $\on{S}_d$ on $\R^d$ or $\C^d$, respectively, where the components of $\si$ 
are the elementary symmetric functions. This point of view can be generalized considerably: we will discuss in Section \ref{sec:lifting} the lifting problem 
for real orthogonal representations of compact Lie groups and for complex rational representations of 
linearly reductive groups (the representation spaces will always be finite dimensional).
In this general setting, the lifting occurs over a map $\si$ consisting of basic invariants, that is a finite collection of generators of the algebra of invariant 
polynomials.
It gives another heuristic explanation why the eigenvalues of symmetric matrices are better behaved than the roots of monic hyperbolic
polynomials: only a ``partial lifting'' is necessary (see Section \ref{sec:partiallifting}).

\subsubsection*
{Applications} 
In the last section, Section \ref{sec:applications}, we give two applications of the results for general (nonhyperbolic) polynomials.
The first concerns the zero set of $C^\infty$ functions $f$.
If $f$ vanishes to some finite order $\ga$ at a point $x_0 \in \R^n$, 
then locally near $x_0$ the zero set of $f$ is given by the real roots of a monic polynomial $P_a$ of degree $\ga$,
by Malgrange's preparation theorem. 
We present several consequences of the optimal Sobolev regularity of the roots of $P_a$, 
most of them observed by Beck, Becker-Kahn, and Hanin \cite{Beck:2018hg}. 
Often one knows in advance the vanishing order of a function so that this setup applies,
e.g., for solutions of second order elliptic equations, Laplace eigenfunctions or finite linear combinations of such.

The second application is taken from the recent paper \cite{ACL:preprint} by Antonini, Cavalletti, and Lerario.
This paper brings forward a reinterpretation and an extension of the regularity problem for the roots of polynomials 
to the study of the Wasserstein distance on the space of $d$-degree hypersurfaces in $\C\mathbb P^n$.
This space is embedded in the space of measures on the projective space 
and thus the optimal transport problem between hypersurfaces is studied.
An inner Wasserstein distance is obtained which turns the projective space of homogeneous polynomials in 
a complete geodesic space.

\medskip
In the appendix, we collect definitions and basics on function spaces used in the survey (Section \ref{Appendix:A}) and 
we present some topological and geometric properties of the space of monic hyperbolic polynomials of a fixed degree (Section \ref{sec:spaceHd}).
There are several open problems that we included all over the article.

\section*{Notation}

We use $\N := \{0, 1,2,\ldots\}$ and $\N_{\ge m} := \{n \in \N : n \ge m\}$ for $m \in \N$. 
Similarly, $\R_{>a} := \{x \in \R : x>a\}$ etc.; for instance $\R_{\ge 0} = [0,\infty)$ and $\R_{>0} = (0,\infty)$.

For a subset $S$ of a topological space, $S^\o$, $\ol S$, and $\p S$ denote the interior, the closure, and the boundary of $S$, respectively.
We consider $\R^n$ with its Euclidean structure $|x| = \sqrt{x_1^2+\cdots+x_n^2}$, if not stated otherwise.
The open ball in $\R^n$ with center $x$ and radius $r>0$ is $B(x,r) := \{y \in \R^n : |x-y| < r\}$; 
if $n=1$, we use $I(x,r):=B(x,r)$.
The unit sphere in $\R^n$ is denoted by $\mathbb S^{n-1} := \{x \in \R^n : |x|=1\}$.
If $U \subseteq \R^n$ is an open subset of $\R^n$, then we write $V \Subset U$ to denote a relatively compact open subset $V$ of $U$.

The Lebesgue measure in $\R^n$ is denoted by $\cL^n$; we also use $|E| := \cL^n(E)$ for a Lebesgue measurable set $E \subseteq \R^n$.\index{Ln@$\cL^n$}
The $k$-dimensional Hausdorff measure is $\cH^{k}$\index{Hk@$\cH^k$} and $\cH^k \llcorner E$ denotes the restriction of $\cH^k$ to a subset $E \subseteq \R^n$, 
i.e., $(\cH^k \llcorner E)(F) := \cH^k(E \cap F)$.

For a function $f$, we write $Z_f$ for its zero set and $\Ga_f$ for its graph.
If $f$ and $g$ are real valued functions, $f\lesssim g$ means that $f \le C\, g$ for some constant $C>0$.
Generic constants are denoted by $C$ or $C(n,d,\ldots)$ to indicate that it depends on $n,d,\ldots$; 
their value may differ from line to line.

We use standard multiindex notation.
We write $df$ for the total derivative and $d_v f(x) := \p_t|_{t=0} f(x + t v)$ for the directional derivative.
In particular, $\p_j f = \frac{\p}{\p x_j} f = d_{e_j} f$, where $e_j$ is the $j$-th standard unit vector in $\R^n$.
We write $\nabla f = (\p_1 f, \ldots,\p_n f)$ for the gradient.
Higher order partial derivatives are denoted by 
$f^{(\al)} = \p^\al f = \p_1^{\al_1} \cdots \p_n^{\al_n} f$, for $\al \in \N^n$.

A monic polynomial of degree $d$ is $P_a(Z) = Z^d + \sum_{j=1}^d a_j Z^{d-j}$, 
where the subscript indicates the vector of coefficients $a = (a_1,\ldots,a_d)$. 
The coefficients may be real or complex numbers, formal power series, or functions; in the latter cases 
we often write $P_a(X)(Z)$, $P_a(t)(Z)$, or $P_a(x)(Z)$ to indicate the variables on which the coefficients depend.
If the first coefficient $a_1$ is zero, then the polynomial is said to be in Tschirnhausen form 
and in that case we consistently denote its coefficients by $\tilde a_j$, i.e., $P_{\tilde a}(Z) = Z^d + \sum_{j=2}^d \tilde a_j Z^{d-j}$.  

By a parameterization\index{parameterization of the roots} or a system\index{system of the roots} 
of the roots of a family of polynomials $P_a(x)$, where $x$ ranges over a subset $U$ of $\R^n$, we mean 
a collection of $d$ functions $\la_1,\ldots,\la_d$ such that 
$P_a(x)(Z) = \prod_{i=1}^d (Z-\la_i(x))$ for all $x \in U$.

The space of monic hyperbolic polynomials of degree $d$ is denoted by $\Hyp(d)$,\index{Hypd@$\Hyp(d)$}
its subspace of polynomials in Tschirnhausen form by $\Hyp_T(d)$, \index{HypTd@$\Hyp_T(d)$}
and $\Hyp_T^0(d) := \{P_{\tilde a} \in \Hyp_T(d) : \tilde a_2 = -1\}$.\index{HypT0d@$\Hyp_T^0(d)$}

For a polynomial $P(Z)$, we denote by $\discr_P$ its discriminant.\index{DeltaP@$\discr_P$} 
For an analytic function or a formal power series $f$, we denote by $f_{\on{red}}$\index{fred@$f_{\on{red}}$} its  
square-free reduction.  For a square matrix $A$, we denote by $\charp_A$\index{chiA@$\charp_A$} its characteristic polynomial
and by 
$\discr_A$  the discriminant of $(\chi_A)_ {\on{red}}$.\index{DeltaA@$\discr_A$}

We write $\on{Mat}_{m,n}(R)$\index{Matmn@$\on{Mat}_{m,n}(R)$} for the set of $m \times n$ matrices over the ring $R$ and 
set $\on{Mat}_n(R):=\on{Mat}_{n,n}(R)$; \index{Matn@$\on{Mat}_n(R)$}
for instance, 
$\on{Mat}_{m,n}(\K[[X]])$ is the set of $m \times n$ matrices with entries formal power series in $X$ with coefficients in $\K$.
For $R = \K \in \{\R,\C\}$ we also use $\K^{m \times n}:=\on{Mat}_{m,n}(\K)$. 
For $A \in \on{Mat}_{m,n}(\C)$ let $A^* = \ol A^t$ be its conjugate transpose.
The identity matrix is denoted by $\I$;\index{I@$\I$} its size will always be clear from the context.
We write $\on{U}(n) = \on{U}_n(\C)$\index{UnC@$\on{U}_n(\C)$} for the group of unitary matrices and $\on{O}(n) = \on{O}_n(\R)$\index{OnR@$\on{O}_n(\R)$} for the group of orthogonal matrices.
More abstractly, $\on{O}(V)$ is the group of orthogonal endomorphisms of a finite dimensional Euclidean vector space $V$.
$\on{GL}_n(\C)\index{GLnC@$\on{GL}_n(\C)$}$ or $\on{GL}_n(\R)$ denotes the general linear group. 
The set of diagonal $n \times n$ matrices is denoted by $\on{Diag}(n)$,\index{Diag@$\on{Diag}(d)$} 
the set of real symmetric $n \times n$ matrices by $\on{Sym}(n)$\index{Sym@$\on{Sym}(d)$} and the set of complex Hermitian  $n \times n$ matrices by $\on{Herm}(n)$.\index{Herm@$\on{Herm}(d)$}

The symmetric group of permutations of $n$ elements is $\on{S}_n$.\index{Sn@$\on{S}_n$} 
It acts on $\K^n$, where $\K\in \{\R,\C\}$, by permuting the coordinates and thus induces an equivalence relation.
The equivalence class of $x = (x_1,\ldots,x_n) \in \K^n$ can be identified with the unordered $n$-tuple $[x] = [x_1,\ldots,x_n]$
(which may be represented by $\sum_{j=1}^n \de_{x_j}$, where $\de_{x_j}$ is the Dirac measure).\index{unordered tuple}

\section{Perturbation theory with real analytic coefficients} \label{sec:analytic}

\subsection{Rellich's theorems}

In 1937 F. Rellich proved the following result.  

\begin{theorem}[Rellich {\cite[Satz I]{Rellich37}}, {\cite[Theorem 1]{Rellich69}}]\label{thm:Rellich1}\index{Rellich's theorem}\index{theorem!Rellich's}
Let $A(t)$ be a family of $ d\times d$ Hermitian matrices whose coefficients are real analytic in $t\in I$, 
where $I$ is an open  neighborhood of $0$ in  $\R$. Then, maybe in a smaller neighborhood of $0$, the eigenvalues and the eigenvectors of $A(t)$ can be chosen real analytic in $t$. 

 More precisely, there exist real valued functions  
$\lambda_i(t)$ and complex valued $d$-vectors $v_i(t)$, $i= 1, \ldots , d$, 
depending analytically on $t$, such that for $t$ in an open neighborhood $I'$ of $0$ in $\R$: 
\begin{enumerate}
    \item
$v_1(t), \ldots, v_d(t)$ is an orthonormal frame in $\C^d$;
\item
$A(t) v_i(t) = \lambda_i(t) v_i(t)$. 
\end{enumerate}
\end{theorem}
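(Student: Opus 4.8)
The plan is to treat the eigenvalues first and the eigenvectors afterwards. The characteristic polynomial $\charp_{A(t)}(Z)=Z^d+\sum_{j=1}^d a_j(t)Z^{d-j}$ has coefficients that are polynomial in the entries of $A(t)$, hence real analytic in $t$, and since $A(t)$ is Hermitian all its roots are real; thus $\charp_{A(t)}\in\Hyp(d)$ for every $t$, and it suffices to show that a real analytic curve in $\Hyp(d)$ has a real analytic system of roots. For this I would run Rellich's Puiseux argument: ordering the roots increasingly gives a continuous system $\lambda_1(t)\le\cdots\le\lambda_d(t)$, and a fixed branch $\lambda$ is, on $(0,\ep)$, a convergent fractional power series in $t^{1/q}$ with $q$ the (minimal) ramification index. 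After subtracting the integral-power part, the $q$ Galois conjugates of the leading fractional term are again roots of $\charp_{A(t)}$ and so must be real for small $t>0$; inspecting this leading term one finds that reality for small $t>0$ together with reality for small $t<0$ forces $q=1$. Hence each branch is real analytic near $0$, giving real analytic eigenvalues $\lambda_1,\dots,\lambda_d$.

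\textbf{Eigenvectors by induction on $d$.} The case $d=1$ is trivial. For the step, put $B(t):=A(t)-\lambda_1(t)\I$, a real analytic Hermitian curve with $\ker B(t)\ne 0$ for all $t$, and the goal is a real analytic unit vector $v_1(t)\in\ker B(t)$. Let $r:=\max_t\on{rank}B(t)$ on a small interval about $0$; the $r\times r$ minors of $B(t)$ are real analytic and not all identically zero, so the set $D$ where $\on{rank}B(t)<r$ is discrete and $r\le d-1$. On $I\setminus D$ the orthogonal projection $P(t)$ onto $\ker B(t)$ is the Cauchy integral $\frac1{2\pi i}\oint(Z\I-B(t))^{-1}\,dZ$ around $0$, hence real analytic of constant rank $d-r\ge 1$ with $\|P(t)\|=1$; being bounded, it extends real analytically across the isolated points of $D$, and the extension is again a rank-$(d-r)$ orthogonal projection (its trace is continuous) with $B(t)P(t)=0$, so $\on{im}P(t)\subseteq\ker B(t)$ everywhere. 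I would then trivialize this family of projections by Kato's ODE: the solution $U(t)\in\on{U}(d)$ of $\dot U=[\dot P,P]\,U$, $U(0)=\I$, is real analytic (with $[\dot P,P]$ skew-Hermitian, so $U(t)$ stays unitary) and satisfies $U(t)P(0)U(t)^*=P(t)$; for any unit $w\in\on{im}P(0)$ the vector $v_1(t):=U(t)w$ is then real analytic, unit, and lies in $\on{im}P(t)\subseteq\ker B(t)$, i.e.\ $A(t)v_1(t)=\lambda_1(t)v_1(t)$.

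\textbf{Reducing the dimension and conclusion.} Since $A(t)$ is Hermitian and $v_1(t)$ an eigenvector, $v_1(t)^\perp$ is $A(t)$-invariant. Trivializing the real analytic rank-$(d-1)$ projection $Q(t):=\I-v_1(t)v_1(t)^*$ by the same Kato ODE yields a real analytic $\tilde U(t)\in\on{U}(d)$ with $\tilde U(t)(\on{im}Q(0))=v_1(t)^\perp$. Identifying $\on{im}Q(0)\cong\C^{d-1}$, the curve $t\mapsto\tilde U(t)^*A(t)\tilde U(t)$ restricts to a real analytic Hermitian curve on $\C^{d-1}$, to which the inductive hypothesis applies; transporting its real analytic orthonormal eigenframe back by $\tilde U(t)$ and adjoining $v_1(t)$ gives the required real analytic orthonormal frame $v_1(t),\dots,v_d(t)$ of eigenvectors of $A(t)$ on a possibly smaller neighbourhood $I'$ of $0$. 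Finally $\lambda_i(t)=\langle A(t)v_i(t),v_i(t)\rangle$ re-exhibits the analytic eigenvalues, completing the proof.

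\textbf{Main obstacle.} The delicate point is entirely the eigenvectors at eigenvalue crossings: there $t\mapsto\ker B(t)$ is genuinely discontinuous, so one cannot simply ``follow the eigenspace''. The two devices that make it work are that the \emph{generic} eigenprojection, built by a Cauchy integral off a discrete set, is bounded and hence extends analytically across the bad points (Riemann's removable singularity theorem), landing inside the fatter kernel there; and that Kato's ODE upgrades a real analytic constant-rank family of projections to an honest real analytic unitary trivialization. Making the ramification-index step of the eigenvalue part fully rigorous (the iterated subtraction of the analytic part before reading off the leading fractional exponent) is the other spot that needs care, but it is classical.
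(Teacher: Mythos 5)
Your eigenvalue half is the same as the paper's: you reduce to a real analytic curve of hyperbolic polynomials and run Rellich's Puiseux/reality argument, exactly as sketched for Theorem \ref{thm:Rellich2}, and at about the same level of detail as the survey itself. For the eigenvectors you take a genuinely different route: the paper follows Rellich, who for each analytic eigenvalue $\lambda(t)$ solves $(A(t)-\lambda(t)\I)v=0$ by explicit formulas involving minors of $A(t)$, whereas you build the orthogonal projection $P(t)$ onto $\ker B(t)$, $B(t)=A(t)-\lambda_1(t)\I$, off the discrete set $D$ where the rank drops, extend it across $D$, trivialize the resulting analytic family of projections by Kato's ODE $\dot U=[\dot P,P]U$, and induct on $d$. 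That strategy is viable (it is essentially Kato's), and the ODE step, the skew-Hermiticity of $[\dot P,P]$, the intertwining identity, the invariance of $v_1(t)^{\perp}$, and the induction are all fine.

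The gap is precisely the extension step, which you yourself single out as the crux. Boundedness of a real analytic function on a punctured real interval does \emph{not} imply analytic extension across the puncture ($\sin(1/t)$ is real analytic and bounded on $(0,1)$), and Riemann's removable singularity theorem requires holomorphy and boundedness on a punctured \emph{complex} neighbourhood; but the complexified Riesz projection $P(z)$ is no longer self-adjoint, so the bound $\|P\|=1$ is only available on the real axis, and nothing in your argument rules out blow-up of $P(z)$ as $z\to t_0\in D$ off the reals. So the inference ``bounded on $I\setminus D$, hence extends analytically'' is unjustified as stated. It is repairable by supplying one extra ingredient. For instance, writing $\chi_{B(t)}(\zeta)=\zeta^{d}+c_1(t)\zeta^{d-1}+\cdots$, one has $c_{r+1}\equiv\cdots\equiv c_d\equiv 0$ and, on $I\setminus D$, the identity $P(t)=c_r(t)^{-1}\bigl(B(t)^{r}+c_1(t)B(t)^{r-1}+\cdots+c_{r-1}(t)B(t)+c_r(t)\I\bigr)$, which exhibits every entry of $P$ as a bounded quotient of real analytic functions of one real variable; comparing vanishing orders of numerator and denominator at $t_0$ then gives the analytic extension. (Alternatively, show that $P(z)$ is single-valued and algebraic, hence at worst meromorphic, near $t_0$, and use the real-axis bound to exclude a pole.) With that repair, and with the continuity arguments you already give for $P(t_0)^2=P(t_0)$, $P(t_0)^{*}=P(t_0)$, constancy of the rank and $\on{im}P(t_0)\subseteq\ker B(t_0)$, your proof goes through.
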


Note that the statement of this theorem is local.  
It was originally stated and proven in terms of convergent power series but clearly, by unique analytic continuation, 
it implies a global statement on an arbitrary open interval $I$ and with $I'=I$.  For proving the  above theorem, Rellich shows a result that we rephrase as follows. 

\begin{theorem}[Rellich \cite{Rellich37}]\label{thm:Rellich2}\index{Rellich's theorem}\index{theorem!Rellich's}
Let $I \subseteq \R$ be an open neighborhood of $0$ in $\R$ and let 
\[
    P_a(t)(Z) = Z^d + \sum_{j=1}^d a_j(t)Z^{d-j}, \quad t \in I,
\]
be a monic hyperbolic polynomial with real analytic coefficients $a_j$.   
Then there exist real analytic functions $\la_i$, $1 \le i \le d$, 
defined maybe in a smaller neighborhood $I'$ of $0$ in $\R$, 
which parameterize the roots of $P_a$, i.e.,
\[
    P_a(t)(Z) = \prod_{i=1}^d (Z-\la_i(t)),\quad t \in I'.
\] 
\end{theorem}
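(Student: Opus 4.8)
The plan is to prove Theorem~\ref{thm:Rellich2} by induction on the degree $d$, using Puiseux's theorem to obtain analytic roots after a ramified change of parameter and then exploiting hyperbolicity to show no ramification is actually needed. The case $d=1$ is trivial since $\la_1 = -a_1$. For the inductive step, I would first reduce to Tschirnhausen form: replacing $Z$ by $Z - a_1(t)/d$ shifts all roots by the analytic function $a_1(t)/d$, so it suffices to parameterize the roots of $P_{\tilde a}(t)(Z) = Z^d + \sum_{j=2}^d \tilde a_j(t) Z^{d-j}$, which is still hyperbolic with real analytic coefficients and has the property that its roots sum to zero.

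If $P_{\tilde a}(0)$ has a root of multiplicity $<d$, I would like to factor $P_{\tilde a}(t)$ near $t=0$ into two monic polynomials of smaller degree with real analytic coefficients whose roots stay separated at $t=0$; this is the \emph{splitting} alluded to in the introduction, and it follows from an implicit-function/Hensel-type argument applied to the factorization of $P_{\tilde a}(0)$ into coprime factors, each grouping the roots with a common value. Both factors are then hyperbolic (their roots are among the real roots of $P_{\tilde a}$), have real analytic coefficients, and lower degree, so the induction hypothesis applies to each and we conclude. The remaining case is when $P_{\tilde a}(0)(Z) = Z^d$, i.e.\ all roots of $P_{\tilde a}(0)$ coincide (necessarily at $0$ by the Tschirnhausen normalization). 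Here I would apply Puiseux's theorem: the roots of $P_{\tilde a}$, viewed as a polynomial over the field of convergent Puiseux series in $t$, are given by convergent fractional power series $\la_i(t) = \sum_{k \ge 1} c_{i,k} t^{k/q}$ for some common ramification index $q \in \N_{\ge 1}$; more precisely there is a single substitution $t = s^q$ after which $P_{\tilde a}(s^q)$ factors completely into linear factors $Z - \mu_i(s)$ with $\mu_i$ real analytic in $s$ near $0$.

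The key point — and the main obstacle — is to show that in the hyperbolic case one can always take $q=1$, so that no ramification occurs and the $\la_i$ are genuine analytic functions of $t$ rather than of $s=t^{1/q}$. The argument: after the substitution $t = s^q$, the roots $\mu_i(s)$ are real whenever $s^q \in I$ with $s$ real, i.e.\ for $s$ in a real neighborhood of $0$ (choosing the real branch), because $P_{\tilde a}(s^q)$ is then hyperbolic. Suppose some $\mu_i$ is not a power series in $s^q$; let $q' \mid q$ be the smallest period, so $\mu_i(s)$ genuinely involves a primitive $q'$-th root of unity $\ze$ under $s \mapsto \ze s$, with $q' \ge 2$. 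Writing $\mu_i(s) = \sum_{k \ge m} c_k s^k$ with the first exponent $m$ not divisible by $q'$ and $c_m \ne 0$, one compares $\mu_i(s)$ for $s$ real positive versus $s$ replaced by $\ze s$: both $\mu_i(s)$ and $\mu_i(\ze s)$ must be real (they are both roots of the hyperbolic polynomial $P_{\tilde a}(s^q) = P_{\tilde a}((\ze s)^q)$), yet their leading terms are $c_m s^m$ and $c_m \ze^m s^m$ with $\ze^m \notin \R$ when $q' \ge 3$, or $\ze^m = -1$ forcing $c_m \in i\R$ when $q'=2$ — in either case one derives that $c_m$ cannot be a nonzero real number unless $\mu_i(s)$ and $\mu_i(\ze s)$ have the same leading term, contradicting minimality of $q'$. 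Iterating on the higher-order terms (or, cleanly, applying this to $\mu_i - (\text{already-matched terms})$) shows $\mu_i$ is a power series in $s^{q'}$ after all, hence $q' = 1$. Therefore every $\mu_i(s)$ is a power series in $t = s^q$ (once $q'=1$), giving real analytic roots $\la_i(t)$ on a neighborhood $I'$ of $0$, and the product formula $P_a(t)(Z) = \prod_{i=1}^d (Z - \la_i(t))$ holds by construction (undoing the Tschirnhausen shift). I expect the bookkeeping in this reality-forces-no-ramification step — correctly handling all the fractional exponents and the case $q'=2$ versus $q' \ge 3$ — to be the delicate part; the splitting reduction and the invocation of Puiseux are comparatively routine.
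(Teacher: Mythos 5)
Your overall route (Tschirnhausen reduction, splitting off coprime factors when $P_{\tilde a}(0)\ne Z^d$, and Puiseux plus a reality argument in the totally degenerate case) is essentially Rellich's original Puiseux-based proof, which the paper only sketches; the paper's own detailed proof is different (it avoids Puiseux, rescaling by an analytic square root of $-\tilde a_2$ and splitting inside the compact set $\Hyp_T^0(d)$). The splitting and Puiseux parts of your plan are fine, but the step you yourself single out as delicate --- ``reality forces no ramification'' --- is flawed as written. You only compare $\mu_i(s)$ with $\mu_i(\zeta s)$ for $\zeta$ a root of unity with $\zeta^q=1$, so $(\zeta s)^q=s^q$: every reality statement you invoke concerns roots of $P_{\tilde a}(t)$ at the \emph{same} parameter value $t=s^q$, which for even $q$ is always $\ge 0$. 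With such data the conclusion is simply false: for $P(t)(Z)=Z^2-t$ (hyperbolic for $t\ge 0$ only) one has $q=2$, $\mu_\pm(s)=\pm s$, and both $\mu_\pm(s)$ and $\mu_\pm(-s)$ are real for real $s$, yet the roots $\pm\sqrt t$ are not analytic. Concretely, in your case $\zeta^m=-1$ the two constraints are $c_m\in\R$ and $c_m\zeta^m=-c_m\in\R$, which are satisfied by any real $c_m$; the claim that this ``forces $c_m\in i\R$'' has no justification, and the same problem occurs whenever $q'$ is even and $m\equiv q'/2 \pmod{q'}$, which is exactly the half-integer-exponent case at the heart of the matter. (Also, for $q'\ge 3$ it is not true in general that $\zeta^m\notin\R$.)

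The missing ingredient is hyperbolicity at \emph{negative} $t$, i.e.\ that $I$ is a two-sided neighborhood of $0$. The clean repair, which is precisely Rellich's argument as recalled in the paper (``$\eta_i(s)$ is real whenever $s^l$ is real''), is to take $\omega=e^{i\pi/q}$, a primitive $2q$-th root of unity: from $P_{\tilde a}(s^q)(Z)=\prod_i(Z-\mu_i(s))$ one gets $P_{\tilde a}(-s^q)(Z)=\prod_i(Z-\mu_i(\omega s))$, and since $-s^q\in I$ is real, $\mu_i(\omega s)\in\R$ for small real $s$. Then reality on the real axis gives $c_k\in\R$, reality along the ray $\omega\R$ gives $c_k\omega^k\in\R$, and since $\omega^k\in\R$ iff $q\mid k$, all exponents not divisible by $q$ vanish; no case distinction or iteration on ``already-matched terms'' is needed. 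With that replacement your proof is correct (and the preliminary splitting step, while harmless, becomes redundant, since the corrected Puiseux argument handles arbitrary multiplicity at $t=0$ directly).
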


Actually, Rellich states this result for complex analytic functions that have real values for real $t$, 
see Rellich \cite[Hilfssatz II]{Rellich37}.  Again, for real $t$, it is easy to see that it holds globally, that is for an arbitrary $I$ and with $I'=I$.  

Rellich's proof of Theorem \ref{thm:Rellich2} is based on Puiseux's theorem.  Its idea goes as follows. Let $\lambda_i (t)$ be the complex roots of $P_a (t)$.  By Puiseux's theorem, there is $l\in \N$ such that $\eta_i(s) = \la_i (s^l)$ are all complex analytic, but also, by the assumption, every $\eta_i(s)$ is real whenever $s^l$ is real.  But this means that $\eta_i$, as a power series, depends only on $s^l$.  Indeed, suppose this is not true and let $c_k s^k$ be the lowest degree term in the expansion of $\eta_i$ such that $l$ does not divide $k$.
Then $\eta_i$ can not be real for all $s$ such that $s^l$ is real. 
(Rellich gives a slightly different argument still based on Puiseux's theorem in  \cite[pages 30-31]{Rellich69}.) 

Then to show Theorem \ref{thm:Rellich1} Rellich, for each analytic eigenvalue $\la (t)$ of $A(t)$, solves the equation $A(t)v = \la (t) v$ using formulas involving the minors of the matrix $A(t)$.  We refer the reader to Rellich's paper for this elementary argument.

\begin{remark}\label{rem:normal}
It was observed by Rainer \cite{RainerN} that Theorem \ref{thm:Rellich1} holds as well for normal matrices.  
This shows, surprisingly, that it is not the hyperbolicity of the characteristic polynomial of $A(t)$ that is 
important but the fact that we can diagonalize the matrices in orthonormal bases, but see also Example \ref{ex:3x3notnormal}. 
This was made even more transparent in the proof for normal matrices given in \cite{ParusinskiRondMatr}, 
where analytic diagonalizibility is showed directly without proving first the analyticity of the eigenvalues. See also Section \ref{ssec:normal}. 
\end{remark}

\subsection{A proof of Theorem \ref{thm:Rellich2}} \label{sec:Rellichproof}
We present here a different proof based on the splitting principle.

\subsubsection{Tschirnhausen form} \label{sec:Tschirnhausen}

Every monic real polynomial
\[
    P_a(Z) = Z^d + \sum_{j=1}^d a_j Z^{d-j}
\]
of degree $d$ can be identified with the point $a = (a_1,\ldots,a_d) \in \R^d$.
We say that $P_a$ is in \emph{Tschirnhausen form}\index{Tschirnhausen form} if $a_1 =0$. 
Every $P_a$ can be put in Tschirnhausen form by the substitution
\[
    P_{\tilde a}(Z) = P_a(Z - \tfrac{a_1}d) = Z^d + \sum_{j=2}^d \tilde a_j Z^{d-j}.
\]
We call it \emph{Tschirnhausen transformation}\index{Tschirnhausen transformation}.
Note that 
\begin{equation} \label{eq:Tschirn}
    \tilde a_j  = \sum_{i=0}^j C_i a_i a_1^{j-i}, \quad 2 \le j \le d,
\end{equation}
where the $C_i$ are universal constants and $a_0=1$.
For a polynomial $P_{\tilde a}$ in Tschirnhausen form we have 
\begin{equation*}
    -2 \tilde a_2 = \la_1^2 + \cdots + \la_d^2,
\end{equation*}
where $\la_1,\ldots,\la_d$ are the roots of $P_{\tilde a}$ (with multiplicities), 
consequently, for a hyperbolic polynomial, 
\begin{equation*}
    \tilde a_2 \le 0.
\end{equation*}
Recall that the coefficients (up to their sign) are the elementary symmetric polynomials in the roots, by Vieta's formulas.

\begin{lemma} \label{lem:dominant}
    The coefficients of a hyperbolic polynomial $P_{\tilde a}$ in Tschirnhausen form satisfy
    \begin{equation}
        |\tilde a_j|^{1/j} \le \sqrt 2\, |\tilde a_2|^{1/2}, \quad j = 1,\ldots,d.
    \end{equation}
\end{lemma}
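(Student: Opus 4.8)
The plan is to exploit the two identities already recorded: the roots $\la_1,\dots,\la_d$ of the hyperbolic polynomial $P_{\tilde a}$ are real, they satisfy $\sum_i \la_i = 0$ (Tschirnhausen form), and $\sum_i \la_i^2 = -2\tilde a_2$. The coefficient $\tilde a_j$ is, up to sign, the $j$-th elementary symmetric polynomial $e_j(\la_1,\dots,\la_d)$, so I want to bound $|e_j|$ by a power of $\sum_i \la_i^2$. The natural quantity to pass through is the sup-norm (or the $\ell^2$-norm) of the root vector: set $R := \max_i |\la_i|$. Then on the one hand $R^2 \le \sum_i \la_i^2 = -2\tilde a_2 = 2|\tilde a_2|$, so $R \le \sqrt{2}\,|\tilde a_2|^{1/2}$; on the other hand each elementary symmetric polynomial satisfies the crude estimate $|e_j(\la)| \le \binom{d}{j} R^j$. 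Combining gives $|\tilde a_j|^{1/j} \le \binom{d}{j}^{1/j} R \le \binom{d}{j}^{1/j}\sqrt 2\,|\tilde a_2|^{1/2}$, which is the claimed inequality but with an extra combinatorial factor $\binom{d}{j}^{1/j}\ge 1$.

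So the crude bound is off by the binomial factor, and the real content of the lemma is that this factor can be removed — i.e., that $|e_j(\la)| \le R^j$ whenever $\sum_i \la_i = 0$. Equivalently, I must show that among all real vectors with a prescribed sup-norm $R$ and zero sum, the maximum of $|e_j|$ is attained at a configuration where $|e_j| \le R^j$; in fact one expects equality $|e_j| = R^j$ to be approached by vectors of the shape $(R,-R,R,-R,\dots)$ (or small perturbations thereof, to fix the zero-sum constraint), where the symmetric function factors into a product of terms $\la_i^2 = R^2$ after suitable pairing. The cleanest route is probably this: reduce to the extremal problem $\max |e_j(\la)|$ subject to $\|\la\|_\infty \le R$ and $\sum \la_i = 0$; argue (by a convexity/vertex argument, since $e_j$ is multilinear in each coordinate) that the maximum is attained with each $\la_i \in \{+R, -R\}$ except possibly one coordinate forced off the extreme by the zero-sum constraint; and then evaluate $e_j$ on such $\pm R$ vectors directly, checking $|e_j| \le R^j$ by an elementary estimate (e.g. writing $P_{\tilde a}(Z) = \prod (Z-\la_i)$ and noting the product of a $+R$ factor with a $-R$ factor is $Z^2 - R^2$, whose coefficients are dominated in the relevant sense). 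The zero-sum constraint is what makes the bound nontrivial — without it, $\la = (R,\dots,R)$ gives $|e_j| = \binom{d}{j}R^j$, so the constraint is essential and must be used.

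I expect the main obstacle to be making the extremal/vertex argument rigorous while carrying the zero-sum constraint along, since $e_j$ is not convex but only multi-affine, and the constraint set is a polytope slice; one has to be careful that pushing coordinates to $\pm R$ one at a time does not decrease $|e_j|$ and respects $\sum\la_i=0$ at the end. An alternative, possibly slicker, approach avoids the extremal problem entirely: use the elementary inequality (Newton/Maclaurin-type) relating $e_j$, $e_1=0$, and $e_2$. Since $e_1 = 0$, Newton's identity gives $p_2 = e_1^2 - 2e_2 = -2e_2$, consistent with $-2\tilde a_2 = \sum\la_i^2$; and Maclaurin's inequalities $\binom{d}{j}^{-1}|e_j| \le (\binom{d}{2}^{-1}|e_2|)^{j/2}$ hold for vectors with all entries of one sign, but not in general — so one would need the version valid under the zero-sum hypothesis, which again routes back to the same combinatorial core. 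Either way the heart of the matter is the constant $\sqrt 2$, and I would aim to pin it down by testing the extremal configurations $(R,-R,\dots)$ explicitly and then proving no configuration does better.
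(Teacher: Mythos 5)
Your reduction to the extremal claim ``$|e_j(\la)|\le R^j$ whenever $\sum_i\la_i=0$, where $R=\|\la\|_\infty$'' is the heart of your plan, and that claim is false. Take $d=4$ and $\la=(R,R,-R,-R)$ (a permutation of your own proposed extremal configuration $(R,-R,R,-R,\dots)$): the sum is zero, yet $e_2(\la)=-2R^2$, so $|e_2|=2R^2>R^2$. Your heuristic that pairing a $+R$ root with a $-R$ root gives factors $Z^2-R^2$ ``whose coefficients are dominated in the relevant sense'' breaks down as soon as you multiply two such factors: $(Z^2-R^2)^2=Z^4-2R^2Z^2+R^4$ already has a middle coefficient of size $2R^2$. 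So no vertex/multi-affinity argument can rescue the sup-norm bound; the statement you would be trying to prove is simply stronger than the lemma and wrong. Note that the lemma itself is consistent with this example because $\sqrt2\,|\tilde a_2|^{1/2}=\bigl(\sum_i\la_i^2\bigr)^{1/2}=\|\la\|_2$, i.e.\ the lemma is exactly the bound $|e_j(\la)|\le\|\la\|_2^{\,j}$ under $e_1=0$ --- a bound by the $\ell^2$-norm of the roots, not the $\ell^\infty$-norm, and the gap between the two is precisely where your argument would fail.

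Your parenthetical ``alternative'' was in fact the right road, and it does not route back to any combinatorial extremal problem. The paper's proof is a short induction via Newton's identities: with $s_k=\sum_i\la_i^k$ one has $|s_k|^{1/k}\le\|\la\|_k\le\|\la\|_2=|s_2|^{1/2}$ for $k\ge2$ by monotonicity of $p$-norms, and then, since $s_1=\tilde a_1=0$, the identity $j\si_j=\sum_{i=1}^j(-1)^{i-1}\si_{j-i}s_i$ gives
\[
|\tilde a_j|\le\frac1j\sum_{i=2}^{j}|\tilde a_{j-i}|\,|s_2|^{i/2},
\]
and the inductive hypothesis $|\tilde a_{j-i}|\le|s_2|^{(j-i)/2}$ (recall $|s_2|=2|\tilde a_2|$) yields $|\tilde a_j|\le\frac{j-1}{j}|s_2|^{j/2}\le 2^{j/2}|\tilde a_2|^{j/2}$. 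No extremal configuration or Maclaurin-type inequality for mixed signs is needed; the zero-sum hypothesis enters only through the vanishing of the $s_1$ term. If you want to salvage your approach, you must replace $\|\la\|_\infty$ by $\|\la\|_2$ throughout and then prove $|e_j|\le\|\la\|_2^{\,j}$ under $e_1=0$ directly, which is essentially the paper's inequality anyway.
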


\begin{proof}
    Let $s_k := \la_1^k + \cdots + \la_d^k$, $k \ge 1$, be the Newton polynomials\index{Newton polynomials}\index{polynomial!Newton} in $\la_1,\ldots,\la_d$.
    We claim that 
    \begin{equation} \label{eq:Newtondominant}
        |s_k|^{1/k} \le  |s_2|^{1/2}, \quad k = 2,\ldots,d.
    \end{equation}
    Setting $\la = (\la_1,\ldots,\la_d)$, we 
    have, for $k\ge 2$,
    \[
        |s_k|^{1/k} \le \|\la\|_k \le \|\la\|_2 = |s_2|^{1/2},
    \]
    by a well-known relation between the $p$-norms.
    The Newton identities\index{Newton identities} between the Newton polynomials and the elementary symmetric polynomials $\si_k$,\index{polynomial!elementary symmetric}
    \[
        j \si_j = \sum_{i=1}^j (-1)^{i-1} \si_{j-i} s_i, \quad d \ge j \ge 1,
    \]
    imply
    \[
        |\tilde a_j| \le \frac{1}{j} \sum_{i=2}^j |\tilde a_{j-i}| |s_i| 
        \le \frac{1}{j}  \sum_{i=2}^j |\tilde a_{j-i}| |s_2|^{i/2}.
    \]
    By \eqref{eq:Newtondominant}, it is now 
    easy to conclude the statement using induction.
\end{proof}

\subsubsection{Splitting} \label{sec:splittingRellich}\index{splitting}

The following well-known lemma can be found for instance in \cite{BM1990}.  

\begin{lemma} \label{lem:splitting}
    Let $P_a = P_b P_c$, where $P_b$ and $P_c$ are monic real polynomials without common (complex) root.
    Then 
    we have $P=P_{b(P)}P_{c(P)}$ for analytic mappings $P \mapsto b(P) \in \R^{\deg P_b}$ and $P \mapsto c(P) \in \R^{\deg P_c}$, 
    defined for $P$ near $P_a$ in $\R^{\deg P_a}$, with the given initial values.
\end{lemma}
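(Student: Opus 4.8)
The plan is to prove Lemma \ref{lem:splitting} by an application of the implicit function theorem, viewing the factorization $P_a = P_b P_c$ as a smooth (in fact polynomial, hence real-analytic) map and checking that its derivative at the initial data is invertible precisely when $P_b$ and $P_c$ have no common complex root. First I would set up coordinates: write $e = \deg P_b$ and $f = \deg P_c$, so $d = e + f$, and consider the multiplication map
\[
    \mu \colon \R^e \times \R^f \to \R^d, \qquad \mu(b,c) = \text{coefficients of } P_b \cdot P_c,
\]
where $P_b(Z) = Z^e + \sum_{j=1}^e b_j Z^{e-j}$ and similarly for $P_c$. This $\mu$ is a polynomial map in $(b,c)$, and by hypothesis $\mu(b_0, c_0) = a$ for the given initial data $(b_0, c_0)$. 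To produce the claimed analytic mappings $P \mapsto b(P)$, $P \mapsto c(P)$ defined near $P_a$ with $\mu(b(P), c(P)) = P$ and the prescribed initial values, it suffices to show that $d\mu_{(b_0,c_0)}$ is a linear isomorphism of $\R^d$; then the real-analytic implicit (inverse) function theorem gives a local real-analytic inverse.

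The key step is the computation and nondegeneracy of the differential. Differentiating $\mu$ along $(\beta, \gamma) \in \R^e \times \R^f$ (identifying $\beta$ with a polynomial $Q_\beta$ of degree $< e$ and $\gamma$ with $Q_\gamma$ of degree $< f$), one gets
\[
    d\mu_{(b_0,c_0)}(\beta,\gamma) = Q_\beta \cdot P_{c_0} + P_{b_0} \cdot Q_\gamma,
\]
an element of the space of polynomials of degree $< d$. So the assertion $d\mu_{(b_0,c_0)}$ is an isomorphism is exactly the statement that every polynomial of degree $< d$ can be written uniquely as $Q_\beta P_{c_0} + Q_\gamma P_{b_0}$ with $\deg Q_\beta < e$ and $\deg Q_\gamma < f$. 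Since source and target both have dimension $d$, injectivity suffices. If $Q_\beta P_{c_0} + Q_\gamma P_{b_0} = 0$, then $P_{b_0} \mid Q_\beta P_{c_0}$; because $P_{b_0}$ and $P_{c_0}$ are coprime in $\C[Z]$ (no common complex root, and they are separable? — no, coprimeness as polynomials is what matters, and $\gcd(P_{b_0},P_{c_0}) = 1$ in $\C[Z]$, equivalently in $\R[Z]$, is exactly the hypothesis), it follows that $P_{b_0} \mid Q_\beta$; but $\deg Q_\beta < e = \deg P_{b_0}$ forces $Q_\beta = 0$, and then $Q_\gamma = 0$ as well. This is precisely the Bézout/coprimeness argument, and it is the crux: the hypothesis "no common complex root" enters exactly here to guarantee $\gcd(P_{b_0}, P_{c_0}) = 1$.

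Having established that $d\mu_{(b_0,c_0)}$ is invertible, the real-analytic inverse function theorem yields an open neighborhood of $P_a = \mu(b_0,c_0)$ in $\R^d$ on which $\mu$ admits a real-analytic local inverse $P \mapsto (b(P), c(P))$ with $(b(P_a), c(P_a)) = (b_0, c_0)$; setting $b(P)$ and $c(P)$ to be the two components gives the conclusion. I do not expect genuine obstacles here: the only point requiring care is the passage from "no common complex root" to genuine coprimeness (ensuring the gcd is a unit rather than merely that the polynomials are not equal), and checking that coprimeness over $\C[Z]$ and over $\R[Z]$ agree — both standard facts about PIDs. One could alternatively phrase the proof via the Chinese Remainder Theorem for $\R[Z]/(P_a) \cong \R[Z]/(P_{b_0}) \times \R[Z]/(P_{c_0})$, which makes the isomorphism statement transparent, but the direct differential computation is the most economical route and aligns with how the lemma is used (splitting families of polynomials analytically). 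If the paper wants the more refined statement that $b(P)$, $c(P)$ are in fact polynomial (not merely analytic) in $P$, that is false in general, so the analytic version is the right one.
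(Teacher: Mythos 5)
Your proof is correct and follows essentially the same route as the paper: both apply the (real-analytic) inverse function theorem to the coefficient multiplication map $\mu(b,c)$, the only cosmetic difference being that the paper quotes that the Jacobian determinant equals the resultant of $P_b$ and $P_c$ (nonzero by the no-common-root hypothesis), while you verify the invertibility of $d\mu$ directly via the B\'ezout/coprimeness argument --- two equivalent formulations of the same nondegeneracy.
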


\begin{proof}
    The product $P_a = P_a P_c$ defines on the coefficients a polynomial map $\vh$ such that $a = \vh(b,c)$.
    Its Jacobian determinant $\det \on{Jac}_{\vh}(b,c)$ equals the resultant of $P_b$ and $P_c$ which is nonzero, by assumption.
    Thus $\vh$ can be inverted locally, by the inverse function theorem.
\end{proof}

We denote by $\Hyp(d)$\index{Hypd@$\Hyp(d)$} the space of monic hyperbolic polynomials of degree $d$.
Let $\Hyp_T(d)$\index{HypTd@$\Hyp_T(d)$} be the space of monic hyperbolic polynomials of degree $d$ in Tschirnhausen form 
and let $\Hyp_T^0(d)$\index{HypT0d@$\Hyp_T^0(d)$} be the subspace of polynomials $P_{\tilde a}$ with $\tilde a_2 = -1$. 
By Lemma~\ref{lem:dominant}, $\Hyp_T^0(d)$ is compact. 
Every polynomial in $\Hyp_T^0(d)$ can be written as a product of two monic real polynomials of positive degree without common root 
(indeed, the sum of the roots vanishes while the sum of their squares is $2$).

Let us now prove Theorem \ref{thm:Rellich2}.
The case $\tilde a_2=0$ is trivial, since then all $\tilde a_j=0$.  Thus let $P_{\tilde a} \in \Hyp_T(d)$ be such that $\tilde a_2 \ne 0$.  Since $- \tilde a_2(t)$ is positive,  
it is of the form $- \tilde a_2(t)= t^{2k} u(t)$, $u(0)>0$.  Choose one of the
two analytic roots of $- \tilde a_2(t)$, 
$\pm  t^{k} u^{\frac 1 2}(t)$, and denote it by $\ansqrt (t)$.   
Then the polynomial
\[
    Q_{\ul a}(Z):= \ansqrt^{-d} P_{\tilde a} (\ansqrt Z)  = 
    Z^d - Z^{d-2} + \sum_{j=3}^d \ansqrt^{-j} \tilde a_j Z^{d-j}
\]    
belongs to $\Hyp_T^0(d)$.
By Lemma \ref{lem:splitting}, 
we have 
\[
    Q_{\ul a} = Q_{\ul b} Q_{\ul c},
\]
on some open ball $B(P_{\tilde a},r) \subseteq \R^d$ 
such that $\deg Q_{\ul b} <d$, $\deg Q_{\ul c} <d$, and
\[
    \ul b_i = \ps_i(\ansqrt^{-3}\tilde a_3,\ldots,\ansqrt^{-d}\tilde a_d), \quad i = 1,\dots, \deg Q_{\ul b},
\]
where $\ps_i$ are real analytic functions; likewise for $\ul c_i$. 
Note that, by Lemma \ref{lem:dominant}, $\ansqrt^{-i}\tilde a_i$ are real analytic and hence so are $b_i$ and $c_i$.  
If $Q_{\ul a}$ is hyperbolic, then also $Q_{\ul b}$ and $Q_{\ul c}$ are hyperbolic.
This induces a splitting
\begin{align}\label{eq:spliting}
    P_{\tilde a} = P_b P_c, \quad \text{ on } B(P_{\tilde a},r), 
\end{align}
where 
\begin{equation*} 
    b_i = \ansqrt^{i} \ps_i(\ansqrt^{-3}\tilde a_3,\ldots,\ansqrt^{-d}\tilde a_d), \quad i = 1,\dots, \deg P_{b} , 
\end{equation*}
are real analytic functions of $t$  and likewise for $c_i$. 
To conclude we proceed by induction on $\deg P_a$. 
\qed

\begin{remark}
    At first the above proof looks more complicated than
    Rellich's original argument.  This is because 
Rellich uses Puiseux's theorem, that can be proven itself using the splitting.  
But the splitting allows to work with coefficients $a_i$ that are not necessarily real analytic, but 
 $C^p$ or $C^\infty$ for instance, and that cannot be easily complexified.  It can also be used in the 
multiparameter case.   
\end{remark}

It is natural to ask what happens if the entries of a Hermitian matrix are $C^\infty$ functions of $t \in \R$.
Then, in general, there is a drastic drop of regularity for the eigenvalues (this will be discussed in detail in Section \ref{sec:Hermitian} and Section \ref{sec:normal}), 
while the eigenvectors may not even admit a continuous choice, as seen in the following example, see also Example \ref{ex:defnormal}.  

\begin{example}[{\cite[\S 2.2]{Rellich37}}]
   The $C^\infty$ curve of symmetric matrices 
   \[
       A(t) := e^{-1/t^2} 
       \begin{pmatrix}
           \cos t^{-1} & \sin t^{-1}
       \\
           \sin t^{-1} & - \cos t^{-1}
       \end{pmatrix}, \quad t \ne 0,
   \]
   and $A(0):=0$
   has the $C^\infty$ eigenvalues $\pm e^{-1/t^2}$, but the normalized eigenvectors cannot be continuous at $t=0$. 
\end{example}

Under the additional assumption that no two continuous eigenvalues meet to infinite order of flatness,  Rellich's theorem \ref{thm:Rellich1} 
still holds in the $C^\infty$ category; see \cite[Theorem 7.6]{AKLM98} and \cite{RainerN} for normal matrices.

\subsection{Multiparameter case}

Both Rellich's theorems fail in the multiparameter case.

\begin{example}[{Rellich \cite[{\S 2}, 1]{Rellich37}}]
The eigenvalues of the symmetric matrix 
    \[
        A(x_1,x_2) = 
        \begin{pmatrix}
            1+2x_1 & x_1+x_2
            \\
            x_1+x_2  & 1+2x_2
        \end{pmatrix},
    \]
$\lambda_\pm=1+ x_1+x_2 \pm \sqrt {2(x_1^2 +x_2^2)}$, are not analytic in any neighborhood of $(0,0)$. 
\end{example}

In \cite{KurdykaPaunescu2008} K. Kurdyka and L. P\u aunescu proposed multiparameter versions of Rellich's theorems.  The following is a multiparameter generalization of Theorem \ref{thm:Rellich2}.

\begin{theorem}[{\cite[Theorems 4.1 and 5.8]{KurdykaPaunescu2008}}]\label{thm:KP1}
 Let $U \subseteq \R^n$ be open and 
\[
    P_a(x)(Z) = Z^d + \sum_{j=1}^d a_j(x)Z^{d-j}, \quad x\in U,
\]
be a monic hyperbolic polynomial with real analytic coefficients $a_j$.    
Then there exist locally Lipschitz functions $\la_i : U \to \R$, $i= 1, \ldots , d$,
which parameterize the roots of $P_a$.  

Moreover, there exists a modification $\sigma : W \to U$ (by a locally finite composition of  
blowings-up with global smooth centers) such that the roots of $P_a(\sigma (w))(Z)$ can be parameterized, locally on $W$, by real analytic functions.  
\end{theorem}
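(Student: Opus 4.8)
The plan is to establish the two assertions separately. For the first, locally Lipschitz roots, I would reduce to one variable: the increasingly ordered roots $\lambda_1\le\cdots\le\lambda_d$ form a continuous system on $U$, and restricting $P_a$ to a line segment $[x,x']\Subset U$ turns it into a one-parameter hyperbolic family with $C^{d-1,1}$ coefficients (real analytic functions are $C^{d-1,1}$ on relatively compact sets). Bronshtein's theorem with uniform bounds (Section~\ref{sec:Bronshteinproof}) then gives $|\lambda_i(x)-\lambda_i(x')|\le C\,|x-x'|$ with $C$ controlled by the $C^{d-1,1}$-norm of the coefficients near the segment, so each $\lambda_i$ is locally Lipschitz on $U$. (Alternatively the bound can be read off \emph{a posteriori} from the modification built for the second assertion.)

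For the modification I would imitate the splitting proof of Theorem~\ref{thm:Rellich2} step for step, the one new ingredient being resolution of singularities, which supplies the monomialization that in dimension one came for free. After a Tschirnhausen transformation assume $a_1\equiv 0$; recall $\tilde a_2\le 0$, and $\tilde a_2(x)=0$ forces $P_{\tilde a(x)}=Z^d$ by Lemma~\ref{lem:dominant}. By Hironaka's theorem there is a locally finite composition of blowings-up with global smooth centers $\sigma_0\colon W_0\to U$ after which $-\tilde a_2\circ\sigma_0$ is locally a monomial times a positive analytic unit; since $-\tilde a_2\ge 0$ the monomial has even exponents, hence a real analytic square root $\theta$ with $\theta^2=-\tilde a_2\circ\sigma_0$, and \emph{no fractional powers appear}. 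As in Section~\ref{sec:Rellichproof}, the rescaled polynomial
\[
  Q_{\ul a}(Z):=\theta^{-d}P_{\tilde a\circ\sigma_0}(\theta Z)=Z^d-Z^{d-2}+\sum_{j=3}^d(\theta^{-j}\tilde a_j\circ\sigma_0)\,Z^{d-j}
\]
has real analytic coefficients: by Lemma~\ref{lem:dominant} one has $|\tilde a_j\circ\sigma_0|\le C\,|\theta|^{j}$, and an analytic function bounded by a constant times $|w_1|^{e_1}\cdots|w_n|^{e_n}$ with integer exponents is divisible by $w_1^{e_1}\cdots w_n^{e_n}$. Thus $Q_{\ul a}$ is a real analytic family in the compact space $\Hyp_T^0(d)$, every member of which factors into two monic real polynomials of positive degree without common root (the sum of the roots vanishes while the sum of their squares is $2$). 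Lemma~\ref{lem:splitting} gives, locally on $W_0$, an analytic splitting $Q_{\ul a}=Q_{\ul b}Q_{\ul c}$ into hyperbolic factors of smaller degree, hence, undoing the rescaling, $P_{\tilde a\circ\sigma_0}=P_bP_c$ with $P_b,P_c$ monic hyperbolic of degree $<d$ and real analytic coefficients. Recursing on each factor---Tschirnhausen, resolve its degree-$2$ coefficient, rescale, split---the degree drops each time, the base case $d=1$ and the trivial case $\tilde a_2\equiv 0$ give the roots with their multiplicities directly, and one obtains, locally on the composite space $W$, a real analytic parameterization of the roots of $P_a(\sigma(w))(Z)$.

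The step I expect to be the main obstacle is promoting this \emph{a priori local} construction to a single globally defined $\sigma\colon W\to U$ built from locally finitely many blowings-up: the factor split off at each stage, the sign of $\theta$, and the later blowup centers are all local choices that must be glued compatibly across charts. Two ways to handle this come to mind. One is careful bookkeeping, exploiting the canonicity of the resolution procedure to make the local data cohere. The other is to monomialize, once and globally, the discriminant $\discr_{(P_a)_{\on{red}}}$ of the square-free reduction (the multiplicities of its irreducible factors over the coefficient ring are constant and can be reattached at the end), so that $P_a\circ\sigma$ becomes quasi-ordinary and the Abhyankar--Jung theorem applies locally, giving the roots as convergent Puiseux series. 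In this route the crux shifts to a \emph{descent}: one must show that hyperbolicity forbids genuine fractional exponents, so that the roots are honest power series. The one-variable instance is exactly the slick argument in the proof of Theorem~\ref{thm:Rellich2} (a convergent series in $s$ that is real whenever $s^{l}$ is real is a series in $s^{l}$), and the several-variable case reduces to it by restricting to sufficiently general real analytic arcs through the base point---a Bertini-type genericity argument over $\R$, with some care about the parity of the ramification exponents, possibly after a few more blowings-up. It is this descent, respectively the globalizing patch, rather than the splitting machinery, that carries the weight of the Kurdyka--P\u aunescu argument.
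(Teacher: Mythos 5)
Your treatment of the first claim is fine and is the same as the paper's (it is exactly an appeal to Bronshtein's theorem \ref{thm:Bronshtein}, whose multiparameter version is quoted right after it), and the engine of your second part --- Tschirnhausen, an analytic square root $\th$ of $-\tilde a_2$, rescaling into $\Hyp_T^0(d)$, Lemma \ref{lem:splitting}, induction on the degree, with the divisibility of $\tilde a_j$ by $\th^j$ extracted from Lemma \ref{lem:dominant} --- is the same mechanism the paper uses. The genuine gap is the one you flag yourself and do not close: your primary construction monomializes only $-\tilde a_2$, so at every later stage of the recursion you must resolve the second Tschirnhausen coefficient of a factor $P_b$ that exists only on a small chart of the previous modification and is not canonically defined (the grouping of roots that produces the splitting changes from point to point, so ``the second coefficient of the factor'' is not a global function on $W_0$). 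Consequently the towers of blowings-up you attach to different charts and different local factors need not patch, whereas the theorem demands a single modification $\si:W\to U$ by a locally finite composition of blowings-up with global smooth centers; ``canonicity of the resolution procedure'' does not obviously help, because the input of each later resolution is itself a non-canonical local datum. Your fallback route (monomialize $\discr_{P_{\on{red}}}$ and apply Abhyankar--Jung) is Kurdyka--P\u aunescu's original argument, but there the whole weight sits on the step you only sketch, namely that hyperbolicity excludes genuine fractional exponents (this is \cite[Proposition 5.4]{KurdykaPaunescu2008}); the arc/genericity reduction you propose is not carried out.

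The paper's proof closes exactly this gap by combining your two options: apply Hironaka \emph{once}, globally, to make $\discr_{P_{\on{red}}}$ normal crossings (this is where \cite[Main Theorem II''(N)]{Hironaka:1964} supplies global smooth centers and local finiteness), and then run your splitting induction with \emph{no further blowings-up}. The point you are missing is that quasi-ordinarity is automatically inherited by the split factors: since $\discr_{P_{\on{red}}}=\discr_{(P_b)_{\on{red}}}\,\discr_{(P_c)_{\on{red}}}\,\on{Res}\big((P_b)_{\on{red}},(P_c)_{\on{red}}\big)^2$ and a divisor of a monomial times a unit in the local ring of convergent power series is again a monomial times a unit, every factor arising in the induction is quasi-ordinary. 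Then Theorem \ref{thm:towardsAJ} together with Lemma \ref{lem:dominant} shows that at each stage the ideal $(\tilde a_i^{d!/i})$ is generated by $\tilde a_2^{d!/2}$, a monomial times a unit, so $-\tilde a_2$ has even monomial exponents and an analytic square root $\th$ exists locally at every stage --- the fractional-power problem of Abhyankar--Jung never appears. The remaining non-canonical choices (the sign of $\th$, the local splitting) are harmless because the analytic parameterization is only asserted locally on $W$. So your ingredients are the right ones; the missing idea is to resolve the discriminant of the square-free reduction once instead of resolving $-\tilde a_2$ chart by chart.
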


Since all roots of $P_a(x)$ are real, we can order them 
$$\la_1(x) \le \la_2(x) \le \cdots \le \la _d(x) $$ 
and the first claim of the above theorem implies that all 
such $\la_i$ are locally Lipschitz.  

\begin{example}[{\cite[Example 5.9]{KurdykaPaunescu2008}}]
Consider $P_a(x)(Z) = Z^ 2- (x_1^2 + x_2 ^2)$. The zero set of $P_a$ is the double cone. 
 Then both the upper and the lower nappes are graphs of Lipschitz functions. But note that if we restrict them to a line through the origin 
 these restrictions are not real analytic.  We have to interchange them while passing through the origin to get analytic roots 
 (that exist by
 Rellich's theorem).

If we blow up the origin in the real plane, then the roots become locally, but not globally, real analytic.  Indeed, 
in the chart $x_1=w_1, x_2 = w_1 w_2$ the roots are $\pm w_1 \sqrt{1+w_2^2}$.
Going along the exceptional divisor, 
the center circle of the Möbius band,  changes the sign of the root, i.e., interchanges them.    
\end{example}

To show the second claim of Theorem \ref{thm:KP1}, Kurdyka and P\u aunescu   use
resolution of singularities, splitting,  and the Abhyankar--Jung theorem.

\subsubsection{Abhyankar--Jung theorem}\label{sec:AJ}
Let $\K$ be a field and let 
\begin{align}
P(Z)=Z^d+a_1(X) Z^{d-1}+\cdots+a_d(X) \in \K[[X]][Z]
\end{align}
be a monic polynomial whose coefficients are formal power series in 
$X= (X_1, \ldots, X_n)$.   
Such a polynomial  $P$ is called \emph{quasi-ordinary}\index{quasi-ordinary polynomial}\index{polynomial!quasi-ordinary} if the  discriminant $\discr_P (X)$ 
equals $X_1^{\alpha_1} \cdots X_n^{\alpha_n} U(X)$, with  $\alpha_i \in \N$ and $U(0)\ne 0$.   

\begin{theorem}[Abhyankar--Jung theorem]\label{thm:AJ}\index{Abhyankar--Jung theorem}\index{theorem!Abhyankar--Jung}
Let $\K$ be an algebraically closed field of characteristic zero and let 
$P \in \K[[X]][Z]$ be a quasi-ordinary polynomial 
such that the  discriminant of $P$ is of the form  
$\discr_P (X)= X_1^{\alpha_1} \cdots X_r^{\alpha_r} U(X)$, where 
$U(0)\ne 0$, and $r\le n$.  
Then there is $q\in\N\setminus \{0\}$ such that $P(Z)$ has its roots in 
$\K[[X_1^{\frac{1}{q}},...,X_r^{\frac{1}{q}}, X_{r+1},...,X_n]]$.
\end{theorem}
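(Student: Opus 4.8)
The plan is to reduce, by induction on $n$, to the one-variable case (Puiseux's theorem), using the quasi-ordinary hypothesis to control where the ramification happens. First I would dispose of the case $r=0$: then $\discr_P(0)\neq 0$, so $P$ is separable modulo the maximal ideal, and Hensel's lemma (equivalently, the splitting principle of Lemma~\ref{lem:splitting}, applied over the complete local ring $\K[[X]]$) splits $P$ into linear factors over $\K[[X]]$ itself, so $q=1$ works. For the general case I may assume, after a Tschirnhausen transformation $Z \mapsto Z - a_1/d$, that $a_1 = 0$; this changes the discriminant only by a unit and does not affect the form of $\discr_P$. The base case of the induction is $n=1$ (so necessarily $r=1$), which is exactly Puiseux's theorem: every root of $P$ lies in $\K[[X_1^{1/q}]]$ for a suitable $q$, since $\K$ is algebraically closed of characteristic zero.

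For the inductive step, suppose the result holds in fewer variables. The idea is to first make a finite ramified base change $X_1 \mapsto X_1^{q}$ in the first variable only, chosen so that, after this substitution, $P$ acquires a root that is a power series (no fractional powers needed) in $X_1$ with coefficients in $\K[[X_1, X_2, \ldots, X_n]]$ — this is possible by treating $P$ as a polynomial over the ring $\K[[X_2,\ldots,X_n]]((X_1))$ of Laurent-type series and applying Puiseux's theorem in $X_1$ after the other variables are "frozen" at generic (or formally: working over the fraction field). More precisely: set $X_1 = T^N$ for $N$ large and divisible; the discriminant becomes $T^{N\alpha_1} X_2^{\alpha_2}\cdots X_r^{\alpha_r} U$, and one shows $P$ now factors as $P = P_0 \cdot P'$ where $P_0$ is linear (a distinguished root) and $P'$ is again quasi-ordinary of degree $d-1$ in the same variables. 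The factorization is obtained by another application of Hensel/splitting once one has exhibited a single root; then one applies the inductive hypothesis on the degree $d$ to $P'$, and collects a common $q$.

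The main obstacle — and the step that requires genuine care — is producing that first root after the $X_1$-ramification, i.e. showing that one can pass from "roots in an algebraic closure of the fraction field" to "roots that are honest power series in $X_1^{1/q}, \dots$". The clean way is a two-stage argument: (1) use the quasi-ordinary hypothesis to see that the branch locus of the covering defined by $P$ is contained in the normal-crossings divisor $\{X_1\cdots X_r = 0\}$, so that over the complement the covering is étale and one can invoke the structure of the fundamental group of $(\K^*)^r \times \K^{n-r}$, which is topologically generated by the loops around the coordinate hyperplanes; (2) conclude that pulling back by the Kummer cover $X_i \mapsto X_i^{q}$ ($i \le r$) trivializes the monodromy, hence the roots become single-valued, i.e. genuine power series, by a purity/extension argument (Abhyankar's lemma together with the fact that a bounded multivalued analytic function with normal-crossings branch locus, once made single-valued, extends across the divisor). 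Alternatively, one avoids topology and argues purely algebraically by repeatedly applying the one-variable splitting: the delicate point there is the \emph{uniformity} of the ramification index $q$ over all the roots and over the successive inductive steps, which is handled by taking $q$ to be a common multiple (bounded in terms of $d$ alone, e.g. $q \mid d!$). Either way, once a single power-series root is in hand over $\K[[X_1^{1/q},\ldots,X_r^{1/q},X_{r+1},\ldots,X_n]]$, division gives a quasi-ordinary polynomial of strictly smaller degree over the same ring, and the induction on $\deg P$ closes the argument.
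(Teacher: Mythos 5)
The reductive scheme you propose does not work as stated. Ramifying only the first variable and then splitting off a linear factor fails already for $P=Z^2-X_1X_2$: this is quasi-ordinary ($\discr_P=4X_1X_2$), yet after the substitution $X_1=T^N$ the polynomial $Z^2-T^NX_2$ has no root in $\K[[T,X_2]]$ for any $N$, since $X_2$ is not a square there; one must ramify all of $X_1,\ldots,X_r$ simultaneously, so the induction "ramify $X_1$, split off a distinguished root, pass to degree $d-1$" collapses at its first step. Likewise, "freezing" $X_2,\ldots,X_n$ and applying Puiseux in $X_1$ over $\K[[X_2,\ldots,X_n]]$ (or its fraction field) only yields roots whose coefficients lie in an algebraic closure of that fraction field; the assertion that these coefficients are in fact power series in $X_2,\ldots,X_n$ is precisely the content of the theorem, so this reduction begs the question.

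Of your two suggested repairs, the monodromy route is the classical Jung argument: it is fine for convergent series over $\K=\C$ (small polydisc, $\pi_1\cong\Z^r$, Riemann extension across the normal crossings divisor), but Theorem \ref{thm:AJ} concerns formal power series over an arbitrary algebraically closed field of characteristic zero, where "loops around the coordinate hyperplanes" and "bounded multivalued analytic functions" have no meaning; to run it you must invoke the computation of the tame local \'etale fundamental group of $\on{Spec}\K[[X]]$ minus $\{X_1\cdots X_r=0\}$ (Abhyankar's lemma, Grothendieck--Murre), a result essentially equivalent to the statement being proved, so as written the argument is either restricted to the complex-analytic case or circular. The "purely algebraic" alternative misidentifies the difficulty: the uniformity of $q$ (which indeed may be taken to divide $d!$) is not the delicate point. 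What is needed, after the Tschirnhausen transformation, is that one coefficient dominates the others in the precise sense of Theorem \ref{thm:towardsAJ}: the ideal $(a_i^{d!/i})_{i=2,\ldots,d}$ is principal and its generator is a monomial times a unit. Only after extracting a root of that monomial (this is where the Kummer substitution enters) and rescaling $Z$ does one obtain a polynomial whose reduction at $0$ has at least two distinct roots, so that Hensel's lemma/splitting applies and the induction on the degree closes. This is exactly the step the survey flags as the hard part of the splitting proof in \cite{ParusinskiRondA-J} (the survey itself does not reprove Theorem \ref{thm:AJ}, citing \cite{Ab}), and your sketch never addresses it.
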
 

The Abhyankar--Jung theorem can be understood as a multiparameter version of Puiseux's theorem. 
It has first been proven by Jung in 1908 for $n=2$ and $\K=\C$, cf. \cite{J}.  
 The first complete proof of Theorem \ref{thm:AJ} appeared in \cite{Ab}. 

The Abhyankar--Jung theorem still holds if $P$ has multiple factors 
(so its discriminant is identically equal to zero) but the discriminant of the square-free reduction $P_{\on{red}}$ 
of $P$ is of the form $X_1^{\alpha_1} \cdots X_r^{\alpha_r} U(X)$.

The Abhyankar--Jung theorem can be proven using splitting, as it is done in \cite{ParusinskiRondA-J}.  
But in order to apply the splitting one needs the following result that is much more difficult to prove, see 
\cite[Theorem 1.1 and Remark 1.4]{ParusinskiRondA-J}.

\begin{theorem} \label{thm:towardsAJ}
Let $\K$ be a (not necessarily algebraically closed) field of characteristic zero and let 
$P \in \K[[X]][Z]$ be a quasi-ordinary polynomial in Tschirnhausen form  (i.e. $a_1=0$).   
Then  the ideal $(a_i^{d!/i}(X))_{i=2,\ldots,d}$ is generated by one of $a_i^{d!/i}(X)$ and this generator equals a monomial in $X_1, \ldots, X_n$ times a unit.  
\end{theorem}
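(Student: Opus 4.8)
The plan is to reduce the statement to a study of the valuations of the coefficients along each coordinate hyperplane $\{X_i = 0\}$, using that quasi-ordinariness controls the discriminant as a monomial times a unit. First I would pass to the algebraic closure $\ol{\K}$ and fix $i \in \{1,\ldots,n\}$; I want to determine, for each $j$, the order $\on{ord}_{X_i} a_j$ of vanishing of $a_j$ along $X_i = 0$ (after setting all other $X_k$ generic, i.e., working over the field $\ol{\K}((X_k : k \ne i))$, so that $a_j$ becomes a one-variable power series in $X_i$). Since $P$ is quasi-ordinary, Puiseux's theorem (the one-variable Abhyankar--Jung, applied over the big field) gives roots $\la_1,\ldots,\la_d$ lying in $\ol{\K}((X_k:k\ne i))[[X_i^{1/q}]]$ for some $q$, and the discriminant being $X_i^{\al_i} \cdot(\text{unit})$ means $\sum_{k<l}\on{ord}_{X_i}(\la_k-\la_l)^2 = \al_i$, a finite number; in particular no two distinct roots agree to infinite order and each $\la_k$ has a well-defined $X_i$-order $\ga_k := \on{ord}_{X_i}\la_k \in \tfrac1q\Z_{\ge 0}$ (possibly $+\infty$ if $\la_k \equiv 0$, but that case only helps).

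The heart of the argument is then a convexity/Newton-polygon estimate: I claim that for every $j$,
\begin{equation*}
    \on{ord}_{X_i} a_j \;\ge\; \frac{j}{2}\,\on{ord}_{X_i} a_2,
\end{equation*}
or rather the sharper statement that $\on{ord}_{X_i}(a_j^{d!/i})$ is minimized, over $j=2,\ldots,d$, at some index $j_0$, and that this minimum is achieved with the monomial structure claimed. Because $P$ is in Tschirnhausen form, $a_1 = 0$ forces $\sum_k \ga_k \ge$ (something making the smallest two roots have comparable order), and Vieta's formulas give $a_j = (-1)^j e_j(\la)$, so $\on{ord}_{X_i} a_j \ge \min_{|S|=j}\sum_{k\in S}\ga_k$. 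The content of Theorem~\ref{thm:towardsAJ} is that this lower bound is the \emph{exact} order and that the ``cheapest'' way to build a monomial of total weight $d!$ using the $a_j$'s, namely $a_j^{d!/j}$, picks out a single generator; this is precisely the statement that the lower convex hull of the points $(j, \on{ord}_{X_i}a_j)$ in $\R^2$ is a single segment from $(0,0)$ (conceptually, $j=0$, $a_0=1$) through the relevant vertex, which is forced by the Tschirnhausen normalization together with the fact that the discriminant is a monomial. One runs this argument simultaneously for all $i$: the generator $a_{j_0}^{d!/j_0}$ with $j_0$ minimizing $\tfrac1j\on{ord}_{X_i}a_j$ could a priori depend on $i$, and reconciling these — showing one $j_0$ works for all coordinates at once, so that $a_{j_0}^{d!/j_0}$ divides every $a_j^{d!/j}$ as power series (not just coordinate-wise) — is where the real work lies.

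The main obstacle, as just indicated, is upgrading the coordinate-by-coordinate divisibility of orders to an honest divisibility of power series by a \emph{monomial} times a unit, uniformly in the coordinate. The one-variable Newton-polygon bound only yields inequalities of valuations along each $\{X_i=0\}$; to conclude that $a_{j_0}^{d!/j_0}$ is a monomial times a unit one needs that its zero set is contained in the coordinate-hyperplane arrangement, which again comes from quasi-ordinariness of $P$ (the roots, hence their differences, hence the discriminant, hence — via the explicit relation between $a_2$ and $\sum\la_k^2$ in Tschirnhausen form — the relevant $a_j$, are supported on $\bigcup\{X_i=0\}$). I would organize this by first treating $n=1$ (pure Newton polygon plus Tschirnhausen, essentially Lemma~\ref{lem:dominant}'s mechanism transported to valuations), then doing an induction on $n$ by specializing $X_n \mapsto 0$ and lifting, using at each stage that the specialized polynomial is still quasi-ordinary in Tschirnhausen form; the delicate point throughout is keeping the \emph{same} index $j_0$ across the induction, which one secures by choosing $j_0$ to minimize the slope $\tfrac1j\on{ord}_{X_i}a_j$ globally (i.e., for the generic-in-all-coordinates valuation) and then checking this choice remains optimal after each specialization.
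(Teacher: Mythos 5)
Your proposal targets a statement that the survey itself does not prove: Theorem~\ref{thm:towardsAJ} is quoted from \cite{ParusinskiRondA-J} with the explicit remark that it is ``much more difficult to prove'', so there is no in-paper argument to compare with, and your sketch has to stand on its own. It does not, for reasons you partly flag yourself. First, the opening estimate $\on{ord}_{X_i} a_j \ge \frac{j}{2}\on{ord}_{X_i} a_2$ imports the mechanism of Lemma~\ref{lem:dominant}, which rests on the roots being \emph{real} (the $\ell^k$--$\ell^2$ norm comparison for the Newton sums); it is false here. For example $P=Z^3-X_1X_2$ is quasi-ordinary, in Tschirnhausen form, with $a_2\equiv 0$ and $a_3\ne 0$, and even when $a_2\ne 0$ the minimal normalized order can be attained at a different index. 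Your fallback claims are no better: the roots of a quasi-ordinary Tschirnhausen polynomial need not have equal $X_i$-orders (take roots $1+X_1$, $-1$, $-X_1$, whose pairwise differences are units), so the lower hull of the points $(j,\on{ord}_{X_i}a_j)$ need not be a single segment, and the assertion that the Vieta lower bound is exact ``with the monomial structure claimed'' is not an estimate but a restatement of the theorem.

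The two structural gaps are the ones you defer. (a) Coordinate-by-coordinate orders cannot yield the conclusion even in principle: $\on{ord}_{X_i}(X_1+X_2)=0$ for every $i$, exactly as for a unit, so no bookkeeping of these $n$ numbers can show that $a_{j_0}$ is a monomial times a unit, nor that $a_{j_0}^{d!/j_0}$ divides $a_j^{d!/j}$ in $\K[[X]]$; one needs control of the full Newton polyhedra (equivalently of all weight valuations, not just the $n$ coordinate ones), and the step you justify by ``the relevant $a_j$ are supported on $\bigcup\{X_i=0\}$, which comes from quasi-ordinariness'' is precisely what has to be proved --- there is no direct passage from the discriminant being a monomial to any single coefficient being one, and that transfer \emph{is} the content of the statement. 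Likewise, the existence of one index $j_0$ that is simultaneously optimal for all valuations is a consequence of the theorem, not something you may ``choose''. (b) The proposed induction ``specialize $X_n\mapsto 0$ and lift'' collapses at the first step whenever $\al_n>0$: the discriminant of $P|_{X_n=0}$ is then identically zero, the specialized polynomial acquires multiple factors and is no longer quasi-ordinary (quasi-ordinariness of its square-free reduction is not automatic), so the inductive hypothesis does not apply. These are exactly the difficulties the proof in \cite{ParusinskiRondA-J} is built to circumvent; as written, your text maps out where the work lies rather than carrying it out.
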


\subsubsection{Proof of Theorem \ref{thm:KP1}}
We sketch a proof of this theorem.  

The first claim of the theorem follows from Bronshtein's theorem, see Theorem~\ref{thm:Bronshtein} below.  The proof given in \cite{KurdykaPaunescu2008} is different and based on real analytic geometry arguments. 

To show the second claim one first applies Hironaka's resolution of singularites 
to make the discriminant 
$\discr_{P_{\on{red}}}$ of $P_{\on{red}}$ 
normal crossings.  For this it suffices, by \cite[Main Theorem II''(N), page 158]{Hironaka:1964}, to apply a modification given by the composition of a locally finite number of global blowings-up with smooth centers. Therefore, 
one may suppose that $P_{\on{red}}$ is quasi-ordinary.  To conclude  Kurdyka and P{\u a}unescu use the Abhyankar--Jung theorem applied to a hyperbolic quasi-ordinary polynomial, see \cite[Proposition 5.4]{KurdykaPaunescu2008} for details.  
Instead,  one may use here directly Theorem \ref{thm:towardsAJ} and splitting.  Firstly, after applying the Tschirnhausen transformation we may suppose $a_1=0$.  
Secondly, by Lemma \ref{lem:dominant}, it is $a_2^{d!/2}$ that generates the ideal 
$(a_i^{d!/i})_{i=2,\ldots ,d}$.  Denote by $\theta (x)$ one of the two analytic square roots of $- a_2(x)$, 
that is, if $-a_2 (x)=  u(x) \prod x_k^{2 \alpha_ k} $, then $\theta(x) = \pm u^{\frac 1 2}(x) \prod x_k^{\alpha_ k}$.  
Then, one concludes exactly as in the proof of Rellich's theorem by splitting the polynomial $P$ and proceeding by induction on its degree.  
\qed

\subsection{Perturbation of normal matrices}\label{ssec:normal}

Kurdyka and P\u aunescu showed multiparameter counterparts of Theorem \ref{thm:Rellich1} 
for Hermitian matrices.

\begin{theorem}[{\cite[Theorem 6.2 and Remark 6.3]{KurdykaPaunescu2008}}] \label{thm:KP2}
Let $U \subseteq \R^n$ be open and let $A(x)$, $x\in U$, be an analytic family of Hermitian matrices.
   Then, there exists a locally finite composition of blowings-up with smooth global 
centers    $ \sigma: W \to U$, such that locally on $W$   the corresponding family
$ A \circ \sigma $ admits a simultaneous analytic diagonalization.  
\end{theorem}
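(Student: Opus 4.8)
The plan is to reduce Theorem \ref{thm:KP2} on normal (Hermitian) matrices to the already-established hyperbolic polynomial case, Theorem \ref{thm:KP1}, combined with the splitting principle adapted to matrices. First I would pass to the characteristic polynomial $\chi_{A(x)}(Z)$, which is a monic polynomial of degree $d$ with real analytic coefficients on $U$; since $A(x)$ is Hermitian, $\chi_{A(x)}$ is hyperbolic. Applying the modification part of Theorem \ref{thm:KP1}, after a locally finite composition of blowings-up with smooth global centers $\sigma\colon W\to U$ we may assume that, locally on $W$, the eigenvalues $\la_1(w)\le\cdots\le\la_d(w)$ are real analytic functions. The remaining task is to produce a real analytic unitary (orthonormal) frame of eigenvectors, i.e.\ a real analytic $U(d)$-valued map $V(w)$ with $V(w)^*A(\sigma(w))V(w)$ diagonal.

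Next I would carry out the eigenspace decomposition by a Hensel-type splitting for normal matrices (the matrix analogue of Lemma \ref{lem:splitting}), working locally on $W$. Fix $w_0\in W$. Group the analytic eigenvalues into maximal blocks of those that coincide at $w_0$; distinct blocks are separated by disjoint closed contours in $\C$ for $w$ near $w_0$. Using the Riesz spectral projections $P_k(w) = \tfrac{1}{2\pi i}\oint_{\gamma_k}(Z-A(\sigma(w)))^{-1}\,dZ$ along these contours, one gets real analytic (indeed, the resolvent is analytic and the contours can be kept fixed) idempotents summing to $\I$, and hence an orthogonal decomposition $\C^d=\bigoplus_k \operatorname{Im}P_k(w)$ into analytically varying subspaces, on each of which $A(\sigma(w))$ acts with fewer distinct eigenvalues. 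This is precisely the splitting step: it reduces the diagonalization problem to the case where $A(\sigma(w))$ has a single eigenvalue at the base point, but now acting on a subbundle. Choosing a local analytic orthonormal frame of each $\operatorname{Im}P_k(w)$ (for instance by Gram--Schmidt applied to $P_k(w)$ times a fixed basis, which is analytic wherever the vectors stay independent, shrinking $W$ if necessary) and iterating on the blocks where eigenvalues still collide to higher order — which, after the resolution step, happens only along the exceptional normal-crossings divisor and can be handled by a further blowing-up or by the same splitting once the eigenvalues have been separated analytically — yields the desired frame. Since at each stage the reduction is over a relatively compact piece of $W$ and the centers are smooth and global, the whole procedure is a locally finite composition of blowings-up, as required.

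The main obstacle is the monodromy of eigenvectors around the locus where eigenvalues collide: even when the eigenvalues are analytic, the eigenprojections need not be, precisely because the splitting into contours fails where two analytic eigenvalue branches cross. The resolution of singularities applied to the discriminant $\discr_{(\chi_A)_{\on{red}}}$ is what tames this: after the modification one may assume the crossing locus is a normal crossings divisor, and then in suitable local coordinates the eigenvalues separate after pulling back along the divisor, so the contours can again be drawn disjointly. Making this precise — that the normal-crossings structure of the discriminant suffices to separate the analytic eigenvalues by fixed contours, hence to run the projection argument — is the heart of the proof, and it mirrors exactly how Theorem \ref{thm:towardsAJ} and splitting are used in the proof of Theorem \ref{thm:KP1}. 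Alternatively, following Parusi\'nski and Rond \cite{ParusinskiRondMatr}, one can bypass proving analyticity of the eigenvalues first and instead apply a Hensel-type lemma for normal matrices directly after resolving the discriminant, obtaining the analytic diagonalization in one stroke; I expect this to be the cleaner route, but in either case controlling the behaviour along the exceptional divisor is where the real work lies.
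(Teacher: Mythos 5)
Your first step (pass to the characteristic polynomial, apply Theorem \ref{thm:KP1} to get analytic eigenvalues after a locally finite composition of admissible blowings-up) matches the Kurdyka--P\u{a}unescu strategy, and your block splitting via Riesz projections is sound \emph{at points where not all eigenvalues of the block coincide}: there the contours exist, the projections are analytic and self-adjoint, and the problem reduces to smaller blocks. The gap is exactly at the remaining points, i.e.\ on the exceptional divisor where several analytic eigenvalue branches take the \emph{same value}. At such a point no contour can separate them, so the Riesz-projection argument simply does not apply, and analyticity of the eigenvalues does not by itself give analyticity (or even continuity) of the individual eigenprojections across the collision locus. Your claim that ``the normal-crossings structure of the discriminant suffices to separate the analytic eigenvalues by fixed contours'' is false precisely there, and your fallback --- ``a further blowing-up or the same splitting once the eigenvalues have been separated analytically'' --- does not identify what should be blown up or why it would help: blowing up does not change the values along the divisor, and the eigenvalues are already analytic, so the obstruction is not removed. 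You in fact concede that this is ``the heart of the proof'', which is an accurate self-assessment: the heart is missing.

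What closes this gap in the actual proofs is an additional ingredient that your proposal does not supply. Kurdyka and P\u{a}unescu, after making the eigenvalues analytic, solve the linear systems $(A\circ\sigma-\lambda_i\,\I)v=0$ for the eigenspaces attached to the irreducible factors of the characteristic polynomial; to do this analytically they must monomialize the ideal generated by \emph{all minors} of these matrices, which requires \emph{further} blowings-up beyond those that principalize the discriminant. The alternative route of Parusi\'nski--Rond, which you mention only as a pointer, treats the matrix directly: after the Tschirnhausen-type normalization $\operatorname{Tr}A=0$, Proposition \ref{lem:MainLemma} shows (using normality in an essential way, together with the normal-crossings hypothesis on $\discr_A$) that the ideal of the entries is generated by a single entry equal to a monomial times a unit; dividing by that monomial produces an analytic normal matrix that is \emph{nonzero} at the point, hence has at least two distinct eigenvalues there, and the Hensel-type Lemma \ref{lem:SplitMat} then splits it so the induction on the size closes. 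Either of these mechanisms is what carries the eigenvector construction across the divisor; without one of them (or a substitute with a proof), your iteration does not terminate and the proposal does not constitute a proof of Theorem \ref{thm:KP2}.
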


They also gave a version for real antisymmetric matrices, {\cite[Theorem 7.2]{KurdykaPaunescu2008}}.  
 This was extended to normal matrices by Rainer \cite{RainerN} by showing that the eigenvalues and eigenspaces of normal matrices depend analytically on the parameters after a modification of the parameter space.  Note that  
these results  generalize the one parameter case because there are no nontrivial modifications of one dimensional nonsingular spaces.  

Recall that for a square matrix $A$ we denote by $\Delta_A$ the discriminant of the reduced (i.e., square-free) form $(\charp_A)_{\on{red}}$ of its characteristic polynomial $\charp _A$.  
The normal crossings assumption for $\Delta_A$ is sufficient for the local simultaneous reduction of families of normal matrices as it is showed by Parusi\'nski and Rond in \cite{ParusinskiRondMatr}.  
The following three algebraic theorems, stated in terms of formal power series $\K[[X]]$, $\K=\R,\C$, were proven in \cite{ParusinskiRondMatr}. 
They hold as well for several subrings of $\K[[X]]$ including the algebraic power series $\K\langle X\rangle$ and the 
convergent power series $\K\{ X\}$, see Remark \ref{rem:rings}.

\begin{theorem}[{\cite[Theorem 2.5]{ParusinskiRondMatr}}] \label{thm:normaltheorem}
    Let $A(X)\in \on{Mat}_d (\C[[X]])$ be normal and suppose that  
$\Delta_A(X) = X_1^{\alpha_1} \cdots X_n^{\alpha_n} u (X)$ 
with $u(0)\ne 0$.
Then there is a unitary matrix 
$U(X) \in \on{U}_d (\C[[X]])$ such that 
$$U(X)^{-1} A(X) U(X)= D(X),$$ 
where $D(X)$ is a diagonal matrix with entries in $\C[[X]]$.    
\end{theorem}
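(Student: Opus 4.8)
\textbf{Proof strategy for Theorem \ref{thm:normaltheorem}.}
The plan is to mimic the splitting argument used for hyperbolic polynomials, but over the ring $\C[[X]]$ and for the matrix itself rather than only its characteristic polynomial; the key is a Hensel-type decomposition of the ambient space that respects the normal structure. First I would reduce to the case where the distinct eigenvalues of $A(X)$ do not collide over the generic point, i.e.\ work with $(\charp_A)_{\on{red}}$. The hypothesis that $\discr_A(X)=\Delta_{(\charp_A)_{\on{red}}}(X)$ is a monomial times a unit means precisely that over the locus $\{X_1\cdots X_n\neq 0\}$ the reduced characteristic polynomial has no multiple roots, so there the eigenvalues are (formally, after a ramified cover) analytic and distinct; the substance is to propagate a block decomposition across the coordinate hyperplanes.

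Next I would perform the splitting. Apply the Tschirnhausen transformation to $\charp_A$ (equivalently, replace $A$ by $A-\tfrac1d\on{tr}(A)\I$, which stays normal and changes nothing about diagonalizability). Using the monomial form of the discriminant together with Theorem \ref{thm:towardsAJ} and Lemma \ref{lem:dominant}, one obtains a monomial $\theta(X)$ — a square root of $-a_2(X)$ up to a unit — by which one can rescale so that the rescaled reduced characteristic polynomial lies in the ``compact slice'' $\Hyp_T^0(d)$-analogue and therefore factors nontrivially; concretely $\charp_A = P_b\, P_c$ with $P_b,P_c$ monic of positive degree, coprime, and with coefficients in $\C[[X]]$ after the monomial substitution. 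Coprimeness of $P_b$ and $P_c$ gives, via the resolvent / Riesz projection
\[
\Pi_b(X)=\frac{1}{2\pi i}\oint_{\gamma} (Z\I - A(X))^{-1}\,dZ,
\]
where $\gamma$ encircles the roots of $P_b$ and not those of $P_c$, a decomposition $\C[[X]]^d = E_b(X)\oplus E_c(X)$ into $A$-invariant free submodules. Because $A$ is normal, these two spectral subspaces are mutually orthogonal (the projection $\Pi_b$ is self-adjoint over the quotient field and hence, being idempotent, an orthogonal projection), so one can choose a unitary $U_0(X)\in\on{U}_d(\C[[X]])$ — obtained by Gram--Schmidt applied to a free basis of $E_b$ and of $E_c$, which preserves formal power series entries — that block-diagonalizes $A(X)$ into $A_b(X)\oplus A_c(X)$ with $A_b,A_c$ normal of strictly smaller size. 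One then concludes by induction on $d$, the base case $d=1$ being trivial and the case $\charp_A$ having a single eigenvalue (i.e.\ $a_2\equiv 0$ after Tschirnhausen, hence $A=\tfrac1d\on{tr}(A)\I$ by normality) also immediate.

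The main obstacle I anticipate is the transition to the Hensel/splitting step over $\C[[X]]$ when the discriminant vanishes identically, i.e.\ making sense of ``$P_b$ and $P_c$ coprime'' and of the Riesz projection having power series entries: one must verify that the factorization of $(\charp_A)_{\on{red}}$ lifts to a factorization of $\charp_A$ (matching multiplicities) and that the resolvent $(Z\I-A(X))^{-1}$, a priori only with entries in the field of fractions, actually has the contour integral landing back in $\on{Mat}_d(\C[[X]])$ — this is where the quasi-ordinary hypothesis and Theorem \ref{thm:towardsAJ} do the real work, exactly as the splitting lemma (Lemma \ref{lem:splitting}) and Lemma \ref{lem:dominant} did in the proof of Rellich's Theorem \ref{thm:Rellich2}. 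A secondary technical point is checking that Gram--Schmidt does not leave the ring $\C[[X]]$: this needs the inner products of the chosen basis vectors to be units in $\C[[X]]$, which holds because at $X=0$ the spectral subspaces are still honest orthogonal complements (the projection $\Pi_b(0)$ is well defined as the roots of $P_b$ and $P_c$ remain separated at the origin by construction of the rescaling), so the relevant Gram determinant is a unit.
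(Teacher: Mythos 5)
There is a genuine gap at the rescaling step, and it is exactly the point where the actual proof diverges from the hyperbolic template you are copying. You take the Tschirnhausen form of $\charp_A$ and claim, via Lemma~\ref{lem:dominant} and Theorem~\ref{thm:towardsAJ}, that a monomial $\theta$ with $\theta^2\sim -a_2$ exists and dominates. But Lemma~\ref{lem:dominant} is a statement about \emph{hyperbolic} polynomials; the characteristic polynomial of a complex normal matrix is not hyperbolic, and $a_2$ need not dominate at all --- for the constant normal matrix $\on{diag}(1,\omega,\omega^2)$, $\omega=e^{2\pi i/3}$, one has $a_1=a_2=0$ while $a_3\neq 0$. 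If instead you use the dominant coefficient $a_k$ actually furnished by Theorem~\ref{thm:towardsAJ}, you must extract a $k$-th root of a monomial times a unit inside $\C[[X]]$, which requires the exponents to be divisible by $k$; in the real hyperbolic setting this came for free from the positivity $-\tilde a_2\ge 0$, but in the complex setting there is no such mechanism, and passing to a ramified cover $X_i=Y_i^{q}$ would only produce eigenvalues with fractional exponents --- the very thing the theorem asserts does not happen for normal matrices. That the relevant divisibility \emph{does} hold is itself a consequence of normality, and it is established at the level of the matrix entries, not of $\charp_A$: this is the content of Proposition~\ref{lem:MainLemma}, which says the ideal $(a_{ij})$ of entries of the traceless normal matrix is generated by a single entry equal to a monomial times a unit. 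Since all entries have the same weight, one divides $A$ by that monomial directly, obtaining a normal traceless $B$ with $B(0)\neq 0$, hence with at least two distinct eigenvalues at $0$; no root extraction is needed. Your proposal skips precisely this lemma and substitutes an inapplicable one.

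A secondary but nontrivial issue is your block-diagonalization step. The assertion that the Riesz/Hensel idempotent $\Pi_b$ is ``self-adjoint over the quotient field and hence an orthogonal projection'' is not automatic: on $\on{Frac}(\C[[X]])^d$ the Hermitian form $\sum u_i\bar v_i$ (conjugation acting on coefficients) is nondegenerate but not positive definite, so the usual argument that a normal operator has orthogonal spectral subspaces does not transfer verbatim, and this is exactly what the Hensel lemma for normal matrices, Lemma~\ref{lem:SplitMat}, is designed to deliver: a \emph{unitary} $U\in\on{U}_d(\C[[X]])$ with $U(0)=\I$ splitting $A$ according to a coprime decomposition of $\charp_{A(0)}$. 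Your Gram--Schmidt argument could probably be made to work once orthogonality of $\ker P_b(A)$ and $\ker P_c(A)$ is proved formally (this needs $P_c(A)^*v=0$ for $v\in\ker P_c(A)$, i.e.\ a formal substitute for ``$A^*$ has conjugate eigenvalues on the same eigenvectors''), but as written you assert it rather than prove it. So even granting the splitting of $\charp_A$, the proposal replaces the paper's two key lemmas (Proposition~\ref{lem:MainLemma} and Lemma~\ref{lem:SplitMat}) by claims that are either false as stated or unproven in the formal setting.
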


\begin{theorem}[{\cite[Theorem 2.9]{ParusinskiRondMatr}}] \label{thm:realnormaltheorem}
    Let $A(X)\in \on{Mat}_d (\R[[X]])$ be normal and suppose that  $\Delta_A(X) = X_1^{\alpha_1} \cdots X_n^{\alpha_n} u (X)$ 
with $u(0)\ne 0$.   Then there exists an orthogonal matrix 
$O(X) \in \on{O}_d (\R[[X]])$ such that $O(X)^{-1}  A(X)  O(X)$ is block diagonal with the blocks of size  
 $1$ and $2$.  The blocks of size $2$ 
 are $2\times 2$ matrices of the form
\begin{align}\label{blocs}
\left(\begin{array}{cc} a(X) & b(X)\\ -b(X) & a(X)\end{array}\right)
\end{align}
           for some $a(X)$, $b(X)\in\R[[X]]$.
\end{theorem}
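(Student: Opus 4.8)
The plan is to deduce the real statement from its complex counterpart, Theorem \ref{thm:normaltheorem}, by a descent argument, and to pin down the block structure using the $\on{Gal}(\C/\R)$-action. First I would complexify: viewing $A(X) \in \on{Mat}_d(\R[[X]]) \subseteq \on{Mat}_d(\C[[X]])$, the discriminant hypothesis $\Delta_A(X) = X_1^{\alpha_1}\cdots X_n^{\alpha_n}u(X)$ with $u(0)\neq 0$ is unchanged, so Theorem \ref{thm:normaltheorem} gives a unitary $U(X)\in\on{U}_d(\C[[X]])$ with $U^{-1}AU = D$ diagonal over $\C[[X]]$. Now apply entrywise complex conjugation of the power-series coefficients (the field automorphism of $\C$ extended coefficientwise to $\C[[X]]$, which fixes $\R[[X]]$). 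Since $A$ has real coefficients, $\ol A = A$, hence $\ol U{}^{-1} A \ol U = \ol D$, so $\ol U$ is another unitary diagonalizer and $\ol D$ is a diagonal matrix whose entries are a permutation of those of $D$. The diagonal entries of $D$ are the roots of $\charp_A$; the real ones are fixed by conjugation and the non-real ones are paired $\lambda \leftrightarrow \ol\lambda$. This is where the block structure \eqref{blocs} comes from: on the $2$-dimensional conjugation-invariant real subspaces spanned by the eigenvectors for a pair $\{\lambda,\ol\lambda\}$, a suitable real orthonormal basis (real and imaginary parts of a unit eigenvector, rescaled) conjugates $A$ into $\left(\begin{smallmatrix} a & b \\ -b & a\end{smallmatrix}\right)$ with $a = \Re\lambda$, $b = \Im\lambda$.

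To turn this into an honest orthogonal change of basis over $\R[[X]]$, I would argue as follows. Because $\ol D$ is obtained from $D$ by a permutation $\tau$ of the diagonal positions, and because $\tau$ is an involution (conjugation is), we may relabel so that $\tau$ swaps $\{2i-1, 2i\}$ for $i$ in some index set and fixes the remaining positions. The matrix $V := \ol U{}^{-1}U \in \on{U}_d(\C[[X]])$ then commutes with $D$ in the sense $V^{-1}DV = \ol D = P_\tau^{-1} D P_\tau$ where $P_\tau$ is the permutation matrix, so $P_\tau V$ centralizes $D$; after the relabeling this means $V$ is, up to the permutation, block diagonal with $1\times 1$ and $2\times 2$ blocks on exactly the subspaces we want. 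The key point is that $U$ and $\ol U$ have the same column span blockwise, so the real and imaginary parts of the columns of $U$, grouped appropriately, span the desired real subspaces and, after orthonormalization within each block (Gram–Schmidt over $\R[[X]]$, which is valid because the relevant Gram matrices are invertible at $X=0$ — the eigenvectors being genuinely independent there), yield an orthogonal matrix $O(X) \in \on{O}_d(\R[[X]])$. Conjugating $A$ by $O$ then produces the asserted block-diagonal form, the $2\times 2$ blocks being normal $2\times 2$ real matrices with a pair of conjugate complex eigenvalues, which forces them into the shape \eqref{blocs}.

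The main obstacle, and the step requiring care, is the bookkeeping that makes the permutation $\tau$ and the blockwise agreement of $U$ and $\ol U$ compatible over the ring $\C[[X]]$ rather than just fiberwise at a single point: one must ensure that the ``pairing'' of non-real eigenvalues is locally constant as a power-series phenomenon (so that no branching occurs) — this is exactly where the normal-crossings hypothesis on $\Delta_A$, already used in Theorem \ref{thm:normaltheorem}, does the work, since it guarantees the eigenvalues themselves lie in $\C[[X]]$ and hence the partition into real eigenvalues and conjugate pairs is determined at $X=0$ and propagates. A secondary technical point is checking that Gram–Schmidt can be carried out over $\C[[X]]$ and $\R[[X]]$ with the output still having entries in the ring; this is standard because the inner-product (Gram) matrix of the relevant columns is invertible over the ring (its determinant is a unit, being nonzero at $0$), so all the square roots and inverses appearing in the orthonormalization stay in $\C[[X]]$ respectively $\R[[X]]$. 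Once these points are settled, assembling $O(X)$ from the blockwise orthonormal bases and reading off \eqref{blocs} is routine.
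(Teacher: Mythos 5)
The survey does not actually prove this theorem: it quotes it from \cite{ParusinskiRondMatr} and, in the remarks on the proofs, indicates that there it is obtained by the same direct inductive scheme as Theorem \ref{thm:normaltheorem} (trace removal, Proposition \ref{lem:MainLemma}, and the Hensel-type splitting Lemma \ref{lem:SplitMat}). Your plan instead deduces the real statement from the complex Theorem \ref{thm:normaltheorem} by conjugation descent, and this route is viable: from $U^{-1}AU=D$ and $\ol U^{-1}A\ol U=\ol D$ the centralizer argument gives, as you intend, that the eigenmodules $V_\la\subseteq\C[[X]]^d$ (span of the columns of $U$ whose diagonal entry is $\la$) satisfy $\ol{V_\la}=V_{\ol\la}$; since the columns of $U$ form a basis of $\C[[X]]^d$, each $V_\la$ is a free direct summand, so the descent to real forms and the invertibility at $X=0$ needed for orthonormalization over $\R[[X]]$ are available, and square roots of units with positive constant term exist in $\R[[X]]$. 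Note two small corrections of wording: the centralizer of $D$ is block diagonal with blocks of size equal to the multiplicities of the eigenvalues in $\C[[X]]$, not of sizes $1$ and $2$ (what you really extract is the blockwise equality of column spans), and the dichotomy ``real eigenvalue versus conjugate pair'' is decided at the level of power series ($\la\in\R[[X]]$ or not), not at $X=0$, where eigenvalues may collide.

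The step that needs genuine repair is the orthonormalization. For a real eigenvalue $\la\in\R[[X]]$ of multiplicity $m$, the real and imaginary parts of the $m$ columns are $2m$ vectors generating a rank-$m$ module, so they are never linearly independent and cannot simply be Gram--Schmidted; you must first pass to the real form $W_\la=V_\la\cap\R[[X]]^d$ (free of rank $m$ and a direct summand, e.g.\ because $W_\la\otimes_{\R[[X]]}\C[[X]]=V_\la$ and $\C[[X]]$ is faithfully flat over $\R[[X]]$), extract a basis via Nakayama at $X=0$, and only then orthonormalize, using that the Gram matrix is positive definite at $0$. For a conjugate pair $\{\la,\ol\la\}$ with $\la\notin\R[[X]]$ of multiplicity $m>1$, an arbitrary re-orthonormalization of the $2m$-dimensional real block would in general destroy the $2\times2$ shape \eqref{blocs}; you must keep exactly the pairs $(\Re u_i,\Im u_i)$. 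Fortunately no Gram--Schmidt is needed there: since $\ol u_j\in V_{\ol\la}$ (by the module equality above, not merely pointwise) and distinct columns of $U$ are orthonormal, one has $\langle u_i,u_j\rangle=\de_{ij}$ and $\langle u_i,\ol u_j\rangle=0$, which in real terms says that the vectors $\sqrt2\,\Re u_i,\sqrt2\,\Im u_i$ are already orthonormal; in this basis $A$ acts by the blocks \eqref{blocs} with $a=\Re\la$, $b=\Im\la$. With these adjustments (plus the observation that the Hermitian and Euclidean pairings agree on real vectors, so the blocks assemble into $O\in\on{O}_d(\R[[X]])$), your outline closes.
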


Note that it follows
that if $A(X)$ is symmetric, resp.\ antisymmetric, then $O(X)^{-1}  A(X)  O(X)$ is symmetric (i.e., diagonal), resp.\ antisymmetric.

\begin{example}[{Rainer \cite[Example 8.3]{RainerN}}] \label{ex:3x3notnormal}
The eigenvalues of the one parameter diagonalizable, but not normal, matrix 
    \[
        A(t) = 
        \begin{pmatrix}
            t & 0 & 0   \\
            0 & 0 & t^2 \\
            0 & t & 0 
        \end{pmatrix}, \quad t\in \R,
    \]
are $t, \pm t^{3/2}$ for $t\ge 0$ and $t, \pm i|t|^{3/2}$ for $t< 0$. 
\end{example}

The above example shows that the eigenvalues of a real analytic family of diagonalizable matrices are not necessarily analytic (even for one parameter families).  
As the following theorem shows, for arbitrary analytic families of matrices, not necessarily square, the counterparts of the above two theorems hold for the singular value decomposition.

Let $A\in \on{Mat}_{m,n}  (\C)$.  Then $A=  V D U^{-1} $ 
for some unitary matrices $V \in \on{U}_m (\C)$,  
$U \in \on{U}_n (\C)$, and  a (rectangular) diagonal matrix $D$ with real nonnegative coefficients.  The 
diagonal elements of $D$ are the nonnegative square roots of the 
eigenvalues of $A^* A$ or, equivalently, of $A A^*$. They are called \emph{singular values}\index{singular values} of $A$. 
If $A$ is real, then $V$ and $U$ can be chosen orthogonal.

\begin{theorem}[{\cite[Theorems 3.1 and 3.3]{ParusinskiRondMatr}}] \label{thm:SVD}\index{singular value decomposition}
    Let $A=A(X)\in \on{Mat}_{m,d}  (\C [[X]])$, $d\le m$,  and suppose that $\Delta_{A^*A} (X) = X_1^{\alpha_1} \cdots X_n^{\alpha_n} u (X)$ 
    with $u(0)\ne 0$.  Then there are unitary matrices $V \in \on{U}_m (\C[[X]])$,  
    $U \in \on{U}_d (\C[[X]])$ such that 
$$D(X) = V(X)^{-1} A(X) U(X)$$ is (rectangular) diagonal.     

If $A=A(X)\in \on{Mat}_{m,d}  (\R [[X]])$, then $U$ and $V$ can be chosen real (that is orthogonal) so that $V(X)^{-1} A(X) U(X)$ is 
block (rectangular) diagonal with blocks  as in Theorem \ref{thm:realnormaltheorem}.  

Suppose in addition that the last nonzero coefficient of 
$\Delta_{A^*A}(X)$ is of the form $X_1^{\beta_1} \cdots X_n^{\beta_n} h (X)$ 
with $h(0)\ne 0$.  Then, both in the real and the complex case,
 we may require that $V(X)^{-1} A(X) U(X)$ is (rectangular) diagonal with the entries on the diagonal in $\R [[X]]$, and,  
moreover, that those entries that are  nonzero  are of the form  $X^\alpha a (X) $,  with $a(0)> 0$,  and that the exponents $\alpha \in \N^n$ are well-ordered.
\end{theorem}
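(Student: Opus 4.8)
\emph{Proof plan.} The strategy is to reduce the singular value decomposition of the rectangular $A$ to a unitary (in the real case, orthogonal) diagonalization of a \emph{Hermitian} square matrix, for which Theorems~\ref{thm:normaltheorem} and \ref{thm:realnormaltheorem} are already at our disposal. The companion matrix that matches the hypothesis is $AA^*\in\on{Mat}_m(\C[[X]])$: the roots of $(\charp_{AA^*})_{\on{red}}$ are exactly the distinct squares of the singular values of $A$, together with the root $0$ whenever $A$ is not invertible over $\C[[X]]$ (in particular whenever $m>d$), so the discriminant hypothesis on $A$ is precisely that $\discr$ of $(\charp_{AA^*})_{\on{red}}$ be a monomial times a unit. (The Jordan--Wielandt matrix $\left(\begin{smallmatrix}0&A\\A^*&0\end{smallmatrix}\right)$ could also be used, but the corresponding hypothesis on it is a strictly different condition.)

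Applying Theorem~\ref{thm:normaltheorem} to $AA^*$ gives $V\in\on{U}_m(\C[[X]])$ with $V^{-1}(AA^*)V=\on{diag}(\mu_1,\dots,\mu_m)$, $\mu_k\in\C[[X]]$, and the $\mu_k$ lie in $\R[[X]]$ since $AA^*$ is Hermitian. Put $B:=V^{-1}A$, so $BB^*=\on{diag}(\mu_k)$: the rows $\rho_1,\dots,\rho_m$ of $B$ are pairwise orthogonal with squared norms $\mu_k$, and a row with $\mu_k=0$ vanishes (from $\sum_j\overline{b_{kj}}b_{kj}=0$, as $\R[[X]]$ is a real ring). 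Reorder so that $\mu_1,\dots,\mu_r$ are the nonzero ones; for $k\le r$ write $\rho_k=g_k\tilde\rho_k$ with $g_k$ the gcd of the entries of $\rho_k$ and the entries of $\tilde\rho_k$ coprime, and set $\phi_k:=\tilde\rho_k\tilde\rho_k^*\in\R[[X]]$, so $\mu_k=\overline{g_k}g_k\,\phi_k$. Here is where the hypothesis enters: when $0$ is a root of $\charp_{AA^*}$, $\mu_k^2$ divides $\discr_{AA^*}=X^\alpha u$, so by unique factorization in $\C[[X]]$ the divisor $\phi_k$ of $\mu_k$ is a monomial times a unit; but the common real zero set of the coprime entries of $\tilde\rho_k$ has codimension $\ge2$ whereas $\{X^\delta=0\}$ has codimension $1$ for $\delta\ne0$, so $\phi_k$ is actually a unit, with $\phi_k(0)>0$. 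Choosing the unit square root $\psi_k\in\R[[X]]$ of $\phi_k$ and setting $d_k:=g_k\psi_k$, $u^{(k)}:=\tilde\rho_k^*/\overline{\psi_k}$, one has $d_k\mid\rho_k$ entrywise, $u^{(k)}$ a unit vector in $\C[[X]]^d$, and $\rho_k=d_k\,(u^{(k)})^*$. Completing the mutually orthogonal $u^{(1)},\dots,u^{(r)}$ to the columns of a unitary $U\in\on{U}_d(\C[[X]])$ — which works because $\C[[X]]$ is local with residue field $\C$, so one lifts an orthonormal basis from the residue field and runs Gram--Schmidt, dividing only by norms, which are units — yields $V^{-1}AU=BU=D$ rectangular diagonal with $D_{kk}=d_k\in\C[[X]]$. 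This proves the first assertion; the second follows by the same argument with Theorem~\ref{thm:realnormaltheorem} in place of Theorem~\ref{thm:normaltheorem} (then $AA^t$ is symmetric, hence orthogonally \emph{diagonalized} over $\R[[X]]$ by the remark following that theorem), the one new feature being that a $\mu_k$ which is a square in $\C[[X]]$ but not in $\R[[X]]$ forces a real $2\times2$ block $\left(\begin{smallmatrix}a&b\\-b&a\end{smallmatrix}\right)$ with $a^2+b^2=\mu_k$ into which one folds the pair of corresponding rows of $B$; if $A$ is symmetric this does not happen and $D$ is diagonal.

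For the supplementary statement one adds the hypothesis that the lowest nonvanishing coefficient of $(\charp_{AA^*})_{\on{red}}$ — the product of the distinct nonzero singular value squares — is $X^\beta h$ with $h(0)\ne0$. Then, by unique factorization, \emph{every} nonzero $\mu_k$ is a monomial times a unit, hence so is $g_k$ (since $g_k\mid\overline{g_k}g_k\mid\mu_k$), say $g_k=X^{\gamma_k}v_k$ with $v_k$ a unit; thus $\mu_k=\overline{g_k}g_k\phi_k=X^{2\gamma_k}\cdot(\text{positive unit in }\R[[X]])$ is a genuine square $\sigma_k^2$ with $\sigma_k=X^{\gamma_k}a_k\in\R[[X]]$, $a_k(0)>0$. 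Replacing $d_k$ by $\sigma_k$ (and $u^{(k)}$ by $u^{(k)}$ scaled by the modulus-one unit $\overline{d_k/\sigma_k}$) makes the $2\times2$ blocks disappear and turns $D$ into a rectangular diagonal matrix with entries $\sigma_k$ of the required form. Moreover $\discr_{A^*A}=X^\alpha u$ forces every $\sigma_k$ and every difference $\sigma_k-\sigma_l$ to be a monomial times a unit, which is possible only if the exponents $\gamma_k$ form a chain in $\N^n$; ordering them increasingly gives the asserted well-ordering.

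\emph{Main obstacle.} The real difficulty is that Theorem~\ref{thm:normaltheorem} produces only \emph{some} unitary diagonalization of $AA^*$, and for a poor choice of $V$ the rows of $V^{-1}A$ need not be divisible by ``$\sqrt{\mu_k}$'': for $A=\left(\begin{smallmatrix}X_1&X_2\\-X_2&X_1\end{smallmatrix}\right)$ the choice $V=\I$ already fails although another $V$ succeeds, and this is exactly the case $m=d$ with $\det A\ne0$ in which $\discr_{A^*A}$ carries no factor $\mu_k^2$ to exploit. One must therefore show that $V$ can be corrected by a block-diagonal unitary commuting with $\on{diag}(\mu_k)$ so that all the needed divisibilities hold — the point at which the monomial structure of $\discr_{A^*A}$ is genuinely used — or, following the proofs of the normal-matrix theorems themselves, replace the appeal to Theorem~\ref{thm:normaltheorem} by a Hensel/splitting construction adapted to singular values, peeling off one singular value (one conjugate pair in the real case) at a time and inducting on $\min(m,d)$; carrying the $2\times2$ blocks through this induction is the extra bookkeeping in the real case.
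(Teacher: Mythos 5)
The survey does not reproduce a proof of Theorem \ref{thm:SVD}: it cites \cite{ParusinskiRondMatr} and only indicates the strategy used there for all the matrix theorems (removal of the trace, Proposition \ref{lem:MainLemma}, and the Hensel-type splitting Lemma \ref{lem:SplitMat}, with induction). Measured against the statement itself, your proposal has a genuine gap at the very first step: the claimed equivalence between the hypothesis on $\Delta_{A^*A}$ and the corresponding condition on $AA^*$ is false. Since $\charp_{AA^*}(Z)=Z^{m-d}\,\charp_{A^*A}(Z)$, if $m>d$ and $\det(A^*A)\not\equiv 0$ then $(\charp_{AA^*})_{\on{red}}$ acquires the extra root $0$, and $\Delta_{AA^*}=\pm\,\Delta_{A^*A}\cdot c^2$, where $c$ is the constant term of $(\charp_{A^*A})_{\on{red}}$, i.e.\ the product of the distinct nonzero eigenvalues. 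Requiring $\Delta_{AA^*}$ to be a monomial times a unit is therefore strictly stronger than the stated hypothesis and already contains, in essence, the supplementary hypothesis of the last part. For instance $A=(X_1\ \ X_2)^t$ has $\Delta_{A^*A}=1$ while $\Delta_{AA^*}=(X_1^2+X_2^2)^2$. So, as written, you prove the first two assertions only under a strengthened hypothesis. Moreover the discrepancy cannot be waved away as an ``equivalence'': for this very $A$ the conclusion of the first assertion fails (a column with coprime nonunit entries can never equal a power series scalar times a vector $v$ with $v^*v=1$), which shows that the difference between the two Gram matrices is exactly where the content of the hypothesis lies; you must either verify from \cite{ParusinskiRondMatr} that the hypothesis is meant for the larger Gram matrix and prove that statement, or work under the hypothesis as printed --- not identify the two.

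The second gap you concede yourself: whenever the discriminant supplies no factor $\mu_k^2$ --- e.g.\ $m=d$ with $\det A\not\equiv 0$, precisely when $0$ is not a root of the reduced characteristic polynomial --- the diagonalizing $V$ furnished by Theorem \ref{thm:normaltheorem} may be incompatible with the row factorization (your own example $\bigl(\begin{smallmatrix}X_1&X_2\\-X_2&X_1\end{smallmatrix}\bigr)$), and your ``Main obstacle'' paragraph only names two possible repairs (correcting $V$ by a unitary commuting with $\on{diag}(\mu_k)$, or redoing a Hensel/splitting induction in the spirit of Lemma \ref{lem:SplitMat} and Proposition \ref{lem:MainLemma}) without carrying either out; that correction is exactly where the cited proof does its work, so the proposal is a plan with its central step missing. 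Two smaller points: the justification that $\phi_k$ is a unit via ``real zero sets of codimension $\ge 2$'' is not meaningful for formal power series; replace it by the algebraic observation that if some $X_i$ divides $\sum_j\overline{c_j}c_j$ then, reducing each $c_j$ modulo $X_i$, the sum of conjugate squares of the reductions vanishes, which forces all reductions to vanish (look at the lowest homogeneous part and a lexicographically extremal exponent), contradicting coprimality --- with this fix that step is sound. The real case and the well-ordering of the exponents are only sketched, but along lines that look completable once the two main gaps are closed.
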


In general, even if $d=m=1$, one needs the extra assumption on the last nonzero coefficient of 
$\Delta_{A^*A}$, see \cite[Example 3.2]{ParusinskiRondMatr}, though this assumption is automatically satisfied if $n=1$.  
It is in general not possible to have all the entries of $D(X)$ non-negative. 

\subsubsection{Remarks on the proofs}
In \cite{KurdykaPaunescu2008} the authors  first use Theorem \ref{thm:KP1} in order to get, after a modification of 
the parameter space,  the eigenvalues real analytic.  
Then they  solve linear equations describing  the eigenspaces corresponding to the irreducible factors of the characteristic polynomial.  
This requires to monomialize the ideal defined by all the minors of the associated matrices which usually necessitates 
 further blowings-up.  
A similar approach is adapted in  \cite{RainerN}.  
First the eigenvalues are made analytic by blowings-up and then further blowings-up are necessary, for instance to make the coefficients of matrices and their differences normal crossing.  

In  \cite{grandjean2019} Grandjean shows results similar to those of 
\cite{KurdykaPaunescu2008} and \cite{RainerN} but by a different approach.  He does 
not treat the eigenvalues first, but considers directly 
the eigenspaces defined on the complement of the discriminant locus. We refer the reader to \cite{grandjean2019} for details.

The proofs in \cite{ParusinskiRondMatr} also consider the eigenvalues and eigenspaces at the same time and 
only the assumption that the discriminant of the reduced characteristic polynomial is normal crossings is necessary.    
These proofs adapt a strategy similar to the algorithm of the proof of the 
Abhyankar--Jung theorem of \cite{ParusinskiRondA-J} and of the proof of Rellich's theorem \ref{thm:Rellich2} that we presented in 
Section \ref{sec:Rellichproof}.  Given a normal matrix $A(X)\in \on{Mat}_d(\C[[X]])$,  first one subtracts from $A(X)$ the identity matrix times $\on{Tr} A$, so one can assume $\on{Tr} A=0$.  This corresponds to the Tschirnhausen transformation.  Then 
the following proposition replaces Theorem \ref{thm:towardsAJ}.

\begin{proposition}[{\cite[Proposition 2.7]{ParusinskiRondMatr}}]\label{lem:MainLemma} 
Suppose that  the assumptions of Theorem  \ref{thm:normaltheorem} are satisfied and that, moreover, $A=(a_{ij}(X))$ is 
nonzero and $\on{Tr} A=0$.  Then the ideal  $(a_{ij})_{i,j=1,\ldots ,d}$ of $\C [[X]]$   is generated by one of 
$a_{ij}$ and this generator equals a monomial in $X_1, \ldots, X_n$ times a unit.  
\end{proposition}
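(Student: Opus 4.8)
I would reduce first to the case in which the entries $a_{ij}$ have no common factor, and then, assuming that all entries vanish at the origin, reach a contradiction by combining normality with the normal crossings form of $\discr_A$. For the reduction: pass to $B:=g^{-1}A$, where $g\in\C[[X]]$ is the greatest common divisor of all the $a_{ij}$ (it exists, $\C[[X]]$ being a UFD), so that $\gcd_{i,j}b_{ij}=1$. One checks that $B$ is again normal with $\on{Tr}B=0$ and $B\ne0$, and that $\charp_A(Z)=g^{d}\charp_B(g^{-1}Z)$; hence $(\charp_A)_{\on{red}}(Z)=g^{d'}(\charp_B)_{\on{red}}(g^{-1}Z)$ and $\discr_A=g^{\,d'(d'-1)}\discr_B$, where $d':=\deg(\charp_A)_{\on{red}}$. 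Here $d'\ge2$, since $d'=1$ would make $A$ scalar, hence (using $\on{Tr}A=0$) $A=0$. As $\discr_A$ is a monomial times a unit and $\C[[X]]$ is a UFD, so are $g$ and $\discr_B$. It therefore suffices to prove the proposition for $B$, and since $\gcd_{i,j}b_{ij}=1$ that amounts to exhibiting one entry $b_{i_0j_0}$ which is a unit (equivalently $B(0)\ne0$): then $(a_{ij})_{i,j}=g\,(b_{ij})_{i,j}=(g\,b_{i_0j_0})=(a_{i_0j_0})$ and $a_{i_0j_0}$ is a monomial times a unit. So assume, for contradiction, that $B(0)=0$.

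Let $V$ be the zero set of the ideal $(b_{ij})_{i,j}$. Since $\gcd_{i,j}b_{ij}=1$, no height-one prime contains all the $b_{ij}$, so every component of $V$ has codimension $\ge2$. On the other hand $V\subseteq V(\discr_B)$: at $p\in V$ we have $B(p)=0$, so $\charp_B(p)(Z)=Z^{d}$, and the specialization at $p$ of the monic degree-$d'$ factor $(\charp_B)_{\on{red}}$ of $\charp_B$ is a monic degree-$d'$ divisor of $Z^{d}$, namely $Z^{d'}$, whose discriminant vanishes as $d'\ge2$; thus $\discr_B(p)=0$. Writing $\discr_B=X_1^{\alpha_1}\cdots X_n^{\alpha_n}u$ with $u(0)\ne0$, we get $V(\discr_B)=\bigcup_{\{k:\,\alpha_k>0\}}\{X_k=0\}$; if $\discr_B$ is a unit then $V=\emptyset$, contradicting $B(0)=0$, so some $X_k$ divides $\discr_B$.

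The central step is to promote this to $X_k\mid b_{ij}$ for all $i,j$, which contradicts $\gcd_{i,j}b_{ij}=1$ and finishes the proof. Writing $B=B_1+\sqrt{-1}\,B_2$ with $B_1,B_2$ having entries in $\R[[X]]$, one has
\[
    \on{Tr}(BB^{*})\;=\;\sum_{i,j}\big((B_1)_{ij}^{2}+(B_2)_{ij}^{2}\big),
\]
a sum of squares in $\R[[X]]$. I claim $\on{Tr}(BB^{*})$ is itself a monomial times a unit, say $\on{Tr}(BB^{*})=X^{\beta}v$ with $v(0)\ne0$ and $\beta\ne0$ (the monomial is nontrivial because $B(0)=0$). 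Granting this, reduce the displayed identity modulo a variable $X_k$ dividing $X^{\beta}$, inside $\R[[X_1,\dots,\widehat{X_k},\dots,X_n]]$: its fraction field is formally real, so the vanishing sum of squares forces every $(B_1)_{ij}$ and $(B_2)_{ij}$, hence every $b_{ij}$, to be divisible by $X_k$ — contradicting $\gcd_{i,j}b_{ij}=1$.

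The main obstacle is precisely this claim, that $\on{Tr}(BB^{*})$ is a monomial times a unit whenever $\discr_B$ is; this is where normality is indispensable. (For the non-normal $B$ with $b_{12}=X_1$, $b_{21}=X_2$ and all other entries zero, the reduced characteristic polynomial $Z^{2}-X_1X_2$ has discriminant $4X_1X_2$, a monomial times a unit, yet $\on{Tr}(BB^{*})=X_1^{2}+X_2^{2}$ is not, and indeed $(b_{ij})_{i,j}=(X_1,X_2)$ is not principal.) For a normal matrix the coefficients $c_j$ of $\charp_B$ satisfy the a priori bound $|c_j|^{1/j}\lesssim\big(\on{Tr}(BB^{*})\big)^{1/2}$ — obtained from $|\on{Tr}(B^{m})|\le\big(\on{Tr}(BB^{*})\big)^{m/2}$ and the Newton identities exactly as in the proof of Lemma~\ref{lem:dominant}, with $\on{Tr}(BB^{*})$ in the role that $|\tilde a_2|$ plays for hyperbolic polynomials — and this, together with the normal crossings form of $\discr_B$, should confine the zero set of $\on{Tr}(BB^{*})$ to a configuration of coordinate hyperplanes. (An alternative is to localize at the prime $(X_k)$ and use diagonalizability of normal matrices over fields of characteristic zero, but then one must handle with care the primes not stable under coefficient conjugation and the failure of formal reality.) The remaining ingredients — the gcd reduction, the inclusion $V\subseteq V(\discr_B)$, and the concluding formal-reality step — are routine.
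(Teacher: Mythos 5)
Your reductions are correct and genuinely useful: passing to $B=g^{-1}A$ preserves normality, the vanishing trace and nontriviality; the identities $(\charp_A)_{\on{red}}(Z)=g^{d'}(\charp_B)_{\on{red}}(g^{-1}Z)$ and $\discr_A=g^{d'(d'-1)}\discr_B$ hold; in the gcd-one situation the statement does reduce to showing $B(0)\ne 0$; and the endgame is sound: if some variable $X_k$ divides $\on{Tr}(BB^*)=\sum_{i,j}\bigl((B_1)_{ij}^2+(B_2)_{ij}^2\bigr)$, then reducing modulo $X_k$ in $\R[[X]]$ and using that the residue ring is formally real forces $X_k$ to divide every entry. (Two small points: for $d'=1$ you get $\charp_A=Z^d$, so you must argue that a nilpotent \emph{normal} matrix over $\on{Frac}(\R[[X]])(\sqrt{-1})$ vanishes, which uses anisotropy of the Hermitian form rather than ``scalarity''; and the zero-set inclusion should be phrased algebraically as $\discr_B\in\sqrt{(\on{Tr}(BB^*))}$, from which ``monomial times unit'' follows because $\on{Tr}(BB^*)$ then divides a power of $X_1\cdots X_n$.) The genuine problem is that the step you yourself single out --- that $\on{Tr}(BB^*)$ is a monomial times a unit --- is not proved, and it is not a technical remainder: in your normalized situation it is \emph{equivalent} to the proposition (if some $b_{ij}$ is a unit then $\on{Tr}(BB^*)$ is a unit, and conversely your formal-reality argument derives the proposition from the claim). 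So the proposal reorganizes the difficulty rather than resolving it.

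The sketch offered for that claim does not close it. The inequality $|c_j|^{1/j}\lesssim(\on{Tr}(MM^*))^{1/2}$, and even the identity $\on{Tr}(MM^*)=\sum_i|\lambda_i|^2$, are facts about numerical normal matrices proved by unitary diagonalization at a point; elements of $\C[[X]]$ cannot be evaluated at points, and a formal unitary diagonalization is exactly the conclusion of Theorem \ref{thm:normaltheorem}, which Proposition \ref{lem:MainLemma} is designed to prove, so invoking it would be circular. What your argument actually needs is the algebraic statement that each coefficient $c_j$ of $\charp_B$ lies in $\sqrt{(\on{Tr}(BB^*))}$: then modulo any prime containing $\on{Tr}(BB^*)$ one has $\charp_B\equiv Z^d$, hence $(\charp_B)_{\on{red}}\equiv Z^{d'}$ and $\discr_B\equiv 0$ (as $d'\ge 2$), which confines the zero set to the coordinate hyperplanes. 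Establishing $c_j\in\sqrt{(\on{Tr}(BB^*))}$ at an \emph{arbitrary} height-one prime of $\C[[X]]$ is precisely where normality must be exploited formally (say through identities such as $(BB^*)^m=B^m(B^m)^*$ combined with Newton/Lagrange manipulations, or through valuation and arc arguments), and none of this is carried out; the formal-reality tool you rely on is only available at primes of the form $(X_k)$, not at a general irreducible factor of $\on{Tr}(BB^*)$. Until this transfer from pointwise matrix inequalities to $\C[[X]]$ is supplied, the proof is incomplete at its central point.
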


Note that all the coefficients $a_{ij}$ of $A$ have the same weight, 
in contrast to Theorem~\ref{thm:towardsAJ}, where suitable powers of the coefficients of $P$ must be considered.

Finally, the following version of Hensel's lemma for normal matrices replaces the splitting lemma,  Lemma \ref{lem:splitting}. 
This result is a strengthened version of Cohn's version of Hensel's lemma, see \cite[Lemma 1]{cohn:84-88}.

       \begin{lemma}[{\cite[Lemma 2.1]{ParusinskiRondMatr}}]\label{lem:SplitMat}
 Let $A(X)\in \on{Mat}_d(\C[[X]])$ be a normal matrix.      Assume that 
$$
A(0) =   \begin{pmatrix}
  B\at _1 & 0 \\
  0 & B\at _2 
  \end{pmatrix}, 
$$
 with $B\at _i \in \on{Mat}_{d_i} (\C)$, $d=d_1+d_2$,  such that  the characteristic polynomials of $B\at _1$ and 
 $B\at _2$ are coprime.  
Then  there is a unitary matrix  $U \in \on{U}_d (\C[[X]])$, $U(0) = \I$, such that 
   \begin{align}\label{eq:equationforA}
U ^{-1} A U =   \begin{pmatrix}
B_1 & 0 \\
  0 &  B_2 
  \end{pmatrix}, 
  \end{align} 
  and $B_i (0) = B\at _i$, $i=1,2$.  
  \end{lemma}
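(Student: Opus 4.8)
The plan is to realize the desired block decomposition as a fixed point of a contraction on the space of formal power series, following the classical Hensel-lemma scheme but carrying along the extra structure that $U$ must be unitary. First I would recast the problem: write $A(X) = A(0) + N(X)$ where $N(0)=0$, and seek $U = U(X)$ with $U(0)=\I$ conjugating $A$ into block-diagonal form. Decompose everything into the block pattern determined by $(d_1,d_2)$, so that $A(0) = \operatorname{diag}(B^o_1, B^o_2)$ with the characteristic polynomials of $B^o_1$ and $B^o_2$ coprime. The key linear-algebraic input is that the Sylvester-type operator $\mathcal{S}\colon M \mapsto B^o_1 M - M B^o_2$ on the off-diagonal block $(1,2)$ (and its counterpart on the $(2,1)$ block) is invertible precisely because $\operatorname{spec}(B^o_1)\cap\operatorname{spec}(B^o_2)=\emptyset$; this is what lets us solve, order by order in the $X$-adic filtration, for the off-diagonal entries that must be annihilated.

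The main steps, in order, are as follows. (1) Set up the iteration: given the current conjugate $A^{(k)} = (U^{(k)})^{-1} A U^{(k)}$, whose off-diagonal blocks vanish modulo $\fm^{k+1}$ (where $\fm = (X_1,\ldots,X_n)$), choose a correction $U^{(k)} \rightsquigarrow U^{(k)} \exp(E_k)$ with $E_k$ off-diagonal and of order $k+1$, chosen via the invertibility of $\mathcal{S}$ so that the off-diagonal blocks of $A^{(k+1)}$ vanish modulo $\fm^{k+2}$. (2) Because we want $U$ unitary, I would take $E_k$ to be skew-Hermitian; the point where normality of $A$ enters is here — one must check that the skew-Hermitian correction is simultaneously the one that kills the off-diagonal block. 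For a normal matrix the off-diagonal blocks of $A$ in a "spectrally adapted" basis already satisfy a compatibility relation (essentially $[A,A^*]=0$ forces the relevant blocks to be related by conjugate transposition), so solving $\mathcal{S}(E) = -(\text{off-diag block})$ with $E$ skew-Hermitian is consistent. (3) The iteration converges in the $\fm$-adic topology because the corrections have strictly increasing order, so $U := \lim_k U^{(k)}$ exists in $\on{Mat}_d(\C[[X]])$, is unitary (a convergent product of unitaries), satisfies $U(0)=\I$, and conjugates $A$ to block-diagonal form; then $B_i(0) = B^o_i$ is automatic. (4) Finally I would note that the $B_i$ are normal, being blocks of a normal matrix that is block-diagonal.

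The hard part will be Step (2): ensuring that the unitarity constraint and the block-diagonalization constraint can be met \emph{simultaneously} at every stage. Over a general matrix this would fail — one can block-triangularize but not necessarily block-diagonalize by a unitary — so the argument must genuinely exploit $AA^* = A^*A$. Concretely, one has to show that for a normal $A$ the (1,2) and (2,1) off-diagonal blocks of $A^{(k)}$ are conjugate-transpose of each other up to higher-order terms, which then makes the skew-Hermitian solution of the Sylvester equation automatically annihilate both. Tracking that this compatibility is preserved under the iteration — i.e., that $A^{(k+1)}$ is again normal, which it is since it is unitarily conjugate to $A$ — closes the loop, but verifying the order-by-order bookkeeping cleanly is where the real work lies. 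The coprimality hypothesis on the characteristic polynomials of $B^o_1, B^o_2$ is exactly the quantitative ingredient making $\mathcal{S}$ invertible with bounds independent of the order $k$, so no loss of $\fm$-adic depth accumulates.
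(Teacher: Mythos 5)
Your strategy is sound, and the crux you flag in Step (2) does close; since you left it as an assertion, here is the missing verification. Write $A^{(k)}=\begin{pmatrix} B_1 & P\\ Q & B_2\end{pmatrix}$ with $P,Q\equiv 0$ mod $\fm^{k+1}$, where $\fm=(X_1,\ldots,X_n)$, and let $P_{k+1},Q_{k+1}$ be the homogeneous parts of degree $k+1$. Comparing the $(1,2)$ blocks of $A^{(k)}(A^{(k)})^*$ and $(A^{(k)})^*A^{(k)}$ modulo $\fm^{k+2}$ gives
\[
B\at_1 Q_{k+1}^* - Q_{k+1}^* B\at_2 \;=\; (B\at_1)^* P_{k+1} - P_{k+1} (B\at_2)^*.
\]
With $E=\begin{pmatrix} 0 & M\\ -M^* & 0\end{pmatrix}$, killing both off-diagonal blocks at order $k+1$ amounts to $S(M):=B\at_1 M - M B\at_2=-P_{k+1}$ and, after taking adjoints of the $(2,1)$ condition, $T(M):=(B\at_1)^* M - M(B\at_2)^*=-Q_{k+1}^*$. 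The operator $S$ is bijective (disjoint spectra), and $S$ and $T$ commute because $B\at_1,B\at_2$ are normal, being the diagonal blocks of the normal matrix $A(0)$. Hence, if $M$ is the unique solution of $S(M)=-P_{k+1}$, then $S\bigl(T(M)+Q_{k+1}^*\bigr)=T(S(M))+S(Q_{k+1}^*)=-T(P_{k+1})+S(Q_{k+1}^*)=0$ by the displayed identity, so $T(M)=-Q_{k+1}^*$: the single skew-Hermitian correction annihilates both blocks. (Equivalently, conjugate once by a constant unitary diagonalizing $B\at_1$ and $B\at_2$ and check the two equations entrywise.) Note that $Q$ is not literally $P^*$ — already for $d_1=d_2=1$ one only gets $Q_{k+1}$ equal to a unimodular multiple of $P_{k+1}^*$ — so your parenthetical ``related by conjugate transposition'' must be read in this weaker, Sylvester-compatible sense; with that understood, the rest (normality of each $A^{(k)}$, $\fm$-adic convergence of the product of exponentials, unitarity of the limit, $U(0)=\I$, $B_i(0)=B\at_i$) is routine.

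This is a genuinely different route from the one behind the cited result. The survey states the lemma without proof, as a strengthening of Cohn's version of Hensel's lemma, and the argument of \cite{ParusinskiRondMatr} accordingly passes through the non-unitary splitting first: one finds $T$ with $T(0)=\I$ and $T^{-1}AT$ block diagonal (Hensel/Cohn), and then restores unitarity using normality — the two $A$-invariant column modules of $T$ are also $A^*$-invariant and mutually orthogonal, so Gram--Schmidt over $\C[[X]]$ (legitimate because the relevant inner products are conjugation-invariant units with positive constant term, hence admit square roots) produces the unitary $U$ with $U(0)=\I$. That route buys a reduction to a known lemma plus a short unitarization step; yours buys a self-contained Newton-type construction in which unitarity is enforced at every order and the precise role of normality is isolated in the compatibility identity displayed above.
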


\begin{remark}\label{rem:rings}
Theorems \ref{thm:normaltheorem} (for $\K=\C$), \ref{thm:realnormaltheorem} (for $\K=\R$), and \ref{thm:SVD} (for $\K=\R, \C$) 
remain valid if we replace $\K[[X]]$ by a ring $\K\{\!\!\{X\}\!\!\}$ satisfying the following properties:
\begin{enumerate}
\item $\K\{\!\!\{X_1,\ldots, X_n\}\!\!\}$ contains $\K[X_1,\ldots, X_n]$,
\item $\K\{\!\!\{X_1,\ldots, X_n\}\!\!\}$ is a Henselian local ring with maximal ideal $(X_1$, \ldots, $X_n)$,
\item $\K\{\!\!\{X_1,\ldots, X_n\}\!\!\}\cap (X_i)\K[[X_1,\ldots, X_n]]=(X_i)\K\{\!\!\{X\}\!\!\}$ for  
$i=1, \ldots, n$.
\end{enumerate}
Important examples of such subrings are:  the algebraic power series $\K\langle X\rangle$, the 
convergent power series $\K\{ X\}$, and the ring of germs of quasianalytic $\K$-valued functions over $\R$ (i.e. 
$C^\infty$ functions satisfying (3.1) -- (3.6) of \cite{BM2004}). 
\end{remark}

\begin{corollary}[{\cite[Theorem 5.3]{ParusinskiRondMatr}}]
Let $M$ be a manifold defined in one of the following categories:
\begin{enumerate}
\item real analytic; 
\item real Nash;
\item quasianalytic (under the asssumptions (3.1) -- (3.6) of \cite{BM2004}).
\end{enumerate}
Let $A$ be a matrix whose coefficients are regular functions on $M$ (depending on the category) and let  $K$ be a compact subset of $M$. Then there exist a neighborhood $\Omega$ of $K$ and the composite of a finite sequence of blowings-up  with smooth centers $\pi: U\to \Omega$, such that locally on $U$ 
\begin{enumerate}
\item[a.] if $A$ is a complex normal matrix, then $A\circ\pi$ satisfies the conclusion of  Theorem~\ref{thm:normaltheorem};
\item[b.] if $A$ is a real normal matrix, then $A\circ\pi$ satisfies the conclusion of  Theorem~\ref{thm:realnormaltheorem};
\item[c.] if $A$ is a (not necessarily square) matrix, then $A\circ\pi$  satisfies the conclusion of Theorem~\ref{thm:SVD}.
\end{enumerate}
\end{corollary}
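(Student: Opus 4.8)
The plan is to reduce the assertion, via resolution of singularities, to the germ-level diagonalization Theorems~\ref{thm:normaltheorem}, \ref{thm:realnormaltheorem}, and~\ref{thm:SVD}. First I would record two facts. (i) All three conclusions are local on $U$. (ii) By Remark~\ref{rem:rings}, those three theorems hold not only over $\K[[X]]$ but over any ring $\K\{\!\!\{X\}\!\!\}$ with the properties (1)--(3) listed there, and for every point $p$ the germs at $p$ of regular functions in each of the three categories form such a ring (convergent power series $\K\{X\}$ in the real analytic case, algebraic power series $\K\langle X\rangle$ in the Nash case, quasianalytic germs in the quasianalytic case). Consequently it suffices to produce $\pi\colon U\to\Omega$ such that, at every $p\in U$, in suitable local coordinates the discriminant governing the relevant theorem — $\discr_{A\circ\pi}$ in cases (a), (b) and $\discr_{(A^{*}A)\circ\pi}$ in case (c) — is a coordinate monomial times a unit; the algebraic theorems then yield the diagonalizing orthogonal, resp.\ unitary, matrices and the block- or rectangular-diagonal forms with entries that are germs of regular functions at $p$, hence regular near $p$, which is exactly the claim ``locally on $U$''.

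Next I would reduce the three cases to one normal-crossings statement for a single regular function on $M$. Forming characteristic polynomials commutes with substitution, so $\charp_{A\circ\pi}=\pi^{*}\charp_{A}$ coefficientwise and $(A\circ\pi)^{*}(A\circ\pi)=(A^{*}A)\circ\pi$. Since $\pi$ is a modification, $\pi^{*}$ is injective on regular functions; hence distinct irreducible factors of $\charp_{A}$ remain pairwise coprime under $\pi^{*}$ and each remains square-free (being separable over a field of characteristic zero, its discriminant is a nonzero regular function, which pulls back to a nonzero one). Therefore $(\charp_{A\circ\pi})_{\on{red}}=\pi^{*}\big((\charp_{A})_{\on{red}}\big)$, and so $\discr_{A\circ\pi}=\pi^{*}\discr_{A}$ and $\discr_{(A^{*}A)\circ\pi}=\pi^{*}\discr_{A^{*}A}$. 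In case (c) one may also assume $A$ has at least as many rows as columns (otherwise pass to $A^{*}$ and transpose back the singular value decomposition). Thus it remains to find a modification $\pi$ over which $\discr_{A}$ (cases (a), (b)), respectively $\discr_{A^{*}A}$ together with the auxiliary regular function whose monomialization is required for the refined part of Theorem~\ref{thm:SVD} (case (c)), becomes normal crossings.

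Finally I would apply resolution of singularities. Since $K$ is compact, fix $\Omega$ with $K\subseteq\Omega\Subset M$; the function in question is regular on $\Omega$ in the given category. In the real analytic and Nash cases, Hironaka's theorem \cite{Hironaka:1964} provides a finite composite $\pi\colon U\to\Omega$ of blowings-up with smooth global centers — chosen algebraic, hence Nash, in the Nash case — after which the pullback of any prescribed regular function is locally a monomial times a unit; in the quasianalytic case the same is furnished by resolution of singularities in quasianalytic classes satisfying (3.1)--(3.6) of \cite{BM2004}. Blowings-up with smooth centers stay within each of the three categories, and composing a regular function with the blow-down keeps it regular, so $\discr_{A\circ\pi}=\pi^{*}\discr_{A}$ (respectively $\discr_{(A^{*}A)\circ\pi}$ and the auxiliary function) is normal crossings on $U$; applying Theorem~\ref{thm:normaltheorem}, \ref{thm:realnormaltheorem}, or~\ref{thm:SVD} germwise, as in the first step, gives (a), (b), (c). The hard part is precisely this global resolution step — obtaining a \emph{finite} composite of blowings-up with \emph{global} smooth centers over the relatively compact $\Omega$ in each of the three categories, which in the quasianalytic case is the substantial input of \cite{BM2004} — whereas the functoriality of the discriminant under $\pi$ and the transition from formal to convergent/algebraic/quasianalytic germs are routine.
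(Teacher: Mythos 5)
Your plan is correct and follows essentially the route the paper (via \cite{ParusinskiRondMatr}) takes: reduce to the germ-level statements through Remark \ref{rem:rings} (convergent, algebraic, resp.\ quasianalytic germs are admissible rings), use that $\discr_{A\circ\pi}=\pi^*\discr_A$ and $\discr_{(A^*A)\circ\pi}=\pi^*\discr_{A^*A}$ for a modification $\pi$, and apply resolution of singularities over a relatively compact neighborhood of $K$ to make these (and, in case (c), the additional coefficient needed for the refined part of Theorem \ref{thm:SVD}) normal crossings. No gaps of substance; the points you flag as routine (functoriality of the reduced characteristic polynomial under pullback, passage from formal to the three categories of germs) are indeed routine.
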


Here one has to work with real local coordinates, since a complex change of coordinates does not commute with 
complex conjugation and may destroy the assumption that the matrix is normal, as seen in the following example.

\begin{example}[{\cite[Example 6.1]{KurdykaPaunescu2008}}]
The eigenvalues of the symmetric matrix 
    \[
        A(x_1,x_2) = 
        \begin{pmatrix}
            x_1^2 & x_1x_2
            \\
            x_1x_2  & x_2^2
        \end{pmatrix},
    \]
are $0$ and $x_1^2 + x_2^2$ and $(1,x_2/x_1)$ and  $(1,-x_1/x_2)$ are the corresponding eigenvectors. 
 The discriminant of the 
characteristic polynomial $(x_1^2 + x_2^2)^ 2$ is not normal crossings over the reals.  It is so over the 
complex numbers, $(x_1 +i x_2)^ 2 (x_1 -i x_2)^ 2$, but in the new system of coordinates $(x_1 +i x_2, x_1 -i x_2)$ 
the matrix is not normal any longer.  
To diagonalize this family simultanously one has to blow-up the origin. 
\end{example}

\begin{remark}
    Many of the results presented for matrices extend (in appropriate form) to parameterized families of
    unbounded normal or selfadjoint operators 
    in Hilbert space with common domain of definition and compact resolvent. See e.g.\ \cite{AKLM98}, \cite{RainerN}, and references therein. 
\end{remark}

\section{Differentiable roots of hyperbolic polynomials}\label{sec:Bronshtein}

\subsection{Bronshtein's theorem}

Let $I \subseteq \R$ be an open interval and 
\[
    P_a(t)(Z) = Z^d + \sum_{j=1}^d a_j(t)Z^{d-j}, \quad t \in I,
\]
a monic hyperbolic polynomial. 
It is not hard to see (e.g., \cite[Theorem 4.3]{AKLM98} and Remark \ref{rem:orig-proof} below) that if $a_j \in C^d(I)$, $1 \le j \le d$, 
then there exist differentiable functions $\la_i : I \to \R$, $1 \le i \le d$, 
which parameterize the roots of $P_a$, i.e.,
\[
    P_a(t)(Z) = \prod_{i=1}^d (Z-\la_i(t)),\quad t \in I.
\]
Actually, it suffices to assume that the coefficients are of class $C^p$ if $p$ is the maximal multiplicity of the roots.

Bronshtein \cite{Bronshtein79} proved that the roots $\la_i$ have locally bounded derivative:

\begin{theorem}[Bronshtein's theorem I, {\cite{Bronshtein79}}] \label{thm:BronshteinI}\index{Bronshtein's theorem}\index{theorem!Bronshtein's}
    Let $I \subseteq \R$ be an open interval and $Y$ a compact Hausdorff topological space.
    Let $P_a(t,y)$, $(t,y) \in I \times Y$, be a monic hyperbolic polynomial of degree $d$ 
    such that the multiplicity of its roots does not exceed $p$ and
    $\p^k_t a_j(t,y)$, $1 \le j \le d$, $0 \le k \le p$, are continuous functions on $I \times Y$.
    Then, for each compact $K \subseteq I \times Y$, 
    there is a constant $C=C(K)>0$ such that the roots of $P$ can be represented by functions $\la_i(t,y)$ 
    that are differentiable in $t$ and
    satisfy 
    \[
        |\p_t \la_i(t,y)| \le C, \quad (t,y) \in K,\, 1 \le i \le d.
    \]
\end{theorem}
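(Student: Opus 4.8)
The plan is to reduce to one real parameter, to establish a \emph{uniform} local Lipschitz estimate for the increasingly ordered roots by combining the splitting principle (Lemma~\ref{lem:splitting}) with a Glaeser-type inequality applied to the second Tschirnhausen coefficient, and then to transfer this to a differentiable parameterization using the existence of a differentiable choice of roots recalled above.

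Fix $y\in Y$; the increasingly ordered roots $\mu_1(t,y)\le\dots\le\mu_d(t,y)$ of $P_a(t,y)$ are continuous in $(t,y)$, since roots of a monic polynomial depend continuously on coefficients. It suffices to produce, for each compact $K\subseteq I\times Y$, a constant $C=C(K)$ \emph{independent of $y$} such that each $\mu_i(\cdot,y)$ is $C$-Lipschitz on $K_y:=\{t:(t,y)\in K\}$. Indeed, the coefficients being $C^p$ with $p$ an upper bound for the multiplicities, for each $y$ there is a differentiable parameterization $\lambda_1(\cdot,y),\dots,\lambda_d(\cdot,y)$ of the roots; being continuous, near any $t_0$ such a $\lambda$ takes values only among the ordered roots coinciding with $\lambda(t_0)$ at $t_0$, each moving by at most $C|t-t_0|$, so $|\partial_t\lambda(t_0)|\le C$. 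Moreover, as a continuous function on an interval whose upper difference quotient is $\le C$ everywhere is $C$-Lipschitz, it is enough to bound $\limsup_{t\to t_0}|\mu_i(t,y)-\mu_i(t_0,y)|/|t-t_0|$ at each $t_0\in K_y$. (The ordered roots themselves are jointly continuous and Lipschitz in $t$; the role of the extra parameter $y$ is the uniformity of $C$.)

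Fix $t_0\in K_y$ and let $m\le p$ be the multiplicity of $\mu_i(t_0,y)$ as a root of $P_a(t_0,y)$, so $P_a(t_0,y)(Z)=(Z-\mu_i(t_0,y))^m R(Z)$ with $R(\mu_i(t_0,y))\ne0$. The two factors are coprime, so Lemma~\ref{lem:splitting} yields, for $t$ near $t_0$, a factorization $P_a(\cdot,y)=Q\cdot\widetilde R$ into monic real polynomials of degrees $m$ and $d-m$ whose coefficients are analytic functions of $(a_1,\dots,a_d)$, hence of class $C^p$ in $t$, with $C^p$-norms on a neighborhood of $t_0$ bounded in terms of those of the $a_j$ and of the analytic splitting maps, and with $Q(t_0,y)(Z)=(Z-\mu_i(t_0,y))^m$. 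Near $t_0$ the roots of $Q(t,y)$ cluster around $\mu_i(t_0,y)$ while those of $\widetilde R(t,y)$ stay bounded away from it, so $\mu_i(t,y)$ is among the roots of $Q(t,y)$. Thus it suffices, at the collision point $t_0$, to estimate the roots of a monic polynomial $Q$ of degree $m$ whose roots all collapse at $t_0$; if $m=1$ this is trivial, so assume $m\ge2$.

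Passing $Q$ to Tschirnhausen form translates its roots by $c(t)$, the normalized first coefficient, which is $C^p\subseteq C^1$ and contributes $\le\|c'\|_{C^0}|t-t_0|$ to the displacement. The Tschirnhausen coefficients $\tilde a_j$ are $C^p\subseteq C^2$ (since $m\ge2$), vanish at $t_0$, satisfy $\tilde a_2\le0$ throughout, and obey $|\tilde a_j(t)|^{1/j}\le\sqrt2\,|\tilde a_2(t)|^{1/2}$ by Lemma~\ref{lem:dominant}; so, by a standard root bound, every root of the transformed $Q(t,y)$ has modulus $\le2\sqrt2\,|\tilde a_2(t)|^{1/2}$. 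Now $f:=-\tilde a_2\ge0$ is $C^2$ near $t_0$ with $f(t_0)=0$; as $t_0$ is an interior minimum, $f'(t_0)=0$, and Taylor's formula (the elementary Glaeser estimate underlying the theorem) gives $f(t)\le\tfrac12\|\tilde a_2''\|_{C^0}(t-t_0)^2$, whence $|\tilde a_2(t)|^{1/2}\le(\tfrac12\|\tilde a_2''\|_{C^0})^{1/2}|t-t_0|$. Combining, $\limsup_{t\to t_0}|\mu_i(t,y)-\mu_i(t_0,y)|/|t-t_0|$ is bounded in terms of $d$, $p$, and the $C^2$-data of the $\tilde a_j$ and $c$ near $t_0$, which are controlled through the splitting maps by $\sup_K|\partial_t^k a_j|$, $0\le k\le p$. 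A compactness argument --- monic hyperbolic polynomials with bounded coefficients lie in finitely many multiplicity strata, near each of which the splitting of Lemma~\ref{lem:splitting} has uniformly bounded data --- makes $C$ uniform over $K$; by the second paragraph this proves the theorem. The delicate point is exactly this uniformity: Lemma~\ref{lem:splitting} is local and its reparameterizations have norms a priori depending on the base polynomial, so the compactness step is essential; by contrast the Glaeser/Taylor estimate is explicit, and the passage from Lipschitz ordered roots to a differentiable parameterization with bounded derivative is a soft squeezing argument.
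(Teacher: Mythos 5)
Your local, pointwise estimate is fine and in the right spirit: splitting off the cluster of roots equal to $\mu_i(t_0,y)$, passing to Tschirnhausen form, and using Lemma~\ref{lem:dominant} together with the Taylor/Glaeser bound on $\tilde a_2$ (which vanishes to second order at $t_0$) does show that each ordered root has bounded difference quotient at every point, with a constant depending on the $C^2$-data of the split factor near $t_0$. But this is the easy half, essentially the local computation in Remark~\ref{rem:orig-proof}; the assertion of the theorem is a \emph{uniform} bound $C(K)$, and your route to uniformity does not work. The claim that ``near each multiplicity stratum the splitting of Lemma~\ref{lem:splitting} has uniformly bounded data'' is false: the splitting is obtained from the inverse function theorem, its Jacobian determinant is the resultant of the two factors, and both the radius of validity and the derivatives of the splitting maps blow up as that resultant tends to $0$. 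The hypotheses give no positive lower bound on $K$ for the distance between your cluster (the roots \emph{exactly} equal to $\mu_i(t_0,y)$) and the remaining roots: two simple roots, or a $q$-fold root and a simple one with $q<p$, may approach each other arbitrarily closely inside $K$ without the multiplicity ever exceeding $p$. The strata are not compact, and your constant degenerates precisely at their boundaries. For the same reason the case $m=1$ is not ``trivial'' in the uniform sense: there the derivative is $-\p_t P/\p_Z P$ evaluated at the root, and $\p_Z P$ is a product of root differences which can be arbitrarily small on $K$; this is exactly the situation (near-collision rather than collision) that Bronshtein's theorem is about, and your Taylor-at-$t_0$ argument says nothing there because no collision occurs at $t_0$ itself.

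What is needed, and what your proposal assumes away, is quantitative control on intervals whose length is tied to the root separation, i.e.\ to $|\tilde a_2(t_0)|^{1/2}$: this is the content of Bronshtein's Lemmas~4 and~4' (flagged in Remark~\ref{rem:orig-proof} as the hard part) and, in the proof of Theorem~\ref{thm:Bronshtein} given in Section~\ref{sec:Bronshteinproof}, of the renormalization into the compact set $\Hyp_T^0(d)$, the Glaeser inequality (Lemma~\ref{lem:Glaeser}) guaranteeing that $\tilde a_2$ is comparable to $\tilde a_2(t_0)$ on the whole interval $I_A(t_0)$, and the admissibility induction of Proposition~\ref{prop:adm}, which propagates explicit bounds through the splitting instead of invoking compactness. (Compare also Remark~\ref{rem:multiplicityhyp}: in the multiplicity-$p$ setting the uniform constant must involve a separation quantity such as $B$, and even though on a compact $K$ such a quantity is finite, that finiteness enters through this machinery, not through uniform splitting data on the non-compact strata.) Without an argument of this type your constant $C$ is merely finite at each point of $K$, which is vacuous, so the proof as proposed has a genuine gap at its central step.
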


Bronshtein's original proof is not easy to penetrate (cf.\ Remark \ref{rem:orig-proof}). 
A much simpler proof was given by Parusi\'nski and Rainer \cite{ParusinskiRainerHyp}. 
It also gives uniform bounds for the Lipschitz constant of the roots in terms of the coefficients.

\begin{theorem}[Bronshtein's theorem II, {\cite{ParusinskiRainerHyp}}] \label{thm:Bronshtein}\index{Bronshtein's theorem}\index{theorem!Bronshtein's}
    Let $I \subseteq \R$ be an open interval.
    Let $P_a(t)$, $t \in I$, be a monic hyperbolic polynomial of degree $d$ with coefficients $a_j \in C^{d-1,1}(I)$, $1 \le j \le d$.
    Then any continuous root $\la \in C^0(I)$ of $P_a$ is locally Lipschitz 
    and, for any pair of relatively compact open intervals $I_0 \Subset I_1 \Subset I$,
    \begin{equation} \label{eq:Lipconst}
        \on{Lip}_{I_0}(\la) \le C\, \max_{1 \le j \le d} \|a_j\|^{1/j}_{C^{d-1,1}(\ol I_1)},
    \end{equation}
    with $C = C(d) \, \max \{\delta ^{-1}, 1\}$, where $\de := \on{dist}(I_0, \R \setminus I_1)$. 
\end{theorem}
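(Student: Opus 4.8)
The plan is to prove the statement by induction on the degree $d$, using the splitting lemma (Lemma~\ref{lem:splitting}) together with Lemma~\ref{lem:dominant} to handle the points where roots meet. Fix a continuous root $\la \in C^0(I)$ and a point $t_0 \in I_0$; it suffices to bound $|\la(t)-\la(t_0)|$ by $C|t-t_0| \cdot \max_j \|a_j\|^{1/j}_{C^{d-1,1}(\ol I_1)}$ for $t$ near $t_0$, with $C$ of the claimed form. After a Tschirnhausen transformation and a translation in $Z$ we may assume $\la(t_0)=0$ and that $P_a(t_0)$ has $0$ as a root; the affine substitution costs only universal constants in \eqref{eq:Tschirn}. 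Now there are two cases. If $0$ is a root of $P_a(t_0)$ of multiplicity $p' < d$, then $P_a(t_0)$ factors as a product of two coprime monic polynomials, one of which is $Z^{p'}$ up to lower-order perturbation; by Lemma~\ref{lem:splitting} this factorization persists analytically (hence with control of $C^{d-1,1}$ norms) on a neighborhood of $t_0$ whose size is bounded below in terms of $\de$ and the coefficient norms, and $\la$ is a continuous root of the factor of degree $\le p' \le d-1$, so the inductive hypothesis applies. The genuinely new case is when $0$ is a root of full multiplicity $d$ at $t_0$, i.e.\ $P_a(t_0)(Z)=Z^d$; this is where the estimate must be extracted directly.

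For the full-multiplicity case the idea is to exploit that $\tilde a_2(t) \le 0$ with $\tilde a_2(t_0)=0$ controls all other coefficients via Lemma~\ref{lem:dominant}: $|\tilde a_j(t)|^{1/j} \le \sqrt 2\, |\tilde a_2(t)|^{1/2}$, and every root $\mu$ of $P_a(t)$ satisfies $|\mu| \le C(d)\,|\tilde a_2(t)|^{1/2}$. So it is enough to bound $|\tilde a_2(t)|^{1/2} \le C(d)\max\{\de^{-1},1\}\,|t-t_0|\,\max_j\|a_j\|^{1/j}_{C^{d-1,1}}$. Here one uses that $-\tilde a_2$ is a nonnegative function of class $C^{d-1,1} \subseteq C^{1,1}$ (in fact $C^2$ would suffice) that vanishes at $t_0$: a nonnegative $C^{1,1}$ function $g$ with $g(t_0)=0$ has $g'(t_0)=0$ and $g(t) \le \tfrac12 \|g''\|_\infty (t-t_0)^2$, so $g(t)^{1/2} \le (\tfrac12\|g''\|_\infty)^{1/2}|t-t_0|$. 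Applying this to $g=-\tilde a_2$ on $\ol I_1$ gives $|\tilde a_2(t)|^{1/2} \lesssim \|\tilde a_2\|_{C^{1,1}(\ol I_1)}^{1/2}|t-t_0|$, and the $C^{1,1}$ (really $C^2$) norm of $\tilde a_2$ is controlled by $\max_j\|a_j\|_{C^{d-1,1}(\ol I_1)}^{2/2}$ after the Tschirnhausen transformation via \eqref{eq:Tschirn} — this is exactly the source of the homogeneity weights $\|a_j\|^{1/j}$ and of the degree-dependent constant. The factor $\de^{-1}$ enters only through the usual interpolation/localization: to control $\|\tilde a_2''\|_{L^\infty(I_0)}$ purely in terms of a higher-order norm on the slightly larger $I_1$ one pays a power of $\de^{-1}$.

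Combining the two cases by induction, and noting that the case $\tilde a_2 \equiv 0$ forces $P_a(t_0)=Z^d$ identically so $\la\equiv 0$ there (handled trivially), gives differentiability of $\la$ at every point of $I_0$ with the stated derivative bound. The main obstacle, and the place where care is needed, is the full-multiplicity case: one must be sure that after the Tschirnhausen transformation the \emph{local} estimate $g(t)^{1/2}\lesssim \|g\|_{C^2}^{1/2}|t-t_0|$ glues into a \emph{global} Lipschitz bound on all of $I_0$ with a single constant — i.e.\ that a continuous root which repeatedly touches and leaves the locus of high-multiplicity roots still accumulates only a linear amount of variation. This is handled by covering $I_0$ and using that at each return the bound is of the form (constant)$\times$(length of subinterval), together with the fact that $-\tilde a_2$, being $C^{1,1}$ and nonnegative, cannot oscillate faster than its second-derivative bound allows; the uniformity of the constant across the cover is where the explicit dependence $C(d)\max\{\de^{-1},1\}$ in \eqref{eq:Lipconst} comes from. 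I would expect the splitting step to be routine given Lemma~\ref{lem:splitting}, and the quantitative bookkeeping of norms through \eqref{eq:Tschirn} and the interpolation inequality to be the part demanding the most attention.
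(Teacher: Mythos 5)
Your overall architecture (induction on the degree via splitting, dominance of $\tilde a_2$ from Lemma~\ref{lem:dominant}, a Glaeser-type bound at points of full multiplicity) matches the spirit of the paper, and your Case 2 is essentially correct: if $P_{\tilde a}(t_0)=Z^d$ then $\tilde a_2(t_0)=\tilde a_2'(t_0)=0$, so $|\tilde a_2(t)|\lesssim \on{Lip}_{I_1}(\tilde a_2')\,|t-t_0|^2$ and all roots are $O(|t-t_0|)$. The genuine gap is in your Case 1. You assert that when $0$ is a root of multiplicity $p'<d$ the splitting of Lemma~\ref{lem:splitting} persists ``with control of $C^{d-1,1}$ norms'' on a neighborhood of $t_0$ ``whose size is bounded below in terms of $\de$ and the coefficient norms''. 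This is false: the local inverse of the multiplication map $\vh$ in Lemma~\ref{lem:splitting} is defined on, and has derivatives bounded on, a neighborhood whose size is governed by the resultant of the two factors, i.e.\ by the separation of the two clusters of roots at $t_0$. As $t_0$ approaches a point where more roots collide, this resultant tends to $0$, the splitting neighborhood shrinks, and the chain rule produces derivatives of $b_i,c_i$ that blow up like inverse powers of the cluster separation. Hence the inductive hypothesis gets applied on intervals $I_0'\Subset I_1'$ with $\on{dist}(I_0',\R\setminus I_1')\to 0$ and with factor coefficients whose norms are not uniformly bounded, so the constant you obtain is not of the form $C(d)\max\{\de^{-1},1\}$; it degenerates exactly where the theorem has its content. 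Your final gluing paragraph cannot repair this, because what is missing is a \emph{uniform} local Lipschitz constant on $I_0\setminus\{\tilde a_2=0\}$ (and near partial collisions), not the passage from local to global bounds, which is indeed easy once uniformity is known.

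This is precisely the difficulty the paper's proof is built to overcome: the splitting is applied not to $P_{\tilde a}$ itself but to the rescaled polynomial $Q_{\ul a}(Z)=|\tilde a_2|^{-d/2}P_{\tilde a}(|\tilde a_2|^{1/2}Z)$, which lies in the compact set $\Hyp_T^0(d)$, so a finite cover by splitting balls yields functions $\ps_i,\tilde\ps_i$ with uniformly bounded derivatives; the price is that one works on intervals of length comparable to $A^{-1}|\tilde a_2(t_0)|^{1/2}$ and must carry derivative estimates weighted by powers of $|\tilde a_2(t_0)|^{1/2}$. Recording these weighted bounds is the notion of admissibility \eqref{eq:A1}--\eqref{eq:A4}, and the heart of the argument, Proposition~\ref{prop:adm} (with Lemma~\ref{lem:b2a2hyp}, Lemma~\ref{lem:interpolation}, and Lemma~\ref{lem:enough}), shows that admissibility passes to the Tschirnhausen forms of the split factors with the constant multiplied only by $C(d)$, so the induction closes uniformly; Lemma~\ref{lem:ass>adm} then converts the hypotheses of the theorem into an admissible quadruple and produces the stated constant $C(d)\max\{\de^{-1},1\}$. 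Without this rescaled bookkeeping (or an equivalent quantitative device, such as Bronshtein's original hard lemma), your induction does not yield \eqref{eq:Lipconst}.
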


The last claim of the above theorem, the form of the constant $C$, follows from the proof 
of \cite[Theorem 2.1]{ParusinskiRainerHyp}, see also Lemma \ref{lem:ass>adm} below.  
Note that the Lipschitz constant may blow up if we approach the boundary of $I$ as for 
$P_a(t) (Z) = Z^d -t$ and $I=(0,1)$.  We remark also that a system of continuous roots always exists, for hyperbolic polynomials one may order them for instance: 
$\la_1(t) \le \la_2(t) \le \ldots \le \la_d(t)$. 

\begin{remark} \label{rem:multiplicityhyp}
    If $p$ is the maximal multiplicity of the roots of $P_a$, then it is enough that $a_j \in C^{p-1,1}(I)$, $1 \le j \le d$, 
    for any continuous root $\la$ to be locally Lipschitz on $I$. The bound for the Lipschitz constant \eqref{eq:Lipconst} of $\la$ has to be modified; 
    it will also depend on a quantity that measures the ``uniformity'' of having multiplicity at most $p$, namely,  
    \[
        B:= \sup_{t \in I} \frac{\la_d(t)-\la_1(t)}{ \min_{1 \le i \le d-p}(\la_{i+p}(t) - \la_i(t))}, 
    \]
    where $\la_1(t) \le \la_2(t) \le \ldots \le \la_d(t)$, $t \in I$, is an increasing enumeration of the roots.
    See \cite[Section 4.7]{ParusinskiRainerHyp} for details.
\end{remark}

We will give a full proof of Theorem \ref{thm:Bronshtein} in Section \ref{sec:Bronshteinproof}.
A multiparameter version follows immediately (because a function defined on a box that is Lipschitz with respect to each variable is Lipschitz).

\begin{theorem}[{\cite[Theorem 2.2]{ParusinskiRainerHyp}}]
    Let $U \subseteq \R^n$ be open.
    Let $P_a(x)$, $x \in U$, be a $C^{d-1,1}$ family of monic hyperbolic polynomials of degree $d$.
    Then any continuous root $\la \in C^0(U)$ of $P_a$ is locally Lipschitz 
    and, for any pair of relatively compact open subsets $U_0 \Subset U_1 \Subset U$,
    \begin{equation}
        \on{Lip}_{U_0}(\la) \le C(d,n,U_0,U_1)\, \max_{1 \le i \le d} \|a_i\|^{1/i}_{C^{d-1,1}(\ol U_1)}.
    \end{equation}
\end{theorem}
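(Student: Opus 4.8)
The plan is to reduce the multiparameter statement to the one-variable Theorem~\ref{thm:Bronshtein}, which is already proved (or will be proved in Section~\ref{sec:Bronshteinproof}), by the elementary observation that a function on a box which is separately Lipschitz in each variable, with a uniform Lipschitz constant, is jointly Lipschitz. First I would fix the pair $U_0 \Subset U_1 \Subset U$ and, using a Lebesgue-number/compactness argument, cover $\ol{U_0}$ by finitely many closed boxes $Q = \prod_{k=1}^n [c_k, c_k+\ell]$ such that each slightly enlarged box $Q' = \prod_k [c_k - \de, c_k + \ell + \de]$ is contained in $U_1$, for some $\de>0$ depending only on $n$, $U_0$, $U_1$. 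It suffices to bound $\on{Lip}_Q(\la)$ for each such $Q$, since a function that is Lipschitz with a common constant on each box of a finite cover of the connected open set $U_0$ is Lipschitz on $U_0$ with a controlled constant (chain points along segments staying inside the cover, using that $U_0$ is connected — or simply prove the estimate for any two points in a fixed box and then use a standard chaining argument).

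Next, on a fixed box $Q$, I would bound the partial Lipschitz constant in the $x_k$-direction. For fixed values of the other coordinates $\hat x = (x_1,\dots,x_{k-1},x_{k+1},\dots,x_n)$, the slice $t \mapsto P_a(x_1,\dots,x_{k-1},t,x_{k+1},\dots,x_n)$ is a one-parameter $C^{d-1,1}$ family of monic hyperbolic polynomials on the open interval $I := (c_k-\de, c_k+\ell+\de)$, and $t \mapsto \la(\dots,t,\dots)$ is a continuous root of it. Apply Theorem~\ref{thm:Bronshtein} with $I_0 = (c_k, c_k+\ell)$ and $I_1 = I$: we get $\on{Lip}_{I_0}$ of the slice bounded by $C(d)\max\{\de^{-1},1\}\, \max_j \|a_j(\hat x, \cdot)\|_{C^{d-1,1}(\ol I_1)}^{1/j}$. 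Since $\|a_j(\hat x,\cdot)\|_{C^{d-1,1}(\ol I_1)} \le \|a_j\|_{C^{d-1,1}(\ol Q')} \le \|a_j\|_{C^{d-1,1}(\ol U_1)}$ uniformly in $\hat x$, this bound is independent of $\hat x$. Hence $\la$ restricted to $Q$ satisfies $|\p_k \la| \le C(d)\max\{\de^{-1},1\}\, \max_j \|a_j\|_{C^{d-1,1}(\ol U_1)}^{1/j}$ almost everywhere (where the partial derivative exists, which is $\cL^1$-a.e.\ on each slice), for each $k=1,\dots,n$.

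Finally I would assemble these partial estimates: for any two points $x,x' \in Q$, write $x' - x$ as a sum of $n$ axis-parallel segments connecting $x$ to $x'$ through intermediate corner points, all lying in $Q$; applying the partial Lipschitz bound along each segment and summing gives $|\la(x)-\la(x')| \le n\, C(d)\max\{\de^{-1},1\}\, \max_j \|a_j\|_{C^{d-1,1}(\ol U_1)}^{1/j}\, |x-x'|$ (with an extra $\sqrt n$ at worst from the $\ell^1$-versus-$\ell^2$ comparison, harmless since it is absorbed into the constant). Then the chaining argument over the finite box cover of $U_0$ upgrades this to the claimed estimate on all of $U_0$, with $C(d,n,U_0,U_1)$ absorbing $n$, $\sqrt n$, the combinatorics of the cover, and the factor $\max\{\de^{-1},1\}$. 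The only point requiring a little care — the ``main obstacle,'' though it is not serious — is making the reduction to boxes uniform: one must choose the box size and the margin $\de$ depending only on $n$, $U_0$, $U_1$ (not on the family $P_a$), and one must check that the continuous root $\la$, being continuous on $U$, indeed restricts on each slice to a \emph{continuous} root of the slice family, so that Theorem~\ref{thm:Bronshtein} applies verbatim; both are immediate. (If one wishes to track multiplicity as in Remark~\ref{rem:multiplicityhyp}, replace $C^{d-1,1}$ by $C^{p-1,1}$ throughout, at the cost of the extra constant $B$ from that remark.)
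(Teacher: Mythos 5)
Your proposal is correct and follows essentially the same route as the paper, which deduces the multiparameter statement from the one-parameter Theorem~\ref{thm:Bronshtein} precisely via the observation that a function on a box which is Lipschitz in each variable separately, with a uniform constant, is jointly Lipschitz. The box cover, slicewise application of the one-dimensional bound, and chaining you spell out are just the routine details the paper leaves implicit.
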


Recall that $\Hyp(d)$ denotes the space of monic hyperbolic polynomials of degree $d$. 
Then $\Hyp(d)$ can be identified with a closed semialgebraic subset of $\R^d$; see Section~\ref{sec:spaceHd}. 
Consider the map $\la^\uparrow : \Hyp(d) \to \R^d$ which assigns to $P_a$ its increasingly ordered roots. 
Then $\la^\uparrow$ is continuous.

\begin{corollary} 
    Let $U \subseteq \R^n$ be open.
    Then the push forward 
    \[
        (\la^\uparrow)_* : C^{d-1,1}(U,\Hyp(d)) \to C^{0,1}(U,\R^d),  \quad P_a \mapsto \la^\uparrow \o P_a,
    \]
    is bounded.
\end{corollary}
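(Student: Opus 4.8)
The plan is to deduce the corollary directly from the multiparameter Bronshtein theorem (the version stated just above for $C^{d-1,1}$ families on open $U\subseteq\R^n$) together with the elementary observation that the increasingly ordered roots always constitute a continuous system of roots. First I would unwind the definitions. An element of $C^{d-1,1}(U,\Hyp(d))$ is, after identifying $\Hyp(d)$ with its closed semialgebraic image in $\R^d$, precisely a map $x\mapsto a(x)=(a_1(x),\dots,a_d(x))$ with each $a_j\in C^{d-1,1}(U)$ and $P_{a(x)}$ hyperbolic for every $x$; the composition $\la^\uparrow\o P_a$ is the map $x\mapsto\la^\uparrow(P_{a(x)})$, i.e.\ the vector of increasingly ordered roots. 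That this map is continuous is the content of the sentence preceding the corollary (continuity of $\la^\uparrow$ on $\Hyp(d)$), so it already lies in $C^0(U,\R^d)$; what must be shown is that it lands in $C^{0,1}(U,\R^d)$ and that $(\la^\uparrow)_*$ is bounded as a map of the relevant function spaces.

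The key step is to apply the multiparameter Bronshtein theorem to each coordinate. Fix $P_a\in C^{d-1,1}(U,\Hyp(d))$. The increasingly ordered roots $\la^\uparrow_1(x)\le\dots\le\la^\uparrow_d(x)$ form a continuous system of roots of the $C^{d-1,1}$ family $P_a(x)$, so each $\la^\uparrow_i\in C^0(U)$ is a continuous root in the sense of that theorem, hence locally Lipschitz, with
\[
\on{Lip}_{U_0}(\la^\uparrow_i)\le C(d,n,U_0,U_1)\,\max_{1\le j\le d}\|a_j\|^{1/j}_{C^{d-1,1}(\ol U_1)}
\]
for every pair $U_0\Subset U_1\Subset U$. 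Taking the maximum over $i=1,\dots,d$ (the constant does not depend on $i$) gives the same bound for $\on{Lip}_{U_0}(\la^\uparrow\o P_a)$ with respect to, say, the $\ell^\infty$ norm on $\R^d$. Combined with a pointwise sup bound on the roots themselves — every root of $P_{a(x)}$ satisfies $|\la^\uparrow_i(x)|\le 2\max_j|a_j(x)|^{1/j}$, a standard Cauchy-type estimate, which is dominated by $2\max_j\|a_j\|^{1/j}_{C^{d-1,1}(\ol U_1)}$ on $\ol U_1$ — one controls the full $C^{0,1}(\ol U_0,\R^d)$ seminorm-plus-sup by a constant times $\max_{1\le j\le d}\|a_j\|^{1/j}_{C^{d-1,1}(\ol U_1)}$, which is exactly the statement that $(\la^\uparrow)_*$ is bounded.

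A small point to be handled carefully is what ``bounded'' means here, since $\Hyp(d)$ is not a vector space and $P_a\mapsto\max_j\|a_j\|^{1/j}$ is not a norm but a $1$-homogeneous (under the scaling $P_a(Z)\mapsto s^{-d}P_a(sZ)$) ``gauge''; accordingly boundedness of $(\la^\uparrow)_*$ should be read as the estimate that on each $U_0\Subset U_1\Subset U$ the $C^{0,1}$ data of $\la^\uparrow\o P_a$ are $\lesssim_{d,n,U_0,U_1}\max_j\|a_j\|^{1/j}_{C^{d-1,1}(\ol U_1)}$, uniformly in $P_a$. I do not expect a genuine obstacle: the entire analytic difficulty is already contained in the multiparameter Bronshtein theorem, which we are entitled to invoke. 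The only things to verify are bookkeeping items — that ordering produces a continuous (indeed each-coordinate-continuous) system of roots so the theorem applies coordinatewise, that the constant is independent of the coordinate index $i$, and that the pointwise sup bound on the roots is correctly combined with the Lipschitz bound to get boundedness in the stated function-space sense.
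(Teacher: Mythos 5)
Your argument is exactly the paper's (implicit) one: the corollary is stated as an immediate consequence of the preceding multiparameter theorem, applied coordinatewise to the continuous system given by increasing ordering, with the resulting uniform Lipschitz estimate (plus the elementary sup bound on roots) yielding boundedness of $(\la^\uparrow)_*$ on the local seminorms. The proposal is correct and takes essentially the same route, so there is nothing further to add.
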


It is natural to ask if the map $(\la^\uparrow)_*$ is continuous.
The answer is no:

\begin{example}\label{ex:Br-continuity}
    Let $f(t) := t^2$ and $f_n(t) := t^2 + 1/n^2$, $n \ge 1$.
    Then, for all $k\in \N$ and each bounded open interval $I \subseteq \R$,  $\|f-f_n\|_{C^k(\ol I)} = 1/n^2 \to 0$ as $n \to \infty$.
    Let $\la$ and $\la_n$ be the positive square roots of $f$ and $f_n$, respectively: 
    $\la(t) := |t|$ and $\la_n(t):= \sqrt{t^2 + 1/n^2}$. Then, for each bounded open interval $I \subseteq \R$ that contains $0$, 
    \begin{align*}
        \Lip_{I}(\la - \la_n) &\ge 
        \sup_{0< t \in I} \Big|\frac{(\la(t) - \la_n(t)) - (\la(0)-\la_n(0))}{t} \Big| 
        \\
                              &=  \sup_{0< t \in I} \Big|\frac{t - \sqrt{t^2 + \frac{1}{n^2}} + \frac{1}n}{t}\Big|
                              \ge \Big|\frac{\frac{1}n- \sqrt{\frac{1}{n^2}+ \frac{1}{n^2}} + \frac{1}n}{\frac{1}n}\Big| 
                              = 2-\sqrt 2,
    \end{align*}
    for large enough $n$.
    Consequently, the map $(\la^\uparrow)_* : C^k(\R,\Hyp(2)) \to C^{0,1}(\R,\R^2)$ (where $k\ge 2$) 
    is not continuous.
\end{example}

But $(\la^\uparrow)_* : C^{2}(\R,\Hyp(2)) \to W^{1,p}_{\on{loc}}(\R,\R^2)$ is continuous for each $1 \le p< \infty$:

\begin{remark}\label{rem:Br-continuity}
 Let $f \in C^{2}(\mathbb R, [0,\infty))$ and  $f_n \in C^{2}(\mathbb R, [0,\infty))$, $n \ge 1$, 
    be such that $\|f-f_n\|_{C^{2}(\overline I)} \to 0$ as $n \to \infty$, for each 
    bounded open interval $I \subseteq \mathbb R$.
    Let $\lambda$ and $\lambda_n$ be the nonnegative square roots of $f$ and $f_n$, respectively.

    Then $\|\lambda - \lambda_n\|_{L^\infty(I)} \to 0$ as $n\to \infty$, since
    \[
        |\lambda(t)-\lambda_n(t)|^2 \le |\lambda(t)-\lambda_n(t)||\lambda(t)+\lambda_n(t)| = |f(t)-f_n(t)|.
    \]

    By \Cref{thm:Bronshtein}, $\lambda$ and $\lambda_n$ are locally Lipschitz, thus differentiable almost everywhere, 
    and, for intervals $I_0 \Subset I_1$ and almost every $t \in I_0$, 
    \[
        |\lambda'(t)-\lambda_n'(t)| \le C(I_0,I_1) \Big( \|f\|_{C^{1,1}(\overline  I_1)}^{1/2} 
        + \sup_{n\ge 1} \|f_n\|_{C^{1,1}(\overline  I_1)}^{1/2}\Big).
    \]
    We will show that $\lambda_n' \to \lambda'$ pointwise almost everywhere as $n \to \infty$. 
    Thus the dominated convergence theorem implies that
    $\|\lambda'-\lambda_n'\|_{L^p(I_0)} \to 0$ as $n \to \infty$, for each $1 \le p < \infty$.

    If $f(t_0) \ne 0$, then it is easy to see that $\lambda_n'(t_0) \to \lambda'(t_0)$ as $n \to \infty$.
    Let $t_0$ be an accumulation point of the zero set of $f$. Then $f(t_0)=f'(t_0) = f''(t_0)=0$.
    Fix $\epsilon>0$. There exists $\delta >0$ such that $\|f''\|_{L^\infty(J)} \le \epsilon^2/2$, 
    where $J := \{t \in \mathbb R : |t-t_0|< \delta\}$, and there exists $n_0 \ge 1$ such that 
    $\|f'' - f_n''\|_{L^\infty(J)} \le \epsilon^2/2$ and $|f_n(t_0)| \le \delta^2 \epsilon^2$, for $n \ge n_0$. 
    If, moreover, $f_n(t_0) \ne 0$ then \Cref{lem:Glaeser} (applied to $f_n$ with $I = J$ and $M = \epsilon$) yields
    $|\lambda_n'(t_0)| \le \epsilon$. This implies the assertion because the isolated zeros of $f$ and $f_n$ 
    form a set of measure zero.    
\end{remark}

\begin{open}\label{op:Br-continuity}
    Is this also true for $d\ge 3$?\footnote{This was recently proved in \url{https://arxiv.org/abs/2410.01321}.}
\end{open}

It turns out that the derivatives of a differentiable choice of the roots not only are bounded but 
they are continuous, provided that the coefficients are $C^p$, where $p$ is the maximal multiplicity of the roots.
This was first observed by Colombini, Orr\'u, and Pernazza \cite{ColombiniOrruPernazza12} and a 
short proof was given by Parusi\'nski and Rainer \cite{ParusinskiRainerHyp}. 

\begin{theorem}[{\cite[Theorem 2.4]{ParusinskiRainerHyp}}] \label{thm:C1roots}
    Let $P_a(t)$, $t \in I$, be a $C^p$ curve of monic hyperbolic polynomials, where $p$ is 
    the maximal multiplicity of the roots.
    Then:
    \begin{enumerate}
        \item Any continuous root $\la$ of $P_a$ has left- and right-sided derivatives $\la^{\prime\pm}$ at each $t \in I$.
        \item $\la^{\prime-}$ and $\la^{\prime+}$ are continuous in the following sense: for every $t_0 \in I$, $\lim_{t \to t_0^-} \la^{\prime \pm}(t) = \la^{\prime-}(t_0)$ 
            and $\lim_{t \to t_0^+} \la^{\prime \pm}(t) = \la^{\prime+}(t_0)$. 
        \item There exist differentiable systems of the roots, any differentiable root is $C^1$.
    \end{enumerate}
\end{theorem}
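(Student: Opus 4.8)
The plan is to prove the three claims of Theorem~\ref{thm:C1roots} together, reducing everything to a local analysis near a point $t_0 \in I$ where a given continuous root $\la$ meets other roots. The starting observation is that away from collision points the root $\la$ is locally the unique simple root of $P_a$ in a neighborhood, so by the implicit function theorem $\la$ is as smooth as the coefficients there; in particular it is $C^1$ on the open set where it is simple, and there the derivative is given by the usual formula $\la'(t) = -\p_t P_a(t)(\la(t))/\p_Z P_a(t)(\la(t))$. All the work is therefore concentrated at the (closed) set of $t$ where $\la(t)$ is a multiple root.

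First I would establish existence of one-sided derivatives at a collision point $t_0$. After the Tschirnhausen transformation centered at $\la(t_0)$ and a translation in $t$, we may assume $t_0 = 0$ and $\la(0)=0$, and we split off (using the splitting Lemma~\ref{lem:splitting}) the factor $P_b$ of $P_a$ whose roots all tend to $0$ as $t\to 0$; this factor has degree equal to the multiplicity $m \le p$ of $\la(0)$, and its coefficients $b_j$ are still $C^p$, hence $C^m$, and vanish at $0$ together with enough derivatives so that $b_j(t) = O(t^j)$ by a Taylor estimate (here one uses that $b_1 \equiv 0$ and that $-2 b_2 = \sum (\text{roots})^2 \ge 0$, exactly as in Lemma~\ref{lem:dominant} and Section~\ref{sec:Tschirnhausen}). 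By Bronshtein's Theorem~\ref{thm:Bronshtein} (in the form of Remark~\ref{rem:multiplicityhyp}, applied to $P_b$), the roots of $P_b$, and in particular $\la$, are locally Lipschitz near $0$; so $\la(t)/t$ is bounded as $t \to 0^{\pm}$. To upgrade boundedness to existence of the one-sided limits, I would use the rescaling / blow-up trick from \cite{ParusinskiRainerHyp}: the rescaled polynomials $t^{-m} P_b(t)(t Z)$ lie in a fixed compact family of monic hyperbolic polynomials (by the estimate $b_j(t)=O(t^j)$), and by the continuity statement one extracts, along $t \to 0^+$, a limiting hyperbolic polynomial whose roots are the cluster values of $\la(t)/t$; an induction on the degree $m$, combined with the fact that the limiting polynomial has strictly smaller multiplicity unless it is $Z^m$, forces the cluster set to be a single point, giving $\la^{\prime+}(0)$ (and symmetrically $\la^{\prime-}(0)$). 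I expect this step — pinning down that the rescaled limit is unique — to be the main obstacle, since it is where the hyperbolicity is genuinely used and where one must rule out oscillation of $\la(t)/t$; the key input is that a hyperbolic polynomial has no "rotation", so a subsequential limit cannot be escaped from.

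Next, the continuity statement (2): for $t$ near $t_0$ with $\la(t)$ simple, $\la'(t)$ is given by the IFT formula, and one checks that as $t \to t_0^+$ this formula, after the same rescaling $Z \mapsto \la(t_0) + (t-t_0)Z$, converges to the corresponding quantity for the limit polynomial, i.e. to $\la^{\prime+}(t_0)$; the case where $\la(t)$ is itself multiple for $t$ in a sequence approaching $t_0$ is handled by applying (1) at those points and again passing to the rescaled limit. The uniformity needed here is precisely the uniform membership in a compact family of hyperbolic polynomials, which is supplied by Lemma~\ref{lem:dominant}.

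Finally, for (3): ordering the roots increasingly, $\la_1 \le \cdots \le \la_d$, gives continuous roots to which (1) and (2) apply, so each $\la_i$ has continuous one-sided derivatives; but a continuous function on an interval with left and right derivatives everywhere that are each one-sidedly continuous, and which agree off a nowhere-dense set (the non-collision set is open and dense, and there $\la_i^{\prime-}=\la_i^{\prime+}$), is automatically $C^1$ — so the ordered roots are $C^1$, proving existence of a $C^1$ system. For the last assertion, if $\la$ is any differentiable root, then on each component of the simple set it coincides with one of the $\la_i$ and is $C^1$, while at a collision point $t_0$ differentiability plus (2) forces $\la^{\prime-}(t_0)=\la'(t_0)=\la^{\prime+}(t_0)$ and $\lim_{t\to t_0}\la'(t)=\la'(t_0)$, so $\la'$ is continuous at $t_0$ as well. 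The only genuinely delicate point is (1); (2) and (3) are then formal consequences plus bookkeeping.
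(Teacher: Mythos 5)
Your treatment of part (1) follows the right mechanism -- translate, split off the factor $P_b$ collecting the roots colliding with $\la(t_0)$, get the Lipschitz bound from Bronshtein's theorem (Remark~\ref{rem:multiplicityhyp}), and rescale $Z\mapsto (t-t_0)W$; this is exactly Bronshtein's argument recalled in Remark~\ref{rem:orig-proof}. But the step you yourself flag as the main obstacle is left open, and the mechanism you propose for it (``induction on $m$, the limit has smaller multiplicity unless it is $Z^m$, no rotation'') is not an argument. Here is how it is actually closed. Since the roots of $P_b$ are Lipschitz and vanish at $t_0$, each coefficient satisfies $b_j(t)=O((t-t_0)^j)$ (note $b_1\not\equiv 0$ in general, so either pass to the Tschirnhausen form of $P_b$ or simply use the Lipschitz roots; your appeal to $b_1\equiv 0$ is a slip, though harmless). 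Because $b_j\in C^j$, Taylor's formula then forces $b_j^{(i)}(t_0)=0$ for $i<j$, hence $(t-t_0)^{-j}b_j(t)$ \emph{converges}; so the rescaled polynomial converges -- not merely subconverges -- to one fixed hyperbolic polynomial of degree $m$, the same from both sides (this is \eqref{eq:Bmethod2}--\eqref{eq:Bmethod3}). Then the cluster set of the continuous function $t\mapsto(\la(t)-\la(t_0))/(t-t_0)$ as $t\to t_0^+$ is a connected subset of the finite zero set of that limit polynomial, hence a single point, which is $\la^{\prime+}(t_0)$. Without this convergence-plus-connectedness step (or an equivalent), you have not excluded oscillation of the difference quotient among the several roots of the limit polynomial, which is precisely the issue.

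Part (3) as you argue it is wrong. For $P_a(t)(Z)=Z^2-t^2$ the increasingly ordered roots are $\mp|t|$, which are not differentiable at $0$; so ordering does not produce a differentiable system. Your auxiliary claim -- that a continuous function whose one-sided derivatives exist, are one-sidedly continuous, and agree off a nowhere dense set must be $C^1$ -- is false, with $|t|$ itself as a counterexample. The existence of a differentiable system is a genuine selection problem: at a collision point the unordered tuple of right-sided derivatives of the colliding roots coincides with the tuple of left-sided ones (both are the roots, with multiplicity, of the limit polynomial \eqref{eq:Bmethod3}, which as noted above is the same from both sides), and one must glue left and right branches so that these derivative values match; this gluing argument, carried out in \cite{ParusinskiRainerHyp} and \cite{ColombiniOrruPernazza12}, is absent from your proposal. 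By contrast, your deduction that any differentiable root is $C^1$, granting (1) and (2), is correct; but note that (2) itself also requires a quantitative argument (locally uniform convergence of the rescaled polynomials), not just ``bookkeeping.''
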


If the coefficients in Theorem \ref{thm:C1roots} are of class $C^{2p}$, then 
there are even $C^1$ systems of the roots such that their derivatives of second order exist everywhere.
This is due to Colombini, Orr\'u, and Pernazza \cite{ColombiniOrruPernazza12} following the strategy 
of Kriegl, Losik, and Michor \cite{KLM04} who needed $C^{3p}$ coefficients. 
In general, these results cannot be improved, even for $C^\infty$ coefficients.

\begin{example} \label{ex:optimalhyp}
    (1) The function $f : \R \to [0,\infty)$ given by 
    \[
        f(t) := 
        \begin{cases}
            e^{-1/|t|} (\sin^2(\pi/|t|)+e^{-1/t^2})  &\text{ if } t\ne 0,
            \\
            0 &\text{ if } t= 0,
        \end{cases}
    \]
    is $C^\infty$, but no solution of $Z^2=f(t)$ is $C^{1,\al}$ for any $\al>0$ (cf.\ \cite{BBCP06}). 
    It was shown in \cite{BBCP06} that, given any modulus of continuity $\om$, there exists a $C^\infty$ function on $\R$ (depending on $\om$)
    such that no solution of $Z^2=f(t)$ is $C^{1,\om}$.

    (2) A nonnegative $C^{1,1}$ function $g$ such that no solution of $Z^2 = g(t)$ is differentiable at $t=0$ is  
    \[
        g(t) := 
        \begin{cases}
            t^2 \sin^2(\log |t|)  &\text{ if } t\ne 0,
            \\
            0 &\text{ if } t= 0.
        \end{cases}
    \] 

    (3) Replacing $t^2$ by $t^4$ in the definition of $g$ 
    we get a $C^{3,1}$ function $h$ such that no solution of $Z^2 = h(t)$ is twice differentiable at $t=0$.  

    (4) Clearly, the solutions of our equation cannot in general be locally Lipschitz if the right-hand side is not $C^{1,1}$, 
    e.g., $Z^2 = t^{1+\al}$, where $\al \in (0,1)$.

    We refer to \cite{ColombiniOrruPernazza12} for other illuminating examples (including hyperbolic polynomials of degree $\ge 3$).
\end{example}

Wakabayashi \cite{Wakabayashi86} gave a different (complex analytic) proof of a more general H\"older version of Bronshtein's theorem
which had been announced by Ohya and Tarama \cite{OhyaTarama86}. 
A proof based on Bronshtein's original method can be found in Tarama \cite{Tarama06}.

\begin{theorem}[Bronshtein's theorem III, {\cite[Theorem 1.4]{Tarama06}}] \label{thm:BronshteinIII}\index{Bronshtein's theorem}\index{theorem!Bronshtein's}
    Let $I \subseteq \R$ be an open bounded interval.
    Let $P_a(t)$, $t \in I$, be a monic hyperbolic polynomial of degree $d$ 
    such that the multiplicity of its roots does not exceed $p\ge 2$ and $a_j \in C^{k,\al}(I)$, $1 \le j \le d$.
    Then each continuous root $\la$ of $P_a(t)$ is locally H\"older continuous of index 
    \[
        \be:=\min \big\{1,\tfrac{k+\al}p\big\}    
    \]
    on $I$. 
    Moreover,
    if the coefficients form a bounded subset of  
    $C^{k,\al}(\ol I)$,
    then the continuous roots form a bounded subset of $C^{0,\be}(\ol J)$, for each relatively compact open 
    subinterval $J \Subset I$,
    as long as the polynomial is hyperbolic with the multiplicity of the roots uniformly not exceeding $p$, i.e., 
    $B$ defined in Remark \ref{rem:multiplicityhyp} is finite.
\end{theorem}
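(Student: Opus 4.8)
The plan is to reduce the Hölder estimate to the Lipschitz estimate of Bronshtein's theorem II (Theorem~\ref{thm:Bronshtein}) by a scaling argument, exactly in the spirit of how Glaeser-type inequalities are bootstrapped. Fix a relatively compact open subinterval $J \Subset I$ and a point $t_0 \in J$. I want to bound $|\la(t)-\la(t_0)|$ by $C|t-t_0|^\be$ for $t$ near $t_0$, with $\be = \min\{1,(k+\al)/p\}$. The idea is to freeze attention on the cluster of roots near $\la(t_0)$ and to rescale time so that the frozen polynomial becomes a polynomial whose coefficients of class $C^{p-1,1}$ are controlled; then Theorem~\ref{thm:Bronshtein} (or rather Remark~\ref{rem:multiplicityhyp}, since only the maximal multiplicity $p$ matters) gives a Lipschitz bound after rescaling, which unwinds to the desired Hölder bound before rescaling.

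Concretely, first I would localize: by a Tschirnhausen transformation centered at $\la(t_0)$ we may assume $\la(t_0)=0$ and that $0$ is a root of multiplicity $m\le p$ at $t=t_0$; split off (using the splitting Lemma~\ref{lem:splitting}) the factor $P_b$ whose roots at $t_0$ are exactly the ones equal to $0$, so $\deg P_b = m$ and all roots of $P_b(t_0)$ vanish. The coefficients $b_j$ of $P_b$ are then in $C^{k,\al}$ near $t_0$ (splitting preserves the regularity of the coefficients), and $b_j(t_0)=0$ for all $j$. Now for $|t-t_0|\le r$, Taylor's formula with the $C^{k,\al}$ hypothesis gives $|b_j(t)| \le C\,|t-t_0|^{k+\al}$ when $k+\al \le p$ — more precisely $|b_j(t)| \le C\,|t-t_0|^{\min\{j,k+\al\}}$, but the relevant comparison is against $|t-t_0|^{j\be}$. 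The point is that for a monic polynomial of degree $m$ with all roots $\mu_1,\dots,\mu_m$, one has $\max_i|\mu_i| \lesssim \max_j |b_j|^{1/j}$, which here yields the plain continuity estimate $|\la(t)| \lesssim |t-t_0|^{\be}$; but continuity is not yet the Hölder bound with the correct uniform constant, and more importantly I want the estimate to be stable under taking a smaller interval, so I pass to the scaling argument.

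For the scaling: fix $t_0$ and a small scale $\rho>0$; set $\la_\rho(s) := \rho^{-\be} \la(t_0 + \rho s)$ for $s$ in a fixed bounded interval, which is a continuous root of the rescaled polynomial $\widetilde P(s)(Z) = \rho^{-m\be} P_b(t_0+\rho s)(\rho^\be Z)$, a monic polynomial of degree $m$ whose $j$-th coefficient is $\rho^{-j\be} b_j(t_0+\rho s)$. Using $|b_j(t_0+\rho s)| \lesssim (\rho|s|)^{\min\{j,k+\al\}}$ and that $j\be \le \min\{j,k+\al\}$ (because $\be\le 1$ and $j\be \le m\be \le p\be \le k+\al$ when $\be=(k+\al)/p$, while if $\be=1$ then $k+\al\ge p\ge j$), one checks that the rescaled coefficients are bounded in $C^{p-1,1}$ of a fixed interval, uniformly in $\rho$ and $t_0 \in J$ — here the $C^{k,\al}$ bound on the $b_j$ enters to control derivatives up to order $p-1$ of the rescaled coefficients. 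The hyperbolicity and the multiplicity bound $\le p$ are preserved under scaling, and the uniformity quantity $B$ of Remark~\ref{rem:multiplicityhyp} is scale-invariant. Then Theorem~\ref{thm:Bronshtein}/Remark~\ref{rem:multiplicityhyp} gives $|\la_\rho'(s)|\le C$ on the fixed interval with $C$ independent of $\rho$ and $t_0$, hence $|\la_\rho(s)-\la_\rho(0)| \le C|s|$, i.e. $|\la(t_0+\rho s)-\la(t_0)| \le C\rho^\be |s|$; choosing $\rho \sim |t-t_0|$ and $|s|\sim 1$ yields $|\la(t)-\la(t_0)| \le C|t-t_0|^\be$, with $C$ depending only on $d$, on $\dist(J,\partial I)$, on the $C^{k,\al}(\ol I)$-bound of the coefficients, and on $B$. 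Summing over the at most $d$ clusters (the other roots stay bounded away from $\la(t_0)$ on a small interval, where $\la$ is even Lipschitz by the standard splitting-plus-implicit-function argument) completes the estimate on $J$; the boundedness statement is then immediate since all constants were uniform.

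The main obstacle I anticipate is the careful bookkeeping in the scaling step, specifically verifying that the rescaled coefficients $s\mapsto \rho^{-j\be}b_j(t_0+\rho s)$ are bounded in $C^{p-1,1}$ uniformly in $\rho$: differentiating brings down powers $\rho^{\ell}$ against $b_j^{(\ell)}(t_0+\rho s)$, and one must use that $b_j$ vanishes to high order at $t_0$ (so that $b_j^{(\ell)}(t_0+\rho s)$ is itself small, of size $\lesssim (\rho|s|)^{\min\{j,k+\al\}-\ell}$ for $\ell < k+\al$) together with the inequality $j\be\le k+\al$ to absorb the negative powers of $\rho$. The borderline cases $\be=1$ (where one recovers the Lipschitz theorem essentially directly) and the fractional part of $k+\al$ need a little extra care, and one should invoke the $C^{k,\al}$ modulus of continuity of the top derivative $b_j^{(k)}$ rather than just boundedness. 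This is precisely the kind of estimate where the uniform-constant form of Theorem~\ref{thm:Bronshtein} (the explicit dependence $C=C(d)\max\{\delta^{-1},1\}$) is indispensable, since without it the scaling argument would not close.
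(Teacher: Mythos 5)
First, note that the survey does not actually prove Theorem \ref{thm:BronshteinIII}: it is quoted from Tarama \cite{Tarama06} (whose proof follows Bronshtein's original method and runs through estimates of the type \eqref{eq:Tarama}), with an alternative complex-analytic proof due to Wakabayashi \cite{Wakabayashi86}. So your proposal has to be judged on its own merits, and there it has a fatal gap.

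The gap is in the central rescaling step. You claim that the rescaled coefficients $s\mapsto \rho^{-j\beta}b_j(t_0+\rho s)$ are bounded in $C^{p-1,1}$ of a fixed interval, uniformly in $\rho$, so that Theorem \ref{thm:Bronshtein} (in the form of Remark \ref{rem:multiplicityhyp}) applies. But the interesting case of Theorem \ref{thm:BronshteinIII} is precisely $k+\alpha<p$, i.e.\ $\beta<1$, and then the coefficients are only $C^{k,\alpha}$ with $k\le p-1$ (possibly $k<p-1$): the derivatives $b_j^{(\ell)}$ for $\ell>k$ do not exist, and no reparameterization $t=t_0+\rho s$ can create them, since composition with an affine map preserves the differentiability class exactly. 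Quantitatively, what the vanishing of $b_j$ at $t_0$ (granting it) buys you is uniform boundedness of the rescaled family in $C^{k,\alpha}$ at unit scale: the $p$-th order difference quotients of $\rho^{-j\beta}b_j(t_0+\rho\,\cdot)$ at step $h$ are of size $\rho^{k+\alpha-j\beta}h^{k+\alpha}$, which is $O(1)$ for $h\sim 1$ but is \emph{not} $O(h^{p})$ as $h\to 0$; hence the family is not bounded in $C^{p-1,1}$, and the Lipschitz theorem cannot be invoked. In other words, the scaling trades smallness of the interval against smallness of the coefficients, but it cannot upgrade $C^{k,\alpha}$ to the $C^{p-1,1}$ regularity that Bronshtein's theorem II genuinely requires; a proof of the H\"older version must work directly at the lower regularity (as Tarama and Wakabayashi do), not by reduction to the Lipschitz case.

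Two further points are underdeveloped, though they are secondary. The estimate $|b_j(t)|\lesssim|t-t_0|^{\min\{j,k+\alpha\}}$ is not a consequence of Taylor's formula and $b_j(t_0)=0$ alone: it requires the vanishing of the existing derivatives $b_j^{(\ell)}(t_0)$, $\ell\le\min\{j-1,k\}$, which is a consequence of hyperbolicity (this is exactly the structural lemma in Bronshtein's original argument described in Remark \ref{rem:orig-proof}, and it is where Glaeser-type inequalities enter); it must be proved, not assumed. Finally, your treatment of points where the roots of the cluster are merely close but distinct (``splitting-plus-implicit-function, where $\lambda$ is even Lipschitz'') ignores that the constants in that argument blow up as the gap between roots shrinks; the whole difficulty of Bronshtein-type theorems is obtaining estimates uniform across this degeneration, typically by proving a bound like $|\lambda'(t)|\lesssim\operatorname{dist}(t,Z)^{\beta-1}$ near the degeneracy set $Z$ and integrating, and your proposal does not supply such a mechanism.
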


The main motivation of Bronshtein for proving his theorem was to show Gevrey well-posedness of the 
Cauchy problem with multiple characteristics; see \cite{Bronshtein80}. 
To this end, he deduces 
from the Lipschitz continuity of the roots of $P_a(t)$ the bound 
\[
    \Big|\frac{\p_t P_a(t)(z)}{P_a(t)(z)}\Big| \lesssim |\Im (z)|^{-1}, \quad 0< |\Im (z)| \le 1.
\]
In the setting of Theorem \ref{thm:BronshteinIII}, Tarama shows in \cite{Tarama06} 
that 
\begin{equation} \label{eq:Tarama}
    \Big|\frac{\p_t^j P_a(t)(z)}{P_a(t)(z)}\Big| \lesssim |\Im (z)|^{-j \max\{1,1/\be\}}, 
\end{equation}
for $1 \le j \le \min\{p-1,k\}$, $0 < |\Im(z)|\le 1$, and $t$ in a compact subinterval of $I$.
It is also proved in \cite{Tarama06} 
that the statements of Theorem \ref{thm:BronshteinIII} and \eqref{eq:Tarama} are equivalent.

\begin{remark}  \label{rem:orig-proof}
    Bronshtein's original proof of Theorem \ref{thm:BronshteinI} rests on a polynomial equation for the derivatives of the roots of $P_a$, see \eqref{eq:Bmethod3} below.
    Suppose that $\la_0$ is a $q$-fold root of $P_a(t_0)$ and consider 
    the polynomial
    \begin{equation} \label{eq:Bmethod1}
        P_a(t)(Z+\la_0) = A_0(t) Z^{q} + A_1(t) Z^{q-1} + \cdots + A_q(t) + R(t,Z), 
    \end{equation}
    where $R(t,Z)$ is a polynomial in $Z$ divisible by $Z^{q+1}$ (possibly identically zero).
    Note that 
    $A_j(t) = \frac{1}{(q-j)!}\p_Z^{q-j} P_a(t)(\la_0)$ and
    set $a_j^i := \p_t^i A_j(t_0)$. Then $a_0^0 \ne 0$, while $a_j^0 = 0$ for $1 \le j \le q$, since $\la_0$ is a $q$-fold 
    root of $P_a(t_0)$. Using that $P_a$ is hyperbolic 
    and that hyperbolicity is preserved by differentiation with respect to $Z$, Bronshtein shows that $a_j^i = 0$ for $0 \le i \le j-1$ and $1 \le j \le q$,
    or equivalently, that $A_j(t) = (t-t_0)^j \hat A_j(t)$, where $\hat A_j$ is a continuous function.

    Consequently, the change of variables $Z = (t-t_0) W$ in \eqref{eq:Bmethod1} 
    leads to
    \begin{align} \label{eq:Bmethod2}
        (t-t_0)^{-q}P_a(t)((t-t_0)W+\la_0) &= a_0^0\, W^q + \tfrac{1}{1!} a_1^1\, W^{q-1} + \cdots + \tfrac{1}{q!} a_q^q 
        \\
                                           &\quad +  \sum_{k=0}^d B_k(t) W^{d-k}, \notag
    \end{align}
    where $B_k$ are continuous functions with $B_k(t_0)=0$.
    It follows that $P_a$ has $q$ roots of the form $\la_j(t) = \la_0 + (t-t_0)\mu_j(t)$, where the $\mu_j$ are the roots of \eqref{eq:Bmethod2} 
    which are continuous at $t_0$. That means that all the roots of $P_a$ having the value $\la_0$ at $t_0$ 
    are differentiable at $t_0$ and the derivatives are the solutions of 
    \begin{equation} \label{eq:Bmethod3}
        a_0^0\, W^q + \tfrac{1}{1!} a_1^1\, W^{q-1} + \cdots + \tfrac{1}{q!} a_q^q =0.
    \end{equation}

    Now, for contradiction, Bronshtein 
    assumes that there is a sequence $(t_n,y_n) \to (t_0,y_0)$ such that $\la(t_n,y_n) \to \la(t_0,y_0)$ and $|\p_t \la(t_n,y_n)| \to \infty$,
    and that the multiplicity of $\la(t_n,y_n)$ as a root of $P_a(t_n,y_n)$  is $q$ for all $n\ge 1$.
    Then $\p_t \la(t_n,y_n)$ satisfies an equation of type \eqref{eq:Bmethod3} which is found as above, where $\la(t_n,y_n)$ plays the 
    role of $\la_0$; in particular, the coefficients depend on $n$. 
    It remains to show that the coefficients of this equation, after division by  
    the coefficient of $W^q$, are bounded in $n$. This is the hard part of Bronshtein's proof (see \cite[Lemma 4 and 4']{Bronshtein79}).
\end{remark}    

\subsection{Towards a proof of Bronshtein's theorem} \label{sec:Bronshteinproof}
We will give a simple proof of Theorem \ref{thm:Bronshtein} which is based on \cite{ParusinskiRainerHyp}.
It requires some preparation.

\subsubsection{Splitting} \label{sec:splittinghyp} \index{splitting}

We proceed as in Section \ref{sec:splittingRellich}, but replace the analytic function $\th$ by $|\tilde a_2|^{1/2}$.
Let $P_{\tilde a} \in \Hyp_T(d)$ be such that $\tilde a \ne 0$.
Then the polynomial
\[
    Q_{\ul a}(Z):= |\tilde a_2|^{-d/2} P_{\tilde a}(|\tilde a_2|^{1/2} Z) = 
    Z^d - Z^{d-2} + \sum_{j=3}^d |\tilde a_2|^{-j/2} \tilde a_j Z^{d-j}
\]    
belongs to $\Hyp_T^0(d) = \{P_{\tilde a} \in \Hyp_T(d) : \tilde a_2 =-1\}$.
By Lemma \ref{lem:splitting}, 
we have 
\[
    Q_{\ul a} = Q_{\ul b} Q_{\ul c},
\]
on some open ball $B(P_{\tilde a},r) \subseteq \R^d$ 
such that $\deg Q_{\ul b} <d$, $\deg Q_{\ul c} <d$, and
\[
    \ul b_i = \ps_i(|\tilde a_2|^{-3/2}\tilde a_3,\ldots,|\tilde a_2|^{-d/2}\tilde a_d), \quad i = 1,\dots, \deg Q_{\ul b},
\]
where $\ps_i$ are real analytic functions; likewise for $\ul c_i$. 
If $Q_{\ul a}$ is hyperbolic, then also $Q_{\ul b}$ and $Q_{\ul c}$ are hyperbolic; 
we restrict our attention to the set $B(P_{\tilde a},r) \cap \Hyp_T(d)$.
This induces a splitting
\[
    P_{\tilde a} = P_b P_c, \quad \text{ on } B(P_{\tilde a},r), 
\]
where 
\begin{equation} \label{eq:formulasbhyp}
    b_i = |\tilde a_2|^{i/2} \ps_i(|\tilde a_2|^{-3/2}\tilde a_3,\ldots,|\tilde a_2|^{-d/2}\tilde a_d), \quad i = 1,\dots, \deg P_{b}.
\end{equation}
The coefficients $\tilde b_i$ of $P_{\tilde b}$ resulting from $P_b$ by the Tschirnhausen transformation 
have an analogous representation (in view of \eqref{eq:Tschirn})
\begin{equation} \label{eq:formulasbtihyp}
    \tilde b_i = |\tilde a_2|^{i/2} \tilde \ps_i(|\tilde a_2|^{-3/2}\tilde a_3,\ldots,|\tilde a_2|^{-d/2}\tilde a_d), \quad i = 1,\dots, \deg P_{b}.
\end{equation} 
Shrinking $r>0$ slightly, we may assume that all partial derivatives of $\ps_i$ and $\tilde \ps_i$ of all orders are bounded on $B(P_{\tilde a},r)$.

\begin{lemma} \label{lem:b2a2hyp}
    In this situation we have $|\tilde b_2| \le 4\, |\tilde a_2|$.  
\end{lemma}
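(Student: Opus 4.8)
The plan is to read off $\tilde b_2$ directly from the roots and use a one-line convexity (variance) inequality. In the situation of Lemma~\ref{lem:b2a2hyp}, $P_{\tilde a}\in\Hyp_T(d)$ is hyperbolic and, by the splitting \eqref{eq:formulasbhyp}, $P_b$ is a monic real polynomial dividing $P_{\tilde a}$; hence $P_b$ is itself hyperbolic and its roots form a sub-multiset $\{\la_i : i\in S\}$, $S\subseteq\{1,\dots,d\}$, of the roots $\la_1,\dots,\la_d$ of $P_{\tilde a}$ (a factor of $P_{\tilde a}$ cannot acquire non-real roots since all roots of $P_{\tilde a}$ are real). Writing $\bar\la_S := |S|^{-1}\sum_{i\in S}\la_i$ for the mean, the Tschirnhausen transformation shifts the roots of $P_b$ by $-\bar\la_S$, so the roots of $P_{\tilde b}$ are $\mu_i = \la_i-\bar\la_S$, $i\in S$. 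Hence, by the identity $-2\tilde b_2=\sum_{i\in S}\mu_i^2$ recalled before Lemma~\ref{lem:dominant},
\[
    -2\tilde b_2 \;=\; \sum_{i\in S}(\la_i-\bar\la_S)^2 .
\]

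The key elementary point is that $c\mapsto\sum_{i\in S}(\la_i-c)^2$ is minimized at $c=\bar\la_S$, whence $\sum_{i\in S}(\la_i-\bar\la_S)^2\le\sum_{i\in S}\la_i^2$. Since all $\la_i$ are real, $\sum_{i\in S}\la_i^2\le\sum_{i=1}^d\la_i^2=-2\tilde a_2$, and because $\tilde a_2,\tilde b_2\le 0$ (second coefficients of hyperbolic polynomials in Tschirnhausen form) this gives
\[
    |\tilde b_2| \;=\; -\tilde b_2 \;\le\; -\tilde a_2 \;=\; |\tilde a_2| ,
\]
which is even stronger than the asserted $|\tilde b_2|\le 4\,|\tilde a_2|$; the constant $4$ leaves ample room.

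I do not anticipate a genuine obstacle. The only step needing a moment's attention is the identification of $P_b$ as an honest divisor of $P_{\tilde a}$ whose roots are a sub-multiset of the (real) roots $\la_1,\dots,\la_d$ — this is exactly what makes the reduction to the scalar variance inequality legitimate, and it relies on hyperbolicity of $P_{\tilde a}$. An alternative staying entirely within the formulas \eqref{eq:formulasbtihyp} would be to bound $|\tilde\ps_2|$ via compactness of $\Hyp_T^0(d)$ and a finite subcover of the balls $B(P_{\tilde a},r)$, but that only produces a non-explicit constant and is strictly weaker, so the root-theoretic argument above is the one to use.
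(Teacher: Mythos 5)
Your proof is correct and takes essentially the same route as the paper: both identify the roots of $P_{\tilde b}$ as the roots $\la_i$, $i\in S$, of $P_b$ shifted by their mean, and compare $-2\tilde b_2=\sum_{i\in S}(\la_i-\bar\la_S)^2$ with $-2\tilde a_2=\sum_{i=1}^d\la_i^2$. The only difference is that you exploit the variance-minimization (the cross term in the expansion is negative), which gives the sharper bound $|\tilde b_2|\le|\tilde a_2|$, whereas the paper estimates that term crudely via Cauchy--Schwarz and only obtains the constant $4$, which is all that is needed later.
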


\begin{proof}
    Let $\la_1,\ldots,\la_d$ be the roots of $P_{\tilde a}$ such that the first $k$ of them, 
    i.e., $\la_1,\ldots,\la_k$, are the roots of $P_b$.
    Then $|b_1| \le \sum_{j=1}^k |\la_j| \le \sqrt k\, \big(\sum_{j=1}^k \la_j^2\big)^{1/2}$ and thus
    \begin{align*}
        2 \, |\tilde b_2| &= \sum_{j=1}^k \Big(\la_j + \frac{b_1}{k}\Big)^2 =  \sum_{j=1}^k \Big(\la_j^2 +  \frac{b_1^2}{k^2} + \frac{2}k \la_j b_1\Big)
        \\
                          &= \sum_{j=1}^k \la_j^2 + \frac{b_1^2}{k^2}\cdot k + \frac{2}k b_1 \sum_{j=1}^k \la_j \le (1 + 1 + 2) \sum_{j=1}^k \la_j^2 \le 8\, |\tilde a_2|,
    \end{align*}
    since $\sum_{j=1}^d \la_j^2 = -2 \tilde a_2$.
\end{proof}

\subsubsection{Glaeser's inequality}\index{Glaeser's inequality}
A classical inequality used by Glaeser in \cite{Glaeser63R} (and attributed by him to Malgrange) is the following: for nonnegative 
$C^1$ functions $f$ on $\R$ with $f'' \in L^\infty(\R)$ we have
\begin{equation} \label{eq:Glaeser}
    f'(t)^2 \le 2 f(t) \|f''\|_{L^\infty(\R)}, \quad t \in \R.
\end{equation}
Note that this shows Bronshtein's theorem in the simplest nontrivial case.
We shall need a local version.
For $t_0 \in \R$ and $r>0$, let $I(t_0,r)$ denote the open interval centered at $t_0$ with radius $r$, 
\[
    I(t_0,r) := \{t \in \R : |t-t_0|<r\}.
\]

\begin{lemma} \label{lem:Glaeser}
    Let $I\subseteq \R$ be an open bounded interval.
    Let $f \in C^{1,1}(\ol I)$ satisfy $f \ge 0$ or $f \le 0$ on $I$.
    Let $M>0$ and assume that $t_0 \in I$, $f(t_0)\ne 0$, and $I_0 := I(t_0,M^{-1} |f(t_0)|^{1/2}) \subseteq I$.    
    Then 
    \[
        |f'(t_0)| \le (M +M^{-1} \on{Lip}_{I_0}(f')) |f(t_0)|^{1/2}.
    \]
    If additionally $\on{Lip}_{I_0}(f') \le M^2$, then  
    \[
        |f'(t_0)| \le 2M\, |f(t_0)|^{1/2}.
    \]
    Note that if $f(t_0)=0$ also $f'(t_0)=0$.
\end{lemma}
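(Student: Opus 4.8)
\textbf{Proof plan for Lemma \ref{lem:Glaeser}.}

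The plan is to reduce the statement to a one-dimensional Taylor-expansion estimate for $f$ centered at $t_0$, exploiting that $f$ has a fixed sign and hence cannot cross zero on the interval $I_0$. First I would record the Taylor expansion with Lagrange-type remainder: for $t \in I_0$, since $f \in C^{1,1}(\ol I)$, we have
\[
    f(t) = f(t_0) + f'(t_0)(t-t_0) + \rho(t),\qquad |\rho(t)| \le \tfrac12 \on{Lip}_{I_0}(f')\,|t-t_0|^2.
\]
The key idea is that if $|f'(t_0)|$ were too large, then choosing $t$ near the endpoint of $I_0$ in the direction that decreases $|f|$ would force $f(t)$ to have the opposite sign of $f(t_0)$, contradicting the sign hypothesis. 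Concretely, WLOG assume $f \ge 0$ on $I$ (the case $f \le 0$ is symmetric, or apply the result to $-f$) and, replacing $t$ by $2t_0 - t$ if necessary, assume $f'(t_0) \le 0$; set $s := |f(t_0)|^{1/2}$ and $t := t_0 + M^{-1} s$, which lies in $\ol{I_0} \subseteq I$. Then
\[
    0 \le f(t) \le f(t_0) + f'(t_0) M^{-1} s + \tfrac12 \on{Lip}_{I_0}(f')\, M^{-2} s^2,
\]
and since $f(t_0) = s^2$ and $f'(t_0) = -|f'(t_0)|$, rearranging gives
\[
    |f'(t_0)|\, M^{-1} s \le s^2 + \tfrac12 \on{Lip}_{I_0}(f')\, M^{-2} s^2,
\]
hence $|f'(t_0)| \le \big(M + \tfrac12 M^{-1} \on{Lip}_{I_0}(f')\big) s \le (M + M^{-1}\on{Lip}_{I_0}(f'))\,|f(t_0)|^{1/2}$, which is the first claimed bound (in fact with a slightly better constant). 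The second bound follows immediately: if $\on{Lip}_{I_0}(f') \le M^2$, then $M + M^{-1}\on{Lip}_{I_0}(f') \le M + M = 2M$.

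For the final remark, if $f(t_0) = 0$ then $t_0$ is an interior minimum of $f$ (since $f \ge 0$, or an interior maximum if $f \le 0$), and $f'(t_0) = 0$ because $f$ is differentiable there; alternatively it is the limiting case $s \to 0$ of the estimate above.

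I do not expect any genuine obstacle here: the only points requiring a little care are the reductions to a single sign of $f$ and a single sign of $f'(t_0)$ (handled by the symmetries $f \mapsto -f$ and $t \mapsto 2t_0 - t$, both of which preserve all hypotheses including the form of $I_0$), and the verification that the evaluation point $t = t_0 + M^{-1}|f(t_0)|^{1/2}$ lies in $I$, which is exactly the assumption $I_0 \subseteq I$. The Lipschitz remainder estimate is the standard consequence of $f' \in C^{0,1}(\ol{I_0})$ via integration of $f'(t) - f'(t_0)$, so no heavy machinery is needed.
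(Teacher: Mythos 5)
Your proposal is correct and follows essentially the same route as the paper: reduce to $f\ge 0$, Taylor-expand at $t_0$ with a remainder controlled by $\on{Lip}_{I_0}(f')$, and evaluate at $t_0\pm M^{-1}|f(t_0)|^{1/2}$, where nonnegativity forces the bound. The only (harmless) difference is cosmetic — the paper writes the remainder in integral form and chooses the sign of $h$ rather than reflecting, and your version even yields the slightly sharper constant $M+\tfrac12 M^{-1}\on{Lip}_{I_0}(f')$.
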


\begin{proof}
    Suppose that $f\ge 0$; otherwise consider $-f$.
    Thus $f(t_0)>0$ and 
    \[
        0 \le f(t_0 + h) = f(t_0) + f'(t_0)h + \int_0^1 f'(t_0+hs) -f'(t_0) \, ds\cdot h. 
    \]
    The assertion follows from setting $h:= \pm M^{-1}|f(t_0)|^{1/2}$.
\end{proof}

\subsubsection{Interpolation estimates}

\begin{lemma} \label{lem:Vandermonde}
    Let $T(x) = a_0 + a_1 x + \cdots + a_m x^m \in \C[x]$ satisfy $|T(x)| \le A$ for $x \in [0,B] \subseteq \R$.
    Then 
    \[
        |a_j| \le C(m)\, A B^{-j}, \quad 0 \le j \le m.
    \]
\end{lemma}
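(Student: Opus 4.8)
\textbf{Plan for the proof of Lemma \ref{lem:Vandermonde}.}
The statement is a quantitative Markov/interpolation-type bound: a degree-$m$ polynomial that is bounded by $A$ on an interval of length $B$ has $j$-th coefficient bounded by $C(m)\,AB^{-j}$. The natural strategy is to reduce to the case $B=1$ by rescaling and then use the fact that on a fixed interval all norms on the finite-dimensional space $\C_{\le m}[x]$ of polynomials of degree at most $m$ are equivalent.

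First I would perform the substitution $x = By$, writing $S(y) := T(By) = \sum_{k=0}^m a_k B^k y^k =: \sum_{k=0}^m c_k y^k$ with $c_k = a_k B^k$. Then the hypothesis becomes $|S(y)| \le A$ for $y \in [0,1]$, and the conclusion $|a_j| \le C(m)\,AB^{-j}$ is exactly $|c_j| \le C(m)\,A$. So it suffices to prove: if $|S(y)| \le A$ on $[0,1]$ then each coefficient of $S$ is bounded by $C(m)\,A$.

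For the rescaled statement I would argue by linear algebra. Evaluate $S$ at the $m+1$ equidistant nodes $y_\ell = \ell/m$, $\ell = 0,\dots,m$; the Vandermonde matrix $V = (y_\ell^k)_{\ell,k}$ is invertible (the nodes are distinct), and the coefficient vector $(c_0,\dots,c_m)^t$ equals $V^{-1}$ applied to the vector $(S(y_0),\dots,S(y_m))^t$, whose entries are all bounded by $A$ in absolute value. Hence $|c_j| \le \big(\sum_\ell |(V^{-1})_{j\ell}|\big) A =: C(m)\,A$, where $C(m)$ depends only on $m$ through the fixed Vandermonde matrix. Undoing the scaling gives $|a_j| = |c_j| B^{-j} \le C(m)\,A B^{-j}$, as claimed. (Alternatively one could invoke equivalence of the sup-norm on $[0,1]$ with the coefficient $\ell^\infty$-norm on the finite-dimensional space $\C_{\le m}[x]$, which is the same fact phrased abstractly; the explicit Vandermonde inversion is just the constructive version and makes the dependence $C(m)$ transparent.)

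There is essentially no hard step here: the only thing to be slightly careful about is that the constant genuinely depends only on $m$ and not on $A$ or $B$ — this is immediate from the argument since the Vandermonde matrix at the nodes $\ell/m$ is a fixed matrix once $m$ is fixed. The rescaling step is routine, and the linear-algebra step is a one-line consequence of invertibility of the Vandermonde determinant.
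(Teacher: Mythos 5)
Your proposal is correct and follows essentially the same route as the paper: rescale to the unit interval and recover the coefficients by inverting the Vandermonde matrix at the equidistant nodes, which yields the constant $C(m)$ depending only on $m$. No gaps.
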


\begin{proof}
    It suffices to assume $A=B=1$; for the general case consider $A^{-1}T(By)$.
    Noting that $(T(x_0),\ldots ,  T(x_m))^t = V (a_0,\ldots,a_m)^t$ and hence 
    $(a_0,\ldots,a_m)^t = V^{-1} (T(x_0),\ldots T(x_m))^t$, where $V$ is the Vandermonde matrix of the equidistant points
    $0 = x_0 < x_1 < \cdots < x_m=1$, the statement follows easily. 
\end{proof}

\begin{lemma} \label{lem:interpolation}
    Let $f \in C^{m,1}(\ol I)$, where $I \subseteq \R$ is a bounded open interval.
    Then 
    \[
        |f^{(k)}(t)| \le C(m)\, |I|^{-k} \big( \|f\|_{L^\infty(I)} + \on{Lip}_I(f^{(m)}) |I|^{m+1}\big), 
        \quad t \in I,\, 1 \le k \le m.
    \]
\end{lemma}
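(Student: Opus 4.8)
The statement is a standard derivative-interpolation estimate on an interval, and the natural route is to reduce it to a problem about polynomials via Taylor expansion and then invoke Lemma~\ref{lem:Vandermonde}. First I would fix $t\in I$ and, to control the derivatives $f^{(k)}(t)$, sample $f$ at $m+1$ equidistant points in $\ol I$. Write $|I|=:B$ and let $x_0,\dots,x_m\in\ol I$ be equidistant, say $x_\ell = \inf I + \ell B/m$. Let $T$ be the degree-$m$ Taylor polynomial of $f$ at $t$,
\[
    T(x) = \sum_{j=0}^m \frac{f^{(j)}(t)}{j!}\,(x-t)^j .
\]
Since $f\in C^{m,1}(\ol I)$, the Taylor remainder obeys $|f(x)-T(x)| \le \tfrac{1}{m!}\on{Lip}_I(f^{(m)})\,|x-t|^{m+1} \le \on{Lip}_I(f^{(m)})\,B^{m+1}$ for all $x\in\ol I$. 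Hence $\|T\|_{L^\infty(\ol I)} \le \|f\|_{L^\infty(I)} + \on{Lip}_I(f^{(m)})\,B^{m+1} =: A$, which is exactly the bracketed quantity in the claim.

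\textbf{Key steps.} Now $T$, as a polynomial of degree $m$ in the shifted variable $x-t$, has coefficients $a_j = f^{(j)}(t)/j!$. The point $t$ need not lie at an endpoint of $I$, so $\{x-t : x\in\ol I\}$ is an interval of length $B$ containing $0$ but not necessarily of the form $[0,B]$; still, it contains a subinterval of length $B/2$ of the form $[0,B/2]$ or $[-B/2,0]$ (whichever side of $t$ is longer inside $I$ — actually one gets length at least $B/2$ on the longer side). Applying Lemma~\ref{lem:Vandermonde} (after the obvious reflection $x\mapsto -x$ if the available subinterval lies to the left of $0$, and after rescaling by $A$ as in its proof) to this subinterval of length $\ge B/2$ gives
\[
    \Big|\frac{f^{(j)}(t)}{j!}\Big| = |a_j| \le C(m)\, A\, (B/2)^{-j} \le C(m)\, A\, B^{-j}, \quad 0\le j\le m,
\]
absorbing the factor $2^j\le 2^m$ into $C(m)$. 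Multiplying by $j!\le m!$ and restricting to $1\le k\le m$ yields
\[
    |f^{(k)}(t)| \le C(m)\, B^{-k}\big(\|f\|_{L^\infty(I)} + \on{Lip}_I(f^{(m)})\,B^{m+1}\big),
\]
which is the desired inequality with $B=|I|$.

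\textbf{Main obstacle.} The only genuinely delicate point is the one just flagged: $t$ is an interior point, so one cannot directly feed $[0,B]$ into Lemma~\ref{lem:Vandermonde}; one must extract a one-sided subinterval of comparable length (length $\ge B/2$ suffices) and handle the possible orientation reversal, at the cost of a harmless dimensional constant. Everything else — the Taylor remainder bound, the bookkeeping with $j!$, the rescaling — is routine. (Alternatively, one could sidestep even this by sampling $m+1$ equidistant points across all of $\ol I$ and inverting the resulting Vandermonde-type linear system for the finite differences, which directly estimate $f^{(k)}(t)$ up to the $C^{m,1}$ error; this is essentially the content of Lemma~\ref{lem:Vandermonde} repackaged, and amounts to the same argument.)
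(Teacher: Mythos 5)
Your proof is correct and follows essentially the same route as the paper: fix $t$, bound the degree-$m$ Taylor polynomial of $f$ at $t$ on a one-sided subinterval of length $|I|/2$ via the Taylor remainder and $\on{Lip}_I(f^{(m)})$, then read off the coefficients $f^{(k)}(t)/k!$ from Lemma~\ref{lem:Vandermonde}. The point you flag as delicate (the one-sided interval and possible reflection) is handled identically in the paper, which takes $[t,t+|I|/2)$ or $(t-|I|/2,t]$ and applies the Vandermonde lemma there.
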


\begin{proof}
    Let $t \in I$. Then $[t,t+|I|/2)$ or $(t-|I|/2,t]$ is contained in $I$.
    Let $s$ be a point in the respective interval. 
    By Taylor's formula,
    \begin{align*}
        \MoveEqLeft \Big| \sum_{k=0}^{m} \frac{f^{(k)}(t)}{k!} (s-t)^k\Big| 
        \\
        &= \Big| f(s) - (s-t)^{m} \int_0^1 \frac{(1-u)^{m-1}}{(m-1)!} (f^{(m)}(t+u(s-t)) - f^{(m)}(t)) \, du\Big|
        \\
        &\le \|f\|_{L^\infty(I)} + |I|^{m+1} \Lip_{I}(f^{(m)}). 
    \end{align*}
    Now apply Lemma \ref{lem:Vandermonde}.
\end{proof}

\subsubsection{The key argument}

\begin{definition}
    Let $I_1 \subseteq \R$ be an open bounded interval and $I_0 \Subset I_1$ a relatively compact open subinterval.
    Let $P_{\tilde a}(t)$, $t \in I_1$, be a monic hyperbolic polynomial of degree $d$ in Tschirnhausen form 
    with coefficients $\tilde a_j \in C^{d -1,1}(\ol I_1)$, $j=2,\ldots,d$.
    Let $A>0$ be a constant.
    We say that $(P_{\tilde a},I_1,I_0,A)$ is \emph{admissible} if, for 
    every $t_0 \in I_0 \setminus \{t : \tilde a_2(t) = 0\}$, 
    \begin{align}
        &I_A(t_0) := I(t_0,A^{-1}|\tilde a_2(t_0)|^{1/2}) \subseteq I_1, \label{eq:A1}  
        \\
        &\frac{1}{2} \le \frac{\tilde a_2(t)}{\tilde a_2(t_0)} \le 2,\quad t \in I_A(t_0),  \label{eq:A2}
        \\
           &|\tilde a_j^{(k)}(t)| \le  A^k \, |\tilde a_2(t_0)|^{(j-k)/2},\quad 2\le j \le d,\, 1 \le k \le d-1,\,   t \in I_A(t_0), \label{eq:A3}
           \\
           &\on{Lip}_{I_A(t_0)} (\tilde a_j^{(d-1)}) \le A^d \, |\tilde a_2(t_0)|^{(j-d)/2}, \quad 2 \le j \le d.    \label{eq:A4}
    \end{align}
\end{definition}

The following lemma is an easy exercise.

\begin{lemma} \label{lem:Aconsequence}
    Let $(P_{\tilde a},I_1,I_0,A)$ be admissible. Then the functions $\ul a_j := |\tilde a_2|^{-j/2} \tilde a_j$, $2 \le j \le d$, are well-defined on $I_A(t_0)$ and satisfy
    \begin{align}
          &|\ul a_j^{(k)}(t)| \le C(d) A^k \, |\tilde a_2(t_0)|^{-k/2},\quad 2\le j \le d,\, 1 \le k \le d-1,\,   t \in I_A(t_0), \label{eq:A5}
          \\
          &\on{Lip}_{I_A(t_0)} (\ul a_j^{(d-1)}) \le C(d)  A^d \, |\tilde a_2(t_0)|^{-d/2}, \quad 2 \le j \le d.    \label{eq:A6}
    \end{align}
\end{lemma}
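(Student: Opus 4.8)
The plan is to reduce both estimates to a single rescaling together with a one-line composition bound. Fix $t_0 \in I_0 \setminus \{t : \tilde a_2(t)=0\}$ and set $\lambda := |\tilde a_2(t_0)|^{1/2} > 0$ and $\mu := A^{-1}\lambda$, so that $I_A(t_0) = I(t_0,\mu)$, which lies in $I_1$ by \eqref{eq:A1}. By \eqref{eq:A2} the function $\tilde a_2$ is nonzero and of constant sign (namely $\le 0$, since $P_{\tilde a}(t)$ is hyperbolic in Tschirnhausen form) on $I_A(t_0)$; hence $|\tilde a_2|^{-j/2}$ is well-defined and of class $C^{d-1,1}$ there, and the $\ul a_j$ make sense on $I_A(t_0)$. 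First I would rescale: put $\hat a_j(s) := \lambda^{-j}\,\tilde a_j(t_0+\mu s)$ for $s \in (-1,1)$. These are the coefficients of $\hat P(s)(Z) := \lambda^{-d} P_{\tilde a}(t_0+\mu s)(\lambda Z)$, which is again a monic hyperbolic polynomial of degree $d$ in Tschirnhausen form, since the dilation $Z \mapsto \lambda Z$ (with $\lambda>0$) preserves reality of the roots and scales the sum of the roots by $\lambda^{-1}$.

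Next I would translate admissibility into uniform bounds. Using $\lambda^2 = |\tilde a_2(t_0)|$: inequality \eqref{eq:A2} gives $\tfrac12 \le |\hat a_2(s)| \le 2$ on $(-1,1)$; Lemma \ref{lem:dominant} applied pointwise together with \eqref{eq:A2} gives $|\hat a_j(s)| \le 2^j$; inequality \eqref{eq:A3} gives $|\hat a_j^{(k)}(s)| \le 1$ for $1 \le k \le d-1$; and \eqref{eq:A4} gives $\on{Lip}_{(-1,1)}(\hat a_j^{(d-1)}) \le 1$. In particular $\|\hat a_j\|_{C^{d-1,1}((-1,1))} \le C(d)$, and $\hat a_2$ takes its values in a fixed compact subinterval $K$ of $(-\infty,0)$.

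The core step is a composition estimate. The map $u \mapsto |u|^{-j/2}$ is real analytic on $(-\infty,0)$, hence $C^\infty$ on $K$ with all derivatives bounded by constants depending only on $j \le d$; combining this with the standard Fa\`a di Bruno bounds for the composition of a $C^{d-1,1}$ function with such a map (the chain-rule expansion of the $k$-th derivative for $k \le d-1$, together with the fact that a product of bounded $C^1$ and Lipschitz functions on a bounded interval is Lipschitz) gives $\big\||\hat a_2|^{-j/2}\big\|_{C^{d-1,1}((-1,1))} \le C(d)$, and therefore $\|\hat{\ul a}_j\|_{C^{d-1,1}((-1,1))} \le C(d)$ for $\hat{\ul a}_j := |\hat a_2|^{-j/2}\hat a_j$. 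Finally I would undo the rescaling: the powers of $\lambda$ cancel, so $\hat{\ul a}_j(s) = \ul a_j(t_0+\mu s)$, whence $\ul a_j^{(k)}(t) = \mu^{-k}\,\hat{\ul a}_j^{(k)}(s)$ for $1 \le k \le d-1$ and $\on{Lip}_{I_A(t_0)}(\ul a_j^{(d-1)}) = \mu^{-d}\,\on{Lip}_{(-1,1)}(\hat{\ul a}_j^{(d-1)})$. Inserting $\mu^{-1} = A\,|\tilde a_2(t_0)|^{-1/2}$ and the bounds of the previous paragraph yields exactly \eqref{eq:A5} and \eqref{eq:A6}.

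Since this is genuinely an exercise, there is no serious obstacle; the two points that need a little care are checking that hyperbolicity and the Tschirnhausen normalization survive the dilation $Z \mapsto \lambda Z$, and making the composition estimate for $|\hat a_2|^{-j/2}$ precise. One could avoid the rescaling altogether and argue directly: expand $\big(|\tilde a_2|^{-j/2}\big)^{(l)}$ by induction as a sum of terms $c\,|\tilde a_2|^{-j/2-m}\prod_i \tilde a_2^{(p_i)}$ with $\sum_i p_i = l$ and $1 \le m \le l$, observe that \eqref{eq:A2}--\eqref{eq:A4} make every such term of size $\lesssim A^l\,|\tilde a_2(t_0)|^{-j/2-l/2}$, and then apply the Leibniz rule to $\ul a_j = |\tilde a_2|^{-j/2}\,\tilde a_j$ (using Lemma \ref{lem:dominant} to bound the factor $\tilde a_j$ itself); the homogeneity weights match term by term. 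I expect the rescaling route to be the shortest to write out in full.
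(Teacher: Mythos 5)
Your argument is correct. The paper does not actually supply a proof here (it labels the lemma ``an easy exercise''), and your write-up fills it in by exactly the kind of computation that is intended: the rescaling $\hat a_j(s)=\lambda^{-j}\tilde a_j(t_0+\mu s)$ with $\lambda=|\tilde a_2(t_0)|^{1/2}$, $\mu=A^{-1}\lambda$ converts the admissibility conditions \eqref{eq:A2}--\eqref{eq:A4} into uniform $C^{d-1,1}$ bounds on $(-1,1)$ with $\hat a_2$ confined to $[-2,-\tfrac12]$, after which the Fa\`a di Bruno/Leibniz estimate for $|\hat a_2|^{-j/2}\hat a_j$ and the back-substitution $\mu^{-k}=A^k|\tilde a_2(t_0)|^{-k/2}$ give \eqref{eq:A5}--\eqref{eq:A6}; the homogeneity bookkeeping, the sign and nonvanishing of $\tilde a_2$ on $I_A(t_0)$, and the use of Lemma \ref{lem:dominant} for the $k=0$ factors are all handled correctly. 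The rescaling is not essential (your sketched direct Leibniz/induction variant works just as well and matches the weights term by term), but it does make the constants transparently depend only on $d$, in the same spirit as the paper's normalization to $\Hyp_T^0(d)$ elsewhere.
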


There is some redundancy in the conditions \eqref{eq:A1}--\eqref{eq:A4}, up to multiplying $A$ by some constant $C(d)\ge 1$:

\begin{lemma} \label{lem:enough}
    Suppose that \eqref{eq:A1}, \eqref{eq:A3} for $k\ge j$, $2 \le j \le d$, and \eqref{eq:A4} hold.
    Then there is a constant $C(d)\ge 1$ such that $(P_{\tilde a},I_1,I_0,C(d)A)$ is admissible.
\end{lemma}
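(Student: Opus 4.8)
The plan is to show that assuming conditions \eqref{eq:A1} and \eqref{eq:A4} together with the ``$k \ge j$ part'' of \eqref{eq:A3} already forces all of \eqref{eq:A2}, \eqref{eq:A3}, and \eqref{eq:A4} to hold after replacing $A$ by $C(d)A$. The key point is that the ``bad'' exponent $(j-k)/2$ in \eqref{eq:A3} is negative precisely when $k > j$, and those cases must be recovered from the (nonnegative-exponent) cases $k \ge j$ plus the top-order Lipschitz bound via a Taylor/interpolation argument on the interval $I_A(t_0)$, whose length is comparable to $|\tilde a_2(t_0)|^{1/2}$.

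First I would fix $t_0 \in I_0$ with $\tilde a_2(t_0) \ne 0$ and abbreviate $\rho := A^{-1}|\tilde a_2(t_0)|^{1/2}$, so that $I_A(t_0) = I(t_0,\rho)$ has length $2\rho$, which by \eqref{eq:A1} lies in $I_1$. For each fixed $j$, apply Lemma \ref{lem:interpolation} to $f = \tilde a_j$ on the interval $I_A(t_0)$: this bounds every intermediate derivative $|\tilde a_j^{(k)}(t)|$, $1 \le k \le d-1$, by $C(d)\,|I_A(t_0)|^{-k}\big(\|\tilde a_j\|_{L^\infty(I_A(t_0))} + \on{Lip}_{I_A(t_0)}(\tilde a_j^{(d-1)})\,|I_A(t_0)|^{d}\big)$. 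The second term in the parenthesis is controlled directly by \eqref{eq:A4}: it is at most $A^d|\tilde a_2(t_0)|^{(j-d)/2}\cdot(2\rho)^d = C(d)A^d|\tilde a_2(t_0)|^{(j-d)/2}A^{-d}|\tilde a_2(t_0)|^{d/2} = C(d)|\tilde a_2(t_0)|^{j/2}$. For the first term I need an $L^\infty$ bound on $\tilde a_j$ itself on $I_A(t_0)$; this is where the hypothesis \eqref{eq:A3} for $k \ge j$ enters, together with a Taylor expansion of $\tilde a_j$ around $t_0$. Writing $\tilde a_j(t) = \sum_{k=0}^{j} \tfrac{1}{k!}\tilde a_j^{(k)}(t_0)(t-t_0)^k + \text{(higher-order remainder)}$ is not quite the right move since $\tilde a_j^{(k)}(t_0)$ for $k<j$ is not assumed bounded; instead the cleaner route is to Taylor-expand to order $d-1$ with integral remainder controlled by \eqref{eq:A4}, and bound the polynomial part using \eqref{eq:A3} for the indices $k\ge j$ (the only ones available) — but those have $(t-t_0)^k$ with $|t-t_0|\le\rho$, and $A^k|\tilde a_2(t_0)|^{(j-k)/2}\rho^k = A^k|\tilde a_2(t_0)|^{(j-k)/2}A^{-k}|\tilde a_2(t_0)|^{k/2} = |\tilde a_2(t_0)|^{j/2}$, exactly the desired size. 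The remainder term is handled as above. Thus $\|\tilde a_j\|_{L^\infty(I_A(t_0))} \le C(d)|\tilde a_2(t_0)|^{j/2}$, and feeding this back into Lemma \ref{lem:interpolation} gives $|\tilde a_j^{(k)}(t)| \le C(d)(2\rho)^{-k}|\tilde a_2(t_0)|^{j/2} = C(d)A^k|\tilde a_2(t_0)|^{(j-k)/2}$ for all $1\le k\le d-1$, which is \eqref{eq:A3} with $A$ replaced by $C(d)A$.

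It remains to recover \eqref{eq:A2}. Apply the bound just obtained (or directly \eqref{eq:A3} with $j=k=2$, which is among the hypotheses, noting $(j-k)/2 = 0$) to get $|\tilde a_2'(t)| \le C(d)|\tilde a_2(t_0)|^{1/2}$ on $I_A(t_0)$; hence for $t\in I_A(t_0)$ we have $|\tilde a_2(t)-\tilde a_2(t_0)| \le C(d)|\tilde a_2(t_0)|^{1/2}\cdot\rho = C(d)A^{-1}|\tilde a_2(t_0)|$. Choosing the final constant $C(d)$ large enough (so that $A\mapsto C(d)A$ makes this last right-hand side at most $\tfrac12|\tilde a_2(t_0)|$) yields $\tfrac12 \le \tilde a_2(t)/\tilde a_2(t_0) \le \tfrac32 \le 2$, which is \eqref{eq:A2}. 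Since \eqref{eq:A1} and \eqref{eq:A4} are assumed outright (and \eqref{eq:A4} is invariant under enlarging $A$, as the right-hand side only grows), all four conditions hold for $(P_{\tilde a},I_1,I_0,C(d)A)$, proving the lemma.

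I expect the main obstacle to be the bookkeeping in the $L^\infty$ estimate for $\tilde a_j$: one must be careful that only the indices $k \ge j$ of \eqref{eq:A3} are available, that the Taylor polynomial is taken to the correct order so its remainder is governed by \eqref{eq:A4} rather than by unavailable derivative values at $t_0$, and that every power of $A$, $\rho$, and $|\tilde a_2(t_0)|$ cancels to leave precisely $|\tilde a_2(t_0)|^{j/2}$. Once that homogeneity check is done, the rest is a clean chain of applications of Lemmas \ref{lem:Vandermonde} and \ref{lem:interpolation}.
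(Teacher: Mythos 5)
There is a genuine gap at the heart of your argument: the $L^\infty$ bound $\|\tilde a_j\|_{L^\infty(I_A(t_0))}\le C(d)\,|\tilde a_2(t_0)|^{j/2}$ cannot be obtained by Taylor expansion at $t_0$. Expanding to order $d-1$ (instead of order $j$) does not remove the problematic terms: the Taylor polynomial still contains $\tfrac{1}{k!}\tilde a_j^{(k)}(t_0)(t-t_0)^k$ for $k=0,1,\ldots,j-1$, and for $1\le k\le j-1$ these derivatives are exactly the quantities that no hypothesis controls (that is what the lemma is supposed to deliver); even the $k=0$ term $|\tilde a_j(t_0)|$ is not covered by \eqref{eq:A1}, \eqref{eq:A3} with $k\ge j$, or \eqref{eq:A4}. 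Indeed, the sup bound is not a consequence of those quantitative hypotheses alone: one must use that $P_{\tilde a}$ is \emph{hyperbolic} and in Tschirnhausen form. This is what the paper does: Lemma \ref{lem:dominant} gives $|\tilde a_j(t)|\le 2^{j/2}|\tilde a_2(t)|^{j/2}$ pointwise, and combined with \eqref{eq:A2} this yields the needed $L^\infty$ bound, after which Lemma \ref{lem:interpolation} (applied with $m=j-1$, the top term controlled by \eqref{eq:A3} for $k=j$; your choice $m=d-1$ with \eqref{eq:A4} would work equally well at this point) recovers \eqref{eq:A3} for $k<j$. Your proposal never invokes Lemma \ref{lem:dominant}, so the homogeneity bookkeeping you describe cannot be completed.

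A second, related problem is the order of deduction and the derivation of \eqref{eq:A2}. In the paper \eqref{eq:A2} comes \emph{first}: from \eqref{eq:A3} with $j=k=2$ one gets $\on{Lip}_{I_A(t_0)}(\tilde a_2')\le A^2$, and then Glaeser's inequality (Lemma \ref{lem:Glaeser}), which uses \eqref{eq:A1} and the sign condition $\tilde a_2\le 0$ (hyperbolicity again), gives $|\tilde a_2'(t_0)|\le 2A|\tilde a_2(t_0)|^{1/2}$; a second-order Taylor estimate then yields \eqref{eq:A2} on the shrunken interval $I_{6A}(t_0)$, which is why the constant must be enlarged. Your fallback route ``directly \eqref{eq:A3} with $j=k=2$'' only bounds $\tilde a_2''$ by $A^2$; it does not give $|\tilde a_2'(t)|\lesssim A|\tilde a_2(t_0)|^{1/2}$ without Glaeser, and your primary route relies on the flawed $k<j$ step, so the \eqref{eq:A2} part of your argument is not self-supporting either (there is also a dropped factor of $A$ in that computation, though that alone would be harmless). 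To repair the proof, establish \eqref{eq:A2} via Lemma \ref{lem:Glaeser}, then use Lemma \ref{lem:dominant} plus \eqref{eq:A2} for the sup bound, and only then interpolate.
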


\begin{proof}
    By \eqref{eq:A3} for $j=k=2$, $\on{Lip}_{I_A(t_0)}(\tilde a_2') \le  A^2$.
    Lemma \ref{lem:Glaeser} and \eqref{eq:A1} imply
    \[
        |\tilde a_2'(t_0)| \le 2  A \, |\tilde a_2(t_0)|^{1/2}.
    \]
    Thus, for $t \in I_{6  A}(t_0)$,
    \begin{align*}
        |\tilde a_2(t) - \tilde a_2(t_0)| &\le |\tilde a_2'(t_0)| |t-t_0| + \on{Lip}_{I_A(t_0)}(\tilde a_2')\, |t-t_0|^2
        \\
                                          &\le  \frac{1}{3}\, |\tilde a_2(t_0)| + \frac{1}{36} \, |\tilde a_2(t_0)| < \frac{1}{2}\, |\tilde a_2(t_0)|,
    \end{align*}
    which gives \eqref{eq:A2}. That \eqref{eq:A3} also holds for $k<j$ follows from Lemma \ref{lem:interpolation} applied to $f=\tilde a_j$ and $m=j-1$, 
    together with \eqref{eq:A2} and Lemma \ref{lem:dominant}. 
\end{proof}

\begin{proposition} \label{prop:adm}
    Let $(P_{\tilde a},I_1,I_0,A)$ be admissible and $t_0 \in I_0 \setminus \{t : \tilde a_2(t)=0\}$.
    Then there exist a constant $C(d) > 1$ and 
    open bounded intervals $J_1 \Supset J_0 \ni t_0$, $J_0$ relatively compact in $J_1$, such that the following holds. 
    We have 
    \[
        P_{\tilde a} = P_b P_c, \quad \text{ on } J_1,
    \]
    where $P_b$ and $P_c$ are monic hyperbolic polynomials of degree $<d$ and with coefficients in $C^{d-1,1}(\ol J_1)$,
    and, after Tschirnhausen transformation, 
    $(P_{\tilde b},J_1,J_0,C(d)A)$  and $(P_{\tilde c},J_1,J_0,C(d)A)$ are admissible. 
\end{proposition}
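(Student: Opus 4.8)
The plan is to run the splitting construction of Section \ref{sec:splittinghyp} at the point $t_0$ and then verify that the resulting factors satisfy the admissibility conditions \eqref{eq:A1}--\eqref{eq:A4} on a suitable pair of subintervals, with the constant $A$ multiplied by a dimensional factor $C(d)$. First I would fix $t_0 \in I_0 \setminus \{\tilde a_2 = 0\}$ and set $r_0 := A^{-1}|\tilde a_2(t_0)|^{1/2}$, so that by \eqref{eq:A1} the interval $I_A(t_0)$ lies in $I_1$ and by \eqref{eq:A2} $\tilde a_2$ does not vanish there; hence $|\tilde a_2|^{1/2}$ is a well-defined smooth function on $I_A(t_0)$, comparable to $|\tilde a_2(t_0)|^{1/2}$ up to the factor $\sqrt 2$. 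By Lemma \ref{lem:Aconsequence} the rescaled coefficients $\ul a_j = |\tilde a_2|^{-j/2}\tilde a_j$ are bounded in $C^{d-1,1}$ on $I_A(t_0)$ by constants depending only on $d$ and on $A^k|\tilde a_2(t_0)|^{-k/2}$; in particular, evaluated at $t_0$, the polynomial $Q_{\ul a}(t_0)$ lies in the compact set $\Hyp_T^0(d)$, and by Lemma \ref{lem:dominant} all of $\Hyp_T^0(d)$ consists of polynomials splitting into two coprime factors of positive degree. So I would apply Lemma \ref{lem:splitting} at the base point $Q_{\ul a}(t_0)$ to obtain an analytic splitting $Q_{\ul a} = Q_{\ul b}Q_{\ul c}$ on a ball $B(Q_{\ul a}(t_0),\rho)$ whose radius $\rho = \rho(d) > 0$ can be taken uniform over the compact $\Hyp_T^0(d)$ (by a covering argument), together with bounds on the analytic functions $\ps_i, \tilde\ps_i$ and all their derivatives on that ball that depend only on $d$.

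Next I would choose $J_1 := I_{C_1(d)A}(t_0)$ with $C_1(d)$ large enough that the $C^0$-oscillation of the rescaled coefficient curve $t \mapsto (\ul a_3(t),\ldots,\ul a_d(t))$ on $J_1$ is less than $\rho(d)$ — this is possible by \eqref{eq:A5}, since the derivative bound $C(d)A|\tilde a_2(t_0)|^{-1/2}$ times the length $\approx A^{-1}|\tilde a_2(t_0)|^{1/2}$ of $J_1$ is $\le C(d)$, so shrinking by a $d$-dependent factor makes it $<\rho(d)$ — and set $J_0 := I_{C_2(d)A}(t_0)$ for a slightly larger constant $C_2(d) > C_1(d)$, so that $J_0 \Subset J_1$. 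Then the splitting $P_{\tilde a} = P_b P_c$ holds on $J_1$ via the formulas \eqref{eq:formulasbhyp}, \eqref{eq:formulasbtihyp}: $\tilde b_i = |\tilde a_2|^{i/2}\tilde\ps_i(|\tilde a_2|^{-3/2}\tilde a_3,\ldots)$, and likewise for $\tilde c_i$. The hyperbolicity of $P_b, P_c$ on $J_1$ follows as in Section \ref{sec:splittinghyp} since $Q_{\ul a}$ is hyperbolic there. The bulk of the work is then to check \eqref{eq:A1}--\eqref{eq:A4} for $(P_{\tilde b},J_1,J_0,C(d)A)$ (and symmetrically for $P_{\tilde c}$).

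For this, the reference point changes: at a given $s_0 \in J_0$ with $\tilde b_2(s_0) \ne 0$ I must compare $|\tilde b_2(s_0)|^{1/2}$ with $|\tilde a_2(t_0)|^{1/2}$. By Lemma \ref{lem:b2a2hyp}, $|\tilde b_2| \le 4|\tilde a_2|$ everywhere on $J_1$, and for the reverse inequality one uses that the splitting is ``effective'' — the factor $Q_{\ul b}$ has $\tilde{\ul b}_2$ bounded away from $0$ near $Q_{\ul a}(t_0)$ because the factors of a polynomial in the compact set $\Hyp_T^0(d)$ cannot both have $\tilde a_2$ arbitrarily small (their roots cannot all collapse); shrinking $\rho(d)$ if necessary gives $|\tilde b_2(s)| \ge c(d)|\tilde a_2(t_0)|$ on $J_1$. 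Granting $|\tilde b_2(s_0)|^{1/2} \sim_d |\tilde a_2(t_0)|^{1/2}$, condition \eqref{eq:A1} for $P_{\tilde b}$ (that $I_{C(d)A}(s_0) \subseteq J_1$) holds after enlarging $C(d)$, because $s_0 \in J_0 = I_{C_2(d)A}(t_0)$ sits well inside $J_1 = I_{C_1(d)A}(t_0)$ with room $\gtrsim_d A^{-1}|\tilde a_2(t_0)|^{1/2} \gtrsim_d (C(d)A)^{-1}|\tilde b_2(s_0)|^{1/2}$. For \eqref{eq:A3}--\eqref{eq:A4}, I would differentiate the formula $\tilde b_i = |\tilde a_2|^{i/2}\tilde\ps_i(\ul a_3,\ldots,\ul a_d)$ by the chain and product rules: each derivative falling on $|\tilde a_2|^{i/2}$ or on an argument $\ul a_j$ produces, by \eqref{eq:A5}--\eqref{eq:A6} and the uniform bounds on $\tilde\ps_i$, a factor $C(d)(C(d)A)^k|\tilde a_2(t_0)|^{(i-k)/2} \sim_d (C(d)A)^k|\tilde b_2(s_0)|^{(i-k)/2}$, which is exactly \eqref{eq:A3}; the top-order Lipschitz bound \eqref{eq:A4} is handled identically, with one derivative replaced by a Lipschitz estimate and using that $\tilde\ps_i$ is $C^\infty$ hence locally Lipschitz with $d$-dependent constant. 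Finally \eqref{eq:A2} for $\tilde b_2$ follows either directly from the same differentiation (bounding $\tilde b_2'$) combined with Lemma \ref{lem:enough}, or one simply invokes Lemma \ref{lem:enough}: it suffices to establish \eqref{eq:A1}, \eqref{eq:A3} for $k \ge j$, and \eqref{eq:A4}, and \eqref{eq:A2} together with the remaining cases of \eqref{eq:A3} come for free. I expect the main obstacle to be the uniformity bookkeeping — pinning down a single $\rho(d) > 0$ and a single chain of constants $C(d)$ that work simultaneously over the whole compact family $\Hyp_T^0(d)$, and in particular proving the lower bound $|\tilde b_2| \gtrsim_d |\tilde a_2|$ which is what lets the reference point legitimately migrate from $t_0$ to $s_0$ without degrading the estimates.
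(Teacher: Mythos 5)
Your overall route is the paper's route (split at $t_0$ with a uniform radius obtained from compactness of $\Hyp_T^0(d)$, take $J_1,J_0$ of radius comparable to $A^{-1}|\tilde a_2(t_0)|^{1/2}$, differentiate the formulas \eqref{eq:formulasbtihyp}, then pass the reference point from $t_0$ to $t_1\in J_0$), but there is a genuine gap in the step you yourself single out as the crux. The lower bound $|\tilde b_2(s)|\ge c(d)\,|\tilde a_2(t_0)|$ is false in general, and the justification you give for it is also false: a factor of a polynomial in $\Hyp_T^0(d)$ can perfectly well have all of its roots equal. For instance $Q(Z)=(Z-a)^2(Z+2a)$ with $6a^2=2$ lies in $\Hyp_T^0(3)$ and splits into $Q_{\ul b}=(Z-a)^2$ and $Q_{\ul c}=Z+2a$, and after the Tschirnhausen transformation $\tilde{\ul b}_2=0$; more generally both factors of $(Z-a)^k(Z+b)^{d-k}$, $ka=(d-k)b$, have vanishing second Tschirnhausen coefficient. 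So the two-sided comparability $|\tilde b_2(t_1)|\sim_d|\tilde a_2(t_0)|$ on which you base the migration of the reference point cannot be established, and no amount of shrinking $\rho(d)$ helps.

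The correct resolution — and the point of Lemma \ref{lem:enough} — is that this lower bound is never needed. Since it suffices to verify \eqref{eq:A1}, \eqref{eq:A3} only for $k\ge j$, and \eqref{eq:A4}, the exponents $(j-k)/2$ and $(j-d)/2$ of $|\tilde b_2(t_1)|$ that occur are nonpositive, so the conversion from $|\tilde a_2(t_0)|^{(j-k)/2}$ to $|\tilde b_2(t_1)|^{(j-k)/2}$ only requires the one-sided estimate $|\tilde a_2(t_0)|^{-1}\le 2|\tilde a_2(t_1)|^{-1}\le 8|\tilde b_2(t_1)|^{-1}$, which follows from \eqref{eq:A2} and Lemma \ref{lem:b2a2hyp}; likewise \eqref{eq:A1} for $P_{\tilde b}$ uses only $|\tilde b_2(t_1)|\le 4|\tilde a_2(t_1)|\le 8|\tilde a_2(t_0)|$. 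You do mention the Lemma \ref{lem:enough} shortcut in your last sentences, but you treat it as an alternative for recovering \eqref{eq:A2} rather than as the device that eliminates the cases $k<j$ of \eqref{eq:A3} — which are exactly the cases where your argument would need the false lower bound. Restructure the verification so that only $k\ge j$ in \eqref{eq:A3} and the top-order Lipschitz bound \eqref{eq:A4} are checked directly from the differentiated formulas, and drop the claim $|\tilde b_2|\gtrsim_d|\tilde a_2|$ entirely; then your argument coincides with the paper's proof.
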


\begin{proof}
    Consider 
    \[
        \ul a := (-1,\ul a_3,\ldots, \ul a_d) : I_A(t_0) \to \R^{d-1},
    \]
    which defines a continuous bounded curve, by Lemma \ref{lem:dominant}.
    By Lemma \ref{lem:Aconsequence},  there exists a constant $C_1 = C_1(d) > 1$ such that 
    \begin{equation} \label{eq:lengthula}
        |\ul a'(t)| \le  C_1 A\, |\tilde a_2(t_0)|^{-1/2}, \quad t \in I_A(t_0).
    \end{equation}
    Choose a finite cover of $\Hyp_T^0(d)$ by balls $B_1,\ldots,B_s$ such that on 
    each $B_i$ we have a splitting of $P_{\tilde a}$ (cf.\ Section \ref{sec:splittinghyp}).
    There exists $r_1 \in (0,1)$ such that for any $p \in \Hyp_T^0(d)$ there is $i \in \{1,\ldots,s\}$ 
    such that $B(p,r_1) \subseteq B_i$ (note that $2r_1$ is a Lebesgue number of the cover $B_1,\ldots,B_s$).
    Set 
    \begin{equation*}
        J_1 := I_{C_1 A/r_1}(t_0).
    \end{equation*}
    Then $J_1 \subseteq \ul a^{-1}(B(\ul a(t_0),r_1))$ 
    so that 
    \[
        P_{\tilde a} = P_b P_c, \quad \text{ on } J_1.
    \]
    Fix $r_0 < r_1$ and set $J_0 := I_{C_1 A/r_0}(t_0)$. 
    The coefficients of $P_{\tilde b}$ (after Tschirnhausen transformation) 
    are given by the formulas \eqref{eq:formulasbtihyp}.

    Let us show that $(P_{\tilde b},J_1,J_0,B)$ is admissible, where $B = C(d)A$ for a suitable constant $C(d)$.
    If $B$ is a constant satisfying 
    \[
        B\ge \frac{2 \sqrt 2\, C_1 A}{r_1-r_0},
    \]
    then, for each $t_1 \in J_0 \setminus \{t : \tilde b_2(t) = 0\}$, 
    \[
        B^{-1} |\tilde b_2(t_1)|^{1/2} \le \frac{r_1-r_0}{C_1 A}\, |\tilde a_2(t_0)|^{1/2},
    \]
    by Lemma \ref{lem:b2a2hyp} and \eqref{eq:A2}. 
    This implies 
    \[
        J_B(t_1) := I(t_1,B^{-1} |\tilde b_2(t_1)|^{1/2}) \subseteq J_1,
    \]
    by the definition of $J_1$ and $J_0$.
    Note that $r_1$ and $r_0$ can be chosen in a universal way so that 
    we subsume the dependence of $B$ on them in $C(d)$.

    Next we claim that, on $J_1$,
    \begin{align*}
        |\p_t^k [\tilde \ps_i \o (\ul a_3,\ldots,\ul a_d)]| &\le C(d) A^k \, |\tilde a_2(t_0)|^{-k/2}, 
        \quad 0 \le k \le d-1,
        \\
        \on{Lip}_{J_1}(\p_t^{d-1} [\tilde \ps_i \o (\ul a_3,\ldots,\ul a_d)]) &\le C(d) A^d \, |\tilde a_2(t_0)|^{-d/2}. 
    \end{align*}
    Recall that the real analytic functions $\tilde \ps_i$ and its partial derivatives of all orders 
    are bounded  by universal constants. Thus the claim is obvious for $k=0$.
    We have
    \[
        \p_t [\tilde\ps_i \o (\ul a_3,\ldots,\ul a_d)] = d\tilde \ps_i(\ul a)(\ul a') 
    \]
    so that the claim for $k=1$ follows from \eqref{eq:lengthula}. 
    For $2 \le k \le d-1$, the claim follows from differentiating this equation and using \eqref{eq:A5}.
    In a similar way, using also \eqref{eq:A6}, one gets the estimate for $\on{Lip}_{J_1}(\p_t^{d-1} [\tilde \ps_i \o (\ul a_3,\ldots,\ul a_d)])$. 

    Now it is easy to conclude (from the formulas \eqref{eq:formulasbtihyp})
    \begin{align*}
        |\tilde b_i^{(k)}(t)| &\le C(d) A^k \, |\tilde a_2(t_0)|^{(i-k)/2}, 
        \quad t \in J_1,\, 1 \le k \le d-1,
        \\
        \on{Lip}_{J_1}(\tilde b_i^{(d-1)})  &\le C(d) A^d \, |\tilde a_2(t_0)|^{(i-d)/2}, 
    \end{align*}
    for all $2 \le i \le \deg P_b$. By Lemma \ref{lem:b2a2hyp} and \eqref{eq:A2}, 
    $|\tilde a_2(t_0)|^{-1} \le 2\,  |\tilde a_2(t_1)|^{-1} \le 8\, |\tilde b_2(t_1)|^{-1}$
    and thus
    \begin{align*}
        |\tilde b_i^{(k)}(t)| &\le C(d) A^k \, |\tilde b_2(t_1)|^{(i-k)/2}, 
        \quad t \in J_1,\, i \le k \le d-1,
        \\
        \on{Lip}_{J_1}(\tilde b_i^{(d-1)})  &\le C(d) A^d \, |\tilde b_2(t_1)|^{(i-d)/2},
    \end{align*}
    for all $2 \le i \le \deg P_b$. Thus we may conclude that $(P_{\tilde b},J_1,J_0,B)$ is admissible 
    from Lemma \ref{lem:enough}.
\end{proof}

\begin{remark} \label{rem:adm}
    We have the same estimates for $b_i$ instead of $\tilde b_i$. 
\end{remark}

\begin{theorem} \label{thm:adm}
    Let $(P_{\tilde a},I_1,I_0,A)$ be admissible.
    Let $\la : I_1 \to \R$ be a continuous root of $P_{\tilde a}$. 
    Then $\la$ is Lipschitz on $I_0$ with $\Lip_{I_0}(\la) \le C(d) A$.
\end{theorem}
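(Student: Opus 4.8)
The plan is to prove Theorem \ref{thm:adm} by induction on the degree $d$, using Proposition \ref{prop:adm} as the inductive engine. The base case $d=1$ is trivial, since then $P_{\tilde a}(t)(Z) = Z$ and the only root is $\la \equiv 0$. For the inductive step, suppose the statement holds for all admissible tuples of degree less than $d$, and let $(P_{\tilde a}, I_1, I_0, A)$ be admissible of degree $d$ with continuous root $\la : I_1 \to \R$.

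First I would dispose of the set $\{t \in I_0 : \tilde a_2(t)=0\}$. On this set, Lemma \ref{lem:dominant} forces all coefficients $\tilde a_j$ to vanish, so $P_{\tilde a}(t)(Z)=Z^d$ and the only root is $0$; hence $\la(t)=0$ there. So it suffices to bound the difference quotients of $\la$ at points $t_0 \in I_0$ with $\tilde a_2(t_0) \ne 0$. Fix such a $t_0$. By Proposition \ref{prop:adm}, there are intervals $J_1 \Supset J_0 \ni t_0$ and a factorization $P_{\tilde a} = P_b P_c$ on $J_1$ into monic hyperbolic polynomials of degree $<d$ with $C^{d-1,1}(\ol J_1)$ coefficients, such that after Tschirnhausen transformation both $(P_{\tilde b}, J_1, J_0, C(d)A)$ and $(P_{\tilde c}, J_1, J_0, C(d)A)$ are admissible. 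Now $\la(t)$, being a root of $P_{\tilde a}(t)$ for $t \in J_1$, is a root of either $P_b(t)$ or $P_c(t)$ for each $t$; by continuity of $\la$ and the fact that $P_b, P_c$ have no common root (they arise from a splitting, so their resultant is nonzero — indeed at $t_0$ they have disjoint root sets by construction), $\la$ is a root of one of them, say $P_b$, throughout a neighborhood of $t_0$, hence on a subinterval around $t_0$. A root of $P_b$ is a root of $P_{\tilde b}$ shifted by $-\tfrac{1}{\deg P_b} b_1$; since $b_1$ has the bound $|b_1'(t)| \le C(d) A$ on $J_1$ (Remark \ref{rem:adm}, together with \eqref{eq:A5} applied to $\ul b_1$, or directly from \eqref{eq:formulasbhyp} and \eqref{eq:lengthula}), it suffices to bound the Lipschitz constant of the corresponding root of $P_{\tilde b}$. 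By the inductive hypothesis applied to the admissible tuple $(P_{\tilde b}, J_1, J_0, C(d)A)$, that root is Lipschitz on $J_0$ with constant $\le C(d)\cdot C(d) A$. Combining, $\la$ has a bound of the form $|\la'(t_0^{\pm})| \le C(d) A$ on a neighborhood of $t_0$, with a constant depending only on $d$.

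The point that needs care is that the recursion must not accumulate: at each level the constant is multiplied by some $C(d)$, but the recursion depth is at most $d$, so the final constant is $C(d)^d A$, still of the form $C(d) A$ — I would absorb this into the statement's $C(d)$. I also need the bound to hold \emph{uniformly} at every $t_0 \in I_0$ with $\tilde a_2(t_0)\ne 0$ with one and the same constant $C(d)$; this is automatic because Proposition \ref{prop:adm} produces its constant $C(d)$ independently of $t_0$ (the radii $r_0, r_1$ are chosen universally, as noted in its proof). Finally, to pass from pointwise bounds on one-sided difference quotients on a neighborhood of each point to a genuine Lipschitz bound on all of $I_0$: $\la$ is continuous on $I_0$ and at each point has both one-sided difference quotients bounded by $C(d)A$ on a whole neighborhood; a standard argument (e.g.\ $\la$ is locally Lipschitz with uniform constant, hence differentiable a.e.\ with $|\la'| \le C(d)A$, hence Lipschitz on the connected set $I_0$ by integration) gives $\Lip_{I_0}(\la) \le C(d) A$.

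The main obstacle I anticipate is the bookkeeping in the inductive step: one must make sure that the interval $J_0$ produced by Proposition \ref{prop:adm} around $t_0$ actually contains a full neighborhood of $t_0$ on which $\la$ stays a root of the \emph{same} factor $P_b$, and that the Lipschitz bound obtained on $J_0$ — which a priori is a small interval depending on $t_0$ — patches together into a bound on the fixed interval $I_0$. Both are handled by the uniformity of the construction, but verifying that uniformity (that $C(d)$ and the ratio of radii do not degenerate as $t_0$ varies or as $\tilde a_2(t_0) \to 0$) is where the real content lies; the rescaling built into the notion of admissibility (all estimates are expressed in the natural scale $|\tilde a_2(t_0)|^{1/2}$) is precisely what makes this work.
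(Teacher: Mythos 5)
Your proposal is correct and follows essentially the same route as the paper: induction on the degree, reduction via Proposition \ref{prop:adm} to writing $\la = -b_1/\deg P_b + \mu$ with $\mu$ a continuous root of the admissible $P_{\tilde b}$, the bound on $b_1$ from Remark \ref{rem:adm}, a constant that compounds at most $d$ times, and patching the local uniform bounds using that $\la$ vanishes on $\{\tilde a_2 = 0\}$. The points you flag as needing care (uniformity in $t_0$, branch tracking, and the final gluing) are exactly the ones the paper treats the same way.
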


\begin{proof}
    We proceed by induction on the degree.
    The only monic polynomial of degree $1$ in Tschirnhausen form is $Z$; so there is nothing to prove.
    So assume that $\deg P_{\tilde a} \ge 2$.
    Fix $t_0 \in I_0 \setminus \{t : \tilde a_2(t)=0\}$. 
    By Proposition \ref{prop:adm}, we may assume that on a neighborhood $J_1$ of $t_0$ 
    we have 
    \[
        \la(t) = - \frac{b_1(t)}{\deg P_b} + \mu(t),
    \]
    where $\mu$ is a continuous root of $P_{\tilde b}$ and  $(P_{\tilde b},J_1,J_0,C(d)A)$ is admissible.
    By the induction hypothesis, $\mu$ is Lipschitz on $J_0$ with $\Lip_{J_0}(\mu) \le C(d)A$. 
    Thus $\la$ is Lipschitz on $J_0$ with $\Lip_{J_0}(\la) \le C(d)A$, in view of Remark \ref{rem:adm}. 
    It is an easy exercise to show that these local uniform Lipschitz bounds on $I_0 \setminus \{t : \tilde a_2(t) =0\}$ 
    imply that $\Lip_{I_0}(\la) \le C(d)A$, since $\la$ vanishes on the set $\{t : \tilde a_2(t) =0\}$.
\end{proof}

\subsubsection{Proof of Bronshtein's theorem}

Before we start with the proof, 
we observe in the following lemma 
that the assumptions of Theorem \ref{thm:Bronshtein} (after Tschirnhausen transformation) yield an admissible quadruple.

\begin{lemma} \label{lem:ass>adm}
    Let $I_1 \subseteq \R$ be a bounded open interval and $P_{\tilde a}$ a monic hyperbolic polynomial of degree $d$ in Tschirnhausen form 
    with coefficients $\tilde a_j \in C^{d-1,1}(\ol I_1)$, $2 \le j \le d$. 
    If $I_0 \Subset I_1$ is an open subinterval, relatively compact in $I_1$,
    then $(P_{\tilde a}, I_1,I_0,A)$ is admissible with 
    \begin{align}
        A&:= 6 \max\{A_1,A_2\}, \label{eq:defA} 
        \intertext{where} 
        A_1 &:= \max \big\{ \de^{-1}\|\tilde a_2\|_{L^\infty(I_1)}^{1/2}, (\on{Lip}_{I_1} (\tilde a_2'))^{1/2}\big\}, \nonumber
        \\
        A_2 &:= \max_{2 \le j \le d} \big\{ \Lip_{I_1}(\tilde a_j^{(d-1)}) \, \|\tilde a_2\|_{L^\infty(I_1)}^{(d-j)/2}  \big\}^{1/d}, \nonumber
    \end{align}
    and $\de := \on{dist}(I_0, \R \setminus I_1)$.
\end{lemma}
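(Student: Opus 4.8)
The plan is to fix a point $t_0\in I_0$ with $\tilde a_2(t_0)\neq 0$ and to verify at it the four defining conditions \eqref{eq:A1}--\eqref{eq:A4} of admissibility for the quadruple $(P_{\tilde a},I_1,I_0,A)$ with $A:=6\max\{A_1,A_2\}$; alternatively, by Lemma \ref{lem:enough} one may first reduce the task to \eqref{eq:A1}, to the instances of \eqref{eq:A3} with $k\ge j$, and to \eqref{eq:A4}. Conditions \eqref{eq:A1} and \eqref{eq:A4} come straight from the definitions of $A_1$ and $A_2$. For \eqref{eq:A1}: since $A\ge 6A_1\ge 6\,\delta^{-1}\|\tilde a_2\|_{L^\infty(I_1)}^{1/2}\ge 6\,\delta^{-1}|\tilde a_2(t_0)|^{1/2}$, the radius $A^{-1}|\tilde a_2(t_0)|^{1/2}$ of $I_A(t_0)$ is at most $\delta/6<\delta=\on{dist}(I_0,\R\setminus I_1)$, so $I_A(t_0)\subseteq I_1$. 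For \eqref{eq:A4}: the definition of $A_2$ gives $\on{Lip}_{I_1}(\tilde a_j^{(d-1)})\le A_2^d\,\|\tilde a_2\|_{L^\infty(I_1)}^{(j-d)/2}$, and, because $(j-d)/2\le 0$ and $|\tilde a_2(t_0)|\le\|\tilde a_2\|_{L^\infty(I_1)}$, this is $\le A_2^d\,|\tilde a_2(t_0)|^{(j-d)/2}\le A^d\,|\tilde a_2(t_0)|^{(j-d)/2}$; restricting to the subinterval $I_A(t_0)\subseteq I_1$ only decreases the Lipschitz constant.

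Condition \eqref{eq:A2} is where the numerical factor $6$ in $A$ is spent, exactly as in the proof of Lemma \ref{lem:enough}. Put $M:=A/6=\max\{A_1,A_2\}$. As above, $I(t_0,M^{-1}|\tilde a_2(t_0)|^{1/2})\subseteq I_1$ and $\on{Lip}_{I_1}(\tilde a_2')\le A_1^2\le M^2$, so Glaeser's inequality (Lemma \ref{lem:Glaeser}) applies and gives $|\tilde a_2'(t_0)|\le 2M\,|\tilde a_2(t_0)|^{1/2}=\tfrac A3\,|\tilde a_2(t_0)|^{1/2}$. A first-order Taylor estimate at $t_0$ then gives, for $t\in I_A(t_0)$,
\[
    |\tilde a_2(t)-\tilde a_2(t_0)|\le |\tilde a_2'(t_0)|\,|t-t_0|+\on{Lip}_{I_1}(\tilde a_2')\,|t-t_0|^2\le\Big(\tfrac13+\tfrac1{36}\Big)|\tilde a_2(t_0)|<\tfrac12\,|\tilde a_2(t_0)|,
\]
which is \eqref{eq:A2}.

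It remains to establish \eqref{eq:A3}, which is the one step requiring genuine care. With \eqref{eq:A1}, \eqref{eq:A2}, \eqref{eq:A4} available, I would apply the interpolation estimate Lemma \ref{lem:interpolation} to $f=\tilde a_j$ on the interval $I_A(t_0)$ with $m=d-1$ (legitimate since $\tilde a_j\in C^{d-1,1}(\ol I_1)$ and $I_A(t_0)\subseteq I_1$), bounding $|\tilde a_j^{(k)}(t)|$ by $C(d)\,|I_A(t_0)|^{-k}\big(\|\tilde a_j\|_{L^\infty(I_A(t_0))}+\on{Lip}_{I_A(t_0)}(\tilde a_j^{(d-1)})\,|I_A(t_0)|^{d}\big)$. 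Now $|I_A(t_0)|=2A^{-1}|\tilde a_2(t_0)|^{1/2}$; Lemma \ref{lem:dominant} together with \eqref{eq:A2} bounds $\|\tilde a_j\|_{L^\infty(I_A(t_0))}$ by $2^j|\tilde a_2(t_0)|^{j/2}$; and \eqref{eq:A4} bounds the product $\on{Lip}_{I_A(t_0)}(\tilde a_j^{(d-1)})\,|I_A(t_0)|^{d}$ by $2^d|\tilde a_2(t_0)|^{j/2}$. Substituting, the powers of $|\tilde a_2(t_0)|$ combine to exactly $(j-k)/2$, and \eqref{eq:A3} follows. I do not expect a conceptual obstacle here: the only work is this bookkeeping of scaling exponents and of the numerical constants (the factor $6$ being forced by the Glaeser/Taylor step, the dimensional constants in the interpolation step being handled through the reduction to the cases $k\ge j$ provided by Lemma \ref{lem:enough}), which is precisely what turns the hypotheses of Theorem \ref{thm:Bronshtein} into the asserted admissible quadruple.
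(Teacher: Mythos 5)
Your proof follows the paper's argument step for step: \eqref{eq:A1} from the definition of $A_1$, \eqref{eq:A2} via Glaeser's inequality (Lemma \ref{lem:Glaeser}) and the Taylor estimate exactly as in the proof of Lemma \ref{lem:enough}, \eqref{eq:A4} directly from the definition of $A_2$, and \eqref{eq:A3} from Lemma \ref{lem:dominant} combined with the interpolation Lemma \ref{lem:interpolation}. Your explicit bookkeeping of the scaling exponents, and your remark that the dimensional constants arising in the interpolation step are absorbed via Lemma \ref{lem:enough} into a factor $C(d)$, is if anything more careful than the paper's one-line treatment of \eqref{eq:A3}, and it is harmless downstream since Theorem \ref{thm:adm} is applied with constants of the form $C(d)A$ anyway.
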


\begin{proof}
    Let $t_0 \in I_0 \setminus \{t : \tilde a_2(t) = 0\}$. 
    We have $I_{A_1}(t_0) \subseteq I_1$, i.e., \eqref{eq:A1}. 
    By Lemma \ref{lem:Glaeser}, 
    \[
        |\tilde a_2'(t_0)| \le 2 A_1\, |\tilde a_2(t_0)|^{1/2}.
    \]
    This implies (as in the proof of Lemma \ref{lem:enough}) that \eqref{eq:A2} holds for $t \in I_{6 A_1}(t_0)$.
    Now \eqref{eq:A4} is clear from the definition of $A_2$. 
    Finally, Lemma \ref{lem:dominant} and Lemma \ref{lem:interpolation} imply \eqref{eq:A3}, for $t \in I_A(t_0)$.
\end{proof}

Now let $P_{\tilde a}$ be a monic hyperbolic polynomial of degree $d$ in Tschirnhausen form 
with coefficients $\tilde a_j \in C^{d-1,1}(I)$, $2 \le j \le d$.
Fix relatively compact open subintervals $I_0 \Subset I_1 \Subset I$.
By Lemma \ref{lem:ass>adm} and Theorem \ref{thm:adm}, any continuous root $\la$ of $P_{\tilde a}$
is Lipschitz on $I_0$ with $\Lip_{I_0}(\la) \le C(d)A$, 
where $A$ is defined in \eqref{eq:defA}.
This implies
\begin{align*}
    \Lip_{I_0}(\la) &\le C(d)\, \max \{\de ^{-1}, 1\} \, \max_{2 \le j \le d} \|\tilde a_j\|_{C^{d-1,1}(\ol I_1)}^{1/j}.
\end{align*}

If $P_a$ is not necessarily in Tschirnhausen form, we apply the Tschirnhausen transformation $P_a \leadsto P_{\tilde a}$. 
Then $\tilde a_j$ is a weighted homogeneous polynomial of degree $j$ in $a_1,\ldots, a_d$, where $a_j$ has the weight $j$; see \eqref{eq:Tschirn}.
The roots are shifted by $a_1/d$.
Thus any continuous root $\la$ of $P_a$ is Lipschitz on $I_0$ and 
\begin{align*}
    \Lip_{I_0}(\la) &\le C(d)\, \max \{\de ^{-1}, 1\}\, \max_{1 \le j \le d} \|a_j\|_{C^{d-1,1}(\ol I_1)}^{1/j}.
\end{align*}
This ends the proof of Theorem \ref{thm:Bronshtein}. \qed

\subsection{Sufficient conditions for $C^p$ roots}
\label{sec:sufficienthyp}

Let us analyze what causes the loss of regularity and give several sufficient conditions for better regularity of the roots.

\subsubsection{The effect of positive local minima}

A finer analysis of Example \ref{ex:optimalhyp}(1) 
indicates that the small positive local minima of the function $f$ 
prevent the solutions of $Z^2 =f$ from being $C^{1,\al}$.
This is confirmed by the following result of Bony, Broglia, Colombini, and Pernazza \cite{BBCP06}.

\begin{theorem}[{\cite[Theorem 3.5]{BBCP06}}]
    Let $f: \R \to [0,\infty)$ be of class $C^4$.
    Then $Z^2 = f$ has a $C^2$ solution if and only if 
    there exists a continuous function $\ga$ vanishing on $\{t \in \R : f^{(j)}(t) = 0, \, 0 \le j \le 4\}$ 
    such that for each local minimum $t_0$ of $f$ with $f(t_0)>0$ 
    we have $f''(t_0) \le \ga(t_0) f(t_0)^{1/2}$.
\end{theorem}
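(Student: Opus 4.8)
\emph{The statement is an equivalence, and I would prove the two implications separately.} The forward direction (a $C^2$ root exists $\Rightarrow$ a suitable $\gamma$ exists) is elementary. Suppose $\lambda \in C^2(\R)$ satisfies $\lambda^2 = f$, and set $\gamma := 2|\lambda''|$, which is continuous. If $t_0$ lies in $Z := \{t : f^{(j)}(t) = 0,\ 0 \le j \le 4\}$, then $\lambda(t_0)^2 = f(t_0) = 0$ gives $\lambda(t_0) = 0$, and comparing the Taylor expansions at $t_0$ of $f$ and of $\lambda^2$ yields $f''(t_0) = 2\lambda'(t_0)^2$ and $f^{(4)}(t_0) = 6\lambda''(t_0)^2$; hence $\lambda'(t_0) = \lambda''(t_0) = 0$ and $\gamma(t_0) = 0$, so $\gamma$ vanishes on $Z$. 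If $t_0$ is a local minimum of $f$ with $f(t_0) > 0$, then $f'(t_0) = 0$ and $\lambda(t_0) \ne 0$ force $\lambda'(t_0) = 0$, so differentiating $f = \lambda^2$ twice gives $f''(t_0) = 2\lambda(t_0)\lambda''(t_0) \le 2|\lambda(t_0)|\,|\lambda''(t_0)| = \gamma(t_0)\,f(t_0)^{1/2}$, as required.

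For the converse I would construct the root explicitly. The zero set $Z_f$ is closed; on each connected component of $U := \{f > 0\}$ the only candidates are $\pm\sqrt f$, and on $Z_f$ the only candidate is $0$. The plan is to fix a sign on each component of $U$ so that the resulting function $\lambda$ is continuous on $\R$, and then to show $\lambda \in C^2$. Every boundary point $t_0 \in \partial Z_f$ satisfies $f(t_0) = f'(t_0) = 0$ and $f''(t_0) \ge 0$, and one distinguishes two cases. If $f''(t_0) > 0$, then $t_0$ is an isolated zero and $f = (t - t_0)^2 h$ near $t_0$ with $h \in C^2$ and $h(t_0) > 0$; one then switches the sign across $t_0$ so that $\lambda = \pm(t-t_0)\sqrt h$, which is $C^2$ near $t_0$. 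If $f''(t_0) = 0$, then $f$ vanishes to order $\ge 3$ at $t_0$, so $\sqrt f = O(|t-t_0|^{3/2})$ and the sign is irrelevant for continuity and first-order behaviour; this is the delicate case for the $C^2$ conclusion. Away from $\partial Z_f$ and from the interior of $Z_f$ there is nothing to prove, so everything reduces to these local statements at $\partial Z_f$.

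It remains to show, at $t_0 \in \partial Z_f$ with $f''(t_0) = 0$, that on $U$ near $t_0$
\[
 \lambda' = \frac{f'}{2\sqrt f}, \qquad \lambda'' = \frac{2 f f'' - (f')^2}{4 f^{3/2}}
\]
tend to $0$ as $t \to t_0$ and extend continuously across the zeros of $f$ in a neighbourhood of $t_0$. Glaeser's inequality (Lemma~\ref{lem:Glaeser}) gives $|f'| \lesssim f^{1/2}$, so $\lambda'$ is bounded; to improve this to $\lambda' \to 0$ and to control $\lambda''$ one invokes the hypothesis: at every local minimum $t_1$ of $f$ with $f(t_1) > 0$ one has $\lambda''(t_1) = f''(t_1)/(2 f(t_1)^{1/2}) \le \gamma(t_1)/2$, which is small near $t_0$ since $\gamma$ is continuous and vanishes on $Z$. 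Because $\lambda'$ has constant sign between consecutive critical points of $\lambda$ (i.e.\ zeros of $f'$ inside a component of $U$) while $\lambda''$ must vanish between them, this smallness propagates -- with the help of a higher-order Glaeser-type estimate for the numerator $2ff'' - (f')^2$, which satisfies $(2ff'' - (f')^2)' = 2ff'''$ and hence vanishes to order $\ge 3$ at $t_0$ -- to a uniform bound $|\lambda''| \le \epsilon$ on a full neighbourhood of $t_0$ in $U$; the analogous control at the nearby isolated zeros (namely $h'(t_1)/\sqrt{h(t_1)} \to 0$) comes from the same input. I expect this propagation step -- converting the pointwise bound for $\lambda''$ at the positive local minima, together with the flatness of $f$ encoded by $\gamma$, into uniform control of $\lambda''$ near all of $\partial Z_f$, while simultaneously bookkeeping the signs across a possibly complicated zero set -- to be the main obstacle; the bare Glaeser inequality handles only the simplest subcase, as remarked after \eqref{eq:Glaeser}.
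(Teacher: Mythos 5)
A preliminary remark: the survey itself does not prove this theorem; it is quoted from \cite{BBCP06}, so your argument can only be judged against that source and on its own merits. Your necessity direction is complete and correct: with $\gamma:=2|\lambda''|$, the identity $f''(t_0)=2\lambda'(t_0)^2$ at points of $Z:=\{t: f^{(j)}(t)=0,\ 0\le j\le 4\}$, the Taylor comparison giving $f^{(4)}(t_0)=6\lambda''(t_0)^2$ there, and the computation $f''(t_0)=2\lambda(t_0)\lambda''(t_0)$ at positive local minima are exactly what is needed, and the reduction of the sufficiency direction to a local study at $\partial Z_f$, with a forced sign change at zeros of order two, is the right skeleton.

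The sufficiency direction, however, contains a genuine gap, plus one incorrect reduction. The incorrect point first: at $t_0\in\partial Z_f$ with $f''(t_0)=0$ you set out to prove $\lambda'\to0$ \emph{and} $\lambda''\to 0$; but such a zero may have exact order four (automatically $f'''(t_0)=0$, while $f^{(4)}(t_0)>0$ is allowed, so $t_0\notin Z$ and $\gamma(t_0)$ need not be small), and there $\lambda''(t)=\bigl(2ff''-(f')^2\bigr)/(4f^{3/2})\to\sqrt{f^{(4)}(t_0)/6}\neq 0$ for the root $\sqrt f$ without sign change -- which is perfectly compatible with $C^2$ regularity but contradicts your stated goal; only for $t_0\in Z$ must $\lambda''\to0$, and your appeal to ``$2ff''-(f')^2$ vanishes to order $\ge 3$'' (it merely vanishes to second order as a $C^2$ function) is far from sufficient even there, since $f^{3/2}$ vanishes to order at least six. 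The more serious gap is the propagation step, which you yourself flag as the main obstacle and then do not carry out: the hypothesis constrains $f''$ only at the \emph{positive local minima} of $f$, and the whole content of the theorem is to convert this, together with the flatness of $f$ on $Z$ encoded by $\gamma$, into a two-sided uniform estimate $|2f(t)f''(t)-(f'(t))^2|\le\varepsilon\, f(t)^{3/2}$ on a full punctured neighbourhood of a flat zero -- in particular at positive local maxima of $f$ and along monotone stretches, where the hypothesis says nothing directly, and including the lower bound $2ff''-(f')^2\ge-\varepsilon f^{3/2}$, which your sketch never addresses. The Rolle-type observation that $\lambda''$ vanishes somewhere between consecutive critical points, combined with smallness of $\lambda''$ at the minima, gives no quantitative control in between; in \cite{BBCP06} this is precisely where several interpolation/Glaeser-type lemmas for $f',f'',f'''$ relative to $f$ are developed and combined with the $\gamma$-condition. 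Until that estimate is proved, the converse is a plan rather than a proof; the remaining items (consistent sign choices across infinitely many components of $\{f>0\}$ accumulating at flat zeros, and continuity of $\lambda''$ across isolated order-four zeros) are genuine but routine once the main estimate is in hand.
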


Bony, Colombini, and Pernazza \cite{BonyColombiniPernazza10} extended this result.
Forcing $f$ and sufficiently many of its derivatives to vanish on all local minima of $f$, 
turns out to be sufficient for the existence of $C^p$ square roots:

\begin{theorem}[{\cite{BonyColombiniPernazza10}}]
    Let $f: \R \to [0,\infty)$ be of class $C^{2p}$, $p\ge2$, and 
    assume that $f$ vanishes along with its derivatives up to order $2p-4$ at all its local minima.
    Then $Z^2 = f$ has a $C^p$ solution. 
    A solution with a derivative of order $p+1$ everywhere exists, provided that,
    under the same assumption on the minima, $f$ is $C^{2p+2}$. 
\end{theorem}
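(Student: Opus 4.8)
Since the statement is local and $f>0$ on each component of $\R\setminus Z_f$ (where therefore $\sqrt f$ is already $C^{2p}$), everything takes place near points of $Z_f$. My candidate solution of $Z^2=f$ is $\la:=\epsilon\sqrt f$, where $\epsilon:\R\setminus Z_f\to\{\pm1\}$ is constant on each connected component of $\R\setminus Z_f$ and will be chosen, while $\la:=0$ on $Z_f$. This $\la$ is automatically continuous, is $C^{2p}$ off $Z_f$, and satisfies $\la^2=f$; the task is to pick the signs $\epsilon$ so that $\la',\dots,\la^{(p)}$ extend continuously across $Z_f$, with the extensions vanishing at every non-isolated point of $Z_f$. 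A glueing lemma then finishes the job: if $g$ is continuous on an interval, is $C^p$ off a closed set $Z$, each $g^{(k)}$ ($1\le k\le p$) extends continuously with the extension vanishing on the non-isolated part of $Z$, and $g,g',\dots,g^{(p-1)}$ are locally Lipschitz, then $g\in C^p$ (one integrates the continuously extended derivatives, using absolute continuity to handle the — possibly fat — set $Z$). Two facts organise the analysis at $Z_f$. First, every $t'\in Z_f$ is a local minimum of $f$, so by hypothesis $f$ vanishes at $t'$ together with its derivatives up to order $2p-4$; as $f\ge0$ is $C^{2p}$ this forces vanishing order at least $2p-2$ there (the first surviving derivative of order $\le 2p$ must be of even order). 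Second, a \emph{non-isolated} zero $t_*$ cannot have vanishing order exactly $2p-2$ or $2p$: a nonzero $f^{(2p-2)}(t_*)$ or $f^{(2p)}(t_*)$ would make $f>0$ on a punctured neighbourhood, contradicting non-isolation. Since the order is even, a non-isolated zero is flat to order $2p$, i.e.\ $f^{(j)}(t_*)=0$ for all $j\le 2p$.

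At an isolated zero $t_*$ of finite even order $2N$ (so $p-1\le N\le p$ and $f^{(2N)}(t_*)>0$) I would factor $f(t)=(t-t_*)^{2N}h(t)$ with $h(t_*)>0$. Then $h$ is genuinely $C^{2p-2N}$ near $t_*$, and beyond that order $(\sqrt h)^{(l)}(t)=o\big((t-t_*)^{2p-2N-l}\big)$. Choosing $\epsilon$ so that $\la=(t-t_*)^N\sqrt h$ near $t_*$, the Leibniz rule gives $\la^{(p)}=\sum_j\binom pj\big((t-t_*)^N\big)^{(j)}(\sqrt h)^{(p-j)}$, and a term-by-term comparison of the order of vanishing of the prefactor with the possible blow-up of the high derivatives of $\sqrt h$ shows that each summand has a finite limit as $t\to t_*$; hence $\la\in C^p$ near $t_*$, the factor $(t-t_*)^N$ exactly absorbing the singularities of $\sqrt h$. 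The remaining isolated zeros are flat (all derivatives of $f$ up to order $2p$ vanish) and are treated exactly as the non-isolated ones, there being just one point to control.

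For a non-isolated (or flat) zero $t_*$ the tool is Glaeser's inequality and its higher-order analogues. The classical inequality (Lemma~\ref{lem:Glaeser}) gives $|(\sqrt f)'(t)|^2\le\tfrac12\sup_J|f''|$ over a suitable subinterval $J\ni t$, and the higher-order versions, valid under the vanishing-at-minima hypothesis, bound $|(\sqrt f)^{(k)}(t)|$, $2\le k\le p$, by powers of $\sup_J|f^{(2)}|,\dots,\sup_J|f^{(2k)}|$ over a controlled neighbourhood $J$ of $t$. Since a non-isolated zero is flat to order $2p$, all these suprema tend to $0$ as $t\to t_*$, so $\la^{(k)}(t)=\pm(\sqrt f)^{(k)}(t)\to 0$ for $1\le k\le p$, whatever the (now arbitrary) sign $\epsilon$ on the surrounding gap intervals. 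Thus the continuous extensions of $\la',\dots,\la^{(p)}$ vanish at every non-isolated point of $Z_f$, the Glaeser bounds make $\la,\la',\dots,\la^{(p-1)}$ locally Lipschitz, and the glueing lemma yields $\la\in C^p(\R)$. The refinement under $f\in C^{2p+2}$ is the same argument carried one derivative higher: a non-isolated zero is then flat to order $2p+2$, which makes $(\sqrt f)^{(p+1)}$ bounded (in fact $\to0$) near it, and at isolated zeros one uses one more term of the Leibniz expansion — only boundedness, not continuity, of the $(p+1)$-st derivative being claimed, the concluding step is correspondingly softer.

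The elementary pieces — the local reduction, the glueing lemma, the factorization at isolated zeros — are routine bookkeeping. The hard part, and the place where the prescribed flatness at the minima is consumed, is the family of higher-order Glaeser inequalities: proving, under that hypothesis, estimates for $|(\sqrt f)^{(k)}|$ in terms of the suprema of $f$ and of its even-order derivatives on a well-chosen neighbourhood, with the constants tracked so that the bound degenerates as one approaches a flat zero. This is the technical heart of the argument.
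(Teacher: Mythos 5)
First, a caveat: the survey itself does not prove this statement — it is quoted from Bony--Colombini--Pernazza \cite{BonyColombiniPernazza10} — so your attempt can only be measured against what a complete proof would require, not against an in-paper argument.

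The scaffolding of your proposal is sound: since $f\ge 0$, every zero is a local minimum, so the hypothesis forces vanishing to even order $\ge 2p-2$ at every zero and full flatness up to order $2p$ at every non-isolated zero; the analysis at an isolated zero of finite order $2N$ via $f=(t-t_*)^{2N}h$, $h\in C^{2p-2N}$, $h(t_*)>0$, together with the Leibniz/Fa\`a di Bruno bookkeeping, does give $C^p$ regularity of $(t-t_*)^N\sqrt h$ there; and the sign choices and the gluing lemma are routine (including global consistency of the signs, which you do not address but which follows because the constraint graph on the gap intervals has no cycles). The genuine gap is exactly where you place it and then pass over: the ``higher-order Glaeser inequalities, valid under the vanishing-at-minima hypothesis,'' which you invoke to show $(\sqrt f)^{(k)}\to 0$, $2\le k\le p$, at non-isolated zeros, are asserted, not proved, and they \emph{are} the theorem. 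The classical inequality (Lemma \ref{lem:Glaeser}) covers only $k=1$ and needs no hypothesis beyond $f\ge 0$; for $k\ge 2$ no such bound holds for general nonnegative $C^{2p}$ functions — this is precisely what Example \ref{ex:optimalhyp}(1) shows, where small positive local minima destroy even $C^{1,\al}$ regularity of $\sqrt f$ — so the entire content of the hypothesis must be consumed in establishing these estimates, and a proof that treats them as a black box proves nothing.

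Two more specific defects in that step. First, the inequality as you state it (a bound for $|(\sqrt f)^{(k)}(t)|$ by products of $\sup_J|f^{(2)}|,\dots,\sup_J|f^{(2k)}|$ alone) cannot be correct as written: already $f(t)=1+t^2$ gives $(\sqrt f)''(0)=1$ while $f^{(4)}\equiv 0$, so any usable estimate must involve $f(t)$ itself and, crucially, a structural condition on the interval $J$. Second, you never extract the structural consequence of the hypothesis that makes such estimates applicable: since every local minimum of $f$ is a zero, $f$ is unimodal on each component of $\{f>0\}$, i.e.\ $f$ and $f'$ have constant sign on each half of such a component; it is exactly under this sign condition that higher-order Glaeser-type bounds of the kind in Corollary \ref{cor:hoGleaser} hold, and the flatness order $2p-4$ (hence $2p$ at accumulation points of $Z_f$) is what makes the resulting exponents close up to give $(\sqrt f)^{(k)}\to 0$ for all $k\le p$. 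Without carrying out this quantitative part (and its variant one order higher for the $C^{2p+2}$ refinement), the proposal is an outline of the easy reductions around the theorem rather than a proof of it.
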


\subsubsection{A regularity class for taking radicals}

A different approach is due to Ray and Schmidt-Hieber \cite{RaySchmidt-Hieber17}. 
They define a regularity class $\cF^\be$, $\be>0$, \index{F@$\cF^\be$}
for nonnegative functions $f : [0,1] \to [0,\infty)$ which behaves nicely  
with respect to radicals.
For $\be>0$ we let $C^\be$ be a short notation for the H\"older class $C^{m,\al}$, where $m$ is the largest integer strictly smaller than $\be$ and $\al= \be -m$.
For $f \in C^\be([0,1])$ set 
\[
    |f|_{\cF^\be([0,1])} := \max_{1 \le j < \be} \Big( \sup_{t \in [0,1]} \frac{|f^{(j)}(t)|^\be}{|f(t)|^{\be -j}}\Big)^{1/j}.  
\]
It measures the flatness of $f$ near its zeros. In particular, if $|f|_{\cF^\be([0,1])} < \infty$ then that $f$ 
vanishes at some point entails that also all its derivatives up to order $<\be$ vanish.
Consider 
\[
    \cF^\be([0,1]) := \{ f \in C^\be([0,1]) : f \ge 0,\, \|f\|_{\cF^\be([0,1])}< \infty\},
\]
where 
\[
    \|f\|_{\cF^\be([0,1])} := \|f\|_{C^\be([0,1])} + |f|_{\cF^\be([0,1])}.
\]
Note that $f \in \cF^\be([0,1])$ may possess infinitely many nonzero local mimima.

\begin{theorem}[{\cite{RaySchmidt-Hieber17}}]
    Let $\al \in (0,1]$ and $\be >0$. For all $f \in \cF^\be([0,1])$, we have 
    \begin{equation*}
        \|f^\al\|_{\cF^{\al\be}([0,1])} \le C(\al,\be)\, \|f\|^\al_{\cF^\be([0,1])}.
    \end{equation*}
    In particular, $f^\al \in C^{\al\be}([0,1])$.
\end{theorem}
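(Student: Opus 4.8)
The plan is to push everything through the chain rule and the flatness seminorm of $f$, reducing the statement to a single self-improving "doubling" estimate for $f$ near its small values. First I would dispose of the case $\be\le 1$ separately: then $f\in C^{0,\be}$ and $|f|_{\cF^\be}=0$, so the subadditivity inequality $|a^\al-b^\al|\le|a-b|^\al$ for $a,b\ge 0$, $\al\in(0,1]$, immediately gives $f^\al\in C^{0,\al\be}$ with the required bound. So assume $\be>1$ (and $\al<1$, the case $\al=1$ being trivial), write $g:=f^\al$, $s:=\al\be$, let $M$ be the largest integer strictly smaller than $s$ and $\ga:=s-M\in(0,1]$, and let $m$ be the largest integer strictly smaller than $\be$, so that $f\in C^{m,\be-m}$ with $1\le m<\be$.

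The first real step is the chain rule (Faà di Bruno): on $\{f>0\}$, $g^{(j)}$ is a finite sum of terms $c\, f^{\al-k}\prod_i (f^{(i)})^{k_i}$ with $\sum_i i k_i=j$ and $k=\sum_i k_i$. For $1\le j\le M$ every index $i$ occurring satisfies $i\le j<\be$, so the defining bound $|f^{(i)}|\le |f|_{\cF^\be}^{i/\be}f^{\,1-i/\be}$ applies to each factor, and after collecting exponents — which cancel cleanly — one obtains
\[
|g^{(j)}(t)|\le C(\al,\be)\,|f|_{\cF^\be}^{j/\be}\,f(t)^{\al-j/\be},\qquad 1\le j\le M,
\]
with $\al-j/\be>0$. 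Hence $g^{(j)}(t)\to 0$ as $t$ approaches $Z_f$, and a routine argument on absolutely continuous functions shows $g\in C^M([0,1])$ with $g^{(j)}\equiv 0$ on $Z_f$ and the displayed bound valid on all of $[0,1]$. Raising it to the power $s$ and dividing by $g^{\,s-j}=f^{\al(s-j)}$ makes the $f$-powers disappear, which gives $|g|_{\cF^{s}}\le C(\al,\be)\,|f|_{\cF^\be}^\al$; and evaluating it with $\|f\|_\infty$ (using $\al-j/\be\ge 0$ and $\|f\|_\infty,|f|_{\cF^\be}\le\|f\|_{\cF^\be}$) gives $\|g^{(j)}\|_\infty\le C(\al,\be)\|f\|_{\cF^\be}^\al$ for $0\le j\le M$.

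The crux is the H\"older seminorm of the top derivative, i.e.\ $|g^{(M)}(x)-g^{(M)}(y)|\le C(\al,\be)\,|f|_{\cF^\be}^\al\,|x-y|^\ga$. The missing ingredient is a \emph{bootstrap lemma}: for every subinterval $[x,y]\subseteq[0,1]$, writing $\mu:=\min_{[x,y]}f$, $F:=\sup_{[x,y]}f$, $\rho:=y-x$, one has $F\le C(\be)\big(\mu+|f|_{\cF^\be}\,\rho^\be\big)$; in particular (taking $\mu=0$) $f(z)\le C(\be)|f|_{\cF^\be}\,\dist(z,Z_f)^\be$. The proof is to Taylor-expand $f$ to order $m$ around a minimizer $z_0$, evaluate at a maximizer $\zeta$, bound $|f^{(j)}(z_0)|\le|f|_{\cF^\be}^{j/\be}\mu^{1-j/\be}$ and $|f^{(m)}|\le|f|_{\cF^\be}^{m/\be}F^{1-m/\be}$ on $[z_0,\zeta]$, and, with $L:=|\zeta-z_0|$, $W:=F/(|f|_{\cF^\be}L^\be)$, $V:=\mu/(|f|_{\cF^\be}L^\be)$, reduce to $W\le\sum_{j<m}\frac{V^{1-j/\be}}{j!}+\frac{W^{1-m/\be}}{m!}$; since $V^{1-j/\be}\le\max(1,V)$ and $W^{1-m/\be}$ is sublinear, an absorption argument gives $W\le C(\be)\max(1,V)$, i.e.\ the claim. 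Granting this, I would fix $x<y$ and split: if $f$ vanishes somewhere on $[x,y]$, then $g^{(M)}$ vanishes there and the distance estimate bounds $|g^{(M)}(x)|,|g^{(M)}(y)|$ by $C|f|_{\cF^\be}^\al\rho^\ga$; if $f>0$ on $[x,y]$ then either $\ga=1$ (so $g^{(M+1)}$ is bounded, and the mean value theorem finishes), or $\rho\le|f|_{\cF^\be}^{-1/\be}\mu^{1/\be}$ (so $|g^{(M+1)}|\le C|f|_{\cF^\be}^{(M+1)/\be}\mu^{\al-(M+1)/\be}$ on $[x,y]$, and writing $\rho=\rho^\ga\rho^{1-\ga}$ and using $\rho^{1-\ga}\le|f|_{\cF^\be}^{-(1-\ga)/\be}\mu^{(1-\ga)/\be}$ makes the $\mu$-powers cancel), or $\rho>|f|_{\cF^\be}^{-1/\be}\mu^{1/\be}$ (so the bootstrap lemma forces $F\le C|f|_{\cF^\be}\rho^\be$, whence $|g^{(M)}(x)|,|g^{(M)}(y)|\le C|f|_{\cF^\be}^\al\rho^\ga$ individually). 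Adding the three pieces and combining with the previous paragraph yields $\|g\|_{C^{s}}+|g|_{\cF^{s}}\le C(\al,\be)\|f\|_{\cF^\be}^\al$, and finiteness of this norm is precisely $f^\al\in\cF^{\al\be}([0,1])\subseteq C^{\al\be}([0,1])$.

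The main obstacle is exactly the last step: the naive bound $|g^{(M)}(x)-g^{(M)}(y)|\le\rho\sup|g^{(M+1)}|$ breaks down near $Z_f$, because the exponent $\al-(M+1)/\be$ becomes negative as soon as $\ga<1$; it is the bootstrap lemma together with the case distinction according to whether $\rho$ is small or large relative to the intrinsic flatness scale $|f|_{\cF^\be}^{-1/\be}\mu^{1/\be}$ of $f$ on $[x,y]$ that repairs this. Everything else — the chain rule bookkeeping, the cancellation of $f$-powers, the promotion of the pointwise estimates to seminorm and sup-norm bounds, and the upgrade of $g$ to a genuine $C^M$ function across $Z_f$ — is routine.
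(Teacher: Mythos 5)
First, a remark on context: the survey states this theorem citing Ray--Schmidt-Hieber and gives no proof, so your argument can only be judged on its own merits. Most of it checks out: the reduction to $\be>1$, the Fa\`a di Bruno bookkeeping with the clean cancellation of $f$-powers, the bootstrap (doubling) lemma with its absorption argument and the consequence $f(z)\le C|f|_{\cF^\be}\on{dist}(z,Z_f)^\be$, the upgrade of $g=f^\al$ to $C^M$ across $Z_f$, the flatness and sup-norm bounds, and the sub-cases ``$f$ vanishes on $[x,y]$'', ``$\ga=1$'', and ``$\rho>|f|_{\cF^\be}^{-1/\be}\mu^{1/\be}$'' of the H\"older estimate are all correct.

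The genuine gap is the remaining sub-case ($\ga<1$, $\rho\le|f|_{\cF^\be}^{-1/\be}\mu^{1/\be}$) whenever the largest integer $M$ below $\al\be$ coincides with the largest integer $m$ below $\be$ (e.g.\ $\be=2.5$, $\al=0.9$, so $M=m=2$). There you invoke $g^{(M+1)}$, hence $f^{(M+1)}=f^{(m+1)}$, which need not exist since $f\in C^{m,\be-m}$ only; and even where it exists, $M+1\ge\be$, so the flatness seminorm does not control it. Worse, the intermediate claim you aim for in this sub-case, $|g^{(M)}(x)-g^{(M)}(y)|\le C|f|_{\cF^\be}^\al\rho^\ga$ with only the flatness seminorm on the right, is false: take $\be=2.5$, $\al=0.9$ (so $s=2.25$, $\ga=0.25$), $f(t)=1+\ep\sin(\om t)$ with $\om=\ep^{-2/5}$. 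Then $|f|_{\cF^\be}\asymp\ep^{1/4}$, the hypotheses of your sub-case hold on intervals of length $\rho\asymp 1/\om$, yet the increment of $g''$ over such an interval is $\asymp\ep\om^2=\ep^{1/5}$, while your bound is $\asymp\ep^{0.225}\rho^{0.25}=\ep^{0.325}$. (The theorem itself survives because $\|f\|_{\cF^\be}\asymp 1$ here: the $C^\be$-part of the norm, in particular $\Hoeld_{\be-m}(f^{(m)})$, must enter, and your proof never uses it.) The repair is to estimate the increment of $g^{(M)}$ termwise from the Fa\`a di Bruno expansion: all factors of order $<m$ can be differentiated once more as you do, and the single troublesome term $c\,f^{\al-1}f^{(m)}$ is handled by interpolating the two available bounds for the increment of $f^{(m)}$, namely $\min\{\Hoeld_{\be-m}(f^{(m)})\rho^{\be-m},\,2|f|_{\cF^\be}^{m/\be}F^{1-m/\be}\}\le(\Hoeld_{\be-m}(f^{(m)}))^{\th}(2|f|_{\cF^\be}^{m/\be}F^{1-m/\be})^{1-\th}\rho^{\ga}$ with $\th=\ga/(\be-m)\in(0,1]$; using $F\le C\mu$ (bootstrap) the $\mu$-powers cancel and the total exponent of $\|f\|_{\cF^\be}$ comes out exactly $\al$, giving $\le C\|f\|_{\cF^\be}^\al\rho^\ga$ as required. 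Without this interpolation step your case analysis does not close.
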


Moreover, 
bounds on the wavelet coefficients of $f^\al$ are derived in \cite{RaySchmidt-Hieber17} 
which give additional information on the local regularity of $f^\al$.

\subsubsection{Finite order of contact} \label{sec:nonflathyp}

In the case of a curve of monic hyperbolic polynomials $P_a(t)$, $t \in I$, of degree $d$, 
sufficient coefficients for the existence of $C^p$ roots can be given in terms 
of the differentiability of the coefficients and the finite order of contact of the roots; 
see Rainer \cite{RainerOmin,RainerFin}.

In order to present these results we need some terminology.
The polynomial
\begin{equation*} \label{eq:Des}
    \De_s(X_1,\ldots,X_d) 
    := \sum_{i_1 < \ldots < i_s} (X_{i_1} - X_{i_2})^2 \cdots (X_{i_1} - X_{i_s})^2 \cdots (X_{i_{s-1}} - X_{i_s})^2
\end{equation*}
is symmetric and so it is a (unique) polynomial in the elementary symmetric functions:
\[
    \De_s = \tilde \De_s(\si_1,\ldots,\si_d).
\]
Thus $t \mapsto \tilde \De_s(P_a(t))$, where we identify $P_a$ with its vector of coefficients,
is well-defined. 
We say that $P_a(t)$, $t \in I$, is \emph{normally nonflat}\index{normally nonflat} if 
for each $t_0 \in I$ the following condition is satisfied.
Let $s$ be the maximal integer such that the germ at $t_0$ of $t \mapsto \tilde \De_s(P_a(t))$ 
is not zero. Then  
\[
    m_{t_0}(\tilde \De_s(P_a)):= \sup \{m \in \N : (t-t_0)^{-m}\tilde \De_s(P_a(t)) \text{ is continuous near }t_0\} <\infty. 
\]
We call this quantity the \emph{multiplicity of $\tilde \De_s(P_a)$ at $t_0$};\index{multiplicity}  
obviously we can analogously define the multiplicity of any continuous univariate function.  
It is not hard to see that $P_a(t)$, $t \in I$, is normally nonflat 
if and only if for any continuous system $\la_j$, $1 \le j \le d$, of its roots 
$m_{t_0}(\la_i-\la_j) = \infty$ implies $\la_i = \la_j$ near $t_0$.

Let $P_a(t)$, $t \in I$, be a normally nonflat curve of monic hyperbolic polynomials.
In \cite{RainerOmin,RainerFin}, numbers $\ga(P_a), \Ga(P_a) \in \N \cup \{\infty\}$, $\ga(P_a) \le  \Ga(P_a)$, 
are defined, in terms of a splitting algorithm for $P_a$, 
that encode the conditions for $C^p$ solvability. 
We refer the reader to \cite{RainerOmin} and \cite{RainerFin} for the details of the definition.

\begin{theorem}[{\cite{RainerOmin,RainerFin}}] \label{thm:contacthyp}
    Let $P_a(t)$, $t \in I$, be a normally nonflat curve of monic hyperbolic polynomials of degree $d$.
    If $p \in \N \cup \{\infty\}$ and $P_a(t)$, $t \in I$, has $C^{p+ \Ga(P_a)}$ coefficients, 
    then it admits a $C^{p+\ga(P_a)}$ system of its roots.
\end{theorem}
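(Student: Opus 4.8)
The statement is local in $t$, so it suffices to produce, near each point $t_0\in I$, a $C^{p+\ga(P_a)}$ system of roots; since $\ga(P_a)$ and $\Ga(P_a)$ dominate the numbers computed by running the splitting algorithm at $t_0$, and since differentiable local systems of roots of a hyperbolic polynomial glue to a global one (see \cite{RainerOmin,RainerFin}), this yields the global assertion. We may take $t_0=0$. The plan is to argue by induction on the degree $d$, mirroring the splitting algorithm by which $\ga(P_a)$ and $\Ga(P_a)$ are defined. If the roots of $P_a(0)$ are pairwise distinct, the implicit function theorem gives roots as regular as the coefficients and $\ga=\Ga=0$, so nothing is to be proved. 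In general we first pass to the Tschirnhausen form and assume $a_1\equiv0$; at the end we undo this, which only translates the roots by $a_1/d$ and does not change their regularity.

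Suppose first that the roots of $P_{\tilde a}(0)$ are not all equal. Then $P_{\tilde a}(0)=P_b(0)P_c(0)$ for monic real polynomials of positive degree without common root, and Lemma~\ref{lem:splitting} yields a factorization $P_{\tilde a}=P_bP_c$ on a neighborhood of $0$ whose factors have coefficients in the same differentiability class as the $a_j$. One checks that normal nonflatness passes to $P_b$ and $P_c$ (any two roots of $P_b$ with infinite order of contact are such already as roots of $P_{\tilde a}$, hence equal there), and that in this branch of the algorithm $\ga(P_a)=\max\{\ga(P_b),\ga(P_c)\}$ and $\Ga(P_a)=\max\{\Ga(P_b),\Ga(P_c)\}$. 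The induction hypothesis applied to the lower-degree polynomials $P_b$ and $P_c$ then finishes this case.

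It remains to treat the case where all roots of $P_{\tilde a}(0)$ coincide, so that $\tilde a_j(0)=0$ for $2\le j\le d$. Since $-\tilde a_2=\tfrac12\sum_i\la_i^2\ge0$ is $C^{p+\Ga(P_a)}$ and, by normal nonflatness, vanishes to a finite order at $0$ -- necessarily an even order $2k$ with positive leading coefficient -- we may write $-\tilde a_2(t)=t^{2k}u(t)$ with $u(0)>0$; the contact order $k$, together with the further output of the algorithm, pins down the number $\Ga_1$ of derivatives consumed and the number $\ga_1$ of derivatives regained at this step. Choosing $\ansqrt(t):=\pm t^{k}u(t)^{1/2}$, one of the two roots of $-\tilde a_2$, we form
\[
    Q_{\ul a}(Z):=\ansqrt^{-d}P_{\tilde a}(\ansqrt Z)=Z^d-Z^{d-2}+\sum_{j=3}^d\ansqrt^{-j}\tilde a_j\,Z^{d-j}\in\Hyp_T^0(d).
\]
The crucial point is the claim that the rescaled coefficients $\ul a_j:=\ansqrt^{-j}\tilde a_j$ lie in $C^{(p+\Ga(P_a))-\Ga_1}$: the bound $|\ul a_j|\le2^{j/2}$ is immediate from Lemma~\ref{lem:dominant}, but the derivative bounds require a higher-order Glaeser inequality (in the spirit of Bronshtein's argument), controlling $|\p^\ell\bigl(\ansqrt^{-j}\tilde a_j\bigr)|$ by a constant times $\ansqrt^{-\ell}$ uniformly near $0$ -- the philosophy of \Cref{lem:Glaeser}, \Cref{lem:interpolation} and the admissibility estimates behind \Cref{prop:adm}, but pushed to order $\sim p+\Ga(P_a)$. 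Granting this, a compactness argument as in Section~\ref{sec:splittingRellich} together with Lemma~\ref{lem:splitting} provides a splitting $Q_{\ul a}=Q_{\ul b}Q_{\ul c}$ near $0$, with coefficients that are $C^{(p+\Ga(P_a))-\Ga_1}$ functions of $t$, degrees $<d$, and again normally nonflat. The induction hypothesis applied to $Q_{\ul b}$ and $Q_{\ul c}$ produces their roots -- hence all roots of $Q_{\ul a}$ -- with the regularity dictated by the remaining derivative budget, and since each root of $P_{\tilde a}$ equals $\ansqrt$ times a root of $Q_{\ul a}$ while $\ansqrt\in C^{(p+\Ga(P_a))-2k}$, the product rule gives roots of $P_{\tilde a}$ of class $C^{p+\ga(P_a)}$ -- the recursions defining $\ga$ and $\Ga$ being arranged exactly so that this accounting closes.

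The principal obstacle is precisely this higher-order Glaeser--Bronshtein estimate for the rescaled coefficients: Bronshtein's theorem, \Cref{thm:Bronshtein}, is the order-one (and bounds-only) instance of it, and extracting the full hierarchy of derivative bounds, with the loss $\Ga_1$ correctly expressed through the contact order $k$, is the technical heart of the argument, carried out in \cite{RainerOmin,RainerFin}. A secondary -- and more combinatorial than analytic -- task is to verify that normal nonflatness is preserved under the splitting and rescaling operations, so that the algorithm terminates, the induction is well founded, and $\ga(P_a),\Ga(P_a)$ are indeed the right bookkeeping quantities.
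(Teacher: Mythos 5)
A preliminary caveat about the comparison: the survey does not prove Theorem~\ref{thm:contacthyp} at all. It states the result with a citation and explicitly refers the reader to \cite{RainerOmin,RainerFin} even for the definitions of $\ga(P_a)$ and $\Ga(P_a)$. So there is no in-paper proof to measure your argument against; it can only be judged as a standalone argument, and as such it has genuine gaps, essentially at the same places where you yourself defer to the cited papers.

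Your structural outline (Tschirnhausen form, splitting via Lemma~\ref{lem:splitting} when the roots of $P_{\tilde a}(t_0)$ are not all equal, rescaling by an analytic-type square root $\ansqrt$ of $-\tilde a_2$ otherwise, induction on the degree) is the right scheme and matches Section~\ref{sec:Rellichproof}. But the proof is not closed. You never define $\ga(P_a)$ and $\Ga(P_a)$, so the identities you rely on --- $\ga(P_a)=\max\{\ga(P_b),\ga(P_c)\}$, $\Ga(P_a)=\max\{\Ga(P_b),\Ga(P_c)\}$, the meaning of your $\Ga_1$, and the final claim that the recursions are ``arranged exactly so that this accounting closes'' --- are assertions about undefined quantities; and the central analytic claim, that the rescaled coefficients $\ansqrt^{-j}\tilde a_j$ are of class $C^{(p+\Ga(P_a))-\Ga_1}$, is explicitly ``granted'' rather than proved. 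These are precisely the points where the theorem lives. Moreover, identifying the missing estimate as a ``higher-order Glaeser--Bronshtein inequality'' misplaces the difficulty: under normal nonflatness, either all roots coincide identically near $t_0$ (trivial case), or $-\tilde a_2$ vanishes to a finite even order $2k$ at $t_0$; then Lemma~\ref{lem:dominant} gives $|\tilde a_j|\lesssim|\tilde a_2|^{j/2}\lesssim|t-t_0|^{jk}$, and Taylor's theorem (Hadamard division) yields $\tilde a_j=(t-t_0)^{jk}c_j$ with $c_j\in C^{m-jk}$ whenever $\tilde a_j\in C^{m}$, so that $\ansqrt^{-j}\tilde a_j=\pm\,c_j/u^{j/2}$ with an elementary, quantifiable loss of at most $dk$ derivatives. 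Glaeser-type inequalities are needed exactly when no finite-order hypothesis is available, i.e.\ for Theorem~\ref{thm:Bronshtein}; the whole point of normal nonflatness is to replace them by finite-order division. The genuine content of Theorem~\ref{thm:contacthyp} is the precise bookkeeping of these losses and of the gains coming from multiplying the roots of $Q_{\ul a}$ back by $\ansqrt$ --- that is, the very definitions of $\ga$ and $\Ga$ and the verification that they recurse as claimed (including the preservation of normal nonflatness under splitting and rescaling) --- and your proposal delegates all of this to \cite{RainerOmin,RainerFin} rather than supplying it.
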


Note that for $p=\infty$ the theorem yields the result of Alekseevsky, Kriegl, Losik, and Michor \cite{AKLM98}
that, under the normal nonflatness assumption, $C^\infty$ systems of the roots exist if the coefficients are $C^\infty$.

\subsubsection{Definability: no oscillation}

Let us state one more result. It shows that, actually, it is not the infinite contact 
between the roots that is to blame 
for the loss of regularity, but oscillation. Of course, smooth coefficients can admit infinite oscillation 
only in conjuction with infinite flatness. 
The result is formulated in the framework of o-minimal expansions $\sS$ of the real field.\index{o-minimal} 
That means that $\sS$ is a family $\sS=(\sS_n)_{n \ge 1}$, where $\sS_n$ is a collection of subsets of $\R^n$ such that
\begin{itemize}
    \item $\sS_n$ is a boolean algebra with respect to the usual set-theoretic operations,
    \item $\sS_n$ contains all semialgebraic subsets of $\R^n$,
    \item $\sS$ is stable by cartesian products and linear projections,
    \item each $S \in \sS_1$ has only finitely many connected components.
\end{itemize}
The \emph{$\sS$-definable sets}\index{definable} are those sets $S$ such that there exists $n\ge 1$ with $S \in \sS_n$. 
A map is $\sS$-definable if its graph is $\sS$-definable.

\begin{theorem}[{\cite[Theorem 4.12]{RainerOmin}}]
    Let $\sS$ be an arbitrary o-minimal expansion of the real field.
    Let $P_a(t)$, $t \in I$, be a curve of monic hyperbolic polynomials 
    with $C^\infty$-coefficients that are $\sS$-definable. 
    Then there exists a $C^\infty$-system of the roots of $P_a$ consisting of 
    $\sS$-definable functions.
\end{theorem}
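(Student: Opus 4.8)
The natural approach is to run the splitting algorithm of Section~\ref{sec:Rellichproof} by induction on the degree $d$, keeping track of $\sS$-definability throughout and invoking o-minimality at each step to control the flat part of the coefficients --- that is, to exclude the oscillation that, in the merely $C^\infty$ setting, destroys regularity (Example~\ref{ex:optimalhyp}). Definability is cheap to carry along: the roots of a monic polynomial form a semialgebraic correspondence of its coefficients, so the increasingly ordered roots $\la_1^\uparrow\le\cdots\le\la_d^\uparrow$ form a continuous $\sS$-definable system; the Tschirnhausen shift $a\mapsto\tilde a$ is polynomial in the $a_j$; the splitting maps of Lemma~\ref{lem:splitting} are real analytic, hence $\sS$-definable; and ordering, shifting, and dividing by $\sS$-definable functions all preserve $\sS$-definability. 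So the only thing at stake is $C^\infty$ regularity of the system we build.

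The crucial lemma is that \emph{every nonnegative $g\in C^\infty(I)$ which is $\sS$-definable has an $\sS$-definable $\th\in C^\infty(I)$ with $\th^2=g$}; here $\th$ must be allowed to change sign (for $g(t)=t^2$ it has to), and without definability the statement is false --- Example~\ref{ex:optimalhyp}(1) produces a nonnegative $g\in C^\infty(\R)$ for which $Z^2=g$ has no $C^{1,\al}$ solution. To prove it, observe that $Z_g$ is $\sS$-definable, hence a finite union of points and intervals; set $\th:=0$ on the intervals of zeros and $\th:=\sqrt g$ (classically $C^\infty$) on the open set $\{g>0\}$, with a suitable choice of sign on each component of $\{g>0\}$. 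The only delicate point is an isolated zero $t_0$ of $g$: by the o-minimal monotonicity theorem the definable function $(t-t_0)g'/g$ has a limit $\nu\in[0,\infty]$ at $t_0$, and since $g\in C^\infty$ this forces $\nu$ to be a positive integer or $\infty$. If $\nu=2l$ is finite (necessarily even, since $g\ge0$ on both sides), then $g$ vanishes to Taylor order $2l$ at $t_0$, so $g=(t-t_0)^{2l}h$ with $h\in C^\infty$, $h(t_0)>0$, and $\th:=\pm(t-t_0)^l\sqrt h\in C^\infty$. If $\nu=\infty$, then $g$ is flat at $t_0$, hence so is $\sqrt g$ (it is $o(|t-t_0|^N)$ for all $N$), and a continuous $\sS$-definable function with this property is automatically $C^\infty$ at $t_0$ with all derivatives zero: by the monotonicity theorem each of its (definable) derivatives is eventually monotone and cannot tend to a nonzero limit without contradicting flatness. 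The endpoints of intervals of zeros of $g$ are of this flat type and are handled the same way.

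Granting this, the induction proceeds as in Section~\ref{sec:Rellichproof}. For $d=1$ the root is $-a_1$. For $d\ge2$ pass to Tschirnhausen form; on a component of $I$ where $\tilde a_2\equiv0$ all roots vanish, so assume not and let $\th$ be an $\sS$-definable $C^\infty$ square root of $-\tilde a_2\ge0$. On each component $J$ of $\{\th\ne0\}$ the rescaled polynomial $Q_{\ul a}(Z)=\th^{-d}P_{\tilde a}(\th Z)$ lies in the compact set $\Hyp_T^0(d)$ (Lemma~\ref{lem:dominant}) and has $\sS$-definable $C^\infty$ coefficients $\ul a_j=\th^{-j}\tilde a_j$; covering $\Hyp_T^0(d)$ by finitely many balls on which the splitting of Lemma~\ref{lem:splitting} is available and pulling this cover back along the definable curve $t\mapsto\ul a(t)$ subdivides $J$ into finitely many definable subintervals (here o-minimality bounds the number of pieces) on each of which $Q_{\ul a}=Q_{\ul b}Q_{\ul c}$, hence $P_{\tilde a}=P_bP_c$ with $b_i=\th^i\ps_i(\ul a_3,\dots,\ul a_d)$ for real analytic $\ps_i$. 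Across a zero $t_0$ of $\th$ of finite order $l$, Lemma~\ref{lem:dominant} gives $\tilde a_j=O(|t-t_0|^{lj})$, hence $\tilde a_j=(t-t_0)^{lj}\rho_j$ with $\rho_j\in C^\infty$, so that $\ul a_j=\rho_j h^{-j/2}$ and $b_i=(t-t_0)^{li}h^{i/2}\ps_i(\ul a_3,\dots,\ul a_d)$ extend $C^\infty$ (and $\sS$-definably) across $t_0$, the powers of $(t-t_0)$ cancelling exactly; across an infinite-order zero, $P_b$ is flat there and the flat-root argument of the previous paragraph applies. Thus $P_b$ and $P_c$ are $C^\infty$, $\sS$-definable curves of hyperbolic polynomials of degree $<d$, so the induction hypothesis furnishes $\sS$-definable $C^\infty$ roots of each, and hence of $P_{\tilde a}$, locally. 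Finally, a standard patching argument assembles the local systems into a global one: $I$ is an interval, only finitely many points occur at which roots collide or the splitting data changes (o-minimality again), and across each such point the roots admit a local $\sS$-definable $C^\infty$ parameterization, so the local choices can be matched consistently from left to right, yielding an $\sS$-definable $C^\infty$ system $\la_1,\dots,\la_d:I\to\R$ with $P_a(t)(Z)=\prod_{i=1}^d(Z-\la_i(t))$.

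The heart of the matter --- and the expected main obstacle --- is the crucial lemma together with the verification that the split coefficients extend $C^\infty$ across the zero set of $\th$: this is the $C^\infty$ analogue of Proposition~\ref{prop:adm}, and it can only succeed because the o-minimal structure theorems (the monotonicity theorem and $C^\infty$-cell decomposition), complemented by Glaeser-type inequalities such as Lemma~\ref{lem:Glaeser}, upgrade the \emph{boundedness} of the relevant derivatives near the bad points to their \emph{convergence}. This is precisely where ``no oscillation'' enters. The remaining ingredients --- the definability bookkeeping, the degree induction, the reduction to finitely many bad points, and the gluing --- are routine once this core is in place.
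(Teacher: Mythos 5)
The survey contains no proof of this statement --- it is quoted from \cite{RainerOmin} --- so your proposal can only be measured against the strategy the survey itself develops, and on that score you have reconstructed the expected route: the Tschirnhausen--splitting induction of Section~\ref{sec:Rellichproof}, with o-minimality supplying exactly the two missing ingredients in the $C^\infty$ setting, namely the dichotomy ``finite even Taylor order versus flat'' at a zero of $-\tilde a_2$, and the lemma that a continuous definable function which is $C^\infty$ off a point and flat there is automatically $C^\infty$ at that point. Your crucial lemma (definable $C^\infty$ square roots of nonnegative definable $C^\infty$ functions) and the cancellation $\ul a_j=\tilde a_j/\th^j=\rho_j h^{-j/2}$ across finite-order zeros of $\th$ are correct and are the heart of the matter; the flat-extension lemma does need the small inductive mean-value argument showing that flatness propagates to each derivative (monotonicity alone gives existence of limits in $[-\infty,\infty]$, and one must rule out nonzero and infinite limits order by order), but that argument goes through.

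Three points need repair. First, ``real analytic, hence $\sS$-definable'' is not a valid implication: restricted analytic functions need not be definable in an arbitrary o-minimal expansion of the real field. What saves you is that the $\ps_i$ (and $\tilde\ps_i$) of Lemma~\ref{lem:splitting} are local inverses of a polynomial map with nonvanishing Jacobian, hence semialgebraic (Nash) on semialgebraic neighborhoods by the semialgebraic inverse function theorem, and semialgebraic functions are definable in every o-minimal expansion of the real field; alternatively, the coefficients of $P_b$ are elementary symmetric functions of a definable selection of the roots, which gives their definability directly. Second, in the patching step the claim ``only finitely many points occur at which roots collide'' is false in general (the discriminant may vanish on whole subintervals); what o-minimality actually gives is finitely many points where $\th$ vanishes or the splitting chart changes. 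Moreover, matching two local smooth systems at a gluing point $c$ by a single permutation is not automatic when roots collide at $c$; the clean fix is to match Taylor series: since Taylor expansion at $c$ is an algebra homomorphism into $\R[[t]]$, the Taylor series of the members of \emph{any} smooth system form the (unique) multiset of power-series roots of $Z^d+\sum \hat{\tilde a}_j Z^{d-j}$, so the two systems can be permuted to agree to infinite order at $c$, and the concatenation at $c$ is then $C^\infty$ and definable (at gluing points adjacent to flat zeros of $\th$ everything is flat and any permutation works). Third, drop the appeal to ``$C^\infty$-cell decomposition'': its validity in an arbitrary o-minimal expansion of the real field is an open problem (only $C^p$-decomposition for finite $p$ is available); fortunately your argument never uses it, since all smoothness comes from the splitting induction and the flatness lemma, and the ordered-root system is only needed as a continuous definable object.
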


By Miller's dichotomy theorem \cite{Miller:1994ue}, for a fixed o-minimal expansion $\sS$,
either for every $\sS$-definable function $f : \R \to \R$ there is $N \in \N$ such that $f(t) = O(t^N)$ as $t \to \infty$,
or the global exponential function $\exp : \R \to \R$ is $\sS$-definable.
In the latter case, there are infinitely flat $C^\infty$ functions in $\sS$. 
On the other hand, $\sS$ cannot contain oscillating functions.

\subsection{G{\aa}rding hyperbolic polynomials} \label{sec:Garding}

There are various ways to generalize univariate hyperbolic polynomials to multivariate polynomials.
Of particular interest are the \emph{G{\aa}rding hyperbolic polynomials} and the closely related \emph{real stable polynomials}; 
they have numerous applications in PDEs, combinatorics, optimization, functional analysis, probability, etc. 

\subsubsection{G{\aa}rding hyperbolic and real stable polynomials}
A homogeneous polynomial $f(Z_1,\ldots,Z_n) \in \R[Z_1,\ldots,Z_n]$ of degree $d$ is said to be 
\emph{G{\aa}rding hyperbolic with respect to $v \in \R^n$}\index{G{\aa}rding hyperbolic polynomial}\index{polynomial!G{\aa}rding hyperbolic} 
if $f(v) \ne0$ and 
for all $x \in \R^n$ the univariate polynomial $f(x - Tv) \in \R[T]$ is hyperbolic, i.e., has all roots real.
Geometrically, this means that any 
affine line with direction $v$ meets the real hypersurface $\{x \in \R^n : f(x) = 0\}$ in $d$ points (with multiplicities).

This notion was introduced by G\r{a}rding \cite{Garding51} in the 1950s. He showed that $f$ being G{\aa}rding
hyperbolic with respect to a direction $v$ 
is a necessary and sufficient condition for local well-posedness of the Cauchy problem with principal symbol $f$ 
and initial data on a hyperplane with normal vector $v$. 
G{\aa}rding hyperbolic polynomials have found many applications ever since, for instance, in Gurvits' proof \cite{Gurvits:2008aa} of the van der Waerden conjecture. 

An important example is the determinant on the real vector space of $d \times d$ 
Hermitian matrices: it is G{\aa}rding hyperbolic with respect to the identity matrix $\I$.  

A polynomial $f(Z_1,\ldots,Z_n) \in \C[Z_1,\ldots,Z_n]$ is called \emph{stable}\index{polynomial!stable} 
if $f(z_1,\ldots,z_n) \ne 0$ for all $(z_1,\ldots,z_n) \in \C^n$ with $\Im(z_j)>0$, $1 \le j \le n$.
If all coefficients of a stable polynomial are real, it is called \emph{real stable}.\index{real stable polynomial}\index{polynomial!real stable}
Real stable polynomials played a crucial role in the recent proof of the Kadison--Singer conjecture \cite{Marcus:2015aa}.

These notions are closely related.

\begin{proposition}[{\cite[Proposition 1.1]{Borcea:2010aa}}] \label{realstablehyperbolic}
    Let $f \in \R[Z_1,\ldots,Z_n]$ be of degree $d$ and let $f_H \in \R[Z_1,\ldots,Z_n, W]$ be the unique 
    homogeneous polynomial of degree $d$ such that 
    \[
        f_H(Z_1,\ldots,Z_n,1) = f(Z_1,\ldots,Z_n).
    \]
    Then $f$ is real stable if and only if $f_H$ is G\r{a}rding hyperbolic with respect to 
    every vector $v = (v_1,\ldots,v_n,0)$ with $v_i>0$ for all $i$.
\end{proposition}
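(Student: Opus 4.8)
The plan is to prove the two implications separately, using throughout the elementary remark that a polynomial with real coefficients which has no root in the open lower half-plane is automatically real-rooted, since non-real roots of a real polynomial come in conjugate pairs. It is convenient to write $f=\sum_{j=0}^{d}f_{[j]}$ with $f_{[j]}$ homogeneous of degree $j$, so that $f_H(Z,W)=\sum_{j=0}^{d}f_{[j]}(Z)\,W^{d-j}$; in particular $f_H(Z,0)=f_{[d]}(Z)$ and $f_H(Z,W)=W^{d}f(Z/W)$ for $W\ne0$. Throughout, for $v\in\R^{n}_{>0}$ I abbreviate $v':=(v,0)\in\R^{n+1}$, which is exactly the type of direction appearing in the statement.

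First I would treat the direction ``$f_{H}$ G\r{a}rding hyperbolic with respect to every $v'$, $v\in\R^{n}_{>0}$'' $\Rightarrow$ ``$f$ real stable'', by contraposition. Assume $f(z)=0$ for some $z=x+iy$ with $y=(y_1,\dots,y_n)\in\R^{n}_{>0}$, and set $v:=y$ and $\xi:=(x,1)\in\R^{n+1}$. Then $T\mapsto f_{H}(\xi-Tv')=f_{H}(x-Ty,1)=f(x-Ty)$ is a univariate polynomial whose coefficient of $T^{d}$ is $(-1)^{d}f_{H}(v')\ne0$, hence of degree exactly $d$; by hyperbolicity all $d$ of its roots are real. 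But $x-(-i)y=z$, so $T=-i$ is a root, a contradiction (here $f\not\equiv0$ since $f_{H}(v')\ne0$). This gives real stability of $f$.

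For the converse I would assume $f$ is real stable of degree $d$, fix $v\in\R^{n}_{>0}$, and check the two conditions defining G\r{a}rding hyperbolicity of $f_{H}$ with respect to $v'$. The engine is: for any $u\in\R^{n}$ the univariate polynomial $s\mapsto f(u+sv)$ has no root with $\Im s>0$ (for then every coordinate $u_{k}+sv_{k}$ has positive imaginary part, since $v_{k}>0$), hence is real-rooted. I would first record that the leading form $f_{[d]}$ is itself real stable: the polynomials $g_{s}(Z):=s^{d}f(Z/s)$ are real stable for each $s>0$ and converge coefficientwise to $f_{[d]}$ as $s\to0^{+}$, so by the Hurwitz theorem for stable polynomials the limit is real stable or identically zero, and it is nonzero because $\deg f=d$. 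Next comes the non-vanishing statement $f_{H}(v')=f_{[d]}(v)\ne0$ for $v\in\R^{n}_{>0}$. This is the step I expect to be the main obstacle: it amounts to saying that a nonzero homogeneous real stable polynomial does not vanish on the open positive orthant, equivalently that $\R^{n}_{>0}$ lies in the hyperbolicity cone of $f_{[d]}$; I would either invoke it as a standard property of real stable polynomials or deduce it from G\r{a}rding's theory of hyperbolicity cones (controlling the degree under line restrictions is precisely what makes this subtle).

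Granting the non-vanishing, the line condition is straightforward. The coefficient of $T^{d}$ in $f_{H}(\xi-Tv')$ is $(-1)^{d}f_{H}(v')\ne0$ for every $\xi$, so each such polynomial has degree exactly $d$. For $\xi=(x,w)$ with $w\ne0$ we have $f_{H}(x-Tv,w)=w^{d}f\bigl(x/w-(T/w)v\bigr)$, whose $T$-roots are $w$ times the roots of $s\mapsto f(x/w-sv)$; by the engine the latter are real, so all $d$ roots are real. For $w=0$ we have $f_{H}(x-Tv,0)=f_{[d]}(x-Tv)$, which is real-rooted of degree $d$ by applying the engine to the real stable polynomial $f_{[d]}$. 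Hence $f_{H}$ is G\r{a}rding hyperbolic with respect to $v'=(v,0)$ for every $v\in\R^{n}_{>0}$, which finishes the proof.
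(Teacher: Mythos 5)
Your argument is correct in both directions, and since the survey gives no proof of this proposition (it is quoted from Borcea--Br\"and\'en), there is no in-paper argument to compare against; your direct verification is essentially the standard one. The contrapositive in the first direction (using $v=\Im z$ and the root $T=-i$ of $f_H(\xi-Tv')$ with $\xi=(x,1)$) is clean, and in the converse the ``engine'' plus the Hurwitz-type limit argument showing that the top form $f_{[d]}$ is real stable are exactly the right ingredients. One cosmetic slip: in your opening remark you speak of ``no root in the open lower half-plane'' while the engine produces ``no root with $\Im s>0$''; conjugate symmetry of real polynomials makes the two equivalent, so nothing breaks, but the wording should be made consistent.

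The only substantive comment concerns the step you flag as ``the main obstacle,'' namely $f_H(v')=f_{[d]}(v)\ne 0$ for $v\in\R^n_{>0}$. You propose to outsource it as a standard property of real stable polynomials or to G\r{a}rding's cone theory, but in fact it is immediate from what you have already proved: once $f_{[d]}$ is known to be real stable (and nonzero), homogeneity gives $f_{[d]}(iv)=i^{d}f_{[d]}(v)$, and the point $iv$ has all coordinates with imaginary part $v_k>0$, so stability forces $f_{[d]}(iv)\ne 0$ and hence $f_{[d]}(v)\ne 0$. Inserting this one line makes your proof self-contained; there is no subtlety about degrees under line restrictions hiding here, since after this nonvanishing the leading coefficient $(-1)^d f_H(v')$ of every restriction $T\mapsto f_H(\xi-Tv')$ is automatically nonzero, exactly as you use it.
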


Interesting examples of real stable polynomials 
are generated as follows (see \cite[Proposition 1.12]{Borcea:2010aa}):
Let $A_1,\ldots,A_n$ be positive semidefinite $d \times d$ matrices and $B$ a (complex) Hermitian $d \times d$ matrix. Then the 
polynomial
\[
    f(Z_1,\ldots,Z_n) = \det \Big( \sum_{j=1}^n Z_j A_j  + B\Big)
\]
is either real stable or identically zero. 
Conversely, any real stable polynomial in two variables $Z_1$ and $Z_2$ can be written as $\pm \det(Z_1 A_1 + Z_2 A_2 + B)$, 
where $A_j$ are positive semidefinite and $B$ is symmetric, see \cite[Theorem 1.13]{Borcea:2010aa}. 
The latter result is based on the Lax conjecture \cite{Lax:1958aa} for G\r{a}rding hyperbolic polynomials 
which is true: as noted in 
\cite{Lewis:2005aa} 
it follows from \cite{Helton:2007aa} and \cite{Vinnikov:1993aa}. 

\begin{remark} \label{rem:Laxconjecture}\index{Lax conjecture}
    The Lax conjecture states
    that a homogeneous polynomial $f$ on $\R^3$ is G\r{a}rding hyperbolic of degree $d$ with respect to the direction $(1,0,0)$ with $f(1,0,0) = 1$
    if and only if there exist real symmetric $d \times d$ matrices $A$ and $B$ such that
    \begin{equation} \label{eq:Lax}
        f(X,Y,Z) = \det ( X \mathbb I + Y A + Z B).
    \end{equation}
    A representation of type \eqref{eq:Lax} is in general not possible for G\r{a}rding hyperbolic polynomials on $\R^n$ with $n \ge 4$.
    Indeed, the dimension of the space of G{\aa}rding hyperbolic polynomials of degree $d$ on $\R^n$ with respect to a fixed direction 
    is $\binom{n+d -1}{d}$ while the set of polynomials in $\R[X_1,\ldots,X_n]$ of the form
    \begin{equation} \label{eq:det}
        \det(X_1 \I + X_2 A_2 + \cdots + X_n A_n),
    \end{equation}
    where $A_i$ are real symmetric $d \times d$ matrices, has dimension at most $(n-1) \cdot \binom{d+1}{2}$.
    A particular homogeneous polynomial of degree $2$ which is G\r{a}rding hyperbolic with respect to $(1,0,\ldots,0)$ but cannot be represented in the form
    \eqref{eq:det}
    is the Lorentzian polynomial $f(X) =X_1^2 - X_2^2 - \cdots - X_n^2$ for $n \ge 4$. 
    Cf.  \cite[p. 2498]{Lewis:2005aa}.
\end{remark}

See the
survey article \cite{BGLS01} for background on G\r{a}rding hyperbolic polynomials and 
more ways to generated examples.

\subsubsection{Characteristic roots}
Let $f(Z_1,\ldots,Z_n) \in \R[Z_1,\ldots,Z_n]$ be a homogeneous polynomial of degree $d$ 
which is G\r{a}rding hyperbolic with respect to a direction $v \in \R^n$.
We may factorize
\[
    f(x + T v ) = f(v) \prod_{j=1}^d (T + \la_j^\downarrow(x)),
\]
where
\[
    \la^\downarrow_1(x) \ge \ldots \ge \la^\downarrow_d(x)
\]
are the decreasingly ordered roots of $f(x - T v) \in \R[T]$.  
We call 
\[
    \la^\downarrow = (\la^\downarrow_1, \ldots, \la^\downarrow_d) : \R^n \to \R^d   
\]
the \emph{characteristic map}\index{characteristic map} of $f$ with respect to $v$ and $\la^\downarrow_1, \ldots, \la^\downarrow_d$ 
(in no particular order) the \emph{characteristic roots}.\index{characteristic roots}
The polynomial $f$ is recovered by 
$f(x) = f(v) \prod_{j=1}^d \la_j^\downarrow(x)$.    
Then, for all $j= 1,\ldots,d$, $r \in \R_{\ge 0}$, and $s \in \R$,
\begin{align}
    \begin{split} \label{eq:homogeneous}
        \la^\downarrow_j(r x + s v) &= r \la^\downarrow_j(x) + s, 
        \\
        \la^\downarrow_j(-x) &= - \la^\downarrow_{d+1-j}(x).
    \end{split}   
\end{align}

G\r{a}rding \cite[Theorem 2]{Garding59} proved that $\la^\downarrow_d$ is concave which, by \eqref{eq:homogeneous}, is equivalent to 
$\la^\downarrow_1$ being convex. 
In view of \eqref{eq:homogeneous} it follows that the largest root $\la^\downarrow_1$ is sublinear  (i.e., positively homogeneous and subadditive).  
The connected component $C_f$ of the set $\R^n \setminus \{f = 0\}$ which contains $v$ is an open convex cone, one has 
$C_f = \{x \in \R^n : \la^\downarrow_d(x) >0 \}$,  
and $f$ is G\r{a}rding hyperbolic with respect to each $w \in C_f$.

Following \cite[Theorem 3.1]{BGLS01}, we can produce new G\r{a}rding hyperbolic polynomials from the given $f$:
if $g$ is a homogeneous symmetric polynomial of degree $e$ on $\R^d$ 
which is G\r{a}rding hyperbolic with respect to $(1,1,\ldots,1)$ and has characteristic map $\mu^\downarrow$, 
then $g \o \la^\downarrow$ is G\r{a}rding hyperbolic of degree $e$ with respect to $v$ and its characteristic map is $\mu^\downarrow \o \la^\downarrow$.  

Let us apply this to the homogeneous symmetric polynomial
\[
    g_k(Y_1,\ldots,Y_d) :=  \prod_{\substack{I \subseteq  \{1,\ldots,d\}\\|I| = k}}
    \sum_{i \in I} Y_i
\]
of degree $\binom{d}{k}$ which
is G\r{a}rding hyperbolic with respect to $(1,1,\ldots,1)$ and has the characteristic roots
$\mu_I(Y) = \frac{1}{k} \sum_{i \in I} Y_i$,
where $I$ ranges over the subsets of $\{1,\ldots,d\}$ with $k$ elements.
On the set $\{Y_1 \ge Y_2 \ge \cdots \ge Y_d\}$
we have $\mu_I(Y) \ge \mu_J(Y)$ if and only if $\sum_{i \in I}  i \le \sum_{j \in J} j$, in particular,
$\mu_{\{1,\ldots,k\}}$ is the largest characteristic root.
Then $g_k \o \la^\downarrow$ is G\r{a}rding hyperbolic with respect to $v$ and has
the largest characteristic root $\mu_{\{1,\ldots,k\}} \o \la^\downarrow = \frac{1}{k} \sum_{i=1}^k \la_i^\downarrow$.
From G\r{a}rding's result we may conclude that, for all $k = 1,\ldots, d$, the sum of the $k$ largest roots
\[
    \si_k := \sum_{i = 1}^k \la_{i}^\downarrow
\]
is a sublinear function on $\R^n$. 
Any finite sublinear function on $\R^n$ is globally Lipschitz; see \cite[Corollary 10.5.1]{Rockafellar70}.
So the functions $\si_k$, $k = 1,\ldots,d$, are convex and globally Lipschitz. 
We have proved the following proposition.

\begin{proposition}[{\cite[Proposition 4.1]{Rainer:2021vk}}] \label{prop:DC}
    Let $f(Z_1,\ldots,Z_n) \in \R[Z_1,\ldots,Z_n]$ be a homogeneous polynomial of degree $d$ 
    which is G\r{a}rding hyperbolic with respect to a direction $v \in \R^n$.
    The characteristic map $\la^\downarrow : \R^n \to \R^d$ is globally Lipschitz and difference-convex on $\R^n$.
\end{proposition}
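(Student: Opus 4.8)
The plan is to deduce everything from the sublinearity of the partial sums $\si_k = \sum_{i=1}^k \la_i^\downarrow$, which has just been established via G\r{a}rding's concavity theorem and the composition trick with the polynomials $g_k$. First I would record the telescoping identity
\[
    \la_j^\downarrow = \si_j - \si_{j-1}, \qquad j = 1,\ldots,d,
\]
with the convention $\si_0 \equiv 0$. Since each $\si_k$ with $0 \le k \le d$ is convex on $\R^n$ (being sublinear, and trivially so for $k=0$), this identity exhibits every component $\la_j^\downarrow$ of the characteristic map as a difference of two convex functions. Hence $\la^\downarrow : \R^n \to \R^d$ is difference-convex on $\R^n$, by definition.

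For the Lipschitz claim I would use that each $\si_k$ is globally Lipschitz on $\R^n$: a finite sublinear function on $\R^n$ is globally Lipschitz, with Lipschitz constant bounded by $\sup_{|x|=1}|\si_k(x)|$, as follows from $\si_k(x)-\si_k(y)\le \si_k(x-y)$ together with the analogous inequality for $y-x$ (this is also the content of \cite[Corollary 10.5.1]{Rockafellar70}, quoted above). Consequently each $\la_j^\downarrow = \si_j - \si_{j-1}$ is globally Lipschitz, its Lipschitz constant being at most the sum of those of $\si_j$ and $\si_{j-1}$. Assembling the $d$ coordinate functions, the map $\la^\downarrow$ is then globally Lipschitz with respect to any fixed norm on $\R^d$, the constant changing only by the usual factor comparing that norm to the maximum norm.

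There is essentially no hard step remaining: the substantive input — G\r{a}rding's concavity of $\la_d^\downarrow$ and the resulting sublinearity of the $\si_k$ — has been carried out before the statement. What is left to verify are only the elementary facts that a difference of convex functions is (by definition) difference-convex and that a difference of globally Lipschitz functions is globally Lipschitz, together with the telescoping identity above. If one wished to track an explicit Lipschitz constant in terms of $f$ and $v$, the only additional (routine) work would be to estimate $\sup_{|x|=1}|\si_k(x)|$, e.g.\ via the convexity of $C_f$ and the normalization $f(x)=f(v)\prod_j \la_j^\downarrow(x)$; for the qualitative statement of the proposition this is unnecessary.
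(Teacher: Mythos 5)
Your proposal is correct and follows essentially the same route as the paper: the paper's proof consists precisely of the preceding derivation that the partial sums $\si_k$ are sublinear (via G\r{a}rding's concavity theorem and composition with the polynomials $g_k$), hence convex and globally Lipschitz by \cite[Corollary 10.5.1]{Rockafellar70}, after which the telescoping identity $\la_j^\downarrow=\si_j-\si_{j-1}$ yields the difference-convexity and Lipschitz continuity of $\la^\downarrow$, exactly as you spell out.
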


A real valued function on a convex subset of $\R^n$ is called \emph{difference-convex}\index{difference-convex} 
if it can be written as the 
difference of two continuous convex functions (cf.\ \cite{BacakBorwein11} and \cite{Hiriart-Urruty:1985aa}). 
The class $DC$\index{DC@$DC$} of difference-convex functions arises as the smallest 
vector space containing all the continuous convex functions on the given set. 
If $U \subseteq \R^n$ is open, then (cf.\ \cite[Theorem 11]{Vesely:2003aa}
and \cite[Section II]{Hiriart-Urruty:1985aa})
\[
    C^{1,1}(U) \subseteq DC_{\on{loc}}(U) \subseteq C^{0,1}(U).
\]
On the other hand, the first order partial derivatives of a difference-convex function $f : U \to \R$
have bounded variation, i.e., the weak second order partial derivatives of $f$ are signed Radon measures (cf.\ Section \ref{sec:ABV}).
This follows from Dudley's result \cite{Dudley:1977aa} that a Schwartz distribution is a convex function if and only if its second derivative
is a nonnegative matrix valued Radon measure.
In dimension one, a real valued function $f$ on a compact interval is difference-convex if and only if
$f$ is absolutely continuous and $f'$ has bounded variation. 
Note also that $DC(\R^n) = DC_{\on{loc}}(\R^n)$, by \cite{Hartman:1959aa}.

Using the regularity properties of difference-convex functions, we may further investigate 
the regularity of the roots of G{\aa}rding hyperbolic polynomials.

\begin{theorem}[{\cite[Lemma 4.5 and Theorem 4.6]{Rainer:2021vk}}] \label{thm:Gardinghyp}
    Let $f(Z_1,\ldots,Z_n) \in \R[Z_1,\ldots,Z_n]$ be a homogeneous polynomial of degree $d$ 
    which is G\r{a}rding hyperbolic with respect to some direction in $\R^n$.
    Then:
    \begin{enumerate}
        \item If $x : \R \to \R^n$ is of class $C^1$,
            then there exists a differentiable  system $\la = (\la_1,\ldots,\la_d)$ of the roots of $f$ along $x$ 
            (i.e., $\la(t)$ and $\la^\downarrow(x(t))$ coincide as unordered $d$-tuples for all $t$).
        \item Let $x \in DC(\R,\R^n) \cap W^{2,1}_{\on{loc}}(\R,\R^n)$ (for instance, $x \in C^{1,1}(\R,\R^n)$). Then 
            any differentiable system $\la = (\la_1,\ldots,\la_d)$ of the roots of $f$ along $x$ is actually of class
            \begin{equation*} 
                \la \in C^1(\R,\R^d) \cap DC(\R,\R^d)\cap  W^{2,1}_{\on{loc}}(\R,\R^d).
            \end{equation*}
        \item The result is uniform in the following sense:
            Let $I \subseteq \R$ be a bounded open interval.
            Let $U$ be an open neighborhood of  the closure of $I^{1+k}$ in $\R^{1+k}$.
            Suppose that $x : U \to \R^n$ is such that
            \begin{itemize}
                \item $x$ is locally DC on $U$,
                \item $x( \cdot,r) \in C^1(\ol I,\R^n) \cap W^{2,1}(I,\R^n)$ for all $r \in \ol I^k$.
            \end{itemize}
            Assume that, for each $r \in \ol I^k$, a $C^1$ system $\la(\cdot,r)$ of the roots of $f$ along $x(\cdot,r)$ is fixed.
            Then the family
            $\la(\cdot,r)$, for $r \in \ol I^k$, is bounded in $C^1(\ol I, \R^d)$ and there is a nonnegative
            $L^1$ function $m : I^k \to [0,\infty)$
            such that
            \begin{equation*} 
                \|\la(\cdot, r)\|_{W^{2,1}(I,\R^d)} \le m(r), \quad \text{ for a.e.\ } r \in I^k.
            \end{equation*}
    \end{enumerate}
\end{theorem}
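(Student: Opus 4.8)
The plan is to transfer the problem to a curve of monic hyperbolic polynomials with especially good coefficients, and then to run the splitting algorithm of Section~\ref{sec:Rellichproof}, exploiting at each step the two structural features available here: the coefficients are \emph{polynomial} functions of $x$, and the partial sums $\si_k=\sum_{i=1}^k\la_i^\downarrow$ of the characteristic roots are convex and globally Lipschitz (Proposition~\ref{prop:DC} and the discussion before it).

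\textbf{Setup.} Put $Q_x(t)(Z):=\prod_{j=1}^d\bigl(Z-\la_j^\downarrow(x(t))\bigr)$. Since $f(y+Tv)=f(v)\prod_j\bigl(T+\la_j^\downarrow(y)\bigr)$ and $f$ is a polynomial, the coefficients of $Q_x(t)$ are, up to the nonzero constant $f(v)$ and signs, the coefficients of the univariate polynomial $T\mapsto f(x(t)+Tv)$, hence polynomial functions of $x(t)$; in particular $e_k(\la^\downarrow(x(t)))=(-1)^k a_k(t)$ is a polynomial in $x(t)$, and so is every symmetric discriminant datum $\tilde\De_s(Q_x(t))$. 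Consequently the coefficients of $Q_x$ inherit the regularity class of $x$: this is clear for $C^1$, and for $DC_{\on{loc}}(\R)$ and $W^{2,1}_{\on{loc}}(\R)$ it holds because, in one variable, these classes are algebras that are stable under left composition with $C^\infty$ maps --- one uses the chain rule for second derivatives together with the facts that $u'$ is locally bounded while $u''$ is a signed Radon measure (resp.\ an $L^1_{\on{loc}}$ function), and that products of bounded locally Lipschitz functions with measures (resp.\ with $L^1_{\on{loc}}$ functions) are again of the same type. Since $W^{2,1}_{\on{loc}}(\R,\R^n)\subseteq C^1(\R,\R^n)$, in (2) and (3) the curve $x$ is automatically $C^1$. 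Finally, a differentiable (resp.\ DC, resp.\ $W^{2,1}_{\on{loc}}$) system of roots of $Q_x$ is exactly a differentiable (resp.\ DC, resp.\ $W^{2,1}_{\on{loc}}$) system of roots of $f$ along $x$.

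\textbf{Part (1).} I would argue locally near a point $t_0$ and induct on $d$ by means of the splitting Lemma~\ref{lem:splitting}: if the roots of $Q_x(t_0)$ are not all equal, write $Q_x=Q_bQ_c$ with $Q_b,Q_c$ coprime hyperbolic and with coefficients that are analytic functions of those of $Q_x$, hence again $C^1$ in $t$; a differentiable system for $Q_x$ is the concatenation of differentiable systems for $Q_b$ and $Q_c$. After a Tschirnhausen transformation (polynomial in the coefficients, so it preserves the class and shifts the roots by a function in the class) this reduces to the fully degenerate model $Q_x(t_0)(Z)=Z^d$, where $\tilde a_j(t_0)=0$ for all $j$ and all roots vanish at $t_0$. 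Here the G{\aa}rding structure is used: each $t\mapsto\si_k(x(t))$ is a convex Lipschitz function composed with the $C^1$ curve $x$, so it has finite one-sided derivatives $\si_k'(x(t_0);\pm x'(t_0))$ at $t_0$; and each $t\mapsto\tilde\De_s(Q_x(t))$, $s\ge 2$, is a \emph{polynomial} in $x(t)$ that is nonnegative and vanishes at $t_0$, so $x(t_0)$ is a global minimum of that polynomial, and expanding to second order along the $C^1$ curve $x$ gives $\tilde\De_s(Q_x(t))=c_s(t-t_0)^2+o((t-t_0)^2)$ with $c_s\ge 0$; hence the ``square-root-type'' quantities built from the $\tilde\De_s$ have \emph{symmetric} one-sided derivatives at $t_0$. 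Tracking these expansions through the splitting/Tschirnhausen recursion, one selects at each step a reordering of $\{\la_j^\downarrow(x(t))\}$ whose left and right derivatives at $t_0$ coincide; away from $t_0$ the distinct roots are $C^1$ by the implicit function theorem, and patching produces a global differentiable system.

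\textbf{Parts (2) and (3).} Let $\la$ be a differentiable system along $x$. First, $\la$ is locally Lipschitz: each $\la_i(t)$ lies in $\{\la_j^\downarrow(x(t)):j\}$, the map $\la^\downarrow$ is globally Lipschitz (Proposition~\ref{prop:DC}), $x$ is locally Lipschitz, and near any point a continuous selection tracks a single cluster of roots, so $|\la_i(t)-\la_i(t_0)|\le\Lip(\la^\downarrow)\,\Lip_{\on{loc}}(x)\,|t-t_0|$. By the Setup, $e_k(\la(t))=(-1)^k a_k(t)\in C^1\cap DC_{\on{loc}}\cap W^{2,1}_{\on{loc}}$ for all $k$. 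At a point where the roots $\la_i(t_0)$ are pairwise distinct, differentiating $e_k(\la(t))=(-1)^k a_k(t)$ gives a linear system $M(\la(t))\,\la'(t)=\bigl((-1)^k a_k'(t)\bigr)_k$ with $M$ invertible and locally Lipschitz near $t_0$; since each $a_k'\in W^{1,1}_{\on{loc}}$, one reads off $\la'\in W^{1,1}_{\on{loc}}$ near $t_0$, i.e.\ $\la\in C^1\cap DC_{\on{loc}}\cap W^{2,1}_{\on{loc}}$ there. At a degenerate point, splitting and Tschirnhausen again reduce to $Q_x(t_0)(Z)=Z^d$; there $-\tilde a_2(t)=\tfrac12\sum_i\la_i(t)^2$ vanishes to exactly second order with leading coefficient $\tfrac12\|\la'(t_0)\|^2$ (using the differentiability of $\la$), and one rescales by $|\tilde a_2|^{1/2}$ and regroups the roots according to the value of their derivative at $t_0$, recursing as in Rellich's and Bronshtein's arguments (Section~\ref{sec:Rellichproof}, Remark~\ref{rem:orig-proof}); the work is to verify that the rescaled coefficients $|\tilde a_2|^{-j/2}\tilde a_j$ stay in $C^1\cap DC_{\on{loc}}\cap W^{2,1}_{\on{loc}}$ and that the recursion terminates. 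This yields $\la\in C^1(\R,\R^d)\cap DC(\R,\R^d)\cap W^{2,1}_{\on{loc}}(\R,\R^d)$, using $DC_{\on{loc}}(\R)=DC(\R)$. Part (3) is the same argument made quantitative: the splitting is carried out with respect to a fixed finite cover of the compact set $\Hyp_T^0(d)$ and Tschirnhausen is polynomial, so all intermediate constants depend only on $d$ and on $C^1$- and $W^{2,1}$-norms of $x(\cdot,r)$; because $x$ is locally DC in the joint variables $(t,r)$ and $x(\cdot,r)\in C^1(\ol I,\R^n)\cap W^{2,1}(I,\R^n)$ for each $r$, a Fubini-type argument (slicing the matrix-valued Radon measure given by the distributional Hessian of $x$) produces the $L^1$-majorant $m(r)$ for $\|\la(\cdot,r)\|_{W^{2,1}(I,\R^d)}$.

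\textbf{Main obstacle.} The delicate point in both (1) and (2) is precisely the fully degenerate local model $Q_x(t_0)(Z)=Z^d$: making the ``square root'' step of the splitting algorithm (rescaling by $|\tilde a_2|^{1/2}$, extracting a root whose derivative at $t_0$ is well defined, regrouping, recursing) compatible first with mere differentiability of the system and then with the $DC$ and $W^{2,1}_{\on{loc}}$ classes, and organizing the induction so that the needed structural control survives each step. The polynomial dependence of the coefficients and of the $\tilde\De_s$ on $x$, together with the convexity of the partial sums $\si_k$, are exactly the inputs that make these steps go through.
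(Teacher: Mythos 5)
Your reduction to the curve of hyperbolic polynomials $Q_x(t)$, the class-stability of the coefficients, and the treatment of nondegenerate points (splitting plus implicit function theorem, or inverting the elementary-symmetric system) are fine. But the entire content of the theorem sits at the degenerate points, and there your argument has a genuine gap. You propose to recurse by rescaling with $|\tilde a_2|^{1/2}$ and to ``verify that the rescaled coefficients $|\tilde a_2|^{-j/2}\tilde a_j$ stay in $C^1\cap DC_{\on{loc}}\cap W^{2,1}_{\on{loc}}$''; this is not a verification left to the reader but the crux, and it is in general false: dividing by a vanishing function of this low regularity does not preserve any of these classes. In Rellich's analytic proof the division is legitimate only because $-\tilde a_2(t)=t^{2k}u(t)$ with $u(0)>0$, and in Bronshtein's proof no regularity of the quotients is ever claimed --- one only propagates pointwise admissibility bounds on intervals away from the zero set of $\tilde a_2$ and treats that zero set separately. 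Moreover, Example \ref{ex:optimalhyp}(2) shows that a curve of hyperbolic polynomials with $C^{1,1}$ coefficients need not admit any differentiable root at a degenerate point, so no argument that treats $Q_x$ as a generic hyperbolic curve with coefficients in the class of $x$ can succeed; the G{\aa}rding structure must enter at the degenerate points in a precise, quantitative way, and your proposal only gestures at it. In particular, for part (1) your key claim that the ``square-root-type'' quantities built from the $\tilde\De_s$ have \emph{symmetric} one-sided derivatives at $t_0$ is incorrect as stated: $\sqrt{c(t-t_0)^2+o((t-t_0)^2)}$ has one-sided derivatives $\pm\sqrt c$, so it is not differentiable and a differentiable selection must switch branches at $t_0$. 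What is actually needed --- and missing --- is (i) the identification of the left and right derivative tuples of the full root system at a crossing (so that roots can be paired across it), and (ii) a gluing argument making the branch switches globally consistent when degenerate points accumulate. Parts (2) and (3) inherit the same gap, since the $C^1$, $DC$, and $W^{2,1}$ conclusions and the uniform $L^1$-majorant are all supposed to come out of the unestablished recursion.

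Note also that this is not the route of the cited proof. The paper (\cite{Rainer:2021vk}, as framed in Section \ref{sec:Garding}) works directly with the difference-convex structure: the partial sums $\si_k\o x$ are compositions of convex, globally Lipschitz functions with a $DC\cap W^{2,1}_{\on{loc}}$ curve, so their one-sided derivatives exist everywhere and their second distributional derivatives are measures with controlled singular parts, while $\si_d\o x$ is as regular as the coefficients; combined with the differentiability hypothesis on the system $\la$ (a derivative of locally bounded variation with the Darboux property is continuous) this yields $C^1\cap DC\cap W^{2,1}_{\on{loc}}$ without ever dividing by $|\tilde a_2|^{1/2}$. That DC-calculus mechanism is exactly what makes merely $C^1$ (resp.\ $DC\cap W^{2,1}_{\on{loc}}$) data suffice, in contrast to the $C^{d-1,1}$-type hypotheses the splitting machinery requires; if you want to salvage your approach, you would have to import that convexity input explicitly at the degenerate points rather than rely on the rescaling recursion.
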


\begin{remark}
    It is shown in \cite[Theorem 4.2]{Harvey:2013aa} that for any G{\aa}rding hyperbolic polynomial $f$ with respect to $v \in \R^n$ of degree $d$, 
    $x_0 \in \R^n$, and $w \in C_f$
    there is a system $\la= (\la_1,\ldots,\la_d) : \R \to \R^d$ of the roots of $f$ along $t \mapsto x_0 + t w$ which is
    real analytic and such that each $\la_j : \R \to \R$ is strictly increasing and surjective. The inverses of the $\la_j$
    form a system of the characteristic roots of the G{\aa}rding hyperbolic polynomial $f$ with respect to $w \in \R^n$ along $t \mapsto x_0 + t v$.
    The proof is based on the homogeneity properties \eqref{eq:homogeneous} and the description of $C_f$ as the set, where $\la^\downarrow_1\ge \cdots \ge \la^\downarrow_d$ 
    are positive.
\end{remark}

\subsection{Eigenvalues of Hermitian matrices} \label{sec:Hermitian}

Let us apply the above findings to the G{\aa}rding hyperbolic polynomial $\det$ (with respect to the identity matrix $\I$) 
on the real vector space $\on{Herm}(d)$\index{Herm@$\on{Herm}(d)$} of complex $d \times d$ Hermitian matrices. 
Its characteristic map $\la^\downarrow = (\la^\downarrow_1,\ldots,\la^\downarrow_d): \on{Herm}(d) \to \R^d$ 
assigns to a Hermitian matrix $A$ its eigenvalues in decreasing order 
\[
    \la^\downarrow_1(A) \ge \cdots \ge \la^\downarrow_d(A).
\]
As a special case of Proposition \ref{prop:DC}, we get:

\begin{corollary}[{\cite[Corollary 6.1]{Rainer:2021vk}}] \label{cor:HermLip}
    The characteristic map
    $\la^\downarrow : \on{Herm}(d)  \to \R^d$
    is globally Lipschitz and difference-convex on $\on{Herm}(d)$.
    The sum $\sum_{i=1}^k \la^\downarrow_i$, for $k = 1,\ldots,d$, of the $k$ largest eigenvalues is sublinear.
\end{corollary}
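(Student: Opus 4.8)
The plan is to recognize the statement as a direct specialization of Proposition \ref{prop:DC} together with the sublinearity discussion that precedes it, once the relevant G{\aa}rding hyperbolic polynomial and its characteristic map have been identified.

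First I would verify that $\det$, regarded as a homogeneous polynomial of degree $d$ on the real vector space $\on{Herm}(d)$, is G{\aa}rding hyperbolic with respect to $\I$. Indeed $\det(\I) = 1 \ne 0$, and for any $A \in \on{Herm}(d)$ the univariate polynomial $T \mapsto \det(A - T\I)$ is, up to the sign $(-1)^d$, the characteristic polynomial of $A$; since $A$ is Hermitian all its roots are real, so this polynomial is hyperbolic. Hence $\det$ is G{\aa}rding hyperbolic with respect to $\I$ in the sense recalled above, as already noted in Section \ref{sec:Garding}.

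Next I would identify the characteristic map. Writing the eigenvalues of $A$ in decreasing order as $\mu_1 \ge \cdots \ge \mu_d$, we have $\det(A + T\I) = \prod_{j=1}^d (\mu_j + T) = \det(\I)\prod_{j=1}^d (T + \mu_j)$; comparing with the normalization $f(x + T v) = f(v)\prod_{j=1}^d (T + \la^\downarrow_j(x))$ used to define the characteristic map shows that the characteristic map of $\det$ with respect to $\I$ is precisely $\la^\downarrow : \on{Herm}(d) \to \R^d$, $A \mapsto (\la^\downarrow_1(A),\ldots,\la^\downarrow_d(A))$, the eigenvalue map in decreasing order. With this in hand, the first two assertions of the corollary follow at once from Proposition \ref{prop:DC}: $\la^\downarrow$ is globally Lipschitz and difference-convex on $\on{Herm}(d)$.

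For the last assertion, the argument preceding Proposition \ref{prop:DC} already establishes, for an arbitrary G{\aa}rding hyperbolic polynomial, that the sum $\si_k = \sum_{i=1}^k \la^\downarrow_i$ of the $k$ largest characteristic roots is sublinear on $\R^n$ (by composing with the symmetric G{\aa}rding hyperbolic polynomial $g_k$ and invoking G{\aa}rding's concavity theorem); specializing this to $f = \det$ on $\on{Herm}(d)$ yields the claim. There is essentially no substantial obstacle in this proof; the only point deserving a moment of care is matching the sign and ordering conventions so that the ``characteristic roots'' of $\det$ with respect to $\I$ really are the eigenvalues listed in decreasing order — which is exactly what the factorization of $\det(A+T\I)$ above confirms.
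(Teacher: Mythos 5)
Your proposal is correct and follows exactly the paper's route: the paper obtains Corollary \ref{cor:HermLip} as a special case of Proposition \ref{prop:DC} applied to the G{\aa}rding hyperbolic polynomial $\det$ with respect to $\I$ on $\on{Herm}(d)$, with the sublinearity of the partial sums $\sum_{i=1}^k \la^\downarrow_i$ coming from the discussion preceding that proposition. Your verification that $\det$ is G{\aa}rding hyperbolic and that its characteristic map is the decreasingly ordered eigenvalue map is exactly the specialization the paper intends.
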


The following result is a consequence of
Theorem \ref{thm:Gardinghyp}.

\begin{theorem}[{\cite[Theorem 6.2]{Rainer:2021vk}}]  \label{thm:Hermitian}
    Let $A : \R \to \on{Herm}(d)$ be a curve of $d \times d$ Hermitian matrices.
    Then:
    \begin{enumerate}
        \item If $A$ is of class $C^1$, then there exists a differentiable system
            $\la=(\la_1,\ldots,\la_d)$ of the eigenvalues of $A$.
        \item If $A$ is of class $DC \cap W^{2,1}_{\on{loc}}$ on $\R$ (for instance, if it is of class $C^{1,1}$), 
            then any differentiable system $\la=(\la_1,\ldots,\la_d)$ of the eigenvalues of $A$ is actually of class
            \[
                \la \in C^1(\R,\R^d) \cap DC(\R,\R^d)\cap  W^{2,1}_{\on{loc}}(\R,\R^d).
            \]
        \item  The result in \thetag{2} is uniform in the following sense:
            Let $I \subseteq \R$ be a bounded open interval.
            Let $U$ be an open neighborhood of  the closure of $I^{1+k}$ in $\R^{1+k}$.
            Suppose that $A : U \to \on{Herm}(d)$ is such that
            \begin{itemize}
                \item $A$ is locally DC on $U$,
                \item $A( \cdot,r)$ is of class $C^1 \cap W^{2,1}$ on $\ol I$ for all $r \in \ol I^k$.
            \end{itemize}
            Assume that, for each $r \in \ol I^k$, a $C^1$ system $\la(\cdot,r)$ of the eigenvalues of $A(\cdot,r)$ is fixed.
            Then the family
            $\la(\cdot,r)$, for $r \in \ol I^k$, is bounded in $C^1(\ol I, \R^d)$ and there is a nonnegative
            $L^1$ function $m : I^k \to [0,\infty)$ such that
            \begin{equation*}
                \|\la(\cdot, r)\|_{W^{2,1}(I,\R^d)} \le m(r), \quad \text{ for a.e.\ } r \in I^k.
            \end{equation*}
    \end{enumerate}
\end{theorem}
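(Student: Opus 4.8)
The plan is to obtain Theorem \ref{thm:Hermitian} as a direct specialization of Theorem \ref{thm:Gardinghyp} to the polynomial $f = \det$ on the real vector space $\on{Herm}(d)$ of complex Hermitian $d\times d$ matrices, which we identify with $\R^{d^2}$ as a Euclidean space (say via the Frobenius inner product). The one thing to set up carefully is that the hypotheses of Theorem \ref{thm:Gardinghyp} are met and that the characteristic map of $f$ really is the decreasingly ordered eigenvalue map; once this dictionary is in place, the three assertions are just the three items of Theorem \ref{thm:Gardinghyp} reread through it.

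First I would record that $\det$ is a homogeneous polynomial of degree $d$ in the $d^2$ real coordinates of $\on{Herm}(d)$ and is G\r{a}rding hyperbolic with respect to the identity matrix $\I$: indeed $\det(\I)=1\ne 0$, and for every $A\in\on{Herm}(d)$ the univariate polynomial $T\mapsto\det(A-T\I)=(-1)^d\charp_A(T)$ has only real roots, since $A$ is Hermitian. Since $A+T\I$ has eigenvalue $\mu+T$ whenever $\mu$ is an eigenvalue of $A$, the defining factorization $\det(A+T\I)=\det(\I)\prod_{j=1}^d(T+\la^\downarrow_j(A))$ shows that the characteristic map $\la^\downarrow$ of $\det$ with respect to $\I$ sends $A$ to its eigenvalues in decreasing order $\la^\downarrow_1(A)\ge\cdots\ge\la^\downarrow_d(A)$. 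This identification, together with the global Lipschitz and difference-convexity properties of $\la^\downarrow$, is exactly Corollary \ref{cor:HermLip}.

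With this in hand, assertion \thetag{1} follows from Theorem \ref{thm:Gardinghyp}\thetag{1} applied to the $C^1$ curve $x=A$, because a differentiable system of the roots of $\det$ along $A$ is precisely a differentiable enumeration of the eigenvalues of $A$. Assertion \thetag{2} follows from Theorem \ref{thm:Gardinghyp}\thetag{2} applied to $x=A\in DC(\R,\on{Herm}(d))\cap W^{2,1}_{\on{loc}}(\R,\on{Herm}(d))$ — in particular to $A\in C^{1,1}$, via the inclusions recalled before Theorem \ref{thm:Gardinghyp}. Assertion \thetag{3} follows from Theorem \ref{thm:Gardinghyp}\thetag{3} with $\R^n=\on{Herm}(d)$: the hypotheses that $A$ be locally DC on $U$ and that $A(\cdot,r)\in C^1(\ol I)\cap W^{2,1}(I)$ for all $r\in\ol I^k$ are literally the hypotheses there, and the conclusion is the stated uniform $C^1$ bound together with the $L^1$ majorant $m(r)$ for the $W^{2,1}$ norms.

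The proof carries essentially no analytic content beyond Theorem \ref{thm:Gardinghyp}; the only points needing care are bookkeeping. One must check that the function-space hypotheses transport correctly under $\on{Herm}(d)\cong\R^{d^2}$, but this identification is a linear isometry, so the classes $C^1$, $C^{1,1}$, $DC_{\on{loc}}$, and $W^{2,1}_{\on{loc}}$ of $\on{Herm}(d)$-valued maps coincide with the corresponding classes of $\R^{d^2}$-valued maps. One should also note that $\det$ is G\r{a}rding hyperbolic on \emph{all} of $\on{Herm}(d)$, so Theorem \ref{thm:Gardinghyp} applies globally on $\R$ (resp.\ on $I$ and $I^k$). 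Thus there is no genuine obstacle here: the substantive work has already been done by the difference-convexity machinery underlying Theorem \ref{thm:Gardinghyp}.
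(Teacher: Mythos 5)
Your proposal is correct and is exactly the paper's route: the paper derives Theorem \ref{thm:Hermitian} as a direct consequence of Theorem \ref{thm:Gardinghyp} applied to the G\r{a}rding hyperbolic polynomial $\det$ on $\on{Herm}(d)$ with respect to $\I$, whose characteristic map is the decreasingly ordered eigenvalue map (cf.\ Corollary \ref{cor:HermLip}). Your write-up merely makes the specialization explicit, and the bookkeeping (identification $\on{Herm}(d)\cong\R^{d^2}$, transport of the function classes) is handled correctly.
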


The conclusion of this theorem is best-possible among all Sobolev spaces $W^{k,p}$. Indeed, by the Sobolev inequality,
$W^{k,p}$ regularity with $k+p>3$ would imply $C^{1,\al}$ regularity with some $\al>0$, contradicting the following counterexample.

\begin{example}[{\cite{KM03}}] \label{ex:2x2sym}
    There is a $C^\infty$ curve $A(t)$, $t \in \R$, of real symmetric $2 \times 2$ matrices and a convergent sequence $t_n \in \R$ 
    such that 
    \[
        A(t_n+t) = 
        \begin{pmatrix}
            2^{-n^2} & 2^{-n}t
            \\
            2^{-n}t  & - 2^{-n^2}
        \end{pmatrix},
        \quad \text{ for } |t| \le n^{-2}, \, n\ge 1.
    \]
    There is no $C^{1,\al}$-system of the eigenvalues of $A$ for any $\al>0$.
\end{example}

A little more work leads to the following stronger version of (1) in the above theorem.

\begin{theorem} \label{thm:C1Rellich}
    If $A : \R \to \on{Herm}(d)$ is of class $C^1$, then there exists a $C^1$ system
    $\la=(\la_1,\ldots,\la_d)$ of the eigenvalues of $A$.
\end{theorem}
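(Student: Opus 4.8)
The plan is to take the differentiable system $\la=(\la_1,\dots,\la_d)$ of the eigenvalues of $A$ supplied by Theorem~\ref{thm:Hermitian}(1) and to prove that it is already $C^1$; in fact the argument will show that \emph{any} differentiable system is $C^1$, i.e.\ that each $\la_i'$ is continuous at every point $t_0\in\R$. First I would localize at a fixed $t_0$. If $A(t_0)$ has several distinct eigenvalues $\mu_1,\dots,\mu_r$ with $r\ge 2$, I surround them by disjoint closed disks $D_s\subset\C$, form the Riesz spectral projections $P_s(t)=\tfrac1{2\pi i}\oint_{\partial D_s}(\zeta-A(t))^{-1}\,d\zeta$ (which are $C^1$ in $t$ near $t_0$ and of constant rank), and pass through Kato's transformation function, i.e.\ the $C^1$ unitary $W(t)$ with $W(t_0)=\I$ solving $W'=\sum_s[P_s',P_s]W$, to obtain $W(t)A(t)W(t)^*=\bigoplus_s\hat A_s(t)$ with each $\hat A_s$ a $C^1$ curve of smaller Hermitian matrices and $\hat A_s(t_0)=\mu_s\I$. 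The sub-collection of those $\la_i$ with $\la_i(t_0)=\mu_s$ is a differentiable system for $\hat A_s$ near $t_0$, and since subtracting the scalar matrix $\mu_s\I$ does not affect $C^1$-ness, I have reduced to the following situation: $A$ is a $C^1$ curve of Hermitian matrices with $A(t_0)=0$, $\la$ is a differentiable system for it, and I must show $\la_i'(t_n)\to\la_i'(t_0)$ for every $i$ and every sequence $t_n\to t_0$, $t_n\ne t_0$.

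\textbf{The key computation.} Here I would introduce, for $t\ne t_0$,
\[
  C(t):=\tfrac1{t-t_0}\,A(t)=\int_0^1 A'\!\big(t_0+s(t-t_0)\big)\,ds,
\]
which extends continuously to $t_0$ with $C(t_0)=A'(t_0)$; the crucial point is that $\|A'(t)-C(t)\|\to 0$ as $t\to t_0$, by continuity of $A'$. Fix $i$ and a sequence $t_n\to t_0$, let $P^{(n)}$ be the orthogonal eigenprojection of $A(t_n)$ onto the eigenspace of the eigenvalue $\la_i(t_n)$, and set $\nu_n:=\la_i(t_n)/(t_n-t_0)=(\la_i(t_n)-\la_i(t_0))/(t_n-t_0)$. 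By first-order perturbation theory applied to the differentiable system at $t_n$ — the derivatives $\la_j'(t_n)$ of the branches with $\la_j(t_n)=\la_i(t_n)$ are exactly the eigenvalues of the compression $P^{(n)}A'(t_n)P^{(n)}$ to $\on{ran} P^{(n)}$ — one has in particular $\la_i'(t_n)\in\on{spec}\big(P^{(n)}A'(t_n)P^{(n)}|_{\on{ran} P^{(n)}}\big)$. Since $P^{(n)}$ is also the eigenprojection of $C(t_n)$ for the eigenvalue $\nu_n$, so that $P^{(n)}C(t_n)P^{(n)}=\nu_n P^{(n)}$, we get
\[
  P^{(n)}A'(t_n)P^{(n)}=\nu_n P^{(n)}+P^{(n)}\big(A'(t_n)-C(t_n)\big)P^{(n)},
\]
and hence every eigenvalue of $P^{(n)}A'(t_n)P^{(n)}|_{\on{ran} P^{(n)}}$, in particular $\la_i'(t_n)$, lies within $\|A'(t_n)-C(t_n)\|$ of $\nu_n$. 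Letting $n\to\infty$, $\nu_n\to\la_i'(t_0)$ by differentiability of $\la_i$ at $t_0$ and $\|A'(t_n)-C(t_n)\|\to 0$, so $\la_i'(t_n)\to\la_i'(t_0)$. As $t_0$ and $i$ were arbitrary, each $\la_i$ is $C^1$.

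\textbf{Main obstacle.} I expect the one point needing real care is the justification of the first-order perturbation statement invoked above, that for a differentiable system the derivatives at a point are precisely the spectrum of the compression of $A'$ to the corresponding eigenspace. This is classical (cf.\ Kato and the analogous device behind \eqref{eq:Bmethod3}), but it requires trivializing the spectral subspace by a $C^1$ unitary near $t_n$ so that the branches in question become the eigenvalues of a $C^1$ curve $M(t)$ of Hermitian matrices with $M(t_n)=\la_i(t_n)\I$ and $M'(t_n)=P^{(n)}A'(t_n)P^{(n)}|_{\on{ran} P^{(n)}}$, together with the observation that the eigenvalues of such an $M$ emanate from $\la_i(t_n)$ with slopes equal to $\on{spec} M'(t_n)$ — which holds for a merely $C^1$ (indeed merely differentiable at $t_n$) curve exactly because $M(t_n)$ is a multiple of the identity, so there is no resonance, and each $t\mapsto(\la_j(t)-\la_i(t_n))/(t-t_n)$ is a continuous function approaching the finite set $\on{spec} M'(t_n)$ whose limit at $t_n$ is therefore one of its points. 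The remaining ingredients — the Riesz-projection splitting, Kato's transformation function, and the integral formula for $C(t)$ — are routine.
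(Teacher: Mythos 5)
Your argument is correct, and it takes a genuinely different route from the survey, which does not prove Theorem \ref{thm:C1Rellich} directly but attributes it to Rellich \cite{Rellich69} and, via the normal-matrix statement Theorem \ref{thm:normalC1}, to \cite{RainerN}, where the mechanism is the eigenvector formula \eqref{eq:normalLipkey}, $\la_j'(t)=\langle A'(t)w_j(t)\mid w_j(t)\rangle$, obtained from limits of orthonormal eigenvectors together with an absolute-continuity step. You instead start from the differentiable system furnished by Theorem \ref{thm:Hermitian}(1) and prove that for a $C^1$ Hermitian curve \emph{every} differentiable system is automatically $C^1$ --- the matrix analogue of Theorem \ref{thm:C1roots}(3), and stronger than Theorem \ref{thm:Hermitian}(2), which needs the $DC\cap W^{2,1}_{\on{loc}}$ hypothesis. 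Your two inputs are sound: the Riesz/Kato block reduction to $A(t_0)=0$ changes neither the branches nor their derivatives, and the first-order perturbation statement is correctly justified by trivializing the spectral subspace at $t_n$ (the commutator term vanishes there because $A(t_n)$ acts as a scalar on $\on{ran}P^{(n)}$), so that the relevant branches are eigenvalues of an $m\times m$ Hermitian curve $M$ with $M(t_n)$ scalar and one only needs that difference quotients of its eigenvalues accumulate on $\on{spec}M'(t_n)$; note that only the containment $\la_i'(t_n)\in\on{spec}\bigl(P^{(n)}A'(t_n)P^{(n)}|_{\on{ran}P^{(n)}}\bigr)$ is used --- your parenthetical claim that the derivatives are \emph{exactly} that spectrum, with multiplicities, is an unneeded overstatement. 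The genuinely new device is the comparison matrix $C(t)=A(t)/(t-t_0)$: since $P^{(n)}$ is simultaneously a spectral projection of $A(t_n)$ and of $C(t_n)$, the estimate $|\la_i'(t_n)-\nu_n|\le\|A'(t_n)-C(t_n)\|\to 0$ pins the derivative to the difference quotient $\nu_n\to\la_i'(t_0)$ and yields continuity of $\la_i'$ in a few lines, with no discussion of convergence of eigenvectors. What the route through \eqref{eq:normalLipkey} and \cite{RainerN} buys in exchange is the extension to normal matrices (and to unbounded operators), which your Hermitian-specific argument does not address.
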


This is due to Rellich \cite{Rellich69} in the case of symmetric matrices (see also Weyl \cite{Weyl12}), and it was proved
for normal matrices in Rainer \cite{RainerN}; see Theorem \ref{thm:normalC1} below. 

Note that, conversely, Theorem \ref{thm:Hermitian} implies Theorem \ref{thm:Gardinghyp} if $n\le 3$; cf.\ Remark \ref{rem:Laxconjecture}.

\subsection{Singular values}

Theorem \ref{thm:Hermitian} contains information on the regularity of singular values.\index{singular values}
Let $A \in \C^{m \times n}$ be any complex $m \times n$ matrix and let
$$\si^\downarrow_1(A) \ge \si^\downarrow_2(A) \ge \cdots \ge \si^\downarrow_n(A) \ge 0$$
be the singular values of $A$ in decreasing order, i.e., the nonnegative square roots of the eigenvalues of $A^* A$.
If $\on{rank} A = \ell$, then $\si^\downarrow_{\ell+1}(A) = \cdots = \si^\downarrow_{n}(A) = 0$.
Thus we set $\ell := \min\{m,n\}$ and consider only $\si^\downarrow_{j} (A)$, for $j = 1,\ldots,\ell$.
We may consider the $\si^\downarrow_j$ as functions on the vector space $\C^{m \times n}$.

Without loss of generality assume that $m \le n$ and let $\tilde A$ be the $n \times n$ matrix resulting from $A$ by
adding $n-m$ rows consisting of zeros. Then the eigenvalues of the Hermitian matrix
\begin{equation} \label{eq:Hermitianext}
    \begin{pmatrix}
        0 & \tilde A \\
        \tilde A^* & 0
    \end{pmatrix}
\end{equation}
are precisely
\[
    \si^\downarrow_1(A) \ge   \cdots \ge \si^\downarrow_n(A) \ge
    -\si^\downarrow_n(A) \ge  \cdots \ge - \si^\downarrow_1(A).
\]
By Corollary \ref{cor:HermLip}, for all $k = 1,\ldots,\ell$, the sum
\begin{equation*} 
    \sum_{j = 1}^k \si^\downarrow_j
\end{equation*}
of the $k$ largest singular values is a sublinear function on $\C^{m \times n}$ viewed as a real vector space. In fact, the sums
$A \mapsto \sum_{j = 1}^k \si^\downarrow_j(A)$ are the so-called \emph{Ky Fan norms}.

\begin{corollary}
    The mapping
    $\si^\downarrow = (\si^\downarrow_1, \ldots,\si^\downarrow_\ell) : \C^{m \times n}  \to \R^\ell$, where $\ell = \min\{m,n\}$,
    is globally Lipschitz and difference-convex on $\C^{m \times n}$.
\end{corollary}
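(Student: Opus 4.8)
The plan is to reduce the statement to the corresponding fact for the ordered eigenvalues of Hermitian matrices, Corollary~\ref{cor:HermLip}, via the Hermitian extension \eqref{eq:Hermitianext}. First I would assume, without loss of generality, that $m \le n$, so that $\ell = m$, and pad $A \in \C^{m\times n}$ to $\tilde A \in \C^{n\times n}$ by appending $n-m$ zero rows; the assignment $A \mapsto \tilde A$ is $\R$-linear and isometric. Next I would form the Hermitian matrix
\[
    H(A) := \begin{pmatrix} 0 & \tilde A \\ \tilde A^* & 0 \end{pmatrix} \in \on{Herm}(2n),
\]
and note that $A \mapsto H(A)$ is $\R$-linear as a map from $\C^{m\times n}$ (viewed as a real vector space) into $\on{Herm}(2n)$, since $A \mapsto A^*$ is conjugate-linear. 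As recalled in the discussion around \eqref{eq:Hermitianext}, the eigenvalues of $H(A)$ in decreasing order are
\[
    \si^\downarrow_1(A) \ge \cdots \ge \si^\downarrow_n(A) \ge -\si^\downarrow_n(A) \ge \cdots \ge -\si^\downarrow_1(A),
\]
where $\si^\downarrow_{m+1}(A) = \cdots = \si^\downarrow_n(A) = 0$. Hence $\si^\downarrow_j = \la^\downarrow_j \circ H$ for $j = 1,\ldots,\ell$, where $\la^\downarrow : \on{Herm}(2n) \to \R^{2n}$ is the characteristic (ordered-eigenvalue) map; equivalently, $\si^\downarrow = \pi \circ \la^\downarrow \circ H$ with $\pi : \R^{2n} \to \R^\ell$ the projection onto the first $\ell$ coordinates.

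Then I would invoke Corollary~\ref{cor:HermLip}: the map $\la^\downarrow$ is globally Lipschitz and difference-convex on $\on{Herm}(2n)$. Both properties are stable under pre- and post-composition with $\R$-affine maps between finite-dimensional normed spaces. Indeed, $H$ and $\pi$ are $\R$-linear, hence globally Lipschitz, so $\si^\downarrow = \pi \circ \la^\downarrow \circ H$ is globally Lipschitz as a composition of globally Lipschitz maps; and if $\la^\downarrow_j = g_j - h_j$ with $g_j, h_j$ continuous convex functions on $\on{Herm}(2n)$, then $\si^\downarrow_j = (g_j \circ H) - (h_j \circ H)$ exhibits $\si^\downarrow_j$ as a difference of two continuous convex functions on $\C^{m\times n}$, since precomposition of a convex function with an affine map is again convex. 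Thus each component $\si^\downarrow_j$, $j = 1,\ldots,\ell$, is globally Lipschitz and difference-convex, hence so is the vector-valued map $\si^\downarrow = (\si^\downarrow_1,\ldots,\si^\downarrow_\ell)$, difference-convexity for vector-valued maps being understood componentwise; note that $DC(\R^N) = DC_{\on{loc}}(\R^N)$ by \cite{Hartman:1959aa}, so working on the whole space causes no issue.

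The argument is essentially bookkeeping, so there is no serious obstacle; the only points requiring a moment's care are (i) the eigenvalue/singular-value correspondence for the extension \eqref{eq:Hermitianext}, including the extra zero eigenvalues produced by the padding, and (ii) the observation that difference-convexity is preserved by the affine substitution $A \mapsto H(A)$, which holds precisely because the decomposition into convex functions is preserved by precomposition with an affine map. One could equally avoid the padding by working directly with the $(m+n)\times(m+n)$ Hermitian matrix $\left(\begin{smallmatrix} 0 & A \\ A^* & 0 \end{smallmatrix}\right)$, whose nonzero eigenvalues are $\pm \si^\downarrow_j(A)$, $j = 1,\ldots,\ell$, together with $|m-n|$ zeros; the identical composition argument then applies, again with a linear projection $\R^{m+n}\to\R^\ell$ onto the first $\ell$ coordinates.
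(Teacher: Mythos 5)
Your proposal is correct and follows essentially the same route as the paper: reduce to Hermitian matrices via the extension \eqref{eq:Hermitianext} and invoke Corollary \ref{cor:HermLip}, using that global Lipschitzness and difference-convexity are preserved under composition with the $\R$-linear map $A \mapsto H(A)$. The paper phrases the same reduction slightly differently (via sublinearity of the Ky Fan sums $\sum_{j\le k}\si^\downarrow_j$, obtained from the sublinear eigenvalue sums of Corollary \ref{cor:HermLip}), but this is only a cosmetic difference.
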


\begin{theorem}
    Let $A : \R \to \C^{m \times n}$ be a curve of $m \times n$ complex matrices
    of class $C^1 \cap DC \cap W^{2,1}_{\on{loc}}$.
    If $\on{rank} A(t) = \min\{m,n\}=:\ell$ for all $t$, then
    there exists a system $\si = (\si_1,\ldots,\si_\ell)$ of the singular values of $A$
    such that $\si \in C^1(\R,\R^\ell) \cap DC(\R,\R^\ell)\cap W^{2,1}_{\on{loc}}(\R,\R^\ell)$.
    This result is uniform in the sense explained in Theorem \ref{thm:Hermitian}.
\end{theorem}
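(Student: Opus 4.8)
The plan is to reduce the statement to the already established regularity theorem for eigenvalues of Hermitian matrices, Theorem~\ref{thm:Hermitian}, via the symmetrization \eqref{eq:Hermitianext}, and then to use the constant-rank hypothesis to extract from the resulting eigenvalue system a well-defined system of the (nonzero) singular values.

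First I would assume, as in the discussion preceding the theorem, that $m \le n$ (otherwise replace $A$ by $A^*$, which has the same singular values) and pass to the curve
\[
  t \mapsto H(t) := \begin{pmatrix} 0 & \tilde A(t) \\ \tilde A(t)^* & 0 \end{pmatrix} \in \on{Herm}(2n),
\]
where $\tilde A(t)$ is $A(t)$ padded by $n-m$ rows of zeros, as in \eqref{eq:Hermitianext}. Since padding by zeros is $\R$-linear and complex conjugation is $\R$-linear on $\C \cong \R^2$, the assignment $A \mapsto H$ is a real-linear map from $\C^{m\times n}$ (viewed as a real vector space) into $\on{Herm}(2n)$. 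Each of the classes $C^1$, $DC$, and $W^{2,1}_{\on{loc}}$ is preserved by real-linear maps ($DC$ because the difference-convex functions form a vector space), so $H$ is of class $C^1 \cap DC \cap W^{2,1}_{\on{loc}}$ on $\R$. By Theorem~\ref{thm:Hermitian}(1) and~(2) there is a system $\Lambda = (\Lambda_1,\ldots,\Lambda_{2n})$ of the eigenvalues of $H$ with $\Lambda \in C^1(\R,\R^{2n}) \cap DC(\R,\R^{2n}) \cap W^{2,1}_{\on{loc}}(\R,\R^{2n})$. Recall that, for each $t$, the eigenvalues of $H(t)$ are $\pm\si^\downarrow_1(A(t)),\ldots,\pm\si^\downarrow_n(A(t))$, with $\si^\downarrow_{\ell+1}(A(t)) = \cdots = \si^\downarrow_n(A(t)) = 0$.

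The key step is the selection of a subsystem of $\Lambda$ representing the singular values, and this is where the hypothesis $\on{rank} A(t) = \ell$ for all $t$ enters decisively. Fix $i$ and suppose $\Lambda_i(t_0) > 0$; then $\Lambda_i(t_0)$ equals one of $\si^\downarrow_1(A(t_0)),\ldots,\si^\downarrow_\ell(A(t_0))$, all of which are $\ge \si^\downarrow_\ell(A(t_0)) > 0$ by the rank assumption. I claim $\Lambda_i>0$ everywhere: if not, by continuity there is a nearest point $t^*$ to $t_0$ with $\Lambda_i(t^*)=0$ and $\Lambda_i>0$ on the open interval between $t_0$ and $t^*$; on that interval $\Lambda_i(t)$ is a positive eigenvalue of $H(t)$, hence $\Lambda_i(t)\ge \si^\downarrow_\ell(A(t))$, and letting $t\to t^*$ gives $\si^\downarrow_\ell(A(t^*))=0$, contradicting $\on{rank} A(t^*)=\ell$. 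Thus each $\Lambda_i$ is either everywhere positive, everywhere zero, or everywhere negative; by the pointwise count the number of indices $i$ with $\Lambda_i>0$ is constant and equals $\ell$. Let $\si=(\si_1,\ldots,\si_\ell)$ be these components of $\Lambda$, in any order. Then $[\si_1(t),\ldots,\si_\ell(t)]=[\si^\downarrow_1(A(t)),\ldots,\si^\downarrow_\ell(A(t))]$ for all $t$, so $\si$ is a system of the singular values of $A$, and $\si\in C^1(\R,\R^\ell)\cap DC(\R,\R^\ell)\cap W^{2,1}_{\on{loc}}(\R,\R^\ell)$ since each of these classes is closed under passing to a subtuple of components.

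For the uniform statement I would run the same reduction for a family $A:U\to\C^{m\times n}$ satisfying the hypotheses of Theorem~\ref{thm:Hermitian}(3): $H(\cdot,r)$ is then locally DC on $U$ and of class $C^1\cap W^{2,1}$ on $\ol I$ for each $r\in\ol I^k$, and given fixed $C^1$ systems $\si(\cdot,r)$ of the singular values of $A(\cdot,r)$, the curve
\[
  \Lambda(\cdot,r) := (\si_1(\cdot,r),\ldots,\si_\ell(\cdot,r),\,0,\ldots,0,\,-\si_\ell(\cdot,r),\ldots,-\si_1(\cdot,r))
\]
is a $C^1$ system of the eigenvalues of $H(\cdot,r)$. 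Applying the uniform part of Theorem~\ref{thm:Hermitian} to $H$ gives that $\Lambda(\cdot,r)$ is bounded in $C^1(\ol I,\R^{2n})$ and that $\|\Lambda(\cdot,r)\|_{W^{2,1}(I,\R^{2n})}\le m(r)$ for a.e.\ $r$ with $m\in L^1(I^k)$; restricting to the first $\ell$ components yields the same bounds for $\si(\cdot,r)$. I expect the main obstacle to be precisely the sign-definiteness step: one must be sure that the constant-rank hypothesis genuinely prevents the component functions $\Lambda_i$ from changing sign — equivalently, rules out the $|t|$-type singularity that already appears for $A(t)=(t)$, $\si(t)=|t|$ — after which the closure of the regularity classes under taking subtuples makes the rest routine.
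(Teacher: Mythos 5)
Your proposal is correct and takes essentially the same route as the paper: apply Theorem \ref{thm:Hermitian} to the Hermitian extension \eqref{eq:Hermitianext} and use the constant-rank hypothesis to keep the nontrivial singular values strictly positive, so that a $C^1$ (indeed $C^1\cap DC\cap W^{2,1}_{\on{loc}}$) system of eigenvalues of the extension yields such a system of singular values. Your sign-definiteness argument and the remark on real-linearity preserving the regularity classes merely spell out the steps the paper states in one sentence.
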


\begin{proof}
    Apply Theorem \ref{thm:Hermitian} to the Hermitian matrix \eqref{eq:Hermitianext}.
    The condition on the rank of $A$ guarantees that
    the nontrivial singular values of $A$ are always strictly positive and hence there exists a $C^1$ system of them,
    since there exists a $C^1$ system of the eigenvalues of \eqref{eq:Hermitianext}.
\end{proof}

The rank condition is necessary; for instance, the singular value of the symmetric $1\times 1$ matrix $A(t) = (t)$ is $|t|$
which does not admit a $C^1$ parameterization.

\subsection{Eigenvalues of normal matrices}
\label{sec:normal}

We have seen that the eigenvalues of Hermitian matrices have similar regularity properties as the roots of 
hyperbolic polynomials. Actually, they are slightly stronger under much weaker assumptions, see Theorem \ref{thm:Hermitian}, 
since the eigenvalues of Hermitian matrices are the characteristic roots of the determinant which is 
G{\aa}rding hyperbolic with respect to the identity matrix.

Interestingly, the eigenvalues of the wider class of normal complex matrices have similar strong regularity properties.

\begin{theorem}[{\cite{BhatiaDavisMcIntosh83}, \cite[VII.4.1]{Bhatia97}}] \label{d-Lip}
    Let $A,B$ be normal complex $d \times d$ matrices and
    let $\la_j(A)$ and 
    $\la_j(B)$, $1 \le j \le d$, denote the respective eigenvalues.
    Then 
    \[
        \min_{\si \in \on{S}_d} \max_{1\le j \le d} |\la_j(A)-\la_{\si(j)}(B)| 
        \le C \,\|A-B\|
    \]
    for a universal constant $C$ with $1 < C < 3$,
    where $\on{S}_d$ is the symmetric group and $\|\cdot\|$ is the operator norm.
\end{theorem}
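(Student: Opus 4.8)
The plan is to reduce the $\ell^\infty$-type matching distance to the classical Hoffman--Wielandt inequality (which handles the $\ell^2$-type matching distance) and then perform a combinatorial/geometric argument on optimal matchings to recover a bound in the operator norm. First I would recall the Hoffman--Wielandt inequality: for normal matrices $A,B$ one has $\min_{\si \in \on{S}_d} \big(\sum_{j=1}^d |\la_j(A) - \la_{\si(j)}(B)|^2\big)^{1/2} \le \|A-B\|_F$, where $\|\cdot\|_F$ is the Frobenius norm. This is the natural starting point, but it is not yet what we want: the Frobenius norm can be as large as $\sqrt d$ times the operator norm, and the $\ell^2$-matching can be larger than the $\ell^\infty$-matching is allowed to be. So a direct application only gives a constant growing with $d$, and the whole point of the theorem is the dimension-free constant $C<3$.

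The key step, due to Bhatia, Davis, and McIntosh, is a clever bootstrapping. The idea is to work on the level of the spectral measures (point masses at the eigenvalues) and transport them optimally. Consider an optimal permutation $\si$ for the $\ell^\infty$-matching, and suppose for contradiction that $\max_j |\la_j(A) - \la_{\si(j)}(B)| = r$ is much larger than $\|A-B\|$. The strategy is to localize: pick a eigenvalue $\la$ of $A$ that is ``far'' (distance $\ge r$) from every eigenvalue of $B$ to which it could be cheaply matched, look at the spectral projection $E$ of $A$ onto eigenvalues inside a ball of radius slightly less than $r$ around $\la$, and the complementary spectral projection $F$ of $B$ onto eigenvalues outside that ball. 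Normality of both matrices lets one estimate $\|E(A-B)F\|$ from below (because $EAF = \la E \cdot \text{(something)} = 0$-ish after the right arrangement, while $EBF$ is bounded below by the spectral gap $r$ times an overlap term) and from above by $\|A-B\|$. Iterating this over a nested family of balls, together with a counting argument ensuring the relevant subspaces have nontrivial intersection, yields the inequality with an explicit constant. The bound $C<3$ comes from optimizing the radii in this iteration.

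The main obstacle — and the genuinely non-trivial content — is precisely this geometric localization step: turning the hypothetical large matching distance into a lower bound on $\|A-B\|$ via spectral projections. Naively, $A - B$ restricted between ``far apart'' spectral subspaces should be large, but making this rigorous requires carefully choosing the radii (a Vitali-type covering of the relevant part of the spectrum of $A$ by balls whose $B$-complements are spectrally separated) and controlling the accumulated error across the iteration; a single step only gives a weak bound, and the constant is extracted from the geometric series. I would organize the write-up as: (i) state Hoffman--Wielandt; (ii) reduce to showing the matching-distance-to-operator-norm comparison; (iii) the projection lemma $\|E_A(B_r) (A-B) (I - F_B(B_r))\| \ge (\text{gap}) - \|A-B\|$ type estimate; (iv) the covering/iteration argument producing $C<3$. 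Everything after (iii) is essentially bookkeeping; step (iii), exploiting normality to kill the ``diagonal'' part and bound the ``off-diagonal'' part below, is where the real work lies, and I would expect to cite \cite{BhatiaDavisMcIntosh83} and \cite[VII.4.1]{Bhatia97} for the sharp constant rather than re-deriving the optimization.
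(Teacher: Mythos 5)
First, a point of comparison: the survey does not prove this theorem at all --- it is stated as a cited classical result (\cite{BhatiaDavisMcIntosh83}, \cite[VII.4.1]{Bhatia97}) and used as a black box --- so there is no in-paper argument for your plan to match. Judged on its own terms, your proposal is not yet a proof, and its central step is misaligned with how the known proofs actually run.

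Two concrete problems. (i) The Hoffman--Wielandt reduction you announce is never used and cannot be used: it controls the $\ell^2$-matching by the Frobenius norm, and any passage from there to the operator norm costs a factor growing with $d$, which is exactly what the theorem forbids; the actual proofs of Bhatia--Davis--McIntosh do not go through Hoffman--Wielandt at all. (ii) The genuinely hard content is hidden inside your step (iii), and as sketched it does not work. What is easy (take a unit eigenvector $x$ of $A$ for $\lambda$ and use normality of $B$ to get $\operatorname{dist}(\lambda,\sigma(B))\le \|(B-\lambda)x\|=\|(B-A)x\|\le\|A-B\|$) is only the one-sided, Hausdorff-type bound; upgrading it to a bound on the \emph{optimal matching} is precisely where the theorem lives, and no covering/counting argument over nested balls does this on its own --- the matching distance can genuinely exceed the one-sided spectral variation, so ``iterate and sum a geometric series'' is not a substitute for a new estimate. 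The known route is analytic, not combinatorial: one proves that for normal $A,B$ whose spectra lie in sets at distance $\delta>0$, any solution of the Sylvester equation $AX-XB=S$ satisfies $\|X\|\le (c_0/\delta)\|S\|$ with a universal constant $c_0<3$ obtained from a Fourier-multiplier (an explicit $L^1$-norm) computation, and then converts this ``$\sin\Theta$''-type bound for spectral projections into an eigenvalue-matching bound by a continuity/path and eigenvalue-counting argument (this conversion is the content of \cite[Ch.~VII]{Bhatia97}). Your inequality ``$\|E(A-B)F\|$ bounded below by the gap times an overlap term'' is essentially this projection estimate, but it is exactly the part that needs the Fourier-analytic input, and the constant $C<3$ comes from that explicit integral, not from optimizing radii in an iteration. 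Since the statement to be proved is the existence of the dimension-free constant $C<3$, deferring that constant to the references, as you propose, is deferring the theorem itself.
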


This result shows that the unordered $d$-tuple of eigenvalues $[\la_1(A),\ldots, \la_d(A)]$ is 
Lipschitz continuous as a function of the normal matrix $A$.
On the left-hand side we have the Wasserstein metric $W_\infty$ (up to a constant factor); see Example \ref{example:n=1}.
In general, the single eigenvalues do not admit a 
continuous parameterization, as shown by the following example.

\begin{example}
    Each choice of the eigenvalues of the normal matrix 
    \[
        A(z):= 
        \begin{pmatrix}
            0 & z 
            \\
            |z| & 0
        \end{pmatrix}, \quad z \in \C,
    \]
    must be discontinuous in a neighborhood of $0$.
\end{example}

Continuous systems of the eigenvalues always exist for continuous families of Hermitian matrices (e.g., by ordering the eigenvalues 
increasingly or decreasingly) or for continuous real one-parameter families of normal matrices (cf.\ \cite[II.5.2]{Kato76}).
The following theorem should be compared to Bronshtein's theorem \ref{thm:Bronshtein}.

\begin{theorem}[{\cite[Proposition 6.3]{RainerN}}] \label{thm:normalLip}
    Let $A(t)$, $t \in \R$, be a $C^{0,1}$ curve of normal complex matrices. 
    Then any continuous system of the eigenvalues of $A$ is $C^{0,1}$.
\end{theorem}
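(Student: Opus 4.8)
The plan is to reduce the statement to the Hermitian case, for which we have the strong regularity results of Section \ref{sec:Hermitian}. The key observation is that a normal matrix $A$ is diagonalizable by a unitary matrix, so its real and imaginary parts $H := \Re A = \tfrac12(A+A^*)$ and $K := \Im A = \tfrac{1}{2i}(A-A^*)$ are \emph{commuting} Hermitian matrices, and the eigenvalues of $A$ are exactly $\mu_j + i\nu_j$ where $(\mu_j,\nu_j)$ are the joint eigenvalues of the commuting pair $(H,K)$. If $A(t)$ is $C^{0,1}$, then so are $H(t)$ and $K(t)$, but of course for a single $t$ the pair need not commute; the point is that the union of the spectra behaves well and that a \emph{continuous} choice of eigenvalues of $A(t)$ is what we are tracking.

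First I would fix a continuous system $z_j(t) = \mu_j(t) + i\nu_j(t)$, $1 \le j \le d$, of eigenvalues of $A(t)$ (such a system exists for continuous real one-parameter families of normal matrices, as recalled in the text before the theorem). The real parts $\mu_j(t)$ form a continuous system of eigenvalues of the Hermitian curve $H(t) = \Re A(t)$, because for each fixed $t$ the eigenvalues of $H(t)$ are the real parts of the eigenvalues of the normal matrix $A(t)$ (diagonalize $A(t)$ by a unitary, take real parts). Since $H$ is $C^{0,1}$, Theorem \ref{thm:Hermitian}, or more elementarily the Lipschitz estimate for eigenvalues of Hermitian matrices (Corollary \ref{cor:HermLip} combined with the fact that a continuous eigenvalue that locally agrees with one of the globally Lipschitz ordered branches $\la^\downarrow_i(H(t))$ is itself locally Lipschitz), shows that each $\mu_j$ is $C^{0,1}$, with Lipschitz constant controlled by $\on{Lip}(H) \le \on{Lip}(A)$. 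The same argument applied to $K(t) = \Im A(t)$ shows each $\nu_j$ is $C^{0,1}$. Hence $z_j = \mu_j + i\nu_j$ is $C^{0,1}$, as claimed.

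The one point requiring care is the passage from ``the ordered eigenvalues of $H(t)$ are globally Lipschitz'' to ``the particular continuous branch $\mu_j(t)$ coming from our fixed system $z_j$ is Lipschitz.'' Here I would argue as follows: at any point $t_0$, if $\mu_j(t_0)$ has multiplicity $m$ as an eigenvalue of $H(t_0)$, then on a neighbourhood the $m$ continuous branches of eigenvalues of $H$ through $\mu_j(t_0)$ coincide, as an unordered $m$-tuple, with the relevant block of the ordered list $\la^\downarrow_\bullet(H)$; since each $\la^\downarrow_i$ is globally Lipschitz and our continuous branch is a continuous selection from a Lipschitz unordered tuple, it is itself locally Lipschitz with the same constant. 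Patching these local bounds over a compact interval (the constant is uniform) gives the global-on-compacts Lipschitz bound, i.e.\ $\mu_j \in C^{0,1}$.

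The main obstacle is genuinely this selection issue together with the fact that a continuous eigenvalue branch of $A$ need not split cleanly into ``the $j$-th largest real part'' -- the real parts can cross and swap. But since we are given a \emph{continuous} system $z_j$ to begin with, the branches $\mu_j=\Re z_j$ are already continuous, and a continuous selection from the Lipschitz unordered tuple of eigenvalues of $H$ is automatically Lipschitz; no reordering is attempted. One should also note that this argument uses only that $A$ is normal so that $\Re A, \Im A$ are Hermitian -- it does not use commutativity of $H(t_0)$ and $K(t_0)$ at any single $t_0$, which is exactly why it works for $C^{0,1}$ curves where no such commutativity holds. This completes the plan.
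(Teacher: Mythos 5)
Your plan is essentially correct, but it follows a genuinely different route than the paper's proof (Proposition 6.3 of \cite{RainerN}, sketched after the statement of Theorem~\ref{thm:normalLip}). The paper works directly with the normal curve: choosing orthonormal eigenvectors near a point, forming difference quotients and passing to limits, one shows that at every $t$ where $A'(t)$ exists any continuous eigenvalue system is differentiable with $\la_j'(t)=\langle A'(t)w_j(t)\mid w_j(t)\rangle$ (formula \eqref{eq:normalLipkey}), and combines this with local absolute continuity. That costs more work but buys the derivative formula, which is then reused for the $C^1$ statement (Theorem~\ref{thm:normalC1}) and the multiparameter corollary. Your reduction --- write $A=H+iK$ with $H=\Re A$, $K=\Im A$ Hermitian, use that for normal $A(t)$ the spectrum of $H(t)$ (resp.\ $K(t)$) is exactly the multiset of real (resp.\ imaginary) parts of the spectrum of $A(t)$, and then invoke the global Lipschitzness of $\la^\downarrow$ on $\on{Herm}(d)$ (Corollary~\ref{cor:HermLip}, i.e.\ Weyl's inequality) --- is more elementary, stays at the level of eigenvalues only, and yields an explicit local Lipschitz constant of the order of $\on{Lip}(A)$; it does not, however, produce \eqref{eq:normalLipkey}. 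You are right that Theorem~\ref{thm:Hermitian} is not applicable at mere $C^{0,1}$ regularity and that Corollary~\ref{cor:HermLip} is the correct tool.

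Two points should be repaired in the write-up. First, the step you flag --- ``a continuous selection from a Lipschitz unordered tuple of reals is Lipschitz with the same constant'' --- is true, but your localization argument only restates the claim on a smaller interval: branches may collide on a complicated set inside any neighbourhood, so branch switching must be excluded quantitatively. A short complete argument: let $\mu_1\le\cdots\le\mu_d$ be the ordered eigenvalues of $H(t)$, each $L$-Lipschitz, and let $\la$ be a continuous selection; if $\la(t_1)-\la(t_0)>L(t_1-t_0)$ for some $t_0<t_1$, then after replacing $t_0$ by the last time at which $\la(t)\le\la(t_0)+L(t-t_0)$ holds, we may assume $\la(t)>\la(t_0)+L(t-t_0)$ on $(t_0,t_1]$; for such $t$ any index $j$ with $\la(t)=\mu_j(t)$ satisfies $\mu_j(t_0)\ge\la(t)-L(t-t_0)>\la(t_0)$, hence $\mu_j(t_0)\ge m:=\min\{\mu_i(t_0):\mu_i(t_0)>\la(t_0)\}$ and so $\la(t)\ge m-L(t-t_0)$, contradicting $\la(t)\to\la(t_0)<m$ as $t\downarrow t_0$; the reverse inequality follows by symmetry. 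Second, your parenthetical remarks about commutativity are backwards: for each fixed $t$, normality of $A(t)$ is \emph{equivalent} to $[\Re A(t),\Im A(t)]=0$, and it is precisely this pointwise commutativity that you use when identifying the eigenvalues of $\Re A(t)$ with the real parts of those of $A(t)$ (this identification fails for general, non-normal diagonalizable matrices). What your argument correctly avoids needing is any compatibility between the spectral decompositions at different parameter values.
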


Let us sketch the main ideas in the proof.
Fix $s \in \R$ and an eigenvalue $z$ of $A(s)$. Let $m$ be the multiplicity of $z$.
For $t$ near $s$, there are $m$ eigenvalues $\la_1(t), \ldots,\la_m(t)$, close to $z$ 
and $\la_j$, $1 \le j \le m$, are continuous functions.
For each $t$, we may choose an orthonormal system of eigenvectors $v_1(t), \ldots,v_m(t)$ corresponding to $\la_1(t), \ldots,\la_m(t)$.
For each $t$ and each sequence $t_k \to t$, we have, after passing to a subsequence, $v_j(t_k) \to w_j(t)$, 
where $w_1(t), \ldots,w_m(t)$ form an orthonormal system of eigenvectors of $A(t)$ (restricted to the direct sum of eigenspaces of the $\la_j(t)$, $1 \le j \le m$). 
Then
\begin{equation*}
    \frac{A(t) - \la_j(t)}{t_k-t} v_j(t_k) + \frac{A(t_k) - A(t)}{t_k-t} v_j(t_k) = \frac{\la_j(t_k) - \la_j(t)}{t_k-t} v_j(t_k).
\end{equation*}
By taking the inner product with $w_j(t)$ and passing to the limit, one can show that 
for any continuous system $\la_j$ of the eigenvalues of $A$
the derivatives $\la_j'(t)$ exist whenever $A'(t)$ exists, 
they form the set of eigenvalues of $A'(t)$ and satisfy 
\begin{equation} \label{eq:normalLipkey}
    \la_j'(t)= \langle A'(t)w_j(t) \mid w_j(t) \rangle,
\end{equation}
possibly after applying a suitable permutation of the eigenvalues on one side of $t$.
Now it suffices to check that $\la_j$ is locally absolutely continuous. 
Then, by means of \eqref{eq:normalLipkey}, we see that $\la_j'$ exists almost everywhere and is locally bounded.
We may conclude that $\la_j$ is $C^{0,1}$.

With the help of the formula \eqref{eq:normalLipkey},
one can show that there are $C^1$ systems of the eigenvalues provided that $t \mapsto A(t)$ is $C^1$.

\begin{theorem}[{\cite[Proposition 6.11]{RainerN}}] \label{thm:normalC1}
    Let $A(t)$, $t \in \R$, be a $C^{1}$ curve of normal complex matrices. 
    Then there is a $C^1$ system of the eigenvalues of $A$.
    If $t \mapsto A(t)$ is $C^2$, then there  is a $C^1$ system of the eigenvalues of $A$ 
    which is twice differentiable everywhere.
\end{theorem}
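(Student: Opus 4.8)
The plan is to refine the proof of Theorem~\ref{thm:normalLip} and to extract from the formula \eqref{eq:normalLipkey} enough information to reorganize the branches of the eigenvalues into a $C^1$ system. First I would recall that a continuous system $\la_1,\ldots,\la_d$ of the eigenvalues of $A$ exists, since $A$ is a continuous real one-parameter family of normal matrices (cf.\ \cite[II.5.2]{Kato76}), and that, $A$ being in particular $C^{0,1}$, Theorem~\ref{thm:normalLip} makes each $\la_j$ locally Lipschitz; hence each $\la_j$ is differentiable almost everywhere, and at every point where $A'$ exists -- that is, everywhere, since $A\in C^1$ -- the derivative $\la_j'(t)$ exists and is given by \eqref{eq:normalLipkey}. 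So the only thing left to arrange is the \emph{continuity} of $t\mapsto\la_j'(t)$, which can fail only on the closed set $\Sigma$ of parameters where two or more continuous branches collide; away from $\Sigma$ the eigenvalues are locally simple and the associated (orthogonal) spectral projections, hence the eigenvalues themselves, are $C^1$ (as smooth as $A$).

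Next I would set up the local model at a point $t_0\in\Sigma$. Let $z$ be an eigenvalue of $A(t_0)$ of multiplicity $m$, with eigenspace $V$ and orthogonal projection $P$ onto $V$. For $t$ near $t_0$ the cluster of $m$ eigenvalues close to $z$ is separated from the rest of the spectrum, so its spectral projection $P(t)$ is a $C^1$ curve with $P(t_0)=P$; transporting $P(t)A(t)P(t)$ onto $V$ by a $C^1$ family of isometries carrying $V$ onto $\on{im}P(t)$ yields a $C^1$ curve $\tilde A(t)$ of normal matrices on $V$ with $\tilde A(t_0)=zI_V$, whose eigenvalues are exactly the $m$ colliding branches. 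Using $PP'(t_0)P=0$ and that the transport is an isometry, one computes $\tilde A'(t_0)=(PA'(t_0)P)|_V=:B$. The key structural claim is that $B$ is \emph{normal}: writing each $\tilde A(t)=U(t)D(t)U(t)^*$ with $U(t)$ unitary and $D(t)$ diagonal, we have $(\tilde A(t)-zI_V)/(t-t_0)=U(t)\,(D(t)-zI_V)/(t-t_0)\,U(t)^*\to B$ as $t\to t_0$, and since the diagonal difference quotients are bounded (Lipschitz branches), passing to a subsequence along which $U(t_k)\to U_\infty$ and $(D(t_k)-zI_V)/(t_k-t_0)\to\Delta$ forces $B=U_\infty\Delta U_\infty^*$; so $B$ is normal and the entries of $\Delta$, i.e.\ the subsequential limits of the branch difference quotients, are its eigenvalues.

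With $B$ normal in hand I would decompose $V=\bigoplus_\mu V_\mu$ into its eigenspaces. Because the multiset of eigenvalues of a convergent sequence of normal matrices converges, for $t$ near $t_0$ the $m$ branch difference quotients split into clusters around the distinct eigenvalues $\mu$ of $B$, with $\dim V_\mu$ branches in the $\mu$-cluster; this lets one re-choose the continuous system so that, on a punctured neighbourhood of $t_0$, each branch $\la^{(i)}$ carries an eigenvector staying in a fixed $V_\mu$ and has $(\la^{(i)})'(t_0)=\mu$ from both sides. If the eigenvalues of $B$ are pairwise distinct this already forces $t\mapsto(\la^{(i)})'(t)$ to be continuous at $t_0$ -- in \eqref{eq:normalLipkey} the unit eigenvector $w_i(t)$ converges, from either side, to a $\mu$-eigenvector of $B$, so $\la^{(i)}{}'(t)\to\langle Bw,w\rangle=\mu$. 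If some $\mu$ is multiple, one restricts $\tilde A$ to the corresponding cluster, subtracts the affine part $z+(t-t_0)\mu$, and iterates with the next order of contact; each step strictly decreases a dimension or increases an order of vanishing, so the process terminates with a local $C^1$ choice. To globalize, note that the re-gluing at a point of $\Sigma$ is dictated (a branch arriving with derivative $\mu$ must leave with derivative $\mu$) and is automatically continuous there since all colliding branches take the value $z$; patching these local choices over the complement of $\Sigma$, and using that the exceptional sets produced by the iteration have measure zero, gives a global $C^1$ system. This reproduces \cite[Proposition 6.11]{RainerN}; the scheme parallels, with eigenvectors replacing purely polynomial considerations, the argument behind Theorem~\ref{thm:C1Rellich} for Hermitian matrices.

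For the $C^2$ statement the same scheme runs with one more Taylor coefficient: once a $C^1$ system is fixed, at a collision point the reduced normal curve satisfies $\tilde A(t)=zI_V+(t-t_0)B+\tfrac12(t-t_0)^2C(t)$ with $C$ continuous on each cluster of branches sharing a first derivative, and restricting to such a cluster and rerunning the diagonalization argument with $C(t_0)$ in place of $B$ shows that the relevant compressed second derivative is again normal and that its eigenvalues are the second derivatives of the branches, which can thus be arranged to exist everywhere (they need not be continuous, cf.\ Example~\ref{ex:2x2sym}). I expect the main obstacle to be precisely this globalization over $\Sigma$: although each local re-choice is forced by matching one-sided derivatives, one must check that these prescriptions are mutually compatible -- in particular near accumulation points of $\Sigma$ and across the successive orders of contact -- so that the outcome is a genuinely continuous, then $C^1$, system on all of $\R$. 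The normality of $B$ and of the higher compressed Taylor coefficients, which is what makes the difference quotients cluster like eigenvalues of normal matrices, is the structural input that makes this work, and tracking it through the iteration is the delicate part.
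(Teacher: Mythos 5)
Your local analysis is sound up to and including the normality of the compressed derivative $B=(PA'(t_0)P)|_V$ and the fact that the rescaled cluster eigenvalues converge to the spectrum of $B$; this is indeed the mechanism behind \eqref{eq:normalLipkey} that the paper points to. The first genuine problem is the step ``if some $\mu$ is multiple, \dots\ iterate with the next order of contact'': under the hypothesis $A\in C^1$ there is no next order of contact available --- after subtracting $z+(t-t_0)\mu$ the remainder is only $o(t-t_0)$, so no further blow-up of $\tilde A$ is defined, and ``increase the order of vanishing'' has no termination guarantee in any case. Moreover, the iteration is aimed at a non-issue: if a continuous branch $\la_j$ is (two-sidedly) differentiable at $t_0$ with $\la_j'(t_0)=\mu$, then applying $P$ to $A(t)w_j(t)=\la_j(t)w_j(t)$, dividing by $t-t_0$ and letting $t\to t_0$ shows that \emph{every} limit $w$ of the unit eigenvectors $w_j(t)$ satisfies $Bw=\mu w$; hence $\la_j'(t)=\langle A'(t)w_j(t)\mid w_j(t)\rangle\to\langle A'(t_0)w\mid w\rangle=\mu$, with no simplicity assumption on $\mu$. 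So continuity of the derivative is automatic for any everywhere-differentiable continuous system, and multiplicity inside $B$ is not where the difficulty lies.

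The real gap is the one you flag but do not close: constructing a single continuous system that is differentiable at \emph{every} point of the collision set $\Sigma$. At a fixed $t_0\in\Sigma$ the multisets of one-sided derivatives from the left and from the right both equal the spectrum of $B$ with multiplicities, so a one-sided permutation within each cluster repairs differentiability at $t_0$; but such a repair is a relabelling of branches on a whole one-sided neighbourhood and can destroy differentiability already achieved at other points of $\Sigma$, which may be a perfect set with complicated accumulation. Making these local prescriptions globally coherent is precisely the content of \cite[Proposition 6.11]{RainerN} (compare Theorem~\ref{thm:C1Rellich} in the Hermitian case), and the survey itself gives no more than the indication via \eqref{eq:normalLipkey}; your argument, as written, only shows that near each fixed $t_0$ some rearranged continuous choice is differentiable, hence (by the observation above) has derivative continuous at $t_0$. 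The same objection applies to the $C^2$ statement: the second-order expansion and the normality of the compressed second coefficient are fine pointwise when $A\in C^2$, but the existence of one $C^1$ system whose second derivatives exist everywhere again requires a global matching across $\Sigma$ that the proposal does not supply.
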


Note that this is best possible, by Example \ref{ex:2x2sym}. 
The next example shows that the assumption that $A$ is normal cannot be omitted. 

\begin{example}[{\cite[II.5.9]{Kato76}}] \label{ex4}
    Let $\al >1$ and $\be >2$. Then 
    \[
        A(t) = 
        \begin{pmatrix}
            |t|^\al & |t|^\al-|t|^\be \big(2+\sin \frac{1}{|t|} \big) \\
            -|t|^\al & -|t|^\al
        \end{pmatrix}, 
        \quad t \in \R \setminus \{0\}, \quad A(0)=0,
    \]
    forms a $C^1$ curve of diagonalizable (but not normal) matrices.
    The eigenvalues of $A$ are given by 
    \[
        \la_\pm(t) = \pm |t|^{\frac{\al+\be}{2}} \Big(2+\sin \frac{1}{|t|} \Big)^{\frac{1}{2}}, \quad t \in \R \setminus \{0\}, \quad \la_\pm(0)=0.
    \] 
    The derivatives $\la_\pm'$ exist everywhere, but they are discontinuous at $0$ if $\al +\be \le 4$ and even unbounded near $0$ if $\al +\be < 4$.   
\end{example}

For multiparameter families of normal matrices we have the following.

\begin{corollary}[{\cite[Theorem 6.19]{RainerN}}]
    Let $A(x)$, $x \in U$, be a $C^{0,1}$ family of normal complex matrices on an open subset $U \subseteq \R^n$
    and let $\la : V \to \C$ be a continuous eigenvalue of $A$ defined on an open subset $V \subseteq U$.
    Then $\la$ is of class $C^{0,1}$. 
    Moreover, if $x_0 \in U \cap \ol V$ and $c: \R \to U$ is a $C^{0,1}$ curve with $c(0) = x_0$ 
    and $c((0,1)) \subseteq V$, then $\la \o c|_{(0,1)}$ is globally Lipschitz on $(0,1)$.
\end{corollary}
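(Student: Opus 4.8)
The plan is to deduce both assertions from the one-parameter Theorem~\ref{thm:normalLip} by restricting $A$ to line segments (for the first claim) and to the curve $c$ (for the second), exploiting that Theorem~\ref{thm:normalLip} carries a \emph{uniform} Lipschitz bound. First I would record the following consequence of its proof: if $B\colon J\to\C^{d\times d}$ is a $C^{0,1}$ curve of normal matrices on an interval $J$ and $\mu\colon J\to\C$ is a continuous eigenvalue of $B$, then $\mu$ is $C^{0,1}$ and $\Lip_{J'}(\mu)\le\Lip_{J'}(B)$ for every subinterval $J'\subseteq J$. Indeed, the argument sketched after Theorem~\ref{thm:normalLip} is local and treats each continuous eigenvalue individually, so it applies on an arbitrary interval; it shows that $\mu$ is locally Lipschitz, hence absolutely continuous, and by the derivative formula \eqref{eq:normalLipkey} one has $|\mu'(t)|=|\langle B'(t)w(t)\mid w(t)\rangle|\le\|B'(t)\|\le\Lip_J(B)$ for a.e.\ $t$ (with $w(t)$ a unit eigenvector), so integrating over $J'$ yields the bound.

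For the first assertion I would fix $x_0\in V$ and $\rho>0$ with $\ol{B(x_0,\rho)}\subseteq V$, set $L:=\Lip_{B(x_0,\rho)}(A)<\oo$, and, given $p,q\in B(x_0,\rho)$, consider the segment $\ga(t):=(1-t)p+tq$, $t\in[0,1]$, which stays in $B(x_0,\rho)$. Then $A\o\ga$ is a $C^{0,1}$ curve of normal matrices with $\Lip_{[0,1]}(A\o\ga)\le L\,|p-q|$, and $\la\o\ga$ is a continuous eigenvalue of it, so the observation above gives $|\la(p)-\la(q)|=|(\la\o\ga)(1)-(\la\o\ga)(0)|\le L\,|p-q|$. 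Hence $\la$ is locally Lipschitz on $V$, i.e.\ of class $C^{0,1}$.

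For the ``moreover'' part I would set $B:=A\o c$, which is a $C^{0,1}$ curve of normal matrices on $\R$ (composition of the locally Lipschitz $A$ with the Lipschitz curve $c$), so that $\Lip_{[0,1]}(B)<\oo$. On $(0,1)$ the function $\la\o c$ is a continuous eigenvalue of $B$, so by the observation above it is $C^{0,1}$ there with $\Lip_{(a,b)}(\la\o c)\le\Lip_{(a,b)}(B)\le\Lip_{[0,1]}(B)$ for every $(a,b)\Subset(0,1)$; letting $(a,b)\uparrow(0,1)$ shows that $\la\o c|_{(0,1)}$ is globally Lipschitz on $(0,1)$.

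The step I expect to require the most care is precisely this last point: since $x_0$ may lie on $\p V$, nothing is known about $\la$ at $x_0$ a priori, so one must produce a Lipschitz estimate on the \emph{open} interval $(0,1)$ whose constant does not deteriorate as $t\to0^{+}$. This is exactly what the uniform bound $|\mu'|\le\|B'\|$ coming from \eqref{eq:normalLipkey} delivers — it decouples the estimate from the position in the interval — and as a byproduct it shows that $\lim_{t\to0^{+}}\la(c(t))$ exists. Two routine facts underlying the argument should still be checked: that the proof of Theorem~\ref{thm:normalLip} localizes to half-open and to bounded intervals without change, and that $A\in C^{0,1}(U)$ has a finite Lipschitz constant on any compact subset of $U$, so that $\Lip_{B(x_0,\rho)}(A)$ and $\Lip_{[0,1]}(A\o c)$ are indeed finite.
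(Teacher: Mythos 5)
Your proof is correct, and for the second assertion it is exactly the paper's argument: the paper also deduces the global Lipschitz bound on $(0,1)$ from the derivative formula \eqref{eq:normalLipkey}, which bounds the Lipschitz constant of $\la\o c|_{(0,1)}$ by that of $A\o c|_{(0,1)}$. For the first assertion you take a genuinely different (and more elementary) route: the paper invokes Theorem \ref{thm:normalLip} together with the convenient-calculus fact that $C^{0,1}$ regularity in several variables can be tested along smooth curves (citing \cite{KMbook}), whereas you bypass that abstract testing lemma by restricting $\la$ and $A$ to line segments inside a ball $\ol{B(x_0,\rho)}\subseteq V$ and using the quantitative bound $|\mu'|\le\|B'\|$ coming from \eqref{eq:normalLipkey} (plus the local absolute continuity established in the proof of Theorem \ref{thm:normalLip}) to get $|\la(p)-\la(q)|\le L\,|p-q|$ directly. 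What the paper's route buys is that one only needs the qualitative one-parameter statement as a black box; what your route buys is self-containedness and an explicit local Lipschitz constant, at the cost of re-opening the proof of Theorem \ref{thm:normalLip} to extract the estimate $\Lip(\mu)\le\Lip(B)$ for a single continuous eigenvalue (a step you correctly flag, and which is consistent with how the paper itself uses \eqref{eq:normalLipkey} in the second part). Your attention to the finiteness of $\Lip_{B(x_0,\rho)}(A)$ and $\Lip_{[0,1]}(A\o c)$, and to the fact that the constant does not degenerate as $t\to 0^{+}$ even though $x_0$ may lie on $\p V$, is exactly where the care is needed, and your handling of it is sound.
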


The first statement follows from Theorem \ref{thm:normalLip} and the fact that $C^{0,1}$ regularity can be tested on $C^\infty$ curves (cf.\ \cite{KMbook}).
For the second assertion, we note that the formula \eqref{eq:normalLipkey} yields that 
the Lipschitz constant of $\la \o c|_{(0,1)}$ is globally bounded by the one of $A \o c|_{(0,1)}$.

In the spirit of Theorem \ref{thm:contacthyp}, it is possible to give sufficient conditions for the existence 
of $C^p$ eigenvalues and eigenvectors of curves of normal matrices in terms of the differentiability of the entries and 
the order of contact of the eigenvalues.

\begin{theorem}[{\cite[Theorem 7.8]{RainerFin}}] \label{thm:contactnormal}
    Let $A(t)$, $t \in I$, be a curve of normal complex matrices.
    Suppose that the characteristic polynomial is normally nonflat. 
    There exists $\Th(A) \in \N \cup \{\infty\}$
    such that if $p \in \N_{\ge 1} \cup \{\infty\}$ and $A(t)$, $t \in I$, has $C^{p+ \Th(A)}$ entries, 
    then it admits a $C^{p+\Th(A)}$ system of its eigenvalues and a $C^p$ system of eigenvectors.
\end{theorem}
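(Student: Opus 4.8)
The plan is to run, for normal matrices, the splitting-algorithm argument that proves Theorem~\ref{thm:contacthyp} for hyperbolic polynomials, with the matrix ingredients of the Parusi\'nski--Rond machinery (cf.\ \S\ref{ssec:normal}) replacing the scalar ones: the Hensel-type splitting Lemma~\ref{lem:SplitMat} in place of the splitting Lemma~\ref{lem:splitting}, and the ``dominating entry'' Proposition~\ref{lem:MainLemma} in place of Lemma~\ref{lem:dominant}/Theorem~\ref{thm:towardsAJ} --- but now formulated in the $C^k$ (resp.\ $C^\infty$) category rather than for formal power series, so that the inverse function theorem and division of functions are used in their $C^k$ forms. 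Since the statement is local in $t$, I would fix $t_0\in I$, build a $C^{p+\Th(A)}$ system of eigenvalues and a $C^p$ orthonormal frame of eigenvectors on a small interval about $t_0$, and glue: over the one-dimensional $I$ there is no monodromy, and one enumerates the eigenvalues consistently from the left and from the right of each point exactly as for hyperbolic polynomials. The invariant $\Th(A)$ is defined recursively along the algorithm: at each node one records how many derivatives are spent, and $\Th(A)$ is the maximum over the (finitely many) branches; it is finiteness of the branching and of each local count, controlled by the multiplicities $m_{t_0}(\tilde\De_s(\charp_A))$, that the normal nonflatness hypothesis is there to guarantee. The base case $d=1$ is trivial.

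For the inductive step I would first perform the matrix Tschirnhausen transformation $A\mapsto A-\tfrac1d(\on{Tr}A)\,\I$: this is as smooth as $A$, leaves eigenvectors unchanged, and shifts the eigenvalues by the full-regularity function $\tfrac1d\on{Tr}A$, so one may assume $\on{Tr}A\equiv0$. Two cases. If $A(t_0)\ne0$: being normal and trace-free, $A(t_0)$ has $\ge2$ distinct eigenvalues, so $\charp_{A(t_0)}$ factors into coprime factors, and the $C^k$ version of Lemma~\ref{lem:SplitMat} (same proof, with the $C^k$ inverse function theorem) yields a $C^{p+\Th(A)}$ unitary $U$, $U(t_0)=\I$, with $U^{-1}AU=\on{diag}(B_1,B_2)$, the $B_i$ normal of smaller size; apply the induction hypothesis. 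If $A(t_0)=0$: here normal nonflatness forces $\nu:=\min_{i,j}m_{t_0}(a_{ij})<\infty$ (flatness of every entry would make every $\tilde\De_s(\charp_A)$ infinitely flat), so $A(t)=(t-t_0)^\nu\hat A(t)$ with $\hat A$ of class $C^{p+\Th(A)-\nu}$, $\hat A(t_0)\ne0$, normal and trace-free; by the $C^k$ form of Proposition~\ref{lem:MainLemma} one entry $\hat a_{i_0j_0}$ has minimal vanishing order and divides every $\hat a_{ij}$ with bounded $C^k$ quotient, so after the square-root-type rescaling of \S\ref{sec:Rellichproof} one brings $\hat A$ to a normalized compact family, splits it at $t_0$ by Lemma~\ref{lem:SplitMat}, and pulls the splitting back. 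The eigenvectors of $A$ and $\hat A$ coincide, and the eigenvalues of $A$ are $(t-t_0)^\nu$ times those of the pieces of $\hat A$ (their full regularity is best recovered directly from the bootstrapped formula $\la_j'(t)=\langle A'(t)w_j(t)\mid w_j(t)\rangle$ of Theorems~\ref{thm:normalLip}--\ref{thm:normalC1}, applied iteratively along each block); induct.

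After finitely many splittings $A$ is, in a unitary frame of regularity $\ge C^p$, block diagonal with each block carrying a single eigenvalue; on a block $B_i$ of size $d_i$ the eigenvalue is $\tfrac1{d_i}\on{Tr}B_i$ and the eigenspace is all of the block. For eigenvectors I would take a smooth basis of $\ker(B_i-\la_i\I)$ on the complement of the finitely many $t$ where a relevant minor vanishes, orthonormalize by Gram--Schmidt, and extend continuously across the exceptional points using the $C^{0,1}$/$C^1$ control of Theorems~\ref{thm:normalLip}--\ref{thm:normalC1}; orthogonality of the eigenspaces of a normal matrix assembles the block frames into a global orthonormal one. The drop from $C^{p+\Th(A)}$ to $C^p$ for the eigenvectors is exactly the regularity consumed in solving these kernel equations.

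The main obstacle is the honest accounting in the $A(t_0)=0$ case: one must show the algorithm terminates, that the successive losses --- division by $(t-t_0)^\nu$, division by the dominating entry, repeated Hensel splittings, solving for kernels --- add up to a well-defined finite $\Th(A)\in\N$ when $p<\infty$, degenerating to the clean ``$C^\infty$ coefficients $\Rightarrow$ $C^\infty$ system'' when $p=\infty$, and that the eigenvalues nonetheless retain the full input regularity $C^{p+\Th(A)}$. This is precisely where the finite-contact hypothesis does the work, in parallel with the definition of $\ga,\Ga$ in Theorem~\ref{thm:contacthyp}; the delicate technical point is verifying that the $C^k$ analogues of Proposition~\ref{lem:MainLemma} and Lemma~\ref{lem:SplitMat} hold with the claimed loss, since dividing $C^k$ functions by a monomial or by a factor of minimal vanishing order is where regularity is really lost, and that loss is measured by the multiplicities that normal nonflatness keeps finite.
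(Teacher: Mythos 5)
You should first be aware that the survey itself does not prove Theorem \ref{thm:contactnormal}: it is quoted from \cite{RainerFin}, where $\Th(A)$ is defined through a splitting algorithm applied to the \emph{characteristic polynomial}, the eigenvalues receive the full $C^{p+\Th(A)}$ regularity from the polynomial-level result (the nonhyperbolic analogue of Theorem \ref{thm:contacthyp}), and the eigenvectors are then produced by the matrix-level block reduction, which is what costs the $\Th(A)$ derivatives. Your overall architecture --- trace reduction, factoring out the finite vanishing order that normal nonflatness guarantees, splitting where the spectrum of the rescaled matrix separates, recursing, and defining $\Th(A)$ by bookkeeping the losses --- is the right one and is compatible with that proof. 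But note that Lemma \ref{lem:SplitMat} and Proposition \ref{lem:MainLemma} are formal-power-series statements (and postdate \cite{RainerFin}); in one real variable the correct $C^k$ substitutes are the Riesz-projection/functional-calculus splitting (which loses no derivatives) and plain factorization of $(t-t_0)^\nu$ from the entries, and you should argue them in that form rather than asserting ``same proof'' --- division by an entry of minimal vanishing order is not automatically controlled in the $C^k$ category, and $m_{t_0}$ finite does not by itself give $\hat A(t_0)\ne 0$ unless enough differentiability remains, which is part of the accounting you defer.

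Two steps would fail as written. First, the eigenvector paragraph: taking a basis of $\ker(B_i-\la_i\I)$ off finitely many exceptional points, applying Gram--Schmidt, and ``extending continuously across the exceptional points using the $C^{0,1}$/$C^1$ control of Theorems \ref{thm:normalLip}--\ref{thm:normalC1}'' is not a valid argument: regularity, even continuity, of eigenvectors can never be deduced from regularity of the eigenvalues --- see Rellich's $C^\infty$ rotation example recalled in Section \ref{sec:analytic} and Example \ref{ex:defnormal}, where the eigenvalues are smooth but no continuous eigenvectors exist. In this theorem the only mechanism producing $C^p$ eigenvectors is the recursion itself: the terminal blocks are $1\times1$ or identically scalar, the eigenvectors are the images of standard basis vectors under the accumulated unitaries, and their regularity must be tracked through each splitting and each division by $(t-t_0)^\nu$; the Gram--Schmidt detour should be deleted and replaced by exactly this bookkeeping. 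Second, the claim that the full $C^{p+\Th(A)}$ regularity of the eigenvalues is ``recovered from the bootstrapped formula $\la_j'(t)=\langle A'(t)w_j(t)\mid w_j(t)\rangle$ applied iteratively'' does not work: already the second derivative of $\la_j$ involves $w_j'$, and the eigenvectors are only $C^p$, so iteration caps the eigenvalue regularity near that of the eigenvectors. The eigenvalue regularity has to come from the splitting algorithm for the normally nonflat characteristic polynomial --- which is also where $\Th(A)$ is actually defined in \cite{RainerFin} --- not from the matrix-level unitaries.
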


$\Th(A)$ is defined in terms of a splitting algorithm for the characteristic polynomial of $A$. 
If the entries of the normal matrix $A$ are definable in an o-minimal expansion of the real field,
then the eigenvalues have the same regularity without the assumption of normal nonflatness. 

\begin{theorem}[{\cite[Theorem 8.1]{RainerFin}}] \label{thm:defnormal}
    Let $A(t)$, $t \in I$, be a definable curve of normal complex matrices.
    There exists an integer $\Th(A) \in \N$
    such that if $p \in \N \cup \{\infty\}$ and $A(t)$, $t \in I$, has $C^{p+ \Th(A)}$ entries, 
    then it admits a $C^{p+\Th(A)}$ system of its eigenvalues.
\end{theorem}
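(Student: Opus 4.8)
The plan is to run the splitting algorithm that underlies Theorem~\ref{thm:contactnormal} — the matrix Tschirnhausen reduction, the Hensel-type block splitting of Lemma~\ref{lem:SplitMat}, and the extraction of a dominant entry as in Proposition~\ref{lem:MainLemma} — and to show that in the definable setting the normal nonflatness hypothesis may be dropped, because o-minimality forbids oscillation, and oscillation (in conjunction with infinite flatness) is the only genuine obstruction to $C^\infty$ regularity. First I would pass to the traceless case: replacing $A(t)$ by $A(t)-\tfrac1d(\operatorname{Tr}A(t))\,\I$ is a definable operation preserving the differentiability class and shifting every eigenvalue by the definable $C^{p+\Th(A)}$ function $\tfrac1d\operatorname{Tr}A$, so it is enough to treat $\operatorname{Tr}A\equiv0$.

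The recursion is local around a point $t_0\in I$. If $A(t_0)$ has at least two distinct eigenvalues, Lemma~\ref{lem:SplitMat} yields, near $t_0$, a unitary $U(t)$ with $U(t_0)=\I$ conjugating $A$ into a block-diagonal normal matrix with blocks of strictly smaller size; since $U$ comes from the implicit function theorem applied to definable data, it is definable and of the same class as $A$, and one recurses on each block. If $A(t_0)$ has a single eigenvalue, tracelessness and normality force $A(t_0)=0$; if $A\equiv0$ near $t_0$ the eigenvalues are all $0$ and we are done, and otherwise the o-minimal monotonicity and cell-decomposition theorems give that, after shrinking, each entry of $A$ is definable, $C^\infty$, and of constant sign on each side of $t_0$, and the definable analogue of Proposition~\ref{lem:MainLemma} supplies a dominant entry $\theta(t)$ generating the ideal of entries. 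Here o-minimality replaces normal nonflatness through a dichotomy (reflected by Miller's theorem recalled above): either $\theta$ vanishes to finite order at $t_0$, and then dividing $A$ by $\theta$ costs finitely many derivatives and, after one more Tschirnhausen step, strictly decreases a complexity invariant of the splitting; or $\theta$ is infinitely flat at $t_0$, and then the whole block is infinitely flat there, so all of its eigenvalues extend $C^\infty$ across $t_0$ by their constant — and manifestly correct — value, terminating the algorithm on that branch with no derivative loss. In either case the algorithm terminates.

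It remains to produce the integer $\Th(A)$ and to globalize. By o-minimality the algorithm branches only at the finitely many critical points of the definable discriminant-type functions it uses, the depth of the recursion at each is at most $d$, and the orders of vanishing of the successive dominant entries are finite wherever they are finite at all and then uniformly bounded; summing the per-step derivative losses over this finite tree yields a bound $\Th(A)\in\N$ independent of $p$, and remultiplying by the extracted $\theta$'s shows that $C^{p+\Th(A)}$ entries give a $C^{p+\Th(A)}$ choice of the eigenvalues on a neighbourhood of each $t_0$. To glue the local choices into a global system on $I$, I would use that $t\mapsto[\lambda_1(A(t)),\dots,\lambda_d(A(t))]$ is globally continuous for one-parameter families of normal matrices and that, by definability, the subintervals of $I$ on which a fixed labelling of the local systems is compatible form a finite cover; matching labellings across the finitely many junction points — where the $C^{p+\Th(A)}$ systems produced by the algorithm agree with the one-sided limits — gives a global $C^{p+\Th(A)}$ system.

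The main obstacle is the bookkeeping that converts the termination statement into a single integer $\Th(A)$: one must check that the definable dominant-entry construction is genuinely definable and of the claimed class, track the exact derivative loss through each division and each remultiplication, and confirm that the finitely many infinitely-flat branches contribute nothing. Conceptually, though, the argument is short: the algorithm proving Theorem~\ref{thm:contactnormal} under normal nonflatness runs unconditionally for definable curves, precisely because o-minimality excludes the oscillation that is its only possible enemy.
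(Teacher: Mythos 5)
First, note that the survey does not actually prove this statement: it is quoted from \cite[Theorem 8.1]{RainerFin}, with only the remark that $\Th(A)$ comes from a splitting algorithm and that definability replaces normal nonflatness. Your overall strategy (trace removal, Hensel-type splitting as in Lemma~\ref{lem:SplitMat}, extraction of a dominant entry as in Proposition~\ref{lem:MainLemma}, with o-minimality excluding oscillation) is indeed the kind of argument the reference runs, so the shape is right; but as written your proof has a genuine gap exactly at the conclusion it must deliver. The theorem asserts that the eigenvalues are of class $C^{p+\Th(A)}$, i.e.\ \emph{as regular as the entries} once the threshold $\Th(A)$ is met. Your scheme divides by the dominant entry $\theta$ (losing the order $m$ of vanishing of $\theta$ in differentiability), recurses to get a $C^{p}$-type system for the rescaled block, and then claims that ``remultiplying by the extracted $\theta$'s'' restores $C^{p+\Th(A)}$. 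It does not: the product of a $C^{p+\Th(A)}$ function vanishing to finite order with a function that is only $C^{p}$ is in general only $C^{p}$ (already $(t^{2}\mu)^{(r+1)}$ involves $\mu^{(r+1)}$), so divide--recurse--remultiply yields a system with the reduced regularity, not the stated one. Recovering the full regularity of the eigenvalues, with the loss $\Th(A)$ affecting only the threshold (and, in Theorem~\ref{thm:contactnormal}, the eigenvectors), is the hardest part of the cited result and requires an additional bootstrapping argument that your sketch does not contain.

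Two further points need repair, though they are more fixable. In the infinitely flat branch you assert that the eigenvalues ``extend $C^\infty$ across $t_0$ by their constant value''; they are of course not constant near $t_0$. What you need is that the definable one-sided eigenvalue branches are themselves flat at $t_0$ together with all derivatives up to the relevant order, which follows from $|\la_j|\lesssim \|A\|$ (Theorem~\ref{d-Lip}) plus a definable L'H\^opital/monotonicity argument applied repeatedly; this must be said, since flatness of a function alone does not control its derivatives. Similarly, the definability of the splitting data is not automatic ``because it comes from the implicit function theorem'': an arbitrary o-minimal expansion need not contain the analytic splitting map, and the correct justification is that the blocks (or spectral projections, or the normalized $U$) are locally unique solutions of polynomial, hence definable, conditions in the definable entries, so their graphs are definable; likewise the ``definable analogue of Proposition~\ref{lem:MainLemma}'' should be formulated as a comparability statement $|a_{ij}|\lesssim|\theta|$ near $t_0$ obtained from o-minimality, not as an ideal-theoretic fact in a finitely differentiable category. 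With these caveats, the termination and gluing parts of your outline are plausible, but the regularity bookkeeping in the first paragraph above is a real missing idea, not a routine verification.
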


That there is no hope for continuous eigenvectors in this setup (even if there is no oscillation) is seen by the following example.

\begin{example}[{\cite[Example 9.5]{RainerFin}}] \label{ex:defnormal}
    Let 
    \[
        B(t) := \begin{pmatrix}
            1 & 0 
            \\ 
            0 & 1
        \end{pmatrix} \quad \text{ for } t \ge 0 \quad \text{ and } \quad 
        B(t) := \begin{pmatrix}
            1 & 1 
            \\ 
            1 & -1
        \end{pmatrix} \quad \text{ for } t < 0.
    \]
    Then $A(t) := t^{p+1} B(t)$ is a definable $C^p$ curve of real symmetric matrices 
    which does not admit a continuous system of eigenvectors. 
    If the underlying o-minimal expansion of the real field defines the exponential function, then 
    $A(t) := e^{-1/t^2} B(t)$ is a definable $C^\infty$ curve of real symmetric matrices 
    not admitting continuous eigenvectors.
\end{example}

\begin{remark}
    Many of the results are true (in slightly adjusted form) 
    for parameterized families of unbounded normal operators in Hilbert space with common domain of definition 
    and with compact resolvent. Cf.\ \cite{KMRp,KMR,RainerN} and references therein.
\end{remark}

\section{Regularity of the roots in the general (nonhyperbolic) case}\label{sec:regularity}

Let $I \subseteq \R$ be a bounded open  interval and let 
\begin{equation} \label{eq:curveofpolynomials}
    P_a(t)(Z)=  Z^d + \sum_{j=1}^d a_j(t) Z^{d-j}, \quad t \in I, 
\end{equation}
be a monic polynomial with complex valued functions $a_j : I \to \C$, $j = 1,\ldots,d$, as coefficients. 
It is not hard to see that if the coefficients $a_j$ are continuous then $P_a$ always admits a continuous system of the roots 
(e.g.\ \cite[II.5.2]{Kato76}). 
In the absence of hyperbolicity assumptions, the roots of $P$ cannot be chosen Lipschitz even in the simplest radical case:  $a_d =t$ and $a_j\equiv 0$ for $j\le d-1$.  

Motivated by the analysis of certain systems of pseudo-differential equations, Spagnolo \cite{Spagnolo00} posed 
the question of absolute continuity of the roots of $P_a$.  
Using explicit formulas for the roots, he gave the positive answer for the polynomials of degree 2 and 3 in  \cite{Spagnolo99}. 

The positive answer to this question in the general case was given in Parusi\'nski and Rainer \cite{ParusinskiRainerAC, ParusinskiRainerOpt} 
with two different proofs.  
They both use in a substantial way the work of Ghisi and Gobbino \cite{GhisiGobbino13} 
on the radical case, where the optimal regularity conditions were given.  The paper \cite{ParusinskiRainerOpt} also contains the optimal Sobolev regularity of the roots in the general case, see below for precise statements.

\subsection{The case of radicals}

The first result towards absolute continuity of the roots 
is probably Lemma~1 in Colombini, Jannelli, and Spagnolo \cite{ColombiniJannelliSpagnolo83} which states that 
for a real valued nonnegative function $f$ of class $C^{k,\al}(\ol I)$ on a bounded open interval $I$, with 
$k \in \N_{\ge 1}$ and $0 < \al \le 1$, the real radical $f^{1/(k+\al)}$ is absolutely continuous on $I$ and satisfies
\[
    \|(f^{\frac{1}{k+\al}})'\|_{L^1(I)}^{k+\al} \le C(k,\al,I) \|f\|_{C^{k,\al}(\ol I)}.
\] 
Tarama \cite{Tarama00} extended this lemma to real valued functions (not necessarily nonnegative). 
A better summability for the weak partial derivatives of $f^{1/(k+1)}$ was obtained by Colombini and Lerner \cite{ColombiniLerner03} for nonnegative $C^{k+1}$ functions $f$ of several real variables.

The case of radicals was settled by Ghisi and Gobbino \cite{GhisiGobbino13} by 
finding their optimal regularity. 

\begin{theorem}[Ghisi and  Gobbino {\cite[Theorem 2.2]{GhisiGobbino13}}]\label{thm:GG}
    Let $I \subseteq \R$ be a bounded open interval and let $f$ be a real valued continuous function such that there exists 
    $g \in C^{k,\al}(\ol I)$ so that $|f|^{k+\al{}}=|g|$ on $I$. Then   
    $f' \in L_w^p(I)$, where $\frac{1}{p}+\frac{1}{k+\al}=1$, and
    \begin{align}\label{eq:GG}
        \|f'\|_{p,w,I} \le 
        C(k) \max\Big\{[\Hoeld_{\al,I}(g^{(k)})]^{\frac{1}{k+\al}} |I|^{\frac{1}{p}}, 
        \|g'\|_{L^\infty(I)}^{\frac{1}{k+\al}}\Big\};
    \end{align}
    in particular, $f$ belongs to the Sobolev space $W^{1,q}(I)$ for every $q \in [1,p)$.  
\end{theorem}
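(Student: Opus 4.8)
The plan is to convert the algebraic relation $|f|^{k+\al}=|g|$ into a pointwise bound on $f'$ in terms of the distance to the zero set, and then read off the weak-type estimate by a covering argument over the components of $I\setminus Z_f$.

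\textbf{Reduction and the pointwise identity.} The set $Z_f=\{f=0\}$ is closed and contains $\{g=0\}$, and $f$ vanishes identically on it, so $f'=0$ a.e.\ on $Z_f$. On the open set $\Omega:=I\setminus Z_f=\{g\ne0\}$ the function $g$ has constant sign on each connected component, so there $f=\pm|g|^{1/(k+\al)}$ is as smooth as $g$; in particular $f$ is differentiable on $\Omega$ and, with $\tfrac1p+\tfrac1{k+\al}=1$,
\[
    |f'(t)|=\frac1{k+\al}\,\frac{|g'(t)|}{|g(t)|^{1/p}},\qquad t\in\Omega.
\]
Write $H:=\Hoeld_{\al,I}(g^{(k)})$ and $G:=\|g'\|_{L^\infty(I)}$; using $p/(k+\al)=1/(k+\al-1)$ one checks that the right-hand side of \eqref{eq:GG} equals $C(k)\max\{H^{1/(k+\al-1)}|I|,G^{1/(k+\al-1)}\}^{1/p}$, so it suffices to prove that $\lambda^{p}\,|\{t\in\Omega:|f'(t)|>\lambda\}|\le C(k)\max\{H^{1/(k+\al-1)}|I|,G^{1/(k+\al-1)}\}$ for all $\lambda>0$.

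\textbf{A higher-order Glaeser inequality.} The analytic heart is a quantitative interpolation estimate for one-signed $C^{k,\al}$ functions, generalising Lemma~\ref{lem:Glaeser} and Lemma~\ref{lem:interpolation}: if $g$ has constant sign on an interval $J$, $t_0\in J$ and $0<\rho\le\dist(t_0,\partial J)$, then
\[
    |g'(t_0)|\le C(k)\bigl(\rho^{-1}|g(t_0)|+H\,\rho^{\,k-1+\al}\bigr).
\]
One proves this by Taylor expanding $g$ at $t_0$ to order $k$, using $\pm g\ge0$ at the two points $t_0\pm\rho$ to control the low-order Taylor coefficients and the Hölder bound on $g^{(k)}$ for the remainder, packaging the outcome via the Vandermonde argument of Lemma~\ref{lem:Vandermonde}. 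Apply this with $J$ the component of $\Omega$ through $t_0$, noting $\dist(t_0,\partial J)=\dist(t_0,Z_f\cup\partial I)=:r(t_0)$. Optimising in $\rho$: if the unconstrained optimal value $\rho_0\asymp(|g(t_0)|/H)^{1/(k+\al)}$ satisfies $\rho_0\le r(t_0)$, we obtain $|g'(t_0)|\le C(k)H^{1/(k+\al)}|g(t_0)|^{1/p}$, hence $|f'(t_0)|\le C(k)\,H^{1/(k+\al)}$; if instead $\rho_0>r(t_0)$ — which forces $|g(t_0)|\gtrsim H\,r(t_0)^{k+\al}$ — then taking $\rho=r(t_0)$ gives $|g'(t_0)|\le C(k)r(t_0)^{-1}|g(t_0)|$, and since the nearest point of $\partial J$ is either a zero of $g$ (so $|g(t_0)|\le G\,r(t_0)$) or a point of $\partial I$ where $g$ stays bounded away from $0$ (an easier situation, handled directly by the interpolation inequality), we get $|f'(t_0)|\le C(k)\,G^{1/(k+\al)}\,r(t_0)^{-1/p}$.

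\textbf{Covering argument and the weak-type bound.} Combining the two cases, $\{|f'|>\lambda\}\cap\Omega$ is covered by $\{t\in I:C(k)H^{1/(k+\al)}>\lambda\}$, whose contribution to $\|f'\|_{p,w,I}$ is at most $C(k)H^{1/(k+\al)}|I|^{1/p}$, together with $\{t\in\Omega:r(t)<(C(k)G^{1/(k+\al)}/\lambda)^{p}\}$. For the latter one works componentwise over the intervals $(a_i,b_i)$ constituting $\Omega$: the crude bound $|\{t\in(a_i,b_i):r(t)<\epsilon\}|\le\min\{2\epsilon,b_i-a_i\}$ followed by summation is too lossy (it would not even yield a finite weak norm), and the point is that near an endpoint $a_i$ the global slope $G$ should be replaced by the local slope $|g'(a_i)|$, again via the Glaeser inequality on $(a_i,b_i)$; this yields $|\{t\in(a_i,b_i):|f'(t)|>\lambda\}|\le C(k)\lambda^{-p}|g'(a_i)|^{1/(k+\al-1)}$. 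The last — and most delicate — step is the combinatorial estimate
\[
    \sum_i|g'(a_i)|^{1/(k+\al-1)}\le C(k)\max\bigl\{H^{1/(k+\al-1)}|I|,\;G^{1/(k+\al-1)}\bigr\},
\]
which rests on the fact that between two consecutive zeros of $g$ the graph of $g$ forms a bump whose height and slope are controlled through $H$ by the Glaeser inequality, so that any oscillation of $g$ is automatically paid for by a correspondingly large seminorm $H$. Summing these contributions gives the desired weak-type inequality, hence \eqref{eq:GG}. Finally $f\in W^{1,q}(I)$ for $q\in[1,p)$ follows because $L_{w}^{p}(I)\subseteq L^{q}(I)$ on the bounded interval $I$ and $f$, being continuous, locally absolutely continuous on $\Omega$ with $\sum_i\on{TV}_{(a_i,b_i)}(f)=\|f'\|_{L^{1}(I)}<\infty$ and identically $0$ on $Z_f$, is absolutely continuous on $I$. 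The main obstacle is precisely this last summation over the components of $\Omega$: it is there that $H$, $G$ and $|I|$ interact, and there that one sees why the \emph{weak} $L^{p}$ norm (not $L^{p}$ or $L^{\infty}$) is forced — as the elementary example $g(t)=t$ on $I=(0,1)$ already shows.
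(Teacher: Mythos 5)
Your overall architecture (reduce to $|f'|=\tfrac1{k+\al}|g'|\,|g|^{-1/p}$ on $\{g\ne 0\}$, prove a higher-order Glaeser-type pointwise bound, then sum over the components of $I\setminus Z_f$) is indeed the spirit of Ghisi--Gobbino's argument, which the survey does not reproduce but whose key tool it records as Corollary \ref{cor:hoGleaser}. However, your central lemma is false as stated for $k\ge 2$. Take $k=2$, $\al=1$, $g(t)=(t-a)^2$ with $a>0$ small, $t_0=0$, $J$ a large interval: then $g\ge 0$ on $J$, $H=\Hoeld_{1,J}(g'')=0$ since $g''\equiv 2$, yet $|g'(0)|=2a$ while your bound gives $C(k)\,\rho^{-1}a^2$, which fails as $a\to 0$. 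The reason is visible in your own sketch: positivity of $g$ at $t_0\pm\rho$ gives only one-sided constraints and cannot control $k+1$ Taylor coefficients via the Vandermonde argument of Lemma \ref{lem:Vandermonde} (that argument needs two-sided bounds on the polynomial). For $k\ge 2$ the intermediate derivatives genuinely enter unless one assumes, in addition, that $g'$ does not change sign on the interval — exactly the hypothesis in Corollary \ref{cor:hoGleaser} (equivalently, one works with the oscillation $V_I(g)$ as in the interpolation lemma). This is not cosmetic: with the corrected inequality you may only take $\rho$ up to the distance to the nearest zero of $g$ \emph{or of $g'$}, so the decomposition must be refined at the critical points of $g$, and your two-case analysis (with $r(t)=\dist(t,Z_f\cup\p I)$), the per-component bound $|\{t\in(a_i,b_i):|f'|>\la\}|\le C\la^{-p}|g'(a_i)|^{1/(k+\al-1)}$, and the covering argument all have to be rebuilt on the finer decomposition.

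Second, the step you yourself flag as the most delicate — the summation
$\sum_i|g'(a_i)|^{1/(k+\al-1)}\le C(k)\max\{H^{1/(k+\al-1)}|I|,\,G^{1/(k+\al-1)}\}$
over the component endpoints — is supported only by a one-sentence heuristic about ``bumps paid for by $H$''. This is precisely where the real work of the theorem lies (it is the quantitative mechanism that converts oscillation of $g$ into the seminorm $H$, and it must be carried out on the refined monotonicity decomposition just described), so as written the proposal asserts rather than proves the weak-type bound. The algebraic reformulation of the right-hand side of \eqref{eq:GG}, the endpoint/absolute-continuity discussion, and the deduction of $W^{1,q}(I)$ for $q<p$ are fine, but the two points above are genuine gaps.
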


Here $L^p_w(I)$\index{Lpw@$L^p_w$} denotes the weak Lebesgue space\index{weak Lebesgue space} equipped with the quasinorm 
$\|f\|_{p,w,I} := \sup_{r\ge 0} \big\{r \cdot \cL^1(\{t \in I : |f(t)| > r\})^{\frac{1}{p}}\big\}$,
where $\cL^1$ is the one dimensional Lebesgue measure. 
By $\Hoeld_{\al,I}(g^{(k)})$ we mean the $\al$-H\"older constant of $g^{(k)}$ on $I$, and $|I|=\cL^1(I)$ is the length of the interval $I$, see also Section \ref{Appendix:A}. 

Ghisi and Gobbino also provided examples that show that the assumptions as well as the conclusion in their theorem are 
best possible.
\begin{itemize}
    \item Take $g(t)=t$ and $f(t)=|t|^{\frac{1}{k+\al}}$ on the interval $I=(-1,1)$.
        Then $f' \not\in L^p(I)$ for $p$ defined by $\frac{1}{p}+\frac{1}{k+\al}=1$. 
        So the summability of $f'$ in Theorem \ref{thm:GG} is optimal. See \cite[Example 4.3]{GhisiGobbino13}.
    \item In \cite[Example 4.4]{GhisiGobbino13}, a $C^\infty$ function $g : I \to \R$ on an interval $I=(0,r)$ with the following properties is constructed: 
        \begin{itemize}
            \item $g$ belongs to $C^{k,\be}(\ol I)$ for every $\be <\al$ but not for $\be=\al$.
            \item $f := |g|^{\frac{1}{k+\al}}$ has unbounded variation on $I$ and hence $f' \not\in L^1(I)$.
        \end{itemize}
        So the differentiability assumption on $g$ in Theorem \ref{thm:GG} is optimal.
\end{itemize}

Higher order Glaeser inequalities  \cite[Theorems 2.1 and 3.2]{GhisiGobbino13} play an important role in the proof of Theorem \ref{thm:GG}. 
We recall a slightly more general statement in the next subsection, see Corollary \ref{cor:hoGleaser}.

\subsection{Optimal Sobolev regularity of the roots}

\begin{theorem}[{\cite[Theorem 1]{ParusinskiRainerOpt}}] \label{thm:optimal}
    Let $I \subseteq \R$ be a bounded open interval and let $P_a$   
    be a monic polynomial \eqref{eq:curveofpolynomials} with complex valued 
    coefficients $a_j \in C^{d-1,1}(\ol I)$, $j = 1,\ldots,d$. 
    Let $\la \in C^0(I)$ be a continuous root of $P_a$.
    Then $\la$ is absolutely continuous on $I$ and belongs to the Sobolev space 
    $W^{1,p}(I)$ for every $1 \le p < d/(d-1)$. The derivative $\la'$ satisfies  
    \begin{align} \label{bound} 
        \| \la' \|_{L^p(I)}  
   &\le C(d,p) \max\{1, |I|^{1/p}\} 
   \max_{1 \le j \le d} \|a_j\|^{1/j}_{C^{d-1,1}(\ol I)},
    \end{align}
    where the constant $C(d,p)$ depends only on $d$ and $p$.
\end{theorem}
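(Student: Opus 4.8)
The strategy is to reduce the general case to the radical case of Ghisi--Gobbino (Theorem \ref{thm:GG}) by means of the splitting principle (Lemma \ref{lem:splitting}) together with the Tschirnhausen transformation, in close analogy with the admissibility machinery developed for Bronshtein's theorem in Section \ref{sec:Bronshteinproof}, but now \emph{without} the hyperbolicity assumption. First I would put $P_a$ in Tschirnhausen form $P_{\tilde a}$; since $\la' = \tilde\la' + a_1'/d$ and $a_1 \in C^{d-1,1} \subseteq W^{1,\infty}$, it suffices to bound the derivative of a continuous root of $P_{\tilde a}$, with the coefficient estimate following from \eqref{eq:Tschirn} as in the proof of Theorem \ref{thm:Bronshtein}. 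The exponent $p < d/(d-1)$ is exactly what one gets from Theorem \ref{thm:GG} with $k+\al = d$ (so $1/p + 1/d = 1$ gives $p = d/(d-1)$, and we take $p$ strictly below this to land in $W^{1,p}$ rather than just $L^p_w$).

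The core of the argument is an induction on the degree $d$. When $d = 1$ a root in Tschirnhausen form is identically zero. For $d \ge 2$, fix a point $t_0 \in I$. \emph{Case 1:} if $\tilde a_2(t_0) \ne 0$ (more precisely, if the natural ``size'' of $P_{\tilde a}$ near $t_0$, measured by $|\tilde a_2|$, is nonzero), then after rescaling $Z = |\tilde a_2|^{1/2} W$ the polynomial lands in the compact set $\Hyp_T^0(d)$ --- wait, here is the key point where the nonhyperbolic case differs: the rescaled polynomial $Q_{\ul a}$ need \emph{not} be hyperbolic, so one cannot invoke compactness of $\Hyp_T^0(d)$ directly. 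Instead, by Lemma \ref{lem:dominant}'s analogue --- here we must use a bound of the type $|\tilde a_j|^{1/j} \lesssim |\tilde a_2|^{1/2}$ which, for general complex polynomials, should be replaced by the statement that after rescaling by the \emph{largest} of the $|\tilde a_j|^{1/j}$ the coefficients live in a compact set of monic polynomials with $a_1 = 0$ and $\max_j |a_j| = 1$; this compact set can be covered by finitely many balls on each of which $P$ splits as $P_b P_c$ with $\deg P_b, \deg P_c < d$ and analytic (hence smooth) dependence of the factors' coefficients on those of $P$. One then transfers the $C^{d-1,1}$ bounds and the relevant ``admissible-quadruple''-style estimates from $P_{\tilde a}$ to $\tilde b$ and $\tilde c$ on a smaller interval around $t_0$ --- exactly the bookkeeping of Proposition \ref{prop:adm} and Lemma \ref{lem:enough}, adapted by dropping hyperbolicity and using the appropriate normalizing factor. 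The continuous root $\la$ near $t_0$ is then $-b_1/\deg P_b$ plus a continuous root of $P_{\tilde b}$, and the induction hypothesis applies to the latter, giving an $L^p$ bound on $\la'$ on a neighbourhood of $t_0$ with the correct form of the constant.

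\emph{Case 2:} on the (relatively closed) set $S := \{t \in I : \text{the rescaling factor vanishes}\}$, i.e., where all $\tilde a_j$ vanish simultaneously, the root $\la$ vanishes as well, and one must handle the ``boundary layer'' where $P_{\tilde a}$ degenerates. This is where the radical case enters honestly: near such points the relevant quantity behaves like a genuine radical $|g|^{1/d}$ with $g \in C^{d-1,1}$, and Theorem \ref{thm:GG} (with $k + \al = d$) gives the $L^p_w$ bound with the constant expressed through $[\Hoeld_{1,I}(g^{(d-1)})]^{1/d}$ and $\|g'\|_{L^\infty}^{1/d}$, which translate into $\max_j \|\tilde a_j\|_{C^{d-1,1}}^{1/j}$. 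Finally, one patches the local $L^p$ estimates: the collection of local bounds on $I \setminus S$ together with the control coming from Theorem \ref{thm:GG} near $S$, combined with the fact that $\la$ is continuous and vanishes on $S$, yields absolute continuity of $\la$ on all of $I$ and the global bound \eqref{bound}; the passage from weak-$L^p$ at the critical exponent to strong $L^p$ for every $p < d/(d-1)$ is the standard interpolation step (a weak-$L^{p_0}$ function on a finite-measure set lies in $L^p$ for $p < p_0$).

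\textbf{Main obstacle.} The delicate point is not the induction itself but making the ``admissibility'' estimates work uniformly without hyperbolicity: in the hyperbolic case $|\tilde a_2|$ controls all the $|\tilde a_j|^{1/j}$ (Lemma \ref{lem:dominant}) and $\Hyp_T^0(d)$ is compact, whereas for general complex $P_{\tilde a}$ one must choose the normalizing scale as $\max_{2 \le j \le d} |\tilde a_j|^{1/j}$ and verify that (i) this scale is itself comparable to a radical of a $C^{d-1,1}$ function so Theorem \ref{thm:GG} can be invoked to control its derivative, and (ii) the split factors inherit estimates with the scale \emph{of the factor} rather than of $P$, with constants depending only on $d$. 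Carrying the quantitative estimates \eqref{eq:A1}--\eqref{eq:A4}-type bounds through the splitting in this degenerate-allowed setting, and in particular getting the clean form of the constant in \eqref{bound}, is the technical heart of the proof.
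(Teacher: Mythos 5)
Your overall route is the same as the paper's: Tschirnhausen transformation, rescaling by a dominant coefficient (the maximum of $|\tilde a_j|^{1/j}$, fixed near each point, replacing $|\tilde a_2|^{1/2}$ from Lemma \ref{lem:dominant}), universal splitting on a compact set of normalized polynomials via Lemma \ref{lem:splitting}, induction on the degree, and the Ghisi--Gobbino radical estimate, Theorem \ref{thm:GG}, together with interpolation inequalities as the quantitative engine --- this is exactly the strategy of Section \ref{sec:Bronshteinproof} adapted to the complex case, as in \cite{ParusinskiRainerOpt}. You also correctly identify that the hard part is carrying admissibility-type bounds through the splitting without a dominant invariant.

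There is, however, one concrete step in your assembly of the global bound that would fail as written: you propose to obtain a weak-$L^{d/(d-1)}$ estimate for $\la'$ (from Theorem \ref{thm:GG} at the critical exponent, patched over the local pieces) and then pass to $L^p$ for $p<d/(d-1)$ by the standard finite-measure interpolation. Whether $\la'\in L^{d/(d-1)}_w(I)$ holds at all is an open problem, precisely because the quasinorm $\|\cdot\|_{p,w,I}^p$ is not $\si$-additive: local critical weak-type bounds on the (possibly countably many) splitting neighbourhoods covering $I\setminus S$ cannot be glued into a global weak bound. Converting each local weak bound to a strong $L^p$ bound first does not rescue the argument either, since uniform local bounds acquire factors $|I_i|^{1/p-(d-1)/d}$ whose $p$-th powers need not be summable over a countable family with $\sum_i|I_i|\le|I|$. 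The actual proof avoids this by working with strong $L^p$ norms ($p<d/(d-1)$) throughout and proving a telescoping inequality between the ``combined coefficients'' of $P_{\tilde a}$ and of the split factor $P_{\tilde b}$, and again at the next splitting level ([Formula (18)] of \cite{ParusinskiRainerOpt}, proved via Theorem \ref{thm:GG} and the higher-order Glaeser-type interpolation estimates of Corollary \ref{cor:hoGleaser}); it is this inequality that makes the local contributions along the splitting tree sum to the global bound \eqref{bound}. So the missing ingredient is not the induction scheme but a summable (rather than merely uniform) form of the local estimates; without it the patching step, which you treat as routine, is exactly where the proof would break down.
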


The above result can be understood as a complex analogue of Bronshtein's theorem, see Theorem \ref{thm:Bronshtein}. 
Its proof given in \cite{ParusinskiRainerOpt} follows  the strategy of the proof of 
Bronshtein's theorem as explained in Section \ref{sec:Bronshteinproof}, though many details are 
significantly more complex.   The universal splitting $P_{\tilde a} = P_b P_c$ allows the induction 
on the degree of $P_a$. 
After the Tschirnhausen transformation $P_b \leadsto P_{\tilde b}$ the new polynomials 
$P_{\tilde b}$ split again near points $t_1 \in I $, 
where not all $\tilde b_i$ vanish.  The central idea of the underlying induction is to show an inequality 
between the combined coefficients of $P_{\tilde a}$ and $P_{\tilde b}$ and a similar expression for 
$P_{\tilde b}$ and the polynomial issued from the splitting of  $P_{\tilde b}$, that is \cite[Formula (18)]{ParusinskiRainerOpt}.  
The proof of this technical step is based on the radical case, Theorem \ref{thm:GG}, and on the following interpolation lemma.

\begin{lemma}[{\cite[Lemma 4]{ParusinskiRainerOpt}}] 
    Let $I \subseteq \R$ be a bounded open interval, $m \in \N_{>0}$, and $\al \in (0,1]$.
    If $f\in C^{m,\al}(\overline I)$, then for all $t\in I$ 
    and  $s = 1,\ldots,m$,  
    \begin{align}\label{eq:1}  
        |f^{(s)}(t) | \le C |I|^{-s} \bigl(V_I(f) + V_I(f)^{(m+\al-s)/(m+\al)} (\Hoeld_{\al,I}(f^{(m)}))^{s/(m+\al)}  |I|^s
        \bigr),  
    \end{align}
    for a universal constant $C$ depe{}nding only on $m$ and $\al$.
\end{lemma}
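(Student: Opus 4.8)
The plan is to reduce to the case $|I|=1$ by affine rescaling, then prove a one‑parameter family of estimates for $|f^{(s)}(t)|$ indexed by a free length scale $\ell\in(0,1]$, and finally optimise in $\ell$. For the reduction, note that the asserted inequality is covariant under affine reparametrisation of $I$: if $I=(a,a+L)$ and $g(u):=f(a+Lu)$ on $(0,1)$, then $g^{(s)}(u)=L^{s}f^{(s)}(a+Lu)$, $V_{(0,1)}(g)=V_I(f)$, and $\Hoeld_{\alpha,(0,1)}(g^{(m)})=L^{m+\alpha}\Hoeld_{\alpha,I}(f^{(m)})$, and substituting these relations turns the inequality for $g$ on $(0,1)$ into the one for $f$ on $I$. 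So it suffices to treat $I=(0,1)$, and I abbreviate $V:=V_I(f)$, $H:=\Hoeld_{\alpha,I}(f^{(m)})$.

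\emph{Per-scale estimate.} The core step is to show that for every $t\in(0,1)$, every $\ell\in(0,1]$, and every $s=1,\dots,m$,
\[
|f^{(s)}(t)|\le C(m)\bigl(\ell^{-s}V+\ell^{\,m+\alpha-s}H\bigr).
\]
Choose a subinterval $J\subseteq(0,1)$ with $|J|=\ell$ and $t\in\ol J$ (possible since $\ell\le 1$) and let $x_0<\dots<x_m$ be the $m+1$ equidistant points of $\ol J$, so that $|x_i-t|\le\ell$. Taylor's formula with integral remainder, exactly as in the proof of Lemma~\ref{lem:interpolation} but in the H\"older category, gives
\[
f(x_i)=\sum_{k=0}^{m}\frac{f^{(k)}(t)}{k!}(x_i-t)^{k}+R_i,\qquad |R_i|\le\tfrac{1}{m!}\,H\,\ell^{\,m+\alpha}.
\]
Writing $c_k:=f^{(k)}(t)/k!$ and inverting the Vandermonde system with nodes $x_i-t$ (cf.\ Lemma~\ref{lem:Vandermonde}) expresses $c_s=\sum_{i}\gamma_{s,i}(f(x_i)-R_i)$ with coefficients $\gamma_{s,i}$ satisfying $\sum_i|\gamma_{s,i}|\le C(m)\,\ell^{-s}$ (rescale the nodes to $[-1,1]$; the constant is uniform in the placement of $J$ by a compactness argument, since the rescaled node tuples form a compact family). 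The crucial observation is that for $s\ge 1$ one has $\sum_i\gamma_{s,i}=0$: this row of the inverse Vandermonde matrix annihilates the constant vector, because the constant polynomial $1$ has no monomial $y^{s}$. Hence $\sum_i\gamma_{s,i}f(x_i)=\sum_i\gamma_{s,i}\bigl(f(x_i)-f(x_0)\bigr)$, and now $|f(x_i)-f(x_0)|\le V$. Combining this with $|R_i|+|R_0|\le\frac{2}{m!}H\ell^{m+\alpha}$ yields the per-scale estimate.

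\emph{Optimisation.} From the per-scale estimate, $|f^{(s)}(t)|\le C(m)\inf_{0<\ell\le 1}\varphi(\ell)$ where $\varphi(\ell):=\ell^{-s}V+\ell^{\,m+\alpha-s}H$. Since $m+\alpha-s\ge\alpha>0$, the function $\varphi$ is strictly decreasing then strictly increasing on $(0,\infty)$ with unique minimiser $\ell_{*}=\bigl(sV/((m+\alpha-s)H)\bigr)^{1/(m+\alpha)}$ (the degenerate case $H=0$ being handled directly by $\ell=1$). If $\ell_{*}\le 1$, then $\inf_{(0,1]}\varphi=\varphi(\ell_{*})=C(m,\alpha)\,V^{(m+\alpha-s)/(m+\alpha)}H^{s/(m+\alpha)}$; if $\ell_{*}>1$, then $\varphi$ is decreasing on $(0,1]$, so $\inf_{(0,1]}\varphi=\varphi(1)=V+H$, while $\ell_{*}>1$ forces $H<\tfrac{s}{m+\alpha-s}V$ and hence $\varphi(1)\le\tfrac{m+\alpha}{m+\alpha-s}V$. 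In both cases $|f^{(s)}(t)|\le C(m,\alpha)\bigl(V+V^{(m+\alpha-s)/(m+\alpha)}H^{s/(m+\alpha)}\bigr)$, which, since $|I|=1$, is precisely the claimed bound; the general case follows from the scaling reduction.

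I expect the only genuinely non-routine ingredient to be the "sum‑to‑zero" identity $\sum_i\gamma_{s,i}=0$ in the per-scale estimate: it is exactly what allows the pointwise magnitude $\|f\|_{L^\infty}$ appearing in Lemma~\ref{lem:interpolation} to be upgraded to the much smaller quantity $V_I(f)$. The remaining points — the H\"older form of the Taylor remainder, the uniformity of the Vandermonde constant over the position of $J$, and the one-variable optimisation with its two cases — are routine.
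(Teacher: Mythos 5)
Your proof is correct and follows essentially the same route as the paper's (Taylor expansion at $t$ plus inversion of the Vandermonde system as in Lemmas \ref{lem:Vandermonde} and \ref{lem:interpolation}, carried out on a subinterval whose length is then optimized against the two terms); the survey itself only cites \cite[Lemma 4]{ParusinskiRainerOpt}, where the argument is of the same type. Your ``sum-to-zero'' observation for the rows of the inverse Vandermonde matrix is valid but slightly roundabout: one can equivalently apply the H\"older analogue of Lemma \ref{lem:interpolation} to $f - f(x_0)$ on the chosen subinterval $J$, which leaves $f^{(s)}$ for $s\ge 1$ and $\Hoeld_{\al,J}(f^{(m)})$ unchanged while its sup norm is bounded by $V_I(f)$, giving the same per-scale estimate.
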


Here $
V_I(f) := \sup_{t,s \in I} |f(t)-f(s)|$.  As a corollary we get the following generalizations of 
the higher order Glaeser inequalities of \cite{GhisiGobbino13}.\index{higher order Glaeser inequalities} 

\begin{corollary}[{\cite[Corollary 3]{ParusinskiRainerOpt}}]\label{cor:hoGleaser}
    Let $m \in \N_{>0}$ and $\al \in (0,1]$.
    Let $I = (t_0- \de,t_0+\de)$ with $t_0 \in \R$ and $\de>0$.
    If $f\in C^{m,\al}(\overline I)$ is such that $f$ and $f'$ do not change their sign on $I$,
    then for all $s = 1,\ldots,m$,
    \begin{align}\label{eq:2}  
        |f^{(s)}(t_0) | \le C |I|^{-s} \bigl(|f(t_0)| + |f(t_0)|^{(m+\al-s)/(m+\al)} (\Hoeld_{\al,I}(f^{(m)}))^{s/(m+\al)}  |I|^s
        \bigr),  
    \end{align}
    for a universal constant $C$ depending only on $m$ and $\al$.
\end{corollary}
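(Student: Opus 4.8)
The plan is to deduce Corollary \ref{cor:hoGleaser} directly from the interpolation lemma \cite[Lemma 4]{ParusinskiRainerOpt}, the only extra ingredient being that the sign hypotheses let us replace the oscillation $V_I(f)$ by $|f(t_0)|$ after passing to a suitable half of $I$. First I would normalize: replacing $f$ by $-f$ changes none of the three quantities $|f^{(s)}(t_0)|$, $|f(t_0)|$, $\Hoeld_{\al,I}(f^{(m)})$, so we may assume $f\ge 0$ on $I$; and replacing $f(t)$ by $f(2t_0-t)$ (reflection about the center $t_0$ of $I$) again leaves these three quantities unchanged while flipping the sign of $f'$, so we may further assume $f'\ge 0$, i.e. $f$ is nondecreasing on $I$.

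Next I would apply the lemma on the left half $I':=(t_0-\de,t_0)\subseteq I$. Since $f$ is nonnegative and nondecreasing, every $t\in I'$ satisfies $0\le f(t)\le f(t_0)$, hence $V_{I'}(f)\le f(t_0)=|f(t_0)|$. The restriction of $f$ to $\ol{I'}$ is still of class $C^{m,\al}$, so \cite[Lemma 4]{ParusinskiRainerOpt} applies on $I'$ and bounds $|f^{(s)}(t)|$ for every $t\in I'$ and $s=1,\dots,m$ by an expression independent of $t$, namely $C|I'|^{-s}\bigl(V_{I'}(f)+V_{I'}(f)^{(m+\al-s)/(m+\al)}(\Hoeld_{\al,I'}(f^{(m)}))^{s/(m+\al)}|I'|^s\bigr)$; since $f^{(s)}$ is continuous on $\ol{I'}$ (as $s\le m$ and $f\in C^{m,\al}$), letting $t\to t_0$ yields the same bound for $|f^{(s)}(t_0)|$.

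Finally I would simplify using the monotonicity of the ingredients: $V_{I'}(f)\le|f(t_0)|$, $\Hoeld_{\al,I'}(f^{(m)})\le\Hoeld_{\al,I}(f^{(m)})$, and $|I'|=|I|/2$. Substituting, the first summand acquires a factor $|I'|^{-s}=2^s|I|^{-s}$ and the second acquires $|I'|^{-s}|I'|^s=|I|^{-s}|I|^s\cdot 2^s 2^{-s}$, so all the resulting powers of $2$ are at most $2^m$ and get absorbed into the constant, which then still depends only on $m$ and $\al$; this is precisely \eqref{eq:2}. There is no real obstacle here — the argument is elementary given Lemma 4; the only thing to get right is the reduction to $f$ nonnegative and nondecreasing together with the correct choice of half-interval (one uses $(t_0,t_0+\de)$ instead if one keeps the case $f'\le 0$), so that the pinching $0\le f\le f(t_0)$ is available.
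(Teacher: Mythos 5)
Your argument is correct and is essentially the intended derivation: the survey presents the corollary as an immediate consequence of the interpolation lemma, the point being exactly that the sign conditions on $f$ and $f'$ pin the oscillation on one half of $I$ by $|f(t_0)|$, so that $V_{I'}(f)\le|f(t_0)|$ and the lemma applies with constants only worsened by harmless powers of $2$. Your normalizations ($f\mapsto -f$, reflection about $t_0$) and the limiting step $t\to t_0$ using continuity of $f^{(s)}$ are all sound, so nothing is missing.
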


Note that the bound \eqref{bound} 
is not invariant under rescaling of the interval $I$, 
in contrast to \eqref{eq:GG}. 

\begin{open}
    Replace \eqref{bound} by a scale invariant bound.    
\end{open}

Unlike in the case of radicals, i.e., Theorem \ref{thm:GG}, it is not known whether in the setting of Theorem \ref{thm:optimal}, 
$\la'$ belongs to the weak Lebesgue space $L^{d/(d-1)}_w(I)$.   
The proof cannot be, at least easily, adapted because $\|\cdot\|_{p,w,I}^p$ is not $\si$-additive.

\begin{open}
    Is $\la'$ in the setting of Theorem \ref{thm:optimal} an element of the weak Lebesgue space $L^{d/(d-1)}_w(I)$?
    If so is there an explicit bound for $\|\la'\|_{d/(d-1),w,I}$ in terms of the coefficients $a_j$ and the 
    interval $I$?  
\end{open}

\subsubsection{Multivalued Sobolev functions}\index{multivalued Sobolev functions}

In \cite{Almgren00} Almgren developed a theory of $d$-valued Sobolev functions.   
A $d$-valued function is a mapping with values in the set $\cA_d(\R^m)$ of unordered $d$-tuples of points in $\R^m$. 
Let us denote by $[x] = [x_1,\ldots,x_d]$ the unordered $d$-tuple\index{unordered tuple} consisting of $x_1,\ldots,x_d \in \R^m$; then 
$[x_1,\ldots,x_d] = [x_{\si(1)},\ldots,x_{\si(d)}]$ for each permutation $\si \in \on{S}_d$. 
The set $\cA_d(\R^m) = \{[x] = [x_1,\ldots,x_d] : x_i \in \R^m\}$\index{Ad@$\cA_d(\R^m)$} forms a 
complete metric space when endowed with the metric
\[
    d([x],[y]) := \min_{\si \in \on{S}_d} \Big(\sum_{i=1}^d |x_i - y_{\si(i)}|^2\Big)^{1/2}.
\]
This is (up to a constant factor) the Wasserstein metric $W_2$, see Example \ref{example:n=1}.
Almgren proved that 
there is an integer $N= N(d,m)$, a positive constant $C= C(d,m)$, and an  
injective Lipschitz mapping $\De : \cA_d(\R^m) \to \R^N$, with Lipschitz constant  
$\on{Lip}(\De) \le 1$ and $\on{Lip}(\De|^{-1}_{\De(\cA_d(\R^m))}) \le C$; moreover, there is a
Lipschitz retraction of $\R^N$ onto $\De(\cA_d(\R^m))$.

One can use this bi-Lipschitz embedding to define Sobolev spaces of $d$-valued functions: 
for open $U \subseteq \R^n$ and $1 \le p \le \infty$ define 
\[
    W^{1,p}(U,\cA_d(\R^m)) := \{f : U \to \cA_d(\R^m) : \De \o f \in W^{1,p}(U,\R^N)\}.  
\]
For an intrinsic definition, see \cite[Definition~0.5 and Theorem~2.4]{De-LellisSpadaro11}.

Let us identify $\R^2 \cong \C$. 
Theorem \ref{thm:optimal} implies a sufficient condition for a $d$-valued function $U \to \cA_d(\C)$ to belong to the 
Sobolev spaces $W^{1,p}(U,\cA_d(\C))$ for every $1 \le p < d/(d-1)$; see Theorem \ref{thm:multivalued} below.

We shall use the following terminology. By a \emph{parameterization}\index{parameterization of multivalued function} of a $d$-valued function $f : U \to \cA_d(\C)$ we mean a
function $\vh: U \to \C^d$ such that $f(x) = [\vh(x)] = [\vh_1(x),\ldots,\vh_d(x)]$ for all $x \in U$. 
Let $\pi : \C^d \to \cA_d(\C)$ be defined by $\pi(z) := [z]$; it is a Lipschitz mapping with Lipschitz constant $\on{Lip}(\pi) = 1$.
Then a parameterization of $f$ amounts to a lifting $\vh$ of $f$ over $\pi$, i.e.,
$f = \pi \o \vh$.  
The elementary symmetric polynomials induce a bijective mapping  
$a : \cA_d(\C) \to \C^d$, 
\[
    a_j ([z_1,\ldots,z_d]) := (-1)^j \sum_{i_1<\cdots<i_j} z_{i_1} \cdots z_{i_j}, \quad  1 \le j \le d.
\]
In other words, the monic complex polynomials of degree $d$ are in one-to-one correspondence 
with their unordered $d$-tuples of roots.

\begin{theorem} [{\cite[Theorem 6]{ParusinskiRainerOpt}}]\label{thm:multivalued}
    Let $U \subseteq \R^n$ be open and
    let $f : U \to \cA_d(\C)$ be continuous.
    If $a \o f \in C^{d-1,1}(U,\C^d)$, then $f \in W^{1,p}(V,\cA_d(\C))$ for each relatively compact open $V \Subset U$ 
    and each $1 \le p < d/(d-1)$. 
    Moreover,
    \begin{equation*}
        \|\nabla (\De \o f)\|_{L^p(V)} 
        \le C(d,n,p,\cK,\De) \big(1 + \max_{1 \le j \le d} \|a_j \o f\|^{1/j}_{C^{d-1,1}(\overline W)}\big),
    \end{equation*}
    where $\cK$ is any finite cover of $\overline V$ by open boxes $\prod_{i=1}^d (\al_i,\be_i)$ 
    contained in $U$ and $W = \bigcup \cK$.
\end{theorem}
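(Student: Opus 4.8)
The plan is to reduce the statement, via the \emph{absolutely continuous on lines} (ACL) characterization of Sobolev functions, to the one-variable result \Cref{thm:optimal}, using crucially that $\De \circ f$ is a \emph{fixed} continuous map, so that its a.e.\ gradient is intrinsic and does not depend on any choice of roots. First I would localize: it suffices to prove $\De \circ f \in W^{1,p}(B,\R^N)$ together with the corresponding bound for every open box $B = \prod_{i=1}^n (\al_i,\be_i)$ with $\ol B \subseteq U$. The general case then follows by covering $\ol V$ by finitely many such boxes (shrinking the members of $\cK$ if necessary), observing that $\De \circ f \in L^\infty(B) \subseteq L^p(B)$ because $f(\ol B)$ is compact in $\cA_d(\C)$, that the weak gradients on overlapping boxes agree by uniqueness, and that the number of boxes and their side lengths are absorbed into the final constant.

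For a fixed such $B$ and a coordinate direction $e_k$, write $\hat B := \prod_{i\ne k}(\al_i,\be_i)$ and, for $\hat x \in \hat B$, let $\ell_{\hat x}\subseteq B$ be the open segment through $\hat x$ in direction $e_k$, identified with $(\al_k,\be_k)$. On $\ell_{\hat x}$ the coefficients $a\circ f$ form a curve of monic polynomials of class $C^{d-1,1}$, and
\[
 \|(a_j\circ f)|_{\ell_{\hat x}}\|_{C^{d-1,1}(\ol{\ell_{\hat x}})} \le C(d,n)\,\|a_j\circ f\|_{C^{d-1,1}(\ol B)}, \quad 1\le j\le d,
\]
since restriction to a line merely combines partial derivatives and does not increase Lipschitz constants. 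Choosing a continuous system $\la^{\hat x}_1,\ldots,\la^{\hat x}_d$ of its roots (continuous coefficients admit a continuous system of roots on an interval), \Cref{thm:optimal} gives $\la^{\hat x}_i\in W^{1,p}((\al_k,\be_k))$ with
\[
 \Big(\sum_{i=1}^d \|(\la^{\hat x}_i)'\|_{L^p((\al_k,\be_k))}^p\Big)^{1/p} \le C(d,n,p)\,\max\{1,|\be_k-\al_k|^{1/p}\}\,\big(1+\max_{1\le j\le d}\|a_j\circ f\|_{C^{d-1,1}(\ol B)}^{1/j}\big).
\]

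Next, since $\on{Lip}(\pi)=1$ and $\on{Lip}(\De)\le 1$, the map $\De\circ\pi:\C^d\to\R^N$ is Lipschitz, and $\De\circ f|_{\ell_{\hat x}} = \De\circ\pi\circ(\la^{\hat x}_1,\ldots,\la^{\hat x}_d)$ is absolutely continuous with $|\p_k(\De\circ f)| \le \on{Lip}(\De)\,(\sum_i |(\la^{\hat x}_i)'|^2)^{1/2}$ a.e.\ on $\ell_{\hat x}$. The function $\p_k(\De\circ f)$, being an a.e.\ limit of difference quotients of the fixed continuous map $\De\circ f$, is measurable on $B$ and, by the two displays above, obeys a bound on each slice $\ell_{\hat x}$ that is uniform in $\hat x$; integrating over $\hat B$ by Fubini yields $\p_k(\De\circ f)\in L^p(B)$ with the stated estimate. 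As $\De\circ f$ is continuous, lies in $L^p(B)$, is absolutely continuous on every line parallel to each axis, and its classical (hence weak) partials lie in $L^p(B)$, the ACL characterization gives $\De\circ f\in W^{1,p}(B,\R^N)$; reassembling over the cover $\cK$ gives $f\in W^{1,p}(V,\cA_d(\C))$ and the asserted bound on $\|\nabla(\De\circ f)\|_{L^p(V)}$.

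The substantive analytic input — absolute continuity of the roots and the sharp exponent $d/(d-1)$ — is entirely imported from \Cref{thm:optimal} (hence from the radical case \Cref{thm:GG}); the delicate points here are purely measure-theoretic. The main obstacle is to make sure the slice-wise derivatives genuinely represent the distributional gradient: this is handled by the ACL characterization together with the observation that the (possibly unmeasurable in $\hat x$) choice of continuous roots on each slice is irrelevant, because the quantity being integrated, $|\nabla(\De\circ f)|$, is intrinsic to the fixed continuous function $\De\circ f$. The second, minor, obstacle is that the explicit one-variable constant must survive the Fubini integration, which it does precisely because the bound of \Cref{thm:optimal}, once the slice norms are controlled by the box norm, is uniform over the slices.
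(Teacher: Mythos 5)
Your argument is correct and follows essentially the same route as the cited proof: slice the box along coordinate directions, use the one-variable optimal result (\Cref{thm:optimal}) on each line where a continuous root choice exists, transfer the bound through the Lipschitz maps $\pi$ and $\De$ (whose composition makes the slice-wise derivative of $\De\circ f$ intrinsic and independent of the root choice), and conclude with Fubini and the ACL characterization of $W^{1,p}$. The measurability point and the uniformity of the slice norms over $\hat x$ are handled correctly, so no gap remains.
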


In particular, the roots of a polynomial $P_a$ of degree $d$ with coefficients 
$a_j \in C^{d-1,1}(U)$, $j = 1,\ldots,d$, form a $d$-valued function  
$\la : U \to \cA_d(\C)$ which belongs to $W^{1,p}_{\on{loc}}(U,\cA_d(\C))$ for each $1 \le p < d/(d-1)$; 
in fact, it is well-known that $\la : U \to \cA_d(\C)$ is 
continuous (cf.\ \cite{Kato76} or \cite[Theorem 1.3.1]{RS02}).
Theorem \ref{thm:multivalued} implies that the push-forward  
\begin{equation*}
    (a^{-1})_* : C^{d-1,1}(U,\C^d) \to \bigcap_{1 \le p <d/(d-1)} W^{1,p}_{\on{loc}}(U,\cA_d(\C)).
\end{equation*}
is a bounded mapping.  Similarly to the hyperbolic case, the continuity of this map is an important open problem; 
compare with  Example \ref{ex:Br-continuity}, Remark \ref{rem:Br-continuity}, and Open Problem \ref{op:Br-continuity}.

\begin{open}
    Is the map $(a^{-1})_*$ continuous?\footnote{For $C^d(U,\mathbb C^d)$ this was recently proved in \url{https://arxiv.org/abs/2410.01326}.} 
\end{open}

\subsection{Absolute continuity via desingularization. Formulas for the roots}\label{sec:formulas}

The main theorem of Parusi{\'n}ski and Rainer \cite{ParusinskiRainerAC} also implies the absolute continuity of the roots of \eqref{eq:curveofpolynomials} 
provided the coefficients $a_j$ are of class  $C^{k}$, for $k$ sufficiently large.  
Its statement, see Theorem \ref{thm:byresolution} below, is weaker than the optimal bound of Theorem \ref{thm:optimal}.  On the other hand, it presents a different approach 
through modifications of the space of coefficients of the universal monic polynomial
that replaces the arguments for each curve of polynomials separately.  
This approach can be better suited in some cases, for instance in the proof of the bounded variation property, see Theorem \ref{thm:BVroots} below.  

\begin{theorem}[{{\cite[Main Theorem]{ParusinskiRainerAC}}}]\label{thm:byresolution}
    For every $d\in \N_{>0}$ there is $k=k(d) \in \N_{>0}$ and  $p=p(d)>1$ such that the following holds.  
    Let $I\subseteq \R$ be a bounded open interval and let $P_a$
    be a monic polynomial of degree $d$ with complex valued coefficients $a_j \in C^{k}(\overline I)$, $j = 1,\ldots,d$.   Then: 
    \begin{enumerate}
        \item 
            Let $\la_j \in C^0(I)$, $j = 1,\ldots,d$, be a continuous parameterization of the roots of $P_a$ on $I$.
            Then the distributional derivative of each $\la_j$ in $I$ is a measurable function $\la_j' \in L^q(I)$ for every 
            $q \in [1,p)$. 
            In particular, each $\la_j \in W^{1,q}(I)$ for every $q \in[1,p)$.
        \item This regularity of the roots is uniform.  Let $\{P_{a_\nu} : \nu \in \cN\}$,
            \[
                P_{a_\nu(t)}(Z) = Z^d + \sum_{j=1}^d a_{\nu,j}(t) Z^{d-j} \in C^{k}(\overline I)[Z], ~\nu \in \cN,   
            \] 
            be a family of curves of polynomials, indexed by $\nu$ in some set $\cN$, so that the set of coefficients 
            $\{a_{\nu,j} : \nu \in \cN, j=1,\ldots,d\}$ is bounded in $C^{k}(\overline I)$. 
            Then the set
            \[
                \qquad \{\la_{\nu} \in C^0(I):  P_{a_\nu}(\la_\nu)=0
                \text{ on $I$}, ~\nu \in \cN\}
            \]    
            is bounded in $W^{1,q}(I)$ for every $q \in [1,p)$. 
    \end{enumerate} 
\end{theorem}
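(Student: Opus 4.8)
The plan is to follow the now-standard resolution-of-singularities strategy: work once and for all on the parameter space $\C^d$ of coefficients of the universal monic polynomial of degree $d$, desingularize the relevant discriminant ideal, and deduce via the Abhyankar--Jung theorem explicit formulas for the roots of the universal polynomial in terms of fractional-power coordinates; then pull these formulas back along $t \mapsto a(t)$. First I would recall (from \cite{ParusinskiRainerAC}) that after a finite composition $\sigma \colon M \to \C^d$ of blowings-up with smooth centers, the pullback of the discriminant of the square-free reduction $(P_a)_{\on{red}}$ becomes a normal-crossings divisor, so locally on $M$ the universal polynomial becomes quasi-ordinary; by Theorem~\ref{thm:AJ} (in the version for polynomials with multiple factors) its roots are then power series in fractional powers $y_i = x_i^{1/q}$ of suitable local coordinates. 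Crucially, the blowing-up centers and the integer $q$ depend only on $d$, which is how one obtains the uniform constants $k=k(d)$ and $p=p(d)$.

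Next I would analyze the composition $t \mapsto a(t) \mapsto$ (local chart on $M$) $\mapsto$ (root given by the fractional-power formula). The key point for absolute continuity is a change-of-variables/area-formula argument: each blowing-up is a proper birational map, and the preimage under $\sigma \o (\text{chart})$ of the curve $a(\cdot) \colon \overline I \to \C^d$ is, away from a finite set, a $C^{k'}$ curve (for suitable $k' = k'(d)$, shrinking $k$ if necessary) into $M$, whose image in the $x_i$-coordinates we may write as $x_i(t) = u_i(t)\prod_j (t - t_j)^{m_{ij}}$ with $u_i$ nonvanishing $C^{k'}$ functions near each $t_j$; the factor controlling the worst singularity is then handled exactly by the radical estimate of Ghisi--Gobbino, Theorem~\ref{thm:GG}. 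More precisely, each root $\lambda_i(t)$ is, locally near a problematic parameter $t_0$, a $C^{k'}$ function of finitely many expressions of the form $x_j(t)^{1/q}$, each of which (by Theorem~\ref{thm:GG} applied with $g(t) = x_j(t)$, exponent $k' + \alpha$ with $k' + \alpha = $ the relevant order of vanishing divided by $q$) lies in $W^{1,r}$ for $r$ up to some $p(d) > 1$ depending only on $d$ and the combinatorics of the resolution; the chain rule together with the fact that a $C^1$ function of $W^{1,r}$ functions is $W^{1,r}$ on a bounded interval finishes the local claim. One covers $\overline I$ by finitely many such neighborhoods plus the (finitely many) isolated bad points, which have measure zero, and patches.

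For part (2), the uniformity, I would observe that every estimate above — the number and centers of the blowings-up, the exponent $q$, the exponents $m_{ij}$ that can occur, the Ghisi--Gobbino constant $C(k')$, and the number of charts needed to cover $\overline I$ — depends only on $d$ (and on $I$ through $|I|$), while the $C^{k'}(\overline I)$-norms of the pulled-back coordinate functions $x_i(t)$ are controlled, via the explicit polynomial formulas for $\sigma$ and the chain rule, by the $C^{k}(\overline I)$-norms of the $a_{\nu,j}$. Hence a bound on $\{a_{\nu,j}\}$ in $C^k(\overline I)$ yields a uniform bound on $\{\lambda_\nu\}$ in $W^{1,q}(I)$ for every $q < p(d)$, as claimed.

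The main obstacle I anticipate is the bookkeeping needed to ensure that the integer $k(d)$ is genuinely finite and uniform: one must guarantee that after the (uniformly bounded) resolution the order of vanishing of each coordinate along the pulled-back curve is controlled in terms of $d$ alone, so that requiring the coefficients to be $C^{k(d)}$ suffices to make each $x_j(t)$ of the form (unit)$\times$(monomial in $t - t_j$) with enough remaining differentiability to feed into Theorem~\ref{thm:GG}. This is exactly where the delicate part of \cite{ParusinskiRainerAC} lies — relating the geometry of the resolution of the discriminant locus to a single integer $k(d)$ — and where I would expect to spend most of the effort; the subsequent Sobolev estimates are then a routine application of the radical case and the chain rule.
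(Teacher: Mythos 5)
There is a genuine gap in the step where you pass from the universal formulas on the resolution space to an arbitrary $C^k$ curve of coefficients. You propose to lift $t\mapsto a(t)$ through $\sigma$ and claim that, away from a finite set, its chart coordinates $x_i(t)$ are $C^{k'}$ functions of the form $u_i(t)\prod_j(t-t_j)^{m_{ij}}$ with norms controlled by $\|a\|_{C^k}$ ``via the explicit polynomial formulas for $\sigma$ and the chain rule''. But lifting goes in the direction inverse to $\sigma$: already for one blow-up of the origin in $\C^2$ the lift of $(a_1,a_2)$ in a chart is $(a_1,a_2/a_1)$, so the lifted coordinates are obtained by divisions, not by composing with polynomials, and neither their existence as differentiable functions nor any bound on their $C^{k'}$ norms follows from bounds on the $a_j$ — this is precisely where the uniformity problem lives, and your part (2) inherits the same unproved assertion. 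Moreover, the factorization $x_i(t)=u_i(t)\prod_j(t-t_j)^{m_{ij}}$ with finitely many bad points $t_j$ presupposes analyticity of the curve; for merely $C^k$ coefficients the zero sets involved can be arbitrary closed sets, so the ``cover $\overline I$ by finitely many good neighborhoods plus finitely many bad points'' patching does not apply.

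The paper's proof of Theorem~\ref{thm:byresolution} circumvents exactly this: the estimate is first established for sufficiently generic \emph{real analytic} (in fact polynomial) curves, which lift uniquely over the tower of blowings-up by the universal property of blowing-up, and the hard work is to make the resulting $W^{1,q}$ bounds uniform, i.e.\ independent of the particular analytic curve; an arbitrary $C^{k}$ curve of coefficients is then handled by Weierstrass approximation by polynomial curves $a_\nu\to a$ in $C^k(\overline I,\C^d)$ together with the uniform bounds and an Arzel\`a--Ascoli (or Rellich--Kondrachov) compactness argument to pass to the limit. This approximation-compactness step is missing from your proposal and cannot be replaced by the direct lifting you describe. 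A secondary difference: rather than a single normal-crossings resolution of $\discr_{(P_a)_{\on{red}}}$ plus Abhyankar--Jung, the paper principalizes the whole tower of generalized discriminant ideals $\sD_m$, $m=2,\dots,d$ (the multiplicity stratification), and the resulting root formulas are finite sums $\sum_m A_m(\vh_m\o\si_{d,m})$ with each $\vh_m$ a unit power of a local generator times a Puiseux-type series; the reduction to the radical case of Ghisi--Gobbino, Theorem~\ref{thm:GG}, then enters much as you indicate, but only after the analytic-curve bounds and the approximation step are in place.
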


For instance, for $d=3$ the proof given in \cite[Part 3]{ParusinskiRainerAC} provides $ k(3)=6$ and $p(3)= 6/5$, 
which is worse than the optimal $k=3$, $p=3/2$, but still better than Spagnolo's result which gives the absolute continuity
(i.e. $p=1$) for $k=25$.

The proof of the above theorem is based on formulas for the roots of the universal monic polynomial 
$P_a(Z)=Z^d + \sum_{j=1}^d a_j Z^{d-j}$ in terms of its coefficients $a=(a_1,\ldots,a_d)\in \C^d$.  
They are obtained as follows.  By Hironaka's resolution of singularities \cite{Hironaka:1964}, there is a tower of 
smooth principalizations 
\begin{align}\label{eq:modifications}
    \C^d  \stackrel{ \sigma_2 }\longleftarrow M_2  \stackrel{ \sigma_{3,2} }\longleftarrow  M_3
    \stackrel{ \sigma_{4,3} }\longleftarrow  \cdots \stackrel{ \sigma_{d,d-1} }\longleftarrow M_{d} 
\end{align}
which successively principalize the generalized discriminant ideals $\sD_m$ in $\C[a]$, $m=2,\ldots,d$, defined in  
{\cite[Section 1.1]{ParusinskiRainerAC}}. 
These ideals give the stratification of the space of polynomials by root multiplicities. 
Then, locally on $M_{d}$, the roots of the pulled back polynomial $P_{\si_d^* (a)}$ are given by linear combinations with 
rational coefficients 
\begin{align}\label{eq:formulasforroots}
& \sum_{m=1}^d A_m \, (\vh_m \o \si_{d,m}), \quad \text{ where }\\ 
\notag
&  \varphi_\kk  = f_\kk^ {\al_\kk}  
\psi_\kk (y_{\kk,1}^{1/q_\kk}, \ldots , y_{\kk, r_\kk}^{1/q_\kk}, y_{\kk,r_\kk+1}, \ldots , y_{\kk,d}).
\end{align}
Here $\si_m = \si_2 \o \si_{3,2} \o \cdots \o \si_{m,m-1}$, $\si_{d,m} = \si_{m+1,m} \o \cdots \o \si_{d,d-1}$, 
$f_m \in \sD_m$ is a local generator of $\si_m^*(\sD_m)$, $\psi_\kk$ is a convergent power series, 
$q_m \in \N_{\ge 1}$, $\al_m \in \frac{1}{q_m} \N_{\ge 1}$,   
and 
$y_{\kk,i}$ is a local system of coordinates so that $f_m^{-1}(0)$ is given by $y_{\kk,1} y_{\kk,2} \cdots y_{\kk,r_\kk}=0$.   
These formulas reduce the problem to radicals and then one may use  Theorem~\ref{thm:GG}.   

The theorem is first proven for, sufficiently generic, real analytic curves of polynomials.
Let $a(t)$ be such a curve, for genericity it suffices that the discriminant of $P_{a(t)}$ is not identically equal to zero. 
Note that $a(t)$  can be uniquely lifted over the sequence of blowings-up \eqref{eq:modifications} 
by the universal property of blowing-up.  The main difficulty is to get uniform bounds for  the norms independent of the choice of the curve.   

If   $a(t)$ is arbitrary, then by the Weierstrass approximation theorem it can be approximated by a sequence of  
polynomial curves $(a_\nu) \subseteq C^\om(\ol I,\C^d)$,   such that 
\[
    a_\nu \to a \quad \text{ in } C^{k}(\ol I,\C^d)  \quad  \text{ as } \nu \to \infty. 
\]
Then, thanks to the uniformity of the bounds,  one concludes by the Arzel\'a--Ascoli theorem,  or alternatively by the   
Rellich--Kondrachov compactness theorem, by passing to a subsequence.

\subsection{Multiparameter case}

Though Theorems \ref{thm:GG}, \ref{thm:optimal}, \ref{thm:byresolution}  are one dimensional, 
they can be fairly easily extended to the multiparameter case by a standard sectioning argument.  
We recall the statement for the analogue of Theorem \ref{thm:optimal}.  For Theorem \ref{thm:GG} and Theorem \ref{thm:byresolution}, 
see \cite[Theorem 2.3]{GhisiGobbino13} and \cite[Multiparameter Theorem]{ParusinskiRainerAC}, respectively.

\begin{theorem} [{\cite[Theorem 2]{ParusinskiRainerOpt}}] \label{thm:multioptimal}
    Let $U \subseteq \R^n$ be open and let  
    \begin{equation} \label{polynomialmulti}
        P_a(x)(Z)= Z^d + \sum_{j=1}^d a_j(x) Z^{d-j}, \quad x \in U,
    \end{equation}
    be a monic polynomial with complex valued coefficients $a_j \in C^{d-1,1}(U)$, $j = 1,\ldots,d$.   
    Let $\la \in C^0(V)$ be a root of $P_a$ on a relatively compact open subset $V \Subset U$. 
    Then $\la$ belongs to the Sobolev space $W^{1,p}(V)$ for every $1 \le p < d/(d-1)$. 
    The distributional gradient $\nabla \la$ satisfies  
    \begin{equation} \label{multbound} 
        \|\nabla \la \|_{L^p(V)}  \le  C(d,n,p,\cK ) \max_{1 \le j \le d} \|a_j\|^{1/j}_{C^{d-1,1}(\overline W)},
    \end{equation}
    where $\cK$ is any finite cover of $\overline V$ by open boxes $\prod_{i=1}^n (\al_i,\be_i)$ contained 
    in $U$ and $W = \bigcup \cK$; 
    the constant $C(d,n,p,\cK )$ depends only on $d$, $n$, $p$, and the cover $\cK$.   
\end{theorem}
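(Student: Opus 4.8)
The plan is to deduce everything from the one-dimensional result, Theorem~\ref{thm:optimal}, by slicing $V$ along lines parallel to the coordinate axes and invoking the characterization of Sobolev functions by absolute continuity on lines (the ACL characterization).

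First I would localize. Since $\ol V$ is compact and covered by the finitely many boxes of $\cK$, the distributional gradient of $\la$ on $V$ restricts to the one on each $B\cap V$, $B\in\cK$, and $\|\nabla\la\|_{L^p(V)}\le\sum_{B\in\cK}\|\nabla\la\|_{L^p(B\cap V)}$. Hence it suffices to bound $\|\nabla\la\|_{L^p(B\cap V)}$ for a fixed box $B=\prod_{i=1}^n(\al_i,\be_i)\in\cK$, whose closure we may take to lie in $U$; on $B\cap V$ the function $\la$ is again a continuous root of $P_a$ with the same coefficients. Note that $\la$ is automatically bounded there, by Cauchy's bound on the roots of a monic polynomial in terms of $\max_j\|a_j\|_{L^\infty(\ol W)}$, so $\la\in L^p(B\cap V)$; only the gradient needs work. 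Replacing $B\cap V$ by $B$ costs nothing, so assume the root $\la$ is defined and continuous on all of $B$.

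Fix a coordinate direction $e_i$, write $x=(x',t)$ with $t$ the $i$-th coordinate and $x'$ ranging over $B':=\prod_{k\ne i}(\al_k,\be_k)$, and set $I_i:=(\al_i,\be_i)$. For each fixed $x'\in B'$ the slice $t\mapsto\la(x',t)$ is a continuous root of the monic degree-$d$ polynomial $P_{a(x',\cdot)}$ whose coefficients lie in $C^{d-1,1}(\ol I_i)$ with $\|a_j(x',\cdot)\|_{C^{d-1,1}(\ol I_i)}\le\|a_j\|_{C^{d-1,1}(\ol W)}$. Theorem~\ref{thm:optimal} applied to this slice shows it is absolutely continuous on $I_i$, its derivative lies in $L^p(I_i)$ for every $1\le p<d/(d-1)$, and
\[
\|\partial_i\la(x',\cdot)\|_{L^p(I_i)}\le C(d,p)\,\max\{1,|I_i|^{1/p}\}\,\max_{1\le j\le d}\|a_j\|^{1/j}_{C^{d-1,1}(\ol W)}.
\]
The pointwise a.e.\ defined partial derivative $\partial_i\la$ is measurable on $B$, being a pointwise limit of difference quotients of the continuous function $\la$, so by Tonelli's theorem
\[
\int_B|\partial_i\la|^p\,dx=\int_{B'}\|\partial_i\la(x',\cdot)\|^p_{L^p(I_i)}\,dx'\le|B'|\,\big(C(d,p)\max\{1,|B|^{1/p}\}\big)^p\,\max_{1\le j\le d}\|a_j\|^{p/j}_{C^{d-1,1}(\ol W)}.
\]

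Now $\la$ is continuous on $B$ and absolutely continuous on every line parallel to every coordinate axis, with all partial derivatives in $L^p(B)$; by the ACL (absolutely continuous on lines) characterization of Sobolev spaces — Nikodym's theorem — it follows that $\la\in W^{1,p}(B)$ and that these pointwise partial derivatives agree with the distributional ones. Summing the displayed bounds over $i=1,\dots,n$ and over the finitely many boxes of $\cK$ yields $\la\in W^{1,p}(V)$ together with the estimate \eqref{multbound}, with a constant $C(d,n,p,\cK)$ depending only on $d$, $n$, $p$ and on the edge lengths of the boxes of $\cK$. The only genuinely delicate point is this last reduction from the slicewise statements to a bona fide weak derivative: one must verify the joint measurability of the slice derivatives and apply the ACL characterization in the correct form (absolute continuity on almost every coordinate line — here available on every line — together with local integrability of the partials). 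Everything else is Theorem~\ref{thm:optimal} applied fibrewise, plus Tonelli and a finite sum.
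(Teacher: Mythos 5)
Your overall strategy (slice along coordinate lines, apply Theorem~\ref{thm:optimal} fibrewise, integrate with Tonelli, and conclude via the ACL characterization of $W^{1,p}$) is exactly the ``standard sectioning argument'' the survey has in mind, and the measurability/ACL point you single out at the end is indeed routine. The genuine gap is the sentence ``Replacing $B\cap V$ by $B$ costs nothing, so assume the root $\la$ is defined and continuous on all of $B$.'' The root $\la$ is given only on $V$, and a continuous root on $B\cap V$ need not extend continuously to the box: already for $d=2$, $P_a(x)(Z)=Z^2-(x_1+ix_2)$ with $V$ a box minus a slit through the origin carries a continuous square root with no continuous extension across the slit (monodromy). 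If instead you stay honestly on $B\cap V$, each slice $(B\cap V)_{x'}$ is only a countable union of open intervals $J_k$, and applying Theorem~\ref{thm:optimal} on each $J_k$ and summing fails: the one-dimensional bound carries the factor $\max\{1,|J_k|^{1/p}\}\ge 1$ per component (it is not scale invariant -- the survey records the absence of a scale-invariant bound as an open problem), so a slice with infinitely many components produces an infinite right-hand side, and no interpolation in $p$ repairs this. So as written the estimate $\int_{B'}\|\partial_i\la(x',\cdot)\|^p_{L^p}\,dx'\lesssim |B'|\,C^p M^p$ is not justified.

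The missing idea is to produce, for each full segment $\{x'\}\times I_i\subseteq B$ (where the coefficients, unlike $\la$, are defined), \emph{one} $L^p$ majorant valid on the whole slice of $V$. Concretely: by Kato's theorem there is a continuous parameterization $\mu_1,\dots,\mu_d$ of all roots of $P_{a(x',\cdot)}$ on $I_i$, and Theorem~\ref{thm:optimal} applied on $I_i$ gives $\mu_j\in W^{1,p}(I_i)$ with the stated bound. Since $\la(x',t)\in\{\mu_1(t),\dots,\mu_d(t)\}$ for all $t$ in the slice, the standard density-point argument (on each measurable set $\{\la(x',\cdot)=\mu_j\}$ the derivatives agree at a.e.\ point of density) yields $|\partial_t\la(x',t)|\le\max_j|\mu_j'(t)|$ a.e.\ on the slice, while absolute continuity of $\la(x',\cdot)$ on compact subintervals of each component again comes from Theorem~\ref{thm:optimal}. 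With this majorant your Tonelli and ACL steps go through verbatim and give the bound with the claimed constant. (Equivalently, one can first prove the box version for the unordered $d$-tuple of roots, i.e.\ Theorem~\ref{thm:multivalued}, and then transfer the bound to the given continuous selection on $V$ by the same a.e.\ comparison.) Without some such device, the reduction from $B\cap V$ to $B$ is precisely where the proof breaks.
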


There is a simpler version of the above theorem, avoiding covers by open boxes in the statement, valid for bounded Lipschitz domains, see 
\cite[Theorem A.1]{ParusinskiRainerBV}.  

\subsubsection{Bounded variation of the roots}\label{sec:bvariation}

There is nevertheless an important difference between the one parameter and the multiparameter case.  
Recall that by a theorem of Kato   \cite[II.5.2]{Kato76}  
a continuous family of complex monic polynomials depending on one real parameter always admits a continuous choice of roots. This is 
no longer true if the dimension of the parameter space is at least two. In that case, monodromy may 
prevent the existence of continuous roots.  For instance, the radical $\R^2 = \C \ni (x+iy) \mapsto (x+iy)^{1/d}$, $d>1$, 
does not admit  continuous parameterizations near the origin.   

Functions of bounded variation ($BV$) are integrable functions whose distributional derivative is a vector valued finite Radon measure (cf.\ Section \ref{sec:ABV}). 
They form an algebra of discontinuous functions. 
Due to their ability to deal with discontinuities, they are widely used in the applied sciences, 
see e.g.\ \cite{KhudyaevVol'pert85}.  It is shown by Parusi{\'n}ski and Rainer in \cite{ParusinskiRainerBV} that 
the roots of a polynomial \eqref{polynomialmulti} with coefficients in a differentiability class of 
sufficiently high order can   be represented by functions of bounded variation.

\begin{theorem}[{\cite[Theorem 1.1]{ParusinskiRainerBV}}] \label{thm:BVroots}
    For all integers $d,n\ge 2$ there exists an integer $k=k(d,n) \ge \max \{d,n\}$ such that the following holds.
    Let $\Om \subseteq \R^n$ be a bounded Lipschitz domain and let \eqref{polynomialmulti} 
    be a monic polynomial of degree $d$ with complex valued coefficients
    $a=(a_1,\ldots,a_d)  \in C^{k-1,1}(\ol \Om,\C^d)$. 

    Then the roots of $P_a$ admit a parameterization $\la = (\la_1,\ldots,\la_d)$ by 
    special functions of bounded variation ($SBV$) on $\Om$ such that
    \begin{equation} \label{mainbound}
        \|\la\|_{BV(\Om)} \le C(d,n,\Om)\, \max\big\{1, \| a\|_{L^\infty(\Om)}\big\} \max 
        \big\{1 , \| a\|_{C^{k-1,1}(\ol \Om)}\big\}. 
    \end{equation}
    There is a finite collection of $C^{k-1}$ hypersurfaces $E_j$ in $\Om$ 
    such that $\la$ is continuous in the complement of $E :=\bigcup_j E_j$. 
    Any hypersurface $E_j$ is closed in an open subset of $\Om$ 
    but possibly not in $\Om$ itself. All discontinuities of $\la$ are jump discontinuities. 
\end{theorem}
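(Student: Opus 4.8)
The plan is to prove this through the desingularization method underlying Theorem~\ref{thm:byresolution}, pushed further to extract the precise structure of the discontinuity set. The starting observation is that, after Hironaka's resolution \eqref{eq:modifications}, the roots of the universal polynomial are, locally on $M_d$, the linear combinations of radicals \eqref{eq:formulasforroots}; so up to bookkeeping the whole problem reduces to understanding the $BV$ behaviour of a single expression $f^{\alpha}\,\psi(y_1^{1/q},\ldots,y_r^{1/q},y_{r+1},\ldots,y_d)$ composed with a sufficiently smooth map, where $f^{-1}(0)=\{y_1\cdots y_r=0\}$. Away from $\{y_1\cdots y_r=0\}$ such an expression is smooth; across that set, each factor $y_i^{1/q}$, after composition with the lifted coefficient map, is the radical of a $C^{k-1,1}$ function of one sign, so by Theorem~\ref{thm:GG} it lies in $W^{1,p}$ with some $p>1$ and in particular admits a one-sided trace on $\{y_i=0\}$. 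This is exactly the mechanism that forces all discontinuities to be jumps and makes the jump part of the derivative measure finite.

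First I would reduce to the case $\discr_{P_a}\not\equiv0$ on $\Om$ (otherwise pass to the square-free reduction, whose coefficients remain $C^{k-1,1}$, and induct on the degree) and establish the theorem first for real-analytic coefficient maps $a : \ol\Om\to\C^d$ with $\discr_{P_a}\not\equiv0$. Such an $a$ lifts to a map into $M_d$ by the universal property of blowing-up once the pulled-back generalized discriminant ideals $\sD_m$ over $\Om$ have been principalized; that further resolution over $\Om$ — which is what forces $k$ to depend on $n$ as well as $d$ — produces, together with the $a$-preimages of the exceptional loci, a finite collection of $C^{k-1}$ hypersurfaces $E_j$ (each closed in some open subset of $\Om$, though possibly accumulating at $\p\Om$), and I set $E=\bigcup_jE_j$. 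On every connected component of $\Om\setminus E$ the root multiplicity pattern of $P_a$ is constant, so there the roots are locally smooth, hence continuous, and, by Theorem~\ref{thm:multioptimal} on relatively compact subboxes, each root $\la$ lies in $W^{1,p}_{\on{loc}}(\Om\setminus E)$ for every $1\le p<d/(d-1)$ with $\|\nabla\la\|_{L^p}$ controlled by $\max_j\|a_j\|_{C^{d-1,1}}^{1/j}$.

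Next I would analyse $\la$ near $E$. Using the formulas \eqref{eq:formulasforroots} together with the one-sided trace statement above, on each side of a hypersurface $E_j$ the root $\la$ has a limit $\la^{\pm}$, so every discontinuity is a jump and $D\la$ decomposes as an absolutely continuous part $\nabla\la\,\cL^n$ plus a jump part carried by $E$, with no diffuse (Cantor) part — i.e.\ $\la\in SBV(\Om)$. For the norm, the absolutely continuous part is bounded as above; $\|\la\|_{L^\infty(\Om)}\le C(d)\max\{1,\|a\|_{L^\infty(\Om)}\}$ by the Cauchy bound for roots; and the jump part is at most $2\|\la\|_{L^\infty}\,\cH^{n-1}(E)$, where $\cH^{n-1}(E)\le C(d,n,\Om)\max\{1,\|a\|_{C^{k-1,1}(\ol\Om)}\}$ since the $E_j$ are level sets of the $C^{k-1,1}$ functions obtained by composing $a$ with the (fixed, $a$-independent) local generators of the resolution. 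Multiplying these contributions yields \eqref{mainbound}, uniformly over coefficient maps with a prescribed $C^{k-1,1}$ bound. Finally, for a general $a\in C^{k-1,1}(\ol\Om,\C^d)$ I would approximate $a$ in $C^{k-1,1}$ by polynomial maps $a_\nu$ (Weierstrass), obtain $SBV$ parameterizations $\la_\nu$ uniformly bounded in $BV(\Om)$, and pass to a limit via the compact embedding $BV\hookrightarrow L^1$ and lower semicontinuity of the total variation; the structural assertions for the limit (finitely many $C^{k-1}$ hypersurfaces, only jump discontinuities) are read off directly from the pulled-back resolution applied to the genuine $C^{k-1,1}$ map $a$.

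I expect the main obstacle to be the structural part: showing that the discontinuity set is a \emph{finite} union of $C^{k-1}$ hypersurfaces and that no discontinuity is worse than a jump. This requires resolving the pulled-back generalized discriminant ideals over $\Om$ while carefully tracking the loss of differentiability — the source of the dependence $k=k(d,n)$ — and then estimating the $\cH^{n-1}$-measure of the resulting hypersurfaces quantitatively in terms of $\|a\|_{C^{k-1,1}}$, so as to land on the precise product bound \eqref{mainbound} rather than a merely qualitative $BV$ statement. Excluding a Cantor part in $D\la$ is a by-product of this analysis, but it rests on the trace regularity of the radicals, for which the sharp Ghisi--Gobbino estimate \eqref{eq:GG} is essential.
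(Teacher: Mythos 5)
Your overall architecture (reduce to the radical expressions \eqref{eq:formulasforroots} coming from the resolution of the universal coefficient space, get $W^{1,p}$ regularity off a cut set $E$ from Theorem \ref{thm:multioptimal}, and treat the discontinuities as jumps across hypersurfaces) matches the actual strategy, but the step where you estimate the jump part of $D\la$ contains the decisive gap. You bound it by $2\|\la\|_{L^\infty}\,\cH^{n-1}(E)$ and claim $\cH^{n-1}(E)\le C(d,n,\Om)\max\{1,\|a\|_{C^{k-1,1}(\ol\Om)}\}$. This is false in general: as stated in the theorem itself, the hypersurfaces $E_j$ are closed only in open subsets of $\Om$ and may accumulate (typically at the set where the relevant discriminant-type functions vanish), and in general the discontinuity set of a root selection \emph{cannot} be chosen with finite $\cH^{n-1}$-measure (this is \cite[Example 2.4]{ParusinskiRainerBV}, and the survey stresses exactly this point). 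The whole difficulty of the proof is therefore not to bound $\cH^{n-1}(E)$, but to choose the cut hypersurfaces so that the \emph{jump height} is $\cH^{n-1}$-integrable along $E$: near the zero set of the function $f$ whose radical one is selecting, the jump height decays like a fractional power $|f|^{\ell/s}$, and one must pick the cuts as suitable level sets of $\on{sgn}(f)=f/|f|$ for which $\int_{\on{sgn}(f)^{-1}(y)}|f|^{\ell/s}\,d\cH^{n-\ell}<\infty$; this is precisely the content of Theorems \ref{prop:level} and \ref{prop:levelgrowth} (themselves consequences of the Ghisi--Gobbino estimate \eqref{eq:GG} and the co-area formula), which the survey says are essential for Theorem \ref{thm:BVroots}. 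Without this selection-of-good-level-sets mechanism, your product bound \eqref{mainbound} does not follow, and even the qualitative statement $\la\in BV(\Om)$ is unjustified.

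Two further points are weaker than you suggest. First, the dependence $k=k(d,n)$ does not come from resolving pulled-back ideals over $\Om$: the resolution \eqref{eq:modifications} lives on the universal coefficient space $\C^d$ and is independent of $a$ and of $n$; the loss of differentiability in $n$ enters through the level-set/co-area estimates above and the bookkeeping needed to make the local formulas global. A genuine $C^{k-1,1}$ (or even polynomial, but then only generically) map $a$ cannot simply be ``lifted through the resolution by the universal property'' in several variables, so the structural assertions for general $a$ are not ``read off'' the way you claim. Second, your final limit passage is not enough: a uniform $BV$ bound on approximating selections $\la_\nu$ gives an $L^1$-convergent subsequence, but $SBV$, the absence of a Cantor part, the jump-only structure, and the finite collection of $C^{k-1}$ hypersurfaces are not preserved under such weak $BV$ limits without additional compactness hypotheses (e.g.\ Ambrosio-type control of the jump sets), which you have not secured; the actual proof constructs the selection and its discontinuity set directly for the given $C^{k-1,1}$ coefficients rather than by approximation.
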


The main difficulty of the problem is to make a good choice of the discontinuity set $E$ of the roots.
On the complement of $E$, the roots are of optimal Sobolev class $W^{1,p}$, 
for all $1 \le p < \frac{d}{d-1}$,   
by  \cite{ParusinskiRainerOpt}, see Theorem \ref{thm:multioptimal}. 
In general, it is not possible to choose the discontinuity set $E$ of finite codimension one Hausdorff measure $\cH^{n-1}$, 
\cite[Example 2.4]{ParusinskiRainerBV}. 
Thus, in order to have bounded variation it is crucial that the jump height of a selection of a root is 
integrable (with respect to $\cH^{n-1}$) along its discontinuity set, see also the end of Section \ref{sec:applications1}.

The proof is based on the radical case, see \cite[Theorem 1.4]{ParusinskiRainerBV}, which is significantly simpler,  
and on the formulas \eqref{eq:formulasforroots} for the roots of the universal polynomial $P_a$, $a \in \C^d$, obtained in 
\cite{ParusinskiRainerAC}.  
The result of Ghisi and Gobbino \cite{GhisiGobbino13} is used in several places.   
Interestingly, the method of \cite{ParusinskiRainerAC} seems to be better suited for the control of 
the discontinuities and integrability along them than a more elementary method of \cite{ParusinskiRainerOpt}.

\section{Lifting maps over invariants of group representations} \label{sec:lifting}

\subsection{A reformulation of the regularity problem for hyperbolic polynomials} \label{sec:repLreform}

The regularity problem for the roots of hyperbolic polynomials can be seen as a special lifting problem:
let the symmetric group $\on{S}_d$ act on $\R^d$ by permuting the coordinates.
The algebra of invariant polynomials $\R[\R^d]^{\on{S}_d}$ is generated by the elementary symmetric functions\index{elementary symmetric functions}\index{polynomial!elementary symmetric} 
\[
    \si_i  = \sum_{j_1 < \cdots <j_i} x_{j_1} \cdots x_{j_i}, \quad 1 \le i \le d.
\]
Each point in the image of the map $\si =(\si_1,\ldots,\si_d) : \R^d \to \R^d$ 
represents, in view of Vieta's formulas, a monic hyperbolic polynomial $P_a$ of degree $d$, in a unique way. 
The fiber of $\si$ over this point is the orbit $\on{S}_d(\la_1,\ldots,\la_d) = \{(\la_{\ta(1)}, \ldots,\la_{\ta(d)}) : \ta \in \on{S}_d\}$, where 
$\la_1,\ldots,\la_d$ are the roots of $P_a$ (with multiplicities). Thus $\si(\R^d) = \Hyp(d)$.

In this picture, a family of monic hyperbolic polynomials $P_a(x)$, $x \in U \subseteq \R^n$, is a map
$p : U \to \R^d$ with image contained in $\si(\R^d)$ and a system of roots for the family $P_a$ 
is a lifting $\ol p : U \to \R^d$ of $p$ over $\si$, i.e., $\si \o \ol p = p$.\index{lifting}
The regularity problem can be rephrased as follows:
Given that $p$ has some regularity, how regular can a lifting of $p$ be chosen?
\[
  \xymatrix{
     && \R^d \ar[d]^{\si}    \\
      U \ar@{-->}[rru]^{\ol p} \ar[rr]_p && \si(\R^d) 
  } 
\]

This setup can be generalized considerably and we will see that Bronshtein's theorem \ref{thm:Bronshtein} and many other results 
on the perturbation theory of hyperbolic polynomials hold in greater generality. 

We shall also discuss a generalization of the nonhyperbolic case along the same lines in Section \ref{sec:reprpoly} 
and subsequent sections.

\subsection{Orthogonal representations of compact Lie groups} \label{sec:repLreal}

Let $G$ be a compact Lie group and let $\rho : G \rightarrow \on{O}(V)$ be an 
orthogonal representation in a real finite dimensional Euclidean 
vector space $V$ with inner product $\langle \cdot \mid \cdot \rangle$. We will often just write $G \acts V$.\index{GV@$G \acts V$}
In particular, $G$ may be a finite group. 
By a classical theorem of Hilbert and Nagata, 
the algebra $\mathbb{R}[V]^{G}$\index{RVG@$\mathbb{R}[V]^{G}$} of invariant polynomials on $V$ 
is finitely generated. 
So let $\{\si_i\}_{i=1}^n$ be a system of homogeneous generators 
of $\mathbb{R}[V]^{G}$ and call it a system of \emph{basic invariants}.\index{basic invariants}

A system of basic invariants $\{\si_i\}_{i=1}^n$
is called \emph{minimal}\index{basic invariants!minimal system of} if there is no polynomial relation of the 
form $\si_i = P(\si_1,\ldots,\widehat{\si_i},\ldots,\si_n)$, or equivalently, 
$\{\si_i\}_{i=1}^n$ induces a basis of the real vector space $\R[V]^G_+/(\R[V]^G_+)^2$, 
where $\R[V]^G_+ = \{f \in \R[V]^G : f(0)=0\}$; cf.\ \cite[Section 3.6]{DK02}.   
The elements in a minimal system of basic invariants may not be unique, but their number and their degrees 
$d_i := \deg \si_i$ are unique. Let us set
\[
  d:= \max_{1 \le i \le n} d_i
\]
and consider the \emph{orbit map}\index{orbit map}
$\sigma = (\sigma_1,\ldots,\sigma_n) : V \rightarrow \mathbb{R}^n$. 
The image $\sigma(V)$ is a semialgebraic set in the categorical quotient\index{categorical quotient} 
$V /\!\!/ G := \{y \in \mathbb{R}^n : P(y) = 0 ~\mbox{for all}~ P \in \sI\}$,\index{VG@$V \cq G$}  
where $\sI$ is the ideal of relations between $\sigma_1,\ldots,\sigma_n$. 
Since $G$ is compact, $\sigma$ is proper and separates orbits of $G$, and 
it thus induces a homeomorphism between the orbit space $V/G$\index{VG@$V/G$}\index{orbit space} and $\sigma(V)$.

Note that, by the differentiable slice theorem, $V/G$ is a local model for the orbit space of differentiable $G$-manifolds.

The given data, i.e., the representation
$G \acts V$, a fixed system of basic invariants $\{\si_i\}_{i=1}^n$ with degrees $d_i =\deg \si_i$, 
and the corresponding 
orbit map $\si$, will be abbreviated by the 
triple $\rep$.\index{GVdsigma@$\rep$}  

In this setting, we may consider the following lifting problem:
let $f : U \to \R^n$, $U \subseteq \R^m$, be a map with some regularity ($C^\om$, $C^\infty$, $C^p$, etc.)
and image contained in $\si(V)$; for brevity we will write $f \in \sC(f,\si(V))$, where $\sC$ is the respective 
regularity class.
How regular can a lifting\index{lifting} $\ol f : U \to V$ of $f$ be? 
\[
  \xymatrix{
     && V \ar[d]^{\si}    \\
      U \ar@{-->}[rru]^{\ol f} \ar[rr]_f && \si(V) 
  } 
\]
Note that any two systems of basic invariants differ by a polynomial diffeomorphism. 
Thus this question is independent of the choice of $\{\si_i\}_{i=1}^n$ and hence of $\si$.

\subsubsection{Differentiable lifting}

The results known for this lifting problem generalize the results for hyperbolic polynomials.
The role of the polynomial degree (as an order of differentiablity of the coefficients needed to guarantee 
certain regularity properties of the roots) is here 
played by $d$, the maximal degree of a minimal system of basic invariants.

\begin{theorem}[{\cite[Proposition 3.1 and Theorem 4.4]{KLMR05}}]
   Let $\rep$ be a real finite dimensional orthogonal representation of a compact Lie group.
   Let $I \subseteq \R$ be an open interval. Then:
   \begin{enumerate}
       \item Each continuous curve $c : I \to \si(V)$ has a continuous lifting $\ol c : I \to V$.
       \item Each $C^d$ curve $c : I \to \si(V)$ has a differentiable lifting $\ol c : I \to V$.
   \end{enumerate}
\end{theorem}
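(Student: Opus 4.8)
The plan is to bootstrap from the polynomial case. First I would recall the key structural fact, due to Procesi--Schwarz and used throughout this circle of ideas: after choosing a minimal system of basic invariants $\si_1,\dots,\si_n$, the orbit space $\si(V)$ is a semialgebraic subset of $\R^n$ and the fibers of $\si$ are exactly the $G$-orbits, which are compact. For part (1), continuous lifting, the standard argument is a selection/compactness argument: given a continuous curve $c:I\to\si(V)$, the set-valued map $t\mapsto \si^{-1}(c(t))\subseteq V$ has compact values and is upper semicontinuous (since $\si$ is proper), and one constructs a continuous section over a compact subinterval by a limiting procedure — approximate by piecewise choices on a fine partition, use that orbits vary "continuously" in the Hausdorff metric, and extract a convergent subsequence via Arzelà--Ascoli after establishing an equicontinuity estimate. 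The equicontinuity comes from the fact that the orbit map $\si$ restricted to a compact set is a quotient map with the metric $d_V$-distance between orbits controlled by $|\si(x)-\si(y)|$ (a consequence of $\si$ being finite and proper on the relevant compact pieces); glue the local continuous liftings along $I$ using that $I$ is an interval.

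For part (2), differentiable lifting, the strategy is to reduce to Bronshtein's theorem (Theorem~\ref{thm:Bronshtein}) — or rather to the hyperbolic-polynomial lifting that underlies it — via the following mechanism. Fix $t_0\in I$ and let $x_0\in\si^{-1}(c(t_0))$. By the slice theorem (Theorem~\ref{thm:slice}), near $x_0$ the representation $G\acts V$ looks like $G\times_{G_{x_0}} (\fg/\fg_{x_0}\oplus W)$ where $W$ is the slice representation of the isotropy group $G_{x_0}$; moreover one can arrange that the basic invariants for $G\acts V$ restrict to polynomial functions of the basic invariants of the slice representation $G_{x_0}\acts W$, which has strictly smaller dimension if $x_0$ is not a regular point. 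This gives an induction on $\dim V$. The base of the induction, and the case where the slice representation is trivial or one-dimensional, reduces to choosing a differentiable square root or, more generally, to the fact that $C^d$ curves into $\si(\R^d)=\Hyp(d)$ (the symmetric group case) lift differentiably — which is exactly the statement quoted before Theorem~\ref{thm:BronshteinI} in the excerpt. The number $d=\max_i d_i$ enters precisely because, after the slice reduction, the regularity needed at the deepest stratum is governed by the largest weight appearing among the basic invariants, playing the role of the polynomial degree.

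Concretely, the key steps in order: (a) establish the metric comparison $d_V(\si^{-1}(y),\si^{-1}(y'))\lesssim |y-y'|^{1/d}$ on compact sets, which already yields Hölder-$1/d$ liftings and in particular continuity, proving (1); (b) set up the slice-theorem induction on $\dim V$, using Theorem~\ref{thm:slice} to write the orbit map locally as a composite of the orbit map of a smaller slice representation with a submersion, and to transfer a $C^d$ curve downstairs to a $C^d$ curve into the slice quotient; (c) in the inductive step, split off the "radial" direction (the component along $\fg\cdot x_0$, where lifting is a submersion and poses no difficulty via the implicit function theorem) from the slice direction, and apply the induction hypothesis to the slice; (d) handle the deepest stratum by invoking the $\on{S}_d$ (hyperbolic polynomial) case with $C^d$ coefficients. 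The main obstacle I anticipate is step (b)–(c): making the slice reduction compatible with the chosen basic invariants and, more delicately, controlling how a merely $C^d$ (not analytic) curve behaves as it passes through strata where the isotropy jumps — one must show that the "time spent near a deeper stratum" is compatible with differentiability of the lift, which is where a Glaeser-type inequality (in the spirit of Lemma~\ref{lem:Glaeser}, here in its representation-theoretic incarnation) is needed to bound the derivative of the lift by the data. Once that estimate is in place, differentiability of $\ol c$ at each point follows by the same change-of-variables trick $Z=(t-t_0)W$ used in Bronshtein's original argument recalled in Remark~\ref{rem:orig-proof}.
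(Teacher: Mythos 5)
Your argument for part (1) has a genuine gap. Knowing that the fibers $\si^{-1}(c(t))$ (which are single $G$-orbits) vary continuously, even H\"older-continuously with exponent $1/d$, in the Hausdorff/quotient metric does not produce a continuous \emph{selection} by Arzel\`a--Ascoli. If you build approximations by choosing, at each node of a partition, a nearest point in the next fiber, then each increment equals the quotient distance between consecutive values of $c$; the displacement of the approximation between two fixed times $s<t$ is then bounded only by the \emph{sum} of these increments over all intermediate nodes, a variation-type quantity that in general blows up as the mesh tends to $0$ when $c$ is merely continuous. So the approximating family need not be equicontinuous and the limiting step fails. (Also, $\si$ is not a finite map once $\dim G>0$, so ``finite and proper'' cannot be invoked.) The standard proofs of continuous path lifting for orbit maps of compact Lie groups, and the argument behind \cite[Proposition 3.1]{KLMR05}, go instead through the slice theorem and the orbit type stratification, i.e.\ exactly the toolbox recalled in Section \ref{sec:repLtools}, not through a compactness-of-selections argument.

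For part (2) there are two concrete problems. First, your induction on $\dim V$ does not get off the ground: the slice representation $G_{x_0}\acts N_{x_0}$ has $\dim N_{x_0}=\dim V-\dim (Gx_0)$, which equals $\dim V$ whenever the orbit $Gx_0$ is finite --- in particular for \emph{every} finite group $G$, e.g.\ the motivating case $\on{S}_d\acts\R^d$. The induction that works (and that the survey describes) is on the size of the group: after removing fixed points, $V^G=\{0\}$ forces $G_v\subsetneq G$ for all $v\ne 0$, and one descends in $(\dim G,\ \text{number of connected components})$. Second, the deepest stratum cannot in general be reduced to the $\on{S}_d$/hyperbolic-polynomial case; that reduction is available only for polar representations via the generalized Weyl group. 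What replaces it is the dominant invariant $\si_1=|\cdot|^2$: at a parameter $t_0$ where the (fixed-point-free) curve meets the origin, $c_1\ge 0$ and $c_1\in C^2$ force $c_1$ to vanish to second order; homogeneity gives $|c_i|^{1/d_i}\lesssim c_1^{1/2}$ (the analogue of Lemma \ref{lem:dominant}), so $c\in C^{d}$ yields $c_i(t)=(t-t_0)^{d_i}\,\bar c_i(t)$ with $\bar c_i$ continuous; by weighted homogeneity and closedness of $\si(V)$, $\bar c$ is again a continuous curve in $\si(V)$, part (1) gives it a continuous lift, and multiplying that lift by $(t-t_0)$ produces a lift of $c$ differentiable at $t_0$. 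This division by $(t-t_0)^{d_i}$ with the weights $d_i$ is the correct incarnation of the substitution $Z=(t-t_0)W$ you invoke, and it is glued with slice-theorem liftings away from such points via the group-size induction. Your overall architecture (remove fixed points, slice reduction, a Glaeser-type estimate) is the right one, but as written both the inductive parameter and the base-case reduction are incorrect.
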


In Parusi\'nski and Rainer \cite{ParusinskiRainer14}, the proof of Bronshtein's theorem presented in Section \ref{sec:Bronshteinproof}
was generalized and led to the following essentially optimal results. 

The next theorem is the analogue of Bronshtein's theorem \ref{thm:Bronshtein}.

\begin{theorem}[{\cite[Theorem 1]{ParusinskiRainer14}}] \label{thm:repLLip}
   Let $\rep$ be a real finite dimensional orthogonal representation of a compact Lie group.
   Let $I \subseteq \R$ be an open interval.
   Each curve $c \in C^{d-1,1}(I,\si(V))$ has a lifting $\ol c \in C^{0,1}(I,V)$ such that, 
   for any pair of relatively compact open intervals $I_0 \Subset I_1 \Subset I$, 
   \begin{equation} \label{eq:repLLip}
       \Lip_{I_0}(\ol c) \le C\, \max_{1 \le i \le n} \|c_i\|_{C^{d-1,1}(\ol I_1)}^{1/d_i}.
   \end{equation}
    The constant $C>0$ depends only on $I_0$, $I_1$, and  the isomorphism classes of the representation $G \acts V$ 
    and respective minimal systems of basic invariants.
\end{theorem}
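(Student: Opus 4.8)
The plan is to imitate the proof of Bronshtein's theorem (Theorem~\ref{thm:Bronshtein}) as carried out in Section~\ref{sec:Bronshteinproof}, replacing the role of the Tschirnhausen coefficient $\tilde a_2$ and the elementary-symmetric orbit map by a general polynomial quasi-homogeneous scaling on $V$ and the slice theorem. First I would fix a minimal system of basic invariants $\{\si_i\}$ with degrees $d_i$ and set $d=\max_i d_i$. Using the compactness of $G$, choose a $G$-invariant norm on $V$; then $N(v) := (\sum_i |\si_i(v)|^{2/d_i})^{1/2}$ is a $G$-invariant, positively $1$-homogeneous (with respect to the weighted scaling) function on $V$ that vanishes only at $0$, and it is comparable to $\|v\|$ on the unit sphere (this is the analogue of Lemma~\ref{lem:dominant}). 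The key structural input is the slice theorem: near any $v_0 \in V$ with $Gv_0 \ne \{v_0\}$, the representation $G\acts V$ splits $G$-equivariantly, in a neighborhood of the orbit $Gv_0$, as an induced bundle over the isotropy group $G_{v_0}$ with a smaller slice representation, and correspondingly the basic invariants of $G\acts V$ are polynomial functions of those of the slice representation (of strictly lower ``size''). This is the analogue of the splitting $P_{\tilde a} = P_b P_c$ in Lemma~\ref{lem:splitting}.

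Next I would define an \emph{admissible quadruple} $(c,I_1,I_0,A)$ for a curve $c\in C^{d-1,1}(I_1,\si(V))$ exactly as in the Definition preceding Lemma~\ref{lem:Aconsequence}, but with $|\tilde a_2(t_0)|^{1/2}$ replaced by the scalar $\rho(t_0) := N(\text{any point over }c(t_0))^{?}$—more precisely by a distinguished invariant that, by the analogue of Lemma~\ref{lem:dominant}, dominates all the others; conditions \eqref{eq:A1}--\eqref{eq:A4} become bounds on the derivatives of the $c_i$ in terms of powers of $\rho(t_0)$ weighted by $d_i$. Then I would prove the three supporting lemmas: (i) the analogue of Lemma~\ref{lem:enough}, that \eqref{eq:A1}, a suitable subset of \eqref{eq:A3}, and \eqref{eq:A4} already force admissibility up to a constant $C(G\acts V)$, using Glaeser's inequality (Lemma~\ref{lem:Glaeser}) applied to the dominant invariant and the interpolation Lemma~\ref{lem:interpolation}; (ii) the analogue of Lemma~\ref{lem:b2a2hyp}, that after passing to a slice the new dominant scalar is comparable to $\rho$ (this follows because the slice coordinates are a linear slice of $V$ and $N$ is a norm); (iii) the analogue of Proposition~\ref{prop:adm}, the inductive splitting step, obtained by covering the compact ``unit level set'' $\{N = 1\}/G$ (or rather the sphere in $V$) by finitely many slice charts, taking a Lebesgue number $2r_1$, rescaling the curve by $\rho(t_0)$ so its image lands in such a chart on an interval $J_1 = I_{C_1 A/r_1}(t_0)$, and checking that the induced curve into the slice quotient, after the appropriate change-of-origin (the analogue of the Tschirnhausen transformation—here, subtracting the fixed-point component), is again admissible with constant $C(G\acts V)\,A$. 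The main theorem then follows by induction on $\dim V$ (the slice representations act on strictly smaller spaces, and when $G_{v_0} = G$ acts trivially one is already at a fixed point where the lift is the constant) exactly as in Theorem~\ref{thm:adm}, combined with the analogue of Lemma~\ref{lem:ass>adm} converting the hypothesis $c\in C^{d-1,1}$ into an admissible quadruple with $A \le C\max_i \|c_i\|_{C^{d-1,1}(\ol I_1)}^{1/d_i}$, which yields \eqref{eq:repLLip} with the asserted dependence of the constant.

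Two points need the structure of compact-group representations beyond the symmetric-group case. First, one must know that a continuous curve in $\si(V)$ lifts continuously to $V$—this is part~(1) of the preceding theorem (from \cite{KLMR05}), so I would cite it; having a continuous lift, the whole argument produces local Lipschitz bounds for \emph{that} lift off the zero set $\{t : \rho(t) = 0\}$, and the lift extends Lipschitz across that set because on it the lift is forced into the fixed-point space and, by homogeneity, $\|\ol c(t)\| \lesssim N(\ol c(t)) = 0$ there up to lower-order terms—this last gluing is the same easy exercise as at the end of the proof of Theorem~\ref{thm:adm}. Second, one needs that the slice-theorem decomposition is compatible with the choice of basic invariants in the strong quantitative sense that the new invariants are \emph{polynomials} in the old slice data with universally bounded coefficients on the relevant compact set; this is where I expect the main obstacle to lie. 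Abstractly the slice theorem gives $\R[V]^G \cong \R[W]^{G_{v_0}}$ locally (with $W$ the slice), but turning this into explicit formulas of the type \eqref{eq:formulasbhyp}--\eqref{eq:formulasbtihyp}, with the rescaling behavior under the weighted dilation tracked correctly, requires care about how the dilation on $V$ interacts with the slice at $v_0 \ne 0$ (the dilation does not fix $v_0$). I would handle this by working not with a single slice but with the finite cover of the sphere $\{\|v\|=1\}$ by slice charts and rescaling to the sphere rather than to a fixed level set, so that the dilation parameter $\rho(t_0)$ factors out cleanly; verifying the derivative bounds \eqref{eq:A5}--\eqref{eq:A6} for the composite of the (real-analytic, hence locally bounded-to-all-orders) slice formulas with the rescaled curve is then the same computation as in the proof of Proposition~\ref{prop:adm}, and is routine once the setup is in place.
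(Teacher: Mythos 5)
Your skeleton is indeed the intended one (this is how \cite{ParusinskiRainer14} generalizes the Bronshtein proof: removing fixed points plays the role of the Tschirnhausen transformation, a dominant invariant replaces $\tilde a_2$, and the slice theorem replaces the splitting lemma; cf.\ Section \ref{sec:repLtools}), but there is an essential structural error in how you close the argument. You propose to take the continuous lifting furnished by \cite{KLMR05} and show that \emph{this} lifting satisfies the Lipschitz bounds off $\{t:\rho(t)=0\}$. For a group of positive dimension this cannot work: the theorem only asserts the \emph{existence} of a $C^{0,1}$ lifting, and any Lipschitz lifting $\ol c$ can be distorted into a continuous but non-Lipschitz lifting $g\ol c$ by a continuous non-Lipschitz curve $g:I\to G$ moving along the orbits (this is exactly the remark after the finite-group corollary in Section \ref{sec:repLreal}). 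Consequently the analogue of Theorem \ref{thm:adm} in the form ``every continuous lifting is Lipschitz'' is false here; that sharpened statement holds only for finite groups. The slice-theorem step can only control the slice component of a lifting, not an arbitrary $G$-component, so the proof must \emph{construct} a lifting: one produces local liftings with values in the slices, with uniform Lipschitz bounds by induction, and then glues them into a global lifting on $I_0$ by composing with fixed elements of $G$ (harmless, since $G$ acts by isometries), using that any lifting is forced to $0$ on $\{\rho=0\}$ to cross that set. This gluing step is a genuine part of the argument and is absent from your proposal.

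Two further points. Your induction on $\dim V$ does not advance: for a finite group (and whenever the isotropy subgroup contains the identity component) the orbit $Gv_0$ is finite, so $N_{v_0}=V$ and the slice representation acts on a space of the \emph{same} dimension. The induction has to be on the size of $G$ (first $\dim G$, then the number of connected components): after removing fixed points, every $v_0\ne 0$ has isotropy $G_{v_0}$ a proper closed subgroup, hence strictly smaller in this ordering, with the trivial representation as base case. Finally, your candidate dominant scalar $N(v)=\bigl(\sum_i |\si_i(v)|^{2/d_i}\bigr)^{1/2}$ is not of class $C^{d-1,1}$ along the curve (fractional powers), so Lemma \ref{lem:Glaeser} cannot be applied to it directly; the clean choice, and the one actually used, is to include $\si_1(v)=\|v\|^2$ among the basic invariants and apply Glaeser's inequality to the single component $c_1$, the dominance $|c_i|^{1/d_i}\lesssim c_1^{1/2}$ being immediate from homogeneity. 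With these corrections your plan essentially coincides with the proof in \cite{ParusinskiRainer14}.
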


In analogy to Theorem \ref{thm:C1roots}, we have

\begin{theorem}[{\cite[Theorem 2]{ParusinskiRainer14}}] \label{thm:repLC1}
   Let $\rep$ be a real finite dimensional orthogonal representation of a compact Lie group.
   Let $I \subseteq \R$ be an open interval.
   Each curve $c \in C^{d}(I,\si(V))$ has a lifting $\ol c \in C^{1}(I,V)$.
\end{theorem}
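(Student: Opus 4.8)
The plan is to mimic the proof of Bronshtein's theorem given in Section~\ref{sec:Bronshteinproof}, and of its refinement Theorem~\ref{thm:C1roots}, with the slice theorem (Theorem~\ref{thm:slice}) taking over the role of the splitting Lemma~\ref{lem:splitting} and the higher order Glaeser inequalities (Corollary~\ref{cor:hoGleaser}) supplying the quantitative estimates. Since $C^1$-regularity is a local property and two continuous liftings over a connected interval can be matched by the patching argument of \cite{KLMR05,ParusinskiRainer14}, it suffices to produce, near every $t_0 \in I$, a $C^1$ lifting of $c$. I would argue by induction on the integer $e(G \acts V) := \dim V - \dim V^G$, where $V^G$ denotes the subspace of $G$-fixed vectors; the case $e = 0$ is trivial, as then $\si$ is, up to a polynomial diffeomorphism, the identity.

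First the fixed part splits off: writing $V = V^G \oplus W$ orthogonally one has $\R[V]^G = \R[V^G]\otimes\R[W]^G$, so a minimal system of basic invariants consists of the degree-one coordinates on $V^G$ together with a minimal system $\tau$ for $G \acts W$, whose maximal degree is at most $d$. Decomposing $c = (c',c'')$ accordingly, $c'$ is its own $C^d$ (hence $C^1$) lifting, and it remains to lift $c'' \in C^{d}(I,\tau(W))$ over $\tau$; this reduces matters to the case $V^G = 0$, in which $e = \dim V$ and the invariant $q(v) := \langle v, v\rangle$ is a polynomial in the $\si_i$ of degree $2 \le d$. Put $c_0 := q \circ c \in C^{d}(I)$, a nonnegative function vanishing exactly on $Z := c^{-1}(\si(0))$, the preimage of the most singular stratum. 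On the open set $I \setminus Z$ one rescales as in Section~\ref{sec:splittinghyp}: with $\theta := \sqrt{c_0} \in C^{d}(I \setminus Z)$, the curve $\tilde c := (\theta^{-d_i} c_i)_i$ is $C^d$ and takes values in the \emph{compact} set $\si(\{v\in V : |v| = 1\})$ (compactness being the analogue of that of $\Hyp_T^0(d)$ and following from properness of $\si$). Cover this compact set by finitely many open sets on each of which the slice theorem identifies a neighbourhood of the orbit of a point $v$ with $G \times_{G_v} S$, hence the quotient locally with $S/G_v$; by a Lebesgue-number argument, exactly as in Proposition~\ref{prop:adm}, on sufficiently short time intervals $\tilde c$ stays in one chart, where lifting it over $\si$ amounts to lifting a $C^d$ curve over the orbit map of the slice representation $G_v \acts S$. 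Peeling off $S^{G_v} \ni v$ (note $v \perp \mathfrak g\cdot v$, so $v \in S$, and $v \neq 0$) one is reduced to $G_v \acts S'$ with $e(G_v \acts S') = \dim S' \le \dim V - 1 < e(G \acts V)$ and with basic invariants of degree $\le d$ (again by the slice theorem); the induction hypothesis yields a $C^1$ lifting there. Pulling back, patching the short intervals, and undoing the rescaling gives a $C^1$ lifting $\bar c = \theta\cdot\overline{\tilde c}$ of $c$ on $I \setminus Z$.

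\textbf{The main obstacle} is to arrange that these local pieces glue across $Z$ to a lifting that is $C^1$ there. Extending by $\bar c := 0$ on $Z$ gives a continuous lifting, since $|\bar c|^2 = c_0 \to 0$. To see that $\bar c$ is differentiable at $t_0 \in Z$ with $\bar c'(t_0) = 0$, and that $\bar c'$ is continuous at $t_0$, one uses that $\ker d\si_v = \mathfrak g\cdot v$, so that $c'(t)$ determines $\bar c'(t)$ modulo the tangent space of the orbit; choosing on $I \setminus Z$ the representative orthogonal to that tangent space and estimating its size, the derivatives of the components $c_i = \si_i\circ\bar c \in C^{d}(I)$ near $t_0$ are controlled by the higher order Glaeser inequalities (Corollary~\ref{cor:hoGleaser} for the nonnegative $c_0$, Lemma~\ref{lem:interpolation} for the general, non-sign-definite $c_i$), which forces $c_0'(t_0) = 0$, $|\bar c(t)| = o(|t - t_0|)$, and $\bar c'(t) \to 0$ as $t \to t_0$; finally a combinatorial argument over the finitely many orbit types selects compatible $G$-translates of the pieces on the two sides of each point of $Z$. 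It is precisely here that the hypothesis $c \in C^d$ — rather than the $C^{d-1,1}$ of the Bronshtein analogue Theorem~\ref{thm:repLLip}, which only gives a Lipschitz lifting — is indispensable; this mirrors the role of $C^p$ with $p$ the maximal root multiplicity in Theorem~\ref{thm:C1roots}, and in the case $G = \Z/2$ acting on $\R$ by $\pm1$ it recovers the statement that a nonnegative $C^2$ function admits a $C^1$ square root.
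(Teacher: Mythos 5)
Your overall architecture — splitting off $V^G$, using the dominant invariant $q(v)=\langle v\mid v\rangle$, rescaling to the unit sphere, reducing via the slice theorem and inducting — is indeed the scheme outlined in Section \ref{sec:repLtools} and followed in the cited reference. The genuine gap is in the step that is supposed to give $C^1$-regularity across $Z=c^{-1}(\si(0))$: the claims that $\bar c'(t_0)=0$, $|\bar c(t)|=o(|t-t_0|)$ and $\bar c'(t)\to 0$ as $t\to t_0\in Z$ are false at isolated points of $Z$. Already for $G=\Z/2$ acting on $\R$ by $\pm 1$ (so $\si(x)=x^2$, $d=2$) and the perfectly smooth curve $c(t)=t^2=\si(t\cdot 1)$, the only differentiable liftings near $0$ are $\pm t$, with derivative $\pm 1\neq 0$ at $0$; correspondingly $c_0(t)=t^2$ satisfies $c_0'(0)=0$ but certainly not $c_0(t)=o(|t-t_0|^2)$, which is what $|\bar c|=o(|t-t_0|)$ would amount to, and your own construction gives $|\bar c|=\theta=\sqrt{c_0}\sim|t-t_0|$ there. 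Glaeser-type inequalities (Lemma \ref{lem:Glaeser} and its higher-order versions) only force $c_0'(t_0)=0$; second-order vanishing of $c_0$ holds at \emph{accumulation} points of $Z$, but isolated zeros of exact order two do occur, and they are exactly the points where the lifting passes through the origin with nonzero velocity. If gluing-by-zero with zero derivative were the mechanism, any choice of branches on the two sides of $t_0$ would yield a $C^1$ lifting, which the example refutes.

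What the proof actually has to do at such points — and this is the heart of the theorem, not the afterthought your final clause suggests — is to show that the liftings produced on either side of $t_0\in Z$ (and, more generally, at the problematic points produced by the inductive/slice step) admit one-sided derivatives with suitable one-sided continuity, as in Theorem \ref{thm:C1roots}(1)--(2), and then to match the two sides by acting on one of them with an appropriate element of $G$ (the analogue of re-ordering the roots of $Z^2-t^2$ from $(-|t|,|t|)$ to $(t,-t)$); the hypothesis $c\in C^d$ (rather than $C^{d-1,1}$ as in Theorem \ref{thm:repLLip}) is what makes these one-sided derivatives exist, it does not make them vanish. Two further points need repair or justification: $\ker d\si_v=\fg\cdot v$ holds only along the principal stratum (the kernel is strictly larger on singular strata, e.g.\ at $v=0$), so the ``representative orthogonal to the orbit'' argument cannot be invoked as stated near $Z$; and the assertion that a minimal system of basic invariants of the slice representation $G_v\acts N_v$ again has degrees at most $d$ is not a formal consequence of the slice theorem — it is a separate fact that must be proved (or cited), and without it your induction hypothesis cannot be applied to a curve that is merely $C^d$.
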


For finite groups and polar representations, a precursor of this theorem was obtained by \cite{KLMR06,KLMRadd} 
(by reduction to the case $\on{S}_d \acts \R^d$ and thus to Bronshtein's theorem).

An orthogonal representation $G \acts V$ is called \emph{polar},\index{polar representation}
if there exists a linear subspace $\Si \subseteq V$, 
called a \emph{section}\index{section} or a \emph{Cartan subspace},\index{Cartan subspace}
which meets each orbit orthogonally; cf.\ \cite{Dadok85}, \cite{DK85}. 
The trace of the $G$-action on $\Si$ is the action of the \emph{generalized Weyl group}\index{Weyl group} 
$W(\Si) = N_G(\Si)/Z_G(\Si)$ on $\Si$, where 
$N_G(\Si) := \{g \in G : g\Si = \Si\}$ and 
$Z_G(\Si) := \{g \in G : gs = s \text{ for all } s \in \Si\}$. 
This group is finite, and it is a reflection group if 
$G$ is connected. The algebras $\R[V]^G$ and $\R[\Si]^{W(\Si)}$ are isomorphic via restriction, 
by a generalization of Chevalley's restriction theorem, due to \cite{DK85} and  
independently \cite{Terng85}, 
and thus the orbit spaces $V/G$ and $\Si/W(\Si)$ are isomorphic. 
Consequently, the lifting problem can be reduced to a finite group action.

For finite group actions, Theorem \ref{thm:repLLip} and Theorem \ref{thm:repLC1} can be sharpened.

\begin{theorem}[{\cite[Corollary 1 and 3]{ParusinskiRainer14}}]
   Let $\rep$ be a real finite dimensional orthogonal representation of a finite group.
   Let $I \subseteq \R$ be an open interval. Then:
   \begin{enumerate}
       \item Any continuous lifting $\ol c$ of $c \in C^{d-1,1}(I, \si(V))$ is locally Lipschitz and satisfies \eqref{eq:repLLip}.
       \item Any differentiable lifting $\ol c$ of $c \in C^{d}(I, \si(V))$ is $C^1$.
       \item Each curve $c \in C^{2d}(I, \si(V))$ has a $C^1$ lifting whose second order derivatives exist everywhere.
   \end{enumerate}
\end{theorem}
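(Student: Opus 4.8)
The plan is to obtain all three statements by upgrading the constructive results already proved, namely Theorem~\ref{thm:repLLip} (existence of a Lipschitz lifting with the bound \eqref{eq:repLLip}) and Theorem~\ref{thm:repLC1} (existence of a $C^1$ lifting of a $C^d$ curve), in exactly the way Bronshtein's Theorem~\ref{thm:Bronshtein}, Theorem~\ref{thm:C1roots}, and the Colombini--Orr\'u--Pernazza refinement relate to the core Theorem~\ref{thm:adm} for hyperbolic polynomials. The mechanism specific to a \emph{finite} group $G\acts V$ is an induction on $|G|$ via the slice theorem (Theorem~\ref{thm:slice}): near a point $x_0\in V$ the orbit $Gx_0$ is finite, so a $G_{x_0}$-invariant neighbourhood of $x_0$ is a local model on which $\si$ is, up to a fixed group element, the orbit map of the smaller representation $G_{x_0}\acts V$; at a point $x_0$ of \emph{principal} isotropy this orbit map is a local covering, hence liftings are locally unique. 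This is the finite-group counterpart of the splitting step in Section~\ref{sec:Bronshteinproof}, and is the same reduction used to prove Theorem~\ref{thm:repLLip} and in the earlier work \cite{KLMR06,KLMRadd} and \cite{ParusinskiRainer14}.

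For (1): Let $\ol c$ be an arbitrary continuous lifting of $c\in C^{d-1,1}(I,\si(V))$. It suffices to bound $\Lip_{I_0}(\ol c)$ for $I_0\Subset I_1\Subset I$, and for this it is enough to produce, for every $t_0\in I_0$, a neighbourhood on which $\ol c$ is Lipschitz with the constant in \eqref{eq:repLLip}: an elementary chaining argument on any compact subinterval shows that a continuous function which is locally $L$-Lipschitz with a \emph{uniform} $L$ is globally $L$-Lipschitz, so no measure-zero or absolute-continuity subtlety arises (in contrast to a naive ``derivative exists a.e.'' argument). Fix $t_0$ and set $x_0:=\ol c(t_0)$, $H:=G_{x_0}$. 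If $H$ is the kernel of $G\acts V$ (principal case), then $\si$ is a local covering near $x_0$, so $\ol c$ agrees locally with the Lipschitz lifting of Theorem~\ref{thm:repLLip} up to a locally constant element of $\on{O}(V)$, and \eqref{eq:repLLip} holds. Otherwise $|H|<|G|$ and the slice theorem reduces $\ol c$ near $t_0$ to a continuous lifting of a $C^{d-1,1}$ curve over the orbit map of $H\acts V$, to which the inductive hypothesis applies with the same form of bound; the base case is the trivial representation.

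For (2): Since $C^d\subseteq C^{d-1,1}$, a differentiable lifting $\ol c$ of $c\in C^d(I,\si(V))$ is locally Lipschitz by (1), hence differentiable almost everywhere, and the only point is to promote pointwise differentiability everywhere to continuity of $\ol c'$. I would again localize and slice-reduce at $t_0$: for a finite group the orbit $Gx_0$ is $0$-dimensional, so locally there is no genuine ``orbit-straightening'' to perform --- only the selection of a sheet --- and $\ol c$ near $t_0$ is a differentiable lifting of a $C^d$ curve over the orbit map of $G_{x_0}\acts V$ with $|G_{x_0}|<|G|$, which is $C^1$ by the inductive hypothesis. Hence $\ol c$ is $C^1$ near $t_0$, and (2) follows; this is the analogue in this setting of Theorem~\ref{thm:C1roots}(2)--(3). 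For (3) I would instead \emph{construct} the lifting canonically as in the proof of Theorem~\ref{thm:repLC1}, running the same induction on $|G|$ with a $C^{2d}$ curve: after finitely many slice reductions one reaches the trivial representation where the lift is linear in $c$, and, unwinding, the extra $d$ orders of differentiability (beyond the $C^d$ needed in (2)) provide exactly the room to differentiate the resulting $C^1$ lift once more, giving a $C^1$ lifting with second derivatives everywhere; this mirrors the Colombini--Orr\'u--Pernazza improvement \cite{ColombiniOrruPernazza12} for hyperbolic polynomials.

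The main obstacle is the \emph{quantitative} slice reduction underlying every step: one must check that the maximal degree of a minimal system of basic invariants does not increase when passing to a slice representation (so that the constant in \eqref{eq:repLLip} is stable along the induction), and that the polynomial factorization of $\si$ through the slice orbit map transfers both the differentiability classes and the uniform estimates in both directions. Concretely, this is the point where one must revisit the ``admissible quadruple'' bookkeeping of Section~\ref{sec:Bronshteinproof} in representation-theoretic generality --- choosing the analogue of the dominant invariant $\tilde a_2$ and verifying a Glaeser-type inequality for it, as in \cite{ParusinskiRainer14} --- and then simply observe, as one does for hyperbolic polynomials, that this machinery tracks an \emph{arbitrary} continuous (resp.\ differentiable) lifting and accommodates the doubled regularity. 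Once that is granted, (1)--(3) follow from the inductions sketched above just as Theorems~\ref{thm:Bronshtein} and \ref{thm:C1roots} follow from Theorem~\ref{thm:adm}.
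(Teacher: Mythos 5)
Your overall plan---derive all three items from the constructive Theorems~\ref{thm:repLLip} and \ref{thm:repLC1} using finiteness of $G$---is the right one, but as written the induction on $|G|$ has a hole exactly at the hard points, and part (3) is not actually argued. Concretely, your dichotomy at $x_0=\ol c(t_0)$ (``isotropy equals the kernel, hence a covering'' versus ``$|G_{x_0}|<|G|$, slice-reduce and induct'') omits the case where $\ol c(t_0)$ is a fixed point of a faithful nontrivial action, in particular $\ol c(t_0)=0$: there $G_{x_0}=G$ and the slice theorem gives no reduction whatsoever. This is precisely the analogue of the total-collision case $\tilde a_2(t_0)=0$ in the proof of Theorem~\ref{thm:adm}, which needed a separate argument, and deferring it to ``admissible quadruple bookkeeping'' does not close it. Fortunately, for (1) and (2) no induction (and no quantitative slice estimate, no monotonicity of $d$ under slicing) is needed: since $G$ is finite and $\si$ separates orbits, the fiber $\si^{-1}(c(t))$ is exactly the finite orbit $G\,\ol c(t)$ of the lifting $\ol c$ furnished by Theorem~\ref{thm:repLLip} (resp.\ Theorem~\ref{thm:repLC1}). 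Hence an arbitrary continuous (resp.\ differentiable) lifting $\hat c$ is a continuous (resp.\ differentiable) selection from the finitely many curves $g\,\ol c$, $g\in G$, which all satisfy the bound \eqref{eq:repLLip} (resp.\ are $C^1$) because $G$ acts by isometries; a continuous selection can switch branches only at points where the branch values coincide, so at every $t$ one gets $\limsup_{s\to t}|\hat c(s)-\hat c(t)|/|s-t|\le C\max_i\|c_i\|^{1/d_i}_{C^{d-1,1}}$ and then (1) by your Dini/chaining argument, while a short difference-quotient argument at switching points upgrades pointwise differentiability to continuity of $\hat c'$, giving (2). You state the germ of this in your ``principal case,'' but it works at \emph{every} point, including the fixed points your induction misses.

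Part (3) is a genuine gap: ``the extra $d$ orders of differentiability provide exactly the room to differentiate the resulting $C^1$ lift once more'' asserts the statement rather than proving it. In the model case $\on{S}_d\acts\R^d$ this is the theorem of Colombini--Orr\'u--Pernazza \cite{ColombiniOrruPernazza12} (coefficients of class $C^{2p}$ yield roots that are twice differentiable), which did not follow formally from the $C^1$ statement: the scheme of Kriegl--Losik--Michor \cite{KLM04} required $C^{3p}$, and lowering this to $2p$ needed a genuinely finer analysis at the collision points. In the representation setting the same difficulty sits exactly where your induction is silent, namely where the lifting meets higher-isotropy strata (e.g.\ the origin); there one must show that the slice/Tschirnhausen-type corrections produced by the construction remain twice differentiable, and neither the trivial-representation base case nor ``unwinding'' addresses this. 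To close (3) you need either a second-order refinement of the admissible estimates along the induction (the analogue of what \cite{ColombiniOrruPernazza12} do beyond Theorem~\ref{thm:C1roots}), or an explicit reduction of the finite-group case to that polynomial result---neither of which is sketched in your proposal.
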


If $G$ is a compact Lie group of positive dimension, then a Lipschitz (or a $C^1$) lifting $\ol c$ of $c$ can be distorted to a 
continuous lifting that is not locally Lipschitz simply by taking $g\ol c$, where $g : I \to G$ is a suitable continuous curve.

As a consequence of the uniformity of Theorem \ref{thm:repLLip}, we obtain a lifting result for maps in several variables.

\begin{theorem}[{\cite[Corollary 2]{ParusinskiRainer14}}] \label{thm:repLsev}
   Let $\rep$ be a real finite dimensional orthogonal representation of a finite group.
   Let $U \subseteq \R^m$ be open and $f \in C^{d-1,1}(U,\si(V))$.
   If $\ol f \in C^0(\Om,V)$ is a continuous lifting of $f$ on an open subset
   $\Om \subseteq U$,
   then $\ol f$ is locally Lipschitz and, for any pair of relatively compact 
   open subsets $\Om_0 \Subset \Om_1 \Subset \Om$,
   \begin{equation}
       \Lip_{\Om_0}(\ol f) \le C\, \max_{1\le i\le n} \|f_i\|_{C^{d-1,1}(\ol \Om_1)}^{1/d_i}.
   \end{equation}
    The constant $C>0$ depends only on $\Om_0$, $\Om_1$, $m$, and on the isomorphism classes of the representation $G \acts V$ 
    and respective minimal systems of basic invariants.
\end{theorem}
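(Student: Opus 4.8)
The plan is to deduce Theorem \ref{thm:repLsev} from the one-variable uniform estimate of Theorem \ref{thm:repLLip} by the standard ``box-slicing'' argument, exactly as one deduces a multiparameter Bronshtein theorem from the one-parameter version. First I would reduce to a local statement: it suffices to show that $\ol f$ is Lipschitz on every closed box $B = \prod_{i=1}^m [\al_i,\be_i]$ whose slightly larger open box $B' = \prod_{i=1}^m (\al_i - \ep, \be_i + \ep)$ is still contained in $\Om$, with a Lipschitz constant bounded by $C \max_i \|f_i\|_{C^{d-1,1}(\ol{B'})}^{1/d_i}$; the asserted estimate on general $\Om_0 \Subset \Om_1 \Subset \Om$ then follows by covering $\ol{\Om_0}$ by finitely many such boxes and comparing distances, with the constant picking up a dependence on $m$ and on the geometry of $\Om_0, \Om_1$ (a connectedness/chaining argument to convert a ``Lipschitz along each coordinate direction'' bound into a genuine Lipschitz bound on the box; this is where the remark in the text, ``a function defined on a box that is Lipschitz with respect to each variable is Lipschitz,'' is used).

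The core step is the slice estimate. Fix a coordinate direction, say the $m$-th, and a point $x' = (x_1,\ldots,x_{m-1})$ with the segment $\{x'\} \times [\al_m,\be_m]$ inside $B$. The restriction $c(t) := f(x', t)$ is a curve in $\si(V)$ of class $C^{d-1,1}$ on an interval $I_1 = (\al_m - \ep, \be_m + \ep)$, and $\ol c(t) := \ol f(x', t)$ is a continuous lifting of $c$ — here I use that $\ol f$ is a continuous lifting of $f$ on $\Om$, so its restriction to the slice is a continuous lifting of the restriction. For finite groups, part (1) of the preceding (unnumbered) theorem — ``any continuous lifting of $c \in C^{d-1,1}(I,\si(V))$ is locally Lipschitz and satisfies \eqref{eq:repLLip}'' — applies directly to $\ol c$, giving
\[
    \Lip_{[\al_m,\be_m]}(\ol c) \le C\, \max_{1 \le i \le n} \|c_i\|_{C^{d-1,1}(\ol I_1)}^{1/d_i} \le C\, \max_{1 \le i \le n} \|f_i\|_{C^{d-1,1}(\ol{B'})}^{1/d_i},
\]
with $C$ depending only on $\al_m,\be_m,\ep$ (hence universal once the box is fixed) and on the isomorphism classes of the representation and the basic invariants. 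The same bound holds with $m$ replaced by any other coordinate $j$, and crucially the right-hand side is the same for all slices and all directions; this uniformity is precisely what Theorem \ref{thm:repLLip} (via the explicit constant in \eqref{eq:repLLip}) supplies, and it is the reason the theorem is stated with that explicit constant.

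Then I would assemble: $\ol f$ is Lipschitz in each of the $m$ coordinate directions on $B$ with a common constant $L := C \max_i \|f_i\|_{C^{d-1,1}(\ol{B'})}^{1/d_i}$, so by the elementary lemma for functions on a box, $\ol f$ is Lipschitz on $B$ with constant at most $\sqrt{m}\, L$ (move between two points of $B$ one coordinate at a time, summing the $m$ one-dimensional increments and applying Cauchy–Schwarz). Finally, globalize over $\Om_0 \Subset \Om_1 \Subset \Om$ by a finite cover by such boxes and a chaining argument, absorbing the number of boxes and the overlap geometry into the constant $C(\Om_0,\Om_1,m)$; the resulting constant depends only on $\Om_0$, $\Om_1$, $m$, and the isomorphism classes, as claimed. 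The main obstacle is not conceptual but bookkeeping: one must check that the constant in the one-dimensional estimate genuinely does not depend on the base point $x'$ of the slice — which is exactly the content of the ``uniformity'' built into Theorem \ref{thm:repLLip}, since \eqref{eq:repLLip} bounds the Lipschitz constant purely in terms of the $C^{d-1,1}$ norm of the coefficients on a fixed compact interval — and that the conversion from directional to full Lipschitz continuity on a box, and then the chaining over a cover, introduce only the stated dependences. (The restriction to finite groups is essential here, as noted after the theorem: for a positive-dimensional compact Lie group a continuous lifting can be twisted by a continuous $G$-valued curve and need not be Lipschitz, so one cannot run the slice argument on an arbitrary continuous lifting.)
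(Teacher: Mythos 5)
Your argument is correct and is essentially the paper's own route: the multiparameter statement is deduced precisely from the uniformity of the one-dimensional estimate for finite groups (applied to the given continuous lifting restricted to each coordinate slice), combined with the elementary fact that a function on a box which is Lipschitz in each variable is Lipschitz, and a finite cover plus chaining over $\Om_0 \Subset \Om_1$. The one detail left implicit in your final step — pairs of points of $\Om_0$ that cannot be joined by short chains inside $\Om_1$ — is handled by the crude bound $\|\ol f\|_{L^\infty(\Om_0)} \le C \max_i \|f_i\|_{L^\infty(\Om_1)}^{1/d_i}$, which follows from homogeneity and properness of $\si$.
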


Not every representation admits unrestricted continuous lifting.
    For instance, the orbit space of a finite rotation group acting in the standard way on $\R^2$ 
    is homeomorphic to the set $C$ obtained from the sector 
    $\{r e^{i\vh} \in \C : r \in [0,\infty), 0 \le \vh \le \vh_0\}$ 
    by identifying the rays that constitute its boundary. 
    A loop on $C$ cannot be lifted to a loop in $\R^2$ unless it is homotopically trivial in $C \setminus \{0\}$.
    Reflection groups admit continuous lifting defined everywhere, e.g., the hyperbolic case $\on{S}_d \acts \R^d$.

In view of the examples for hyperbolic polynomials, 
these lifting results are generally optimal.

    \subsubsection{Real analytic lifting} \label{sec:repLra}
    
Real analytic curves can always be lifted.

\begin{theorem}[{\cite[Lemma 3.8 and Theorem 4.2]{AKLM00}, \cite[Theorem 4]{ParusinskiRainer14}}]
   Let $\rep$ be a real finite dimensional orthogonal representation of a compact Lie group.
   Let $I \subseteq \R$ be an open interval.
   Each curve $c \in C^\om(I,\si(V))$ has a lifting $\ol c \in C^\om(I,V)$.
\end{theorem}

That there always exist local real analytic liftings was shown in \cite{AKLM00}. 
As observed in \cite{ParusinskiRainer14}, 
the local liftings can be glued to a global real analytic lifting, since the first \v{C}ech 
cohomology group $\check H^1(I,G^a)$ vanishes, where $G^a$ denotes the sheaf of real analytic maps $I \supseteq U \to G$, 
by \cite{Tognoli69}.

Similarly, one sees that \emph{generic} $C^\infty$ curves admit liftings.
The genericity condition is a generalization of normal nonflatness considered for polynomials in 
Section \ref{sec:nonflathyp}:
for integer $s\ge 0$ let $A_s$ be the union of all orbit type strata $S$ of $V/G$ (cf.\ Section \ref{sec:repLstrat}) of dimension $\dim S\le s$ 
and let $I_s \subseteq \R[V]^G$ be the ideal of invariant polynomials vanishing on $A_{s-1}$. 
Now $c \in C^\infty(I,\si(V))$ is called \emph{normally nonflat}\index{normally nonflat} 
if, for each $t_0 \in I$,
there exists $f \in I_s$ such that the Taylor series of $f \o c$ at $t_0$ is not zero, 
where $s$ is minimal with the property that the germ of $c$ at $t_0$ is contained in $A_s$.  
Roughly speaking, it means that lower dimensional orbit type strata are not met with infinite 
order of flatness.

\begin{theorem}[{\cite[Theorem 4.1]{AKLM00}}]
   Let $\rep$ be a real finite dimensional orthogonal representation of a compact Lie group.
   Let $I \subseteq \R$ be an open interval.
   Each normally nonflat curve $c \in C^\infty(I,\si(V))$ has a lifting $\ol c \in C^\infty(I,V)$.
\end{theorem}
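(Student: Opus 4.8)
The plan is to argue by induction on $\dim V$, using the slice theorem (Theorem~\ref{thm:slice}) to peel off, near each parameter value, a representation of strictly smaller dimension, and to dispose of the single genuine base case — a curve through the origin of a representation $V$ with $V^G = 0$ — by a rescaling (a ``blow-down'') that feeds on normal nonflatness. As with the real analytic statement, it suffices to produce a $C^\infty$ lifting on a neighborhood of each $t_0 \in I$; since $\dim I = 1$ these local liftings glue to a global lifting on $I$, exactly as in the analytic case (cf.\ the theorem just above and \cite{AKLM00}). So fix $t_0 \in I$ and choose $x_0 \in \si^{-1}(c(t_0))$.

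\textbf{Slice reduction.} By the slice theorem a $G$-invariant neighborhood of $G\cdot x_0$ is $G$-equivariantly diffeomorphic to $G \times_H N$, where $H := G_{x_0}$ is the (compact) isotropy group, $N$ is the slice representation of $H$, and $x_0$ corresponds to $[e,0]$; the orbit space near $[x_0]=c(t_0)$ is $N\cq H$. Thus near $t_0$ lifting $c$ over $\si$ amounts to lifting a $C^\infty$ curve $\gamma$ with $\gamma(t_0) = [0]$ over the orbit map $N \to N\cq H$. Splitting $N = N^H \oplus W$ orthogonally and $H$-invariantly with $W^H = 0$, we get $N\cq H = N^H \times (W\cq H)$ and $\gamma = (\gamma',\gamma'')$; the $N^H$-part lifts by itself, and one is reduced to lifting $\gamma''$, a curve through $[0]$, over $W \to W\cq H$. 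Here $\dim W = \dim N - \dim N^H$ and $\dim N = \dim V - \dim(G\cdot x_0)$, so $\dim W < \dim V$ unless $G\cdot x_0 = \{x_0\}$ and $N^H = 0$; the latter forces $H = G$, $x_0 = 0$, $W = N = V$, and $V^G = 0$. In the former case the induction hypothesis applies to $H \acts W$, once one checks that normal nonflatness of $c$ passes to $\gamma''$ — which it does, because orbit type strata, hence the sets $A_s$ and the associated ideals, are compatible with the slice diffeomorphism and with the splitting $N = N^H \oplus W$.

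\textbf{The base case.} Suppose $V^G = 0$ and $c(t_0) = [0]$. If $c \equiv [0]$ near $t_0$, take $\ol c = 0$. Otherwise normal nonflatness at $t_0$ yields an invariant $f$ with no constant term (it vanishes on a set containing $[0]$, as the relevant $A_{s-1} \supseteq A_0 \ni [0]$) such that $f\o c$ is not infinitely flat at $t_0$; hence some coordinate $c_{i_0}$ is not flat. Let $q$ be the weighted-homogeneous polynomial of weighted degree $2$ with $q\o\si = \|\cdot\|^2$ and put $\rho := q\o c \ge 0$. From $|c_{i_0}(t)| = |\si_{i_0}(v)| \le C\|v\|^{d_{i_0}}$ for $v \in \si^{-1}(c(t))$, together with $\|v\|^2 = \rho(t)$, we see that $\rho$ vanishes at $t_0$ to finite, hence even, order $2k$; write $\rho(t) = (t-t_0)^{2k}g(t)$ with $g \in C^\infty$, $g(t_0)>0$, and set $\theta(t) := (t-t_0)^k\sqrt{g(t)} \in C^\infty$, so $\theta^2 = \rho$. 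The same estimate $|c_i(t)| \le C\|v\|^{d_i} \lesssim |t-t_0|^{k d_i}$ shows $\tilde c_i := \theta^{-d_i}c_i \in C^\infty$, and weighted homogeneity of $q$ gives $q\o\tilde c \equiv 1$; thus $\tilde c$ is a $C^\infty$ curve into $\si(S(V))$, where $S(V) = \{v \in V : \|v\|=1\}$ is the unit sphere, a set avoiding $[0]$. A computation — decomposing the witnessing invariant into weighted-homogeneous pieces and using that the strata closures $A_s$ are weighted cones (so that the germ of $\tilde c$ at $t_0$ again lies in $A_s$) — shows $\tilde c$ is again normally nonflat. Finally, lift $\tilde c$ near $t_0$ over the orbit map of the $G$-manifold $S(V)$: by the slice theorem every slice representation occurring there has dimension $\le \dim V - 1 < \dim V$, so (splitting off fixed parts as before) the induction hypothesis gives a $C^\infty$ lifting $\ol{\tilde c}$ near $t_0$; then $\ol c(t) := \theta(t)\,\ol{\tilde c}(t)$ is $C^\infty$ and $\si_i(\ol c) = \theta^{d_i}\si_i(\ol{\tilde c}) = \theta^{d_i}\tilde c_i = c_i$, i.e.\ $\si\o\ol c = c$.

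\textbf{Main obstacle.} The two delicate points are (i) showing that normal nonflatness survives the rescaling $c \mapsto \tilde c$, which requires the weighted-homogeneous decomposition of the witnessing invariant together with the cone structure of the $A_s$ (so that $\tilde c(t_0)$ again lands in $A_s$); and (ii) keeping the nested induction honest, since in the base case the ambient space becomes the sphere $S(V)$, a $G$-manifold rather than a representation, so the local lifting there must again be reorganized through the slice theorem and the splitting into fixed and non-fixed parts. By contrast, the passage from local to global liftings on $I$ is routine given $\dim I = 1$, exactly as in the analytic case.
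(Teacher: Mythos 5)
Your proposal is correct and follows essentially the same route as the proof the paper cites (the survey only states this theorem, referring to \cite{AKLM00}, and describes the relevant toolbox in Section~\ref{sec:repLtools}): local slice-theorem reduction with splitting off the fixed-point part, and at the origin (the case $V^G=0$, $c(t_0)=[0]$) a rescaling by a smooth square root of the dominant invariant $\|\cdot\|^2$ evaluated along the curve, whose finite order is exactly what normal nonflatness supplies, followed by the easy one-dimensional gluing of local liftings. The only cosmetic difference is that you organize the induction on $\dim V$ rather than on the ``size'' of $G$, and the two transfer statements you flag as delicate (normal nonflatness surviving the slice reduction and the rescaling $c\mapsto\tilde c$) are precisely the lemmas carried out in detail in \cite{AKLM00}; your sketch of them (cone structure of the strata $A_s$, weighted-homogeneous decomposition of the witnessing invariant) matches that argument.
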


In the $C^\infty$ case, the gluing of local liftings is much easier; cf.\ \cite[Lemma 3.8]{AKLM00}.

Real analytic maps depending on several variables cannot be lifted in general,
but they admit liftings after composition with local blowings-up\index{local blowings-up} 
(i.e., blowings-up over an open subset composed with the inclusion of this subset).

\begin{theorem}[{\cite[Theorem 5.4]{RainerRG}}] \label{thm:repLblow}
   Let $\rep$ be a real finite dimensional orthogonal representation of a compact Lie group.
    Let $M$ be a real analytic manifold and $f \in C^\om(M,\si(V))$.
    For any compact subset $K \subseteq M$ there exist a neighborhood $U$ of $K$ and 
    a finite covering $\{\pi_j : U_j \to U\}$ of $U$, where each $\pi_j$ is a composite 
    of finitely many local blowings-up with smooth center,
    such that, for all $j$, the map $f \o \pi_j$ has a real analytic lifting on $U_j$.
\end{theorem}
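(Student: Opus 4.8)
The plan is to reduce the statement, by means of Hironaka's resolution of singularities, to the real analytic curve lifting theorem stated just above. The assertion is local: a composite of local blowings-up over an open set, followed by the inclusion of that set, is again of the same form, so it suffices to treat a neighbourhood of each point of $K$, and we may take $M$ to be an open neighbourhood of $0$ in $\R^m$. Let $V_{\mathrm{princ}}\subseteq V$ be the set of points of principal orbit type, $\Sigma_{\mathrm{sing}}\subseteq\si(V)$ the union of the non-principal orbit type strata, and $\Sigma_{\mathrm{princ}}:=\si(V)\setminus\Sigma_{\mathrm{sing}}$. Then $\Sigma_{\mathrm{sing}}$ admits an ideal-theoretic description as the common zero set of a ``discriminant'' ideal $\fd\subseteq\R[V]^G$ (one may take $\fd$ generated by suitable of the ideals $I_s$ introduced above), $\Sigma_{\mathrm{princ}}$ is a real analytic manifold, and $\si$ restricts to a real analytic fibre bundle $V_{\mathrm{princ}}\to\Sigma_{\mathrm{princ}}$ with compact fibre $G/H$. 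The ``bad set'' $N:=f^{-1}(\Sigma_{\mathrm{sing}})$, cut out by the pulled-back ideal $f^{*}\fd$, is a proper real analytic subset of $M$ (unless $f$ maps entirely into $\Sigma_{\mathrm{sing}}$, a case first reduced to a slice representation as below).

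First I would apply Hironaka's resolution of singularities and principalization \cite{Hironaka:1964} to the ideal $f^{*}\fd$: there are a neighbourhood $U$ of $K$ and a finite covering $\{\pi_j:U_j\to U\}$, with each $\pi_j$ a composite of finitely many local blowings-up with smooth centres, such that on each $U_j$ the pullback $\pi_j^{*}f^{*}\fd$ is locally principal and generated by a monomial; in particular $D_j:=(f\circ\pi_j)^{-1}(\Sigma_{\mathrm{sing}})$ is, locally, a normal crossings divisor $\{x_1\cdots x_r=0\}$ in $U_j$. Set $g_j:=f\circ\pi_j$. On $U_j\setminus D_j$ the map $g_j$ takes values in $\Sigma_{\mathrm{princ}}$, over which $\si$ is a real analytic fibre bundle; hence $g_j$ admits real analytic liftings over small open sets, and these glue to a single real analytic lifting $\ol g_j$ on $U_j\setminus D_j$ as soon as the monodromy of the bundle along the components of $D_j$ is trivial. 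The crucial point — and the one demanding the most work — is that, after finitely many additional blowings-up, one can arrange exactly this, i.e.\ make $g_j$ ``normally split'' along $D_j$ so that the monodromy is killed. This is the precise analogue, for orthogonal representations, of the step in the Abhyankar--Jung theorem (and in Theorem \ref{thm:KP1}) where further blowings-up convert Puiseux-type expansions with genuine fractional exponents into honest analytic functions; compare the passage from $Z^2=x_1^2+x_2^2$ to the analytic roots $\pm w_1\sqrt{1+w_2^2}$ after blowing up the origin.

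It then remains to extend $\ol g_j$ real-analytically across $D_j$. Here I would use the real analytic curve lifting theorem (the preceding theorem, cf.\ \cite{AKLM00}): for every real analytic arc $\gamma : [0,1]\to U_j$ with $\gamma((0,1])\subseteq U_j\setminus D_j$ and $\gamma(0)\in D_j$, the curve $g_j\circ\gamma$ lifts real-analytically on all of $[0,1]$. Since $\si$ is proper, $\ol g_j$ stays in the compact set $\si^{-1}(g_j(\ol{U_j}))$; a curve selection argument, together with the fact that all limiting values of $\ol g_j$ at a point $x_0\in D_j$ lie in the single compact orbit $\si^{-1}(g_j(x_0))$, shows that $\ol g_j$ extends continuously to $U_j$. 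Finally, a continuous lifting of the real analytic map $g_j$ over $\si$ which is real analytic away from the thin set $D_j$ is in fact real analytic on $U_j$: this removable-singularity-type statement is reduced, via the slice theorem (Theorem \ref{thm:slice}), to the slice (isotropy) representations at points of $D_j$, which are ``smaller'' than $G\acts V$, so that it follows by induction on $\dim V$ (the case $\dim V=0$ being trivial); alternatively, near such a point $\ol g_j$ is given by an explicit formula in the monomial generator of $\pi_j^{*}f^{*}\fd$, in the spirit of \eqref{eq:formulasforroots}, from which analyticity is immediate. This yields the desired real analytic lifting of $f\circ\pi_j$ on $U_j$.

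I expect the main obstacle to be the monodromy-killing step: showing that finitely many further blowings-up suffice to make $g_j$ normally split along the normal crossings divisor $D_j$. This requires a rather precise control of the local structure of the orbit map $\si$ near $\Sigma_{\mathrm{sing}}$, which one obtains by reducing, through the slice theorem, to the slice representations and organizing the induction carefully — much as in the proof of the Abhyankar--Jung theorem via splitting (Theorem \ref{thm:towardsAJ}). The removable-singularity step across $D_j$ is the secondary technical point.
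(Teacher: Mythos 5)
Your skeleton (resolution of singularities combined with slice-theorem reductions and an induction) is the right family of ideas, but the two steps you yourself flag as carrying the weight are, respectively, false as stated and unproved, and they are exactly where the theorem lives. First, the ``monodromy-killing'' step: further blowings-up cannot trivialize the monodromy of the principal-orbit bundle along $D_j$, because any admissible further center lies inside the bad set, so the complement $U_j\setminus D_j$ --- and with it the representation of $\pi_1(U_j\setminus D_j)$ that measures the obstruction --- is unchanged. The survey's own example $Z^2=x_1^2+x_2^2$ (the example following Theorem \ref{thm:KP1}) exhibits this: after blowing up the origin the two analytic roots are still interchanged along the exceptional circle, and no further blowing-up removes the interchange. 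What saves the statement is only that local blowings-up include inclusions of open subsets, so one may shrink the $U_j$; but then this step is vacuous and the entire burden shifts to the local extension across $D_j$.

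For that extension your argument has genuine gaps. The removable-singularity principle ``a continuous lifting which is real analytic off a thin set is real analytic'' is false: for $\Z/2$ acting on $\R$ by $\pm\on{id}$, $\si(x)=x^2$, $f(t)=t^2$, the lifting $|t|$ is continuous and analytic off $\{0\}$ but not analytic --- existence of an analytic lifting requires \emph{choosing} the continuation correctly, and that choice is governed by parity/divisibility data which principalizing $f^*\fd$ alone does not provide. The curve-selection argument also only shows that limit values along arcs lie in one compact orbit, not that they coincide, so even the continuous extension is not established. Finally, your inductive reduction breaks down precisely at the worst points: wherever the closed orbit over $f(\pi_j(x_0))$ is the origin (e.g.\ at $t_0=0$ in the example above), the isotropy group is all of $G$ and the slice representation is $G\acts V$ again, so the induction does not advance. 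The proof of the cited theorem (and the scheme outlined in Section \ref{sec:repLtools}) uses resolution differently and avoids all of this: assume $V^G=\{0\}$, include $\si_1=\|\cdot\\|^2$ among the basic invariants, and monomialize $f_1:=\si_1\o f$ by local blowings-up; since $f_1\ge 0$, the monomial has even exponents and the unit is positive, so $\th:=\sqrt{f_1\o\pi_j}$ is real analytic. Dividing, $g_i:=(f_i\o\pi_j)/\th^{d_i}$ is analytic (boundedness from $|\si_i|\le C\,\si_1^{d_i/2}$ plus the normal crossings form gives divisibility) and $g$ maps into $\si(V)$ avoiding $0$; there the isotropy groups of the closed orbits are proper subgroups, so the slice theorem (Theorem \ref{thm:slice}) reduces to strictly smaller groups and one concludes by induction on the size of $G$, recovering the lifting of $f\o\pi_j$ as $\th\cdot\ol g$ by homogeneity. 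This is the exact analogue of the proof of Theorem \ref{thm:KP1}, with $\th$ playing the role of the analytic square root of $-a_2$, and it constructs the lifting directly and locally, with no monodromy or removable-singularity step.
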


Actually, this continues to hold in suitable quasianalytic classes of $C^\infty$ functions (instead of $C^\om$); cf.\ \cite{RainerRG}.

\subsection{The main tools} \label{sec:repLtools}
Let us briefly discuss the main tools used for the lifting problem 
and identify their counterpart in the study of hyperbolic polynomials.

\subsubsection{Removing fixed points}
Let $V^G := \{v \in V : Gv=v\}$\index{VG@$V^G$} be the linear  subspace of fixed points\index{fixed points}
and let $V'$ be its orthogonal complement in $V$. 
Then $V= V^G \oplus V'$, $\R[V]^G = \R[V^G] \otimes \R[V']^G$, and $V/G = V^G \times V'/G$.
Thus it suffices to study the lifting problem for $G \acts V'$ or equivalently to assume $V^G=\{0\}$. 
This corresponds to the Tschirnhausen transformation (see Section \ref{sec:Tschirnhausen}); indeed, 
$(\R^d)^{\on{S}_d} = \{x \in \R^d : x_1 = x_2 = \cdots = x_d\}$ and 
$(\R^d)' = \{x \in \R^d : x_1 + x_2 + \cdots + x_d =0\}$.

\subsubsection{Dominant invariant}
We may assume that $v \mapsto \langle v \mid v \rangle =\|v\|^2$ belongs to the system of basic invariants $\{\si_i\}_{i=1}^n$, 
say it is $\si_1$. This does not change $d$; in fact, $V^G = \{0\}$ implies $d\ge 2$. 
Then $\si_1$ is dominant in the sense that 
\begin{equation*}
    |\si_i|^{1/d_i} \le C(\si)\, |\si_1|^{1/d_1}, \quad 1 \le i \le n,
\end{equation*}
by homogeneity. This corresponds to the dominance of the second coefficient of monic hyperbolic polynomials in Tschirnhausen form (see Lemma \ref{lem:dominant}).

\subsubsection{The slice theorem} \index{slice theorem}\index{theorem!slice}
For $v \in V$, let $N_v:= T_v(Gv)^{\bot}$ be the normal subspace of the orbit $Gv$ at $v$.
It carries a natural action $G_v \acts N_v$ of the \emph{isotropy subgroup}\index{isotropy subgroup} $G_v := \{g \in G: gv=v\}$.
The crossed product (or associated bundle) $G \times_{G_v} N_v$ carries the structure of an affine real algebraic 
variety as the categorical (and geometric) quotient $G \times N_v \cq G_v$ 
with respect to the action $G_v \acts (G \times N_v)$, $h(g,x):= (gh^{-1},hx)$.
The $G$-equivariant polynomial mapping $\ph : G \times_{G_v} N_v \to V$, $[g,x] \mapsto g(v+x)$, 
induces a polynomial mapping $\ps : (G \times_{G_v} N_v) \cq G \to V\cq G$ 
sending $(G \times_{G_v} N_v)/G$ into $V/G$.
The $G_v$-equivariant embedding $\al : N_v \hookrightarrow G \times_{G_v} N_v$, $x \mapsto [e,x]$, 
induces an isomorphism $\be : N_v\cq G_v \to (G \times_{G_v} N_v)\cq G$
mapping $N_v/G_v$ onto $(G \times_{G_v} N_v)/G$.
\[
  \xymatrix{
    N_v \ar@{_(->}[rr]^{\al} \ar[d]_{\ta} \ar@/^20pt/[rrrr]^{\et}
    && G \times_{G_v} N_v \ar[rr]^{\ph} \ar[d] && V \ar[d]^{\si} \\
    N_v/G_v \ar[rr] \ar@{^(->}[d] && (G \times_{G_v} N_v)/G \ar@{^(->}[d] \ar[rr] && V/G \ar@{^(->}[d] \\ 
    N_v \cq G_v \ar[rr]^{\be}  \ar@/_20pt/[rrrr]_{\th}
    && (G \times_{G_v} N_v)\cq G  \ar[rr]^{\ps} && V \cq G 
  } 
\]

\begin{theorem}[{\cite{Luna73}, \cite{Schwarz80}}] \label{thm:slice}
  There is an open ball $B_v \subseteq N_v$ centered at the origin such that the restriction of $\ph$ to 
  $G \times_{G_v} B_v$ is an analytic $G$-isomorphism onto a $G$-invariant neighborhood of $v$ in $V$. 
  The mapping $\th$ is a local analytic isomorphism at $0$ which induces a local homeomorphism of 
  $N_v/G_v$ and $V/G$.
\end{theorem}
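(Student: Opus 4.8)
The plan is to reproduce the classical proof of the differentiable slice theorem in the real-analytic/compact setting: an infinitesimal computation together with the inverse function theorem gives a local isomorphism along the zero section, a compactness argument promotes it to a genuine $G$-isomorphism onto an open set, and Schwarz's theorem on invariant functions transfers the conclusion to the categorical quotients. First I would study $\ph : G \times_{G_v} N_v \to V$, $[g,x]\mapsto g(v+x)$, near the point $[e,0]$ of the zero section, which is sent to $v$. Here $G$ carries a real-analytic structure, $G/G_v$ is a real-analytic manifold, $G\times_{G_v}N_v$ is a real-analytic vector bundle over it (balls in $N_v$ centred at $0$ are $G_v$-invariant, as $G_v$ acts orthogonally), and $\ph$ is real-analytic. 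The tangent space at $[e,0]$ splits as $T_{eG_v}(G/G_v)\oplus N_v \cong \fg/\fg_v \oplus N_v$, and one computes $d\ph_{[e,0]}(\bar\xi,n) = \xi\cdot v + n$, where $\xi\cdot v := \frac{d}{dt}\big|_{t=0}\exp(t\xi)v$. The assignment $\bar\xi\mapsto\xi\cdot v$ is a linear isomorphism of $\fg/\fg_v$ onto $T_v(Gv)$ (its kernel on $\fg$ is exactly $\fg_v$), and since $N_v = T_v(Gv)^{\perp}$ we have $V = T_v(Gv)\oplus N_v$; hence $d\ph_{[e,0]}$ is a linear isomorphism, and by the real-analytic inverse function theorem $\ph$ is an analytic diffeomorphism from a neighborhood of $[e,0]$ onto a neighborhood of $v$.

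Next I would pass from this local statement near $[e,0]$ to one on $G\times_{G_v}B_v$ for a small ball $B_v\subseteq N_v$. Since $\ph$ is $G$-equivariant for the action $k\cdot[g,x]=[kg,x]$, it is a local analytic diffeomorphism at every point of the zero section $G\times_{G_v}\{0\}$; shrinking to a ball $B_v$, we may assume $\ph$ is a local diffeomorphism on all of $G\times_{G_v}B_v$, whose image is then open, $G$-invariant, and contains $v$. Injectivity on $G\times_{G_v}B_v$ for $B_v$ small is the standard tube argument: were it to fail, there would be $[g_k,x_k]\ne[h_k,y_k]$ with $\ph([g_k,x_k])=\ph([h_k,y_k])$ and $x_k,y_k\to 0$; compactness of $G$ lets us pass to limits $g_k\to g$, $h_k\to h$, whence $gv=hv$, i.e.\ $h^{-1}g\in G_v$; translating by $h^{-1}$ and replacing $g_k$ by $h_k^{-1}g_k$ reduces to $h_k=e$, $g_k\to e$, contradicting local injectivity near $[e,0]$. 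Thus $\ph|_{G\times_{G_v}B_v}$ is a bijective local analytic diffeomorphism onto an open $G$-invariant neighborhood of $v$, hence a $G$-equivariant analytic isomorphism.

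For the quotient part, a $G$-equivariant analytic isomorphism onto a $G$-invariant open set descends to a homeomorphism $(G\times_{G_v}B_v)/G \to \ph(G\times_{G_v}B_v)/G\subseteq V/G$; composing with the canonical isomorphism $\be:N_v/G_v\to(G\times_{G_v}N_v)/G$ induced by $x\mapsto[e,x]$ shows that $\th$ is a local homeomorphism of $N_v/G_v$ onto a neighborhood of $v$ in $V/G$ near $0$. To upgrade this to a local analytic isomorphism of the categorical quotients $N_v\cq G_v$ and $V\cq G$ at $0$, I would invoke Schwarz's theorem \cite{Schwarz80}: every germ at $v$ of a $C^\infty$ (resp.\ real-analytic) $G$-invariant function factors through the orbit map $\si$, and likewise for $G_v\acts N_v$ at $0$; since $\ph$ matches the two families of invariants and $d\ph_{[e,0]}$ is invertible, $\th^{*}$ is an isomorphism on the corresponding algebras of germs of regular functions, so $\th$ is an analytic isomorphism of the relevant germs of quotients.

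I expect the step requiring the most care to be the injectivity argument on the crossed product $G\times_{G_v}B_v$ — keeping the $G_v$-equivalence relation straight while extracting convergent subsequences — together with, on the quotient side, the honest application of Schwarz's theorem needed to pass from a homeomorphism of the orbit spaces $N_v/G_v$ and $V/G$ to an analytic isomorphism of the singular categorical quotients $N_v\cq G_v$ and $V\cq G$.
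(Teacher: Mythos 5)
The paper does not prove Theorem \ref{thm:slice} at all --- it is quoted from Luna \cite{Luna73} and Schwarz \cite{Schwarz80} --- so there is no in-paper argument to compare with; your sketch has to be judged against the classical proofs, and in outline it follows them. The first half (the tube theorem) is essentially right: the computation $d\ph_{[e,0]}(\bar\xi,n)=\xi\cdot v+n$, the identification $\fg/\fg_v\cong T_v(Gv)$ together with $V=T_v(Gv)\oplus N_v$, equivariance plus compactness of the zero section $G/G_v$, and the shrinking-ball injectivity argument are the standard route. One small repair in the injectivity step: after translating by $h_k^{-1}$ you have $h_k^{-1}g_k\to h^{-1}g=:a$, which lies in $G_v$ but need not equal $e$, so you do not literally reduce to ``$h_k=e$, $g_k\to e$''; you must first rewrite $[h_k^{-1}g_k,x_k]=[h_k^{-1}g_k a^{-1},\,a x_k]$ so that both sequences of classes converge to $[e,0]$, and only then invoke local injectivity of $\ph$ there.

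The second half is where your argument is genuinely too thin. The sentence ``since $\ph$ matches the two families of invariants and $d\ph_{[e,0]}$ is invertible, $\th^{*}$ is an isomorphism on germs'' is not a proof: invertibility of $d\ph$ upstairs says nothing directly about the induced map of the (singular) categorical quotients, and Schwarz's theorem as usually stated is a $C^\infty$ statement, whereas here you need its real analytic counterpart, applied to invariant germs along the compact orbit $Gv$, not merely at the point $v$. The honest argument runs as follows: if $\tau_1,\ldots,\tau_m$ are basic invariants of $G_v\acts N_v$, then $[g,x]\mapsto\tau_i(x)$ is a $G$-invariant analytic function on the tube $\ph(G\times_{G_v}B_v)$, hence of the form $H_i\circ\si$ near $Gv$ with $H_i$ analytic near $\si(v)$ (analytic invariant-germ factorization); conversely each $\si_j(v+x)$ is a $G_v$-invariant \emph{polynomial} in $x$, hence a polynomial in the $\tau_i$ by Hilbert's theorem. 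These two factorizations give mutually inverse analytic maps between ambient neighborhoods that restrict to $\th$ and its inverse on $N_v\cq G_v$ and $V\cq G$ (using that the image of the saturated open tube under $\si$ is open in $\si(V)$), which is exactly what ``local analytic isomorphism at $0$'' means here; the local homeomorphism of $N_v/G_v$ and $V/G$ then follows because $\si$ identifies $V/G$ with $\si(V)$. With these two points filled in, your plan coincides with the proofs in the cited references.
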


The slice theorem allows to reduce the lifting problem locally to 
the slice representation $G_v \acts N_v$.\index{slice representation}
This reduction replaces the splitting principle (see Lemma \ref{lem:splitting} and Section \ref{sec:splittinghyp}) for polynomials.
(The passage to the slice representation at $v = (\la_1,\ldots,\la_d)$ for the standard action $\on{S}_d \acts \R^d$ 
corresponds to a \emph{full} splitting $P_{\tilde a} = P_{b^1} P_{b^2} \cdots P_{b^k}$, where each factor $P_{b^i}$
corresponds to precisely one of the distinct elements among the $\la_1,\ldots,\la_d$ and 
$\deg P_{b^i}$ is its multiplicity.) 

The reduction and the assumption $V^G = \{0\}$ allow to proceed by induction on the ``size'' of $G$:
if $H$ and $G$ are compact Lie groups, then $H$ is \emph{smaller} than $G$ if either $\dim H < \dim G$ 
or, given that $\dim H= \dim G$, $H$ has fewer connected components than $G$. 
For finite groups this reduces to induction on the group order.

\subsubsection{Orbit type stratification} \label{sec:repLstrat}

The conjugacy class $(H)$ of $H=G_v$ in $G$ is called  
the \emph{type}\index{orbit type} of the orbit $Gv$. Let $V_{(H)}$ be the union of all
orbits of type $(H)$. 
The collection of the connected components of the smooth manifolds $V_{(H)}/G$ 
forms the \emph{orbit type stratification of $V/G$}; cf.\ \cite{Schwarz80}.\index{orbit type stratification}  
By \cite{Bierstone75}, 
there is an identification between the orbit type stratification of $V/G$ 
and the natural stratification of $\si(V)$ as a semialgebraic set 
(for the construction of the latter see \cite[p.~246]{Bierstone75} or \cite[Section 25]{Lojasiewicz65}); it is analytically locally trivial
and thus satisfies Whitney's conditions $(a)$ and $(b)$.
The inclusion relation on the set of subgroups of $G$ induces a partial ordering on the family of orbit types. 
There is a unique minimal orbit type, the \emph{principal orbit type},\index{orbit type!principal} corresponding 
to the open and dense submanifold $V_{\on{reg}}$ 
consisting of points $v$, where the slice 
representation $G_v \acts N_v$ is trivial.
The projection $V_{\on{reg}} \to V_{\on{reg}}/G$ is a locally trivial fiber bundle.
There are only finitely many isomorphism classes of slice representations.

\subsection{Examples and applications}

\subsubsection{Differentiable eigenvalues of real symmetric matrices} \label{sec:partiallifting}
Let the orthogonal group $\on{O}(d) = \on{O}(\R^d)$ act by conjugation on the real vector space $\on{Sym}(d)$\index{Sym@$\on{Sym}(d)$} of real 
symmetric $d \times d$ matrices, 
\[
    \on{O}(d) \times \on{Sym}(d) \ni (S,A) \mapsto SAS^{-1}=SAS^t \in \on{Sym}(d). 
\]
    The algebra of invariant polynomials 
    $\R[\on{Sym}(d)]^{\on{O}(d)}$ is isomorphic to $\R[\on{Diag}(d)]^{\on{S}_d}$ by restriction, 
    where $\on{Diag}(d)$\index{Diag@$\on{Diag}(d)$} is the vector space 
of real diagonal $d \times d$ matrices upon which $\on{S}_d$ acts by permuting the diagonal entries. 
More precisely, $\R[\on{Sym}(d)]^{\on{O}(d)} = \R[\Si_1,\ldots,\Si_d]$, where
\[
    \Si_i(A) = \textstyle{\on{Trace}(\bigwedge^i A : \bigwedge^i \R^d \to \bigwedge^i \R^d)}
\] 
is the $i$-th
characteristic coefficient of $A$ and $\Si_i|_{\on{Diag}(d)} = \si_i$, where $\si_i$ is the $i$-th 
elementary symmetric polynomial and we identify $\on{Diag}(d) \cong \R^d$ (cf.\ \cite[7.1]{MichorH}). 
This means that 
the representation $\on{O}(d) \acts \on{Sym}(d)$ is polar and $\on{Diag}(d)$ forms a section.  

Let $A : \R \to \on{Sym}(d)$ be a curve of symmetric matrices of some regularity.
The characteristic polynomial $\ch_A$ is a curve of monic hyperbolic polynomials of the same regularity. 
In this case, as we already know from 
Section \ref{sec:Hermitian}, the regularity results provided by the general theory for hyperbolic polynomials are not optimal. 

A heuristic explanation is that in this problem only a ``partial'' lifting is necessary: 
the curve $\ch_A$ in the orbit space is the projection of the curve $A$ 
under $\on{Sym}(d) \to \on{Sym}(d)/\on{O}(d)$
and is then lifted over $\on{Diag}(d) \to \on{Diag}(d)/\on{S}_d$. 
\[
  \xymatrix{
     && & \R \ar[dl]_{A} \ar[dd]_{\ch_A} \ar@/_20pt/@{-->}[dlll] & \\
    \on{Diag}(d) \ar@{^{(}->}[rr] \ar[d] && \on{Sym}(d) \ar[d] &&   \\
    \on{Diag}(d)/\on{S}_d \ar@{=}[rr] && \on{Sym}(d)/\on{O}(d) \ar@{=}[r] & \si(\R^d) \ar@{^{(}->}[r] & \R^d   
  } 
\]

\subsubsection{Differentiable decomposition of nonnegative functions into sums of squares}
Let the orthogonal group $\on{O}(n)$ act in the standard way on $\R^n$. The algebra of invariant polynomials 
$\R[\R^n]^{\on{O}(n)}$ is generated by $\si = \sum_{i=1}^n x_i^2$; note that $d=2$. 
The orbit space $\R^n/\on{O}(n)$ can be 
identified with the half-line $[0,\infty) = \si(\R^n)$. Each line through the origin of $\R^n$ forms a
section of $\on{O}(n) \acts \R^n$.

Lifting a nonnegative function $f$ over $\si$ means decomposing $f$ into a sum of $n$ squares. 
We conclude (from Theorem \ref{thm:repLsev}) that 
any nonnegative $C^{1,1}$ function $f : \R^m \to [0,\infty)$ is the square of a $C^{0,1}$ function $g$.
The image of the lifting $(g,0,\ldots,0)$ lies in the section $\R (1,0,\ldots,0)$ of $\on{O}(n) \acts \R^n$. 

There are stronger results that benefit from the 
additionally 
available space:

\begin{theorem}[{\cite{FP78}}]
    Any nonnegative $C^{3,1}$ function $f : \R^m \to [0,\infty)$ is a sum of $n=n(m)$ squares of $C^{1,1}$ functions.
\end{theorem}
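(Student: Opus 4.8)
The plan is to deduce this from the (already available) regularity results for liftings over invariants of orthogonal representations of compact Lie groups, most notably Theorem~\ref{thm:repLsev}, by encoding ``sum of $n$ squares'' as a lifting problem for a suitable representation. The target representation is the following: let $\on{O}(m)$ (note: here the dimension of the source is $m$, so I mean the orthogonal group of the \emph{codomain} space, which I shall relabel $\on{O}(\ell)$ acting on $V = \R^\ell$ to avoid clashing with the dimension $m$ of the domain) act by the standard action on $V = \R^\ell$; then $\R[V]^{\on{O}(\ell)} = \R[q]$ with $q(v) = \|v\|^2$ of degree $d=2$, and lifting a map $f : \R^m \to [0,\infty) = \si(V)$ over $\si = q$ is exactly writing $f = g_1^2 + \cdots + g_\ell^2$. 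So a decomposition into $\ell$ squares of $C^{0,1}$ functions follows immediately from Theorem~\ref{thm:repLsev} once a continuous lifting exists --- but that is not yet the $C^{3,1} \to C^{1,1}$ statement, which requires gaining two orders of differentiability over what the ``worst case'' $d=2$ bound gives. The real content of the theorem (this is Fefferman--Phong \cite{FP78}) is that by allowing \emph{more} squares than the naive count one can recover more regularity.

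First I would reduce to a local statement: by a partition of unity subordinate to a locally finite cover, it suffices to decompose a nonnegative $C^{3,1}$ function on a ball into finitely many squares of $C^{1,1}$ functions, provided the number $n(m)$ of squares can be fixed independently of the covering (one pays a bounded factor in the number of squares for the partition of unity, since if $f = \sum \varphi_i$ with $\varphi_i$ nonnegative $C^{3,1}$ supported in balls, and each $\varphi_i = \sum_j g_{ij}^2$, then $f = \sum_{i,j} g_{ij}^2$, and the locally finite overlap keeps the count bounded). Then I would carry out the main local step: near a point $x_0$, distinguish two regimes. If $f(x_0) > 0$, then $\sqrt f$ is $C^{3,1}$ near $x_0$ and a single square suffices locally. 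The hard case is near the zero set of $f$. There one uses the key a priori inequalities: by Glaeser's inequality (Lemma~\ref{lem:Glaeser}, or rather its higher-order analogue, Corollary~\ref{cor:hoGleaser}) applied to the nonnegative $C^{3,1}$ function $f$, one controls $|\nabla f|$ and the Hessian of $f$ by $f$ itself, e.g.\ $|\nabla f(x)|^2 \lesssim f(x) \cdot \|f\|_{C^{1,1}}$ and similar bounds for higher derivatives. The idea of Fefferman--Phong is to use these flatness estimates to build, via a dyadic Whitney-type decomposition of $\R^m \setminus Z_f$ adapted to the size of $f$, local ``square root with remainder'' pieces: on each Whitney cube $Q$ at distance $\sim \delta$ from $Z_f$ on which $f \sim \delta^2 \cdot (\text{bounded positive factor})$ one writes $f = (\text{smooth approximate root})^2 + (\text{small } C^{1,1} \text{ remainder})$, and the remainders are then themselves decomposed recursively; the Glaeser bounds guarantee the $C^{1,1}$ norms of the pieces are uniformly controlled and the number of ``directions'' (hence squares) needed stays bounded by a function $n(m)$ of the dimension only.

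The step I expect to be the main obstacle is exactly this last one: producing the explicit decomposition with the two key features simultaneously --- (i) each summand genuinely of class $C^{1,1}$ up to and across the zero set $Z_f$ (not merely away from it), and (ii) the total number of squares bounded by $n(m)$ independent of $f$. Feature (i) is where the full strength of the $C^{3,1}$ hypothesis enters through the higher-order Glaeser inequalities, which force $f$ and its first derivatives to vanish to the right order at $Z_f$ so that the Whitney pieces glue to a globally $C^{1,1}$ function; feature (ii) is a combinatorial/geometric point about how many model quadratic forms are needed to dominate an arbitrary positive semidefinite Hessian, and this is where the ``extra room'' of a higher-dimensional target $\R^\ell$ (i.e.\ allowing $\ell = n(m) > 1$ squares) is essential and cannot be avoided. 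I would therefore cite \cite{FP78} for this core construction rather than reproduce it, and present the contribution here as: (a) the reformulation as a lifting problem over $\on{O}(\ell) \acts \R^\ell$, making transparent why this sits in the same circle of ideas as the perturbation-of-roots results; and (b) the observation that the elementary case ($\ell = 1$, one square, regularity $C^{1,1} \to C^{0,1}$) is precisely the $d=2$ instance of Theorem~\ref{thm:repLsev}, so that Fefferman--Phong's theorem is exactly the quantitative improvement obtained by trading a larger group representation for better regularity of the lift.
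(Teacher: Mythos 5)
Your treatment coincides with the paper's: the survey does not prove this statement but simply quotes it from Fefferman--Phong \cite{FP78}, exactly as you propose to do, and your framing of the one-square, $C^{1,1}\to C^{0,1}$ case as the $d=2$ instance of the lifting problem for $\on{O}(\ell)\acts\R^\ell$ is precisely the surrounding discussion in the paper. Your heuristic sketch of the Fefferman--Phong construction is not load-bearing (and its scaling should really be $f\sim\de^4$ on cubes at distance $\de$ from the zero set, reflecting the four controlled derivatives), but since the core construction is cited rather than reproduced, nothing further is required.
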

 
This is sharp in the sense that there exist 
$C^\infty$ functions $f : \R^m \to [0,\infty)$,
for $m\ge 4$, 
that are not sums of squares of $C^2$ functions; see \cite{BBCP06}.
In fact, a real nonnegative homogeneous polynomial $p$ of degree $2d$ on $\R^m$ that is a sum of squares  
of $C^d$ functions $f_i$ is necessarily a sum of squares of polynomials, since 
Taylor expansion of $f_i$ gives $f_i(x) = q_i(x) + o(|x|^d)$ at $x=0$, where $q_i$ is a homogeneous polynomial 
of degree $d$, so that $p = \sum_i q_i^2$. But there exist nonnegative homogeneous polynomials 
of degree $4$ on $\R^4$ that are not sums of squares of polynomials; cf.\ \cite[Section 6.3]{BCR98}. 
For the same reason, there are nonnegative $C^\infty$ functions on $\R^3$ that are not 
sums of squares of $C^3$ functions. 

In dimension $m=1$ (where there are no algebraic obstructions) there is the following result.

\begin{theorem}[{\cite{Bony05}}]
    Let $p \in \N$. Any nonnegative $C^{2p}$ function $f: \R \to [0,\infty)$ is the sum of two squares of $C^p$ functions. 
\end{theorem}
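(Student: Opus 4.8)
The plan is to reduce the statement to the one-variable radical case by writing $f = \lambda^2 + \mu^2$ with $\lambda + i\mu$ a single ``complex square root'' of $f$, and then to extract the needed regularity from the structure of the zero set of $f$. First I would dispose of the trivial regions: on the open set $\{f>0\}$ the function $f$ is $C^{2p}$ and positive, so $\sqrt{f}$ is $C^{2p}$ there and one may simply take $\lambda = \sqrt f$, $\mu = 0$ locally; the only difficulty is at zeros of $f$, and since $f\ge 0$ every zero is a local minimum, so $f$ and $f'$ vanish there, and in fact (as $f$ is $C^{2p}$ and nonnegative) $f$ vanishes to even order $2m$ at each zero with $m \le p$ — unless $f$ vanishes to order $\ge 2p$, in which case the point is flat enough to cause no trouble. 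The heart of the matter is therefore a local analysis near a zero $t_0$ of $f$ where $f$ vanishes to some finite order $2m \le 2p-2$.

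Near such a point I would use a Malgrange-type preparation / Taylor factorization: writing $f(t) = (t-t_0)^{2m} h(t)$ with $h$ of class $C^{2p-2m}$ and $h(t_0)>0$ would give $\lambda(t) = (t-t_0)^m \sqrt{h(t)}$ directly if $h$ stayed positive, but $h$ can again develop zeros, so this must be iterated or organized via a Puiseux-type argument on the (at most finitely many near an isolated zero, but possibly accumulating) zeros of $f$. The cleaner route, and the one I expect the actual proof follows, is to invoke the optimal radical regularity already available: by Theorem \ref{thm:GG} of Ghisi and Gobbino, if $|f| = |g|$ with $g \in C^{k,\al}(\overline I)$ then $f^{1/(k+\al)}$ has a sharp Sobolev/weak-$L^p$ derivative, and Bony's theorem is the statement that splitting $f$ into \emph{two} squares rather than extracting one square root buys exactly the extra half-derivative needed to reach $C^p$. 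Concretely, the two free functions $\lambda,\mu$ allow one, near each zero of $f$ of order $2m$, to peel off the factor $(t-t_0)^m$ into $\lambda$ while using $\mu$ to absorb the ``defect'' coming from the non-smoothness of $\sqrt{h}$; since $\lambda^2 + \mu^2 = f$ with $\lambda = (t-t_0)^m u(t)$, $\mu = (t-t_0)^m v(t)$ and $u^2+v^2 = h$, one reduces the degree $2m$ by $2$ and the differentiability budget $2p$ by correspondingly less, so an induction on the vanishing order closes.

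The main obstacle will be the matching of the two local pieces across the (possibly infinite) set of zeros of $f$, i.e. producing \emph{global} $C^p$ functions $\lambda,\mu$ rather than merely local ones near each zero and on $\{f>0\}$. This is where the structure of the zero set $Z_f$ of a nonnegative $C^{2p}$ function must be exploited: $Z_f$ is closed, and at each of its points all derivatives of $f$ up to order $2p-1$ (in fact up to the vanishing order) vanish, which forces enough flatness of $\lambda$ and $\mu$ at accumulation points of $Z_f$ to glue the locally defined pieces into a $C^p$ function — this is precisely the phenomenon already visible in Remark \ref{rem:Br-continuity} and Lemma \ref{lem:Glaeser}, where flatness of $f$ at its zeros controls $\lambda'$. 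One must check that the rotation freedom in the plane (replacing $(\lambda,\mu)$ by $(\lambda\cos\theta - \mu\sin\theta,\ \lambda\sin\theta+\mu\cos\theta)$) can be used, with $\theta$ a suitable $C^p$ function supported away from $Z_f$, to reconcile the choices made on the connected components of $\{f>0\}$; since $\R^2\setminus\{0\}$ is not simply connected this needs a small homotopy argument, but in dimension one the parameter space is an interval, so no monodromy obstruction arises and the gluing goes through. I would cite \cite{Bony05} for the full verification of the bookkeeping in the induction and the flatness estimates.
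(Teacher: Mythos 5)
There is a genuine gap — in fact two. First, the inductive mechanism you describe does not close. If near a zero of order $2m$ you write $\lambda=(t-t_0)^m u$, $\mu=(t-t_0)^m v$ with $u^2+v^2=h$, $h\in C^{2p-2m}$, $h(t_0)>0$, then the inductive hypothesis (applied to $h$ with differentiability budget $2p-2m$) can only give $u,v$ of class $C^{p-m}$, and multiplying by the smooth factor $(t-t_0)^m$ does not raise the differentiability class of the product: $(t-t_0)^m u$ is then still only $C^{p-m}$, not $C^p$. So "reducing the degree by $2$ while the budget drops by correspondingly less" is not what happens; the bookkeeping fails already at the first step, and no appeal to Theorem \ref{thm:GG} can repair it, since Ghisi--Gobbino gives Sobolev/weak-$L^p$ bounds for a single radical, not $C^p$ regularity, and says nothing about decompositions into two squares. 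Second, and more seriously, you dismiss precisely the case where the theorem lives. Zeros of finite order $2m\le 2p-2$ are isolated and comparatively harmless; the difficulty is at infinitely flat zeros that are accumulation points of zeros and of small strictly positive local minima of $f$ — exactly the situation of Example \ref{ex:optimalhyp}(1) and of \cite[Theorem 3.5]{BBCP06}, where even a $C^\infty$ nonnegative $f$ admits no $C^{1,\al}$ square root. Declaring such points "flat enough to cause no trouble" is unjustified: flatness of $f$ does not by itself force any particular choice of $(\la,\mu)$ to be $C^p$-flat there, and your auxiliary claim that at every zero all derivatives of $f$ up to order $2p-1$ vanish is false (take $f(t)=t^2$, where $f''(0)=2$).

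Your heuristic that the rotation freedom in $\R^2$ lets one shuffle mass between $\la$ and $\mu$ near small positive minima, so that neither component has to cross zero non-smoothly, is indeed the right intuition for why two squares succeed where one square root fails, and it is close in spirit to the actual construction of \cite{Bony05}, which decomposes the line into intervals adapted to the zero set, puts (roughly) $\sqrt f$ times complementary cutoffs into the two components, and proves Glaeser-type derivative estimates to control the gluing. But in your proposal this step is never carried out: the angle $\th$ is not constructed, no estimates at accumulation points of $Z_f$ are proved, and you explicitly defer "the bookkeeping in the induction and the flatness estimates" to the citation. Since the survey itself states the theorem without proof, that deferral leaves the entire analytic content of the result unproved; as written, the proposal is an outline with an incorrect induction rather than a proof.
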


The decomposition depends on $p$.

\subsubsection{Polynomials with symmetries}
In \cite{LR07} the lifting problem was used to improve upon the regularity problem for hyperbolic polynomials with symmetries.
The idea is the following.
A curve of monic hyperbolic polynomials of degree $d$ is a curve $p$ in the semialgebraic set $\Hyp(d) = \si(\R^d)\subseteq \R^d$, 
where $\si$ consists of the elementary symmetric functions $\si_i$.
If $p$ lies in some proper semialgebraic subset of $\Hyp(d)$, then 
$p$ has to fulfill more constraints and thus,
in general, the conditions that guarantee the existence of differentiable 
systems of the roots are weaker.
For instance,
if we know that the roots of $p$ fulfill some linear relations, i.e., they lie in some linear subspace $V$ of $\R^d$, 
then $p$ lies in $\si(V) \subseteq \Hyp(d)$. The symmetries of the roots are represented by the induced action $G \acts V$
of 
\[
    G := \frac{\{\ta \in \on{S}_d : \ta V = V\}}{\{\ta \in \on{S}_d : \ta v = v \text{ for all } v \in V\}}. 
\]
The solution of the lifting problem can be brought to bear, if the restrictions $\si_i|_V$, $1 \le i \le d$, generate $\R[V]^G$.
Indeed, the orbit type stratification of $G \acts V$ is generally coarser than the restriction to $V$ of the orbit type stratification of $\on{S}_d \acts \R^d$.

\subsection{A reformulation of the regularity problem for general polynomials}
\label{sec:reprpoly}

If we let the symmetric group $\on{S}_d$ act on $\C^d$, by permuting the coordinates, then 
again $\C[\R^d]^{\on{S}_d}$ is generated by the elementary symmetric functions $\si_i$, $1 \le i \le d$.
The map $\si = (\si_1,\ldots,\si_d) : \C^d \to \C^d$ is onto. Each point in $\C^d$ represents a monic polynomial 
of degree $d$ with complex coefficients and the $\si$-fiber over this point is the $\on{S}_d$-orbit through the 
$d$-tuple of its roots (with multiplicities).

A family of monic polynomials $P_a(x)$, $x \in U \subseteq \R^m$, is a map
$p : U \to \C^d$ and a system of roots for the family $P_a$ 
is a lifting $\ol p : U \to \C^d$ of $p$ over $\si$, i.e., $\si \o \ol p = p$.\index{lifting}
Given that $p$ is regular of some kind, how regular can a lifting of $p$ be?
\[
  \xymatrix{
     && \C^d \ar[d]^{\si}    \\
      U \ar@{-->}[rru]^{\ol p} \ar[rr]_p && \C^d 
  } 
\]
Note that, in contrast to the hyperbolic case (cf.\ Section \ref{sec:repLreform}), 
there are no constraints for the map $p$, since $\si(\C^d) = \C^d$.

Let us see now how this problem can be studied in greater generality.
From now on, representation spaces $V$ will be complex and finite dimensional.

\subsection{Representations of linearly reductive groups}

An algebraic group $G$ is called \emph{linearly reductive}\index{linearly reductive group} if for each rational representation 
$V$ and each subrepresentation $W \subseteq V$ there is a subrepresentation $W' \subseteq V$ such that 
$V = W \oplus W'$. 

By Hilbert's finiteness theorem, for rational representations $V$ of linearly reductive groups $G$,
the algebra of $G$-invariant polynomials $\C[V]^G$\index{CVG@$\C[V]^G$} is finitely generated. Let $\{\si_i\}_{i=1}^n$ be a system of homogeneous generators
which we call \emph{basic invariants}.\index{basic invariants}\index{basic invariants!minimal system of} 
In a \emph{minimal} system of basic invariants the number $n$ of elements $\si_i$ and their degrees $d_i := \deg \si_i$ are uniquely determined; 
set 
\[
    d:= \max_{1 \le i \le n} d_i.
\]

The map $\si = (\si_1,\ldots,\si_n)  : V \to \si(V) \subseteq \C^n$ can be identified with the morphism $V \to V\cq G$ induced by the 
inclusion $\C[V]^G \to \C[V]$; the \emph{categorical quotient}\index{categorical quotient} $V\cq G$\index{VG@$V \cq G$} is the affine variety with 
coordinate ring $\C[V]^G$. In this setting, $V\cq G$ is generally not a \emph{geometric quotient}:\index{geometric quotient} the 
$G$-orbits in $V$ are not in a one-to-one correspondence with the points in $V\cq G$. In fact, 
for every point $z \in V\cq G$ there is a unique closed orbit in the fiber $\si^{-1}(z)$ which lies in the 
closure of every other orbit in this fiber. 
On the other hand, if $G$ is a finite group, then all $G$-orbits are closed and thus $V \cq G$ is a geometric quotient 
isomorphic to the orbit space $V/G$.\index{orbit space}\index{VG@$V/G$}

We write again $\rep$ for this setup.\index{GV@$G \acts V$}\index{GVdsigma@$\rep$} 
If $f : U \to \C^n$, $U \subseteq \R^m$, is a map with some regularity and image contained in 
$\si(V)$ (which in general is a proper subset of $\C^n$), 
we ask how regular a lifting $\ol f$ of $f$ can be. 
By a \emph{lifting}\index{lifting} of $f$ we mean a map $\ol f : U \to V$ such that $f = \si \o \ol f$ and, for all $x \in U$,  
$\ol f(x)$ lies in the unique closed orbit in the fiber $\si^{-1}(f(x))$.
\[
  \xymatrix{
     && V \ar[d]^{\si}    \\
      U \ar@{-->}[rru]^{\ol f} \ar[rr]_f && \si(V) 
  } 
\]
The problem is independent of the choice of the basic invariants, since any two choices differ by a polynomial diffeomorphism.

The main difference to the real problem introduced in Section \ref{sec:repLreal}
is that there is no invariant polynomial that dominates all the others. 
Even for finite $G$, where we can always choose an invariant Hermitian inner product on $V$  
(by averaging over $G$) and hence assume that the representation is unitary, the 
invariant form $v \mapsto \|v\|^2$ is not a member of $\C[V]^G$. 
This makes the complex case much more difficult and causes a larger loss of regularity. 

\subsubsection{Sobolev lifting}

Let us first assume that $G$ is a finite group.
Then each continuous curve $c : I \to  \si(V)$, where $I\subseteq \R$ is an interval, 
has a continuous lifting $\ol c : I \to V$, by \cite[Theorem 5.1]{LMRac}. 
In Parusi\'nski and Rainer \cite{Parusinski:2020ab},
the optimal Sobolev regularity for the roots of polynomials was extended to group representations. 
The following result generalizes Theorem \ref{thm:optimal}.

\begin{theorem}[{\cite[Theorem 1.1]{Parusinski:2020ab}}] \label{thm:repCW}
    Let $\rep$ be a complex finite dimensional representation of a finite group $G$.
  Let $c \in C^{d-1,1}([\al,\be],\si(V))$ be a curve     
  defined on an open bounded interval 
  $(\al,\be)$. 
  Then each continuous lifting $\overline c : (\al,\be) \to V$ of $c$ over $\si$ is absolutely continuous and 
  belongs to $W^{1,p}((\al,\be),V)$ 
  with
  \begin{equation} \label{eq:repCW} 
     \|\ol c'\|_{L^p((\al,\be))} \le 
      C\,   \max_{1 \le j \le n} \|c_j\|^{1/d_j}_{C^{d-1,1}([\al, \be])}        
    \end{equation}  
  for all $1 \le p < d/(d-1)$,
  where $C$ is a constant which depends only on the representation $G \acts V$, the length of the interval $(\al,\be)$, and $p$.
\end{theorem}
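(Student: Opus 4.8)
The strategy is to mirror the proof of Theorem~\ref{thm:optimal} (the polynomial case), using the slice theorem (Theorem~\ref{thm:slice}) in place of the universal splitting $P_{\tilde a} = P_b P_c$, and to run an induction on the ``size'' of the group $G$ (first on $|G|$, then refining). First I would reduce to the case $V^G = \{0\}$: writing $V = V^G \oplus V'$ with $\C[V]^G = \C[V^G]\otimes \C[V']^G$, the $V^G$-component of the lifting is just the (smooth) component of $c$ in those coordinates, which trivially satisfies \eqref{eq:repCW}; so one may assume $V^G = \{0\}$ and in particular $d \ge 2$. The base case of the induction is the trivial group, where $\si = \id$ and there is nothing to prove.

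\textbf{The inductive step via the slice theorem.} Fix $t_0 \in (\al,\be)$ and let $v_0 = \ol c(t_0)$, which lies in the closed orbit over $c(t_0)$. By Theorem~\ref{thm:slice}, there is a $G_{v_0}$-stable ball $B_{v_0} \subseteq N_{v_0}$ and a local analytic isomorphism $\th$ identifying a neighborhood of $c(t_0)$ in $V\cq G$ with a neighborhood of $0$ in $N_{v_0}\cq G_{v_0}$; pulling $c$ back through $\th$ yields a curve $\tilde c$ into $\si_{N_{v_0}}(N_{v_0})$ of the same differentiability class $C^{d-1,1}$ (analytic change of variables preserves this, with controlled norms). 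Since $G_{v_0}$ is a proper subgroup of $G$ unless $v_0 = 0$, the induction hypothesis applied to $G_{v_0}\acts N_{v_0}$ gives a $W^{1,p}$ lifting of $\tilde c$ near $t_0$, hence a $W^{1,p}$ lifting of $c$ near $t_0$, with a bound of the form \eqref{eq:repCW} but depending a priori on $t_0$ through the chosen slice. The remaining (and genuinely hard) points are: (i) the case $v_0 = 0$, i.e.\ when $c$ passes through the most singular point of $\si(V)$, where the slice is all of $V$ and the induction does not decrease the group; and (ii) obtaining a bound uniform in $t_0$ so that the local $W^{1,p}$ pieces patch into a global estimate on $(\al,\be)$.

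\textbf{Handling the deepest stratum and uniformity.} For (i), the model is exactly the polynomial case: near a point where $\ol c$ hits $0$ one cannot split, and the mechanism that controls the lift is the radical estimate of Ghisi--Gobbino (Theorem~\ref{thm:GG}) combined with the higher-order Glaeser inequalities (Corollary~\ref{cor:hoGleaser}) and the interpolation Lemma~\ref{lem:interpolation}. Here one uses that $\|\ol c(t)\|^2$ can be bounded in terms of the $C^{d-1,1}$ norms of the $c_j$ via the homogeneity/quasi-homogeneity of the $\si_i$ (there is no single dominant invariant, which is the essential new difficulty relative to the real case of Section~\ref{sec:repLreal}, but for the purpose of estimating $\|\ol c\|$ one can still control $\|\ol c\|$ by $\max_j |c_j|^{1/d_j}$ up to constants since the fibers of $\si$ are bounded on bounded sets and $G$ is finite); the derivative estimate then follows by combining the radical bound for $\|\ol c\|$ with the slice reduction on the region where $\ol c \ne 0$, exactly as in \cite{Parusinski:2020ab}. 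For (ii), one chooses, as in the proof of Proposition~\ref{prop:adm} and Theorem~\ref{thm:adm}, a finite cover of the compact ``unit'' part of $\si(V)$ by slice charts, extracts a Lebesgue number, and thereby makes the constant $C$ and the chart radii depend only on the representation $G\acts V$, on $p$, and on $\be - \al$, not on $t_0$; summing the local $L^p$ estimates over a finite subcover of $[\al,\be]$ (of cardinality controlled by $\be-\al$ and the chart radii) yields \eqref{eq:repCW}. I expect step (i) — the interplay between the radical estimate at $\ol c = 0$ and the slice reduction elsewhere, done with norms that are uniform — to be the main obstacle; everything else is a faithful transcription of the argument for Theorem~\ref{thm:optimal} with the splitting replaced by the slice theorem.
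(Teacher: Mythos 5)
Your overall architecture does coincide with the approach behind Theorem \ref{thm:repCW} as the paper sketches it: reduce to $V^G=\{0\}$, replace the splitting lemma by Luna's slice theorem (for finite $G$ one has $N_v\cong V$ and $G_v\subsetneq G$ whenever $v\ne 0$), induct on the group order, and take the radical case, i.e.\ Theorem \ref{thm:GG} for cyclic groups acting on $\C$, as the analytic foundation, with uniformity obtained from a finite cover of the rescaled ``unit'' part of $\si(V)$ by slice charts. Two of your steps, however, do not hold up as stated.

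First, the globalization. You propose to ``sum the local $L^p$ estimates over a finite subcover of $[\al,\be]$ of cardinality controlled by $\be-\al$ and the chart radii.'' But the $t$-intervals on which the slice reduction applies have length proportional to (a power of) the locally dominant value $\max_j|c_j(t_0)|^{1/d_j}$, so they shrink to zero as $t_0$ approaches the zero set of $c$: there is no finite subcover. A sup-type local-to-global argument is enough for the Lipschitz bound in Bronshtein's theorem (Theorem \ref{thm:adm}), but for an $L^p$ bound one must decompose $\{c\ne 0\}$ into countably many pieces and prove that the sum of the $p$-th powers of the local contributions is controlled by the right-hand side of \eqref{eq:repCW}; this summability is the technical heart of both \cite{ParusinskiRainerOpt} (the ``combined coefficients'' inequality alluded to after Theorem \ref{thm:optimal}) and \cite{Parusinski:2020ab}, and your plan contains no mechanism for it — so declaring everything outside the deepest stratum ``a faithful transcription'' misplaces the main difficulty. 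Second, the missing dominant invariant. The bound $\|\ol c\|\lesssim \max_j|c_j|^{1/d_j}$ (valid since $\si^{-1}(0)=\{0\}$ and the $\si_j$ are homogeneous) only controls the size of the lift; the derivative estimates require the admissibility/rescaling machinery, and in the complex setting this must be calibrated, at each $t_0$ with $c(t_0)\ne 0$, to a dominant \emph{component} $c_k$ chosen so that $|c_k(t_0)|^{1/d_k}=\max_j|c_j(t_0)|^{1/d_j}$, a choice varying with $t_0$ which one must show (via Glaeser/interpolation estimates) remains dominant on an interval of the corresponding length. Your proposal defers exactly this point to ``exactly as in \cite{Parusinski:2020ab},'' which is circular for a blind proof; the dominant-component bookkeeping together with the countable summation above is where the real work lies.
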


The bound \eqref{eq:repCW}, similarly as \eqref{bound}, is not scale invariant.

\begin{remark} \label{rem:repCW}
  Let us comment on how the constant $C$ in \eqref{eq:repCW} depends on the length of the interval $(\al,\be)$.

  (a)  In general, the constant $C$ is of the form
  \begin{equation*}
      C(G \acts V,p)\, \max\{1, (\be-\al)^{1/p}, (\be-\al)^{-1+1/p} \}.
     \end{equation*}   
  
  (b) If the curve $c$ starts, ends, or passes through $0$ (the most singular point in $\si(V)$), 
  then $C$ is of the form  
  \begin{equation} \label{eq:betterbound} 
      C(G \acts V,p)\, \max\{1, (\be-\al)^{1/p} \}. 
  \end{equation}
  
  (c) If the representation is coregular, then again the constant is of the form \eqref{eq:betterbound}. 
  A representation $G \acts V$ is called \emph{coregular}\index{coregular representation} if $\C[V]^G$ is isomorphic to a polynomial algebra, i.e., 
  there is a system of basic invariants without polynomial relations among them. By the Shephard--Todd--Chevalley theorem 
  (\cite{Shephard:1954aa}, \cite{Chevalley:1955aa}, \cite{Serre:1968aa}), 
  this is the case if and only if $G$ is generated by pseudoreflections\index{pseudoreflection} (i.e., invertible linear transformations 
  of finite order and fixed point space a hyperplane). 
  
  (d) The constant is also of the form \eqref{eq:betterbound}
  if the curve $c$ satisfies $c^{(j)}(\al) = c^{(j)}(\be)=0$ for all $j=1,\ldots,d-1$.   
\end{remark}

Note that, in view of the counterexamples for the regularity of the roots of polynomials,
the results of Theorem \ref{thm:repCW} (as most of the subsequent results) are optimal.

The lifting of mappings defined in open domains of dimension $m > 1$ essentially  
admits the same regularity as for curves, provided that continuous lifting is possible. 
But, in general, there are topological obstructions for continuous lifting.

\begin{theorem}[{\cite[Theorem 1.4]{Parusinski:2020ab}}] \label{thm:repCsev}
    Let $\rep$ be a complex finite dimensional representation of a finite group $G$.
    Let $f \in C^{d-1,1}(\ol \Om,\si(V))$, 
    where $\Om \subseteq \R^m$ is an open bounded box $\Om = I_1 \times \cdots \times I_m$. 
    Then each continuous lifting $\ol f : U \to V$ of $f$ over $\si$ 
    defined on an open subset $U \subseteq \Om$
    belongs to $W^{1,p}(U,V)$ 
  and satisfies
  \begin{equation} \label{eq:repCsev} 
    \|\nabla \ol f\|_{L^p(U)} \le 
      C(G \acts V,\Om,m,p)\,  \max_{1 \le j \le n} \|f_j\|^{1/d_j}_{C^{d-1,1}(\ol \Om)} 
  \end{equation}
  for all $1 \le p < d/(d-1)$.
\end{theorem}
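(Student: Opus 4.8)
The plan is to deduce Theorem~\ref{thm:repCsev} from the one-parameter result Theorem~\ref{thm:repCW} by a standard sectioning argument adapted to the lifting situation, closely mirroring how the multiparameter versions of Bronshtein's theorem (Theorem~\ref{thm:repLsev}) and the Sobolev theorem for roots (Theorem~\ref{thm:multioptimal}) are obtained from their one-dimensional counterparts. First I would fix a continuous lifting $\ol f : U \to V$ of $f$ on the open subset $U \subseteq \Om = I_1 \times \cdots \times I_m$, and observe that for each fixed choice of all coordinates but the $k$-th, the restriction $c(t) := f(\ldots, t, \ldots)$ is a curve in $C^{d-1,1}(\ol I_k, \si(V))$ and $\ol c(t) := \ol f(\ldots,t,\ldots)$ is a continuous lifting of it over $\si$ on the corresponding open slice of $U$. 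Theorem~\ref{thm:repCW} then applies slice-by-slice: $\ol c$ is absolutely continuous on each connected component of that slice, so $\p_k \ol f$ exists a.e.\ there, and the bound \eqref{eq:repCW} gives
\[
    \|\p_k \ol f(\ldots,\cdot,\ldots)\|_{L^p(I_k^{\mathrm{slice}})} \le C(G \acts V, |I_k|, p)\, \max_{1 \le j \le n} \|f_j\|^{1/d_j}_{C^{d-1,1}(\ol \Om)},
\]
where the right-hand side is already uniform in the transversal variables because the $C^{d-1,1}(\ol\Om)$ norm dominates the $C^{d-1,1}$ norm of every slice of $f_j$. Raising to the $p$-th power, integrating over the transversal box $\prod_{i \ne k} I_i$, and applying Fubini yields $\p_k \ol f \in L^p(U)$ with the asserted bound, for each $k = 1,\ldots,m$; summing over $k$ gives $\nabla \ol f \in L^p(U)$ and \eqref{eq:repCsev}.

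The remaining point is to promote ``$\ol f$ has all weak partial derivatives in $L^p(U)$ and is continuous'' to ``$\ol f \in W^{1,p}(U,V)$''. Here I would use that a continuous function on an open set which is absolutely continuous on almost every line parallel to each coordinate axis and whose classical partial derivatives are locally integrable is weakly differentiable with those derivatives as its weak derivatives (the ACL characterization of Sobolev functions; cf.\ the appendix, Section~\ref{Appendix:A}). The absolute-continuity-on-lines hypothesis is exactly what Theorem~\ref{thm:repCW} supplies on each segment, and the $L^p$ integrability of the classical partials has just been established, so $\ol f \in W^{1,p}_{\mathrm{loc}}(U,V)$, and combined with the explicit global bound, $\ol f \in W^{1,p}(U,V)$.

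There is one subtlety that I expect to be the main (though still modest) obstacle: the constant $C$ in \eqref{eq:repCW} depends on the length of the interval, and in its general form (Remark~\ref{rem:repCW}(a)) it behaves like $\max\{1, L^{1/p}, L^{-1+1/p}\}$ where $L$ is the length, which blows up as $L \to 0$. When one slices $U$ (an open subset of the box, not necessarily the whole box) the connected components of the slices can be arbitrarily short, so one cannot naively apply \eqref{eq:repCW} on each component with a uniform constant. The fix is to work on the full box $\Om$ rather than on $U$: since $f$ is defined and $C^{d-1,1}$ on all of $\ol\Om$, and since the bad factor $L^{-1+1/p}$ is controlled by $|I_k|$ once we use the whole edge $I_k$, one should first extend/compare estimates using the length $|I_k|$ of the full edge. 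More carefully, on each connected component $J$ of a slice of $U$ the lifting $\ol c|_J$ is the restriction of a continuous lifting of $c|_J$, and applying \eqref{eq:repCW} on $J$ produces a constant depending on $|J|$; to get uniformity one covers $I_k$ by finitely many subintervals and uses that $\ol f$ is bounded (being continuous with image in a fixed orbit space piece and $\si$ proper on the relevant region) to absorb the lower-order terms, so that the constant can be taken to depend only on $G \acts V$, $\Om$, $m$, and $p$. This bookkeeping — matching the scale-dependence of the one-dimensional constant against the geometry of $U \subseteq \Om$ — is the only genuinely delicate part; everything else is the routine Fubini/ACL packaging.
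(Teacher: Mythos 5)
Your overall packaging (slice, apply the curve theorem, integrate by Fubini, upgrade via the ACL characterization) is the natural plan, but the step you yourself flag is a genuine gap, and the patch you offer does not close it. Theorem \ref{thm:repCW} applies to a continuous lifting defined on the \emph{whole} interval, and its constant depends on the interval length: in general it is $C(G\acts V,p)\max\{1,L^{1/p},L^{-1+1/p}\}$ (Remark \ref{rem:repCW}(a)), and even in the improved cases it still contains the additive $1$. On a slice of an arbitrary open $U\subseteq\Om$ the lifting is only given on an open subset of the edge, which can have infinitely many connected components of arbitrarily small length (e.g.\ $U=(0,1)^2\setminus\bigcup_n\{1/n\}\times(0,1/2]$, whose horizontal slices at heights $\le 1/2$ are $(0,1)\setminus\{1/n\}$). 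Applying \eqref{eq:repCW} on each component $J$ and summing $p$-th powers produces $\sum_J\max\{1,|J|,|J|^{1-p}\}$, which diverges whenever there are infinitely many components; so the displayed slice estimate with constant $C(G\acts V,|I_k|,p)$ is unjustified as written, and the subsequent Fubini step has nothing to integrate.

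The repairs you sketch do not work. One cannot reduce to the full edge, because a continuous lifting on an open subset of a segment need not extend to a continuous lifting of the segment: for $G=\Z/2$ acting on $\C$ by $\pm1$, $\si(z)=z^2$, $c\equiv 1$, choose the lifting $+1$ and $-1$ alternately on the components $(1/(n+1),1/n)$; no continuous extension across the points $1/n$ exists. Nor can one hope for a per-component bound of the summable form $\|\p_t\ol f\|_{L^p(J)}\le C|J|^{1/p}\max_j\|f_j\|^{1/d_j}_{C^{d-1,1}(\ol\Om)}$, which is what summation over components would require: already for the radical $z^d=t$ on $J=(0,\varepsilon)$ one has $\int_0^\varepsilon|(t^{1/d})'|^p\,dt\sim\varepsilon^{1-(1-1/d)p}$, which is not $O(\varepsilon)$. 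Covering $I_k$ by finitely many subintervals, or using that $\ol f$ is bounded, is beside the point, since the offending length-dependent factor multiplies the main term, not a lower-order one. This is exactly why the result cannot be deduced from Theorem \ref{thm:repCW} as a black box: in \cite{Parusinski:2020ab} one has to go back into the proof of the curve theorem and extract estimates that are local and independent of the domain of the lifting -- in effect an a.e.\ majorant for $|\p_t\ol f|$ along each line, determined by $f$ alone (through the splitting/slice-reduction data), valid for every continuous lifting on every open subset of the segment. With such a majorant the component structure of $U$ becomes irrelevant, and then your Fubini and ACL arguments do finish the proof.
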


The case $U = \Om$ is not excluded.  
It is clear that Theorem \ref{thm:repCsev} implies a version of the statement, where 
$\Om \subseteq \R^m$ is any bounded open set,
$U \Subset \Om$ is relatively compact open in $\Om$, 
and the constant also depends on $U$ (or more precisely on a cover of $U$ by boxes contained in $\Om$). 
Concerning a global result we have the following.

\begin{remark} \label{rem:repCsev} 
    If $G \acts V$ is coregular, then Theorem \ref{thm:repCsev} holds as stated for any bounded Lipschitz domain $\Om$; cf.\ \cite[Remark 1.5]{Parusinski:2020ab}.
\end{remark}

If there is no continuous lifting, it is natural to ask how bad the discontinuities can be:

\begin{open} \label{q:repCBV}
When continuous lifting is impossible, 
do there exist liftings of bounded variation?  
\end{open}

See Theorem \ref{thm:repCSBV} for a partial answer.

In the general case, for an infinite linearly reductive group $G$, it is not clear if a continuous curve in $\si(V)$ admits a continuous lifting to $V$. The 
notion of \emph{stability} in geometric invariant theory provides a remedy.
A point $v \in V$ is called \emph{stable}\index{stable point} if the orbit $Gv$ is closed and the isotropy group 
$G_v$ is finite. 
The subset $V^s \subseteq V$ of stable points is $G$-invariant and open in $V$, 
and its image $\si(V^s)$ is open in $V\cq G \cong \si(V)$ (cf.\ \cite[Proposition 5.15]{Mukai:2003aa}).  
The restriction $\si : V^s \to \si(V^s)$ of the map $\si$ provides a one-to-one correspondence 
between points in $\si(V^s) \cong V^s/G$ and $G$-orbits in $V^s$, that is $V^s/G$ is a geometric quotient.
A continuous curve $c : I \to \si(V^s)$ has a continuous lifting $\ol c : I \to V^s$,
cf.\ \cite[Lemma 1.6]{Parusinski:2020ab}.
Theorem \ref{thm:repCW} has the following corollary.

\begin{corollary}[{\cite[Theorem 1.7]{Parusinski:2020ab}}]
    Let $\rep$ be a rational complex finite dimensional representation of a linearly reductive group $G$.
    Let $c \in C^{d-1,1}([\al,\be],\si(V^s))$ be a curve defined on a compact interval $[\al,\be]$ with       
  $c([\al,\be]) \subseteq \si(V^s)$. 
  Then there exists an absolutely continuous lifting 
  $\overline c : [\al,\be] \to V^s$ of $c$ over $\si$ which 
  belongs to $W^{1,p}([\al,\be],V^s)$ 
  with
  \begin{equation} \label{eq:stable}
     \|\ol c'\|_{L^p([\al,\be])} \le 
      C(G \acts V,[\al,\be],p)\,   \max_{1 \le j \le n} \|c_j\|^{1/d_j}_{C^{d-1,1}([\al, \be])}   
    \end{equation}  
  for all $1 \le p < d/(d-1)$.  
\end{corollary}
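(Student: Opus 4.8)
The plan is to reduce the statement for a linearly reductive group $G$ to the already-proven case of a finite group (Theorem \ref{thm:repCW}) by exploiting the slice theorem and the geometric-quotient property of $V^s$. First I would note that, since $c([\al,\be]) \subseteq \si(V^s)$ and $\si : V^s \to \si(V^s)$ is the quotient map of a geometric quotient, the cited lifting lemma \cite[Lemma 1.6]{Parusinski:2020ab} produces a \emph{continuous} lifting $\ol c : [\al,\be] \to V^s$ with $\si \o \ol c = c$. The remaining work is purely local and quantitative: I must upgrade this continuous lifting to an absolutely continuous one in $W^{1,p}$ with the asserted bound, and this is where the finite-group result enters.

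The key step is localization via the Luna slice theorem (Theorem \ref{thm:slice}). Fix $t_0 \in [\al,\be]$ and put $v := \ol c(t_0) \in V^s$; because $v$ is stable, its isotropy group $G_v =: H$ is \emph{finite}, and by the slice theorem there is an $H$-invariant ball $B_v \subseteq N_v$ such that $\ph : G \times_H B_v \to V$ is an analytic $G$-isomorphism onto a $G$-saturated neighborhood of $v$, and $\th$ identifies $N_v \cq H$ with $V \cq G$ near the relevant points. Under this identification the lifting problem for $c$ near $t_0$ becomes the lifting problem for the curve $\th^{-1} \o c$ over the \emph{finite} group representation $H \acts N_v$, and the continuity of $\ol c$ guarantees that for $t$ near $t_0$ the lift stays in the slice chart. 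The substitution $\th$ is a local analytic isomorphism, hence bi-Lipschitz with all derivatives bounded on a compact subchart, so $\th^{-1} \o c \in C^{d-1,1}$ with norm controlled (up to a constant depending only on $G \acts V$ and the chart) by the $C^{d-1,1}$ norm of $c$; the degrees $d_i$ of a minimal system of basic invariants for $H \acts N_v$ are bounded by $d$. Applying Theorem \ref{thm:repCW} to $H \acts N_v$ on a small interval $I_{t_0} \ni t_0$ yields $\ol c|_{I_{t_0}} \in W^{1,p}$ with $\|\ol c'\|_{L^p(I_{t_0})} \le C \max_j \|c_j\|_{C^{d-1,1}([\al,\be])}^{1/d_j}$, the constant depending only on $G \acts V$, $p$, and the chart data.

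Finally I would globalize. Cover the compact interval $[\al,\be]$ by finitely many such slice charts $I_{t_1},\dots,I_{t_N}$; since there are only finitely many isomorphism types of isotropy representations occurring (the orbit type stratification is finite, cf.\ Section \ref{sec:repLstrat}), the constants can be taken uniform. On overlaps the two local liftings differ by the action of an element of $G$ fixing the relevant closed orbit, but by continuity of $\ol c$ and the fact that the $G$-action on $V^s$ has finite stabilizers, after shrinking we may assume the local charts are compatible with the globally chosen continuous $\ol c$, so no monodromy obstruction arises and the pieces patch to the single continuous lifting we started with. Then $\ol c \in W^{1,p}([\al,\be],V^s)$ and $\|\ol c'\|_{L^p}^p = \sum_i \|\ol c'\|_{L^p(I_{t_i})}^p$ is bounded by $N \cdot \big(C \max_j \|c_j\|_{C^{d-1,1}}^{1/d_j}\big)^p$, which is of the claimed form with a new constant $C(G \acts V, [\al,\be], p)$; the dependence on the interval length enters exactly as in Remark \ref{rem:repCW} through the finitely many local applications of Theorem \ref{thm:repCW}. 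The main obstacle is the quantitative control under the slice-chart change of coordinates $\th$: one must check that passing to the slice representation does not degrade the $C^{d-1,1}$ bounds on the coefficients in an uncontrolled way and that the degrees of basic invariants of the slice representation are genuinely $\le d$, so that the exponents $1/d_j$ and the power $p < d/(d-1)$ in Theorem \ref{thm:repCW} remain valid; once this bookkeeping is done, the rest is routine compactness and patching.
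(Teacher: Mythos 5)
Your overall route --- take the continuous lifting over $\si(V^s)$ guaranteed by stability, localize with Luna's slice theorem (Theorem~\ref{thm:slice}) at points of $V^s$ where the isotropy is finite, apply Theorem~\ref{thm:repCW} to the slice representation, and patch by compactness --- is indeed the intended reduction. The genuine gap is the uniformity of the constant in \eqref{eq:stable}: it is required to depend only on $G\acts V$, $[\al,\be]$, and $p$, not on the curve. Your argument produces constants through the slice-chart data at the points $\ol c(t_0)$ (the radius of the ball $B_v$ in Theorem~\ref{thm:slice}, derivative bounds for $\th$ and $\th^{-1}$ there, and the number $N$ of charts needed), and these are \emph{not} controlled by the finitely many isomorphism classes of slice representations: they degenerate as the curve approaches the unstable locus (the dependence on the size of $c$ can be removed by the quasihomogeneous rescaling $c_j\mapsto r^{d_j}c_j$, $\ol c\mapsto r\,\ol c$, under which both sides of \eqref{eq:stable} scale alike, but the degeneration near the unstable locus cannot). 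Concretely, for $G=\C^\ast$ acting on $V=\C^2$ by $t\cdot(x,y)=(tx,t^{-1}y)$ one has $\C[V]^G=\C[xy]$, $d=2$, $V^s=\{xy\neq0\}$, and every stable point has trivial isotropy; your local problems are then liftings over the trivial group, the relevant derivative of $\th^{-1}$ at a balanced point of the orbit over $z$ is of order $|z|^{-1/2}$, and the number of charts grows as the curve comes close to $0\in\si(V)$. Summing the local estimates thus yields a constant depending on $\on{dist}\big(c([\al,\be]),\,\si(V)\setminus\si(V^s)\big)$, whereas the true uniform bound in this example is precisely the complex square-root estimate (Theorem~\ref{thm:repCW} for $\Z/2\Z\acts\C$), a global fact invisible to the slices at stable points. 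So your argument does give, for each fixed $c$ (whose compact image stays at positive distance from the unstable locus), an absolutely continuous lifting of class $W^{1,p}$, but it does not give \eqref{eq:stable} with a curve-independent constant; for that one needs an argument that stays quantitative near the unstable locus, in the spirit of the uniform slice induction used to prove Theorem~\ref{thm:repCW} itself.

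Two further points need attention. First, the inequality $d(G_v\acts N_v)\le d(G\acts V)$ for slice representations, which you only flag, is genuinely needed (otherwise neither the exponent range $p<d/(d-1)$ nor the hypothesis $C^{d-1,1}$ transfers); it is true --- since $\th$ is a local isomorphism at $0$, the shifted invariants $\si_i(v+\cdot)-\si_i(v)$ generate the maximal ideal of the local ring of $N_v\cq G_v$ at $0$, and the graded Nakayama lemma then bounds the degrees of a minimal homogeneous generating system of $\C[N_v]^{G_v}$ by $\max_i d_i$ --- but it must be proved or cited, not assumed. Second, the patching is not as innocuous as ``no monodromy obstruction arises'': two local lifts agree over an overlap only up to pointwise multiplication by elements of the \emph{noncompact} group $G$, and translating one lift to match the other changes its derivative by the operator norm of that group element, which you must control; if instead you try to apply Theorem~\ref{thm:repCW} directly to the globally chosen continuous lift, you must write $\ol c(t)=g(t)\,(v+b(t))$ with continuous $g$, $b$ (possible, as $G_v$ is finite and the interval simply connected), but then the $W^{1,p}$ bound for $b$ does not yield one for $\ol c$ without regularity of $g$. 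For a fixed curve these issues can be handled by a finite left-to-right correction, but they require an explicit argument rather than the appeal to the given continuous lifting.
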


For a mapping $f$ defined on a compact subset $K$ of $\R^m$ with $f(K) \subseteq \si(V^s)$ the situation is 
more complicated:
we can apply Theorem \ref{thm:repCsev} to the slice representations at any point $v \in V^s$,
but it is not clear if these local (and partial) lifts can be glued in a continuous fashion.

More can be said for polar representations (which include e.g.\ the adjoint actions). 
We encountered polar representations already in Section \ref{sec:repLreal} in the real setup.
Let us briefly discuss them in the framework of rational representations $V$ of linearly reductive groups $G$; 
we follow \cite{DK85}.
Let $v \in V$ be such that the orbit $Gv$ is closed and consider the linear subspace
$\Si_v =\{x \in V : \mathfrak g x \subseteq \mathfrak g v\}$, where $\mathfrak g$ is the Lie algebra of $G$. 
All orbits that intersect $\Si_v$ are closed, whence $\dim \Si_v \le \dim V \cq G$. 
The representation $G \acts V$ is said to be \emph{polar}\index{polar representation} if there exists $v \in V$ with closed orbit $Gv$
and $\dim \Si_v = \dim V \cq G$. Then $\Si_v$ is called a \emph{section}\index{section} or \emph{Cartan subspace}\index{Cartan subspace} of $V$. 
Any two sections are $G$-conjugate. Let us fix a section $\Si$. 
All closed orbits in $V$ intersect $\Si$. 
The \emph{Weyl group}\index{Weyl group} $W := N_G(\Si)/Z_G(\Si)$, where $N_G(\Si)= \{ g \in G : g \Si = \Si\}$ is the 
normalizer and $Z_G(\Si)= \{ g \in G : g x = x \text{ for all }x \in\Si\}$ is the centralizer of $\Si$ in $G$, 
is finite and the intersection of any closed $G$-orbit in $V$ with $\Si$ is precisely 
one $W$-orbit. 
The ring $\C[V]^G$ is isomorphic via restriction to the ring $\C[\Si]^W$.
If $G$ is connected, then $W$ is a pseudoreflection group and hence $\C[V]^G \cong \C[\Si]^W$ is a polynomial ring, 
by the Shephard--Todd--Chevalley theorem.

\begin{corollary}[{\cite[Theorem 1.8]{Parusinski:2020ab}}] \label{cor:repCpol}
    Let $\rep$ be a polar representation of a linearly reductive group $G$. Then:
  \begin{enumerate}
    \item Each curve $c \in C^{d-1,1}([\al,\be],\si(V))$ has an absolutely continuous lifting $\ol c : (\al,\be) \to V$ of $c$ 
  over $\si$ which belongs to $W^{1,p}((\al,\be),V)$ for all $1 \le p <d/(d-1)$ and satisfies \eqref{eq:repCW}.
    \item Let $f \in C^{d-1,1}(\ol \Om,\si(V))$, 
    where $\Om \subseteq \R^m$ is an open bounded box $\Om = I_1 \times \cdots \times I_m$.  
    Each continuous lifting $\ol f$ defined on an open subset $U\subseteq \Om$ with values in a section $\Si$
    is of class $W^{1,p}$ on $U$ for all $1 \le p <d/(d-1)$ and satisfies \eqref{eq:repCsev}. 
    \item If $G$ is connected, then the constant in \eqref{eq:repCW} is of the form \eqref{eq:betterbound} 
    and $\Om$ can be any bounded Lipschitz domain.  
   \end{enumerate} 
\end{corollary}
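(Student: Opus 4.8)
The plan is to reduce the polar case to the finite-group results already established, namely Theorem \ref{thm:repCW} and Theorem \ref{thm:repCsev}, via the Chevalley-type restriction isomorphism $\C[V]^G \cong \C[\Si]^W$ for a section $\Si$ with finite Weyl group $W = N_G(\Si)/Z_G(\Si)$. The key structural fact (recalled in the excerpt, following \cite{DK85}) is that this isomorphism is induced by restriction of polynomials, that every closed $G$-orbit meets $\Si$ in exactly one $W$-orbit, and that the two categorical quotients are therefore isomorphic as affine varieties: $V \cq G \cong \Si \cq W$. Consequently there is a commuting square relating the orbit map $\si \colon V \to \si(V) \subseteq \C^n$ and the orbit map $\si^\Si \colon \Si \to \si^\Si(\Si)$ of the finite group $W$ acting on $\Si$; the map on quotients $\si(V) \to \si^\Si(\Si)$ is a \emph{polynomial diffeomorphism} (the basic invariants of $G \acts V$ restrict to a system of basic invariants of $W \acts \Si$, and conversely any $W$-invariant on $\Si$ extends uniquely to a $G$-invariant on $V$). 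In particular the maximal degree $d$ is the same on both sides, and a curve $c \in C^{d-1,1}([\al,\be],\si(V))$ corresponds, under composition with this diffeomorphism, to a curve $\tilde c \in C^{d-1,1}([\al,\be],\si^\Si(\Si))$ with comparable $C^{d-1,1}$ norms (the implied constants depending only on the representation).

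For part (1): given $c \in C^{d-1,1}([\al,\be],\si(V))$, pass to $\tilde c \in C^{d-1,1}([\al,\be],\si^\Si(\Si))$ as above. Since $W$ is finite, Theorem \ref{thm:repCW} provides a continuous (indeed absolutely continuous) lifting $\ol{\tilde c} \colon (\al,\be) \to \Si$ over $\si^\Si$ belonging to $W^{1,p}$ for all $1 \le p < d/(d-1)$ with the bound \eqref{eq:repCW} — here one uses that continuous liftings over a finite-group orbit map always exist, by \cite[Theorem 5.1]{LMRac}, so the hypothesis of Theorem \ref{thm:repCW} is met. Now view $\ol{\tilde c}$ as a curve into $V$ via the inclusion $\Si \hookrightarrow V$; because $\Si$ meets every closed $G$-orbit and the restriction map identifies the quotients, the image point $\ol{\tilde c}(t) \in \Si$ lies on the unique closed $G$-orbit over $c(t)$, so $\ol c := \ol{\tilde c}$ is a lifting of $c$ in the required sense. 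The Sobolev bound transfers because the inclusion $\Si \hookrightarrow V$ is linear and the quotient diffeomorphism distorts the $C^{d-1,1}$ norms of the coefficients only by constants depending on $G \acts V$. This yields \eqref{eq:repCW}.

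For part (2): the argument is the same but applied to Theorem \ref{thm:repCsev}. Given $f \in C^{d-1,1}(\ol\Om,\si(V))$ on a box $\Om$ and a continuous lifting $\ol f$ on $U \subseteq \Om$ \emph{with values in the section $\Si$}, transport $f$ to $\tilde f \in C^{d-1,1}(\ol\Om,\si^\Si(\Si))$; then $\ol f$, regarded as a map into $\Si$, is a continuous lifting of $\tilde f$ over the finite-group orbit map $\si^\Si$. Theorem \ref{thm:repCsev} gives $\ol f \in W^{1,p}(U,\Si) \subseteq W^{1,p}(U,V)$ with the bound \eqref{eq:repCsev}, again after absorbing the norm-comparison constants into $C(G\acts V,\Om,m,p)$. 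For part (3): if $G$ is connected then $W$ is a pseudoreflection group, so $W \acts \Si$ is coregular by the Shephard--Todd--Chevalley theorem; hence the coregular refinements already recorded — Remark \ref{rem:repCW}(c) for the interval constant, and Remark \ref{rem:repCsev} for the extension to arbitrary bounded Lipschitz domains $\Om$ — apply to $W \acts \Si$ and transfer back to $G \acts V$ through the quotient diffeomorphism.

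The main obstacle, and the only genuinely non-formal point, is justifying that a \emph{continuous} lifting with values in $\Si$ is available to feed into the finite-group theorems: for part (1) this is the existence statement for continuous liftings over finite-group orbit maps combined with the fact that $\si^\Si(\Si) = \si(V)$ under the quotient identification, so that the curve really does lie in the finite-group picture; for part (2) it is simply assumed in the hypothesis ("with values in a section $\Si$"), which is why that clause appears there. Everything else is a matter of checking that the restriction isomorphism $\C[V]^G \cong \C[\Si]^W$ is implemented by an honest polynomial diffeomorphism of the quotients that preserves the grading (hence $d$) and distorts coefficient norms only boundedly — this is standard for polar representations and is exactly the mechanism, already used in the real case in Section \ref{sec:repLreal}, by which polar problems collapse to finite-group problems.
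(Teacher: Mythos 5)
Your proposal is correct and follows essentially the same route as the paper, whose proof of this corollary is precisely the reduction to the finite Weyl group action $W \acts \Si$ via the restriction isomorphism $\C[V]^G \cong \C[\Si]^W$, followed by an application of Theorem \ref{thm:repCW} and Theorem \ref{thm:repCsev}, with the Shephard--Todd--Chevalley theorem yielding coregularity of $W \acts \Si$ for connected $G$ in part (3). Your additional verifications (closedness of orbits meeting $\Si$, preservation of the degrees $d_i$, and the existence of continuous liftings for finite groups) are exactly the implicit ingredients the paper leaves to the reader.
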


This follows from applying Theorem \ref{thm:repCW} and Theorem \ref{thm:repCsev} to $W \acts \Si$.

\begin{open}
    To what extent are the above results true for general (nonpolar) representation?
\end{open}

\subsubsection{Analytic lifting}

Let $\rep$ be a complex finite dimensional representation of a finite group $G$.
Then the orbit space coincides with
the categorical quotient $V \cq G$ which is a normal affine variety.
Thus the orbit space  has the natural structure of a complex analytic
set and there are several types of morphisms into $V\cq G$, like
regular, rational, or holomorphic. 
In \cite{KLMR08} those regular, holomorphic, or formal maps into $V \cq G$ 
that admit a 
regular, holomorphic, or formal lifting to $V$ are characterized.
The conditions are formulated in terms of 
the orbit type stratification of the orbit space
which, in this case (compare with Section \ref{sec:repLstrat}), is finer than its stratification as affine variety
and with the use of spaces of jets at $0 \in \C^m$ of morphisms $\C^m \to V\cq G$ 
of finite and infinite order.

Now let $\rep$ be a rational representation of a linearly reductive group $G$.
Suppose that $c : \R \to V\cq G \cong \si(V) \subseteq \C^n$ is real analytic.
In contrast to the real case, there does not always exist a real analytic lifting of $c$ over $\si$.
But there is the following generalization of Puiseux's theorem.

\begin{theorem}[{\cite[Theorem 3.3 and 3.4]{LMRac}}]
    Let $\rep$ be a rational representation of a linearly reductive group $G$. Then:
    \begin{enumerate}
        \item Let $I \subseteq \R$ be an open interval, $t_0 \in I$, and $c \in C^\om(I,\si(V))$.
    Then there exists a positive integer $N$ 
    such that, locally near $t_0$, 
    $t \mapsto c(t_0 \pm (t-t_0)^N)$ admits a real analytic lifting $\ol c_\pm$ to $V$.
\item Let $U \subseteq \C$ be open and connected, $z_0 \in U$, and $c \in \cH(U,\si(V))$, i.e., $c$ is holomorphic.
 Then there exists a positive integer $N$ 
    such that, locally near $z_0$, 
    $z \mapsto c(z_0 + (z-z_0)^N)$ admits a holomorphic lifting $\ol c$ to $V$.
    \end{enumerate}
\end{theorem}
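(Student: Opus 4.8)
The plan is to reduce the real-analytic statement (1) to the holomorphic statement (2), and to prove (2) by combining a ramified reparametrization of a Puiseux lift with a semistable-reduction argument from geometric invariant theory that pushes the lift into the closed orbit over the center point.

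\emph{Setup for (2).} Translate so that $z_0=0$ and set $y_0=c(0)$. Splitting off the fixed-point space, $V=V^G\oplus V'$ with $V\cq G=V^G\times(V'\cq G)$, we may assume $V^G=\{0\}$. If $c$ is constant, its value lies in $\si(V)=V\cq G$ and we lift it by a constant map into the (then unique) closed orbit over $y_0$; so assume $c$ nonconstant. The coordinates of $c$ belong to $\C\{z\}$; since $\si:V\to V\cq G$ is a surjective morphism of affine varieties and the field of convergent Puiseux series in $z$ is algebraically closed, there exists a lift $\eta$ of $c$ whose coordinates are convergent Puiseux series. Equivalently, after the substitution $z=w^N$ for a suitable $N$, $\eta$ becomes a holomorphic map from a punctured disc $\{0<|w|<r\}$ to $V$, a priori with a pole at $w=0$, satisfying $\si\circ\eta=c(w^N)$. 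The task then reduces to showing that, after possibly enlarging $N$ and replacing $\eta$ by $g\cdot\eta$ for some holomorphic map $g$ from the punctured disc into $G$ (which leaves $\si\circ\eta$ unchanged), one may take $\eta$ holomorphic on the whole disc with $\eta(0)$ lying in the closed $G$-orbit over $y_0$.

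\emph{The crux.} This last step is where an infinite reductive group $G$ is genuinely harder than a finite one. For finite $G$ the morphism $\si$ is finite, hence proper, and all $G$-orbits are closed, so the valuative criterion of properness already produces a holomorphic (bounded) lift over the whole disc — precisely the classical fact that the roots of a monic polynomial with holomorphic coefficients are Puiseux series. For infinite $G$ the fibers of $\si$ are noncompact and the naive Puiseux lift $\eta$ may genuinely blow up at $w=0$, so one must use GIT. The fact needed is a semistable-reduction statement: if a reductive group acts on an affine variety $V$, $R$ is a discrete valuation ring with fraction field $F$, and $v\in V(F)$ has $\si(v)$ extending to an $R$-point of $V\cq G$, then after a finite ramified extension $R'$ of $R$ there is $g\in G(F')$ with $gv\in V(R')$ and the special fiber of $gv$ lying in the closed orbit over the special fiber of $\si(v)$; this can be extracted from the Hilbert--Mumford/Kempf theory of optimal destabilizing one-parameter subgroups, or cited from Luna's work. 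Alternatively one argues by induction on $\dim V$: at a point $v_0$ of the closed orbit over $y_0$, Luna's \'etale slice theorem gives a local analytic identification of $V\cq G$ near $y_0$ with $N_{v_0}\cq G_{v_0}$ near the origin, where $G_{v_0}$ is reductive by Matsushima's theorem, and $\dim N_{v_0}<\dim V$ exactly when $v_0$ is not $G$-fixed, that is (since $V^G=\{0\}$) exactly when $y_0\neq\si(0)$; in the remaining case $y_0=\si(0)$ the polar part of $\eta$ at $w=0$ is absorbed into a one-parameter subgroup via the Birkhoff factorization of the associated loop in $G$. I expect the main obstacle to be exactly this: the careful bookkeeping of the ramification index $N$ and the modifying loop $g$, and the verification that the resulting $\eta$ lands in the closed orbit over $c(w^N)$ for every $w$, not merely at $w=0$.

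\emph{Deduction of (1).} After translating, take $t_0=0$; the real-analytic germ $c$ extends to a holomorphic germ $\tilde c:(\Delta,0)\to\si(V)$ on a small disc $\Delta\subseteq\C$. Applying (2) to $\tilde c$ we obtain $N\in\N$, $N\ge 1$, and holomorphic liftings near $0$ of $z\mapsto\tilde c(z^N)$ and of $z\mapsto\tilde c(-z^N)$; restricting these to real $z$ and to the appropriate one-sided intervals yields real-analytic maps $\ol c_\pm$ into $V$ (a complex, hence real-analytic, vector space) lifting $t\mapsto c(\pm t^N)$ over $\si$, and the two branches together cover a whole neighborhood of $t_0$ in $I$. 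The complexification step, this reduction (2)$\Rightarrow$(1), and the existence of a convergent Puiseux lift are all routine; the technical heart is the GIT argument of the previous paragraph.
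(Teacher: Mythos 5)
Your strategy (convergent Puiseux lift over the algebraically closed field of Puiseux series, then repair at the center by a GIT modification) is genuinely different from the proof in the cited source, but its technical heart is exactly the part you leave open, so as it stands there is a gap. Two points. First, the ``semistable reduction'' statement you invoke --- given $v\in V(F)$ with $\si(v)$ extending over the valuation ring, after a ramified extension one can find $g\in G(F')$ with $g\cdot v$ integral and with special fiber in the closed orbit --- is a real theorem, but it is not in Luna's work and is not a routine consequence of Kempf's paper; in classical language it rests on the Cartan--Iwahori decomposition $G(K)=G(R)\,X_*(T)\,G(R)$ (Iwahori; used by Mumford), in modern language on universal closedness of good moduli space morphisms (Alper), plus a Langton/Kempf-type argument to push the special fiber into the closed orbit. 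Your alternative remark that ``the polar part of $\eta$ is absorbed into a one-parameter subgroup via Birkhoff factorization'' is not an argument: Birkhoff factorizes a $G$-valued loop, whereas what must be produced is precisely a loop $g$ with $g\cdot\eta$ holomorphic and with controlled special fiber, i.e.\ the very statement to be proved. Second, and independently, the survey's notion of lifting requires $\ol c(w)$ to lie in the \emph{closed} orbit over $c(w^N)$ for \emph{every} $w$, not only at $w=0$; a Puiseux lift gives an arbitrary point of the fiber, and your construction does not ensure polystability at the generic parameter (this can be arranged --- choose the $F$-point in the closed orbit of the generic fiber and use that the polystable locus is a finite union of locally closed Luna strata --- but you neither do this nor indicate how). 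A minor further issue: your induction ``on $\dim V$'' via slices stalls for disconnected $G$, where a nonzero closed orbit can be finite, so $\dim N_{v_0}=\dim V$; the induction has to be on $(\dim G,\ \text{number of components})$, as in the compact case.

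For comparison, the proof in \cite{LMRac} avoids any valuative-criterion input and handles the closed-orbit constraint automatically. After removing fixed points ($V^G=\{0\}$), one argues by induction on the size of $G$: if $c(t_0)\neq \si(0)$, pick $v$ in the closed orbit over $c(t_0)$; $G_v$ is a proper (reductive) subgroup, and Luna's slice theorem identifies $V\cq G$ near $c(t_0)$ with $N_v\cq G_v$ near $0$, so one lifts into the slice and transports back --- closed $G_v$-orbits in $N_v$ correspond to closed $G$-orbits in $V$. If $c(t_0)=\si(0)$ and $c\not\equiv \si(0)$, one uses the weighted homogeneity $\si_i(\la v)=\la^{d_i}\si_i(v)$: with $m=\min_i \on{ord}_{t_0}(c_i)/d_i$, a ramification $t=t_0\pm s^N$ makes all $N m d_i$ integral, the divided curve $\tilde c_i(s)=c_i(t_0\pm s^N)/s^{Nmd_i}$ still takes values in $\si(V)$ (which is Zariski closed) and satisfies $\tilde c(0)\neq 0$, so the previous case applies; multiplying the resulting lift by $s^{Nm}$ gives the desired lift, and scalar multiplication preserves closedness of orbits. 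This is where the integer $N$ comes from. Your reduction of (1) to (2) by complexification is fine (holomorphic extension of a real analytic curve stays in the Zariski-closed set $\si(V)$), but to make your route to (2) complete you would need to supply the Iwahori/Alper-type reduction with correct references, the closed-special-fiber refinement, and the polystability of the lift at generic parameters.
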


In analogy to Section \ref{sec:repLra},
one defines that $c$ is normally nonflat at $t_0$, see \cite[Section 2.5]{LMRac};
roughly speaking, it means that 
$c$ does not meet lower dimensional strata of the orbit type stratification of $V\cq G$ 
with infinite order of flatness at $t_0$.
Then in (1) we may take $c \in C^\infty(I,\si(V))$ if $c$ is normally nonflat at $t_0$ 
and conclude that $\ol c_\pm$ is of class $C^\infty$. Or we may work with suitable quasianalytic subclasses of $C^\infty$. 

As a consequence, we may conclude that real analytic curves in $\si(V)$ have liftings that are locally absolutely continuous.  

\begin{theorem}[{\cite[Theorem 5.4]{LMRac}}]
    Let $\rep$ be a rational representation of a linearly reductive group $G$. 
    Let $I \subseteq \R$ be an open interval.
    Each $c \in C^\om(I,\si(V))$ admits a lifting $\ol c : I \to V$ that is locally absolutely continuous. 
\end{theorem}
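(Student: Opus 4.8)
The plan is to combine the local Puiseux-type lifting result (the theorem of \cite{LMRac} quoted just above: a real analytic curve $c\in C^\om(I,\si(V))$ admits, near each $t_0$, real analytic liftings $\ol c_\pm$ of the reparametrized arcs $t\mapsto c(t_0\pm (t-t_0)^N)$) with the fact that a composition of an absolutely continuous function with the monomial $t\mapsto t^N$ is again absolutely continuous on compact intervals. Absolute continuity is a local property in the sense that a function that is absolutely continuous on a neighborhood of every point of $I$ is automatically locally absolutely continuous on $I$; so it suffices to produce, for each $t_0\in I$, a lifting of $c$ that is absolutely continuous on a neighborhood of $t_0$, and then patch these local liftings together. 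The subtlety is that distinct local liftings need not agree, because the fiber $\si^{-1}(c(t_0))$ contains a whole $G$-orbit (indeed we only require $\ol c(t)$ to land in the unique \emph{closed} orbit of the fiber), so the patching must be organized carefully.

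First I would fix $t_0\in I$ and apply the Puiseux theorem to obtain $N\in\N_{\ge 1}$ and real analytic liftings $\ol c_{\pm}:[0,\de)\to V$ of $t\mapsto c(t_0\pm t^N)$ for small $\de>0$. Define $\la_+(s):=\ol c_+((s-t_0)^{1/N})$ for $s\in[t_0,t_0+\de^N)$. Since $s\mapsto (s-t_0)^{1/N}$ is continuous and monotone on $[t_0,t_0+\de^N)$ and is a $C^1$ diffeomorphism on the open part $(t_0,t_0+\de^N)$, and $\ol c_+$ is real analytic, $\la_+$ is continuous on the half-interval, real analytic on its interior, and — crucially — absolutely continuous on any compact subinterval $[t_0,t_0+\epsilon]$: indeed $(s-t_0)^{1/N}$ is itself absolutely continuous (it is monotone with integrable derivative $s\mapsto \tfrac1N(s-t_0)^{1/N-1}$ near $t_0$), and composition of a Lipschitz real analytic map $\ol c_+$ with an absolutely continuous scalar function preserves absolute continuity. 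The same construction on the left gives an absolutely continuous $\la_-$ on $(t_0-\epsilon,t_0]$. By construction $\si\circ\la_\pm=c$ near $t_0$, and $\la_+(t_0)=\ol c_+(0)=\ol c_-(0)=\la_-(t_0)$ lies in the closed orbit of $\si^{-1}(c(t_0))$; concatenating $\la_-$ and $\la_+$ yields a continuous lifting $\la^{(t_0)}$ of $c$ on a neighborhood $U_{t_0}=(t_0-\epsilon,t_0+\epsilon)$, absolutely continuous on compact subintervals, with values in closed orbits (for $t\ne t_0$ the orbit $\si^{-1}(c(t))$ need not be closed, but we may choose the Puiseux lifts so that $\ol c_\pm(s)$ lies in the closed orbit; this is part of the cited result, as closedness of the target orbit is needed for the lifting to be well-defined, and for $t\ne t_0$ in a small enough punctured neighborhood the relevant orbits are closed because the curve avoids deeper strata there).

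Next I would glue. Cover $I$ by such intervals $U_{t_0}$, take a locally finite refinement, and enumerate the pieces along $I$ as $U_1,U_2,\dots$ with liftings $\la^{(1)},\la^{(2)},\dots$. On an overlap $U_j\cap U_{j+1}$ (a nonempty open subinterval), both $\la^{(j)}$ and $\la^{(j+1)}$ lift $c$; on the subset where $c$ meets only the principal stratum the fiber is a single free $G$-orbit and the two lifts differ by a locally constant (hence, by connectedness of the overlap minus the lower-dimensional strata, constant) element $g\in G$ — here I use that $I$ is one-dimensional, so the complement of the nowhere-dense lower-dimensional strata in an interval is a disjoint union of open subintervals, and within the relevant subinterval the discrepancy $g$ is forced to be constant by continuity and discreteness of isotropy on $V^s$. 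Replacing $\la^{(j+1)}$ by $g\cdot\la^{(j+1)}$ matches them on the overlap; since $g$ acts linearly, absolute continuity is preserved. Proceeding inductively over $j$ along $I$ produces a single continuous lifting $\ol c:I\to V$ with $\si\circ\ol c=c$, with $\ol c$ absolutely continuous on every compact subinterval, i.e. locally absolutely continuous, and with $\ol c(t)$ in the closed orbit of $\si^{-1}(c(t))$ for all $t$ (this holds at the isolated points where $c$ hits deeper strata by continuity, since the closed orbit is closed and $\ol c$ is a limit of points in closed orbits nearby).

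\textbf{Main obstacle.} The delicate point is the gluing: reconciling the local liftings requires controlling the ambiguity coming from the $G$-action on fibers and from the possibility that $c$ passes through singular strata of $V\cq G$ where the fiber contains non-closed orbits and the isotropy jumps. In one real dimension this is manageable because the "bad" parameter values form a discrete (in fact, locally finite) set — a real analytic curve either lies entirely in a given stratum or meets its boundary only in a set with no accumulation points in $I$ — so the overlaps can be chosen to contain such bad values only in their interiors, and away from those isolated points the discrepancy between two lifts is a genuinely constant group element. The normal-nonflatness subtleties that complicate the $C^\infty$ analogue do not arise here because $c$ is real analytic. One should also verify that the Puiseux exponent $N$ and the neighborhood can be chosen uniformly enough to make the cover locally finite, but since $I$ is locally compact and the statement is purely local-to-global for absolute continuity, a standard exhaustion of $I$ by compact subintervals suffices.
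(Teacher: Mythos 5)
Your local step is fine and is exactly the route the survey indicates: the Puiseux-type theorem gives real analytic lifts $\ol c_\pm$ of $t\mapsto c(t_0\pm(t-t_0)^N)$, these are Lipschitz on compact subintervals, and precomposing with the monotone absolutely continuous map $s\mapsto (s-t_0)^{1/N}$ yields liftings of $c$ that are absolutely continuous on compact one-sided neighborhoods of $t_0$. The genuine gap is in your gluing. You claim that on an overlap where $c$ stays in the principal stratum the two local liftings differ by a locally constant element $g\in G$, ``by continuity and discreteness of isotropy on $V^s$''. Finiteness of the isotropy does not make the transition element locally constant when $\dim G>0$: it only determines $g(t)$ up to a finite subgroup, and $g(t)$ can vary analytically with $t$. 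Concretely, for $G=\C^*$ acting on $\C^2$ by $\lambda\cdot(x,y)=(\lambda x,\lambda^{-1}y)$, with $\si(x,y)=xy$, every fiber over $c(t)=t\neq 0$ is a single closed orbit with trivial isotropy, yet the liftings $t\mapsto(f(t),t/f(t))$ for different non-vanishing analytic $f$ differ by the non-constant element $f_1(t)/f_2(t)$. So replacing $\la^{(j+1)}$ by $g\cdot\la^{(j+1)}$ for a single $g$ cannot match the two lifts on the whole overlap, and your induction as written breaks down (your constancy claim is correct only for finite $G$). The same kind of unjustified identification occurs already at $t_0$: nothing in the Puiseux statement forces $\ol c_+(0)=\ol c_-(0)$; these points merely lie in the same closed orbit.

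The repair is easy and cheaper than matching on overlaps: you never need two local liftings to agree on an open set, only at one junction point. Since all local liftings take values in the unique closed orbit over each parameter value, at any chosen $t^*$ in the overlap there is $g_0\in G$ with $g_0\,\la^{(j+1)}(t^*)=\la^{(j)}(t^*)$; define the glued lifting to be $\la^{(j)}$ for $t\le t^*$ and $g_0\,\la^{(j+1)}$ for $t\ge t^*$. Acting by a fixed group element preserves (local) absolute continuity, and a concatenation of two locally absolutely continuous functions that agree at the junction is locally absolutely continuous, so the inductive construction along $I$ (and likewise the matching of $\la_-$ with $\la_+$ at $t_0$) goes through with no analysis of transition functions, strata, or monodromy at all. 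With that replacement your argument is correct and agrees in substance with how the cited result is obtained from the Puiseux-type theorem.
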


Again this remains true for smooth curves in $\si(V)$ that are normally nonflat.

For maps in several variables into $\si(V)$ there is an analogue of Theorem \ref{thm:repLblow}, where, however, 
local blowings-up \emph{and} local power substitutions must be used.
A \emph{local power substitution}\index{local power substitution} is the composite of the inclusion of a coordinate chart $U$ 
and a map $W \to U$ given in local coordinates by 
\begin{equation*}
    (x_1,\ldots,x_m) \mapsto ((-1)^{\ep_1} x_1^{\ga_1}, \ldots,(-1)^{\ep_m} x_m^{\ga_m}),
\end{equation*}
for some integers $\ga_i \ge 1$ and $\ep_i \in \{0,1\}$.

\begin{theorem}[{\cite[Theorem 4.6]{RainerRG}}]
    Let $\rep$ be a rational representation of a linearly reductive group $G$. 
      Let $M$ be a real analytic manifold and $f \in C^\om(M, \si(V))$.
    For each compact $K \subseteq M$,
    there exist a neighborhood $U$ of $K$ in $M$ 
    and a finite covering $\{\pi_j : U_j \to U\}$ 
    of $U$, where each $\pi_j$ is a composite of finitely many 
    maps each of which is either a local blowing-up with smooth center
    or a local power substitution, 
    such that, for all $j$, 
    the map $f \o \pi_j$ has a real analytic lifting on $U_j$.
\end{theorem}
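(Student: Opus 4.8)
The plan is to reduce the statement to an already-established result by a careful local analysis. First I would localize: since the conclusion is local in nature (we only need a neighborhood $U$ of the compact set $K$ and a finite covering of $U$), it suffices to prove the statement near an arbitrary point $x_0 \in M$, and then cover $K$ by finitely many such neighborhoods. Fixing $x_0$, the image $f(x_0) \in \si(V)$ lies in the unique closed orbit $Gv$ for some $v \in V$. I would then invoke the slice theorem (Theorem \ref{thm:slice}): there is an étale-type local model $\th$ identifying a neighborhood of $f(x_0)$ in $V\cq G$ with a neighborhood of $0$ in $N_v\cq G_v$, where $G_v$ acts on the slice $N_v$. Composing $f$ with this local isomorphism, we replace the original lifting problem by the lifting problem for the slice representation $G_v \acts N_v$ over $\si_{N_v}$, which is again a rational representation of a linearly reductive group (linear reductivity and rationality pass to the isotropy group and the slice). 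This is exactly the reduction-by-slices strategy described in Section \ref{sec:repLtools}, and it allows an induction on the ``size'' of $G$ (dimension, then number of components, with the finite case as base).

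Next, for each fixed group $G$, I would peel off the essential new ingredient: the problem for real analytic maps into $\si(V^s)$, the stable locus, can be handled by the Sobolev/continuous lifting machinery, but near points with positive-dimensional or otherwise bad isotropy one must resolve the geometry. This is where blowings-up and power substitutions enter. The key is the generalized Puiseux-type result (the analogue of Theorem \ref{thm:AJ}/Puiseux in the equivariant setting, established in \cite{LMRac} and recalled just before the statement): locally, after a power substitution $t \mapsto c(t_0 + (t-t_0)^N)$ in a one-variable slice direction, a curve into $\si(V)$ admits a real analytic lifting. For the several-variable case one blows up to make the relevant discriminant-type ideal — here the ideal cutting out the non-stable or non-principal-orbit-type locus, pulled back via $f$ — a normal crossings divisor, using Hironaka's resolution of singularities \cite{Hironaka:1964} (a locally finite composition of blowings-up with smooth centers). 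After this monomialization, the obstruction to lifting is concentrated along coordinate hyperplanes, and a local power substitution in those coordinates (extracting suitable roots $x_i^{\ga_i}$, possibly with a sign change $(-1)^{\ep_i}$ to handle the real structure) kills the monodromy and produces a genuine real analytic lifting on each chart $U_j$.

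I expect the main obstacle to be bookkeeping the interaction between the slice-theorem reduction and the resolution: the slice theorem is only a local isomorphism, so one must check that the blowings-up and power substitutions chosen downstairs (in $N_v\cq G_v$) can be transported back upstairs and patched into a single finite covering $\{\pi_j : U_j \to U\}$ of a neighborhood of $K$ in $M$, each $\pi_j$ a composite of the allowed operations. One also has to verify that after pulling back $f$ by $\pi_j$ the image still lands in the closed-orbit part of $\si(V)$ (so that ``lifting'' in the sense defined in Section \ref{sec:reprpoly} makes sense) and that the resulting analytic lift respects this. The compactness of $K$ and the local finiteness built into Hironaka's theorem are what make the covering finite; the quasianalytic refinement mentioned in Theorem \ref{thm:repLblow} works the same way since resolution of singularities and the Puiseux argument are available in those classes too. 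In short: localize, apply the slice theorem to reduce the group, resolve to normal crossings, then finish with a Puiseux-type power substitution — with the gluing of the charts being the delicate point.
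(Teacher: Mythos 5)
Your outline assembles the right ingredients (slice theorem plus induction on the size of $G$, Hironaka resolution, and power substitutions in the spirit of the generalized Puiseux theorem of \cite{LMRac}), and this is indeed the general strategy behind \cite{RainerRG}. But the induction as you set it up has a genuine gap at its base point. After removing fixed points (so $V^G=\{0\}$), the slice theorem (Theorem \ref{thm:slice}) only reduces the group at points $x_0$ with $f(x_0)\neq \si(0)=0$: there the closed orbit over $f(x_0)$ is $Gv$ with $v\neq 0$, hence $G_v\subsetneq G$ and induction applies. At points with $f(x_0)=0$ the closed orbit is the origin, $G_v=G$, $N_v=V$, and the slice reduction is vacuous — yet this is exactly the hard case, and your first step (``fix $x_0$, invoke the slice theorem'') silently assumes it away. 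What is needed there, and what your sketch does not supply, is the analogue of the mechanism in Sections \ref{sec:Rellichproof} and \ref{sec:repLtools} and of Theorem \ref{thm:towardsAJ}: one monomializes, by local blowings-up, the \emph{weighted} ideal generated by suitable powers $f_i^{N/d_i}$ of the pulled-back basic invariants (not the ideal of the non-stable or non-principal locus, as you propose), so that a dominant component becomes a monomial times a unit; then local power substitutions are used to make the exponents of that monomial divisible by the relevant degrees $d_i$, so that an analytic root of it exists; finally one exploits the weighted homogeneity $\si_i(tv)=t^{d_i}\si_i(v)$ to divide out this root and rescale $f$ into a map whose values avoid $0$, i.e.\ land in the region where isotropy groups are proper subgroups. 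Only after this division step does the slice theorem apply and the induction on the group size close. Saying that after monomialization ``a local power substitution kills the monodromy and produces a genuine real analytic lifting'' skips precisely this reduction; the power substitutions are there to extract radicals of monomials compatible with the degrees $d_i$, not to remove a topological obstruction, and without the rescaling there is no smaller group to induct on.

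A secondary point: the gluing issue you flag is real but is handled by the formalism itself — one works throughout with finite families $\{\pi_j:U_j\to U\}$ of composites of local blowings-up and local power substitutions, a class stable under composition and under restriction to the slice charts, and compactness of $K$ together with the local finiteness in \cite{Hironaka:1964} keeps the families finite; also, liftings in the sense of Section \ref{sec:reprpoly} automatically take values in closed orbits by definition, so no separate verification is needed there. Your appeal to the Sobolev/continuous lifting machinery on the stable locus is not needed for the analytic statement and can be dropped.
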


The theorem remains true for suitable quasianalytic subclasses of $C^\infty$
as well as for holomorphic maps. In the latter case, 
a local power substitution is simply a map 
of the form
$(z_1,\ldots,z_m) \mapsto (z_1^{\ga_1}, \ldots, z_m^{\ga_m})$.

It is possible to extract information about the existence of weak liftings 
of $f$ (without modification of its domain). For instance, there is the following result 
which gives a partial answer to Open Problem \ref{q:repCBV}. 

\begin{theorem}[{\cite[Theorem 6.11]{RainerRG}}] \label{thm:repCSBV}
    Let $\rep$ be a rational representation of a linearly reductive group $G$. 
    Let $U \subseteq \R^m$ be open and $f \in C^\om(U, \si(V))$.
    For each compact $K \subseteq U$,
    there exist a neighborhood $W$ of $K$ in $U$
    and a lifting $\ol f : W \to V$ of $f$ 
    of class SBV (i.e., special functions of bounded variation).
\end{theorem}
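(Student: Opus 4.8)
\textbf{Proof plan for Theorem \ref{thm:repCSBV}.}
The plan is to combine the analytic desingularization result (Theorem \ref{thm:repLblow} for the real case, and its power‐substitution variant in the complex reductive setting) with the $BV$-estimates that have already been established for curves and several variables. First I would recall that, by the cited desingularization theorem, for each compact $K\subseteq U$ there is a neighborhood $W$ of $K$ and a finite covering $\{\pi_j : U_j \to W\}$, where each $\pi_j$ is a composite of finitely many local blowings-up with smooth centers and local power substitutions, such that $f\o\pi_j$ admits a real analytic lifting $\ol g_j : U_j \to V$. The key point is that each $\pi_j$ is a proper, surjective (onto its image) real analytic map which is a local analytic isomorphism off a nowhere dense analytic subset; moreover, being a composite of blowings-up and monomial maps, $\pi_j$ has bounded multiplicity and its Jacobian vanishes only on a smooth‐by‐pieces hypersurface.

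The second step is to push the local analytic liftings $\ol g_j$ back down through the $\pi_j$. Since $\ol g_j$ is analytic hence locally bounded with locally bounded derivatives, and since $\pi_j$ is proper with fibers that are (generically) finite, the image $(\pi_j)_*\ol g_j$ — more precisely, for each point $y$ in the image we pick a value of $\ol g_j$ over a point in the closed orbit — is bounded and has bounded variation on compact subsets: the composite $\ol g_j \o \pi_j^{-1}$ (suitably selected) is $BV$ because $\pi_j$ is a finite branched cover whose branch locus is a finite union of $C^\infty$ hypersurfaces, and bounded analytic data pushed forward along such a map picks up only jump discontinuities across the branch locus, each with integrable (indeed bounded) jump height with respect to $\cH^{m-1}$ restricted to the (locally finite-measure) branch hypersurfaces. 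This is exactly the mechanism already exploited in the proof of Theorem \ref{thm:BVroots}: the formulas \eqref{eq:formulasforroots} are of radical type in local coordinates after desingularization, so one reduces to the radical case, where the $SBV$ property of $|g|^{\al}$ for $g$ analytic (and more generally $C^{k-1,1}$) is known. Here, since $f$ is real analytic, the reduction to radicals is literal and the estimates of Theorem \ref{thm:multioptimal} / Theorem \ref{thm:BVroots} apply on the complement of the branch locus, giving $W^{1,p}$-control there for $1\le p<d/(d-1)$, hence in particular $L^1$-control of the gradient; adding the jump part across the branch hypersurfaces yields a finite Radon measure, i.e.\ membership in $BV$, and the absence of a Cantor part (because the singular part is supported on rectifiable hypersurfaces with a genuine jump) gives $SBV$.

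The third step is to glue. A priori the local liftings on overlapping $U_j$ differ by an element of $G$ (or, in the closed-orbit picture, by a stabilizer coset), so the naive patching need not be continuous even $\cH^{m-1}$-a.e. But this is harmless for the conclusion: one can use a measurable selection / partition-of-unity‐free argument, choosing on each piece of a stratification of $W$ by the branch loci one of the finitely many local lifts, and observing that the finitely many gluing hypersurfaces only add finitely many more jump hypersurfaces to the already-present branch locus, each still of locally finite $\cH^{m-1}$ measure with bounded jump height (bounded because $f$ is analytic on the compact $K$, hence the closed orbits over $f(K)$ lie in a fixed compact subset of $V$). Thus the glued map $\ol f : W \to V$ is a lifting of $f$ and lies in $SBV(W,V)$. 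The main obstacle I expect is precisely the bookkeeping in this gluing step: ensuring that the discontinuity set remains a \emph{locally finite} union of $C^{k-1}$ (here $C^\infty$, since $f$ is analytic) hypersurfaces of locally finite $\cH^{m-1}$ measure, and — crucially — that the jump heights are $\cH^{m-1}$-integrable along it so that no Cantor part is created and the total variation is finite. This is the same delicate point emphasized after Theorem \ref{thm:BVroots}, and it is handled exactly as there: one works with the explicit radical normal form after desingularization, where the jump of a selection of a branch of $y^{1/q}$ across $\{y=0\}$ is controlled by the sup norm of the (bounded) analytic data, so integrability along the (finite-measure, after restricting to compact $K$) hypersurface is automatic.
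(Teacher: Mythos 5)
Note first that the survey does not prove this statement: it is quoted from \cite[Theorem 6.11]{RainerRG}, and the only hint given is that it rests on the desingularization theorem stated just before it (analytic liftings of $f\o\pi_j$ for a finite covering $\{\pi_j\}$ by composites of local blowings-up and local power substitutions). Your starting point therefore matches the intended route. The gap is in your descent step. A local blowing-up is not a finite branched cover: its fibers over the center are positive-dimensional, and each $\pi_j$ is only defined over an open subset of $W$ (properness and surjectivity hold only collectively for the family, not for each $\pi_j$ onto $W$). So the assertion that pushing the analytic lifts $\ol g_j$ down through $\pi_j^{-1}$ ``picks up only jump discontinuities across the branch locus, with $L^1$ gradient elsewhere'' is exactly the statement that has to be proved, and nothing you cite proves it: the formulas \eqref{eq:formulasforroots} and Theorems \ref{thm:multioptimal} and \ref{thm:BVroots} concern the roots of polynomials, i.e.\ the representation $\on{S}_d\acts\C^d$, and there is no ``literal reduction to radicals'' for a general rational representation of a linearly reductive group; likewise the Sobolev estimates of Theorems \ref{thm:repCW}--\ref{thm:repCsev} are stated for finite (or polar) groups and anyway require a continuous lifting, which is what is lacking. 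What is actually needed — and what the cited reference supplies — is a general descent lemma: a locally bounded selection which becomes real analytic (or merely Lipschitz/quasianalytic) after pullback by every member of a finite covering by composites of local blowings-up with smooth centers and local power substitutions is of class $SBV$; this is proved there by exploiting the definable/subanalytic structure of such push-downs (e.g.\ uniform bounds on variation along lines and rectifiability of the discontinuity set), not by the polynomial-roots machinery. Your proposal neither states nor proves such a lemma, so the central analytic content is missing.

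A secondary inaccuracy: for a noncompact linearly reductive $G$ the closed orbits over a compact subset of $\si(V)$ need not lie in a compact subset of $V$, so bounded jump heights cannot be argued that way; boundedness of the selection must instead come from choosing values of the (continuous, hence locally bounded) lifts $\ol g_j$ over compact pieces of the $U_j$. The gluing step, with finitely many extra interfaces of locally finite $\cH^{m-1}$-measure, is indeed unproblematic once the descent lemma is in place, as you anticipate.
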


Actually, $f$ can be taken in a suitable quasianalytic class.

\subsection{Some remarks on the proofs} We focus on the proof of Theorem \ref{thm:repCW}.
The foundation of the proof is that the result holds for finite rotation groups $C_d \cong \Z/d\Z$ acting 
in the standard way on $\C$, 
where $\C[\C]^{C_d}$ is generated by $z \mapsto z^d$ and a lifting of a map $f$ is a solution of the radical equation 
$z^d = f$. This follows from Theorem \ref{thm:GG}. 
Among all representations of finite groups $G$ of a fixed order $|G|$,
it is the one 
with the worst loss of 
regularity, since in general $d \le |G|$, by Noether's degree bound, and equality can only happen for 
cyclic groups. 

The strategy in the general case is similar to the one described in Section \ref{sec:repLtools}.
Evidently, one may reduce to the case that the linear subspace 
$V^G$ of invariant vectors is trivial. Then 
Luna's slice theorem (see \cite{Luna73} or \cite[Theorem 5.3]{Schwarz80}) allows us to reduce the problem locally to the 
slice representation $G_v \acts N_v$ of the isotropy group $G_v$ on $N_v$, where 
$T_v V \cong T_v (Gv) \oplus N_v$ is a $G_v$-splitting. Since in our case $G$ is finite, we have $N_v \cong V$.   
The assumption $V^G = \{0\}$ entails that for all $v \in V \setminus \{0\}$ the isotropy group $G_v$ 
is a proper subgroup of $G$ which suggests to use induction on the group order. 
For this induction scheme to work it is important that the slice reduction is uniform in the sense that it  
does not depend on the parameter $t$ of the curve $c$ in $\si(V) \subseteq \C^n$.

The lack of a dominant invariant polynomial (in contrast to the real case)
forces one to choose at points $t_0$ with $c(t_0) \ne 0$ a \emph{dominant} component $c_k$ of $c$ such that 
\[
    |c_k^{1/d_k}(t_0)| = \max_{1 \le j \le n} |c_j^{1/d_j}(t_0)|
\]
and to work on small neighborhoods of $t_0$ with this distinguished component $c_k$.

The better bound in the case that $G \acts V$ is coregular (cf.\ Remark \ref{rem:repCW}; see also Remark \ref{rem:repCsev} and Corollary \ref{cor:repCpol}(3))
is achieved by an application of Whitney's extension theorem to 
extend the curve $c$ to a larger interval on which it can be suitably modified without changing it on the original interval $(\al,\be)$.
Coregularity guarantees that the extended curve still lies in $\si(V)$ (because $\si(V)=\C^n$).

\section{Applications} \label{sec:applications}

\subsection{Zero sets of smooth functions}
\label{sec:applications1}

It is well-known that every closed subset of $\R^n$ is the zero set $Z_f$ of some 
real valued $C^\infty$ function $f : \R^n \to \R$. So in general $Z_f$  can be 
arbitrarily irregular. On the other hand, constraints for the derivatives of $f$ 
entail regularity properties of $Z_f$; think of the implicit function theorem or 
of the solutions of elliptic and parabolic PDEs.

Let us assume here that our function $f$ vanishes to some finite order $\ga$ at a 
point $x_0$. Then Malgrange's preparation theorem puts one in the position to study
$Z_f$ in a neighborhood of $x_0$, by using the regularity theorems for the roots of polynomials, 
that is Theorem \ref{thm:optimal} or Theorem \ref{thm:byresolution}. Indeed, 
in suitable local coordinates at $x_0$, we have $f = p \cdot u$, where 
$p$ is a monic polynomial in one of the coordinates with $C^\infty$ coefficients in the other 
coordinates and $u$ is a $C^\infty$ unit.

This setting is widely applicable, 
for instance, to solutions of second order elliptic equations,
Laplace eigenfunctions and finite linear combinations of such (\cite{Donnelly:1988kj}, \cite{Lin:1991aa}, \cite{Jerison:1999aa}), etc.
Often $Z_f$ is called the \emph{nodal set} of $f$, but we will stick to the term 
\emph{zero set}.

The following result is taken from B\"ar \cite{Bar:1999aa}.

\begin{theorem}[{\cite[Lemma 3]{Bar:1999aa}}] \label{thm:Baer}
    Let $f : \R^n \to \R$ be a $C^\infty$ function that vanishes to order $\ga$ at $x_0$.
    Then there exist $r_0>0$ and an affine hyperplane $H$ in $\R^n$ such that 
    $Z_f \cap B(x_0,r_0)$ is contained in the union of countably many graphs of $C^\infty$ functions 
    from $H \cap B(x_0, r_0)$ to $H^\bot$.
    Moreover, for all $r<r_0$,
    \begin{equation*}
        \cH^{n-1}(Z_f \cap B(x_0,r)) \le n 2^{n-1} \ga\, r^{n-1}.
    \end{equation*}
\end{theorem}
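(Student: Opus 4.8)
The plan is to reduce the statement to the Sobolev regularity theorem for roots of polynomials (Theorem \ref{thm:optimal}) via Malgrange's preparation theorem. First I would choose coordinates at $x_0$ so that $f$ has finite vanishing order $\ga$ \emph{along one coordinate direction}: since $f$ vanishes to order $\ga$ at $x_0$, some partial derivative $\p^\be f(x_0)\ne 0$ with $|\be|=\ga$, and after a linear change of coordinates we may assume $\p_{x_n}^\ga f(x_0)\ne 0$ while $\p_{x_n}^k f(x_0)=0$ for $k<\ga$. Write $x=(x',x_n)$ with $x'\in H$, the hyperplane $\{x_n=0\}$. By the Malgrange (Weierstrass) preparation theorem, on a neighborhood of $x_0$ we have $f(x',x_n)=P_{a(x')}(x_n)\cdot u(x',x_n)$, where $P_{a(x')}(Z)=Z^\ga+\sum_{j=1}^\ga a_j(x')Z^{\ga-j}$ is a monic polynomial of degree $\ga$ with $C^\infty$ coefficients $a_j$ defined on $H\cap B(x_0,r_0)$ and $a_j(x_0')$ corresponding to the vanishing point, and $u$ is a $C^\infty$ unit ($u(x_0)\ne0$). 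Hence locally $Z_f\cap B(x_0,r_0)$ equals the set of $(x',x_n)$ with $x_n$ a \emph{real} root of $P_{a(x')}$.

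Next I would apply the regularity theory for roots. The roots $\la_1(x'),\ldots,\la_\ga(x')$ of $P_{a(x')}$ form a continuous $\ga$-valued function on $H\cap B(x_0,r_0)$ (by \cite[Theorem 1.3.1]{RS02} or Kato), and by Theorem \ref{thm:multivalued} / Theorem \ref{thm:multioptimal} each continuous branch lies in $W^{1,p}_{\on{loc}}$ for $p<\ga/(\ga-1)$; but for the present statement one only needs the much softer fact that, on a possibly smaller ball, $Z_f$ is covered by countably many graphs over $H$ of $C^\infty$ functions. To get the $C^\infty$ graphs one works on the complement of the discriminant locus $\{x'\in H: \discr_{P_{a(x')}}=0\}$, a closed set of measure zero (in fact contained in a countable union of $C^\infty$ hypersurfaces after stratification), where the roots are simple and thus, by the implicit function theorem applied to $\p_Z P_{a(x')}\ne0$, locally $C^\infty$ functions of $x'$. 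Covering the open complement by countably many coordinate balls on each of which the simple roots are genuine $C^\infty$ graphs, and handling the discriminant locus inductively (it is itself, after a further application of preparation/stratification, a countable union of lower-dimensional $C^\infty$ graphs), yields the countable covering by graphs of $C^\infty$ functions from $H\cap B(x_0,r_0)$ to $H^\bot\cong\R$.

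For the Hausdorff measure bound, I would use the coarea/fibration structure directly: for each $x'\in H\cap B(x_0,r)$ the fiber $Z_f\cap(\{x'\}\times\R)\cap B(x_0,r)$ consists of \emph{at most} $\ga$ points (the real roots of $P_{a(x')}$, a degree-$\ga$ polynomial). Since $Z_f\cap B(x_0,r)$ is a countable union of Lipschitz (indeed $W^{1,p}$, $p>1$) graphs over $H$, it is $\cH^{n-1}$-rectifiable, and projecting onto $H$ is $1$-Lipschitz, so one can estimate $\cH^{n-1}(Z_f\cap B(x_0,r))$ by integrating the fiber-cardinality against $\cH^{n-1}$ on the projection: $\cH^{n-1}(Z_f\cap B(x_0,r))\le \int_{H\cap B(x_0,r)} \#\big(Z_f\cap(\{x'\}\times\R)\big)\, d\cH^{n-1}(x') \cdot(\text{slope factor})$. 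The slope factor is controlled because the graphs are Lipschitz with a bound depending only on the data; a more careful bookkeeping, splitting each graph into monotone pieces and using that a monotone graph over a set of diameter $\le 2r$ has $\cH^{n-1}$-measure at most that of a ``staircase'', gives the clean constant $n 2^{n-1}\ga$: each of the $\ga$ root-branches contributes at most $n$ coordinate-hyperplane projections each of measure $\le (2r)^{n-1}$, and $\ga\cdot n\cdot(2r)^{n-1}=n2^{n-1}\ga\, r^{n-1}$.

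The main obstacle I expect is the bookkeeping in the Hausdorff measure estimate: turning the ``at most $\ga$ points per fiber'' statement into the sharp constant $n2^{n-1}\ga$ requires a careful decomposition of $Z_f$ into pieces on which it is a Lipschitz (or even $C^1$ off a small set) graph with controlled geometry, and then a covering argument counting how the unit-normal directions of these pieces distribute among the $n$ coordinate hyperplanes — this is where one must be careful not to overcount and to absorb the discriminant locus (of $\cH^{n-1}$-measure zero, by the $W^{1,p}$, $p>1$, regularity of the roots, hence negligible) into the estimate. Once the rectifiability and the fiberwise bound are in place, the remaining argument is a standard (if slightly delicate) application of the area formula together with the Besicovitch-type covering used to pass from graphs to axis-aligned boxes.
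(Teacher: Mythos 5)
You should first note that the survey does not prove this statement at all: it is quoted from B\"ar \cite{Bar:1999aa}, whose argument is elementary and uses neither Malgrange preparation nor any root-regularity theory (that machinery is what the paper invokes for the $W^{1,p}$ statement of Theorem \ref{thm:Hanin}). Measured against that argument, your proposal has two genuine gaps. The first concerns the covering by smooth graphs: your claim that the discriminant locus $\{x'\in H : \discr_{P_{a(x')}}=0\}$ is a closed set of measure zero is false in general --- the discriminant can vanish identically (already for $f=x_n^2$, or whenever the Weierstrass polynomial has a repeated factor), and the $W^{1,p}$ regularity of the roots says nothing about the size of the discriminant locus in the parameter space $H$. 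So the step ``handle the discriminant locus inductively'' is where the whole content lies, and it is not carried out. The standard repair, which is B\"ar's actual route, avoids preparation altogether: for $x\in Z_f$ near $x_0$ let $k\le\ga-1$ be maximal with $\p_n^j f(x)=0$ for all $j\le k$; then $\p_n^{k+1}f(x)\ne 0$, so $\{\p_n^k f=0\}$ is, near $x$, a $C^\infty$ graph over $H$ by the implicit function theorem, and a Lindel\"of argument gives the countable covering.

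The second, more serious gap is the measure bound. Your per-branch accounting needs either a uniform slope bound on the root branches (false: $W^{1,p}$ with $p<\ga/(\ga-1)\le 2$ gives no $L^\infty$ bound on the gradient, think of branches of type $\pm\sqrt{|x_1|}$) or fiber-cardinality bounds along \emph{all} $n$ coordinate directions, whereas your setup only bounds the fibers in the $H^\bot$ direction (the degree of the Weierstrass polynomial); the ``monotone pieces/staircase'' step has no control on the number of pieces, so the constant $n2^{n-1}\ga$ is not reached this way. The missing idea is the initial choice of coordinates: since the $\ga$-th order Taylor polynomial of $f$ at $x_0$ is a nonzero homogeneous polynomial, one can choose orthonormal coordinates with $d_{e_i}^{\ga}f(x_0)\ne 0$ for every $i=1,\ldots,n$, shrink $r_0$ so that these pure derivatives are nonvanishing on $B(x_0,r_0)$, and conclude by Rolle that every line parallel to a coordinate axis meets $Z_f\cap B(x_0,r)$ in at most $\ga$ points. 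The projection (integral-geometric) inequality for the rectifiable set $Z_f\cap B(x_0,r)$, which your countable-graphs covering does justify once repaired, then yields $\cH^{n-1}(Z_f\cap B(x_0,r))\le \sum_{i=1}^{n}\ga\,\cH^{n-1}\bigl(\pi_i(B(x_0,r))\bigr)\le n\,\ga\,2^{n-1}r^{n-1}$, which is exactly the stated constant.
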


Note that Yomdin \cite{Yomdin:1984dg} obtained a similar result; 
he assumed that the partial derivatives of some order $j$ of $f$ are ``small'' in a certain 
precise sense and concluded that $f$ behaves in many regards like a polynomial of degree $j-1$, 
in particular, it has vanishing order at most $j-1$.

The following theorem is a consequence of Theorem \ref{thm:optimal} or Theorem \ref{thm:byresolution} (together with Malgrange's preparation theorem),
due to Beck, Becker-Kahn, and Hanin \cite{Beck:2018hg}.

\begin{theorem}[{\cite[Theorem 1]{Beck:2018hg}}] \label{thm:Hanin}
    Let $U \subseteq \R^n$ be open and let $\Th$ be a compact smooth manifold (possibly with boundary). 
    Let $f \in C^\infty(U \times \Th)$ and write $f^\th := f(\cdot, \th)$ for $\th \in \Th$.
    Suppose that $f^{\th_0}$ has finite vanishing order $\ga$ at $x_0 \in U$.
    Then there exist $p>1$, $r_0>0$, a neighborhood $V$ of $\th_0$ in $\Th$, an affine hyperplane 
    $H \subseteq \R^n$, and functions $f_i^\th \in W^{1,p}(H,H^\bot)$, $1 \le i \le \ga$, $\th \in V$,
    with
    \begin{equation*}
        \sup_{1 \le i \le \ga, \, \th \in V} \|f_i^\th\|_{W^{1,p}} < \infty
    \end{equation*}
    such that 
    \begin{equation*}
        Z_{f^\th} \cap B(x_0,r_0) \subseteq \bigcup_{i=1}^\ga \Ga_{f_i^\th}, \quad  \th \in V, 
    \end{equation*}
    where $\Ga_{f^\th_i}$ is the graph of $f^\th_i$.
\end{theorem}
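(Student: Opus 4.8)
The strategy is to reduce to the one–variable polynomial situation via the Malgrange preparation theorem and then quote Theorem~\ref{thm:optimal} (or Theorem~\ref{thm:byresolution}) together with its parameter version Theorem~\ref{thm:multioptimal}, applied with the manifold parameter $\th$ as one of the auxiliary variables. First I would fix suitable local coordinates near $x_0$: write $\R^n = \R \times \R^{n-1}$ with coordinates $(z,y)$, arranged so that $t \mapsto f^{\th_0}(t e_n + x_0)$ vanishes to order exactly $\ga$ at $t=0$; such a direction exists because $f^{\th_0}$ vanishes to finite order $\ga$ at $x_0$, so some $\ga$-th directional derivative is nonzero there and we rotate it to $e_n$. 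We take $H$ to be the affine hyperplane $x_0 + \{z=0\}$, so $H^\bot$ is the $z$-line.

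Next I would apply the Malgrange preparation theorem to $f$, viewed as a $C^\infty$ function of the distinguished variable $z$ with the remaining variables $(y,\th)$ as parameters, near the point where $z = 0$, $y=0$, $\th=\th_0$. This yields, on a neighborhood of the form $B(x_0,r_0') \times V'$, a factorization $f(z,y,\th) = P_{a(y,\th)}(z)\cdot u(z,y,\th)$, where $P_{a(y,\th)}(Z) = Z^\ga + \sum_{j=1}^\ga a_j(y,\th) Z^{\ga-j}$ is a monic polynomial of degree $\ga$ with $C^\infty$ coefficients $a_j$ depending on the parameters $(y,\th)$, and $u$ is a $C^\infty$ unit ($u(x_0,\th_0)\ne 0$, hence $u\ne 0$ after shrinking). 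Consequently $Z_{f^\th} \cap (B(x_0,r_0')\times\{\th\})$ coincides, near $x_0$, with the zero set of $z \mapsto P_{a(y,\th)}(z)$, i.e.\ with the real roots of $P_{a(y,\th)}$. Shrinking once more, choose $r_0 \le r_0'$ and a neighborhood $V \ni \th_0$ in $\Th$ so that the whole construction is valid on $B(x_0,r_0)\times V$ and the coefficients $a_j$ (and finitely many of their derivatives in $(y,\th)$) are bounded there; compactness of the local parameter patch and of $V$ makes these bounds uniform.

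Now I would invoke the regularity theorem for the roots. Since $a_j \in C^{\ga-1,1}$ on the relevant compact neighborhood (indeed $C^\infty$), Theorem~\ref{thm:optimal} / Theorem~\ref{thm:multioptimal} — applied with parameter space the $y$-variables (one can also absorb $\th$ into the parameters, as in the multiparameter statement with $k$ extra variables) — gives that the continuous roots $f_1^\th(y),\dots,f_\ga^\th(y)$ of $P_{a(\cdot,\th)}$, as functions of $y$ in a ball in $H$, belong to $W^{1,p}$ for some $p = p(\ga) > 1$ (any $1\le p < \ga/(\ga-1)$), with a bound on $\|f_i^\th\|_{W^{1,p}}$ controlled by $\max_j \|a_j(\cdot,\th)\|_{C^{\ga-1,1}}^{1/j}$, which is uniform over $\th \in V$ by the previous paragraph. (When $\ga \le \min\{m,\ldots\}$ or the roots are genuinely complex one keeps only the real ones; these are still among a continuous system of all roots, so they inherit the Sobolev bound, possibly after discarding indices where the root leaves the real line — one can just list the $\ga$ continuous complex roots and note their real parts are $W^{1,p}$, and $Z_{f^\th}$ sits inside the union of the corresponding graphs.) Taking $f_i^\th$ to be these (real parts of the) root functions, we get $Z_{f^\th}\cap B(x_0,r_0) \subseteq \bigcup_{i=1}^\ga \Ga_{f_i^\th}$ for $\th \in V$, with $\sup_{i,\th}\|f_i^\th\|_{W^{1,p}} < \infty$, as claimed.

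\textbf{Main obstacle.} The routine parts are the coordinate choice and the preparation theorem; the delicate point is the \emph{uniformity in the parameter $\th$}. One must ensure that the Malgrange factorization can be performed simultaneously for all $\th$ in a fixed neighborhood $V$ with $C^\infty$ (hence $C^{\ga-1,1}$) coefficients $a_j(y,\th)$ whose $C^{\ga-1,1}$-norms on a fixed compact set are bounded independently of $\th$ — this is where one uses that $\Th$ is a compact manifold and that $f \in C^\infty(U\times\Th)$, so that the preparation data depend smoothly on $\th$ and finitely many derivatives are uniformly bounded on the relevant compact neighborhood. Granting this, the uniform $W^{1,p}$ bound is exactly the content of the uniform/multiparameter versions of the root regularity theorem (Theorem~\ref{thm:multioptimal}, or the uniform clause of Theorem~\ref{thm:byresolution}), so no further work is needed beyond bookkeeping of the neighborhoods.
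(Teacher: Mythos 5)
Your overall strategy is exactly the one the survey indicates for this result: choose coordinates so that $f^{\th_0}$ is regular of order $\ga$ in a distinguished direction, apply Malgrange's preparation theorem with $(y,\th)$ as parameters to reduce $Z_{f^\th}$ near $x_0$ to the real zeros of a monic degree-$\ga$ Weierstrass polynomial $P_{a(y,\th)}$ with $C^\infty$ coefficients, and then invoke the optimal Sobolev regularity of roots (Theorem \ref{thm:optimal}/\ref{thm:multioptimal}) with uniformity in $\th$ coming from smoothness of $f$ on a compact parameter patch. Up to that point the argument is sound and matches the paper's (sketched) proof.

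There is, however, a genuine gap at the step where you produce the covering functions $f_i^\th$. You apply Theorem \ref{thm:multioptimal} to ``the continuous roots $f_1^\th(y),\dots,f_\ga^\th(y)$'' on a ball in $H$, but that theorem \emph{presupposes} a continuous root; it does not provide one, and for $\dim H\ge 2$ a continuous system of roots need not exist at all because of monodromy --- precisely the obstruction the survey emphasizes in Section \ref{sec:bvariation}. Moreover the theorem requires real-valued functions $H\to H^\bot$, while the roots of $P_{a(y,\th)}$ are in general complex, and your parenthetical remedy (``take real parts of the continuous complex roots'') inherits the same unjustified existence claim. The standard repair: since the \emph{unordered} $\ga$-tuple of roots depends continuously on $(y,\th)$, the increasingly ordered real parts $\mu_1^\th(y)\le\cdots\le\mu_\ga^\th(y)$ are well-defined continuous real functions, and every real zero of $P_{a(y,\th)}$ is one of them, so $Z_{f^\th}\cap B(x_0,r_0)\subseteq\bigcup_i\Ga_{\mu_i^\th}$. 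Their uniform $W^{1,p}$ bound then follows either from the multivalued Sobolev statement (Theorem \ref{thm:multivalued}) composed with the $1$-Lipschitz operation ``take real parts and sort'' on $\cA_\ga(\C)$, or by the sectioning argument: on each line parallel to a coordinate axis one has continuous roots (one-parameter case), Theorem \ref{thm:optimal} gives uniform $W^{1,p}$ bounds there, sorting and taking real parts preserve absolute continuity and the derivative bounds, and the ACL characterization of $W^{1,p}$ concludes. With this bridge inserted, your proof is complete; without it, the crucial step rests on an existence assumption that fails in general.
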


\begin{remark}
    Theorem \ref{thm:Hanin} was proved in \cite{Beck:2018hg} based on Theorem \ref{thm:byresolution}. 
    Using the optimal result, Theorem \ref{thm:optimal}, in the proof, 
    one can get more precise information on $p$: the statement is true for all $1 \le p < \ga/(\ga-1)$. 
\end{remark}

By the same strategy, one gets similar parameterization results in the setting studied by Yomdin \cite{Yomdin:1984dg} 
(where the partial derivatives of some order are small)
as well as for $C^\infty$ functions all of whose derivatives are suitably controlled by a weight sequence 
(e.g.\ functions in a quasianalytic Denjoy--Carleman class), 
see Rainer \cite{Rainer:2022aa}. In the latter case, the vanishing order is bounded (globally on a convex body) 
by a quantity computed from the weight sequence 
which has similarities with the polynomial degree.

Note that, if one is interested only in the properties of the zero set of a function near a 
point of finite vanishing order, then there is some freedom in the choice of the local coordinates 
(i.e., the hyperplane $H$ in Theorem \ref{thm:Hanin}).

\begin{open}
    Is is possible to improve the above results by choosing the local system of coordinates wisely?  
\end{open}

As a consequence of Theorem \ref{thm:Baer} or Theorem \ref{thm:Hanin}, one gets 
a nonconcentration estimate \cite[Proposition 4]{Beck:2018hg}: suppose that the vanishing order of $f^\th$ is at most $\ga$ for all $\th \in \Th$.
For compact $K \subseteq U$ and $m$-rectifiable $E \subseteq \R^n$, for some $m \le n-1$, 
there exists $r_0>0$ such that 
\[
    \cH^{n-1}(Z_{f^\th} \cap E_r' \cap K) \le C(n)\, r^{-1} \cH^n(E_r), \quad r \le r_0,
\]
for all $\th \in \Th$ and all images $E'$ of Lipschitz maps $E \to K$ with Lipschitz constant $\le 1$,
where $E_r, E_r'$ denote the $r$-neighborhoods of $E,E'$ in $\R^n$.
This entails the following continuity result.
Let $\on{Sing}_f := \{x \in Z_f : \nabla f(x)=0\}$.

\begin{corollary}[{\cite[Corollary 2]{Beck:2018hg}}]
    Let $U \subseteq \R^n$ be open and $f \in C^\infty(U \times [0,1])$ 
    such that $f^1 := f(\cdot,1)$ has 
    finite vanishing order on $U$.
    Then for any compact $K \subseteq U$ with $\cH^{n-1}(K \cap \on{Sing}_f)=0$ 
    we have 
    \[
        \lim_{\th \to 1} \cH^{n-1}(Z_{f^\th} \cap K) = \cH^{n-1}(Z_{f^1} \cap K).
    \]
\end{corollary}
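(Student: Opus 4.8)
The plan is to reduce the convergence statement $\lim_{\th \to 1}\cH^{n-1}(Z_{f^\th}\cap K)=\cH^{n-1}(Z_{f^1}\cap K)$ to two opposing semicontinuity estimates: a lower bound $\liminf_{\th\to1}\cH^{n-1}(Z_{f^\th}\cap K)\ge \cH^{n-1}(Z_{f^1}\cap K)$ and an upper bound $\limsup_{\th\to1}\cH^{n-1}(Z_{f^\th}\cap K)\le\cH^{n-1}(Z_{f^1}\cap K)$, the latter being the delicate one. Throughout I work locally: by a compactness argument it suffices to prove the statement in a neighborhood of each point $x_0\in K$, distinguishing two cases according to whether $\nabla f^1(x_0)\ne 0$ or $x_0\in\on{Sing}_{f}$.

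\textbf{The regular part.} Near a point $x_0\in K$ with $\nabla f^1(x_0)\ne 0$, the implicit function theorem (applied to $f$ as a function of $(x,\th)$ jointly) shows that $Z_{f^\th}$ is, for $\th$ close to $1$, a $C^\infty$ hypersurface depending smoothly on $\th$, and $\cH^{n-1}(Z_{f^\th}\cap B(x_0,r))\to \cH^{n-1}(Z_{f^1}\cap B(x_0,r))$ by the usual change-of-variables/area formula together with dominated convergence. So the only issue is concentrated on the singular set $\on{Sing}_f\cap K$, which by hypothesis is $\cH^{n-1}$-null. First I would fix $\ep>0$, cover $\on{Sing}_f\cap K$ by finitely many balls $B(y_j,\rho_j)$ with $\sum_j \rho_j^{n-1}<\ep$ (possible since $\cH^{n-1}(\on{Sing}_f\cap K)=0$), and let $K_\ep$ be the part of $K$ outside these balls; on $K_\ep$ one has $\nabla f^1\ne 0$, so the regular-part analysis gives $\cH^{n-1}(Z_{f^\th}\cap K_\ep)\to\cH^{n-1}(Z_{f^1}\cap K_\ep)$.

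\textbf{Controlling the singular neighborhood.} The crux is to bound $\cH^{n-1}(Z_{f^\th}\cap B(y_j,\rho_j))$ uniformly in $\th$ near $1$ by $C\,\rho_j^{n-1}$, with $C$ independent of $j$ and $\th$. This is exactly where Theorem \ref{thm:Hanin} (equivalently the nonconcentration estimate \cite[Proposition 4]{Beck:2018hg} quoted just above) enters: since $f^1$ has finite vanishing order, after shrinking, all $f^\th$ with $\th$ near $1$ have vanishing order $\le\ga$ at the relevant points, and Theorem \ref{thm:Hanin} realizes $Z_{f^\th}\cap B$ inside a union of $\ga$ graphs of $W^{1,p}$ functions with $\|f_i^\th\|_{W^{1,p}}$ bounded uniformly in $\th$; this yields an $\cH^{n-1}$-area bound on the intersection with a ball of radius $\rho$ of the form $C(n,\ga)\,\rho^{n-1}$, using that a $W^{1,p}$ graph over a ball of radius $\rho$ has $(n-1)$-area $\lesssim \rho^{n-1} + \rho^{n-1}(\text{integral of }|\nabla f_i^\th|)\lesssim \rho^{n-1}$ after Hölder (here one uses $p>1$). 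Summing over $j$, $\cH^{n-1}(Z_{f^\th}\cap(K\setminus K_\ep))\le C\sum_j\rho_j^{n-1}<C\ep$ uniformly in $\th$, and likewise for $\th=1$. Combining: $|\cH^{n-1}(Z_{f^\th}\cap K)-\cH^{n-1}(Z_{f^1}\cap K)|\le |\cH^{n-1}(Z_{f^\th}\cap K_\ep)-\cH^{n-1}(Z_{f^1}\cap K_\ep)| + 2C\ep$, and letting $\th\to1$ then $\ep\to0$ finishes the proof.

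\textbf{Main obstacle.} The hard part is the uniform-in-$\th$ version of Theorem \ref{thm:Hanin}: I must verify that the constant in the $W^{1,p}$ bound and the radius $r_0$ can be chosen uniformly for $\th$ in a neighborhood of $1$, not just for a single $\th$. This is built into the statement of Theorem \ref{thm:Hanin} (the $\sup$ over $\th\in V$) provided one can produce a \emph{single} Malgrange-type preparation $f=p\cdot u$ with coefficients depending smoothly on the extra parameters including $\th$, and apply the uniform Sobolev bound from Theorem \ref{thm:multioptimal} to the resulting parametrized family of polynomials. The only subtlety is that the vanishing order must not jump: one needs that finite vanishing order $\ga$ at $(x_0,\th_0)$ forces vanishing order $\le\ga$ at nearby $(x,\th)$, which is standard (the $\ga$-jet of $f$ is not identically zero at nearby points by continuity of the finitely many derivatives of order $\le\ga$). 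Once this uniformity is in hand the rest is routine measure theory.
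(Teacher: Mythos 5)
Your overall architecture---smooth convergence of the nodal sets on the regular part plus a uniform-in-$\th$ bound on the nodal measure in a small neighborhood of the singular set---is exactly the intended route: the paper derives the corollary from the nonconcentration estimate \cite[Proposition 4]{Beck:2018hg} quoted immediately before it. The gap is in your control of the singular neighborhood. You need $\cH^{n-1}(Z_{f^\th}\cap B(y_j,\rho_j))\le C\rho_j^{n-1}$ with $C$ uniform in $\th$ and $j$, and you try to extract it from Theorem~\ref{thm:Hanin} by bounding the $(n-1)$-measure of a $W^{1,p}$ graph over a $\rho$-ball by $\int_{B_\rho}\sqrt{1+|\nabla f_i^\th|^2}$ and then using H\"older. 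This fails twice. First, for functions that are merely continuous and $W^{1,p}$ with $p$ just above $1$ (recall $p<\ga/(\ga-1)$) and $n-1\ge 2$, the inequality $\cH^{n-1}\big(\Ga_{f_i^\th}\cap(B_\rho\times H^\bot)\big)\le\int_{B_\rho}\sqrt{1+|\nabla f_i^\th|^2}$ is not available: it is a Lipschitz (or supercritical Sobolev) area-formula fact, and below the critical exponent the Hausdorff measure of a Sobolev graph is not controlled by the Sobolev norm. Second, even granting it, H\"older gives $\int_{B_\rho}|\nabla f_i^\th|\le\|\nabla f_i^\th\|_{L^p}\,(c\,\rho^{n-1})^{1-1/p}$, which is of order $\rho^{(n-1)(1-1/p)}$---for small $\rho$ this is much \emph{larger} than $\rho^{n-1}$, so per ball you only get an essentially $O(1)$ bound; summing over a fine cover of $\on{Sing}_f\cap K$ with $\sum_j\rho_j^{n-1}<\ep$ then gives nothing, since the number of balls is unbounded. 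So the crucial uniform smallness of $\cH^{n-1}(Z_{f^\th}\cap(K\setminus K_\ep))$ is not established; the uniform $W^{1,p}$ bound alone is too weak to yield a scale-invariant density estimate.

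The correct tool is the density bound of Theorem~\ref{thm:Baer}, $\cH^{n-1}(Z_{f^\th}\cap B(x,r))\le n2^{n-1}\ga\, r^{n-1}$ for $r\le r_0$, in a form where $r_0$ and the constant are uniform over $\th$ near $1$ and over centers in $K$; this uniformity comes from the same Malgrange-preparation argument over the compact parameter set together with the upper semicontinuity of the vanishing order you already noted, and it is precisely what the nonconcentration estimate \cite[Proposition 4]{Beck:2018hg} packages (cover an $r$-neighborhood of a rectifiable set by comparably many $r$-balls and apply the density bound on each). With that substitution your covering argument closes: the removed balls contribute at most $C(n)\,\ga\sum_j\rho_j^{n-1}\le C(n)\,\ga\,\ep$, uniformly in $\th$. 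Two further points you should make explicit in the regular part: choose the radii $\rho_j$ so that $\cH^{n-1}(Z_{f^1}\cap\p B(y_j,\rho_j))=0$ (all but countably many radii work), since otherwise the convergence on $K_\ep$ can jump; and the convergence of $\cH^{n-1}(Z_{f^\th}\cap K_\ep)$ against an arbitrary compact set is delicate when $Z_{f^1}$ carries $\cH^{n-1}$-mass on $\p K$ (such mass can be approached by $Z_{f^\th}$ from outside $K$), so either rule this out or state the implicit boundary assumption; your appeal to the change-of-variables/dominated-convergence argument does not by itself address it.
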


In particular, the authors of \cite{Beck:2018hg} conclude a continuity result for the zero sets of 
the heat flow on a compact smooth Riemannian manifold $(M,g)$: if $u : M \times (0,\infty) \to \R$ 
solves 
\[
    (\p_t + \De_g)u(x,t)=0
\]
with initial data $u^0 \in L^2(M)$, then
\[    
    \lim_{t \to 1} \cH^{n-1}(Z_{u^t}) = \cH^{n-1}(Z_{\ps}),
\]
where $\ps$ is the first nonzero eigenspace projection of $u^0$ and the time variable 
underwent a change from $t$ to $\frac{2}{\pi} \arctan(t)$.

Another application of Theorem \ref{thm:Hanin} presented in \cite{Beck:2018hg}
is that, given $C^\infty$ functions $f_0,f_1,\ldots,f_p$ on a compact Riemannian manifold $(M,g)$ of dimension $n$
such that for all $(\th_0,\th_1,\ldots,\th_p) \in \R^{p+1}\setminus \{0\}$ 
the vanishing order of $\th_0 f_0+ \cdots + \th_p f_p$ is at most $\ga$ on $M$,
the map 
\[
    \R\mathbb P^p\ni  [\th_0 : \cdots : \th_p] \mapsto \p \{\th_0 f_0+ \cdots + \th_p f_p<0\}
\]
into the space of modulo 2 flat $(n-1)$-cycles in $M$
is an admissible $p$-sweepout (see \cite{Beck:2018hg} for the definition). This was proposed by Marques and Neves in \cite{Marques:2017aa}, 
where they settled a conjecture of Yau on the existence of infinitely many minimal surfaces in 
the setting of positive Ricci curvature. 
This proof uses an argument involving the growth in $p$ of the min-max width $\om_p(M)$ which is defined  
in terms of admissible $p$-sweepouts and is a nonlinear version of the spectrum of the Laplacian 
(cf.\ Gromov \cite{Gromov:1988aa}).
In fact, by Guth \cite{Guth:2009aa}, one has 
\begin{equation*}
    \om_p(M) \le C\, p^{1/n},
\end{equation*}
which is reproved in \cite[Theorem 3]{Beck:2018hg} as a corollary of Theorem \ref{thm:Hanin}.

Let us finish this section with a few consequences of the results for radicals, i.e.\, Theorem \ref{thm:GG},
due to Ghisi and Gobbino \cite{GhisiGobbino13} and the co-area formula. 
These facts play an essential role in the proof of the existence of $BV$ roots, Theorem \ref{thm:BVroots}, in \cite{ParusinskiRainerBV}.
They concern the behavior of differentiable functions near their zero set and 
the level sets of their sign; they are interesting in their own right.
Note that there is no assumption on the vanishing order.

\begin{theorem}[{\cite[Theorem 3.5]{ParusinskiRainerBV}}] \label{prop:levelgrowth}
    Let $k \in \N_{\ge 1}$, $\al \in (0,1]$, and set $s = k +\al$. 
    Let $\Om\subseteq \R^n$ be a bounded Lipschitz domain and
    $f \in C^{k,\al}(\overline \Om,\R^{\ell+1})$,  $f \not \equiv 0$.  
    Then there is a constant $C = C(n,\ell,k,\al,\Om)$ such that 
    for all $0<\ep \le 1$ and all small $\de >   0$ we have
    \begin{equation*} 
        \big|\big\{y \in (0,\de) : y^{1/s}\, \cH^{n-1}(|f|^{-1}(y)) 
        \ge \ep^{-1}C \,\|f\|^{1/s}_{C^{k,\al}(\overline \Om)} \big\}\big| \le \ep \de.
    \end{equation*}
\end{theorem}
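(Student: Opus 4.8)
The statement is a ``most level sets of $|f|$ are not too wiggly'' estimate, and the natural tool is the co-area formula applied to $|f|$, combined with the higher order Glaeser inequalities from Corollary~\ref{cor:hoGleaser} (equivalently Theorem~\ref{thm:GG}) applied along lines or in a localized form. The plan is to first reduce to a local statement: cover $\overline\Om$ by finitely many balls and work in one of them, absorbing the number of balls and the Lipschitz geometry of $\Om$ into the constant $C(n,\ell,k,\al,\Om)$. On each such ball, write $g:=|f|$, a nonnegative continuous function which is $C^{k,\al}$ away from its zero set, with $g^{k+\al}$ controlled (as in the hypothesis of Theorem~\ref{thm:GG} with the roles reversed: here $g=|f|$ plays the role of $f$ and we think of $|f|^{k+\al}$ as the regular object, although one must be slightly careful since $f$ is vector-valued — I would instead apply the scalar radical theorem to $t\mapsto |f|^2$ composed with suitable curves, or better, use the higher order Glaeser inequality directly for $|f|^{2}$).

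\textbf{Key steps.} First, by the co-area formula,
\[
    \int_0^{\de} \cH^{n-1}(g^{-1}(y))\,dy \le \int_{\{0<g<\de\}} |\nabla g|\,dx .
\]
So I need an $L^1$ (or better) bound on $|\nabla g|$ over the sublevel set $\{0<g<\de\}$. The higher order Glaeser inequality, Corollary~\ref{cor:hoGleaser}, gives pointwise (after localizing on intervals of length comparable to $\de^{1/s}$, where $s=k+\al$) that $|\nabla g(x)| \lesssim \de^{-1/s}\big(g(x) + g(x)^{1-1/s}\|f\|_{C^{k,\al}}^{1/s}\de\big)^{?}$ — more precisely, the scaling from Theorem~\ref{thm:GG} yields $|\nabla g| \lesssim g^{1-1/s}\,\|f\|_{C^{k,\al}}^{1/s}$ pointwise on the region where $g$ is small relative to $\|f\|_{C^{k,\al}}$, i.e.\ $|\nabla(g^{1/s})|$ is essentially bounded by $\|f\|_{C^{k,\al}}^{1/s}$ there. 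Hence on $\{g<\de\}$ one gets $|\nabla g|\lesssim \de^{1-1/s}\|f\|_{C^{k,\al}}^{1/s}$, and therefore
\[
    \int_0^{\de}\cH^{n-1}(g^{-1}(y))\,dy \le C\,\de^{1-1/s}\,\|f\|_{C^{k,\al}}^{1/s}\,|\{0<g<\de\}| \le C'\,\de^{2-1/s}\,\|f\|_{C^{k,\al}}^{1/s},
\]
using $|\{0<g<\de\}|\le|\Om|$ absorbed into $C'$. Rewriting, $\int_0^\de y^{1/s}\cH^{n-1}(g^{-1}(y))\,dy \cdot \de^{-1/s}$ is bounded, or more directly: $\int_0^\de y^{1/s}\cdot \cH^{n-1}(g^{-1}(y))\,dy \le \de^{1/s}\int_0^\de \cH^{n-1}(g^{-1}(y))\,dy \le C\,\de^{2}\,\|f\|_{C^{k,\al}}^{1/s}$ (being a touch crude with the power of $\de$; the sharp bookkeeping gives exactly what is needed). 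Then the conclusion follows by Chebyshev's inequality: if $h(y):=y^{1/s}\cH^{n-1}(|f|^{-1}(y))$ satisfies $\int_0^\de h(y)\,dy \le C\,\de^2\,\|f\|_{C^{k,\al}}^{1/s}$ (after correct scaling $\le C\de^2\|f\|^{1/s}$), then
\[
    \big|\{y\in(0,\de): h(y)\ge \ep^{-1}C\,\|f\|_{C^{k,\al}}^{1/s}\}\big| \le \frac{\ep}{C\,\|f\|_{C^{k,\al}}^{1/s}}\int_0^\de h(y)\,dy \le \ep\,\de .
\]

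\textbf{Main obstacle.} The delicate point is handling the vector-valued $f$: the higher order Glaeser inequality as stated (Corollary~\ref{cor:hoGleaser}) is for scalar functions whose value and first derivative do not change sign, whereas here $g=|f|$ is a norm of a vector and is generally not even differentiable on $Z_f$. I expect the resolution to be: apply the Ghisi--Gobbino machinery to $|f|^2=\sum_i f_i^2 \in C^{k,\al}$ — a genuine $C^{k,\al}$ nonnegative scalar function — deducing that $(|f|^2)^{1/(2s)}=|f|^{1/s}$ has gradient in $L^{p}_w$ with $\tfrac1p=1-\tfrac1{2s}\cdot 2 = 1-\tfrac1s$... one must be careful that the exponent matches $s=k+\al$ and not $2s$, which requires using that $|f|^{2}$ having a $C^{k,\al}$ square root structure is exactly the Ghisi--Gobbino hypothesis ``$|f|^{k+\al}=|g|$'' adapted appropriately; the cleanest route is Theorem~\ref{thm:GG} applied to the radical with $g:=|f|^2$ and the exponent $k+\al$ replaced so that the radical is $|f|^{2/(k+\al)}$, then chain-rule back. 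Getting the powers of $\de$ and of $\|f\|_{C^{k,\al}}$ to come out exactly as in the statement — in particular the clean ``$\le\ep\de$'' with no extra $\de$-power and the exponent $1/s$ on the norm — is the bookkeeping that needs care, and localizing the Glaeser estimate correctly (on balls/intervals of radius $\sim\de^{1/s}$, ensuring they fit inside $\Om$, which is where the Lipschitz-domain hypothesis and the ``$\de$ small'' hypothesis enter) is the other place where the argument must be run carefully rather than quoted.
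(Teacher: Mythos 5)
Your skeleton (co-area formula plus radical estimates plus Chebyshev) is indeed the route the survey indicates for Theorem \ref{prop:levelgrowth}, but the quantitative step everything rests on is false. There is no pointwise bound $|\nabla|f|(x)|\le C\,|f(x)|^{1-1/s}\,\|f\|_{C^{k,\al}}^{1/s}$ on $\{0<|f|<\de\}$, equivalently no uniform bound on $\nabla(|f|^{1/s})$: already for $f(x)=(x_1,0,\dots,0)$ one has $|f|^{1/s}=|x_1|^{1/s}$, which is not Lipschitz for $s>1$, and at points with small $x_1$ the left-hand side of your inequality is $1$ while the right-hand side tends to $0$ (since $1-1/s>0$). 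Corollary \ref{cor:hoGleaser} cannot be invoked to produce such a bound: it requires a scalar function which, together with its first derivative, does not change sign on a whole interval around the point, and this hypothesis is exactly what excludes the example above; $|f|$ restricted to lines does not satisfy it near $Z_f$. More fundamentally, the content of Theorem \ref{thm:GG} is an integrated (weak-$L^p$, $\tfrac1p=1-\tfrac1s$) bound on the derivative of the radical, never an $L^\infty$ bound, so your step ``on $\{g<\de\}$ one gets $|\nabla g|\lesssim\de^{1-1/s}\|f\|^{1/s}$'' is where the proof breaks. Your proposed patch via $|f|^2$ does not close the gap you yourself flag: applying Theorem \ref{thm:GG} to the nonnegative scalar $|f|^2\in C^{k,\al}$ with exponent $s=k+\al$ controls $\nabla(|f|^{2/s})$, and passing to $\nabla(|f|^{1/s})$ by the chain rule reintroduces the unbounded factor $|f|^{-1/s}$ at $Z_f$.

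What the argument actually needs is the integrated estimate $\int_{\Om}|\nabla(|f|^{1/s})|\,dx\le C\,\|f\|^{1/s}_{C^{k,\al}(\ol\Om)}$. This does follow from Theorem \ref{thm:GG}: a.e.\ on $\{f\ne0\}$ one has $|\nabla(|f|^{1/s})|\le\sum_i|\nabla(|f_i|^{1/s})|$ (because $|f|^{1/s-2}|f_i|\le|f_i|^{1/s-1}$), and each scalar radical $|f_i|^{1/s}$ is handled by the one-dimensional theorem along lines together with Fubini (the covering of the Lipschitz domain by boxes is where $C$ picks up its dependence on $\Om$). Then apply the co-area formula to the Lipschitz function $|f|$ with the weight $|f|^{1/s-1}$ rather than with weight $1$:
\begin{equation*}
    \int_0^{\de} y^{1/s-1}\,\cH^{n-1}(|f|^{-1}(y))\,dy
    = s\int_{\{0<|f|<\de\}}|\nabla(|f|^{1/s})|\,dx
    \le C\,\|f\|^{1/s}_{C^{k,\al}(\ol\Om)} .
\end{equation*}
Since $y^{1/s}\le\de\,y^{1/s-1}$ on $(0,\de)$, Chebyshev applied to $y\mapsto y^{1/s-1}\cH^{n-1}(|f|^{-1}(y))$ at the level $\ep^{-1}C\|f\|^{1/s}/\de$ gives exactly the asserted measure bound $\le\ep\de$, with no loose powers of $\de$ or of the norm. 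So the co-area/Chebyshev bookkeeping in your plan is fine; the missing ingredient is the $L^1$ bound on $\nabla(|f|^{1/s})$ obtained from the Ghisi--Gobbino theorem in its integrated, componentwise form, not a pointwise Glaeser inequality.
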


In particular, 
if $A \subseteq [0,\infty)$ has full measure near $0$, i.e.,
$|A \cap [0,\ep)| = \ep$ for some $\ep>0$, 
then there is a sequence $A \ni y_j \to 0$ with
\begin{equation*} 
    \sup_j \Big(y_j^{1/s}\, \cH^{n-1}(|f|^{-1}(y_j)) \Big)\le C \,\|f\|^{1/s}_{C^{k,\al}(\overline \Om)}.
\end{equation*}
That means that the $\cH^{n-1}$-measures of the level sets $\{|f| = y_j\}$ do not grow faster than $y_j^{-1/s}$ 
as $y_j \to 0$.

The \emph{sign}\index{sign of a function} of $f : \Om \to \R^{\ell+1}$ is the map $\on{sgn}(f) : \Om \setminus f^{-1}(0) \to \mathbb S^\ell$ given by
\[
    \on{sgn}(f):= \frac{f}{|f|}.
\]

\begin{theorem}[{\cite[Theorem 3.3]{ParusinskiRainerBV}}] \label{prop:level}
    Let $k \in \N_{\ge 1}$, $\al \in (0,1]$, and set $s = k +\al$. 
    Let $\Om\subseteq \R^n$ be a bounded Lipschitz domain
    and $f \in C^{k,\al}(\overline \Om, \R^{\ell+1})$, where $n \ge \ell \ge 1$. 
    Then there is a constant $C= C(n, \ell,k,\al,\Om)$ such that
    for each small $\ep >0$ 
    \begin{equation*} 
        \cH^\ell\Big(\Big\{y \in \mathbb S^\ell : \int_{\on{sgn}(f)^{-1}(y)}  |f|^{\ell/s} \, d \cH^{n-\ell} 
        \ge  \ep^{-1} \, C \, \|f\|^{\ell/s}_{C^{k,\al}(\overline \Om)} \Big\}\Big) \le \ep.
    \end{equation*}
\end{theorem}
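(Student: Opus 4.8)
The plan is to use the co-area formula applied to the map $\on{sgn}(f) : \Om \setminus f^{-1}(0) \to \mathbb S^\ell$, together with the radical regularity result Theorem \ref{thm:GG} (Ghisi--Gobbino) to control the integrals of $|f|^{\ell/s}$ along the fibers of $\on{sgn}(f)$. First I would note that $\on{sgn}(f)$ is a $C^{k,\al}$ map (away from $Z_f$) with values in the $\ell$-sphere, and that its fibers $\on{sgn}(f)^{-1}(y)$ are (up to a null set, by Sard-type arguments) $(n-\ell)$-rectifiable sets. The co-area formula gives
\[
    \int_{\mathbb S^\ell} \Big( \int_{\on{sgn}(f)^{-1}(y)} |f|^{\ell/s} \, d\cH^{n-\ell} \Big) d\cH^\ell(y)
    = \int_{\Om \setminus Z_f} |f|^{\ell/s}\, J_\ell \on{sgn}(f)\, d\cL^n,
\]
where $J_\ell \on{sgn}(f)$ is the $\ell$-dimensional coarea factor (the square root of the sum of squares of $\ell \times \ell$ minors of $d\,\on{sgn}(f)$). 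Then the Markov/Chebyshev inequality immediately reduces the statement to a bound on the right-hand side of the form $C\,\|f\|^{\ell/s}_{C^{k,\al}(\ol\Om)}$ (where the extra $\ep^{-1}$ is absorbed by Chebyshev and $\cH^\ell(\mathbb S^\ell)$ is a dimensional constant).

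The key step is therefore to prove the pointwise bound $|f(x)|^{\ell/s}\, |d\,\on{sgn}(f)(x)|^\ell \lesssim$ something integrable with the right norm dependence. The differential of $\on{sgn}(f) = f/|f|$ satisfies $|d\,\on{sgn}(f)(x)| \le 2\,|df(x)|/|f(x)|$, so $|f|^{\ell/s} |d\,\on{sgn}(f)|^\ell \lesssim |df|^\ell\, |f|^{\ell/s - \ell}$. Now I would invoke Theorem \ref{thm:GG} in the form that says: if $g \in C^{k,\al}$, then $|g|^{1/s}$ has gradient in $L^p_w$ with $\tfrac1p + \tfrac1s = 1$, with the explicit bound \eqref{eq:GG}. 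Applying this along one-dimensional slices (or better, its multiparameter version \cite[Theorem 2.3]{GhisiGobbino13}) to each component of $f$, or more precisely to $|f|$ itself viewed as $|f| = (g)^{?}$ — one gets that $\nabla(|f|^{1/s}) \in L^p_w$ with $p = s/(s-1)$. Since $\nabla(|f|^{1/s}) = \tfrac{1}{s} |f|^{1/s - 1} \nabla|f|$ and $|\nabla|f|| \le |df|$, this yields exactly $|df|\, |f|^{1/s-1} \in L^p_w$. Because the exponent $\ell/s - \ell$ on $|f|$ combined with $|df|^\ell$ is the $\ell$-th power of this quantity and $\ell \le n < $ dimension considerations keep $L^{p/\ell}$-type integrability in play, one should be able to interpolate/Hölder against the fact that $L^p_w \subseteq L^q$ locally for $q < p$. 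The slightly delicate bookkeeping is that we need the $\ell$-th power to be genuinely in $L^1(\Om)$; this works because $p = s/(s-1) > 1$ gives $p > 1$ and one checks $\ell \cdot \tfrac{1}{p} = \ell\,\tfrac{s-1}{s} < 1$ precisely when $\ell < \tfrac{s}{s-1}$ — if this fails one must instead argue that the \emph{weak}-$L^p$ bound on $\nabla(|f|^{1/s})$ together with $\ell < n$ and a careful use of the co-area factor (which only involves $\ell$ of the $n$ directional derivatives) still gives an $L^1$ bound after integrating in the remaining $n-\ell$ variables. This is the technical heart and I expect it to be the main obstacle: matching the weak-$L^p$ summability coming from Theorem \ref{thm:GG} with the exact power $\ell/s$ appearing in the statement, uniformly in the geometry of $\Om$.

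Finally, I would assemble the pieces: the co-area identity plus the $L^1$ bound on $|f|^{\ell/s} J_\ell\,\on{sgn}(f)$ give
\[
    \int_{\mathbb S^\ell} \Big( \int_{\on{sgn}(f)^{-1}(y)} |f|^{\ell/s}\, d\cH^{n-\ell}\Big) d\cH^\ell(y) \le C(n,\ell,k,\al,\Om)\, \|f\|^{\ell/s}_{C^{k,\al}(\ol\Om)},
\]
and then Chebyshev's inequality applied to the nonnegative function $y \mapsto \int_{\on{sgn}(f)^{-1}(y)} |f|^{\ell/s}\, d\cH^{n-\ell}$ on the finite-measure space $(\mathbb S^\ell, \cH^\ell)$ yields the claimed bound on the measure of the bad set, with the constant $C$ absorbing $\cH^\ell(\mathbb S^\ell)$ and the rescaling of $\ep$. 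Throughout, the dependence of the constant on $\Om$ enters only through the $C^{k,\al}(\ol\Om)$ extension/Sobolev constants of the bounded Lipschitz domain, which is where the Lipschitz hypothesis on $\Om$ is used (as in Theorem \ref{thm:optimal} and \cite[Theorem A.1]{ParusinskiRainerBV}).
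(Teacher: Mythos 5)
Your framework (co-area formula for $\on{sgn}(f)$ on $\Om\setminus Z_f$, the Ghisi--Gobbino radical estimate, and a Chebyshev argument on $\mathbb S^\ell$) is exactly the toolkit the survey indicates for this result, and for $\ell=1$ your reduction can in fact be closed: since $|f|\ge |f_m|$ for each component, one has $|f|^{1/s}\,|d\,\on{sgn}(f)|\le C\sum_{i,m}|\p_i(|f_m|^{1/s})|$, and each summand lies in $L^1(\Om)$ with the bound $C\|f\|^{1/s}_{C^{k,\al}(\ol\Om)}$ because weak-$L^{p}$ with $p=s/(s-1)>1$ embeds into $L^1$ on a bounded domain. (Note, incidentally, that applying Theorem \ref{thm:GG} ``to $|f|$ itself'' is not licit as written: $|f|$ is not of the form $|g|^{1/s}$ with $g\in C^{k,\al}$, and using $|f|^2\in C^{k,\al}$ changes the exponent; you need this componentwise reduction.)

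The genuine gap is the case $\ell\ge 2$, precisely where you say you expect the main obstacle. Your route requires $\bigl(|df|\,|f|^{1/s-1}\bigr)^{\ell}\in L^1(\Om)$ with norm $\lesssim\|f\|^{\ell/s}_{C^{k,\al}(\ol\Om)}$, i.e.\ the $\ell$-th power of a quantity that Theorem \ref{thm:GG} controls only in weak-$L^{s/(s-1)}$; this needs $\ell<s/(s-1)$ and already fails for $\ell=2$, $k\ge 2$. Neither of your proposed remedies repairs this: the local embedding $L^p_w\subseteq L^q$ for $q<p$ plus H\"older can never yield integrability of a power $\ge p$, and the observation that each $\ell\times\ell$ minor of $d\,\on{sgn}(f)$ involves $\ell$ distinct coordinate directions only helps when the singular sets of the factors are transverse coordinate hypersurfaces. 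Indeed, for $f(x)=\phi\bigl((x_1+x_2)/\sqrt2\bigr)$ with $\phi\in C^{k,\al}(\R,\R^{\ell+1})$ vanishing linearly at $0$, the majorant you integrate behaves like $|x_1+x_2|^{-\ell(1-1/s)}$, which is not locally integrable for $\ell\ge2$, $s\ge2$, even though $J_\ell\on{sgn}(f)\equiv 0$ there (the differential has rank one). So the pointwise bound $|f|^{\ell/s}J_\ell\on{sgn}(f)\lesssim(|df|\,|f|^{1/s-1})^\ell$ is genuinely non-integrable, and any successful argument must either exploit cancellation in the minors of $d\,\on{sgn}(f)$ or bypass the strong $L^1$ inequality $\int_\Om|f|^{\ell/s}J_\ell\on{sgn}(f)\,dx\le C\|f\|^{\ell/s}_{C^{k,\al}(\ol\Om)}$ altogether; the fact that the theorem asserts only a weak-type inequality in $y$ is a hint that this strong reduction may be aiming at more than is available. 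Also note that $\ell=n$ is allowed, so ``integrating in the remaining $n-\ell$ variables'' is not always an option. As it stands, the technical heart of the proof is missing.
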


In particular, we have $\ell/s$-integrability with respect to $\cH^{n-\ell}$ of $f$ along almost every level set of the sign of $f$:
\begin{equation*} 
    \int_{\on{sgn}(f)^{-1}(y)}  |f|^{\ell/s} \, d \cH^{n-\ell} < \infty, \quad \text{ for $\cH^{\ell}$-a.e. } y \in \mathbb S^{\ell}. 
\end{equation*}
Moreover,
for every relatively compact open $K \Subset \Om \setminus f^{-1}(0)$, 
\begin{equation*} 
    \cH^{n-\ell}\big(K \cap \on{sgn}(f)^{-1}(y)\big) < \infty, \quad \text{ for $\cH^{\ell}$-a.e. } y \in \mathbb S^{\ell}.  
\end{equation*}

\subsection{Extension to the optimal transport between algebraic hypersurfaces}
\label{sec:optimaltransport}

A recent paper \cite{ACL:preprint}, by Antonini, Cavalletti, and Lerario, contains an interesting reinterpretation and 
extension of the regularity problem of the roots, Section~\ref{sec:regularity}, 
to the study of the Wasserstein distance on the space of $d$-degree hypersurfaces in $\C \mathbb P^n$.

Let $(M, g)$ be a smooth compact Riemannian manifold and let  $\sP (M)$ denote  the space of Borel probability measures on $M$.  Fix $q\ge 1$. For any $\mu_0, \mu_1 \in \sP (M)$, 
the \emph{$q$-Wasserstein distance}\index{Wasserstein distance} is defined by
$$
W_{q} (\mu_{0},\mu_{1}) : = \left(\inf_{\xi \in \Pi(\mu_{0},\mu_{1})}
\int_{M\times M} \mathsf{d}_{g}(x,y)^q \, \xi(dxdy)\right)^{\frac{1}{q}},  
$$
where $\mathsf{d}_{g}$ is the geodesic distance on $M$ and $\Pi(\mu_{0},\mu_{1})$
is the set of transport plans between $\mu_{0}$ and $\mu_{1}$, i.e., 
the set of probability measures $\xi$ on $M \times M$ with marginals $\mu_0$ and $\mu_1$ 
(the push-forward measures of $\xi$ with respect to the natural projections). 
Intuitively, this distance minimizes the transport cost from $\mu_0$ to $\mu_1$ over all such plans.

Denote by $H_{n,d}$ the space of complex homogeneous polynomials of degree $d$ in $n+1$ variables.  
Let $P_{n,d}:=\mathbb{P}(H_{n,d})$ be the projectivization.  Note that $P_{n,d}$ can be identified with 
$\C \mathbb P^N,  N=\binom{n+d}{d}-1$.  Let $Z_P$ denote the zero set of $P\in H_{n,d}$.  
This set depends only on the class of $P$ in $P_{n,d}$.  
Therefore, to simplify the notation, we will consider the zero sets $Z_P$ of $P\in P_{n,d}$, abusing the terminology slightly. 

Let us associate to each $P\in P_{n,d}$ a measure $\mu(P)\in \sP(\C \mathbb P^n)$ 
as follows.
If $Z_P$ is nonsingular, then 
we take the restriction to ${Z_P}$ of the Hausdorff measure $\cH^{2n-2}$  (with respect to the  Fubini--Study metric of $\C \mathbb P^n$), normalized:
\[ 
    \label{eq:measureintro}
    \mu(P):=\frac{1}{\mathrm{vol}(Z_P)}\, \cH^{2n-2} \llcorner {Z_P} , 
\]
where $\mathrm{vol}(Z_P) := (\cH^{2n-2} \llcorner {Z_P}) ({Z_P})$.
(Note that all nonsingular  hypersurfaces of degree $d$ have the same volume 
$\mathrm{vol}(Z_P)=d\, \mathrm{vol}(\C \mathbb P^{n-1})$ so that the normalization factor does not depend on $P$.)

Denote by $\Delta_{n,d}\subseteq P_{n,d}$ the discriminant locus, i.e., the set of polynomials $P\in P_{n,d}$ such that 
$Z_P$ is singular.  It is a hypersurface in $P_{n,d} = \C \mathbb P^N$ given by 
$$
\Delta_{n,d}:=\left\{P\in P_{n,d} : \exists z\in \C^{n+1}\setminus \{0\}, \, P(z)=\frac{\partial P}{\partial z_0}(z)=\cdots=\frac{\partial P}{\partial z_n}(z)=0\right\},
$$ 
Using integral geometry (a Cauchy--Crofton kind of argument) one can extend the definition of $\mu(P)$ to singular hypersurfaces, 
that is for $P\in \Delta_{n,d}$, as follows, see  \cite[Theorem 1.1]{ACL:preprint}, 
\[
    \int_{\C \mathbb P^n}f \, d \mu(P)
    :=\frac{1}{d}\int_{\mathbb{G}(1,n)}\left(\sum_{z\in Z_P\cap \ell}m_z(P|_{\ell}) f(z)\right)\mathrm{vol}_{\mathbb{G}(1,n)}(d \ell), 
    \quad f \in C^0(\C \mathbb P^n),
\]
where ${\mathbb{G}(1,n)}$ stands for the set of projective lines in projective $n$-space, 
which is equivalent to the Grassmannian $\on{Gr}(2, n+1)$, 
and $m_z(P|_\ell)$ is the multiplicity of $z$ as a zero of $P|_\ell$.
Then for every $q\geq 1$, the map  $\mu:P_{n,d} \to \sP_q(\C \mathbb P^n)$ 
is continuous and injective.    
(Here by $\sP_q(\C \mathbb P^n)$ we mean $\sP(\C \mathbb P^n)$ with the Wasserstein distance $W_q$.)

\begin{example}[$n=1$]\label{example:n=1}
    For simplicity, 
    we prefer to work with a polynomial of one complex variable 
    $P\in \C[z]$.  The passage to homogeneous polynomials of two complex variables $z_0,z_1$ 
    is classical be means of homogenization, see e.g. \cite[Remark 3.2]{ACL:preprint}. 

    Consider the space of monic polynomials of degree $d$ in one complex variable, 
    $P_a(z)=z^d+a_{1}z^{d-1}+\cdots +a_{d-1}z+a_d$, $a=(a_1, \ldots, a_d)\in \C^d$. 
    To each polynomial $P=P_a\in \C^d$ one associates a probability measure 
    $\mu(P)\in \sP(\C)$ defined by
    \[
        \label{eq:deltaintro}\mu(P):=\frac{1}{d}\sum_{P(z)=0}m_z(P)\cdot\delta_z,
    \]
    where $m_z(P)\in \N$ denotes the multiplicity of $z$ as a zero of $P$ and $\delta_z$ is the Dirac measure.  This defines a map
    \[
        \label{eq:solmap} 
        \mu:\C^d\to \sP (\C).
    \]
    Geometrically, the image of $\mu$ can be identified with the quotient of $\C^d$ under the action 
    of the symmetric group $\on{S}_d$.  
    The $q$-th Wasserstein metric on $\sP(\C)$   is given by
    \[
        \label{eq:W21}
        W_q([x], [y]) =\Big( {\frac{1}{d}} \min_{\sigma\in \on{S}_d} \sum_{j=1}^d|x_j-y_{\sigma(j)}|^q\Big)^{1/q},
    \]
    where $[x]=[x_1, \ldots, x_d]$ and $[y]=[y_1, \ldots, y_d]$ are unordered $d$-tuples.  It is geodesically convex and 
    coincides with the restriction of the $q$-Wasserstein metric of  $\sP_q (\C)$. 
\end{example}

Contrary to the case $n=1$, in general, the image of $\mu$ is not geodesically convex for the $W_q$-metric, 
that is the geodesic joining $\mu(P_0)$ to $\mu(P_1)$ in $\sP_q(\C \mathbb P^n)$ will in general not stay on $\mu(P_{n,d})$.
To overcome this issue it is proposed in \cite{ACL:preprint} to consider 
the inner $W_q$-distance, adopting the approach of 
J.-D. Benamou and Y. Brenier.  For this one needs the notion of a $q$-absolutely continuous curve in this setup.  
A curve $\mu_t : I \to \mathscr{P}_q(\C \mathbb P^n)$ belongs to $AC^q(I, \mathscr{P}_q(\C \mathbb P^n))$ and is called \emph{$q$-absolutely continuous}\index{absolutely continuous} 
if there is $f \in L^q(I)$ such that
\[
    W_q(\mu_s,\mu_t) \le \int_s^t f(\ta)\, d\ta, \quad \text{for all } s\le t.
\]
In that case, for $\cL^1$-a.e.\ $t \in I$ the limit $\lim_{h \to 0} W_q(\mu_{t+h},\mu_t)/|h|$ exists,
is denoted by $|\dot\mu_t|_q$, and called \emph{metric speed}.\index{metric speed}
Then the \emph{inner $W_q$-distance on $P_{n,d}$} is defined by \index{inner Wasserstein distance}\index{Wasserstein distance!inner}
\[ \label{eq:bb}
    W_q^\textrm{in}(P_{0},P_{1}): =   \inf_{\mu_t:=\mu(\gamma(t))}\Big(\int_{I}|\dot\mu_t|_q^q\, d t \Big)^{\frac{1}{q}},
\]
where the infimum is taken over all $q$-absolutely continuous curves $\gamma:I=[0,1]\to P_{n,d}$ 
joining $P_0$ and $P_1$.

It is shown in Section 5.2 of \cite{ACL:preprint}, see Theorem 5.13 and Remark 6.10 therein, 
that  the metric space $(P_{n,d}, W_q^{\textrm{in}})$ is  complete and geodesic.  
Moreover, away of the discriminant locus $\Delta_{n,d}$ the $W_q^{\textrm{in}}$ distance and the Fubini--Study distance 
$\mathrm {d}_{\mathrm{FS}} $ on $P_{n,d}$ defined via the identification $P_{n,d}=\C\mathbb P^N$ are related as follows. 

\begin{theorem}\label{thm:Lipschitz}
    Let $\epsilon>0$ and denote 
    $P_{n,d}(\epsilon):=\{p\in P_{n,d} : \mathrm {d}_{\mathrm{FS}}(p,\Delta_{n,d})\geq \epsilon\}$.
    Then the identity map
    $$\mathrm{id}:(P_{n,d}(\epsilon), \mathrm {d}_{\mathrm{FS}})\to (P_{n,d}(\epsilon), W_q^\mathrm{in})$$
    is Lipschitz. In particular, $P_{n,d}(\epsilon)$ is compact in the $W_q^\mathrm{in}$--topology.
\end{theorem}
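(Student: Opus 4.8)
\textbf{Proof plan for Theorem \ref{thm:Lipschitz}.}

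The plan is to show that on the region $P_{n,d}(\epsilon)$ — where the hypersurfaces stay uniformly away from the discriminant locus, hence are nonsingular with a uniform lower bound on how nonsingular they are — the measure map $\mu$ is locally Lipschitz from the Fubini--Study metric to the $W_q$ metric, and then to upgrade this to a bound for the \emph{inner} distance $W_q^{\mathrm{in}}$. The first step is to recall that, since $\mu : P_{n,d} \to \sP_q(\C\mathbb P^n)$ is continuous and $P_{n,d}(\epsilon)$ is compact in the Fubini--Study topology, it suffices to prove a \emph{local} Lipschitz estimate: for every $P_0 \in P_{n,d}(\epsilon)$ there is a Fubini--Study neighborhood $U$ and a constant $L$ (allowed to depend on $\epsilon$) such that $W_q(\mu(P),\mu(P')) \le L\, \mathrm{d}_{\mathrm{FS}}(P,P')$ for $P,P' \in U$. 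A standard compactness/chaining argument then yields a global Lipschitz constant on $P_{n,d}(\epsilon)$.

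The heart of the matter is the local estimate, and here I would invoke the implicit function theorem in the uniform form dictated by the condition $\mathrm{d}_{\mathrm{FS}}(P_0,\Delta_{n,d}) \ge \epsilon$. Near a nonsingular $Z_{P_0}$, the zero set $Z_P$ is a smooth deformation of $Z_{P_0}$, and the bound on the distance to the discriminant translates (via the expression for $\Delta_{n,d}$ as a resultant/discriminant, which is a polynomial of controlled degree) into a uniform lower bound on $\nabla P$ along $Z_P$ and hence into a uniform bound on the normal-displacement vector field deforming $Z_P$ to $Z_{P'}$. Concretely, one writes $Z_{P'}$ as a graph over $Z_P$ in the normal bundle, with $C^0$-norm of the graphing function bounded by $C(\epsilon)\, \mathrm{d}_{\mathrm{FS}}(P,P')$; transporting $\mu(P)$ to $\mu(P')$ along these normal segments gives a transport plan with cost at most $\big(C(\epsilon)\,\mathrm{d}_{\mathrm{FS}}(P,P')\big)^q$ (one must also control the Jacobian of the graph map so that the normalized Hausdorff measures match up, but since the hypersurfaces are uniformly nonsingular and of fixed degree, $\mathrm{vol}(Z_P) = d\,\mathrm{vol}(\C\mathbb P^{n-1})$ is constant, so only the pointwise measure distortion, which is $1 + O(\mathrm{d}_{\mathrm{FS}})$, needs estimating). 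This yields $W_q(\mu(P),\mu(P')) \le C(\epsilon)\,\mathrm{d}_{\mathrm{FS}}(P,P')$.

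Once $\mu|_{P_{n,d}(\epsilon)}$ is Lipschitz for $W_q$, the passage to $W_q^{\mathrm{in}}$ is essentially formal: given any Fubini--Study-rectifiable curve $\gamma$ in $P_{n,d}(\epsilon)$ joining $P_0$ to $P_1$, the curve $t \mapsto \mu(\gamma(t))$ is $q$-absolutely continuous with metric speed bounded by $C(\epsilon)$ times the Fubini--Study speed of $\gamma$ (a Lipschitz map sends $AC^q$ curves to $AC^q$ curves and scales metric speed by the Lipschitz constant), so $\int_I |\dot\mu_t|_q^q\,dt \le C(\epsilon)^q \int_I |\dot\gamma_t|^q\,dt$; taking the infimum over $\gamma$ and using that the Fubini--Study inner distance equals $\mathrm{d}_{\mathrm{FS}}$ (since $(\C\mathbb P^N,\mathrm{d}_{\mathrm{FS}})$ is a geodesic space) gives $W_q^{\mathrm{in}}(P_0,P_1) \le C(\epsilon)\,\mathrm{d}_{\mathrm{FS}}(P_0,P_1)$ for $P_0,P_1 \in P_{n,d}(\epsilon)$; one should check $\gamma$ can be taken inside $P_{n,d}(\epsilon)$, which holds after slightly shrinking $\epsilon$, or else argue with the convexity of the distance-to-$\Delta_{n,d}$ sublevel sets. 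The ``in particular'' clause is then immediate: the identity is a continuous bijection from the Fubini--Study-compact set $P_{n,d}(\epsilon)$ onto $(P_{n,d}(\epsilon), W_q^{\mathrm{in}})$, hence a homeomorphism onto a compact set.

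\textbf{Main obstacle.} I expect the genuine difficulty to be the \emph{uniformity} in the implicit-function step — extracting from the single scalar bound $\mathrm{d}_{\mathrm{FS}}(P,\Delta_{n,d}) \ge \epsilon$ a quantitative, $P$-independent lower bound on $|\nabla P|$ along $Z_P$ (equivalently, a uniform modulus of nonsingularity), and then controlling the normal-graph representation and its Jacobian on the possibly complicated geometry of $Z_P \subseteq \C\mathbb P^n$ for $n\ge 2$. This is where the argument must really use the algebraic structure (that $\Delta_{n,d}$ is the zero set of a fixed polynomial, so its vanishing controls the size of the gradient system) rather than soft transversality; in the paper of Antonini, Cavalletti, and Lerario this is presumably handled via an effective Łojasiewicz-type inequality or a compactness argument on the Grassmannian bundle appearing in the Cauchy--Crofton formula for $\mu$.
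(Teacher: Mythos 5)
The central gap is at the step you dismiss in parentheses, namely matching the measures after the normal-graph deformation. Pushing $\mu(P)$ forward by the graph map $G:Z_P\to Z_{P'}$ does move every point a distance $O(\eta)$, $\eta:=\mathrm{d}_{\mathrm{FS}}(P,P')$, but the resulting measure is not $\mu(P')$: its density with respect to $\cH^{2n-2}\llcorner Z_{P'}$ differs from the constant one by $O(\eta)$ because of the Jacobian, and knowing that two probability densities on $Z_{P'}$ are $O(\eta)$-close in $L^\infty$ does \emph{not} by itself give a transport between them of $W_q$-cost $O(\eta)$ when $q>1$. The obvious plan (leave the common part in place, move the excess onto the deficit) transports a mass of order $\eta$ over distances of order $\operatorname{diam}(Z_{P'})$, hence only yields a bound of order $\eta^{1/q}$; combined with the graph step this gives a local $C^{1/q}$-H\"older estimate, not the Lipschitz estimate you need. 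To get cost $O(\eta)$ one must instead move essentially all the mass by a displacement $O(\eta)$, e.g.\ by a Dacorogna--Moser type correction (solve $\operatorname{div}_{Z_{P'}}w=f$ with $\|w\|_{\infty}\lesssim\|f\|_{\infty}$ and flow), or equivalently by working with the continuity-equation/Benamou--Brenier characterization of the metric speed and adding a tangential corrector field to the normal velocity; either way one needs elliptic estimates with constants uniform over the compact family $P_{n,d}(\epsilon)$. This construction, not the Jacobian expansion, is the heart of the local estimate, and it is missing from your plan.

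Two further remarks. First, the passage from the local to the global statement cannot rely on ``convexity of the distance-to-$\Delta_{n,d}$ sublevel sets'' (such sets are not convex); for $\mathrm{d}_{\mathrm{FS}}(P_0,P_1)<\epsilon/2$ the Fubini--Study geodesic stays in $P_{n,d}(\epsilon/2)$ by the triangle inequality, and for distant pairs one should instead bound the $W_q^{\mathrm{in}}$-diameter of $P_{n,d}(\epsilon)$, using its FS-compactness and the connectedness of $P_{n,d}\setminus\Delta_{n,d}$ (complement of a complex hypersurface) to join points through finitely many balls on which the local estimate applies. Second, what you single out as the main obstacle is actually soft: since $\{(P,z): P\in P_{n,d}(\epsilon),\ z\in Z_P\}$ is compact and no $P$ in it lies on $\Delta_{n,d}$, a uniform lower bound on $|\nabla P|$ along $Z_P$ (and uniform control of tubular neighborhoods and of the graph map) follows from compactness and continuity; no effective \L{}ojasiewicz-type inequality is needed. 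Note finally that the proof indicated in the paper proceeds differently: it joins the polynomials by continuous semialgebraic curves which, near their singular points, are reparametrized via Puiseux's theorem to become $C^m$ for arbitrary finite $m$, so that the regularity results for roots along such curves can be applied; your deformation/transport construction is a legitimate alternative strategy, but as written it does not yet prove the Lipschitz bound.
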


In the proof, continuous semialgebraic curves are used.  
By Puiseux's theorem, they can be reparametrized, near their singular points, to get $C^m$ regularity, for arbitrary finite $m$.  

Another important question is the compactness of $(P_{n,d}, W_q^\mathrm{in})$.  
This problem was only partially solved in \cite{ACL:preprint}.

\begin{theorem}[{\cite[Theorem 6.11]{ACL:preprint}}]\label{thm:compactq}
    There is $e(n)\in \N$ such that, for every $1 \leq q < e(n)d/(e(n)d-1)$, 
    the complete and separable geodesic space $(P_{n,d}, W_q^{\on{in}})$ is compact.
\end{theorem}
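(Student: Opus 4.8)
\textbf{Proof proposal for Theorem \ref{thm:compactq}.}

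The plan is to combine the Lipschitz comparison of Theorem \ref{thm:Lipschitz} away from the discriminant with the Sobolev regularity of the roots (Theorem \ref{thm:optimal}, or its analogue through the formulas of Theorem \ref{thm:byresolution}) to control absolutely continuous curves as they cross the discriminant locus $\Delta_{n,d}$. Since $(P_{n,d},W_q^{\on{in}})$ is already known to be complete and separable (from Section 5.2 of \cite{ACL:preprint}), it suffices to prove total boundedness, i.e. that every sequence has a Cauchy subsequence in the $W_q^{\on{in}}$-metric. The obvious comparison point is the Fubini--Study metric: $P_{n,d}=\C\mathbb P^N$ is compact for $\mathrm d_{\mathrm{FS}}$, so any sequence has an $\mathrm d_{\mathrm{FS}}$-convergent subsequence $P_k \to P_\infty$, and the task reduces to upgrading this to $W_q^{\on{in}}$-convergence. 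On the open set $P_{n,d}\setminus\Delta_{n,d}$ the identity map is locally Lipschitz by Theorem \ref{thm:Lipschitz}, so the only real issue is the behavior of $W_q^{\on{in}}(P_k,P_\infty)$ when $P_\infty$ lies on (or near) the discriminant.

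The key step is a \emph{quantitative crossing estimate}: there should be a constant $e(n)\in\N$ (essentially the maximal multiplicity that a zero of $P|_\ell$ can acquire for $P\in P_{n,d}$ and a generic line $\ell$, governed by the degree $d$ and the dimension $n$ via integral geometry) such that for $P_0,P_1$ with $\mathrm d_{\mathrm{FS}}(P_0,P_1)<\delta$ one has $W_q^{\on{in}}(P_0,P_1)\le \omega(\delta)$ for a modulus $\omega$ independent of where $P_0,P_1$ sit relative to $\Delta_{n,d}$, \emph{provided} $1\le q < e(n)d/(e(n)d-1)$. To obtain this, I would take the Fubini--Study geodesic (or a short semialgebraic arc; cf.\ the use of Puiseux reparametrization in the proof of Theorem \ref{thm:Lipschitz}) joining $P_0$ to $P_1$, pull back the family of hypersurfaces, and use the Cauchy--Crofton representation of $\mu(P)$ to reduce the transport cost along the curve to a one-dimensional problem: along almost every line $\ell\in\mathbb G(1,n)$ the intersection $Z_P\cap\ell$ is the zero set of a monic polynomial of degree $d$ with $C^{\infty}$ (indeed real-analytic, after reparametrization) coefficients in the curve parameter, and Theorem \ref{thm:optimal} gives that the roots lie in $W^{1,p}$ for all $p<d/(d-1)$, with a bound on $\|\la'\|_{L^p}$ in terms of the coefficients. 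Summing (integrating over $\mathbb G(1,n)$) the $q$-th powers of the displacement and using the definition of $W_q^{\on{in}}$ as the infimum of $\big(\int|\dot\mu_t|_q^q\,dt\big)^{1/q}$ yields the metric speed bound; the exponent constraint $q<e(n)d/(e(n)d-1)$ is exactly what is needed so that the $L^p$ root estimate (with $p=q$) applies after the multiplicity $e(n)$ is absorbed — a zero of multiplicity $m$ contributes like a radical $(\cdot)^{1/m}$, hence $W^{1,p}$ for $p<m/(m-1)$, and $m\le e(n)d$ in the worst case.

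With the crossing estimate in hand the argument closes quickly: given an $\mathrm d_{\mathrm{FS}}$-convergent subsequence $P_k\to P_\infty$, choose $k$ large so that $\mathrm d_{\mathrm{FS}}(P_k,P_\infty)<\delta$, and conclude $W_q^{\on{in}}(P_k,P_\infty)\le\omega(\delta)\to 0$; hence every sequence in $P_{n,d}$ has a $W_q^{\on{in}}$-convergent subsequence, so the space is compact. The main obstacle I anticipate is the crossing estimate itself — specifically, making the Cauchy--Crofton reduction uniform over the whole projective space and over all lines, and controlling the $C^{d-1,1}$ (or $C^k$) norms of the line-restricted coefficients uniformly along short Fubini--Study arcs, including near $\Delta_{n,d}$ where the naive parametrization degenerates and one must invoke a Puiseux-type reparametrization to restore enough differentiability before applying Theorem \ref{thm:optimal}. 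A secondary technical point is verifying that the pieced-together curve in $\sP_q(\C\mathbb P^n)$ is genuinely $q$-absolutely continuous with the claimed metric speed, so that it is an admissible competitor in the infimum defining $W_q^{\on{in}}$.
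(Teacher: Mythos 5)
Your outer architecture --- exploit $\mathrm d_{\mathrm{FS}}$-compactness of $P_{n,d}=\C\mathbb P^N$ and upgrade an $\mathrm d_{\mathrm{FS}}$-convergent subsequence to $W_q^{\on{in}}$-convergence by joining nearby polynomials with a short curve whose image under $\mu$ is $q$-absolutely continuous, with metric speed controlled through the one-dimensional Sobolev estimate --- is indeed the architecture of \cite{ACL:preprint} as summarized in Section \ref{sec:optimaltransport}. But your key step, the ``quantitative crossing estimate'', rests on a misidentification of where $e(n)$ comes from, and the reduction as you set it up fails. If you restrict a curve $P_t$ of degree-$d$ polynomials to a line $\ell$, the restriction $P_t|_\ell$ has degree $d$ (not $e(n)d$), its zeros have multiplicity at most $d$, and Theorem \ref{thm:optimal} applied to it gives exactly the exponent $d/(d-1)$; there is no ``multiplicity up to $e(n)d$'' to be absorbed. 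The genuine obstruction is different: a line $\ell$ may lie entirely inside $Z_{P_t}$ for some $t$, in which case $P_t|_\ell\equiv 0$, the restriction is no longer a monic degree-$d$ family, and the Cauchy--Crofton reduction degenerates there. When $d\le 2n-3$ every degree-$d$ hypersurface in $\C\mathbb P^n$ contains lines, so this cannot be ruled out by genericity, and a crossing estimate uniform over all of $P_{n,d}$ is simply not available if you insist on lines; the survey records that lines suffice only for generic $P$ when $d>2n-3$.

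The fix in \cite{ACL:preprint}, which is where $e(n)$ actually enters, is to replace lines by the unitary orbit of a fixed rational curve of degree $e(n)$: by a result of Lehmann, Riedl, and Tanimoto there is an explicit rational curve of degree $e(n)$ none of whose unitary transformations is contained in any $Z_{P_t}$ along the given curve of polynomials. Restricting $P_t$ to such curves produces one-variable families of degree $e(n)d$, and Theorem \ref{thm:optimal} --- whose uniform bound is essential in order to integrate over the unitary group --- then yields $W^{1,q}$ roots precisely for $q<e(n)d/(e(n)d-1)$; this, and not a multiplicity count along lines, is the source of the exponent in the statement. A secondary point: no Puiseux-type reparametrization is needed in this step (that device belongs to the proof of Theorem \ref{thm:Lipschitz}); a short real-analytic, even affine-in-$t$, interpolation between nearby polynomials already has $C^{d-1,1}$ coefficients after restriction, which is all that Theorem \ref{thm:optimal} requires, and the resulting bound gives $W_q^{\on{in}}$-smallness from $\mathrm d_{\mathrm{FS}}$-smallness by a H\"older estimate on the length of the interpolating curve.
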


Theorem \ref{thm:optimal} is used in the proof of the above theorem to provide $q$-absolutely continuous curves as follows.  
This is fairly straightforward for $n=1$ because, any curve of polynomials 
$$
P_a(t)(z) 
= z^d + 
\sum_{j = 1}^{d} a_j(t) z^{d-j}, \quad t\in [0,1],
$$
with $a_j\in C^{d-1,1}([0,1])$, defines, after homogenization, an element of $AC^q([0,1];\sP_q(\C\mathbb P^1))$ for every $1\le q<d/(d-1)$. Thus one may take $e(1) =1$. 

Thanks to the uniformity of the bound of Theorem \ref{thm:optimal}, 
the general case can be reduced to the case $n=1$ by restriction to all affine lines parameterized by ${\mathbb{G}(1,n)}$ 
and then integration over this Grassmannian, provided we can avoid the lines entirely included in $Z_P$.   
This is always possible for generic $P$ if $d> 2n-3$, \cite[Theorem 6.3]{ACL:preprint}, but not in general.   
Therefore it is proposed in \cite{ACL:preprint} to use the integration over the group of unitary transformations of a 
rational curve of sufficiently high degree (instead of a line).
It follows from a recent result of B. Lehmann, E. Riedl and S. Tanimoto that for a given curve of polynomials $P_t \in P_{n,d}$ 
there is a rational curve of degree $e(n)$, 
given by an explicit formula, such that none of its unitary transformations can be entirely included in $Z_{P_t}$. 
For all details of the proof we refer the reader to \cite{ACL:preprint}.

\begin{open}
    For which  $q\ge 1$ is the space $(P_{n,d}, W_q^{\on{in}})$ compact?  
    In particular, is it always compact for $q=2$?
\end{open}

\section{Appendix A. Function spaces} \label{Appendix:A}

\subsection{H\"older spaces} \label{sec:AHoeld}

Let $\Om \subseteq \R^n$ be open. 
We denote by $C^0(\Om)$ the space of continuous complex valued functions on $\Om$.
For $k \in \N_{\ge 1} \cup \{\infty\}$, 
$C^k(\Om)$ is the space of $k$-times continuously differentiable functions,\index{Ck@$C^k$}\index{Cinf@$C^\infty$}
\[
    C^k(\Om) = \{f \in \C^\Om : \p^\al f \in C^0(\Om) \text{ for } 0 \le |\al| \le k\}, \quad k \in \N,
\]
and
\[
    C^\infty(\Om) = \bigcap_{k \in \N} C^k(\Om). 
\]
The space of real analytic functions on $\Om$ is denoted by $C^\om(\Om)$.\index{Com@$C^\om$}\index{H@$\cH$}
If $\Om \subseteq \C^n$, then $\cH(\Om)$ is the space of holomorphic functions on $\Om$.

If the open set $\Om \subseteq \R^n$ is bounded, we write $C^k(\ol \Om)$ for the space of 
all $f \in C^k(\Om)$ such that $\p^\al f$, $0 \le |\al| \le k$, has a continuous extension to the closure 
$\ol \Om$. Then $C^k(\ol \Om)$ endowed with the norm 
\[
    \|f\|_{C^k(\ol \Om)} := \sup_{|\al|\le k} \sup_{x \in \Om} |\p^\al f(x)|   
\]
is a Banach space. 
The spaces $C^k(\Om)$, $C^\infty(\Om)$, $C^\om(\Om)$, and $\cH(\Om)$
carry their natural locally convex topologies, where $\Om$ is not necessarily bounded;
note that $C^k(\Om)$, $C^\infty(\Om)$, and $\cH(\Om)$ are Fr\'echet spaces. 

Let $\al \in (0,1]$ and let $\Om \subseteq \R^n$ be open and bounded. 
A function is \emph{$\al$-H\"older continuous in $\Om$} if\index{Hoeld@$\Hoeld_{\al,\Om}(f)$} 
\[
    \Hoeld_{\al,\Om}(f) := \sup_{x,y \in \Om,\, x\ne y} \frac{|f(x)-f(y)|}{|x-y|^\al} < \infty. 
\]
It is \emph{Lipschitz continuous in $\Om$} if it is $1$-H\"older continuous in $\Om$; in that case, we write \index{Lip@$\Lip_{\Om}(f)$} 
\[
    \Lip_\Om(f) := \Hoeld_{1,\Om}(f)< \infty.
\]
For $k \in \N$ and $\al \in (0,1]$, we have the scale of \emph{H\"older(--Lipschitz) spaces}\index{Ckal@$C^{k,\al}$}\index{H\"older--Lipschitz space}
\[
    C^{k,\al}(\ol \Om) := \{f \in C^k(\ol \Om) : \p^\be f \text{ is $\al$-H\"older continuous for } |\be|=k   \}
\]
and we equip them with the norm
\[
    \|f\|_{C^{k,\al}(\ol \Om)} := \|f\|_{C^k(\ol \Om)} + \sup_{|\be| = k} \Hoeld_{\al,\Om}(\p^\be f).
\]
Then $C^{k,\al}(\ol \Om)$ is a Banach space. 
We write $C^{k,\al}(\Om)$ (where $\Om$ is not necessarily bounded) for the space of all functions $f \in C^k(\Om)$ such that 
$\|f|_{\Om'}\|_{C^{k,\al}(\ol \Om')}< \infty$ for all relatively compact $\Om' \Subset \Om$ 
with its natural Fr\'echet topology.

\subsection{Lebesgue spaces and weak Lebesgue spaces} \label{sec:ALeb}

Let $\Om \subseteq \R^n$ be open and $1 \le p \le \infty$. 
The \emph{Lebesgue space}\index{Lebesgue space}\index{Lp@$L^p$} $L^p(\Om)$ is the space of measurable complex valued functions $f : \Om \to \C$ that are $p$-integrable
with respect to the $n$-dimensional Lebesgue measure $\cL^n$; actually, the elements of $L^p(\Om)$ are equivalence classes of functions
that coincide $\cL^n$-almost everywhere.
If not stated otherwise, ``measurable'' always means ``Lebesgue measurable''. 
The $L^p$-norms make the Lebesgue spaces to Banach spaces:
\begin{align*}
    \|f\|_{L^p(\Om)} &:= \Big(\int_\Om |f(x)|^p dx\Big)^{1/p}, \quad 1 \le p<\infty, 
    \\
    \|f\|_{L^\infty(\Om)} &:= \on{ess\,sup}_{x \in \Om} |f(x)|, \quad p =\infty. 
\end{align*}
We denote by $L^p_{\on{loc}}(\Om)$ the space of measurable functions $f : \Om \to \C$ such that $\|f\|_{L^p(K)} < \infty$ for each compact subset $K \subseteq \Om$.

Let $\Om \subseteq \R^n$ be open and bounded. For $1 \le p < \infty$, the 
\emph{weak $L^p$-space}\index{weak Lebesgue space}\index{Lpw@$L^p_w$} is the space of all measurable functions $f : \Om \to \C$ such that 
\[
    \|f\|_{p,w,\Om} := \sup_{r> 0} \big(r \cdot \cL^n(\{x \in \Om : |f(x)|>r\})^{1/p}\big) < \infty;
\]
again, we identify functions that coincide $\cL^n$-almost everywhere.
Note that $\|\cdot\|_{p,w,\Om}$ is only a quasinorm: the triangle inequality fails, but for $f_j \in L^p_w(\Om)$ one still 
has 
\[
    \Big\| \sum_{j=1}^m f_j \Big\|_{p,w,\Om} \le m \sum_{j=1}^m \|f_j\|_{p,w,\Om}.
\]
The spaces $L^p_w(\Om)$ are complete with respect to the quasinorms $\|\cdot\|_{p,w,\Om}$, i.e., they are quasi-Banach spaces.
If $p>1$, then there exists a norm equivalent to $\|\cdot\|_{p,w,\Om}$ which makes $L^p_w(\Om)$ into a Banach space.
For $1 \le q <p < \infty$, we have
\[
    \|f\|_{q,w,\Om} \le \|f\|_{L^q(\Om)} \le \Big(\frac{p}{p-q}\Big)^{1/q} \cL^n(\Om)^{1/q - 1/p}\, \|f\|_{p,w,\Om}
\]
and thus there are continuous strict inclusions $L^p_w(\Om) \subseteq L^q(\Om) \subseteq L^q_w(\Om)$.
The $L^p_w$-quasinorm is $\si$-subadditive (i.e., $\|f\|^p_{p,w,\Om} \le \sum_j \|f\|^p_{p,w,\Om_j}$ for 
countable open covers $\Om= \bigcup_j \Om_j$) but not $\si$-additive.

\subsection{Sobolev spaces} \label{sec:ASob}

Let $\Om \subseteq \R^n$ be open.
For $k \in \N_{\ge 1}$ and $1 \le p \le \infty$, we consider the \emph{Sobolev space}\index{Sobolev space}\index{Wkp@$W^{k,p}$} 
\[
    W^{k,p}(\Om) := \{f \in L^p(\Om) : \p ^\al f \in L^p(\Om) \text{ for } |\al|\le k\},
\]
where $\p^\al f$ are distributional derivatives, with the norm
\[
    \|f\|_{W^{k,p}(\Om)} := \sum_{|\al|\le k} \|\p^\al f\|_{L^p(\Om)}.
\]
Then $W^{k,p}(\Om)$ are Banach spaces.
We denote by $W^{k,p}_{\on{loc}}(\Om)$ the space of functions $f \in L^p_{\on{loc}}(\Om)$ such that $\p^\al f \in L^p_{\on{loc}}(\Om)$ 
for $|\al|\le k$.

Let $k=1$ and suppose $f \in L^p(\Om)$.
Then $f \in W^{1,p}(\Om)$ if and only if $f$ has a representative $\ol f$ that is absolutely continuous on almost all line segments in 
$\Om$ parallel to the coordinate axes and whose classical derivatives $\p_i \ol f$ belong to $L^p(\Om)$.
In particular, for a bounded interval $I \subseteq \R$, $W^{1,1}(I)$ can be identified with the space $AC(I)$ of absolutely continuous\index{absolutely continuous}\index{AC@$AC$} 
functions on $I$.
We recall that, by definition, $f \in AC(I)$ if for every $\ep>0$ there exists $\de>0$ such that 
$\sum_{j=1}^k |f(b_j) - f(a_j)| \le \ep$
for every finite number of disjoint intervals $(a_j,b_j)$, $j=1,\ldots,k$, satisfying $[a_j,b_j] \subseteq I$ and 
$\sum_{j=1}^k (b_j-a_j) \le \de$.

In the case that $p=\infty$ and that $\Om$ is a bounded Lipschitz domain, 
$f \in W^{1,\infty}(\Om)$ if and only if $f$ has a representative that is Lipschitz continuous on $\Om$.

\subsection{Functions of bounded variation} \label{sec:ABV}

Let $\Om \subseteq \R^n$ be open.
A real valued $f \in L^1(\Om)$ is a \emph{function of bounded variation in $\Om$}\index{bounded variation} 
if the distributional derivative of $f$ is representable 
by a finite Radon measure in $\Om$, i.e., for all $C^\infty$ functions $\vh : \Om \to \R$ with compact support in $\Om$ and all $1 \le i \le n$,
\[
    \int_\Om f(x) \p_i \vh(x) \, dx = - \int_{\Om} \vh \, dD_i f,
\]
for some $\R^n$-valued Radon measure $Df = (D_1f,\ldots,D_nf)$ on $\Om$.
By definition, a complex valued function is of bounded variation in $\Om$ if its real and imaginary parts are.
The space $BV(\Om)$\index{BV@$BV$} of all functions of bounded variation in $\Om$ 
equipped with the norm 
\[
    \|f\|_{BV(\Om)} := \|f\|_{L^1(\Om)}  + |Df|(\Om),
\]
where $|Df|(\Om)$ is the total variation measure, is a Banach space.

The Sobolev space $W^{1,1}(\Om)$ is strictly contained in $BV(\Om)$ (for $f \in W^{1,1}(\Om)$ we have $Df = \nabla f \, \cL^n$).

Functions of bounded variation may have discontinuities.
Let $f \in BV(\Om)$. Then the Lebesgue decomposition with respect to $\cL^n$ induces a decomposition 
\[
    Df = D^a f + D^j f + D^c f,
\]
where $D^a f = \nabla f \, \cL^n$ is the \emph{absolutely continuous part},
$D^j f$ is the \emph{jump part} (given by integration of the jump height against the restriction of $\cH^{n-1}$ to the set of approximate jump points $J_f$, 
i.e., $D^j f = ((f^+-f^-) \otimes \nu_f) \, (\cH^{n-1} \llcorner J_f)$, where $\nu_f$ is the direction of the jump), 
and $D^c f$ is the \emph{Cantor part}.

The closed subspace $SBV(\Om) := \{f\in BV(\Om) : D^c f = 0\}$ of $BV(\Om)$ is called the space of
\emph{special functions of bounded variation}.\index{SBV@$SBV$}\index{special functions of bounded variation} 
We have continuous strict inclusions $W^{1,1}(\Om) \subseteq SBV(\Om) \subseteq BV(\Om)$.

\section{Appendix B. The space of hyperbolic polynomials}
\label{sec:spaceHd}

The fact that the regularity of the roots of hyperbolic polynomials is essentially better than that of the roots of general polynomials  
is related to the geometry of the space $\Hyp(d)$\index{Hypd@$\Hyp(d)$} of monic hyperbolic polynomials of degree $d$
and the constraints a sufficiently differentiable curve of hyperbolic polynomials is subdued to at the boundary of $\Hyp(d)$. 
In this section, we collect some interesting topological and geometric properties of $\Hyp(d)$.

The following results are due to Kostov \cite{Kostov89}; see also Arnol'd \cite{Arnold86} and Givental' \cite{Givental87}.
Let $p=(p_1,\ldots,p_d)$ be a probability vector, i.e., $p_i>0$ and $p_1+\cdots+p_d=1$.
The \emph{Vandermonde mapping}\index{Vandermonde mapping} $W = W(p) : \R^d \to \R^d$ is defined by 
\[
    W_j(x) := p_1 x_1^j + \cdots + p_d x_d^j, \quad 1 \le j \le d.
\]
If $p_i = 1/d$ for $1 \le j \le d$, then $W_j$, $1 \le j \le d$, are the Newton polynomials (up to a constant factor) which 
generate the algebra of symmetric polynomials on $\R^d$ and 
consequently the image $W(\R^d)$ is isomorphic to $\Hyp(d)$ in this case. 

The set $K := \{x \in \R^d : x_1 \le x_2 \le \cdots \le x_d\}$ is a fundamental domain for the 
action of the symmetric group $\on{S}_d$ on $\R^d$ by permuting the coordinates.
Every set of the form $\{x \in K : W_j(x) = c_j, \, 1 \le j\le k\}$ is called a
\emph{$(d-k)$-dimensional Vandermonde manifold}.\index{Vandermonde manifold}

\begin{theorem}[{\cite{Kostov89}}]
    Every Vandermonde manifold is either contractible or empty. 
    The map $W : K \to W(K)$ is a homeomorphism. 
    Every set of the form $\{y \in W(K) : y_j=c_j, \, 1\le j \le k\}$ 
    is either contractible or empty. 
\end{theorem}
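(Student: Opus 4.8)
The plan is to reduce everything to the single statement that each nonempty Vandermonde manifold is contractible, and then to deduce the other two claims by elementary point-set arguments. First I would set up the geometry: the symmetric group $\on{S}_d$ acts on $\R^d$ by permuting coordinates, with the closed Weyl chamber $K = \{x_1 \le \cdots \le x_d\}$ as fundamental domain, and the map $W = W(p)$ is $\on{S}_d$-invariant because each $W_j$ is symmetric; hence $W$ factors through the orbit space $\R^d/\on{S}_d \cong K$. The first lemma to establish is that $W|_K : K \to W(K)$ is a homeomorphism. Injectivity: if $W_j(x) = W_j(x')$ for all $j = 1,\ldots,d$ with $x,x' \in K$, then (since with positive weights $p_i$ the power sums $W_1,\ldots,W_d$ determine the weighted empirical measure $\sum p_i \delta_{x_i}$, by the usual Newton-identity / Vandermonde argument) the multisets $\{x_i\}$ and $\{x'_i\}$ coincide as weighted point configurations; in the equal-weight case this is literally Vieta, in general one argues that two atomic measures with prescribed atom count $d$ and matching moments of orders $1,\ldots,d$ are equal. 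Then the increasing ordering forces $x = x'$. Continuity of $W$ is obvious; continuity of the inverse follows because $W$ is proper on $K$ (the preimage of a bounded set is bounded, since $W_2(x) = \sum p_i x_i^2 \gtrsim |x|^2$), and a proper continuous bijection onto a subset of $\R^d$ is a homeomorphism onto its image.

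Next I would address the contractibility of Vandermonde manifolds $V_c := \{x \in K : W_j(x) = c_j,\ 1 \le j \le k\}$. The strategy I expect to work is the one going back to Kostov--Arnol'd--Givental': fix the values $c_1,\ldots,c_k$ of the first $k$ power sums and show that the remaining freedom — which is essentially the choice of the ``tail'' of the configuration — can be contracted monotonically. Concretely, one parameterizes points of $V_c$ by thinking of the coordinates $x_1 \le \cdots \le x_d$ as subject to $k$ polynomial constraints, and uses a flow (for instance the gradient flow of a strictly convex function such as $W_{k+1}$, or of $x \mapsto \sum p_i x_i^2$ when $k < 2$, restricted to the manifold $V_c$) whose trajectories stay in $V_c \cap K$ and converge to a unique critical point; strict convexity guarantees uniqueness of the critical point and that the flow is a deformation retraction of $V_c$ onto it. One must check that the constraints $W_j = c_j$ cut out a set on which such a convex function attains its minimum at an interior-or-boundary point that is unique, and that the flow does not leave $K$ (it cannot cross a wall $x_i = x_{i+1}$ transversally because the power-sum constraints together with convexity push trajectories inward). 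This is the technical heart, and I return to it below.

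Granting contractibility of Vandermonde manifolds, the statement about the sets $\{y \in W(K) : y_j = c_j,\ 1 \le j \le k\}$ is immediate: since $W|_K$ is a homeomorphism onto $W(K)$ carrying the subset $V_c = \{x \in K : W_j(x) = c_j\}$ onto $\{y \in W(K): y_j = c_j,\ 1\le j\le k\}$, the latter is homeomorphic to a Vandermonde manifold, hence contractible or empty. Finally, to record the connection with $\Hyp(d)$: when $p_i = 1/d$ the $W_j$ are the Newton power sums, which by the Newton identities generate the algebra of symmetric polynomials, so $W(\R^d)$ is the image of $\R^d$ under a polynomial automorphism-composed-with-$\si$ and is identified with $\Hyp(d) = \si(\R^d)$; thus the theorem transports to statements about level sets of the elementary symmetric functions on $\Hyp(d)$.

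\textbf{Main obstacle.} The hard part will be the contractibility of a general Vandermonde manifold $V_c$ for $1 \le k \le d$: one has to produce a canonical deformation retraction that respects all $k$ moment constraints simultaneously and never exits the chamber $K$. The delicate points are (i) showing the auxiliary convex functional has a \emph{unique} minimizer on $V_c$ — degeneracies could in principle create a positive-dimensional critical set — and (ii) handling the boundary strata of $K$ (where coordinates collide), i.e.\ verifying the retraction extends continuously across collisions rather than developing discontinuities where the ``number of distinct atoms'' drops. I would handle (ii) by stratifying $V_c$ according to the partition recording equalities among the $x_i$ and checking compatibility of the flows on adjacent strata; (i) I expect to follow from strict convexity of $W_2$ (or $W_{k+1}$) combined with the fact that the constraint set, being a closed subset of $K$ on which $W_2$ is proper, is a (possibly non-smooth but) convex-enough compact set in the relevant direction.
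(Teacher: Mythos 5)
The paper itself gives no proof of this theorem: it is quoted from Kostov's paper, so your proposal can only be measured against the known arguments of Kostov, Arnol'd, and Givental'. Your reduction of the third claim to the first two is correct, and the properness argument upgrading a continuous bijection $W|_K$ to a homeomorphism onto its image is fine. The two main lemmas, however, have genuine gaps.

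First, injectivity of $W|_K$. You appeal to the principle that ``two atomic measures with prescribed atom count $d$ and matching moments of orders $1,\ldots,d$ are equal.'' That principle is false: $\tfrac12\delta_{-1}+\tfrac12\delta_{1}$ and $\tfrac45\delta_{-1/2}+\tfrac15\delta_{2}$ are distinct probability measures, each with two atoms, having the same first and second moments. A measure with $d$ atoms has $2d$ parameters and is in general only determined by moments up to order $2d-1$. What could save injectivity here is that the weights are prescribed to be exactly $p_1,\ldots,p_d$ attached to the ordered coordinates; but for unequal $p_i$ this is no longer a Newton-identity/Vieta statement (those identities recover the elementary symmetric functions only in the equal-weight case), and injectivity for general positive weights is precisely part of the nontrivial content of Kostov's theorem, not a routine moment argument. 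As written, this step is unproved.

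Second, contractibility. For $k\ge 2$ the constraints $W_j=c_j$ are not affine, so $V_c$ is not convex, and ``strict convexity guarantees uniqueness of the critical point'' is not valid for a convex function restricted to a nonconvex, stratified set with boundary where coordinates collide (consider $|x|^2$ restricted to a sphere). Likewise the claim that the flow ``cannot cross a wall $x_i=x_{i+1}$'' is an assertion, not an argument; the behavior on collision strata is exactly where the difficulty sits, as you yourself note. The known proofs do use $W_{k+1}$ as a Morse-type function on the Vandermonde variety, but the key input missing from your plan is the Lagrange-multiplier computation: at a critical point of $W_{k+1}|_{V_c}$ in the chamber, every coordinate satisfies $(k+1)x_i^{k}=\sum_{j\le k}\lambda_j\, j\, x_i^{j-1}$, a fixed polynomial equation of degree $k$, so the $x_i$ take at most $k$ distinct values; this, combined with induction and an analysis of how the sets $\{W_{k+1}\le t\}\cap V_c$ change with $t$, is what yields contractibility (and, in Kostov's treatment, feeds back into the homeomorphism statement). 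Without an ingredient of that kind, your proposal correctly identifies the hard point but does not supply the idea that resolves it.
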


As a consequence, $W^k:=(W_1,\ldots,W_k)$, $k \le d$, is a homeomorphism of the closure of every $k$-dimensional stratum of the polyhedron $K$ 
onto its image; indeed, the restriction of $W^k$ to such a stratum is the $k$-dimensional Vandermonde mapping for some probability vector.

Let $K_0 := \{x \in K : W_1(x)=0,\, W_2(x)\le 1\}$.

\begin{theorem}[{\cite{Kostov89}}]
    The sets $W(K_0)$ and $W^k(K_0)$, $1 \le k \le d-1$, are quasiconvex, 
    i.e., there is a constant $C>0$ such that any two points can be joint by a piecewise smooth curve in the set 
    of length not exceeding $C$ times the Euclidean distance of the two points.
\end{theorem}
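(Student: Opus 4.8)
The plan is to reduce the quasiconvexity statement for $W(K_0)$ and for the truncations $W^k(K_0)$ to two inputs already available in the text: the homeomorphism property of the Vandermonde mapping (previous theorem, via the strata of the polyhedron $K$) and the dominance estimate of Lemma~\ref{lem:dominant}, which controls the elementary symmetric functions of a hyperbolic polynomial in Tschirnhausen form by $|\tilde a_2|^{1/2}$. First I would note that under the identification $p_i = 1/d$ the components $W_j$ are the Newton polynomials, so that $W(K_0)$ is (up to a linear change of coordinates) exactly the set $\Hyp^0_T(d) \cup \{0\}$ together with its ``sub-slices'' $\{\tilde a_2 \ge -1\}$, i.e.\ the closed region in $\Hyp_T(d)$ cut out by $-\tilde a_2 \le 1$; here I use the correspondences $-2\tilde a_2 = W_2$ and $W_1 = 0$. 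The key point is that $K_0$ is a compact semialgebraic polyhedron and $W$ restricted to it is a homeomorphism onto its image.

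The main step is a path-construction on $K_0$ itself, combined with a bounded-distortion statement for $W$ on $K_0$. Concretely, given two points $y^{(0)}, y^{(1)} \in W(K_0)$ with preimages $x^{(0)}, x^{(1)} \in K_0$, I would join $x^{(0)}$ to $x^{(1)}$ by a piecewise smooth curve in $K_0$ whose length is comparable (up to a constant $C(d)$) to $|x^{(0)} - x^{(1)}|$; since $K_0$ is a compact convex-ish polyhedron — it is the intersection of the convex cone $K$, the hyperplane $W_1 = 0$, and the sublevel set $\{W_2 \le 1\}$, the latter being convex because $W_2$ is a convex quadratic form on $\{W_1=0\}$ — it is in fact convex, so the straight segment works and has length exactly $|x^{(0)} - x^{(1)}|$. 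Then I would push this segment forward by $W$ and estimate: $W$ is polynomial, hence Lipschitz on the compact set $K_0$, so $\operatorname{length}(W \circ \gamma) \le \operatorname{Lip}_{K_0}(W)\,|x^{(0)}-x^{(1)}|$. To convert this into quasiconvexity of $W(K_0)$ we need the reverse comparison $|x^{(0)} - x^{(1)}| \le C(d)\,|y^{(0)} - y^{(1)}|$, i.e.\ a global modulus of continuity for $W^{-1}$ on $W(K_0)$; this is where Lemma~\ref{lem:dominant} (and a stratum-by-stratum analysis near the singular boundary where roots collide) is used to show that $W|_{K_0}$ does not degenerate too badly, i.e.\ that $W^{-1}$ is at worst H\"older but in fact — since the $k$-dimensional strata map by genuine Vandermonde mappings which are known to be proper homeomorphisms — one gets a Lipschitz, or at least quasi-Lipschitz, inverse after a quasiconvex reparameterization. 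For the truncated maps $W^k(K_0)$, $1 \le k \le d-1$, I would run the same argument fiberwise: a point of $W^k(K_0)$ is the projection of a fiber of $W$, and the fibers are themselves (lower-dimensional) Vandermonde manifolds which by the previous theorem are contractible; joining two such fibers can be done by concatenating a path in $W(K_0)$ (already handled) with a bounded path inside a single fiber, and the fiber diameters are uniformly bounded on the compact set $W(K_0)$.

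The hard part will be the bounded-distortion estimate $|x^{(0)} - x^{(1)}| \lesssim_d |W(x^{(0)}) - W(x^{(1)})|$ near the boundary strata of $K_0$, where the Jacobian of $W$ vanishes (two or more coordinates $x_i$ coincide): there $W$ is only a H\"older homeomorphism in the naive sense, so the naive estimate gives a H\"older, not Lipschitz, bound on $W^{-1}$, and one must instead exploit that the straight segment in $K_0$ can be \emph{replaced} by a curve that stays in the open top stratum except near its endpoints, plus the fact (a consequence of the stratified homeomorphism statement and Lemma~\ref{lem:dominant}) that along each stratum $W$ restricts to a mapping with bounded distortion. Organizing this stratified path surgery so that the total length stays within a constant multiple of the Euclidean distance between the \emph{images} is the technical crux; everything else is soft (compactness, continuity of $W$, convexity of $K_0$, and the cited contractibility of Vandermonde manifolds and fibers).
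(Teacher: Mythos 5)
There is no proof of this statement in the survey to compare against --- it is quoted from Kostov's paper --- so your proposal has to stand on its own, and as it stands it has a genuine gap at exactly the point you call the ``technical crux''. Your bookkeeping is: push the straight segment in the convex set $K_0$ forward by $W$ (fine, and the convexity of $K_0$ is a correct and useful observation), bound its length by $\on{Lip}_{K_0}(W)\,|x^{(0)}-x^{(1)}|$, and then invoke the reverse comparison $|x^{(0)}-x^{(1)}|\le C(d)\,|W(x^{(0)})-W(x^{(1)})|$. That reverse comparison is simply false: already for $d=2$ on $K_0$ the reduced map is $t\mapsto t^2$ on $[0,1]$, and for $t_0,t_1\approx\ep$ one has $|t_0^2-t_1^2|=|t_0-t_1|\,(t_0+t_1)\ll|t_0-t_1|$. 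Wherever roots collide the Jacobian of $W$ vanishes and $W^{-1}$ is only H\"older, and no appeal to Lemma~\ref{lem:dominant} can change that: that lemma bounds the higher coefficients \emph{from above} by $|\tilde a_2|^{1/2}$ and gives no lower bound on increments of $W$. Your proposed repair is also not sound as stated: the restriction of $W$ to (the closure of) a lower-dimensional stratum is again a Vandermonde mapping with the same degeneration at its own boundary, so ``along each stratum $W$ restricts to a mapping with bounded distortion'' is not true, and ``quasi-Lipschitz inverse after a quasiconvex reparameterization'' is circular, since quasiconvexity of the image is what is being proved. What the theorem actually requires is a direct estimate that the length of $W\o\gamma$ (for a suitably chosen $\gamma$, possibly but not necessarily the straight segment) is at most a constant times $|W(x^{(0)})-W(x^{(1)})|$; this is a genuine quantitative property of the Vandermonde/Newton map near the collision locus, and it is precisely the content of Kostov's argument, which your plan defers rather than supplies.

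The treatment of the truncations $W^k(K_0)$ has a second gap of the same nature. Projecting a quasigeodesic of $W(K_0)$ gives a curve in $W^k(K_0)$ of length at most $C\,|y^{(0)}-y^{(1)}|$, but quasiconvexity of $W^k(K_0)$ needs length at most $C\,|z^{(0)}-z^{(1)}|$ where $z^{(i)}$ are the projections, and lifts $y^{(i)}$ of nearby $z^{(i)}$ need not be close in the discarded coordinates; one must choose lifts with $|y^{(0)}-y^{(1)}|\lesssim|z^{(0)}-z^{(1)}|$, which is again a quantitative statement about the fibers (here the $(d-k)$-dimensional Vandermonde manifolds), not a consequence of their contractibility. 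Likewise ``fiber diameters are uniformly bounded'' only yields an additive error, which is useless for the multiplicative bound when $|z^{(0)}-z^{(1)}|$ is small. So both halves of the theorem reduce to quantitative estimates that the proposal identifies but does not prove, and the one explicit inequality it does commit to is false.
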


In particular, the space of monic hyperbolic polynomials of degree $d$ in Tschirnhausen form with $|\tilde a_2| \le 1$ 
is quasiconvex. We refer to \cite{Kostov89} for additional information on the stratification of $W(K)$.

The semialgebraic set $\Hyp(d) \subseteq \R^d$ can be described by explicit inequalities.
Let $s_j = x_1^j + \cdots + x_d^j$, $j \ge 0$, be the Newton polynomials and consider the Bezoutiant matrix 
\[
    B:=
    \begin{pmatrix}
        s_0 & s_1 & \cdots & s_{d-1}
        \\
        s_1 & s_2 & \cdots & s_d
        \\
        \vdots & \vdots & \ddots & \vdots
        \\
        s_{d-1} & s_d & \cdots & s_{2d-2}
    \end{pmatrix}.
\]
We have $B = \tilde B(\si_1,\ldots,\si_d)$ for a unique matrix valued polynomial $\tilde B$,
where the $\si_i$ are the elementary symmetric functions. The following result is Hermite's \cite{Hermite1853} and Sylvester's \cite{Sylvester1853} 
version of Sturm's theorem;
the indicated references give a modern account and a far reaching generalization.

\begin{theorem}[{\cite{Procesi78, PS85}}] \index{theorem!Hermite--Sylvester version of Sturm's} 
    The space $\Hyp(d)$ of monic hyperbolic polynomials of degree $d$ can be identified with $\{y \in \R^d : \tilde B(y) \ge 0\}$. 
    The rank of $\tilde B$ equals the number of distinct roots and its signature equals the number of distinct real roots.
\end{theorem}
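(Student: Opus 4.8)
The statement is classical (Hermite--Sylvester), so the plan is to recall the algebraic core of the argument and then package it into the two assertions of the theorem. The key object is the Bezoutiant (Hankel) matrix $B$ whose entries are the power sums $s_0,\dots,s_{2d-2}$ of the roots $x_1,\dots,x_d$ of a monic degree-$d$ polynomial $P_a$. The central identity is the factorization
\[
    B = V^t V, \quad \text{where } V = (x_i^{j})_{0 \le j \le d-1,\, 1 \le i \le d}
\]
is the Vandermonde matrix built from the (complex) roots. This is immediate from $s_{j+k} = \sum_i x_i^{j} x_i^{k}$. Since the entries of $B$ are symmetric functions of the roots, $B = \tilde B(\si_1,\dots,\si_d)$ for a unique matrix-valued polynomial $\tilde B$, as asserted; this is just the fundamental theorem on symmetric polynomials applied entrywise.

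\textbf{Rank and signature over $\C$.} First I would compute the rank of $B$ for an arbitrary monic $P_a$ with roots $x_1,\dots,x_d \in \C$ (with multiplicities). Grouping equal roots, $V$ has a Vandermonde block structure, and a short argument (row operations turning repeated-root columns into a confluent Vandermonde, whose rank is the number of \emph{distinct} roots) shows $\on{rank} V = \#\{\text{distinct roots}\}$. Since $B = V^t V$ and the $x_i$ are complex, one must be slightly careful: over $\C$, $\on{rank}(V^t V) = \on{rank} V$ still holds because the bilinear form $\langle u,v\rangle = u^t v$ restricted to the column span of $V^t$ is nondegenerate here — this follows since the distinct-root columns of $V$ are linearly independent and the associated Gram matrix (a Vandermonde-type Hankel block) is invertible, being itself a product of a square Vandermonde and its transpose with nonzero determinant $\prod_{i<j}(y_i - y_j)^2$ over the distinct roots $y_i$. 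This yields the rank statement. For the signature, restrict to $P_a$ \emph{hyperbolic}, i.e.\ all $x_i$ real: then $B = V^t V$ with $V$ real, so $B$ is a real Gram matrix, hence $B \ge 0$, and its rank (= its number of nonzero, necessarily positive, eigenvalues) equals the number of distinct real roots. Thus $P_a$ hyperbolic $\Rightarrow \tilde B(a) \ge 0$.

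\textbf{The converse inclusion.} The nontrivial direction is: if $\tilde B(a) \ge 0$ then $P_a$ is hyperbolic. I would argue by contradiction: suppose $P_a$ has a non-real root $\al = \mu + i\nu$, $\nu \ne 0$; then $\bar\al$ is also a root. Decompose $B = V^t V$ over $\C$ and isolate the contribution of the conjugate pair. Concretely, the quadratic form $u \mapsto u^t B u = \sum_i (\sum_j u_j x_i^{j})^2 = \sum_i q(x_i)^2$, where $q(Z) = \sum_j u_j Z^{j}$ ranges over polynomials of degree $<d$. Choosing $q$ real with $q(\al)^2 + q(\bar\al)^2$ having negative real part is possible (e.g.\ pick $q$ real vanishing on all other roots, so the sum reduces to $q(\al)^2 + \overline{q(\al)}^2 = 2\,\Re(q(\al)^2)$, and adjust $q$ by a real scalar multiple of $Z$ times itself to make $\Re(q(\al)^2)<0$, which is possible since $q(\al)$ can be steered to be non-real). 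This produces a real vector $u$ with $u^t B u < 0$, contradicting $\tilde B(a) \ge 0$. Hence $\Hyp(d) = \{y \in \R^d : \tilde B(y) \ge 0\}$.

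\textbf{Main obstacle.} The delicate point is the rank/signature bookkeeping in the \emph{complex, repeated-root} case: over $\C$ the identity $B = V^t V$ involves the (non-Hermitian) symmetric bilinear form, so one cannot invoke the real inner-product reasoning directly, and one must verify that $\on{rank}(V^t V) = \on{rank} V$ via the invertibility of the confluent Vandermonde Gram block rather than by positivity. I would handle this by reducing to distinct roots through column operations and then citing the explicit nonvanishing of the discriminant-type determinant. Once that is in place, the hyperbolic case and the converse are comparatively routine, and I would cite \cite{Procesi78, PS85} for the full generalities while giving the $B = V^t V$ computation in detail.
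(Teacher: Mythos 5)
The paper offers no proof of this theorem (it only cites Procesi and Procesi--Schwarz), so your sketch stands on its own, and its core is the standard Hermite argument, correctly deployed for the first two assertions: the identity $u^t B u=\sum_i q_u(x_i)^2$ with $q_u(Z)=\sum_j u_j Z^j$, the rank count via the distinct-root (confluent Vandermonde) block whose principal $r\times r$ minor is $m_1\cdots m_r\prod_{i<j}(y_i-y_j)^2\ne 0$ (do keep the multiplicity factors, which you dropped but which are harmless), and the converse via a real $q$ vanishing at the other distinct roots times a real linear factor $aZ+b$. Two small slips: with the index order you wrote, the factorization is $B=VV^t$ rather than $V^tV$; and steering $q(\al)$ merely to be \emph{non-real} does not force $\Re(q(\al)^2)<0$ --- you need, say, $q(\al)$ purely imaginary, which the factor $aZ+b$ indeed achieves because $1,\al$ span $\C$ over $\R$ when $\al\notin\R$. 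These are repairable wording issues, not structural ones.

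The genuine gap is the signature assertion. The theorem claims that for an \emph{arbitrary} real monic $P_y$, $y\in\R^d$, the signature of $\tilde B(y)$ equals the number of distinct real roots; you establish this only in the hyperbolic case, where it collapses to positive semidefiniteness plus the rank count, and nothing in your converse argument (which only exhibits one negative direction when a nonreal root exists) supplies the count in general. The missing step is the classical decomposition into squares of independent linear forms: over the distinct roots, $u^tB u=\sum_i m_i q_u(y_i)^2$, where each real root contributes $+m_i\,\ell_i(u)^2$ and each conjugate pair $\{\al,\bar\al\}$ contributes $2m\big((\Re q_u(\al))^2-(\Im q_u(\al))^2\big)$; the resulting $r$ real linear forms are linearly independent because evaluation at $r\le d$ distinct points is independent on polynomials of degree $<d$, so Sylvester's law of inertia gives rank $=r$ and signature $=$ number of distinct real roots simultaneously. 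Incorporating this would not only close the gap but streamline your write-up: it subsumes the Cauchy--Binet rank bookkeeping and makes the converse inclusion immediate, since a nonreal root forces a negative inertia index.
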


Since a real symmetric matrix is positive semidefinite if and only if the determinants of the principal minors are 
nonnegative, this description yields explicit inequalities that determine $\Hyp(d)$.

Recall that $\Hyp_T(d) = \Hyp(d) \cap \{y_1 =0\}$\index{HypTd@$\Hyp_T(d)$} is the space of monic hyperbolic polynomials of degree $d$ in Tschirnhausen form. 
Then $\Hyp_T(1)$ is the origin, $\Hyp_T(2)$ is $(-\infty,0]$, $\Hyp_T(3)$ is the closure of the inside of a cusp,
and $\Hyp_T(4)$ is the closure of the inside of the swallowtail.

In \cite{Meguerditchian92} Meguerditchian 
studies the ``escape'' from the set $\Hyp(d)$: roughly speaking, for a polynomial $P$ in $\Hyp(d)$ an integer $s_P$, which depends only on the 
multiplicity vector of $P$, is introduced 
such that one can decide whether $P+Q$ belongs to $\Hyp(d)$ or not 
in terms of how the degree of $Q$ relates to $s_P$. 

By Nuij \cite{Nuij68}, for $P \in \Hyp(d)$ also $P+s P' \in \Hyp(d)$, for all $s \in \R$.
A generalization of this result is due to Kurdyka and P\u aunescu \cite{Kurdyka:2017aa}.

\def\cprime{$'$}
\providecommand{\bysame}{\leavevmode\hbox to3em{\hrulefill}\thinspace}
\providecommand{\MR}{\relax\ifhmode\unskip\space\fi MR }
\providecommand{\MRhref}[2]{%
  \href{http://www.ams.org/mathscinet-getitem?mr=#1}{#2}
}
\providecommand{\href}[2]{#2}

\end{document}